\documentclass[12pt,a4paper]{book}

\usepackage{graphicx}
\usepackage{subfig}
\usepackage{amsmath,amsthm,amssymb}
\usepackage{esint}
\usepackage{mathrsfs}

\usepackage{hyperref}

\usepackage[left=2.2cm,right=2.2cm,top=3cm,bottom=3.5cm]{geometry}

\usepackage{color}
\usepackage{array}

\setcounter{tocdepth}{2}
\setcounter{secnumdepth}{2}

\theoremstyle{plain}
\newtheorem{thm}{Theorem}[section]
\newtheorem{prop}[thm]{Proposition}
\newtheorem{lemma}[thm]{Lemma}
\newtheorem{cor}[thm]{Corollary}
\theoremstyle{definition}
\newtheorem{defn}[thm]{Definition}
\newtheorem{ex}[thm]{Example}
\theoremstyle{remark}
\newtheorem*{rem}{Remark}

\numberwithin{equation}{chapter}
\numberwithin{figure}{chapter}

\usepackage{csquotes}
\usepackage[backend=bibtex8, style=numeric-verb, maxnames=5, sorting=nyt]{biblatex}
\addbibresource{BibFile.bib}

\renewcommand\labelenumi{(\alph{enumi})}
\renewcommand\theenumi\labelenumi

\newcommand{\R}{\mathbb{R}} 
\newcommand{\N}{\mathbb{N}}
\newcommand{\Z}{\mathbb{Z}} 
\newcommand{\C}{\mathbb{C}} 
\newcommand{\Q}{\mathbb{Q}}
\newcommand{\sphere}{\mathcal{S}} 

\newcommand{\Grad}{\nabla}  
\newcommand{\Hess}{\Grad^2}
\newcommand{\Div}{{\rm div}\,} 
\newcommand{\Divtx}{{\rm div}_{t,\vx}\,} 
\newcommand{\Divtxrot}{{\rm div}_{t^\rot,\vx^\rot}\,} 
\newcommand{\Lap}{\Delta} 

\newcommand{\parthree}[3]{\partial_{#1#2#3}^3}
\newcommand{\parfour}[4]{\partial_{#1#2#3#4}^4}

\newcommand{\dx}{\,{\rm d}\vx}
\newcommand{\dxcomp}{\,{\rm d}x}

\newcommand{\dt}{\,{\rm d}t}  
\newcommand{\ds}{\,{\rm d}s} 
\newcommand{\dsigma}{\,{\rm d}\sigma} 
\newcommand{\dtau}{\,{\rm d}\tau} 
\newcommand{\dr}{\,{\rm d}r} 
\newcommand{\dS}{\,{\rm d}S}

\renewcommand{\vec}[1]{{\bf #1}}
\newcommand{\vx}{\vec{x}}
\newcommand{\vy}{\vec{y}} 
\newcommand{\vz}{\vec{0}}
\newcommand{\vn}{\vec{n}}
\newcommand{\va}{\vec{a}}
\newcommand{\vb}{\vec{b}} 
\newcommand{\vd}{\vec{d}}
\newcommand{\ve}{\vec{e}}
\newcommand{\vf}{\vec{f}}
 
\newcommand{\vp}{\vec{p}}
\newcommand{\vq}{\vec{q}}
\newcommand{\vs}{\vec{s}}
\newcommand{\vnu}{\boldsymbol{\nu}}
\newcommand{\veta}{\boldsymbol{\eta}}
\newcommand{\valpha}{\boldsymbol{\alpha}}
\newcommand{\vR}{\vec{R}}
\newcommand{\vF}{\vec{F}}

\newcommand{\vu}{\vec{u}} 
\newcommand{\vm}{\vec{m}} 
\newcommand{\vv}{\vec{v}} 
\newcommand{\vU}{\vec{U}} 
\newcommand{\vw}{\vec{w}} 
\newcommand{\vphi}{\boldsymbol{\varphi}} 
\newcommand{\vpsi}{\boldsymbol{\psi}}

\newcommand{\mU}{\mathbb{U}} 
\newcommand{\mF}{\mathbb{F}}
\newcommand{\id}{\mathbb{I}}
\newcommand{\mA}{\mathbb{A}}
\newcommand{\mB}{\mathbb{B}}
\newcommand{\mM}{\mathbb{M}}
\newcommand{\mT}{\mathbb{T}}
\newcommand{\mZ}{\mathbb{O}} 

\newcommand{\gl}[1]{{\rm GL}(#1)}
\newcommand{\orth}[1]{{\rm O}(#1)}
\newcommand{\sym}[1]{{\rm Sym}(#1)}
\newcommand{\symz}[1]{{\rm Sym}_0(#1)}
\newcommand{\szn}{\symz{n}}
\newcommand{\sztwo}{\symz{2}}

\newcommand{\tr}{{\rm tr}\,} 
\newcommand{\trans}{\top} 

\newcommand{\ec}{\, ,}
\newcommand{\ed}{\, .}
\newcommand{\es}{\, ;}

\newcommand{\closure}[1]{\ov{#1}} 
\newcommand{\interior}[1]{{#1}^\circ} 

\renewcommand{\rho}{\varrho}
\renewcommand{\theta}{\vartheta}
\newcommand{\half}{\frac{1}{2}}  
\newcommand{\supp}{{\rm supp}} 
\newcommand{\loc}{{\rm loc}} 
\newcommand{\Cc}{C^\infty_{\rm c}} 
\newcommand{\ov}[1]{\overline{#1}}  
\newcommand{\un}[1]{\underline{#1}}  
\newcommand{\til}[1]{\widetilde{#1}}  
\renewcommand{\hat}[1]{\widehat{#1}}
\newcommand{\ep}{\varepsilon} 
\newcommand{\rot}{{\rm rot}}
\newcommand{\const}{{\rm const}} 
\newcommand{\ext}{{\rm ext}} 
\newcommand{\co}{{\rm co}} 
\newcommand{\pert}{{\rm pert}}
\renewcommand{\log}{{\rm log}}
\let\subsetnotused\subset 
\renewcommand{\subset}{\subseteq}
\renewcommand{\supset}{\supseteq}
\newcommand{\sO}{\mathcal{O}}
\newcommand{\sU}{\mathcal{U}}
\renewcommand{\setminus}{\smallsetminus}
\newcommand{\opL}{\mathscr{L}}
\newcommand{\subsetcomp}{\subsetnotused\joinrel\subsetnotused}
\newcommand{\ts}{{\rm s}}
\newcommand{\tf}{{\rm f}}
\newcommand{\oscA}{{\rm A}}
\newcommand{\oscB}{{\rm B}}
\renewcommand{\emptyset}{\varnothing}
\newcommand{\name}[1]{\textsc{#1}} 
\newcommand{\charf}{\mathbf{1}} 
\newcommand{\init}{{\rm init}}


\begin{document}
\frontmatter

\begin{titlepage}
	
	\begin{center}
		\vspace*{5cm}
		\LARGE
		
		\textsc{Convex Integration\\Applied to the Multi-Dimensional Compressible Euler Equations}\\ 
		
		\vspace{1.5cm}
		
		\Large Simon Markfelder
		
	\end{center}
\end{titlepage}

\chapter*{Preface} 

In this book we shall deal with both the barotropic and the full compressible Euler system in multiple space dimensions. Both systems are particular examples of hyperbolic conservation laws. Whereas for scalar conservation laws there exists a well-known complete well-posedness theory, and for one-dimensional systems one also has achieved several results on existence and uniqueness, in the case of multi-dimensional systems there are even negative results regarding uniqueness: With the so-called \emph{convex integration} method it is possible to show that there exist initial data for which the compressible Euler equations in multiple space dimensions admit infinitely many solutions. The convex integration technique was originally developed in the context of differential inclusions and has later been applied in groundbreaking papers by \name{De~Lellis} and \name{Sz{\'e}kelyhidi} to the incompressible Euler equations which led to infinitely many solutions. In the literature this result has been refined in order to obtain solutions for the compressible Euler system as well. The common feature of all of these non-uniqueness results for compressible Euler is an ansatz which reduces the compressible Euler equations to some kind of ``incompressible system'' for which a slight modification of the incompressible theory can be applied. In this book we present a first result of a direct application of convex integration to the barotropic compressible Euler equations. With the help of this result we will show existence of initial data for which there are infinitely many solutions both for the  barotropic and full Euler system.

We begin by discussing in detail the notion of a solution which is commonly used in the literature and also in this book, namely the \emph{admissible weak solution}, as it is essential for results on well- or ill-posedness to work with a proper notion of a solution. 

In the main part of this book we show how convex integration can be applied to the barotropic Euler system. More precisely we show that under the assumption of existence of a so-called \emph{subsolution} there exist infinitely many functions which solve the barotropic Euler equations in a particular sense. 

In view of some results that are available in the literature and which are originally shown by reducing the compressible Euler equations to some kind of ``incompressible system'', we discuss how our convex integration result can be improved in order to reprove those results.

We finish by the consideration of a particular type of initial data both for the isentropic and the full Euler system, which is inspired by one-dimensional Riemann problems. For some of those initial data we prove existence of infinitely many admissible weak solutions by applying the convex integration result. To this end it will suffice to show existence of a suitable subsolution. In addition to that we collect further results in this context from the literature.

In the appendix we briefly explain the notation, and furthermore collect well-known lemmas and theorems which are used in this book.

I would like to express my deep gratitude to my PhD-advisors Prof. Christian Klingenberg from the University of W\"urzburg and Prof. Eduard Feireisl from the Czech Academy of Sciences in Prague for their support and encouragement. Furthermore I wish to thank Ond{\v r}ej Kreml, V{\'a}clav M{\'a}cha and Hind Al Baba from the Czech Academy of Sciences in Prague for a fruitful collaboration, parts of which are presented in this book in Chapter~\ref{chap:appl-riemann}. Many thanks to Till Heller and Eva Horlebein for reading parts of this book. I am very grateful for their helpful suggestions and comments. 

\bigskip

\hfill \textit{Simon Markfelder} $\qquad$

\tableofcontents

\mainmatter

\chapter{Introduction} \label{chap:intro} 

\section{The Euler Equations} \label{sec:intro-euler}

One of the oldest system of partial differential equations (PDEs) is the system which describes the motion of fluids formulated by \name{L. Euler} in the 18th century, see e.g. \cite{Euler08} for an English translation of one of \name{Euler}'s original works. This system of PDEs, which is nowadays known as the \emph{Euler system} or the \emph{Euler equations}, shall be investigated regarding the question of uniqueness of its solutions in this book. 

In order to formulate his model, \name{Euler} assumes that the state of a fluid is described by the three quantities
\begin{itemize}
	\item (mass) density $\rho$,
	\item velocity $\vu$ and
	\item pressure $p$, 
\end{itemize} 
all of which are functions of time $t$ and position in space $\vx$. There is a relation between the pressure $p$ and the density $\rho$ which is nowadays referred to as the \emph{equation of state}. Furthermore the state of the fluid at a certain moment in time, which we call the \emph{initial time}, is known and denoted by the \emph{initial state}. Usually the initial time is set to be zero. 

Under this assumptions \name{Euler} derives the following system of PDEs, see \cite{Euler08}, which we call the \emph{barotropic compressible Euler system}:

\begin{align}
	\partial_t \rho + \Div (\rho \vu) & = 0 \es \label{eq:baro-euler-pv-dens} \\
	\partial_t (\rho\vu) + \Div (\rho \vu\otimes\vu) + \Grad p(\rho) & = \vz \ed \label{eq:baro-euler-pv-mom}
\end{align}

This system can be viewed as a system of conservation laws, where \eqref{eq:baro-euler-pv-dens} describes the conservation of mass and \eqref{eq:baro-euler-pv-mom} the conservation of momentum. Both of those conservation principles are fundamental in physics. In other words \eqref{eq:baro-euler-pv-mom} represents \name{Newton}'s second law without external force. Note that the pressure acts on the fluid as an internal force, which is the reason for the pressure term in \eqref{eq:baro-euler-pv-mom}.

We derive the Euler equations \eqref{eq:baro-euler-pv-dens}, \eqref{eq:baro-euler-pv-mom} in a more general and modern way in Chapter~\ref{chap:conslaws}. More precisely we show there, how the concept of conservation is related to PDEs of the form \eqref{eq:baro-euler-pv-dens}, \eqref{eq:baro-euler-pv-mom}. Roughly speaking this is based on the formulation of the concept of conservation as integral equations, together with an application of the Divergence Theorem.

Let us become a bit more precise in our formulation of the Euler equations. We consider a fluid confined to a spatial domain $\Omega\subset \R^n$, where in this book we only consider the multi-dimensional case, i.e. $n=2$ or $n=3$. Note that $\Omega$ could also be the whole space $\R^n$. We study the behaviour of the fluid on a certain time interval $[0,T)$ with $T\in\R^+\cup\{\infty\}$. The initial state of the fluid is determined by given functions\footnote{In the literature it is also common to write $\rho_0,\vu_0$ for the initial data. However we will use the index ``0'' for something different, see e.g. Theorem~\ref{thm:convint}.} $\rho_\init$, $\vu_\init$ of $\vx\in \Omega$. The density takes values in $\R^+$ whereas the velocity is vector-valued, i.e. $\vu(t,\vx)\in\R^n$. In particular we exclude the vacuum case $\rho=0$ in this book. We look for solutions $\rho,\vu$ of system \eqref{eq:baro-euler-pv-dens}, \eqref{eq:baro-euler-pv-mom} which additionally satisfy the initial condition 
\begin{equation} \label{eq:baro-initial}
	\rho(0,\cdot) = \rho_\init \ec \qquad \vu(0,\cdot) = \vu_\init \ed
\end{equation}

As indicated above, in this book the pressure $p$ is a given function of $\rho$ only. We assume that this function satisfies
$$
p\in C^1(\R^+_0)\ec \quad p(0)=0\ec\quad p'(\rho)>0 \ \forall \rho\in\R^+\quad \text{and}\quad p\text{ convex}\ed
$$

In Chapter~\ref{chap:appl-riemann} we consider the special \emph{isentropic} pressure law 
\begin{equation} \label{eq:isentropic-EOS}
	p(\rho)= a \rho^\gamma \ed
\end{equation}
Here $a>0$ and $\gamma> 1$ are constants, where $\gamma$ is called \emph{adiabatic coefficient}. In this case the barotropic Euler equations \eqref{eq:baro-euler-pv-dens}, \eqref{eq:baro-euler-pv-mom} are called \emph{isentropic} Euler equations.

The equation of state makes the equations \eqref{eq:baro-euler-pv-dens}, \eqref{eq:baro-euler-pv-mom} a system of $1+n$ equations and $1+n$ unknowns $\rho,\vu$. Hence the system is formally well-determined, i.e. the number of unknowns equals the number of equations. 

Finally, if the boundary of the domain $\Omega$ is non-empty, one typically imposes boundary conditions. Only the following two cases appear in this book. Either $\Omega$ is bounded (see Chapter~\ref{chap:appl-ibvp}) and in this case we consider the \emph{impermeability boundary condition}
\begin{equation} \label{eq:impermeability}
	\vu\cdot \vn\big|_{\partial\Omega} = 0 \ec
\end{equation}
or $\Omega = \R^n$ (see Chapter~\ref{chap:appl-riemann}), i.e. $\partial\Omega = \emptyset$ and a boundary condition is not applicable. Note that it is also possible to consider many other cases, e.g. a periodic domain. Then $\Omega$ is a cube and the periodic boundary condition, where the outflow through one face of the cube flows into the cube through the opposite face, is studied. However in this book we stick to the two cases above. 

In this book we focus on the barotropic Euler equations \eqref{eq:baro-euler-pv-dens}, \eqref{eq:baro-euler-pv-mom}. However we also deal with a slightly different system, which has been established as a consequence of the development of thermodynamics in the 19th centure. The Euler system \eqref{eq:baro-euler-pv-dens}, \eqref{eq:baro-euler-pv-mom} has been supplemented by another equation which represents the conservation of energy\footnote{Note that the equations \eqref{eq:baro-euler-pv-dens}, \eqref{eq:baro-euler-pv-mom} derived by \name{Euler} in \cite{Euler08}, already allow for the pressure to depend additionally on a temparature field. However this temparatur is given a priori (for all times $t$ and all positions~$\vx$) and not considered as an additional unknown.}. We call the resulting system the \emph{full\footnote{Some authors write ``\emph{complete} compressible Euler system'' instead.} compressible Euler system}, which reads as follows:
\begin{align}
	\partial_t \rho + \Div (\rho \vu) & = 0 \es \label{eq:full-euler-pv-dens} \\
	\partial_t (\rho\vu) + \Div (\rho \vu\otimes\vu) + \Grad p & = \vz \es \label{eq:full-euler-pv-mom} \\
	\partial_t \left(\half \rho|\vu|^2 + \rho e(\rho,p)\right) + \Div \left(\left(\half \rho|\vu|^2 + \rho e(\rho,p) + p\right)\vu\right) & = 0 \ed \label{eq:full-euler-pv-en}
\end{align}

Here $e$ denotes the \emph{(specific) internal energy}. As the barotropic Euler system, the full Euler system \eqref{eq:full-euler-pv-dens} - \eqref{eq:full-euler-pv-en} is completed by an equation of state, i.e. a relation between the pressure $p$ and the internal energy $e$. The unknowns of the full Euler system are the density $\rho$, the velocity $\vu$ and the pressure $p$, taking values in $\R^+$, $\R^n$ and $\R^+$, respectively. Note that in contrast to the barotropic equations \eqref{eq:baro-euler-pv-dens}, \eqref{eq:baro-euler-pv-mom}, where the pressure $p$ is related to the density $\rho$ via the equation of state, the pressure in the full system \eqref{eq:full-euler-pv-dens} - \eqref{eq:full-euler-pv-en} is another unknown. Again the unknowns are functions of time $t\in[0,T)$ and $\vx\in \Omega\subset \R^n$. The initial state is given by a triple $\rho_\init,\vu_\init,p_\init$, i.e. the corresponding initial condition reads
\begin{equation} \label{eq:full-initial}
	\rho(0,\cdot) = \rho_\init\ec \qquad \vu(0,\cdot) = \vu_\init \ec \qquad p(0,\cdot) = p_\init \ed
\end{equation}

As indicated above, an equation of state is needed to complement the full Euler system \eqref{eq:full-euler-pv-dens} - \eqref{eq:full-euler-pv-en}. Typically in fluid mechanics another unknown -- the temparature $\theta$ -- is considered, and the equation of state consists in fact of two equations which interrelate the three unknowns $p$, $e$ and $\theta$: The \emph{thermal} equation of state $p=p(\rho,\theta)$ and the \emph{caloric} equation of state $e=e(\rho,\theta)$, see e.g. \name{Feireisl} and \name{Novotn{\'y}} \cite[Section 1.4.2]{FeiNov}\footnote{We mention here that the terms ``thermal'' and ``caloric'' are not used uniquely in the literature.}. While the consideration of the temparature $\theta$ is unavoidable for many systems in fluid mechanics (e.g. the Navier-Stokes-Fourier system, a viscous version of full Euler), there is no need to introduce a temparature $\theta$ as long as the Euler equations \eqref{eq:full-euler-pv-dens} - \eqref{eq:full-euler-pv-en} are considered. It suffices to state an \emph{incomplete} equation of state, which interrelates $p$ and $e$. More precisely it reads
\begin{equation} \label{eq:incomp-EOS-pv}
	e(\rho,p) = \frac{p}{(\gamma-1) \rho} \ed 
\end{equation} 
Again the adiabatic coefficient $\gamma>1$ is a constant. Furthermore we want to mention, that many authors still introduce a temparature $\theta$ when considering the Euler equations. This can be viewed as a special case of our general setting via the corresponding (thermal and caloric) equations of state, as long as the latter guarantee that \eqref{eq:incomp-EOS-pv} holds. For example the \emph{ideal gas} is considered very often in the literature. Here the temparature $\theta$ is related to the pressure $p$ and the internal energy $e$ via the following thermal and caloric equations of state:
\begin{align}
p(\rho,\theta) &= \rho\theta \es \label{eq:id-gas-p} \\
e(\rho,\theta) &= \frac{\theta}{\gamma - 1} \ed \label{eq:id-gas-e} 
\end{align}
It is simple to check that \eqref{eq:id-gas-p} and \eqref{eq:id-gas-e} imply the incomplete equation of state \eqref{eq:incomp-EOS-pv}. Hence the ideal gas is indeed a special case of our general setting.


The equation of state makes the full Euler system \eqref{eq:full-euler-pv-dens} - \eqref{eq:full-euler-pv-en} a formally well-determined system, as it is a system of $n+2$ equations for $n+2$ unknowns. 

In this book the full Euler system is only considered on the whole space, i.e. $\Omega=\R^n$, see Chapter~\ref{chap:appl-riemann}. Hence boundary conditions are not applicable. Of course in other contexts, when the full Euler system is confined to a bounded domain, one imposes a boundary condition.

In this book we will drop the word ``compressible''. This differs from the habit used in the literature, where the term ``Euler equations'' usually denotes the \emph{incompressible} Euler equations. When we mean the latter we will always indicate this explicitely by writing ``incompressible''.

\section{Weak Solutions and Admissibility} \label{sec:intro-sol}

In order to solve the initial (boundary) value problem for the barotropic Euler system \eqref{eq:baro-euler-pv-dens}, \eqref{eq:baro-euler-pv-mom} or for the full Euler system \eqref{eq:full-euler-pv-dens} - \eqref{eq:full-euler-pv-en}, we have to specify what we mean by a solution. At a first view one would define a solution as a pair $(\rho,\vu)$ or a triple $(\rho,\vu,p)$ of differentiable functions satisfying the PDEs \eqref{eq:baro-euler-pv-dens}, \eqref{eq:baro-euler-pv-mom} or \eqref{eq:full-euler-pv-dens} - \eqref{eq:full-euler-pv-en} pointwise in addition to the initial and, if applicable, boundary conditions. However it is well-known that even for scalar conservation laws, strong solutions\footnote{See Definition~\ref{defn:strongsol}.} of the corresponding initial value problem do not exist globally in time, no matter how smooth or small the initial data are. To overcome this problem one typically considers weak solutions. This step is also justified by keeping in mind that the concept of conservation is actually modelled by integral equations rather than PDEs like \eqref{eq:baro-euler-pv-dens}, \eqref{eq:baro-euler-pv-mom} or \eqref{eq:full-euler-pv-dens} - \eqref{eq:full-euler-pv-en}, see Chapter~\ref{chap:conslaws}. Because of many simple examples of initial data for which there are inifinitely many weak solutions, one imposes admissibility criteria to single out the physically relevant weak solutions. For scalar conservation laws it turned out that the \emph{entropy criterion} is a satisfying criterion, in the sense that weak solutions fulfilling the entropy condition, exist, are unique and depend continuously on the initial data, see \name{Kru\v{z}kov} \cite{Kruzkov70}. A weak solution is hence called \emph{admissible} or \emph{entropy solution} if it satisfies the entropy condition.

For this reason we supplement the Euler systems \eqref{eq:baro-euler-pv-dens}, \eqref{eq:baro-euler-pv-mom} and \eqref{eq:full-euler-pv-dens} - \eqref{eq:full-euler-pv-en} by the entropy condition, see Section~\ref{sec:conslaws-admissibility}. We will see in Chapter~\ref{chap:euler} that for both systems there are just few entropies\footnote{See Definition~\ref{defn:comp-entropy}~\ref{item:entropy} for a definition of the ``mathematical'' notion of entropy.}. For barotropic Euler in multiple space dimenstions, we prove that there is in principal just one entropy, namely the physical energy, see Subsection~\ref{subsec:euler-baro-entropies}. Hence we complement the barotropic Euler system \eqref{eq:baro-euler-pv-dens}, \eqref{eq:baro-euler-pv-mom} with the energy inequality 
\begin{equation} \label{eq:baro-euler-adm(energy)}
	\partial_t \left(\half\rho|\vu|^2 + P(\rho)\right) + \Div\left[\left(\half\rho|\vu|^2 + P(\rho) + p(\rho)\right)\vu\right] \leq 0\ed
\end{equation}
Here $P$ is the \emph{pressure potential} given by 
\begin{equation} \label{eq:pressure-potential}
P(\rho) = \rho \int_{\rho_0}^\rho \frac{p(r)}{r^2} \dr \ec
\end{equation}
where $\rho_0$ is arbitrary. In particular for the isentropic equation of state \eqref{eq:isentropic-EOS} one obtains 
\begin{equation} \label{eq:isentropic-P}
P(\rho) = \frac{a}{\gamma - 1} \rho^\gamma \ed
\end{equation}
Consistently a weak solution $(\rho,\vu)$ of the barotropic Euler system \eqref{eq:baro-euler-pv-dens}, \eqref{eq:baro-euler-pv-mom} is called \emph{admissible} or \emph{entropy solution} if the energy inequality \eqref{eq:baro-euler-adm(energy)} holds in the weak sense. As indicated above we show in Subsection~\ref{subsec:euler-baro-entropies} that the energy inequality \eqref{eq:baro-euler-adm(energy)} is nothing but the entropy condition in the sense of Section~\ref{sec:conslaws-admissibility}. One could establish the energy inequality \eqref{eq:baro-euler-adm(energy)} from a physical point of view as well since prohibiting energy to be produced is a fundamental principle in physics.

For the full Euler system \eqref{eq:full-euler-pv-dens} - \eqref{eq:full-euler-pv-en} in multiple space dimensions, we obtain the following entropy criterion: A weak solution $(\rho,\vu,p)$ is called \emph{admissible} if 
\begin{equation} \label{eq:full-euler-adm(entropy)} 
	\partial_t \Big(\rho Z\big(s(\rho,p)\big) \Big) + \Div \Big(\rho Z\big(s(\rho,p)\big) \vu \Big) \geq 0
\end{equation}
holds for all $Z\in C^\infty(\R)$ with $Z'\geq 0$, where 
\begin{equation} \label{eq:s}
s(\rho,p) := \frac{1}{\gamma-1} \Big( \log p - \gamma \log \rho \Big) 
\end{equation}
is the ``physical'' \emph{(specific) entropy}\footnote{One should notice that there is a difference between the ``mathematical'' entropies introduced in Section~\ref{sec:conslaws-entropies} and the ``physical'' entropy $s$. A priori those are two different notions. However we show in Subsection~\ref{subsec:euler-full-entropies} that in principal all (mathematical) entropies $\eta$ in the context of the full Euler equations are of the form $\eta=-\rho Z(s)$.}. Again our derivation of the condition above is build upon the consideration of (mathematical) entropies in the sense of Sections \ref{sec:conslaws-entropies} and \ref{sec:conslaws-admissibility}. Condition \eqref{eq:full-euler-adm(entropy)} can be also justified from a physical point of view as \eqref{eq:full-euler-adm(entropy)} represents the second law of thermodynamics.

\section{Overview on Well-Posedness Results} \label{sec:intro-well-posedness} 

A similar well-posedness theory as obtained by \name{Kru\v{z}kov} \cite{Kruzkov70} for scalar conservation laws, is far from being reached for the initial boundary value problem for systems of conservation laws, e.g. the Euler systems \eqref{eq:baro-euler-pv-dens}, \eqref{eq:baro-euler-pv-mom} or \eqref{eq:full-euler-pv-dens} - \eqref{eq:full-euler-pv-en}. On the one hand, some is known in one space dimension: Existence of weak entropy solutions for sufficiently small initial data was shown by \name{Glimm} \cite{Glimm65} in 1965. Here the standard approach was used, where one selects an appropriate sequence of approximate solutions, for which one is able to show that a subsequence converges and that the limit is in fact a desired weak solution. Later on in 1983 \name{DiPerna} \cite{DiPerna83} used a different approach to prove existence of solutions to the isentropic Euler equations. He considered approximate solutions which converge to a so-called \emph{measure-valued solution} which can be viewed as a ``very weak'' notion of a solution. With the technique of compensated compactness, he could prove that the limit is not just a measure but actually a weak solution.

In addition to these existence results, \name{Bressan et al.} \cite{BreCraPic00} proved uniqueness of \name{Glimm}'s solutions and continuous dependence on the initial data, again in the case of one-dimensional systems of conservation laws. 

Let us now turn our attention towards multiple space dimensions. Existence of admissible weak solutions to the compressible Euler equations is still open. For this reason one also studies a more general notion of a solution, namely the above-mentioned measure-valued solutions. Just like for the weak solutions, one also introduces notions of \emph{admissibility} for the measure-valued solutions. For this ``admissible'' measure-valued solutions one can a least prove existence together with some other properties like weak-strong uniqueness, see e.g. \name{Gwiazda}, \name{{\'S}wierczewska-Gwiazda} and \name{Wiedemann} \cite{GwiWie15} for isentropic Euler or \name{B{\v r}ezina} and \name{Feireisl} \cite{BreFei18} for full Euler. However in this book we stick to weak solutions. 

Regarding uniqueness, negative results are known, some of which are the principal content of this book. Before detailing these non-uniqueness results, let us begin by introducing the \emph{incompressible} Euler equations, even though they play only a minor role in this book:
\begin{align} 
	\Div \vv &=0\es \label{eq:incomp-euler-divfree}\\
	\partial_t \vv + \Div (\vv\otimes\vv) + \Grad p &=\vz\ed \label{eq:incomp-euler-velocity}
\end{align}
The unknowns are the velocity $\vv = \vv(t,\vx)$ and the pressure $p=p(t,\vx)$, which take values in $\R^n$ and $\R^+$, respectively. 

The first non-uniqueness results for the incompressible Euler equations go back to \name{Scheffer} \cite{Scheffer93} in 1993 and \name{Shnirelman} \cite{Shnirelman00} in 2000. Later in 2009 and 2010 \name{De~Lellis} and \name{Sz{\'e}kelyhidi} \cite{DelSze09}, \cite{DelSze10} continued this study with the aim to get a better understanding of turbulence and to tackle a proof of Onsager's conjecture. This conjecture claims that weak solutions of \eqref{eq:incomp-euler-divfree}, \eqref{eq:incomp-euler-velocity} which are H{\"o}lder continuous with exponent $\alpha>1/3$ conserve the energy $\half |\vv|^2$, and furthermore the threshold of $1/3$ is sharp in the sense that up to the H\"older exponent $\alpha<1/3$ there exist H\"older continuous solutions of the incompressible Euler system which dissipate energy. The first part of Onsager's conjecture had been proven already in 1994 by \name{Constantin}, \name{E} and \name{Titi} \cite{ConETit94}, whereas the other part remained open until recently. A series of papers \cite{DelSze13}, \cite{IsettPHD}, \cite{DelSze14}, \cite{BuckmasterPHD}, \cite{Buckmaster15}, \cite{BDIS15}, \cite{DanSze17}, all of which are built upon the technique formed by \name{De~Lellis} and \name{Sz{\'e}kelyhidi} in \cite{DelSze09} and \cite{DelSze10}, proved existence of energy dissipative solutions whose regularity improved from result to result. Finally the remaining part of Onsager's conjecture was proved in 2018 by \name{Isett} \cite{Isett18} and \name{Buckmaster}, \name{De~Lellis}, \name{Sz\'ekelyhidi} and \name{Vicol} \cite{BDSV18}. 

The technique used in \cite{DelSze09} and \cite{DelSze10} is called \emph{convex integration} and was developed by \name{Gromov}, see e.g. \cite{Gromov}, in the context of partial differential relations. The series of papers mentioned above can be viewed as a refinement of this technique. The solutions which are obtainted by convex integration are sometimes called ``\emph{wild}''.

The paper \cite{DelSze10} by \name{De~Lellis} and \name{Sz{\'e}kelyhidi} additionally considered both the compressible barotropic Euler equations \eqref{eq:baro-euler-pv-dens}, \eqref{eq:baro-euler-pv-mom} and the full Euler system \eqref{eq:full-euler-pv-dens} - \eqref{eq:full-euler-pv-en} and hence this paper is the cornerstone of our studies. Here it was shown that for the barotropic Euler equations \eqref{eq:baro-euler-pv-dens}, \eqref{eq:baro-euler-pv-mom} there exist bounded initial data for which there are infinitely many weak solutions, all of which fulfill the energy inequality \eqref{eq:baro-euler-adm(energy)}. A similar statement can be shown for the full Euler system \eqref{eq:full-euler-pv-dens} - \eqref{eq:full-euler-pv-en}. This non-uniqueness result for compressible Euler is derived from the results for the incompressible Euler system. More precisely \name{De~Lellis} and \name{Sz{\'e}kelyhidi} proceed as follows. In \cite{DelSze10} existence of bounded initial data to which there are infinitely many weak solutions of the initial value problem to \eqref{eq:incomp-euler-divfree}, \eqref{eq:incomp-euler-velocity} is shown. Here it is possible to prescribe the kinetic energy $\half|\vv|^2$ for all times $t$ and almost all positions $\vx$. If one requires the kinetic energy to be constant, then one can achieve infinitely many weak solutions to \eqref{eq:incomp-euler-divfree}, \eqref{eq:incomp-euler-velocity} with constant pressure\footnote{Here \emph{constant} means constant in time and space.}. It is easy to verify that $(\rho,\vu)$ definied by $\rho\equiv 1$ and $\vu=\vv$, where $\vv$ is one of the infinitely many solutions of the incompressible Euler system \eqref{eq:incomp-euler-divfree}, \eqref{eq:incomp-euler-velocity} with constant pressure, solves the barotropic compressible Euler system \eqref{eq:baro-euler-pv-dens}, \eqref{eq:baro-euler-pv-mom}. Since the density $\rho$, $|\vu|$ and the pressure are constant, and in addition to that $\Div\vu=\Div\vv=0$ due to \eqref{eq:incomp-euler-divfree}, $(\rho,\vu)$ even solves the energy inequality \eqref{eq:baro-euler-adm(energy)} as equation. Defining additionally $p\equiv 1$, similar arguments show that $(\rho,\vu,p)$ is an admissible weak solution of the full Euler system \eqref{eq:full-euler-pv-dens} - \eqref{eq:full-euler-pv-en}, where again the entropy inequality \eqref{eq:full-euler-adm(entropy)} holds as equation. 

This first result on non-uniqueness for the compressible Euler equations has been further improved in the case of the barotropic Euler system \eqref{eq:baro-euler-pv-dens}, \eqref{eq:baro-euler-pv-mom}. For a periodic domain \name{Chiodaroli} \cite{Chiodaroli14} as well as \name{Feireisl} \cite{Feireisl14} showed that for any continuously differentiable initial density $\rho_\init$ there exists a bounded initial velocity $\vu_\init$ for which there are infinitely many admissible weak solutions to the initial boundary value problem.

\name{Chiodaroli}'s idea \cite{Chiodaroli14} is to look for solutions with constant-in-time density, which means $\rho(t,\cdot)=\rho_\init$ for all $t$. This leads to some kind of ``incompressible system'' for the momentum $\rho\vu$ which includes the prescribed initial density $\rho_\init$. A slight modification of \name{De~Lellis}' and \name{Sz{\'e}kelyhidi}'s convex integration then yields the result. This result has been refined recently for other boundary conditions or the whole space by \name{Akramov} and \name{Wiede\-mann} \cite{AkrWie21}.

\name{Feireisl}'s approach \cite{Feireisl14} is even more general. His idea is to apply Helmholtz decomposition to the momentum, i.e. write $\rho\vu=\vv + \Grad\Phi$ where $\vv$ is div-free and $\Phi$ is a scalar field. He then prescribes $\rho$ and $\Phi$ in such a way that they are compatible with the conservation of mass \eqref{eq:baro-euler-pv-dens}, i.e. $\partial_t\rho + \Lap \Phi = 0$. Similarly to \name{Chiodaroli}'s work sketched above, one finally ends up with some kind of ``incompressible system'' for $\vv$, to which one applies a modified version of \name{De~Lellis}' and \name{Sz{\'e}kelyhidi}'s convex integration.

For the full Euler system \name{Feireisl}, the author and others \cite{FKKM20} prove that for all piecewise constant initial densities $\rho_\init$ and piecewise initial pressures $p_\init$ there exists a bounded initial velocity $\vu_\init$ for which there are infinitely many admissible weak solutions. This result is built upon a simple observation by \name{Luo}, \name{Xie} and \name{Xin} \cite{LuoXieXin16} which allows to apply convex integration in pairwise disjoint pieces of the domain and merge the solutions to obtain an object defined on the whole domain. Again the actual convex integration is merely a version of method developed by \name{De~Lellis} and \name{Sz{\'e}kelyhidi}.

In addition to that, the literature provides non-uniqueness results for the initial value problem to compressible Euler with a special type of initial data that are inspired by one-dimensional Riemann problems, see papers by \name{Chiodaroli}, \name{Kreml}, the author and collaborators \cite{ChiDelKre15}, \cite{ChiKre14}, \cite{KliMar18_1}, \cite{ChiKre18}, \cite{KliMar18_2}, \cite{BreChiKre18}, \cite{BreKreMac18}, \cite{CKMS19}, \cite{AKKMM20} \cite{KKMM20}. These solutions are constructed with piecewise constant densities and again a slight modification of \name{De~Lellis}' and \name{Sz{\'e}kelyhidi}'s convex integration. We come back to these results in Chapter~\ref{chap:appl-riemann}.

The reduction of the problem to some kind of ``incompressible system'' for which a slight modification of \name{De~Lellis}' and \name{Sz{\'e}kelyhidi}'s convex integration can be applied, is common to all non-uniqueness results for compressible Euler available in the literature. Apart from a general presentation of non-uniqueness results for the compressible Euler equations, the objective of this book is to apply convex integration directly to the barotropic compressible Euler system \eqref{eq:baro-euler-pv-dens}, \eqref{eq:baro-euler-pv-mom}.

\section{Structure of this Book}

This book is organized as follows. In Chapter~\ref{chap:conslaws} we present basic facts on hyperbolic conservation laws. We discuss some notions of solutions and introduce entropies. In Chapter~\ref{chap:euler} we show that the barotropic Euler equations \eqref{eq:baro-euler-pv-dens}, \eqref{eq:baro-euler-pv-mom} and the full Euler system \eqref{eq:full-euler-pv-dens} - \eqref{eq:full-euler-pv-en} are indeed systems of hyperbolic conservation laws. Furthermore -- and this is more important -- we derive the entropy conditions \eqref{eq:baro-euler-adm(energy)} and \eqref{eq:full-euler-adm(entropy)} for both systems. Chapter~\ref{chap:convint} is the main part of this book. Here convex integration is implemented directly for barotropic Euler, i.e. not via an ansatz to obtain some kind of ``incompressible system''. For completeness and in view of the application in Chapter~\ref{chap:appl-riemann}, we prove with slight modifications a fixed-density-version of our convex integration, the result of which coincides in some sense with the incompressible convex integration by \name{De~Lellis} and \name{Sz{\'e}kelyhidi}. As an application of our convex integration result, we first look at general initial boundary value problems for the barotropic Euler equations and prove a simple statement on weak solutions, see Chapter~\ref{chap:appl-ibvp}. In view of the results by Chiodaroli \cite{Chiodaroli14} and Feireisl \cite{Feireisl14} mentioned above, we explain how our convex integration could be further improved, especially in order to obtain \emph{admissible} solutions. In Chapter~\ref{chap:appl-riemann} we discuss a second application of the convex integration result presented in this book. More precisely we consider the above mentioned special type of initial data that are inspired by one-dimensional Riemann problems. We summarize results regarding non-/uniqueness of admissible weak solutions and show exemplarily how non-uniqueness is proven, where we have to use the fixed-density-version of our convex integration in order to obtain infinitely many \emph{admissible} solutions. 

\bigskip

Let us finish our introduction by quoting \name{Euler} \cite[Paragraph 68]{Euler08}:
\begin{quotation}
	\textit{``This makes it quite clear how far removed we are from a complete understanding of the motion of fluids and that my exposition is no more than a mere beginning. Nevertheless, everything that the Theory of Fluids contains is embodied in the two equations formulated above [...], so that it is not the laws of Mechanics that we lack in order to pursue this research but only the Analysis, which has not yet been sufficiently developed for this purpose. It is therefore clearly apparent what discoveries we still need to make in this branch of Science before we can arrive at a more perfect Theory of the motion of fluids.''}
\end{quotation}

To summarize the introduction and addressing to the quote above, let us note that even if some of the mathematical difficulties described by \name{Euler} have been overcome, for example by considering weaker notions of solutions and admissibility conditions, a complete understanding of the Euler equations has still not been reached.

\chapter{Hyperbolic Conservation Laws} 
\label{chap:conslaws}

The theory of \emph{hyperbolic\footnote{Note that the notion of \emph{hyperbolicity} as defined in Section~\ref{sec:conslaws-hyper} can be viewed as a side note in this book because it is not really important here.} conservation laws} is a very important field in mathematics. Their formulation is highly inspired by natural processes. We observe in Chapter~\ref{chap:euler} that the compressible Euler systems \eqref{eq:baro-euler-pv-dens}, \eqref{eq:baro-euler-pv-mom} and \eqref{eq:full-euler-pv-dens} - \eqref{eq:full-euler-pv-en} are particular examples of hyperbolic conservation laws. In this chapter we deal with hyperbolic conservation laws in general. More details can be found in textbooks, e.g. by \name{Dafermos} \cite{Dafermos}.

\section{Formulation of a Conservation Law} \label{sec:conslaws-formulation} 
Let us consider a set of $m\in\N$ conserved quantities which are functions of time $t\in[0,T)$ and space $\vx\in\Omega$. The conservation process is studied on a time interval $[0,T)\subset\R$ with maximal time $T\in\R^+\cup\{\infty\}$. Furthermore the space-variable $\vx$ shall lie in some spatial Lipschitz domain $\Omega\subset\R^n$, where $n$ is the dimension of the physical space\footnote{In this book we focus on the case $n=2$ or $n=3$, but this restriction is not used in the current chapter.} and $\Omega$ is not necessarily bounded. 

For any $(t,\vx)\in [0,T)\times \Omega$ we collect the spatial densities of those conserved quantities in an $m$-dimensional vector $\vU(t,\vx)$, the \emph{state vector}. Furthermore we require the state vector to lie in some open set $\sO\subset \R^m$, which is also called \emph{phase space}. Hence 
$$
	\vU:[0,T)\times\Omega \to \sO
$$ 
is a function which maps $(t,\vx)\in [0,T)\times\Omega$ to something that lies in $\sO$. With these definitions the integral 
$$
\int_D \vU(t,\vx) \dx
$$ 
for $t\in[0,T)$ and a bounded subset $D\subset\Omega$ gives total ``mass'' of the conserved quantities in the subset $D$ at time $t$.

In order to formulate the conservation law, we have to introduce the matrix-valued \emph{flux function} 
$$
\mF: \sO \to \R^{m\times n},\ \vU\mapsto \mF(\vU) \ed
$$
In the sequel we assume that $\mF\in C^1(\sO;\R^{m\times n})$.

\begin{rem} 
	Note that since $\vU$ is a function of $(t,\vx)$, $\mF$ can be viewed as function of $(t,\vx)$ as well. We will do this very often in the following. More generally and not in this book, one could also consider the flux $\mF$ as an immediate function of $(t,\vx)$ rather than via $\vU$, see e.g. the textbook by \name{Dafermos} \cite[Chapter I]{Dafermos}. However in many cases and in particular always in this book, there is a relation between $\mF$ and $\vU$, which is sometimes called \emph{constitutive relation}.
\end{rem} 

Now we are ready to formulate the conservation law: We say that the quantities $\vU:[0,T)\times \Omega\to \sO$ are \emph{conserved} or obey a \emph{conservation law} if 
\begin{equation} \label{eq:cons-law-integral}
	\int_D \vU(t_1,\vx) \dx - \int_D \vU(t_0,\vx) \dx + \int_{t_0}^{t_1} \int_{\partial D} \mF\big(\vU(t,\vx)\big) \cdot \vn(\vx) \dS_\vx \dt = \vz
\end{equation}
holds for all times $0 \leq t_0 \leq t_1< T$ and all bounded Lipschitz domains $D\subset \Omega$. 

\begin{rem} 
	For the time being we do not specify in which function space $\vU$ lies. In particular we do not ask for regularity of $\vU$. However we tacitly assume that the integrals in \eqref{eq:cons-law-integral} exist. We will later determine that $\vU\in L^\infty\big((0,T)\times \Omega;\sO\big)$. Then we need to say how \eqref{eq:cons-law-integral} has to be understood since an $L^\infty$ function is only defined almost everywhere and the sets $\{t_0\}\times D$ or $[t_0,t_1]\times \partial D$ are sets of zero measure (with respect to the $(1+n)$-dimensional Lebesgue measure).  
\end{rem}

\begin{rem}
	If there is only one conserved quantity, i.e. $m=1$, we call the conservation law \emph{scalar}, whereas we speak of a \emph{system} of conservation laws if $m\geq 2$.
\end{rem}

In the sequel we are going to understand the meaning of \eqref{eq:cons-law-integral}. From a physical point of view the integral 
\begin{equation} \label{eq:1-temp-conslaws}
	\int_{t_0}^{t_1} \int_{\partial D} \mF\big(\vU(t,\vx)\big) \cdot \vn(\vx) \dS_\vx \dt
\end{equation}
represents the mass which flows through the boundary $\partial D$ out\footnote{The fact that it represents the \emph{outward} flowing mass is due to the fact that $\vn$ is the \emph{outward} pointing normal vector on $\partial D$.} of the domain $D$ in the time interval $[t_0,t_1]$. Note that the same expression \eqref{eq:1-temp-conslaws} with a minus sign in front accordingly represents the mass that flows \emph{into} the domain $D$ in the time interval $[t_0,t_1]$. In order to understand the physical meaning of \eqref{eq:cons-law-integral} let us rewrite it as 
\begin{equation} \label{eq:cons-law-integral-B}
\int_D \vU(t_1,\vx) \dx = \int_D \vU(t_0,\vx) \dx -  \int_{t_0}^{t_1} \int_{\partial D} \mF\big(\vU(t,\vx)\big) \cdot \vn(\vx) \dS_\vx \dt\ed
\end{equation} 
With the above-mentioned physical interpretations of each of the terms, equation \eqref{eq:cons-law-integral-B} can be understood as follows. The total mass of the conserved quantities in the domain $D$ at time $t_1$ is equal to the total mass in the same domain $D$ at the earlier time $t_0$ added to the amount of mass that flew into the domain $D$ during $[t_0,t_1]$. Therefore the conservation law as formulated in \eqref{eq:cons-law-integral-B} or \eqref{eq:cons-law-integral} is precisely a mathematical formulation of the physical concept of \emph{conservation}. Hence this \emph{integral formulation} is the most natural way how to write down a conservation law. However in the literature a conservation law is commonly written in its \emph{weak formulation} rather than \eqref{eq:cons-law-integral}: The equation 
\begin{equation} \label{eq:cons-law-weak}
	\int_0^T \int_{\Omega} \Big(\vU(t,\vx) \cdot \partial_t\vpsi(t,\vx) + \mF\big(\vU(t,\vx)\big) : \Grad \vpsi(t,\vx) \Big)\dx \dt= 0
\end{equation}
holds for all test functions $\vpsi \in \Cc\big((0,T)\times\Omega;\R^m\big)$.

Indeed one can show that the integral formulation \eqref{eq:cons-law-integral} and the weak formulation \eqref{eq:cons-law-weak} are equivalent. However this requires some effort. A rigorous proof can be found in the textbook by \name{Dafermos} \cite[Section 1.1 - 1.3]{Dafermos}. In this book conservation laws are introduced via measures on the boundary of Lipschitz space-time domains, which can be viewed as a generalization of \eqref{eq:cons-law-integral} where such a space-time domain is $(t_0,t_1)\times D$. Such a measure is induced by a measurable function on the boundary. Then there exists $\vU\in L^\infty\big((0,T)\times \Omega;\sO\big)$ which satisfies \eqref{eq:cons-law-weak} for all $\vpsi \in \Cc\big((0,T)\times\Omega;\R^m\big)$ as proven in \cite[Theorem 1.2.1]{Dafermos}. The converse is shown in \cite[Theorem 1.3.4]{Dafermos}: Let $\vU\in L^\infty\big((0,T)\times \Omega;\sO\big)$ fulfill \eqref{eq:cons-law-weak} for all $\vpsi \in \Cc\big((0,T)\times\Omega;\R^m\big)$. Then for each Lipschitz space-time domain there exists a measurable function defined on the boundary\footnote{Such a function is sometimes called \emph{(normal) trace} of $\vU$.} such that the conservation law \eqref{eq:cons-law-integral} is satisfied. 

For this reason from now on a function which obeys a conservation law is a function $\vU\in L^\infty\big((0,T)\times \Omega;\sO\big)$ which obeys the conservation law in its weak formulation, i.e. $\vU$ fulfills \eqref{eq:cons-law-weak} for all $\vpsi \in \Cc\big((0,T)\times\Omega;\R^m\big)$.

Let us finish this section with the \emph{strong}\footnote{Other authors use the term \emph{classical} instead of \emph{strong}.} formulation of the conservation law. Assume that $\vU\in C^1\big((0,T)\times \Omega;\sO\big)$ satisfies the conservation law, i.e. it satisfies \eqref{eq:cons-law-weak} for all $\vpsi \in \Cc\big((0,T)\times\Omega;\R^m\big)$. Then it follows from the Divergence Theorem (Proposition~\ref{prop:not-divergence}) that 
\begin{align}
	0 &= \int_0^T \int_{\Omega} \Big(\vU(t,\vx) \cdot \partial_t\vpsi(t,\vx) + \mF\big(\vU(t,\vx)\big) : \Grad \vpsi(t,\vx) \Big)\dx \dt \notag\\
	&= - \int_0^T \int_{\Omega} \Big(\partial_t \vU(t,\vx) + \Div\mF\big(\vU(t,\vx)\big) \Big) \cdot \vpsi(t,\vx)\dx \dt \label{eq:2-temp-conslaws}
\end{align}
for all $\vpsi \in \Cc\big((0,T)\times\Omega;\R^m\big)$. Note that the fact that $\Omega$ might be unbounded seems to cause problems when applying the Divergence Theorem. However since $\vpsi$ has compact support, we can integrate over a \emph{bounded} (and still Lipschitz) domain instead of $\Omega$ in \eqref{eq:2-temp-conslaws}. Furthermore the boundary terms in \eqref{eq:2-temp-conslaws} disappear because the test functions $\vpsi$ have compact support in $(0,T)\times \Omega$. 

Since \eqref{eq:2-temp-conslaws} holds for all test functions $\vpsi \in \Cc\big((0,T)\times\Omega;\R^m\big)$, we deduce that 
\begin{equation} \label{eq:cons-law-strong}
	\partial_t \vU(t,\vx) + \Div\mF\big(\vU(t,\vx)\big)=\vz
\end{equation}
must hold pointwise for all $(t,\vx)\in(0,T)\times\Omega$. This is the strong formulation. 

The converse of what is proven above is also true. Let us record this by the following proposition.
\begin{prop} \label{prop:weak-vs-strong}
	A function $\vU\in C^1\big((0,T)\times \Omega;\sO\big)$ fulfills \eqref{eq:cons-law-weak} for all test functions $\vpsi \in \Cc\big((0,T)\times\Omega;\R^m\big)$ if and only if it fulfills \eqref{eq:cons-law-strong} for all $(t,\vx)\in(0,T)\times\Omega$.
\end{prop}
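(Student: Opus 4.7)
The plan is to prove both directions by exploiting the duality between integration by parts and the fundamental lemma of the calculus of variations. The forward implication (weak $\Rightarrow$ strong under $C^1$ regularity) was already derived in the text immediately preceding the proposition, so I will just reorganize that argument and complete the easier reverse implication.

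For the implication strong $\Rightarrow$ weak, I would start by fixing an arbitrary test function $\vpsi \in \Cc\big((0,T)\times\Omega;\R^m\big)$. Since $\supp \vpsi$ is a compact subset of $(0,T)\times\Omega$, I can choose a bounded Lipschitz domain $D\subset\Omega$ and an interval $[t_0,t_1]\subset(0,T)$ with $\supp\vpsi \subset (t_0,t_1)\times D$. Multiplying \eqref{eq:cons-law-strong} by $\vpsi$ and integrating over $(t_0,t_1)\times D$ gives
\begin{equation*}
	0 = \int_{t_0}^{t_1}\int_D \Big(\partial_t \vU + \Div\mF(\vU)\Big)\cdot \vpsi\dx\dt.
\end{equation*}
Applying the Divergence Theorem (Proposition~\ref{prop:not-divergence}) to the space-time domain $(t_0,t_1)\times D$, the boundary terms vanish since $\vpsi$ vanishes identically near $\partial\big((t_0,t_1)\times D\big)$, and one obtains \eqref{eq:cons-law-weak} (after extending the integration trivially back to $(0,T)\times\Omega$).

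For the implication weak $\Rightarrow$ strong, I would follow exactly the computation \eqref{eq:2-temp-conslaws} already displayed in the text. The regularity $\vU\in C^1\big((0,T)\times\Omega;\sO\big)$ together with $\mF\in C^1(\sO;\R^{m\times n})$ ensures that $\partial_t\vU$ and $\Div\mF(\vU)$ are well-defined continuous functions, so integration by parts is rigorously justified (again using compact support of $\vpsi$ to reduce to a bounded Lipschitz domain). The resulting identity
\begin{equation*}
	\int_0^T\int_\Omega \Big(\partial_t\vU + \Div\mF(\vU)\Big)\cdot \vpsi \dx\dt = 0
\end{equation*}
holds for \emph{every} $\vpsi \in \Cc\big((0,T)\times\Omega;\R^m\big)$. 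Applying the fundamental lemma of the calculus of variations componentwise to the continuous function $\partial_t\vU + \Div\mF(\vU)$ yields \eqref{eq:cons-law-strong} pointwise on $(0,T)\times\Omega$.

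There is no real obstacle here: the proof is essentially a bookkeeping exercise already carried out in the preceding paragraphs. The only subtlety worth flagging is the unboundedness of $\Omega$, which is handled uniformly in both directions by restricting integration to a bounded Lipschitz subdomain containing $\supp\vpsi$, so that the Divergence Theorem applies without modification and no boundary terms survive.
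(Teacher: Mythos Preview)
Your proposal is correct and follows precisely the approach of the paper: the weak $\Rightarrow$ strong direction is the computation \eqref{eq:2-temp-conslaws} already displayed before the proposition, and the paper's proof of the converse simply says to apply ``similar arguments in reverse order,'' which is exactly the integration-by-parts argument you spell out. Your handling of the unbounded-$\Omega$ subtlety via restriction to a bounded Lipschitz subdomain containing $\supp\vpsi$ also matches the paper's remark following \eqref{eq:2-temp-conslaws}.
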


\begin{proof}
	One implication has been showed above. The other implication follows by applying similar arguments in reverse order.
\end{proof}

\begin{rem} 
	Proposition~\ref{prop:weak-vs-strong} says that we can use the strong formulation of the conservation law as long as differentiable functions $\vU$ are considered. However for $L^\infty$ functions we must to stick to the weak formulation. 
\end{rem}

\begin{rem} 
	One says that a function $\vU\in L^\infty\big((0,T)\times \Omega;\sO\big)$ satisfies \eqref{eq:cons-law-strong} in the \emph{sense of distributions} if \eqref{eq:cons-law-weak} holds for all test functions $\vpsi \in \Cc\big((0,T)\times\Omega;\R^m\big)$. In the same manner one commonly uses the strong formulation when writing down a conservation law even if one looks for $L^\infty$-functions which fulfill the conservation law in the weak sense or, equivalently, in the sense of distributions.
\end{rem}

\section{Initial Boundary Value Problem} \label{sec:conslaws-ibvp} 

One typically views a conservation law as a description of the time evolution of a set of conserved quantities, i.e. a state vector $\vU$. Hence one aims to find $\vU(t,\cdot)$ for all times $t\in(0,T)$ where one knows the initial state, given by a function $\vU_\init\in L^\infty(\Omega;\sO)$. In other words the $\vU$ we are looking for not only obeys conservation, but also the \emph{initial condition} 
\begin{equation} \label{eq:conslaws-ic}
\vU(0,\cdot) = \vU_\init \ed
\end{equation}
If $\Omega \subsetneq \R^n$, one usually wants $\vU$ to satisfy a \emph{boundary condition}\footnote{Actually the boundary condition is a requirement on $\mF$. However this can be translated into a requirement on $\vU$ through the constitutive relation.}, i.e. one determines 
\begin{equation} \label{eq:conslaws-bc}
\mF\cdot \vn\big|_{\partial\Omega} = \vF_{\partial \Omega} \ec
\end{equation}
where $\vF_{\partial \Omega}\in L^\infty\big((0,T)\times \partial\Omega;\R^m\big)$ is a given function.

Let us now introduce two notions of solutions to the initial (boundary) value problem\footnote{We write the word ``boundary'' in brackets since the boundary condition is only applicable if $\partial\Omega\neq \emptyset$.} \eqref{eq:cons-law-strong}\footnote{As mentioned in a remark above, we write \eqref{eq:cons-law-strong} even if we deal with weak solutions in Definition~\ref{defn:weaksol}.}, \eqref{eq:conslaws-ic} and, if applicable, \eqref{eq:conslaws-bc}. It is simple to define strong solutions because for those it is clear in which sense the initial and the boundary condition hold.

\begin{defn} \label{defn:strongsol}
	We call a function $\vU\in C^1\big([0,T)\times \closure{\Omega};\sO\big)$ a \emph{strong solution} of the initial (boundary) value problem \eqref{eq:cons-law-strong}, \eqref{eq:conslaws-ic}, \eqref{eq:conslaws-bc} if
	\begin{itemize}
		\item $\vU$ obeys the conservation law strongly, i.e. it solves \eqref{eq:cons-law-strong} for all $(t,\vx)\in (0,T)\times \Omega$,
		\item the initial condition \eqref{eq:conslaws-ic} holds pointwise a.e. on $\Omega$ and
		\item if applicable, the boundary condition \eqref{eq:conslaws-bc} is fulfilled pointwise a.e. on $(0,T)\times \partial\Omega$.
	\end{itemize}
\end{defn} 

More interesting are the weak solutions. Here we have to clearify in which sense the initial and the boundary condition hold.

\begin{defn} \label{defn:weaksol}
	We call a function $\vU\in L^\infty\big((0,T)\times \Omega;\sO\big)$ a \emph{weak solution} of the initial (boundary) value problem \eqref{eq:cons-law-strong}, \eqref{eq:conslaws-ic}, \eqref{eq:conslaws-bc} if the equation
	\begin{align} 
		\int_0^T \int_{\Omega} \Big(\vU \cdot \partial_t\vpsi + \mF(\vU) : \Grad \vpsi \Big)\dx \dt + \int_\Omega \vU_\init \cdot \vpsi(0,\cdot) \dx \qquad & \notag \\
		- \int_0^T \int_{\partial\Omega} \vpsi \cdot \vF_{\partial \Omega} \dS_\vx \dt &= 0 \label{eq:conslaws-ivp-weak} 
	\end{align}
	holds for all test functions $\vpsi \in \Cc\big([0,T)\times\closure{\Omega};\R^m\big)$. If $\Omega=\R^n$, then the boundary term
	\begin{equation} \label{eq:boundary-term}
		\int_0^T \int_{\partial\Omega} \vpsi \cdot \vF_{\partial \Omega} \dS_\vx \dt
	\end{equation}
	disappears. Furthermore if $\Omega\subsetneq\R^n$ and one does not ask for a boundary condition, then one usually adjusts the support of the test function in order to make the boundary term \eqref{eq:boundary-term} vanish. 
\end{defn} 

\begin{rem}
	Note that in contrast to \eqref{eq:cons-law-weak} the test functions $\vpsi$ in \eqref{eq:conslaws-ivp-weak} might not vanish for $t=0$ and on the boundary $\partial\Omega$. It is a matter of taste if one writes $\Cc\big([0,T)\times\closure{\Omega};\R^m\big)$ or $\Cc\big([0,T)\times\R^n;\R^m\big)$ since each $\vpsi \in \Cc\big([0,T)\times\closure{\Omega};\R^m\big)$ can be extended to a function in $\Cc\big([0,T)\times\R^n;\R^m\big)$ and conversely the restriction $\vpsi|_{\closure{\Omega}} \in \Cc\big([0,T)\times\closure{\Omega};\R^m\big)$ for each $\vpsi \in \Cc\big([0,T)\times\R^n;\R^m\big)$.
\end{rem}

The initial condition \eqref{eq:conslaws-ic} is represented in \eqref{eq:conslaws-ivp-weak} by the term 
$$
	\int_\Omega \vU_\init \cdot \vpsi(0,\cdot) \dx 
$$
whereas the boundary condition \eqref{eq:conslaws-bc} is represented by the term \eqref{eq:boundary-term}.

The following proposition is an analogon to Propostion \ref{prop:weak-vs-strong}.

\begin{prop} \label{prop:weak-vs-strong2}
	A function $\vU\in C^1\big([0,T)\times \closure{\Omega};\sO\big)$ is a strong solution of the initial (boundary) value problem \eqref{eq:cons-law-strong}, \eqref{eq:conslaws-ic}, \eqref{eq:conslaws-bc} if and only if it is a weak solution.
\end{prop}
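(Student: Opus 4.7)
The plan is to mimic the argument of Proposition \ref{prop:weak-vs-strong} but now keep track of the boundary terms at $t=0$ and on $\partial\Omega$ that arise from the integration by parts, and to use them to recover the initial and boundary conditions.

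For the forward direction, assume $\vU \in C^1\big([0,T)\times\closure{\Omega};\sO\big)$ is a strong solution. Take any $\vpsi \in \Cc\big([0,T)\times\closure{\Omega};\R^m\big)$. Multiply \eqref{eq:cons-law-strong} by $\vpsi$ and integrate over $(0,T)\times\Omega$; since $\vpsi$ has compact support, we may in fact integrate over $(0,T)\times D$ for some bounded Lipschitz $D\subset\Omega$ large enough to contain the spatial support of $\vpsi$, so the Divergence Theorem (Proposition~\ref{prop:not-divergence}) applies. Integrating by parts in $t$ produces the boundary contribution $-\int_\Omega \vU(0,\cdot)\cdot\vpsi(0,\cdot)\dx$ (no contribution at $t=T$ because $\vpsi$ is compactly supported in $[0,T)$), and integrating by parts in $\vx$ produces $\int_0^T\int_{\partial\Omega}(\mF(\vU)\cdot\vn)\cdot\vpsi\dS_\vx\dt$. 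Substituting the initial condition $\vU(0,\cdot)=\vU_\init$ and, if applicable, the boundary condition $\mF\cdot\vn|_{\partial\Omega}=\vF_{\partial\Omega}$ converts these into exactly the two extra terms of \eqref{eq:conslaws-ivp-weak}, yielding the weak formulation. If $\Omega=\R^n$ the boundary term is absent, and if no boundary condition is prescribed one restricts to $\vpsi$ with support away from $\partial\Omega$, so the argument goes through uniformly.

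For the converse, assume $\vU \in C^1\big([0,T)\times\closure{\Omega};\sO\big)$ satisfies \eqref{eq:conslaws-ivp-weak} for every admissible $\vpsi$. First test against $\vpsi \in \Cc\big((0,T)\times\Omega;\R^m\big)$; the two boundary terms in \eqref{eq:conslaws-ivp-weak} vanish automatically, so $\vU$ satisfies the conservation law in the weak sense of \eqref{eq:cons-law-weak}. By Proposition~\ref{prop:weak-vs-strong}, the PDE \eqref{eq:cons-law-strong} holds pointwise on $(0,T)\times\Omega$. Now take an arbitrary $\vpsi \in \Cc\big([0,T)\times\closure{\Omega};\R^m\big)$ and reverse the integration by parts performed above: the two volume integrals in \eqref{eq:conslaws-ivp-weak} combine into
\[
-\int_0^T\int_\Omega\bigl(\partial_t\vU+\Div\mF(\vU)\bigr)\cdot\vpsi\dx\dt+\int_\Omega\vU(0,\cdot)\cdot\vpsi(0,\cdot)\dx-\int_0^T\int_{\partial\Omega}(\mF(\vU)\cdot\vn)\cdot\vpsi\dS_\vx\dt\ed
\]
The bulk term vanishes by the pointwise PDE just established, so \eqref{eq:conslaws-ivp-weak} reduces to
\[
\int_\Omega\bigl(\vU(0,\cdot)-\vU_\init\bigr)\cdot\vpsi(0,\cdot)\dx+\int_0^T\int_{\partial\Omega}\bigl(\mF(\vU)\cdot\vn-\vF_{\partial\Omega}\bigr)\cdot\vpsi\dS_\vx\dt=0\ed
\]

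It remains to decouple the two boundary contributions. First restrict to $\vpsi$ whose spatial support avoids $\partial\Omega$; this kills the surface integral and leaves $\int_\Omega(\vU(0,\cdot)-\vU_\init)\cdot\vpsi(0,\cdot)\dx=0$ for all values of $\vpsi(0,\cdot)\in\Cc(\Omega;\R^m)$, yielding $\vU(0,\cdot)=\vU_\init$ pointwise a.e. (and by continuity, everywhere). Then restrict to $\vpsi$ vanishing near $t=0$ to isolate the surface term, from which the boundary condition \eqref{eq:conslaws-bc} follows by the fundamental lemma of calculus of variations on $(0,T)\times\partial\Omega$. I expect the only mildly delicate point to be this last separation of the two traces: one has to verify that $\Cc\big([0,T)\times\closure{\Omega};\R^m\big)$ contains enough test functions to prescribe $\vpsi(0,\cdot)$ freely in $\Cc(\Omega;\R^m)$ independently of $\vpsi|_{(0,T)\times\partial\Omega}$, and vice versa, which is straightforward by multiplying by suitable cutoffs in $t$ or in the distance to $\partial\Omega$. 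The various sub-cases ($\Omega=\R^n$ or no boundary condition imposed) are handled by simply dropping the corresponding boundary term throughout.
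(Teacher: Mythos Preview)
Your proof is correct and follows exactly the approach the paper sketches: apply the Divergence Theorem as in Proposition~\ref{prop:weak-vs-strong}, but now retain and exploit the boundary terms at $t=0$ and on $\partial\Omega$ to recover the initial and boundary conditions. The paper merely says ``the details are left to the reader''; you have filled in precisely those details, including the separation of the two traces via appropriate choices of test functions.
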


\begin{proof}
	The proof is works analogously to the proof of Propostion \ref{prop:weak-vs-strong}. The key is to apply the Divergence Theorem (Propostion \ref{prop:not-divergence}) where now the boundary terms do not vanish as the test functions $\vpsi \in \Cc\big([0,T)\times\closure{\Omega};\R^m\big)$ do not vanish for $t=0$ and on $\partial \Omega$. The details are left to the reader.
\end{proof}

\section{Hyperbolicity} \label{sec:conslaws-hyper}

The notion of \emph{hyperbolicity} is only a side note in this book. For details we refer e.g. to the textbook by \name{Dafermos} \cite[Section 3.1]{Dafermos}.

\begin{defn} \label{defn:hyperbolicity}
	The system of conservation laws \eqref{eq:cons-law-strong} is called \emph{hyperbolic} if for any $\vU\in\sO$ and $\vnu\in\sphere^{n-1}$ there exist $m$ linearly independent eigenvectors $\vR_i(\vnu;\vU)\in\R^m$ ($i=1,...,m$) of the matrix $\sum_{k=1}^n \nu_k \Grad_\vU \vF_k$. The corresponding eigenvalues are denoted by $\lambda_i(\vnu;\vU)$. 
\end{defn}

\section{Companion Laws and Entropies} \label{sec:conslaws-entropies}

In order to select a physically relevant weak solution, we have to study entropies.

\begin{defn} \label{defn:comp-entropy}
	\begin{enumerate}
		\item \label{item:companion} A function $\eta\in C^\infty(\sO)$ is called \emph{companion} if there exists a function $\vq\in C^\infty(\sO;\R^n)$ with 
		\begin{equation} \label{eq:companion-cond}
			\Grad_\vU q_k(\vU) = \Grad_\vU \eta(\vU) \cdot \Grad_\vU \vF_k (\vU)
		\end{equation}
		for all $k=1,...,n$ and all $\vU\in\sO$.
		\item \label{item:entropy} A convex campanion is called \emph{entropy}. In this case the corresponding $\vq$ is called \emph{entropy flux} and the pair $(\eta,\vq)$ \emph{entropy-entropy flux-pair} or simply \emph{entropy pair}. 
	\end{enumerate} 
\end{defn}

\begin{rem}
	Condition \eqref{eq:companion-cond} means that 
	$$
	\partial_{U_j} q_k(\vU) = \sum_{i=1}^m \partial_{U_i} \eta(\vU) \ \partial_{U_j} F_{ik} (\vU)
	$$
	holds for all $k=1,...,n$, all $i,j=1,...,m$ and all $\vU\in\sO$.
\end{rem}

\begin{rem}
	For each entropy, the corresponding entropy flux is unique up to an additive constant.
\end{rem}

\begin{rem}
	Convexity of a $C^\infty$ function is equivalent to the fact that its Hessian is positive semi-definite. Hence a companion is an entropy if and only if its Hessian is positive semi-definite. 
\end{rem}

Let us consider the following example which yields entropies that are not very useful, see Proposition~\ref{prop:weak-linentropies} below.

\begin{ex}\label{ex:entropies}
	Let $\va\in\R^m$ and $b\in \R$. Then $\eta\in C^\infty (\sO)$ defined by $\eta(\vU) = \va\cdot \vU + b$ is an entropy where the corresponding entropy flux $\vq\in C^\infty(\sO;\R^n)$ reads $q_k(\vU)= \va\cdot \vF_k(\vU)$ for $k=1,...,n$.
\end{ex} 

\begin{proof}
	 One easily verifies that $\Grad_\vU \eta (\vU)= \va^\trans$ and hence $\Hess_\vU\eta(\vU) = \mZ_m$, i.e. $\eta$ is convex. Furthermore we have $\Grad_\vU q_k(\vU) = \va^\trans \cdot \Grad_\vU \vF_k(\vU)$. Hence we deduce that \eqref{eq:companion-cond} holds.
\end{proof}

\begin{prop}\label{prop:companion-cons-law}
	Every strong solution fulfills the companion conservation law
	\begin{equation}
		\partial_t \eta(\vU) + \Div \vq(\vU) = 0 \label{eq:companion-cons-law}
	\end{equation} 
	for all $(t,\vx)\in(0,T)\times \Omega$.
\end{prop}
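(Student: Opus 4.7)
The plan is to exploit the $C^1$ regularity of a strong solution and the chain rule, combined with the companion condition \eqref{eq:companion-cond}, to reduce $\partial_t \eta(\vU) + \Div \vq(\vU)$ to a contraction of $\Grad_\vU \eta(\vU)$ with the strong form \eqref{eq:cons-law-strong} of the conservation law.

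Concretely, I would first use the chain rule (which is applicable since $\vU \in C^1$ and $\eta,\vq \in C^\infty$) to write
\[
\partial_t \eta(\vU) = \Grad_\vU \eta(\vU) \cdot \partial_t \vU,
\qquad
\partial_{x_k} q_k(\vU) = \Grad_\vU q_k(\vU) \cdot \partial_{x_k} \vU
\]
for each $k=1,\dots,n$. Then I would invoke the companion relation \eqref{eq:companion-cond} to replace $\Grad_\vU q_k(\vU)$ by $\Grad_\vU \eta(\vU) \cdot \Grad_\vU \vF_k(\vU)$, observing that by the chain rule $\Grad_\vU \vF_k(\vU) \cdot \partial_{x_k} \vU = \partial_{x_k}\bigl(\vF_k(\vU)\bigr)$. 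Summing over $k$ gives
\[
\Div \vq(\vU) = \Grad_\vU \eta(\vU) \cdot \Div \mF(\vU).
\]

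Adding this to the expression for $\partial_t \eta(\vU)$ yields
\[
\partial_t \eta(\vU) + \Div \vq(\vU) = \Grad_\vU \eta(\vU) \cdot \bigl(\partial_t \vU + \Div \mF(\vU)\bigr),
\]
and the right-hand side vanishes pointwise on $(0,T)\times\Omega$ because $\vU$ solves \eqref{eq:cons-law-strong} in the strong sense.

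I do not expect any genuine obstacle here: the only subtlety is being careful with indices when pairing the $m\times n$-matrix structure of $\mF$ with the vector-valued gradient $\Grad_\vU \eta$, and noting that neither convexity of $\eta$ nor hyperbolicity of the system enters this computation — the conclusion relies solely on the definition of a companion via \eqref{eq:companion-cond} and the $C^1$ chain rule.
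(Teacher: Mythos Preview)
Your proof is correct and follows essentially the same approach as the paper: apply the chain rule to $\partial_t \eta(\vU)$ and $\Div \vq(\vU)$, use the companion condition \eqref{eq:companion-cond} to factor out $\Grad_\vU \eta(\vU)$, and conclude via the strong form \eqref{eq:cons-law-strong}. Your additional remark that neither convexity of $\eta$ nor hyperbolicity is needed here is also accurate.
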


\begin{proof}
	The claim simply follows from the chain rule and condition \eqref{eq:companion-cond}. Indeed we have
	\begin{align*} 
		\partial_t \eta(\vU) + \Div \vq(\vU) &= \Grad_{\vU} \eta(\vU) \cdot \partial_t \vU + \sum_{k=1}^n \Grad_{\vU} q_k(\vU) \cdot \partial_k \vU \\
		&= \Grad_{\vU} \eta(\vU) \cdot \partial_t \vU + \sum_{k=1}^n \Grad_{\vU} \eta(\vU) \cdot \Grad_{\vU} \vF_k(\vU) \cdot \partial_k \vU \\
		&= \Grad_{\vU} \eta(\vU) \cdot \Big( \partial_t \vU +\Div \mF(\vU)\Big) \\
		&= 0 \ed
	\end{align*} 
\end{proof} 

From the integrability conditions we obtain an equivalent condition for $\eta$ to be a companion. To use the integrability conditions we need to assume that $\sO$ is simply connected and $\mF\in C^2(\sO;\R^{m\times n})$. Note that this is not a severe restriction as the sets $\sO$ for the particular examples of conservation laws that appear in this book -- i.e. the isentropic and full compressible Euler equations -- are simply connected, see \eqref{eq:baro-O} and \eqref{eq:full-O}.

\begin{prop} \label{prop:companion-int-bed}
	Let $\sO$ be simply connected and $\mF\in C^2(\sO;\R^{m\times n})$. The function $\eta\in C^\infty(\sO)$ is a companion if and only if the matrices\footnote{We will later on drop ``$(\vU)$'' for convenience.} $$\Hess_\vU \eta(\vU) \cdot \Grad_\vU \vF_k(\vU)$$ are symmetric for all $k=1,...,n$ and all $\vU\in \sO$.
\end{prop}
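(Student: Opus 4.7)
The plan is to reduce the existence of the flux components $q_k$ to a standard closed/exact-form argument on the simply connected set $\sO$, and to compute the closedness condition explicitly in terms of $\eta$ and $\mF$.

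First, I would work componentwise. The defining relation \eqref{eq:companion-cond} reads, for fixed $k\in\{1,\ldots,n\}$,
$$
\partial_{U_j} q_k(\vU) = \sum_{i=1}^m \partial_{U_i}\eta(\vU)\,\partial_{U_j}F_{ik}(\vU) \qquad (j=1,\ldots,m).
$$
So the existence of $q_k\in C^\infty(\sO)$ is precisely the question whether the $C^\infty$ one-form
$$
\omega_k := \sum_{j=1}^m \Big(\sum_{i=1}^m \partial_{U_i}\eta\,\partial_{U_j}F_{ik}\Big) \,\mathrm{d}U_j
$$
on $\sO$ is exact. Since $\sO$ is simply connected and $\omega_k$ is $C^\infty$ (using $\mF\in C^2$ and $\eta\in C^\infty$), by the Poincar\'e lemma $\omega_k$ is exact if and only if it is closed, i.e.\ if and only if
$$
\partial_{U_l}\!\Big(\sum_i \partial_{U_i}\eta\,\partial_{U_j}F_{ik}\Big) = \partial_{U_j}\!\Big(\sum_i \partial_{U_i}\eta\,\partial_{U_l}F_{ik}\Big)
$$
for all $j,l\in\{1,\ldots,m\}$.

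Next, I would expand both sides by the product rule and use symmetry of mixed partials of $F_{ik}$ (valid since $\mF\in C^2$). The terms involving second derivatives of $F_{ik}$ cancel between the two sides, and what remains is exactly
$$
\sum_{i=1}^m \partial_{U_l}\partial_{U_i}\eta\,\partial_{U_j}F_{ik} = \sum_{i=1}^m \partial_{U_j}\partial_{U_i}\eta\,\partial_{U_l}F_{ik}.
$$
The left-hand side is the $(l,j)$-entry and the right-hand side is the $(j,l)$-entry of the matrix product $\Hess_\vU\eta\cdot\Grad_\vU\vF_k$. Hence closedness of $\omega_k$ for every $k$ is equivalent to symmetry of all the matrices $\Hess_\vU\eta(\vU)\cdot\Grad_\vU\vF_k(\vU)$ on $\sO$.

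Combining the two steps: if $\eta$ is a companion, then each $q_k$ exists in $C^\infty(\sO)$, so $\omega_k = \mathrm{d}q_k$ is exact, hence closed, hence the symmetry condition holds. Conversely, if the symmetry condition holds, then each $\omega_k$ is closed, hence exact by the Poincar\'e lemma on the simply connected domain $\sO$, yielding $q_k\in C^\infty(\sO)$ satisfying \eqref{eq:companion-cond}, so $\eta$ is a companion. There is no real obstacle here; the only point where some care is needed is verifying that the second-order terms in $F_{ik}$ cancel cleanly, and invoking the Poincar\'e lemma with the appropriate regularity, which is why the hypotheses $\sO$ simply connected and $\mF\in C^2$ are imposed.
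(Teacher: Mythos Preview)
Your proposal is correct and follows essentially the same approach as the paper: reduce existence of each $q_k$ to exactness of the one-form with components $\sum_i \partial_{U_i}\eta\,\partial_{U_j}F_{ik}$, invoke the Poincar\'e lemma (which the paper calls the ``integrability conditions'') on the simply connected set $\sO$, expand the closedness condition via the product rule, and cancel the second-order terms in $\mF$ using symmetry of mixed partials to be left with precisely the symmetry of $\Hess_\vU\eta\cdot\Grad_\vU\vF_k$. The only difference is cosmetic --- you phrase things in the language of differential forms, while the paper writes out the componentwise integrability condition directly.
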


\begin{proof}
	The integrability conditions (Proposition~\ref{prop:not-int-cond}) state that the existence of each $q_k\in C^\infty(\sO)$ is equivalent to 
	$$
		\partial_{U_i} [\Grad_\vU\eta \cdot \Grad_\vU \vF_k]_j = \partial_{U_j} [\Grad_\vU\eta \cdot \Grad_\vU \vF_k]_i
	$$
	for all $i,j=1,...,m$ and all $\vU\in \sO$. Computing the derivatives on both sides we obtain
	$$
		\sum_{\ell=1}^m \partial_{U_i} \partial_{U_\ell}\eta \ \ \partial_{U_j} F_{\ell k} + \sum_{\ell=1}^m \partial_{U_\ell}\eta \ \ \partial_{U_i} \partial_{U_j} F_{\ell k} = \sum_{\ell=1}^m \partial_{U_j} \partial_{U_\ell}\eta \ \ \partial_{U_i} F_{\ell k} + \sum_{\ell=1}^m \partial_{U_\ell}\eta \ \ \partial_{U_j} \partial_{U_i} F_{\ell k}\ed
	$$
	The terms containing the second order derivatives of $\mF$ cancel and we end up with
	$$
		\sum_{\ell=1}^m [\Hess_\vU\eta]_{i\ell}\  [\Grad_\vU \vF_k]_{\ell j} = \sum_{\ell=1}^m 	[\Hess_\vU\eta]_{j\ell}\  [\Grad_\vU \vF_k]_{\ell i} \ed
	$$
	Hence $\eta\in C^\infty(\sO)$ is a companion if and only if 
	$$
		[\Hess_\vU\eta \cdot \Grad_\vU \vF_k]_{i j} = [\Hess_\vU\eta \cdot \Grad_\vU \vF_k]_{j i} \ed
	$$
	for all $i,j=1,...,m$, all $k=1,...,n$ and all $\vU\in \sO$. 
\end{proof}

\begin{rem}
	Note that Proposition~\ref{prop:companion-int-bed} yields $\frac{m(m-1)}{2}$ conditions for each $k=1,...,n$ since this is the number of components in the upper or lower triangular of an $(m\times m)$-matrix. Hence altogether there are $\frac{nm(m-1)}{2}$ conditions. If $m=1$ and $n$ arbitrary, i.e. we deal with a scalar conservation law in arbitrary space dimensions, the number of conditions is 0. This can be also observed as a $(1\times 1)$-matrix is always symmetric. Thus every $\eta\in C^\infty(\sO)$ is an entropy. However if we deal with systems in several space dimensions, e.g. the compressible Euler equations \eqref{eq:baro-euler-pv-dens}, \eqref{eq:baro-euler-pv-mom} or \eqref{eq:full-euler-pv-dens} - \eqref{eq:full-euler-pv-en}, one can check that there are more conditions than unknowns. In other words the problem of finding entropies is formally overdetermined. We will however show that there still exist entropies albeit there are not many.
\end{rem}

\section{Admissible Weak Solutions} \label{sec:conslaws-admissibility}

We know from simple examples of hyperbolic conservation laws\footnote{E.g. the \emph{Burgers' equation}, a scalar conservation law in one space dimension.} that even for smooth initial data, strong solutions do not exist for all times. Thus we must stick to looking for weak solutions. On the other hand again simple examples of hyperbolic conservation laws show that there might be infinitely many weak solutions. To overcome this issue on non-uniqueness one imposes admissibility criteria to single out the \emph{relevant} weak solutions. What is meant by a relevant solution depends on the particular process in nature which is modelled by the conservation law. 

In the sequel we propose a heuristical derivation of a universal and commonly used admissibility criterion. Assume that a natural process is modelled by a conservation law. Then effects that can be modelled by higher order partial derivatives are neglected. This makes sense if those effects play only a minor role. In the case of the Euler equations one could think of viscosity as such a neglected effect. By considering this neglect as a continuous process, the conservation law can be viewed as a ``limit'' of a higher order partial differential equation, e.g. 
\begin{equation} \label{eq:conslaws-viscous}
	\partial_t \vU + \Div \mF(\vU) = \ep \Lap \vU \ed
\end{equation} 

Assume that for all $\ep>0$ there exists a function $\vU^\ep \in C^2\big((0,T)\times \Omega;\R^m\big)$ which solves \eqref{eq:conslaws-viscous} and suppose furthermore that the $\vU^\ep$ converge\footnote{Assume for simplicity that $\vU^\ep \to \vU$ strongly in $L^\infty$. Note again that we are merely justifying an admissibility criterion \emph{heuristically}.} to some $\vU$ as $\ep\to 0$. From \eqref{eq:conslaws-viscous} we deduce for any entropy pair $(\eta,\vq)$ that
\begin{align*}
	\partial_t \eta(\vU^\ep) + \Div \vq(\vU^\ep) &= \Grad_\vU \eta(\vU^\ep) \cdot \partial_t\vU^\ep + \sum_{k=1}^n \Grad_\vU q_k(\vU^\ep) \cdot \partial_k \vU^\ep \\
	&= \Grad_\vU \eta(\vU^\ep) \cdot \left( \partial_t\vU^\ep + \sum_{k=1}^n \Grad_\vU \vF_k(\vU^\ep) \cdot \partial_k \vU^\ep \right) \\
	&= \Grad_\vU \eta(\vU^\ep) \cdot \Big( \partial_t\vU^\ep + \Div \mF(\vU^\ep) \Big) \\
	&= \ep \Grad_\vU \eta(\vU^\ep) \cdot \Lap\vU^\ep \ec
\end{align*}
where we have used \eqref{eq:companion-cond}. Since 
\begin{align*}
	\Lap \eta(\vU^\ep) &= \sum_{k=1}^n \sum_{i=1}^m \partial_k \Big(\partial_{U_i} \eta(\vU^\ep)\ \partial_k U_i^\ep\Big) \\
	&= \sum_{k=1}^n \sum_{i,j=1}^m \partial_{U_i}\partial_{U_j} \eta(\vU^\ep)\ \partial_k U_i^\ep \ \partial_k U_j^\ep + \Grad_\vU \eta(\vU^\ep) \cdot \Lap \vU^\ep \\
	&= \tr\Big(\Grad^\trans \vU^\ep \cdot \Hess_\vU\eta(\vU^\ep) \cdot \Grad \vU^\ep\Big) +  \Grad_\vU \eta(\vU^\ep) \cdot \Lap \vU^\ep \ec 
\end{align*}
this implies 
\begin{equation*}
	\partial_t \eta(\vU^\ep) + \Div \vq(\vU^\ep) = \ep \Lap \eta(\vU^\ep) - \ep\, \tr \Big(\Grad^\trans \vU^\ep \cdot \Hess_\vU \eta(\vU^\ep) \cdot \Grad \vU^\ep\Big) \ed
\end{equation*}
Let us multiply the latter equation with a non-negative test function $\varphi\in \Cc\big((0,T)\times \Omega;\R^+_0\big)$ and apply the Divergence Theorem (Proposition~\ref{prop:not-divergence}), to obtain the weak formulation
\begin{align}
	&\int_0^T \int_{\Omega} \Big(\eta(\vU^\ep) \,\partial_t\varphi + \vq(\vU^\ep) \cdot \Grad \varphi \Big)\dx \dt \notag\\
	&= \ep \int_0^T \int_{\Omega} \tr\Big(\Grad^\trans \vU^\ep \cdot \Hess_\vU\eta(\vU^\ep) \cdot \Grad \vU^\ep\Big)\varphi \dx \dt - \ep \int_0^T \int_{\Omega} \eta(\vU^\ep)\ \Lap\varphi \dx \dt \ed \label{eq:temp-visc}
\end{align}
Note that the boundary terms disappear due to the compact support of the test function $\varphi$. 

The aim is now to consider the limit of \eqref{eq:temp-visc} as $\ep\to 0$. The term
$$
\ep \int_0^T \int_{\Omega} \tr\Big(\Grad^\trans \vU^\ep \cdot \Hess_\vU\eta(\vU^\ep) \cdot \Grad \vU^\ep\Big)\varphi \dx \dt
$$
causes troubles as the derivative $\Grad \vU^\ep$ may blow up as $\ep\to 0$. However we get rid of this problematic term via the estimate
$$
\ep \int_0^T \int_{\Omega} \tr\Big(\Grad^\trans \vU^\ep \cdot \Hess_\vU\eta(\vU^\ep) \cdot \Grad \vU^\ep\Big)\varphi \dx \dt \geq 0\ec
$$
which holds due to the convexity of $\eta$, i.e. the Hessian of $\eta$ is positive semi-definite, and the fact that $\varphi$ takes non-negative values. 

Hence we end up with 
$$
	\int_0^T \int_{\Omega} \Big(\eta(\vU^\ep) \,\partial_t\varphi + \vq(\vU^\ep) \cdot \Grad \varphi \Big)\dx \dt \geq - \ep \int_0^T \int_{\Omega} \eta(\vU^\ep)\ \Lap\varphi \dx \dt  
$$
whose limit as $\ep\to 0$ is 
$$
	\int_0^T \int_{\Omega} \Big(\eta(\vU) \,\partial_t\varphi + \vq(\vU) \cdot \Grad \varphi \Big)\dx \dt \geq 0 \ed
$$

This motivates the following definition. 

\begin{defn} \label{defn:admissibility} 
	A weak solution $\vU\in L^\infty\big((0,T)\times \Omega;\sO\big)$ of the initial (boundary) value problem \eqref{eq:cons-law-strong}, \eqref{eq:conslaws-ic}, \eqref{eq:conslaws-bc} is called \emph{admissible weak solution} or \emph{entropy solution} if
	\begin{align} 
		\int_0^T \int_{\Omega} \Big(\eta(\vU) \  \partial_t\varphi + \vq(\vU) \cdot \Grad \varphi \Big)\dx \dt + \int_\Omega \eta(\vU_\init) \varphi(0,\cdot) \dx \qquad & \notag \\
		-  \int_0^T \int_{\partial\Omega} q_{\partial \Omega} \varphi \dS_\vx \dt &\geq 0   \label{eq:admissibility} 
	\end{align}
	holds for all entropy pairs $(\eta,\vq)$ and all non-negative test functions $\varphi\in \Cc\big([0,T)\times \closure{\Omega};\R^+_0\big)$.
	Here the boundary values $q_{\partial\Omega} = \vq \cdot \vn\big|_{\partial\Omega}$ are given by the boundary condition \eqref{eq:conslaws-bc}. Note that again the boundary term 
	$$
	\int_0^T \int_{\partial\Omega} q_{\partial \Omega} \varphi \dS_\vx \dt
	$$
	disappears if $\Omega=\R^n$.
\end{defn}

As in the context of weak solutions, one also says that 
\begin{equation} \label{eq:admissibility-strong}
	\partial_t \eta(\vU) + \Div \vq(\vU) \leq 0
\end{equation}
holds in the sense of distributions if \eqref{eq:admissibility} holds for all non-negative test functions $\varphi\in \Cc\big([0,T)\times \closure{\Omega};\R^+_0\big)$. The inequality \eqref{eq:admissibility-strong} or \eqref{eq:admissibility} is called \emph{entropy inequality}. If the entropy inequality holds for all entropy pairs $(\eta,\vq)$, we say that the \emph{entropy criterion} is satisfied.

\begin{prop}
	Every strong solution $\vU\in C^1\big([0,T)\times \closure{\Omega};\sO\big)$ of the initial (boundary) value problem \eqref{eq:cons-law-strong}, \eqref{eq:conslaws-ic}, \eqref{eq:conslaws-bc} is an admissible weak solution.
\end{prop}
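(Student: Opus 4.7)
The plan is to reduce this to the companion conservation law that already holds pointwise for any strong solution, and then mimic the integration-by-parts computation used in the proof of Proposition~\ref{prop:weak-vs-strong2}. Since $(\eta,\vq)$ is in particular a companion pair, Proposition~\ref{prop:companion-cons-law} gives
\begin{equation*}
\partial_t \eta(\vU) + \Div \vq(\vU) = 0 \quad \text{for all } (t,\vx) \in (0,T)\times\Omega\ed
\end{equation*}
So the goal reduces to showing that the left-hand side of \eqref{eq:admissibility} equals zero (which of course implies $\geq 0$). The convexity of $\eta$ is not needed here; it is purely a consequence of the companion identity.

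To get this, I would multiply the companion equation by a non-negative $\varphi \in \Cc\big([0,T)\times\closure{\Omega};\R^+_0\big)$ and integrate over $(0,T)\times\Omega$. Integration by parts in time yields
\begin{equation*}
\int_0^T\!\!\int_\Omega \partial_t\eta(\vU)\,\varphi \dx\dt = -\int_0^T\!\!\int_\Omega \eta(\vU)\,\partial_t\varphi\dx\dt - \int_\Omega \eta(\vU(0,\cdot))\,\varphi(0,\cdot)\dx\ec
\end{equation*}
where the upper boundary term vanishes because $\varphi(T,\cdot) = 0$ (indeed $\varphi$ has compact support in $[0,T)$), and at $t=0$ we use the strong-solution initial condition $\vU(0,\cdot) = \vU_\init$ (holding pointwise a.e.~by Definition~\ref{defn:strongsol}) to replace $\eta(\vU(0,\cdot))$ by $\eta(\vU_\init)$. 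For the spatial part, I would apply the Divergence Theorem (Proposition~\ref{prop:not-divergence}) to obtain
\begin{equation*}
\int_0^T\!\!\int_\Omega \Div\vq(\vU)\,\varphi\dx\dt = -\int_0^T\!\!\int_\Omega \vq(\vU)\cdot\Grad\varphi\dx\dt + \int_0^T\!\!\int_{\partial\Omega} \vq(\vU)\cdot\vn\;\varphi\dS_\vx\dt\ec
\end{equation*}
and then identify $\vq(\vU)\cdot\vn|_{\partial\Omega}$ with $q_{\partial\Omega}$, which is precisely how $q_{\partial\Omega}$ is defined in Definition~\ref{defn:admissibility} from the prescribed boundary data. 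Summing the two identities and rearranging gives exactly the expression on the left-hand side of \eqref{eq:admissibility} equal to zero, hence $\geq 0$.

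The only minor subtleties are routine and already handled in the paper: when $\Omega$ is unbounded, the Divergence Theorem is applied on a bounded Lipschitz subdomain containing the compact support of $\varphi$ in the $\vx$-variable, so no contribution from infinity arises; and when $\Omega = \R^n$ (so $\partial\Omega = \emptyset$) the $\partial\Omega$-integral simply vanishes, consistent with Definition~\ref{defn:admissibility}. I do not anticipate a genuine obstacle — the argument is essentially the $\eta$-version of the proof of Proposition~\ref{prop:weak-vs-strong2}, and the inequality in the conclusion comes out as an equality because strong solutions dissipate no entropy at all.
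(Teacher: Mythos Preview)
Your proposal is correct and follows exactly the route the paper takes: invoke Proposition~\ref{prop:companion-cons-law} to get the companion law $\partial_t\eta(\vU)+\Div\vq(\vU)=0$ pointwise, then integrate by parts via the Divergence Theorem to show \eqref{eq:admissibility} holds as an equality. The only thing to add explicitly is the one-line remark (which the paper makes) that $\vU$ is already a weak solution by Proposition~\ref{prop:weak-vs-strong2}, since admissibility is defined on top of that.
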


\begin{proof}
	We already proved in Proposition~\ref{prop:weak-vs-strong2} that $\vU$ is a weak solution. Furthermore \eqref{eq:admissibility} holds as equality for all entropy pairs $(\eta,\vq)$ and all non-negative test functions $\varphi\in \Cc\big([0,T)\times \closure{\Omega};\R^+_0\big)$. Indeed this can be simply proven using Proposition~\ref{prop:companion-cons-law} and the Divergence Theorem (Propostion \ref{prop:not-divergence}).
\end{proof}

\begin{rem} 
	The process we described in the beginning of this section is called \emph{vanishing viscosity method}. We used it in order to motivate the entropy criterion: Under very strong assumptions we proved that the solution obtained as the vanishing viscosity limit\footnote{The existence of this limit was one of the strong assumptions.} satisfies the entropy criterion. The vanishing viscosity method itself could be also used as a criterion to select relevant solutions. This is in fact done for scalar conservation laws where \name{Kru\v{z}kov}~\cite{Kruzkov70} showed that entropy solutions exist, are unique and coincide with the solution obtained by the vanishing viscosity method. However it should be noticed that as soon as systems of conservation laws are considered, at the current state of research the assumptions used above are in general out of reach. 
\end{rem}

\begin{rem} 
	In Chapter~\ref{chap:euler} we compute the entropies for Euler and we observe that the corresponding entropy inequalities coincide with general principles in thermodynamics. This can be viewed as a second explanation why the solutions ruled out by the entropy condition are physically irrelevant. 
\end{rem}

When solving the initial (boundary) value problem \eqref{eq:cons-law-strong}, \eqref{eq:conslaws-ic}, \eqref{eq:conslaws-bc}, our objective is to find admissible weak solutions in the sense of Definitions \ref{defn:weaksol} and \ref{defn:admissibility}. In order to search for admissible weak solutions, one has to find all entropy pairs $(\eta,\vq)$, since one needs to check if the entropy inequality holds for all entropy pairs.

Let us finish this chapter with the following propostion.

\begin{prop} \label{prop:weak-linentropies} 
	If a function $\vU\in L^\infty\big((0,T)\times \Omega;\sO\big)$ is a weak solution of the initial (boundary) value problem \eqref{eq:cons-law-strong}, \eqref{eq:conslaws-ic}, \eqref{eq:conslaws-bc}, then the entropy inequality \eqref{eq:admissibility-strong} holds for all entropies of the form considered in Example~\ref{ex:entropies} in the sense of distributions.
\end{prop}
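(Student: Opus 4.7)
The plan is to exploit that for the linear-affine entropies of Example~\ref{ex:entropies}, the entropy inequality degenerates to the weak formulation of the conservation law itself, tested against a specific vector-valued test function. Recall from the example that $\eta(\vU)=\va\cdot\vU+b$ has entropy flux with components $q_k(\vU)=\va\cdot\vF_k(\vU)$, so in the sense of distributions
\[
\partial_t\eta(\vU)+\Div\vq(\vU) \;=\; \va\cdot\bigl(\partial_t\vU+\Div\mF(\vU)\bigr),
\]
which formally vanishes for any weak solution. Hence I expect not just an inequality but in fact the equality version of \eqref{eq:admissibility}.

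Concretely, I would fix a non-negative scalar test function $\varphi\in\Cc\big([0,T)\times\closure{\Omega};\R^+_0\big)$ and plug the vector-valued test function $\vpsi:=\va\,\varphi$ into the weak formulation \eqref{eq:conslaws-ivp-weak} from Definition~\ref{defn:weaksol}. Since $\va$ is a constant vector, $\partial_t\vpsi=\va\,\partial_t\varphi$ and $\Grad\vpsi=\va\otimes\Grad\varphi$, so the integrand $\vU\cdot\partial_t\vpsi+\mF(\vU):\Grad\vpsi$ becomes exactly $(\va\cdot\vU)\,\partial_t\varphi+\sum_{k=1}^n(\va\cdot\vF_k(\vU))\,\partial_k\varphi$, i.e.\ $(\eta(\vU)-b)\,\partial_t\varphi+\vq(\vU)\cdot\Grad\varphi$. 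Similarly the initial term becomes $\int_\Omega(\eta(\vU_\init)-b)\,\varphi(0,\cdot)\dx$, and on the boundary $\va\cdot\vF_{\partial\Omega}=\vq\cdot\vn\big|_{\partial\Omega}=q_{\partial\Omega}$.

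The remaining task is to dispose of the constant $b$. The contributions of $b$ collect into
\[
-b\int_0^T\!\!\int_\Omega\partial_t\varphi\dx\dt \;-\; b\int_\Omega\varphi(0,\cdot)\dx,
\]
and this vanishes by the fundamental theorem of calculus in $t$, using that $\varphi$ has compact support in $[0,T)\times\closure{\Omega}$ so that $\varphi$ is zero for $t$ near $T$ (and the $\vx$-integral can be restricted to a bounded set). Combining these observations, the weak formulation rearranges exactly into the \emph{equality}
\[
\int_0^T\!\!\int_\Omega\!\bigl(\eta(\vU)\,\partial_t\varphi+\vq(\vU)\cdot\Grad\varphi\bigr)\dx\dt+\int_\Omega\eta(\vU_\init)\,\varphi(0,\cdot)\dx-\int_0^T\!\!\int_{\partial\Omega}q_{\partial\Omega}\,\varphi\dS_\vx\dt=0,
\]
which trivially implies \eqref{eq:admissibility}, i.e.\ the distributional entropy inequality \eqref{eq:admissibility-strong}.

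There is no real obstacle here; the only mild subtlety is that Definition~\ref{defn:admissibility} uses scalar non-negative test functions while Definition~\ref{defn:weaksol} uses vector-valued test functions with no sign restriction, but the product $\va\,\varphi$ is an admissible choice in the latter regardless of the sign of $\varphi$, so the non-negativity of $\varphi$ is in fact not even used—consistent with obtaining equality rather than inequality.
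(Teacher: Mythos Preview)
Your proof is correct and follows essentially the same approach as the paper: both set $\vpsi:=\va\,\varphi$ in the weak formulation \eqref{eq:conslaws-ivp-weak}, identify the resulting terms with $\eta$ and $\vq$, and eliminate the constant $b$ via integration in $t$ (the paper phrases this as an application of the Divergence Theorem, you as the fundamental theorem of calculus, but the content is identical). Both proofs obtain the entropy identity as an equality, not merely an inequality.
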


\begin{proof}
Let $\vU\in L^\infty\big((0,T)\times \Omega;\sO\big)$ be a weak solution of the initial (boundary) value problem \eqref{eq:cons-law-strong}, \eqref{eq:conslaws-ic}, \eqref{eq:conslaws-bc} and consider an entropy pair of the form
\begin{align*}
	\eta(\vU) &= \va\cdot \vU + b\ec \\
	q_k(\vU) &= \va\cdot \vF_k(\vU) \qquad \text{ for }k=1,...,n \ed
\end{align*}
Let furthermore $\varphi\in \Cc\big([0,T)\times \closure{\Omega};\R^+_0\big)$ be an arbitrary non-negative test function. Our objective is to show that \eqref{eq:admissibility} holds for $(\eta,\vq)$ and $\varphi$. Setting $\vpsi := \varphi \va$ in \eqref{eq:conslaws-ivp-weak} we obtain
\begin{align*} 
	0 &=\int_0^T \int_{\Omega} \Big(\va\cdot \vU \, \partial_t\varphi + \sum_{k=1}^n \big(\va \cdot\vF_k(\vU)\big)  \partial_k \varphi \Big)\dx \dt + \int_\Omega \va\cdot\vU_\init \, \varphi(0,\cdot) \dx   \\
	&\qquad - \int_0^T \int_{\partial\Omega} \va \cdot \vF_{\partial \Omega} \,\varphi \dS_\vx \dt \\
	&= \int_0^T \int_{\Omega} \Big(\eta(\vU) \  \partial_t\varphi + \vq(\vU) \cdot \Grad \varphi \Big)\dx \dt + \int_\Omega \eta(\vU_\init) \varphi(0,\cdot) \dx \\
	& \qquad  -  \int_0^T \int_{\partial\Omega} q_{\partial \Omega} \varphi \dS_\vx \dt  \ec
\end{align*}
where we have used the Divergence Theorem (Proposition~\ref{prop:not-divergence}), to show that 
$$
b \int_0^T \int_\Omega \partial_t\varphi \dx\dt + b \int_\Omega \varphi(0,\cdot)\dx = 0 \ed
$$
Hence \eqref{eq:admissibility} is satisfied as equality.

\end{proof}

\begin{rem} 
	The converse of the statement in Proposition~\ref{prop:weak-linentropies} is also true but not needed in this book.
\end{rem}

\begin{rem}
	From Proposition~\ref{prop:weak-linentropies} we deduce that the entropies considered in Example~\ref{ex:entropies} are not useful as they do not rule out any non-physical weak solution.
\end{rem}

\chapter{The Euler Equations as a Hyperbolic System of Conservation Laws} \label{chap:euler} 
\chaptermark{The Euler Equations as a Conservation Law} 


In this book we deal with two examples of systems of conservation laws, namely the barotropic compressible Euler equations \eqref{eq:baro-euler-pv-dens}, \eqref{eq:baro-euler-pv-mom} and the full compressible Euler equations \eqref{eq:full-euler-pv-dens} - \eqref{eq:full-euler-pv-en}, both of which are introduced in Chapter~\ref{chap:intro}. In this chapter we show that those two systems are indeed hyperbolic conservation laws as treated in Chapter~\ref{chap:conslaws}. Furthermore we compute the entropies\footnote{Here we mean ``mathematical'' entropies in the sense of Definition~\ref{defn:comp-entropy}~\ref{item:entropy}.} for both systems, where we present many details as we couldn't find these calculations in the literature. It turns out that the only relevant entropies are the energy (in the case of barotropic Euler) and the physical entropy (for full Euler). Finally we recap the definition of an admissible weak solution for both systems. 

The appropriate formulation of the Euler systems \eqref{eq:baro-euler-pv-dens}, \eqref{eq:baro-euler-pv-mom} and \eqref{eq:full-euler-pv-dens} - \eqref{eq:full-euler-pv-en} is the one using conserved variables, i.e. density $\rho$, momentum\footnote{More precisely $\vm$ denotes the \emph{momentum density} and $E$ the \emph{energy density}.} $\vm$ and energy $E$, instead of the primitive variables, i.e. $\rho$, velocity $\vu$ and pressure $p$. Hence the barotropic Euler system \eqref{eq:baro-euler-pv-dens}, \eqref{eq:baro-euler-pv-mom} is written as
\begin{align}
	\partial_t \rho + \Div \vm & = 0 \ec \label{eq:baro-euler-cv-dens} \\
	\partial_t \vm + \Div \left(\frac{\vm\otimes\vm}{\rho}\right) + \Grad p(\rho) & = \vz \ec \label{eq:baro-euler-cv-mom}
\end{align}
where now the unknowns are the density $\rho$ and the momentum $\vm$, both of which are functions of time $t\in [0,T)$ and space $\vx\in \Omega$ and take values in $\R^+$ and $\R^n$ respectively.
Note that since we exclude vacuum, $\rho$ in the denominator cannot cause any problems. 

Similarly, the full Euler system \eqref{eq:full-euler-pv-dens} - \eqref{eq:full-euler-pv-en} is rewritten as follows:
\begin{align}
	\partial_t \rho + \Div \vm & = 0 \es \label{eq:full-euler-cv-dens} \\
	\partial_t \vm + \Div \left(\frac{\vm\otimes\vm}{\rho}\right) + \Grad p(\rho,\vm,E) & = \vz \es \label{eq:full-euler-cv-mom} \\
	\partial_t E + \Div \left(\Big(E+ p(\rho,\vm,E)\Big)\frac{\vm}{\rho}\right) & = 0 \ed \label{eq:full-euler-cv-en}
\end{align} 
Now the unknowns are the density $\rho$, the momentum $\vm$ and the energy $E$, which are functions of $(t,\vx)$ and take values in $\R^+$, $\R^n$ and $\R^+$, respectively. In this notation the incomplete equation of state \eqref{eq:incomp-EOS-pv} turns into
\begin{equation} \label{eq:incomp-EOS-cv}
	p(\rho,\vm,E) = (\gamma-1)\left( E - \half \frac{|\vm|^2}{\rho} \right) \ed
\end{equation}

\section{Barotropic Euler System} \label{sec:euler-barotropic} 

The barotropic Euler system \eqref{eq:baro-euler-cv-dens}, \eqref{eq:baro-euler-cv-mom} can be written in the manner of Chapter~\ref{chap:conslaws}, where 
\begin{align}
	\sO &= \R^+ \times \R^n \ec \label{eq:baro-O}\\
	\vU &= \left(\begin{array}{c}
		\rho \\ \vm
	\end{array}\right)\ \in\ \R^{1+n} \ec \label{eq:baro-U} \\
	\mF(\vU) &= \left(\begin{array}{c}
		\vm^\trans \\ \frac{\vm\otimes\vm}{\rho} + p(\rho) \id 
	\end{array}\right) \ \in\ \R^{(1+n)\times n} \ed \label{eq:baro-F}
\end{align}
The number of unknowns is $m=n+1$.

A straighforward computation yields  
\begin{equation} \label{eq:baro-flux-jac}
	\Grad_\vU \vF_k = \left(\begin{array}{cc}
		0 & \ve_k^\trans \\
		-\frac{m_k\,\vm}{\rho^2} + p'(\rho)\ve_k & \frac{\vm}{\rho} \ve_k^\trans + \frac{m_k}{\rho} \id
	\end{array}\right)
\end{equation}
for $k=1,...,n$, where $\ve_k$ denotes the $k$-th standard basis vector, see Section \ref{sec:not-vecmat}.

\subsection{Hyperbolicity} 

As already mentioned, the hyperbolicity of the Euler system \eqref{eq:baro-euler-cv-dens}, \eqref{eq:baro-euler-cv-mom} is not really needed in this book. However for completeness let us indicate how hyperbolicity can be shown. 

Let $\vnu\in\sphere^{n-1}$ be arbitrary. According to \eqref{eq:baro-flux-jac} the matrix we are interested in reads
\begin{equation} \label{eq:baro-flux-jac-sum}
\sum_{k=1}^n \nu_k \Grad_{\vU}\vF_k = \left(\begin{array}{cc}
0 & \vnu^\trans \\
-\frac{(\vm\cdot\vnu)\vm}{\rho^2} + p'(\rho)\vnu & \frac{\vm}{\rho} \vnu^\trans + \frac{\vm\cdot\vnu}{\rho} \id
\end{array}\right) \ed
\end{equation}
Let $\va_1,...,\va_{n-1}\in\sphere^{n-1}$ be $n-1$ linearly independent vectors which are perpendicular to $\vnu$. It is a matter of straightforward calculation that the $1+n$ vectors
$$
\left(\begin{array}{c}
	1 \\ \frac{\vm}{\rho} - \sqrt{p'(\rho)} \vnu 
\end{array}\right) \ec  \left(\begin{array}{c}
	0 \\ \va_1 
\end{array}\right) \ec  ... \ec \left(\begin{array}{c}
	0 \\ \va_{n-1} 
\end{array}\right)\ec \left(\begin{array}{c}
	1 \\ \frac{\vm}{\rho} + \sqrt{p'(\rho)} \vnu 
\end{array}\right)
$$
are eigenvectors of the matrix \eqref{eq:baro-flux-jac-sum}, where the corresponding eigenvalues read 
$$
	\frac{\vm\cdot \vnu}{\rho} - \sqrt{p'(\rho)} \ \ec\  \frac{\vm\cdot \vnu}{\rho} \ec\  ... \ \ec\  \frac{\vm\cdot \vnu}{\rho} \ \ec\  \frac{\vm\cdot \vnu}{\rho} + \sqrt{p'(\rho)}
$$
respectively. Since $p'(\rho)>0$ for all $\rho\in \R^+$, the eigenvectors written above are linearly independent. Hence the barotropic Euler system \eqref{eq:baro-euler-cv-dens}, \eqref{eq:baro-euler-cv-mom} is hyperbolic in the sense of Definition~\ref{defn:hyperbolicity}.

\subsection{Entropies} \label{subsec:euler-baro-entropies}
Our goal is now to find the entropies for the barotropic Euler system \eqref{eq:baro-euler-cv-dens}, \eqref{eq:baro-euler-cv-mom}. To this end we make use of Proposition~\ref{prop:companion-int-bed}. To avoid problems, we assume here that $p\in C^\infty(\R^+_0)$. This is true for an isentropic flow where the pressure is given by \eqref{eq:isentropic-EOS}. Note that whenever the entropy condition is relevant (only in Chapter~\ref{chap:appl-riemann}), we consider the isentropic pressure law, which justifies the assumption above. From $p\in C^\infty$ we deduce $\mF\in C^\infty(\sO;\R^{m\times n})$, see \eqref{eq:baro-F}, in particular $\mF\in C^2$. Furthermore $\sO$ is simply connected, see \eqref{eq:baro-O}.

\begin{prop} \label{prop:companion-baro}
	The function $\eta\in C^\infty(\sO)$ is a companion for barotropic Euler \eqref{eq:baro-euler-cv-dens} - \eqref{eq:baro-euler-cv-mom} if and only if its Hessian takes the form 
	\begin{equation} \label{eq:baro-hess-eta}
		\Hess_\vU \eta(\rho,\vm) = f(\rho,\vm) \left(\begin{array}{cc}
			p'(\rho) + \frac{|\vm|^2}{\rho^2} & -\frac{\vm^\trans}{\rho} \\
			-\frac{\vm}{\rho} & \id_n
		\end{array}\right)
	\end{equation}
	with a function $f\in C^\infty(\sO)$.
\end{prop}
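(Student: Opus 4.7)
The strategy is to apply Proposition~\ref{prop:companion-int-bed}, which states that $\eta$ is a companion exactly when each matrix $\Hess_\vU \eta \cdot \Grad_\vU \vF_k$ is symmetric. Writing the Hessian in block form as
$$\Hess_\vU \eta = \begin{pmatrix} a & \vb^\trans \\ \vb & C \end{pmatrix}$$
with $a = a(\rho,\vm) \in \R$, $\vb = \vb(\rho,\vm) \in \R^n$ and a symmetric matrix $C = C(\rho,\vm) \in \R^{n\times n}$, I would multiply this block matrix with the expression for $\Grad_\vU \vF_k$ given in \eqref{eq:baro-flux-jac} and impose symmetry on each of the four resulting blocks.

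A short calculation shows that the lower-right $n\times n$ block of $\Hess_\vU \eta \cdot \Grad_\vU \vF_k$ equals $\bigl(\vb + \tfrac{C\vm}{\rho}\bigr)\ve_k^\trans + \tfrac{m_k}{\rho} C$, and since $C$ is already symmetric this reduces the symmetry requirement to the condition that $\vd \ve_k^\trans$ be symmetric for each $k=1,\ldots,n$, where $\vd := \vb + \tfrac{C\vm}{\rho}$. A rank-one matrix of the form $\vd\ve_k^\trans$ is symmetric precisely when $d_j = 0$ for every $j \neq k$; demanding this simultaneously for every $k \in \{1,\ldots,n\}$ with $n \geq 2$ forces $\vd = \vz$, i.e.\ $\vb = -\tfrac{C\vm}{\rho}$.

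Substituting this relation into the condition that the upper-right block coincides with the transpose of the lower-left block, a direct expansion shows that all terms depending on $m_k$ cancel and one is left with $\bigl(a - \tfrac{\vm^\trans C \vm}{\rho^2}\bigr) \ve_k = p'(\rho)\, C \ve_k$ for every $k$. Because $p'(\rho) > 0$ and each standard basis vector is therefore an eigenvector of $C$ with one and the same eigenvalue, $C$ must be a scalar multiple of the identity, $C = f\, \id_n$ for some scalar function $f = f(\rho,\vm)$. Feeding $C = f\,\id_n$ back into the scalar relation yields $a = f\bigl(p'(\rho) + |\vm|^2/\rho^2\bigr)$ and $\vb = -\tfrac{f\vm}{\rho}$, which is precisely the form claimed in \eqref{eq:baro-hess-eta}. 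Smoothness of $f$ is inherited from the Hessian via $f = [\Hess_\vU \eta]_{n+1,n+1}$, and the converse implication follows by running the same block computation in reverse to check that the two blocks of $\Hess_\vU \eta \cdot \Grad_\vU \vF_k$ are indeed symmetric under the ansatz.

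The main obstacle is the bookkeeping of the $(1+n)\times(1+n)$ block multiplication; the decisive structural step is recognizing that the collection of rank-one symmetry constraints (one for each $k$) simultaneously forces $\vd$ to vanish, which in turn pins $C$ down to a scalar multiple of $\id_n$ and thereby collapses the entire Hessian onto a single unknown scalar function $f$.
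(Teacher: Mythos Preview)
Your proposal is correct and follows essentially the same route as the paper's proof: both invoke Proposition~\ref{prop:companion-int-bed}, compute $\Hess_\vU\eta\cdot\Grad_\vU\vF_k$, and extract from the symmetry constraints first that the $\vm$-Hessian block is a scalar multiple of $\id_n$, then that the remaining entries are determined by that scalar. The only difference is presentational---you work in block-matrix notation $(a,\vb,C)$ while the paper writes out the individual second partials $\partial_{m_i}\partial_{m_j}\eta$, $\partial_\rho\partial_{m_i}\eta$, $\partial_\rho\partial_\rho\eta$---but the logical steps (vanishing of $\vd=\vb+\tfrac{C\vm}{\rho}$, then the eigenvector argument forcing $C=f\,\id_n$) match exactly.
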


\begin{proof}
With \eqref{eq:baro-flux-jac} we compute 
\begin{align*}
	&\Hess_\vU \eta \cdot \Grad_\vU \vF_k \\
	&= \left(\begin{array}{cc}
		\partial_{\rho}\partial_{\rho}\eta & \Grad_{\vm}\partial_{\rho}\eta \\
		\Grad_{\vm}^\trans\partial_{\rho}\eta & \Hess_\vm \eta 
	\end{array}\right) \cdot \left(\begin{array}{cc}
		0 & \ve_k^\trans \\
		-\frac{m_k\,\vm}{\rho^2} + p'(\rho)\ve_k & \frac{\vm}{\rho} \ve_k^\trans + \frac{m_k}{\rho} \id
	\end{array}\right) \\
	&= \left(\begin{array}{cc}
		- \Grad_{\vm}\partial_{\rho}\eta \cdot \frac{m_k\,\vm}{\rho^2}+ \partial_{m_k}\partial_{\rho}\eta \  p'(\rho) & \left(\partial_{\rho}\partial_{\rho} \eta +\Grad_{\vm}\partial_{\rho}\eta\cdot\frac{\vm}{\rho}\right)\ve_k^\trans  + \Grad_{\vm}\partial_{\rho}\eta \ \frac{m_k}{\rho}\\
		- \Hess_\vm\eta \cdot \frac{m_k\,\vm}{\rho^2} + \Grad_{\vm}^\trans\partial_{m_k} \eta \ p'(\rho) & \Grad_{\vm}^\trans\partial_{\rho}\eta \cdot \ve_k^\trans + \Hess_\vm \eta \cdot \frac{\vm}{\rho} \ve_k^\trans + \Hess_\vm \eta \ \frac{m_k}{\rho} 
	\end{array}\right) \ed
\end{align*}
Hence according to Proposition~\ref{prop:companion-int-bed}, $\eta$ is a companion if and only if both
$$
	\left[\left(\partial_{\rho}\partial_{\rho} \eta +\Grad_{\vm}\partial_{\rho}\eta\cdot\frac{\vm}{\rho}\right)\ve_k^\trans  + \Grad_{\vm}\partial_{\rho}\eta \ \frac{m_k}{\rho}\right]_i = \left[ - \Hess_\vm\eta \cdot \frac{m_k\,\vm}{\rho^2} + \Grad_{\vm}^\trans\partial_{m_k} \eta \ p'(\rho) \right]_i 
$$
for all $i,k=1,...,n$ and 
$$
	\left[\Grad_{\vm}^\trans\partial_{\rho}\eta \cdot \ve_k^\trans + \Hess_\vm \eta \cdot \frac{\vm}{\rho} \ve_k^\trans + \Hess_\vm \eta \ \frac{m_k}{\rho}\right]_{ij} = \left[\Grad_{\vm}^\trans\partial_{\rho}\eta \cdot \ve_k^\trans + \Hess_\vm \eta \cdot \frac{\vm}{\rho} \ve_k^\trans + \Hess_\vm \eta \ \frac{m_k}{\rho} \right]_{ji} 
$$
for all $i,j,k=1,...,n$.

This can be translated into
\begin{equation} \label{eq:baro-entropy-A}
	\left(\partial_{\rho}\partial_{\rho} \eta +\Grad_{\vm}\partial_{\rho}\eta\cdot\frac{\vm}{\rho}\right) \delta_{ik}  + \partial_{m_i}\partial_{\rho}\eta \ \frac{m_k}{\rho} = - \Grad_{\vm}\partial_{m_i}\eta \cdot \frac{m_k\,\vm}{\rho^2} + \partial_{m_i}\partial_{m_k} \eta \ p'(\rho)
\end{equation}
for all $i,k=1,...,n$ and 
\begin{equation} \label{eq:baro-entropy-B}
	\partial_{m_i}\partial_{\rho}\eta \ \delta_{jk} + \Grad_\vm\partial_{m_i} \eta \cdot \frac{\vm}{\rho} \ \delta_{jk} + \partial_{m_i}\partial_{m_j} \eta \ \frac{m_k}{\rho} = \partial_{m_j}\partial_{\rho}\eta \ \delta_{ik} + \Grad_\vm\partial_{m_j} \eta \cdot \frac{\vm}{\rho} \delta_{ik} + \partial_{m_i}\partial_{m_j} \eta \ \frac{m_k}{\rho}
\end{equation}
for all $i,j,k=1,...,n$.

On the one hand, if $\Hess_\vU \eta$ is of the form \eqref{eq:baro-hess-eta} then simple calculations show that \eqref{eq:baro-entropy-A} and \eqref{eq:baro-entropy-B} are true.

For the converse, assume that \eqref{eq:baro-entropy-A} and \eqref{eq:baro-entropy-B} hold. From \eqref{eq:baro-entropy-A} with $i\neq k$ we obtain
$$
\partial_{m_i}\partial_{\rho}\eta \ \frac{m_k}{\rho} = - \Grad_{\vm}\partial_{m_i}\eta \cdot \frac{m_k\,\vm}{\rho^2} + \partial_{m_i}\partial_{m_k} \eta \ p'(\rho) \qquad \text{ for }i\neq k \ec
$$
and assuming $i\neq j=k$ we get from \eqref{eq:baro-entropy-B}
\begin{equation} \label{eq:baro-entropy-temp1}
	\partial_{m_i}\partial_{\rho}\eta + \Grad_\vm\partial_{m_i} \eta \cdot \frac{\vm}{\rho} = 0  \qquad \text{ for all }i=1,...,n\ed
\end{equation}
Hence we find
$$
\partial_{m_i}\partial_{m_k} \eta \ p'(\rho) = \left(\partial_{m_i}\partial_{\rho}\eta + \Grad_\vm\partial_{m_i} \eta \cdot \frac{\vm}{\rho}\right) \frac{m_k}{\rho} = 0
$$
for $i\neq k$ and therefore
\begin{equation} \label{eq:baro-entropy-mimk}
	\partial_{m_i}\partial_{m_k} \eta = 0 \qquad \text{ for }i\neq k\ed
\end{equation}

Next setting $i=k$ in \eqref{eq:baro-entropy-A} and using \eqref{eq:baro-entropy-temp1} we end up with
\begin{align*}
	\partial_{m_i}\partial_{m_i} \eta \ p'(\rho) &= \partial_{\rho}\partial_{\rho} \eta +\Grad_{\vm}\partial_{\rho}\eta\cdot\frac{\vm}{\rho} + \left(\partial_{m_i}\partial_{\rho}\eta + \Grad_{\vm}\partial_{m_i}\eta \cdot \frac{\vm}{\rho} \right) \frac{m_i}{\rho} \\
	&= \partial_{\rho}\partial_{\rho} \eta +\Grad_{\vm}\partial_{\rho}\eta\cdot\frac{\vm}{\rho}\ed
\end{align*}
From this we observe that $\partial_{m_i}\partial_{m_i} \eta$ is independent on $i$ and we set 
\begin{equation} \label{eq:baro-entropy-mimi}
f(\rho,\vm):=\partial_{m_i}\partial_{m_i} \eta(\rho,\vm)
\end{equation}
with any $i=1,...,n$. Furthermore we get 
\begin{equation} \label{eq:baro-entropy-temp3}
\partial_{\rho}\partial_{\rho} \eta = f \, p'(\rho) - \Grad_{\vm}\partial_{\rho}\eta\cdot\frac{\vm}{\rho} \ec
\end{equation}
and, using \eqref{eq:baro-entropy-mimk} in \eqref{eq:baro-entropy-temp1},
\begin{equation} \label{eq:baro-entropy-mirho}
\partial_{m_i}\partial_{\rho}\eta  = -f \, \frac{m_i}{\rho} \qquad \text{ for all }i=1,...,n\ed
\end{equation}
Plugging this into \eqref{eq:baro-entropy-temp3} we find
\begin{equation} \label{eq:baro-entropy-rhorho}
\partial_{\rho}\partial_{\rho} \eta = f \left( p'(\rho) + \frac{|\vm|^2}{\rho^2} \right) \ed
\end{equation}
Collecting \eqref{eq:baro-entropy-rhorho}, \eqref{eq:baro-entropy-mirho}, \eqref{eq:baro-entropy-mimi} and \eqref{eq:baro-entropy-mimk} we conclude that the Hessian of $\eta$ is of the form \eqref{eq:baro-hess-eta}. 
\end{proof}

Proposition~\ref{prop:companion-baro} allows to find the entropies for the barotropic Euler system. Keep in mind that we assume in this subsection that $p \in C^\infty(\R^+_0)$. Hence the pressure potential $P$ as introduced in \eqref{eq:pressure-potential} is also of class $C^\infty$.

\begin{prop} \label{prop:baro-entropies}
	The entropies for the barotropic Euler system  \eqref{eq:baro-euler-cv-dens}, \eqref{eq:baro-euler-cv-mom} are all functions of the form
	\begin{equation} \label{eq:baro-entropies}
		\eta(\rho,\vm) = a \left(\frac{|\vm|^2}{2\rho} + P(\rho)\right) + \vm\cdot \vb + c\rho + d  \ec
	\end{equation}
	with $a\geq 0$, $\vb\in \R^n$ and $c,d\in \R$. The corresponding entropy fluxes are given by 
	\begin{equation} \label{eq:baro-entr-fluxes}
		\vq (\rho,\vm) = a \left( \frac{|\vm|^2}{2\rho} + P(\rho) + p(\rho)\right) \frac{\vm}{\rho} + (\vm\cdot \vb) \frac{\vm}{\rho} + p(\rho)\vb  + c\vm \ed
	\end{equation}
\end{prop}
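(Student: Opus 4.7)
The plan is to use Proposition~\ref{prop:companion-baro} to integrate the prescribed Hessian, working from the $\vm$-derivatives out to the $\rho$-derivatives, then impose convexity and finally read off the entropy flux.

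First I would read off from \eqref{eq:baro-hess-eta} the three identities $\partial_{m_i}\partial_{m_j}\eta = f\,\delta_{ij}$, $\partial_{m_i}\partial_\rho\eta = -f\,m_i/\rho$, and $\partial_\rho\partial_\rho\eta = f\bigl(p'(\rho)+|\vm|^2/\rho^2\bigr)$. Equality of mixed third partials, applied with $i\neq k$, gives $\partial_{m_k}f = \partial_{m_k}\partial_{m_i}\partial_{m_i}\eta = \partial_{m_i}(\partial_{m_i}\partial_{m_k}\eta) = 0$ since the inner quantity vanishes; as $n\geq 2$, this forces $f = f(\rho)$. Integrating the $(\vm,\vm)$-block then yields
$$\eta(\rho,\vm) = \tfrac{f(\rho)}{2}|\vm|^2 + \vm\cdot\vh(\rho) + H(\rho)$$
for some $\vh\in C^\infty(\R^+;\R^n)$ and $H\in C^\infty(\R^+)$. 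Substituting this ansatz into $\partial_{m_i}\partial_\rho\eta = -f\,m_i/\rho$ gives $f'(\rho)m_i + h_i'(\rho) = -f(\rho)m_i/\rho$ for every $\vm$; matching coefficients in $\vm$ produces the separate identities $f'(\rho) = -f(\rho)/\rho$ (so $f(\rho) = a/\rho$ for some $a\in\R$) and $\vh'(\rho) = \vz$ (so $\vh\equiv\vb$ for a constant $\vb\in\R^n$). The remaining $(\rho,\rho)$-equation then reduces to $H''(\rho) = a\,p'(\rho)/\rho$. A short differentiation of \eqref{eq:pressure-potential} gives $P''(\rho) = p'(\rho)/\rho$, so $H(\rho) = aP(\rho) + c\rho + d$ for constants $c,d\in\R$, yielding exactly the form \eqref{eq:baro-entropies}.

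For convexity I would note that the Hessian equals $(a/\rho)M$ with $M$ the matrix in \eqref{eq:baro-hess-eta}, and compute
$$\begin{pmatrix}\sigma & \vw^\trans\end{pmatrix} M \begin{pmatrix}\sigma \\ \vw\end{pmatrix} = \sigma^2 p'(\rho) + \bigl|\sigma\,\vm/\rho - \vw\bigr|^2 \geq 0$$
for every $(\sigma,\vw)\in\R\times\R^n$, using $p'>0$. Thus $\Hess_\vU\eta$ is positive semi-definite iff $a\geq 0$. For the flux, Example~\ref{ex:entropies} applied with $\va = (c,\vb)$ and $b=d$ contributes $c\vm + (\vb\cdot\vm)\vm/\rho + p(\rho)\vb$ to $\vq$, accounting for the affine part $\vm\cdot\vb + c\rho + d$. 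For the remaining part $a\bigl(|\vm|^2/(2\rho) + P(\rho)\bigr)$ I would verify \eqref{eq:companion-cond} directly, which amounts to the classical energy balance identity $\partial_t\bigl(|\vm|^2/(2\rho) + P(\rho)\bigr) + \Div\bigl[(|\vm|^2/(2\rho) + P(\rho) + p(\rho))\vm/\rho\bigr] = 0$ for smooth solutions of \eqref{eq:baro-euler-cv-dens}--\eqref{eq:baro-euler-cv-mom}; summing the two contributions produces \eqref{eq:baro-entr-fluxes}. The only slightly delicate step in the whole argument is the first one (reducing $f$ to a function of $\rho$ alone, which genuinely uses $n\geq 2$ and a careful separation of the three Hessian constraints); all deeper content has already been absorbed into Proposition~\ref{prop:companion-baro}.
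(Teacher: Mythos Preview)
Your proposal is correct and follows essentially the same approach as the paper: both use Proposition~\ref{prop:companion-baro}, exploit $n\geq 2$ to force $f=f(\rho)$, integrate the Hessian constraints layer by layer to reach \eqref{eq:baro-entropies}, complete the square to handle convexity, and verify the flux by direct computation (splitting off the affine part via Example~\ref{ex:entropies}). The only cosmetic difference is ordering: the paper first checks that \eqref{eq:baro-entropies}--\eqref{eq:baro-entr-fluxes} define an entropy pair and then shows all entropies arise this way, whereas you do the converse first.
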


\begin{proof} 
	A straightforward computation shows that $(\eta,\vq)$ as given by \eqref{eq:baro-entropies} and \eqref{eq:baro-entr-fluxes} satisfy \eqref{eq:companion-cond} for all $a,c,d\in \R$ and $\vb\in \R^n$, where one has to use that \eqref{eq:pressure-potential} implies 
	$$
	\rho P'(\rho)=P(\rho) + p(\rho) \ed
	$$
	Hence $\eta$ is a companion and $\vq$ the corresponding flux. Another straightforward computation proves that the Hessian of $\eta$ satisfies \eqref{eq:baro-hess-eta} with $f(\rho,\vm)=\frac{a}{\rho}$. Here one makes use of $P''(\rho)=\frac{p'(\rho)}{\rho}$ which is another consequence of \eqref{eq:pressure-potential}. Now we are ready to prove that $\eta$ is convex, i.e. the Hessian of $\eta$ is positive semi-definite. Indeed for all $\vw\in \R^{1+n}$ we have 
	\begin{align}
		\vw^\trans \cdot \Hess_\vU \eta \cdot \vw &= \vw^\trans \cdot \frac{a}{\rho} \left(\begin{array}{cc}
			p'(\rho) + \frac{|\vm|^2}{\rho^2} & -\frac{\vm^\trans}{\rho} \\
			-\frac{\vm}{\rho} & \id_n
		\end{array}\right) \cdot \vw \notag \\
		&= \frac{a}{\rho} \left( w_t^2\left( p'(\rho) + \frac{|\vm|^2}{\rho^2}\right) - 2 \frac{w_t}{\rho} \vm\cdot \vw_\vx + |\vw_\vx|^2\right) \notag \\
		& = \frac{a}{\rho} w_t^2 p'(\rho) + \frac{a}{\rho} \left| w_t \frac{\vm}{\rho} - \vw_\vx \right|^2 \ \geq \ 0\ec \label{eq:1-temp-euler}
	\end{align}
	because $a\geq 0$. Hence the functions $\eta$ given by \eqref{eq:baro-entropies} are indeed entropies.
	
	It remains to show that all entropies are of the form \eqref{eq:baro-entropies}. Let $\eta$ be an entropy. From Proposition~\ref{prop:companion-baro} we know that there exists a function $f\in C^\infty (\sO)$ such that \eqref{eq:baro-hess-eta} holds. First we observe that $f$ does not depend on $\vm$. Indeed for all $i=1,...,n$ we can find\footnote{Here it is crucial that we are considering the multi-dimensional case, i.e. $n\geq 2$.} $j\neq i$ and hence
	$$
	\partial_{m_i} f = \partial_{m_i} \partial_{m_j} \partial_{m_j} \eta = \partial_{m_j} \partial_{m_i} \partial_{m_j} \eta = 0 \ed
	$$
	Thus we have 
	$$
	\partial_{m_i}\partial_{m_j} \eta = f(\rho)\delta_{ij} \ed
	$$
	This yields that there exists a function $\vb\in C^\infty(\R^+; \R^n)$ such that
	$$
	\partial_{m_i} \eta = m_i f(\rho) + b_i (\rho)
	$$
	for all $i=1,...,n$. With the fact that 
	$$
	\partial_\rho\partial_{m_i} \eta = -\frac{m_i}{\rho} f(\rho)
	$$
	we deduce that 
	$$
	b_i'(\rho) = -m_i \left(\frac{f(\rho)}{\rho} + f'(\rho)\right)
	$$
	for all $i=1,...,n$. Since the left-hand side does not depend on $\vm$, we obtain the ordinary differential equation
	$$
	\frac{f(\rho)}{\rho} + f'(\rho) = 0\ec
	$$
	whose solution is 
	$$
	f(\rho)=\frac{a}{\rho}
	$$
	with an $a\in\R$. Furthermore we obtain that $b_i'(\rho)=0$, i.e. $\vb=\const$.
	
	Hence there exists $g\in C^\infty (\R^+)$ such that 
	$$
	\eta = a \frac{|\vm|^2}{2\rho} + \vm\cdot \vb + g(\rho) \ed
	$$
	With
	$$
	\partial_\rho\partial_\rho \eta = a\frac{|\vm|^2}{\rho^3} + a\frac{p'(\rho)}{\rho}
	$$
	we deduce 
	$$
	g''(\rho) = a\frac{p'(\rho)}{\rho}
	$$
	and thus
	$$
	g(\rho) = a P(\rho) + c\rho + d
	$$
	with $c,d\in \R$.
	
	Hence 
	$$
	\eta =  a \left(\frac{|\vm|^2}{2\rho} + P(\rho)\right) + \vm\cdot \vb + c\rho + d \ec
	$$
	with suitable $a,c,d\in \R$ and $\vb\in \R^n$. Since $\eta$ is an entropy and therefore convex, we deduce that the inequality \eqref{eq:1-temp-euler} holds. Hence $a\geq 0$, which finishes the proof.
\end{proof}

\begin{rem} 
	The entropy given by $a=1$, $\vb=\vz$ and $c=d=0$, i.e. 
	\begin{equation} \label{eq:energy}
		\eta_{\rm energy}(\rho,\vm) = \frac{|\vm|^2}{2\rho} + P(\rho) \ec
	\end{equation}
	represents the \emph{energy}, more precisely the \emph{energy density}, of the fluid. It consists of two parts, the first of which, $\frac{|\vm|^2}{2\rho}$, stands for the kinetic energy, whereas $P(\rho)$ constitutes the internal energy. The corresponding entropy flux reads 
	\begin{equation} \label{eq:energy-flux}
		\vq_{\rm energy}(\rho,\vm) = \left( \frac{|\vm|^2}{2\rho} + P(\rho) + p(\rho)\right) \frac{\vm}{\rho} \ec
	\end{equation}
	and is called \emph{energy flux}. 
\end{rem}

Finally we prove that the consideration of the energy is enough to select admissible weak solutions.

\begin{prop} \label{prop:energy-is-enough} 
	A weak solution $\vU\in L^\infty\big((0,T)\times \Omega;\sO\big)$ of the initial (boundary) value problem \eqref{eq:cons-law-strong}, \eqref{eq:conslaws-ic}, \eqref{eq:conslaws-bc} for the barotropic Euler system, i.e. $\sO$, $\vU$ and $\mF$ are given by \eqref{eq:baro-O} - \eqref{eq:baro-F}, is admissible if and only if \eqref{eq:admissibility-strong} holds for $(\eta_{\rm energy},\vq_{\rm energy})$ in the sense of distributions.
\end{prop}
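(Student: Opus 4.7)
My plan is to reduce the verification of admissibility for an arbitrary entropy pair to the energy pair by exploiting the explicit classification of entropies obtained in Proposition~\ref{prop:baro-entropies}.

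The ``only if'' direction is immediate: since $(\eta_{\rm energy}, \vq_{\rm energy})$ is itself an entropy pair (take $a=1$, $\vb=\vz$, $c=d=0$ in Proposition~\ref{prop:baro-entropies}), Definition~\ref{defn:admissibility} forces the entropy inequality to hold for this specific pair.

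For the converse, I would start with an arbitrary entropy pair $(\eta, \vq)$ and invoke Proposition~\ref{prop:baro-entropies} to write it uniquely in the form
\begin{align*}
\eta(\rho, \vm) &= a\, \eta_{\rm energy}(\rho, \vm) + \vm \cdot \vb + c\rho + d, \\
\vq(\rho, \vm) &= a\, \vq_{\rm energy}(\rho, \vm) + c\vm + \frac{(\vm\cdot\vb)\vm}{\rho} + p(\rho)\vb,
\end{align*}
with $a \geq 0$, $\vb \in \R^n$, $c, d \in \R$. The crucial observation is that the ``linear remainder'' $(\eta_{\rm lin}, \vq_{\rm lin}) := (\eta - a\eta_{\rm energy}, \vq - a\vq_{\rm energy})$ is precisely of the form treated in Example~\ref{ex:entropies} with coefficient vector $\va = (c, \vb^\trans)^\trans$ and constant $b = d$: one checks using \eqref{eq:baro-F} that $q_{\rm lin,k} = \va \cdot \vF_k(\vU)$, matching the prescription of Example~\ref{ex:entropies}.

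Since $\vU$ is a weak solution by assumption, Proposition~\ref{prop:weak-linentropies} yields that the admissibility inequality \eqref{eq:admissibility} holds as an \emph{equality} for the pair $(\eta_{\rm lin}, \vq_{\rm lin})$ and any non-negative test function $\varphi$. The assumption of the proposition says that \eqref{eq:admissibility} holds for $(\eta_{\rm energy}, \vq_{\rm energy})$ and every such $\varphi$. Multiplying the latter inequality by the non-negative scalar $a$ and adding it to the linear equality gives \eqref{eq:admissibility} for $(\eta, \vq)$, because every term in \eqref{eq:admissibility} is linear in the entropy pair: the volume integrals decompose by linearity of integration, $\eta(\vU_\init) = a\,\eta_{\rm energy}(\vU_\init) + \eta_{\rm lin}(\vU_\init)$ pointwise, and the boundary contribution $q_{\partial\Omega} = \vq \cdot \vn|_{\partial\Omega}$ inherits the same decomposition from the boundary condition \eqref{eq:conslaws-bc}. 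The only subtlety to check in the write-up is that the sign condition $a \geq 0$ is preserved under this recombination, which it is precisely because the convexity of $\eta$ forced $a \geq 0$ in Proposition~\ref{prop:baro-entropies}; this is the only place where convexity of the general entropy enters, and it is the main (though still mild) obstacle to making the reduction go through.
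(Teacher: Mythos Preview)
Your proposal is correct and follows essentially the same approach as the paper: both decompose an arbitrary entropy pair via Proposition~\ref{prop:baro-entropies} into $a$ times the energy pair plus a linear remainder of the form in Example~\ref{ex:entropies}, invoke Proposition~\ref{prop:weak-linentropies} for the linear part, multiply the energy inequality by $a\geq 0$, and add. Your write-up even spells out a few details (boundary term decomposition, role of $a\geq 0$) that the paper leaves implicit.
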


\begin{proof}
In the sequel when writing that \eqref{eq:admissibility-strong} is satisfied, we always mean that this inequality is satisfied in the sense of distributions.

If a weak solution $\vU\in L^\infty\big((0,T)\times \Omega;\sO\big)$ is admissible, then \eqref{eq:admissibility-strong} holds for all entropy pairs. In particular \eqref{eq:admissibility-strong} is fulfilled for $(\eta_{\rm energy},\vq_{\rm energy})$.

The converse is much more interesting. Let $(\eta,\vq)$ be an arbitrary entropy pair. Our goal is to show that \eqref{eq:admissibility-strong} is true for $(\eta,\vq)$. Due to Proposition~\ref{prop:baro-entropies} $\eta$ and $\vq$ are given by \eqref{eq:baro-entropies} and \eqref{eq:baro-entr-fluxes} with suitable $a\geq 0$, $\vb\in \R^n$ and $c,d\in \R$. Furthermore by assumption, \eqref{eq:admissibility-strong} holds for $(\eta_{\rm energy},\vq_{\rm energy})$. Multiplying by $a\geq 0$, we obtain \eqref{eq:admissibility-strong} for $(a\eta_{\rm energy},a\vq_{\rm energy})$. In addition to that, Proposition~\ref{prop:weak-linentropies} implies that \eqref{eq:admissibility-strong} holds for 
$$
(\til{\eta},\til{\vq}) = \left(\vm\cdot \vb + c\rho+d,(\vm\cdot \vb) \frac{\vm}{\rho} + p(\rho) \vb + c\vm\right)\ed
$$
Adding those two entropy inequalities we obtain \eqref{eq:admissibility-strong} for 
$$
(a\eta_{\rm energy},a\vq_{\rm energy}) + (\til{\eta},\til{\vq}) = (\eta,\vq) \ed
$$ 
This finishes the proof.
\end{proof}

\subsection{Admissible Weak Solutions} \label{subsec:euler-baro-aws}

As pointed out earlier in this book, we consider only the following two cases: 
\begin{itemize}
	\item $\Omega \subsetneq \R^n$ bounded with impermeability boundary condition \eqref{eq:impermeability}; $T\in \R^+$.
	\item $\Omega=\R^2$; $T=\infty$.
\end{itemize}

Let us recall the definition of an admissible weak solution to the corresponding initial (boundary) value problem as introduced in Definitions \ref{defn:weaksol} and \ref{defn:admissibility}. Here we switch back from conserved variables $(\rho,\vm)$ to primitive variables $(\rho,\vu)$. Note that as we exclude vacuum in our considerations, i.e. $\rho>0$, it is simply possible to jump back and forth between those two formulations. Furthermore we keep in mind Proposition~\ref{prop:energy-is-enough}, which says that it is enough to consider the energy \eqref{eq:energy} to select admissible weak solutions.

\begin{defn} \label{defn:aws-baro-bdd}
	A pair $(\rho,\vu) \in L^\infty\big((0,T) \times \Omega; \R^+ \times \R^n\big)$ is a weak solution of the initial boundary value problem \eqref{eq:baro-euler-pv-dens}, \eqref{eq:baro-euler-pv-mom}, \eqref{eq:baro-initial}, \eqref{eq:impermeability} if the following equations are satisfied for all test functions $(\phi,\vphi) \in \Cc\big([0,T) \times \closure{\Omega}; \R\times \R^n\big)$ with $\vphi\cdot \vn\big|_{\partial \Omega}=0$:
	\begin{align}
		\int_0^T \int_{\Omega} \Big[\rho \partial_t \phi + \rho\vu\cdot\Grad \phi\Big]\dx\dt + \int_{\Omega} \rho_\init\phi(0,\cdot) \dx &= 0 \es \label{eq:baro-euler-weak-bdd-dens} \\
		\int_0^T \int_{\Omega} \Big[\rho\vu \cdot\partial_t \vphi + \rho\vu\otimes\vu:\Grad \vphi + p(\rho)\Div \vphi\Big]\dx\dt + \int_{\Omega} \rho_\init\vu_\init\cdot\vphi(0,\cdot) \dx &= 0\ed \label{eq:baro-euler-weak-bdd-mom}
	\end{align}
	A weak solution is admissible if in addition
	\begin{align}
		\int_0^T \int_{\Omega} \left[\bigg(\half\rho|\vu|^2 + P(\rho)\bigg) \partial_t \varphi + \bigg(\half\rho|\vu|^2 + P(\rho) + p(\rho)\bigg)\vu\cdot\Grad \varphi \right]\dx\dt \qquad & \notag \\ 
		+ \int_{\Omega} \bigg(\half\rho_\init|\vu_\init|^2 + P(\rho_\init)\bigg)\varphi(0,\cdot) \dx &\geq 0 \label{eq:baro-euler-weak-bdd-admissibility}
	\end{align}
	for all $\varphi \in \Cc\big([0,T) \times \closure{\Omega};\R^+_0\big)$.
\end{defn}

\begin{rem}
	When plugging $\vU$ and $\mF$ as given by \eqref{eq:baro-U} and \eqref{eq:baro-F} into \eqref{eq:conslaws-ivp-weak}, we obtain one equation rather than the two equations \eqref{eq:baro-euler-weak-bdd-dens}, \eqref{eq:baro-euler-weak-bdd-mom}. However the two equations \eqref{eq:baro-euler-weak-bdd-dens}, \eqref{eq:baro-euler-weak-bdd-mom} are equivalent to \eqref{eq:conslaws-ivp-weak}. Indeed if \eqref{eq:conslaws-ivp-weak} holds for all test functions $\vpsi$, then setting $\vpsi=(\phi,\vz)$ and $\vpsi=(0,\vphi)$ yields \eqref{eq:baro-euler-weak-bdd-dens} and \eqref{eq:baro-euler-weak-bdd-mom} respectively. For the converse we add \eqref{eq:baro-euler-weak-bdd-dens} with $\phi=\psi_t$ and \eqref{eq:baro-euler-weak-bdd-mom} with $\vphi=\vpsi_\vx$ to obtain \eqref{eq:conslaws-ivp-weak}.
\end{rem}	

\begin{rem}
	The boundary term \eqref{eq:boundary-term} in \eqref{eq:conslaws-ivp-weak} actually yields the terms
	\begin{align*}
		&\int_0^T \int_{\partial\Omega} \rho\vu\cdot \vn \phi \dS_\vx\dt \qquad \text{ and }\\
		&\int_0^T \int_{\partial\Omega} \Big[\rho (\vu\cdot \vphi)(\vu\cdot \vn) + p(\rho) \vphi\cdot \vn \Big]\dS_\vx \dt
	\end{align*}
	in \eqref{eq:baro-euler-weak-bdd-dens} and \eqref{eq:baro-euler-weak-bdd-mom} respectively. However due to the impermeability boundary condition \eqref{eq:impermeability} the boundary term in \eqref{eq:baro-euler-weak-bdd-dens} vanishes whereas in \eqref{eq:baro-euler-weak-bdd-mom} we remain with
	$$
		\int_0^T \int_{\partial\Omega} p(\rho) \vphi\cdot \vn \dS_\vx \dt \ed
	$$
	As mentioned in Definition~\ref{defn:weaksol} we adjust the support of the test function $\vphi$ to make the remaining term vanish since we do not prescribe $\rho$ or $p(\rho)$ on $\partial\Omega$. This is the reason why we want $\vphi$ to satisfy $\vphi\cdot \vn\big|_{\partial \Omega}=0$. Similarly the boundary term in \eqref{eq:admissibility} yields a term
	$$
	\int_0^T \int_{\partial\Omega}  \bigg(\half\rho|\vu|^2 + P(\rho) + p(\rho)\bigg)\vu\cdot \vn \varphi \dS_\vx\dt
	$$
	in \eqref{eq:baro-euler-weak-bdd-admissibility}. This term again vanishes as we consider the impermeability boundary condition \eqref{eq:impermeability}.
\end{rem}

Analogously we obtain the following.

\begin{defn} \label{defn:aws-baro-whole}
	A pair $(\rho,\vu) \in L^\infty\big((0,\infty) \times \R^2; \R^+ \times \R^2\big)$ is a weak solution of the initial value problem \eqref{eq:baro-euler-pv-dens}, \eqref{eq:baro-euler-pv-mom}, \eqref{eq:baro-initial} if the following equations are satisfied for all test functions $(\phi,\vphi) \in \Cc\big([0,\infty) \times \R^2; \R\times \R^2\big)$:
	\begin{align}
		\int_0^\infty \int_{\R^2} \Big[\rho \partial_t \phi + \rho\vu\cdot\Grad \phi\Big]\dx\dt + \int_{\R^2} \rho_\init\phi(0,\cdot) \dx &= 0 \es \label{eq:baro-euler-weak-whole-dens} \\
		\int_0^\infty \int_{\R^2} \Big[\rho\vu \cdot\partial_t \vphi + \rho\vu\otimes\vu:\Grad \vphi + p(\rho)\Div \vphi\Big]\dx\dt + \int_{\R^2} \rho_\init\vu_\init\cdot\vphi(0,\cdot) \dx &= 0\ed \label{eq:baro-euler-weak-whole-mom}
	\end{align}
	A weak solution is admissible if in addition
	\begin{align}
		\int_0^\infty \int_{\R^2} \left[\bigg(\half\rho|\vu|^2 + P(\rho)\bigg) \partial_t \varphi + \bigg(\half\rho|\vu|^2 + P(\rho) + p(\rho)\bigg)\vu\cdot\Grad \varphi \right]\dx\dt \qquad & \notag \\ 
		+ \int_{\R^2} \bigg(\half\rho_\init|\vu_\init|^2 + P(\rho_\init)\bigg)\varphi(0,\cdot) \dx &\geq 0 \label{eq:baro-euler-weak-whole-admissibility}
	\end{align}
	for all $\varphi \in \Cc\big([0,\infty) \times \R^2;\R^+_0\big)$.
\end{defn}

\section{Full Euler System} \label{sec:euler-full}

The full Euler equations \eqref{eq:full-euler-cv-dens} - \eqref{eq:full-euler-cv-en} is a system of conservation laws as well, where 
\begin{align}
	\sO &= \R^+ \times \R^n \times \R^+ \ec \label{eq:full-O} \\
	\vU &= \left(\begin{array}{c}
		\rho \\ \vm \\ E
	\end{array}\right) \ \in\ \R^{n+2}\ec \label{eq:full-U} \\
	\mF(\vU) &= \left(\begin{array}{c}
		\vm^\trans \\ \frac{\vm\otimes\vm}{\rho} + p(\rho,\vm,E) \, \id \\
		\big(E + p(\rho,\vm,E)\big)\frac{\vm^\trans}{\rho} 
	\end{array}\right) \ \in\ \R^{(n+2)\times n} \ed \label{eq:full-F}
\end{align}
The number of unknown is $m=n+2$.

One can simply check that 
\begin{equation} \label{eq:full-flux-jac}
	\Grad_\vU \vF_k = \left(\begin{array}{ccc}
		0 & \ve_k^\trans & 0 \\
		-\frac{m_k\,\vm}{\rho^2} + \partial_{\rho} p \, \ve_k & \frac{\vm}{\rho} \ve_k^\trans + \ve_k \Grad_\vm p + \frac{m_k}{\rho} \id & \partial_E p \, \ve_k \\
		-(E+p)\frac{m_k}{\rho^2} + \frac{m_k}{\rho}\partial_\rho p & (E+p) \frac{\ve_k^\trans}{\rho} + \frac{m_k}{\rho} \Grad_\vm p & \frac{m_k}{\rho} + \frac{m_k}{\rho} \partial_E p
	\end{array}\right)
\end{equation}
for $k=1,...,n$.

From the incomplete equation of state \eqref{eq:incomp-EOS-cv} we deduce
\begin{align}
	\partial_\rho p &= \half (\gamma-1) \frac{|\vm|^2}{\rho^2} \ec \label{eq:full-der-p1} \\
	\Grad_{\vm} p &= -(\gamma-1) \frac{\vm^\trans}{\rho} \ec \label{eq:full-der-p2}\\
	\partial_E p &= (\gamma-1) \ed \label{eq:full-der-p3}
\end{align}

\subsection{Hyperbolicity}

As in the barotropic case, we only sketch the proof of the hyperbolicity. 

For an arbitrary $\vnu\in \sphere^{n-1}$ we obtain from \eqref{eq:full-flux-jac} that 
\begin{equation} \label{eq:full-flux-jac-sum}
	\sum_{k=1}^n \nu_k \Grad_{\vU}\vF_k = \left(\begin{array}{ccc}
	0 & \vnu^\trans & 0 \\
	-\frac{(\vm\cdot\vnu)\vm}{\rho^2} + \partial_{\rho} p \, \vnu & \frac{\vm}{\rho} \vnu^\trans + \vnu \Grad_\vm p + \frac{\vm\cdot\vnu}{\rho} \id & \partial_E p \, \vnu \\
	-(E+p)\frac{\vm\cdot\vnu}{\rho^2} + \frac{\vm\cdot\vnu}{\rho}\partial_\rho p & (E+p) \frac{\vnu^\trans}{\rho} + \frac{\vm\cdot\vnu}{\rho} \Grad_\vm p & \frac{\vm\cdot\vnu}{\rho} + \frac{\vm\cdot\vnu}{\rho} \partial_E p
	\end{array}\right) \ed
\end{equation}
Let $\va_1,...,\va_{n-1}\in\sphere^{n-1}$ be $n-1$ linearly independent vectors which are perpendicular to $\vnu$. Together with \eqref{eq:incomp-EOS-cv} and \eqref{eq:full-der-p1} - \eqref{eq:full-der-p3} it is straightforward to check that the $n+2$ vectors
$$
\left(\begin{array}{c}
	1 \\ \frac{\vm}{\rho} - \sqrt{\gamma\frac{p}{\rho}} \vnu \\ \frac{E+p}{\rho} - \sqrt{\gamma\frac{p}{\rho}} \frac{\vm\cdot \vnu}{\rho}
\end{array}\right) \ec  \left(\begin{array}{c}
	0 \\ \va_1 \\ \frac{\vm\cdot \va_1}{\rho}
\end{array}\right) \ec  ... \ec \left(\begin{array}{c}
	0 \\ \va_{n-1} \\ \frac{\vm\cdot \va_{n-1}}{\rho}
\end{array}\right)\ec \left(\begin{array}{c}
	1 \\ \frac{\vm}{\rho} \\ \frac{|\vm|^2}{2\rho^2}
\end{array}\right) \ec \left(\begin{array}{c}
	1 \\ \frac{\vm}{\rho} + \sqrt{\gamma\frac{p}{\rho}} \vnu \\ \frac{E+p}{\rho} + \sqrt{\gamma\frac{p}{\rho}} \frac{\vm\cdot \vnu}{\rho}
\end{array}\right)
$$
are eigenvectors of the matrix \eqref{eq:full-flux-jac-sum}, where the corresponding eigenvalues read 
$$
\frac{\vm\cdot \vnu}{\rho} - \sqrt{\gamma\frac{p}{\rho}} \ \ec\  \frac{\vm\cdot \vnu}{\rho} \ec\  ... \ \ec\  \frac{\vm\cdot \vnu}{\rho} \ \ec\  \frac{\vm\cdot \vnu}{\rho} \ \ec\  \frac{\vm\cdot \vnu}{\rho} + \sqrt{\gamma\frac{p}{\rho}}
$$
respectively. Furthermore one can show that the eigenvectors written above are linearly independent. Hence the full Euler system \eqref{eq:full-euler-cv-dens} - \eqref{eq:full-euler-cv-en} is hyperbolic in the sense of Definition~\ref{defn:hyperbolicity}.

\subsection{Entropies} \label{subsec:euler-full-entropies}

We want to find the entropies for the full Euler system \eqref{eq:full-euler-cv-dens} - \eqref{eq:full-euler-cv-en} using Proposition~\ref{prop:companion-int-bed}. Note that $\sO$, see \eqref{eq:full-O}, is simply connected and $\mF\in C^2(\sO;\R^{(n+2)\times n} )$, see \eqref{eq:full-F} and \eqref{eq:incomp-EOS-cv}. It turns out that it is much more convenient to work with the variables $(\rho,\vm,p)=:\til{\vU}$ instead of the conserved variables $(\rho,\vm,E)$. We denote the space, in which $\til{\vU}$ lies, by $\til{\sO}$. Note that in our case $\til{\sO}= \R^+\times \R^n\times \R^+$. Since $\rho>0$, it is easily possible to jump back and forth between the variables $(\rho,\vm,E)$ and $(\rho,\vm,p)$ (and even $(\rho,\vu,p)$) via the equation of state \eqref{eq:incomp-EOS-cv}. We denote the entropy as a function of $\til{\vU}$ by $\til{\eta}$, i.e. 
$$
\eta(\vU) = \til{\eta}\big(\til{\vU}(\vU)\big)\ec
$$
where 
$$
\til{\vU}(\vU) = \left(\begin{array}{c}
\rho \\ \vm \\ p(\rho,\vm,E)
\end{array}\right)  \ed
$$

\begin{prop} \label{prop:companion-full}
	The function $\til{\eta}\in C^\infty(\til{\sO})$ is a companion for full Euler \eqref{eq:full-euler-cv-dens} - \eqref{eq:full-euler-cv-en} if and only if its Hessian takes the form 
	\begin{equation} \label{eq:full-hess-eta}
		\Hess_{\til{\vU}} \til{\eta}(\rho,\vm,p) = \left(\begin{array}{ccc}
			\frac{|\vm|^2}{\rho^2} f(\rho,\vm,g) -\gamma \frac{p}{\rho} g(\rho,\vm,p) & -\frac{\vm^\trans}{\rho} f(\rho,\vm,p) & g(\rho,\vm,p) \\
			-\frac{\vm}{\rho} f(\rho,\vm,p) & \id_n f(\rho,\vm,p) & \vz \\
			g(\rho,\vm,p) & \vz^\trans & h(\rho,\vm,p)
		\end{array}\right)
	\end{equation}
	with functions $f,g,h\in C^\infty(\til{\sO})$ where 
	\begin{equation} \label{eq:full-hess-zusatz}
		f(\rho,\vm,p) = g(\rho,\vm,p) + \gamma \frac{p}{\rho} h(\rho,\vm,p) + \frac{\gamma-1}{\rho} \partial_p \eta(\rho,\vm,p) \ed
	\end{equation}
\end{prop}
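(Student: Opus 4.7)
The plan is to apply Proposition~\ref{prop:companion-int-bed} in the original conserved variables $\vU = (\rho, \vm, E)$ and then translate the resulting symmetry conditions into conditions on $\til\eta$ via the change of variables $\til\vU = (\rho, \vm, p)$, which is a global diffeomorphism (since $\rho > 0$) by \eqref{eq:incomp-EOS-cv}. The key subtlety compared with the barotropic case is that $p$ depends on $\vm$ and $E$, so the Hessian of $\eta = \til\eta \circ \til\vU$ in the $\vU$-variables acquires an extra contribution coming from the nontrivial Hessian of $p$.

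First I would use the chain rule to express $\Hess_\vU \eta$ in terms of second derivatives of $\til\eta$ together with $\partial_p \til\eta$. Writing $J = \partial \til\vU / \partial \vU$, and noting that $\Hess_\vU \rho = 0$ and $\Hess_\vU m_i = 0$ while $\Hess_\vU p$ is nonzero and explicitly computable from \eqref{eq:full-der-p1}--\eqref{eq:full-der-p3}, one obtains
$$
\Hess_\vU \eta = J^\trans \cdot \Hess_{\til\vU} \til\eta \cdot J + \partial_p \til\eta \cdot \Hess_\vU p.
$$
Next I would form the products $\Hess_\vU \eta \cdot \Grad_\vU \vF_k$ using \eqref{eq:full-flux-jac} and impose their symmetry for every $k = 1, \dots, n$, as required by Proposition~\ref{prop:companion-int-bed}. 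It is convenient to do all subsequent algebra in the $(\rho, \vm, p)$-variables, employing \eqref{eq:full-der-p1}--\eqref{eq:full-der-p3} to substitute for $\partial_\rho p$, $\Grad_\vm p$ and $\partial_E p$.

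Decomposing the resulting symmetry conditions according to the block structure $(\rho, \vm, p)$, I expect the analysis to mirror the barotropic proof of Proposition~\ref{prop:companion-baro}. The $\vm$--$\vm$ symmetries (for distinct indices) should force $\partial_{m_i} \partial_{m_j} \til\eta = f \, \delta_{ij}$ for a single scalar function $f$; the $\vm$--$p$ symmetries should force $\partial_p \partial_{m_i} \til\eta = 0$, making the $\vm$--$p$ block of $\Hess_{\til\vU} \til\eta$ vanish; the $\rho$--$\vm$ symmetries should then give $\partial_\rho \partial_{m_i} \til\eta = -(m_i/\rho) f$, modulo corrections from the $\partial_p \til\eta \cdot \Hess_\vU p$ term that are absorbed into the prescribed $-f\,\vm^\trans/\rho$ entry of \eqref{eq:full-hess-eta}. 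Setting $g := \partial_\rho \partial_p \til\eta$ and $h := \partial_p \partial_p \til\eta$ and expanding what remains (the $\rho$--$\rho$ identity that couples to the trace of the $\vm$--$\vm$ block) should reproduce the parametrization \eqref{eq:full-hess-eta} with the diagonal entry in position $(\rho,\rho)$ forced to be $\frac{|\vm|^2}{\rho^2} f - \gamma \frac{p}{\rho} g$; matching this with the direct expression for $\partial_\rho\partial_\rho \til\eta$ coming from \eqref{eq:full-hess-zusatz} is what yields the compatibility relation between $f, g, h$ and $\partial_p \til\eta$ stated in \eqref{eq:full-hess-zusatz}.

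For the converse implication, given any smooth $\til\eta$ whose Hessian has the form \eqref{eq:full-hess-eta} with \eqref{eq:full-hess-zusatz}, one checks by direct computation using \eqref{eq:full-flux-jac} and \eqref{eq:full-der-p1}--\eqref{eq:full-der-p3} that the matrices $\Hess_\vU \eta \cdot \Grad_\vU \vF_k$ are symmetric; Proposition~\ref{prop:companion-int-bed} then concludes. The main obstacle is the bookkeeping around the $\partial_p \til\eta \cdot \Hess_\vU p$ correction: without carefully isolating it, the $\rho$--$\rho$ slot of \eqref{eq:full-hess-eta} looks inconsistent and the nontrivial compatibility \eqref{eq:full-hess-zusatz} is easy to miss. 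Keeping a strict notational distinction between derivatives with respect to $\vU$ and with respect to $\til\vU$ throughout the computation is essential.
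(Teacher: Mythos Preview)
Your approach is essentially the same as the paper's: both apply Proposition~\ref{prop:companion-int-bed} in the conserved variables, pass to $(\rho,\vm,p)$ via the chain-rule identity $\Hess_\vU \eta = J^\trans \Hess_{\til\vU}\til\eta\, J + \partial_p\til\eta\,\Hess_\vU p$, and then read off the block structure of $\Hess_{\til\vU}\til\eta$ from the resulting symmetry conditions. The only minor discrepancy is in the order of deductions---in the paper the vanishing of $\partial_p\partial_{m_i}\til\eta$ is extracted from the $\vm$--$\vm$ symmetry conditions (with three momentum indices) \emph{after} $f$ and the $\rho$--$\vm$ relations are in hand, rather than directly from an ``$\vm$--$p$'' block as you anticipate---but this is a bookkeeping detail, not a different argument.
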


\begin{proof}
	It can be simply checked that 
	\begin{equation} \label{eq:1-temp-full-entropy}
		\Hess_\vU \eta = \partial_p \til{\eta} \ \Hess_\vU p + \big( \Grad_{\vU} \til{\vU} \big)^\trans \cdot \Hess_{\til{\vU}} \til{\eta} \cdot \Grad_{\vU} \til{\vU} \ec
	\end{equation}
	and 
	\begin{equation} \label{eq:2-temp-full-entropy}
		\Grad_{\vU} \til{\vU} =\left(\begin{array}{ccc}
		1 & \vz^\trans & 0 \\
		\vz & \id_n & \vz \\
		\partial_\rho p & \Grad_{\vm} p & \partial_E p
		\end{array} \right) = \left(\begin{array}{ccc}
			1 & \vz^\trans & 0 \\
			\vz & \id_n & \vz \\
			\half (\gamma-1) \frac{|\vm|^2}{\rho^2} & -(\gamma-1)\frac{\vm^\trans}{\rho} & \gamma-1 
		\end{array} \right)
	\end{equation}
	(see also \eqref{eq:full-der-p1} - \eqref{eq:full-der-p3}) as well as 
	\begin{equation} \label{eq:3-temp-full-entropy}
		\Hess_\vU p = -\frac{\gamma-1}{\rho}\left(\begin{array}{ccc}
			\frac{|\vm|^2}{\rho^2} & -\frac{\vm^\trans}{\rho} & 0 \\
			-\frac{\vm}{\rho} & \id_n & \vz \\
			0 & \vz^\trans & 0
		\end{array} \right) \ed
	\end{equation}
	
	Due to Proposition~\ref{prop:companion-int-bed}, $\eta$ is a companion if and only if $\Hess_\vU\eta \cdot \Grad_{\vU} \vF_k$ is symmetric for all $k=1,...,n$ and all $\vU\in \sO$. Together with \eqref{eq:1-temp-full-entropy}, \eqref{eq:2-temp-full-entropy} and \eqref{eq:3-temp-full-entropy} a lengthy but straightforward calculation shows that this condition can be translated into the following four equations:
	\begin{equation} \label{eq:full-entropy-cond1}
		\left( \partial_\rho \partial_p \til{\eta}  + \partial_p \partial_p \til{\eta} \ \gamma \frac{p}{\rho} + \sum_{\ell=1}^n \partial_p\partial_{m_\ell} \til{\eta} \ \frac{m_\ell}{\rho}  + \partial_p \til{\eta} \  \frac{\gamma-1}{\rho} \right)\delta_{ik} - \partial_{m_i} \partial_{m_k} \til{\eta} \ =\  0 
	\end{equation}
	for all $i,k=1,...,n$;
	\begin{equation} \label{eq:full-entropy-cond2}
		\left(\partial_\rho \partial_p \til{\eta} + \partial_p\partial_p \til{\eta} \ \gamma\frac{p}{\rho} +  \sum_{\ell=1}^n \partial_p\partial_{m_\ell} \til{\eta} \ \frac{m_\ell}{\rho} + \partial_p \til{\eta} \ \frac{\gamma-1}{\rho}\right)\frac{m_k}{\rho} + \partial_\rho\partial_{m_k} \til{\eta} \ =\ 0 
	\end{equation}
	for all $k=1,...,n$;
	\begin{align} 
		&\partial_\rho\partial_\rho \til{\eta} \  \delta_{ik} + \partial_\rho\partial_p \til{\eta} \ \gamma\frac{p}{\rho} \delta_{ik} + \partial_\rho \partial_{m_i} \til{\eta} \ \frac{m_k}{\rho} - \partial_\rho\partial_{m_k} \til{\eta} \ (\gamma-1) \frac{m_i}{\rho} + \sum_{\ell=1}^n \partial_\rho\partial_{m_\ell} \til{\eta} \ \frac{m_\ell}{\rho} \delta_{ik} +  \notag \\
		&+ \partial_p\partial_{m_i}\til{\eta}\ \gamma\frac{p \,m_k}{\rho^2}  + \sum_{\ell=1}^n \partial_{m_i}\partial_{m_\ell} \til{\eta}\ \frac{m_k m_\ell}{\rho^2} - \partial_{m_i}\partial_{m_k} \til{\eta}\ \half(\gamma-1)\frac{|\vm|^2}{\rho^2} \notag \\
		& + (\gamma-1) \left(\half\frac{|\vm|^2}{\rho^2} \delta_{ik} - \frac{m_k m_i}{\rho^2}\right) \left(\partial_\rho\partial_p \til{\eta} + \partial_p\partial_p\til{\eta}\ \gamma\frac{p}{\rho} + \sum_{\ell=1}^n \partial_p\partial_{m_\ell}\til{\eta} \ \frac{m_\ell}{\rho} + \partial_p \til{\eta}\ \frac{\gamma-1}{\rho} \right) \  =\  0 \label{eq:full-entropy-cond3}
	\end{align} 	
	for all $i,k=1,...,n$;
	\begin{align}
		& - \left( \partial_\rho\partial_p \til{\eta} + \partial_p\partial_p\til{\eta}\ \gamma\frac{p}{\rho} + \sum_{\ell=1}^n \partial_p\partial_{m_\ell} \til{\eta}\ \frac{m_\ell}{\rho} + \partial_p \til{\eta} \ \frac{\gamma-1}{\rho} \right) \frac{\gamma-1}{\rho}\left(m_i\delta_{jk} - m_j \delta_{ik}\right) \notag \\
		& + \partial_\rho\partial_{m_i} \til{\eta} \ \delta_{jk} - \partial_\rho\partial_{m_j} \til{\eta} \ \delta_{ik} + \gamma\frac{p}{\rho} \left(\partial_p\partial_{m_i} \til{\eta} \ \delta_{jk} - \partial_p\partial_{m_j} \til{\eta} \ \delta_{ik}\right) \notag \\
		& + \sum_{\ell=1}^n \partial_{m_i}\partial_{m_\ell} \til{\eta} \ \frac{m_\ell}{\rho}\delta_{jk} - \sum_{\ell=1}^n \partial_{m_j}\partial_{m_\ell} \til{\eta} \ \frac{m_\ell}{\rho}\delta_{ik} \notag \\ 
		& - (\gamma-1) \left( \partial_{m_i}\partial_{m_k}\til{\eta}\ \frac{m_j}{\rho} - \partial_{m_j}\partial_{m_k}\til{\eta}\ \frac{m_i}{\rho} \right) \ = \ 0 \label{eq:full-entropy-cond4}
	\end{align}
	for all $i,j,k=1,...,n$.
	
	On the one hand, one easily shows that \eqref{eq:full-entropy-cond1} - \eqref{eq:full-entropy-cond4} hold if $\Hess_{\til{\vU}}\til{\eta}$ is of the form \eqref{eq:full-hess-eta} with \eqref{eq:full-hess-zusatz}. 
	
	For the converse let the conditions \eqref{eq:full-entropy-cond1} - \eqref{eq:full-entropy-cond4} be satisfied. From \eqref{eq:full-entropy-cond1} we deduce that 
	\begin{equation} \label{eq:mi-mk-full-entropy}
		\partial_{m_i}\partial_{m_k} \til{\eta} = 0 \qquad \text{ for }i\neq k \ec
	\end{equation}
	and 
	\begin{equation} \label{eq:mi-mi-full-entropy}
		\partial_{m_i}\partial_{m_i} \til{\eta} = \partial_\rho \partial_p \til{\eta}  + \partial_p \partial_p \til{\eta} \ \gamma \frac{p}{\rho} + \sum_{\ell=1}^n \partial_p\partial_{m_\ell} \til{\eta} \ \frac{m_\ell}{\rho}  + \partial_p \til{\eta} \  \frac{\gamma-1}{\rho} \ =:\ f(\rho,\vm,p)
	\end{equation}
	for all $i=1,...,n$. In particular $\partial_{m_i}\partial_{m_i} \til{\eta}$ does not depend on $i$. Furthermore we obtain from \eqref{eq:full-entropy-cond2} that
	\begin{equation} \label{eq:rho-mi-full-entropy}
		\partial_\rho\partial_{m_i} \til{\eta} = -\frac{m_i}{\rho} f(\rho,\vm,p) 
	\end{equation}
	for all $i=1,...,n$. Plugging \eqref{eq:mi-mi-full-entropy}, \eqref{eq:mi-mk-full-entropy} and \eqref{eq:rho-mi-full-entropy} into \eqref{eq:full-entropy-cond4} we end up with
	$$
	\gamma\frac{p}{\rho}\left( \partial_p \partial_{m_i} \til{\eta} \ \delta_{jk} - \partial_p \partial_{m_j} \til{\eta} \ \delta_{ik} \right) =0 \ec
	$$
	which must hold for all $i,j,k=1,...,n$. For any $i=1,...,n$ choose $j=k\neq i$ to obtain 
	\begin{equation} \label{eq:p-mi-full-entropy}
		\partial_p \partial_{m_i} \til{\eta} = 0 \ed
	\end{equation}
	Let us set
	\begin{align*}
		g(\rho,\vm,p) &:= \partial_\rho\partial_p \til{\eta} \ec\\
		h(\rho,\vm,p) &:= \partial_p\partial_p \til{\eta} \ed
	\end{align*}
	Plugging this and \eqref{eq:mi-mk-full-entropy}, \eqref{eq:mi-mi-full-entropy}, \eqref{eq:rho-mi-full-entropy} and \eqref{eq:p-mi-full-entropy} into \eqref{eq:full-entropy-cond3}, we obtain 
	$$
		\partial_\rho\partial_\rho \til{\eta} \  \delta_{ik} + \gamma\frac{p}{\rho} g \delta_{ik} - \frac{|\vm|^2}{\rho^2} f \delta_{ik} = 0 
	$$ 	
	for all $i,k=1,...,n$, which implies 
	$$
		\partial_\rho\partial_\rho \til{\eta} = \frac{|\vm|^2}{\rho^2} f - \gamma\frac{p}{\rho} g \ed
	$$
	Hence we have shown that $\Hess_{\til{\vU}} \til{\eta}$ is of the form \eqref{eq:full-hess-eta}. Finally using \eqref{eq:p-mi-full-entropy} in \eqref{eq:mi-mi-full-entropy} yields \eqref{eq:full-hess-zusatz}.
\end{proof}

As in the barotropic case, we use Proposition~\ref{prop:companion-full} to find the entropies for the full Euler system, i.e. we prove a ``full'' version of Proposition~\ref{prop:baro-entropies}. However we split this statement into two propositions since we will not give an equivalent condition for the companions to be convex. Such a condition is difficult to derive and would go beyond the issues of this book. Let us first solve \eqref{eq:full-hess-eta}, \eqref{eq:full-hess-zusatz} in order to find all companions. 

\begin{prop} \label{prop:full-companions} 
	The companions for the full Euler system \eqref{eq:full-euler-cv-dens} - \eqref{eq:full-euler-cv-en} are all functions of the form 
	\begin{equation} \label{eq:full-entropies}
		\eta(\rho,\vm,E) = - \rho Z \left( \log \left(\frac{p(\rho,\vm,E}{\rho^\gamma}\right)\right) + a E + \vm \cdot \vb + c \ec 
	\end{equation}
	where $a,c\in \R$, $\vb\in \R^n$ and $Z\in C^\infty(\R)$. The corresponding fluxes are given by 
	\begin{equation} \label{eq:full-entr-fluxes}
		\vq(\rho,\vm,E) = - \vm Z \left( \log \left(\frac{p(\rho,\vm,E}{\rho^\gamma}\right)\right) + a\big(E+p(\rho,\vm,E)\big)\frac{\vm}{\rho} + (\vm\cdot \vb) \frac{\vm}{\rho} + p(\rho,\vm,E) \vb\ed 
	\end{equation}
\end{prop}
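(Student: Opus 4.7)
My plan is to apply Proposition~\ref{prop:companion-full} and integrate the Hessian structure that it prescribes. For the ``if'' direction I would rewrite the ansatz \eqref{eq:full-entropies} in the $(\rho,\vm,p)$-variables via $aE=\frac{a|\vm|^2}{2\rho}+\frac{ap}{\gamma-1}$, obtaining $\til{\eta}=\frac{a|\vm|^2}{2\rho}+\vm\cdot\vb+\frac{ap}{\gamma-1}-\rho Z(s)+c$ with $s=\log(p/\rho^\gamma)$, and then check by direct computation --- using only the chain rule in $s$ --- that $\Hess_{\til{\vU}}\til{\eta}$ has the form \eqref{eq:full-hess-eta} with functions $f,g,h$ satisfying \eqref{eq:full-hess-zusatz}.

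For the ``only if'' direction let $\til{\eta}$ be a companion and let $f,g,h$ be as in Proposition~\ref{prop:companion-full}. Equality of mixed partials (applied to the off-diagonal entries of the Hessian and to the identity $\partial_p\partial_{m_i}\til{\eta}=0$) forces $f$ to depend only on $\rho$ and $g,h$ only on $(\rho,p)$. Combining $\partial_{m_i}\partial_{m_i}\til{\eta}=f(\rho)$ with $\partial_\rho\partial_{m_i}\til{\eta}=-m_if(\rho)/\rho$ yields the ODE $(\rho f)'=0$, hence $f(\rho)=a/\rho$ for some $a\in\R$; integrating $\partial_{m_i}\til{\eta}$ in $\vm$ then gives
$$\til{\eta}(\rho,\vm,p)=\frac{a|\vm|^2}{2\rho}+\vm\cdot\vb+\Phi(\rho,p)$$
for a constant vector $\vb\in\R^n$ and some $\Phi\in C^\infty(\R^+\times\R^+)$. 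Substituting this into \eqref{eq:full-hess-zusatz} and multiplying by $\rho$ reduces all remaining conditions to the single first-order linear PDE
$$\rho\,\partial_\rho W+\gamma p\,\partial_p W+(\gamma-1)\,W=a,\qquad W:=\partial_p\Phi.$$
Switching to coordinates $(\rho,s)$ with $s=\log(p/\rho^\gamma)$ collapses this to the ODE $\rho\,\partial_\rho W+(\gamma-1)W=a$ at fixed $s$, whose general solution is $W=\frac{a}{\gamma-1}+\rho^{-(\gamma-1)}\Theta(s)$ with $\Theta$ arbitrary. Integrating back in $p$ at fixed $\rho$ (using $dp=\rho^\gamma e^s\,ds$) produces $\Phi(\rho,p)=\frac{ap}{\gamma-1}+\rho\,\til{Z}(s)+\chi(\rho)$ for a suitable antiderivative $\til{Z}$ of $s\mapsto\Theta(s)e^s$; the remaining Hessian identity $\partial_\rho\partial_\rho\Phi=-\gamma(p/\rho)g$ then forces $\chi''=0$, so $\chi(\rho)=c_1\rho+c_2$. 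Passing back to the $(\rho,\vm,E)$-variables and absorbing $c_1\rho$ into $-\rho Z(s)$ by setting $Z:=-\til{Z}-c_1$ recovers \eqref{eq:full-entropies}.

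For the flux formula \eqref{eq:full-entr-fluxes} I would verify \eqref{eq:companion-cond} directly with $\vF_k$ from \eqref{eq:full-F} by splitting $\eta$ into its affine part and its nonlinear part. The affine piece $aE+\vm\cdot\vb+c$ falls under Example~\ref{ex:entropies} applied to $\va=(0,\vb^\trans,a)^\trans$, and yields exactly the contribution $a(E+p)\frac{\vm}{\rho}+(\vm\cdot\vb)\frac{\vm}{\rho}+p\vb$ appearing in \eqref{eq:full-entr-fluxes}. It remains to check that the pair $(-\rho Z(s),-\vm Z(s))$ satisfies \eqref{eq:companion-cond}; after applying the chain rule in $s$ this reduces to the identity $\Grad_\vU(\rho Z(s))\cdot\Grad_\vU\vF_k=\Grad_\vU(m_kZ(s))$ for $k=1,\dots,n$, which is a direct calculation using \eqref{eq:full-F} and \eqref{eq:incomp-EOS-cv}. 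The main obstacle I anticipate is the combination of the PDE-integration step with this final flux verification: both rely on the $\gamma$-dependent cancellations that make $s$ the natural variable, and careful bookkeeping of the factors $\rho^\gamma e^s=p$ as well as of the constraint \eqref{eq:full-hess-zusatz} is needed to avoid sign and coefficient errors.
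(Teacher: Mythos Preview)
Your proposal is correct and follows the same overall architecture as the paper: both invoke Proposition~\ref{prop:companion-full}, integrate the Hessian in the $\vm$-variables to reach $\til{\eta}=\frac{a|\vm|^2}{2\rho}+\vm\cdot\vb+G(\rho,p)$, and then pin down $G$ (your $\Phi$) via the characteristic variable $s=\log(p/\rho^\gamma)$. The one genuine difference is in how the two remaining scalar conditions are handled. The paper couples the $(\rho,\rho)$-Hessian entry and the constraint \eqref{eq:full-hess-zusatz} simultaneously by passing to $H:=\frac{ap}{(\gamma-1)\rho}-\frac{G}{\rho}$, showing that \emph{both} conditions force $\rho^2\partial_\rho H+\gamma\rho p\,\partial_p H$ to be constant, and only then introduces the characteristic coordinate. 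You instead decouple them: you use \eqref{eq:full-hess-zusatz} alone as a first-order PDE for $W=\partial_p\Phi$, solve it in $(\rho,s)$-coordinates, integrate once in $p$ (picking up the free function $\chi(\rho)$), and only afterwards invoke the $(\rho,\rho)$-Hessian identity to kill $\chi''$. Your route is arguably more transparent, since it separates the two integrations cleanly; the paper's substitution $G\mapsto H\mapsto Y$ is slicker but less motivated. One small wording issue: your phrase ``reduces all remaining conditions to the single first-order linear PDE'' is misleading, since the $(\rho,\rho)$-identity is still outstanding at that stage and is precisely what you use later to fix $\chi$; you should say instead that \eqref{eq:full-hess-zusatz} alone yields that PDE. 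Your treatment of the flux via the affine/nonlinear splitting and Example~\ref{ex:entropies} matches the paper's ``long but straightforward computation.''
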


\begin{proof}
	A long but straightforward computation proves that $(\eta,\vq)$ of the form given by \eqref{eq:full-entropies} and \eqref{eq:full-entr-fluxes} fulfill \eqref{eq:companion-cond}. Thus $\eta$ is a companion and $\vq$ the corresponding flux. 
	
	For the converse let $\eta$ be an entropy. Our goal is to show that this implies that $\eta$ is of the form \eqref{eq:full-entropies}. As above we consider $\eta$ as a function of $\til{\vU}$ instead of $\vU$ and write $\til{\eta}$. From Proposition~\ref{prop:companion-full} we know that there exist functions $f,g,h\in C^\infty(\til{\sO})$ such that \eqref{eq:full-hess-eta} and \eqref{eq:full-hess-zusatz} hold. We proceed as in the proof of Proposition~\ref{prop:baro-entropies}. For all $i=1,...,n$ we find\footnote{Here again it is essential that $n\geq 2$.} $j\neq i$ and hence
	$$
	\partial_{m_i} f = \partial_{m_i} \partial_{m_j} \partial_{m_j} \til{\eta} = \partial_{m_j} \partial_{m_i} \partial_{m_j} \til{\eta} = 0 \ed
	$$
	Furthermore 
	$$
	\partial_p f = \partial_p \partial_{m_j} \partial_{m_j} \til{\eta} = \partial_{m_j} \partial_p \partial_{m_j} \til{\eta} = 0 \ed
	$$
	Thus 
	$$
	\partial_{m_i} \partial_{m_j} \til{\eta} = f(\rho) \delta_{ij} \ed
	$$
	As in the proof of Proposition~\ref{prop:baro-entropies} this implies together with the fact that 
	$$
	\partial_\rho \partial_{m_i} \til{\eta}= -\frac{m_i}{\rho} f(\rho) \ec
	$$
	that
	$$
	f(\rho) = \frac{a}{\rho}
	$$
	with an $a\in\R$, and there exists a constant $\vb\in \R^n$ such that 
	$$
	\partial_{m_i} \til{\eta} = m_i f(\rho) + b_i \ed
	$$
	Hence there exists $G\in C^\infty(\R^+\times \R^+)$ such that 
	$$
	\til{\eta} = a \frac{|\vm|^2}{2\rho} + \vm\cdot \vb + G(\rho,p) \ed
	$$
	From 
	$$
	\partial_\rho \partial_\rho \til{\eta} = \frac{|\vm|^2}{\rho^2} f(\rho) - \gamma\frac{p}{\rho} g(\rho,\vm,p) =  a\frac{|\vm|^2}{\rho^3} - \gamma\frac{p}{\rho} \, \partial_\rho \partial_p \til{\eta} 
	$$ 
	and \eqref{eq:full-hess-zusatz} we obtain
	\begin{equation} \label{eq:4-temp-full-entropy}
		\partial_\rho \partial_\rho G(\rho,p) = -\gamma \frac{p}{\rho} \, \partial_\rho \partial_p G(\rho,p)
	\end{equation} 
	and 
	\begin{equation} \label{eq:5-temp-full-entropy}
		\frac{a}{\rho} = \partial_\rho \partial_p G(\rho,p) + \gamma\frac{p}{\rho} \, \partial_p \partial_p G(\rho,p) + \frac{\gamma-1}{\rho} \, \partial_p G(\rho,p) \ec
	\end{equation}
	respectively. Next we define $H\in C^\infty(\R^+\times \R^+)$ by
	$$
	H(\rho,p) := \frac{a p}{(\gamma-1) \rho} - \frac{G(\rho,p)}{\rho}\ed
	$$
	A straightforward calculation shows that \eqref{eq:4-temp-full-entropy} and \eqref{eq:5-temp-full-entropy} imply 
	$$
	\partial_\rho \Big(\rho^2\, \partial_\rho H(\rho,p) + \gamma\rho p \,\partial_p H(\rho,p)\Big) = 0
	$$
	and 
	$$
	\partial_p \Big(\rho^2\, \partial_\rho H(\rho,p) + \gamma\rho p \,\partial_p H(\rho,p)\Big) = 0 \ec
	$$
	respectively. Hence there exists $c\in \R$ such that 
	\begin{equation} \label{eq:6-temp-full-entropy}
		\rho^2\, \partial_\rho H(\rho,p) + \gamma\rho p \,\partial_p H(\rho,p) = c \ed
	\end{equation}
	Define $Y\in C^\infty(\R^+\times\R^+)$ via
	$$
	Y(\rho,p) := H(\rho,p) + \frac{c}{\rho} \ed
	$$
	Then \eqref{eq:6-temp-full-entropy} implies 
	\begin{equation} \label{eq:7-temp-full-entropy}
		\partial_\rho Y(\rho,p) + \gamma\frac{p}{\rho}\,\partial_p Y(\rho,p) = 0 \ed 
	\end{equation}
	From \eqref{eq:7-temp-full-entropy} we deduce that $Y$ is constant along lines $(\rho,p(\rho)):=(\rho,\alpha \rho^\gamma)$ with some $\alpha>0$. Indeed
	$$
	\frac{\rm d}{{\rm d}\rho} Y(\rho,\alpha\rho^\gamma) = \partial_\rho Y(\rho,\alpha\rho^\gamma) + \alpha \gamma \rho^{\gamma-1} \partial_p Y(\rho,\alpha\rho^\gamma) = 0\ed
	$$
	Thus 
	$$
	Y(\rho,p) = Y(1,\alpha) = Y\left(1,\frac{p}{\rho^\gamma}\right) \ec
	$$
	and with the definition of $\til{Z}\in C^\infty(\R^+)$ by
	$$
	\til{Z}(s) := Y(1,s) \ec
	$$
	we end up with 
	$$
	\til{\eta} = - \rho \til{Z}\left(\frac{p}{\rho^\gamma}\right) + a \left( \frac{|\vm|^2}{2\rho} +\frac{p}{\gamma-1}\right) + \vm\cdot \vb + c \ed
	$$
	Finally we introduce the function $Z\in C^\infty(\R)$ by 
	$$
	Z(s) := \til{Z}\big(\exp(s)\big) \ec
	$$
	which yields that 
	$$
	\til{\eta} = - \rho Z\left(\log\left(\frac{p}{\rho^\gamma}\right)\right) + a \left( \frac{|\vm|^2}{2\rho} +\frac{p}{\gamma-1}\right) + \vm\cdot \vb + c \ed
	$$
	Writing $\til{\eta}$ with respect to the conserved variables $\vU=(\rho,\vm,E)$ again, we obtain \eqref{eq:full-entropies}. 
\end{proof}

\begin{rem}
	At this point the usage of the logarithm in the argument of the function $Z$ seems to be unnecessary. Indeed the statement is still correct if one replaces $Z\big(\log(p / \rho^\gamma)\big)$ in \eqref{eq:full-entropies} and \eqref{eq:full-entr-fluxes} by $Z(p/\rho^\gamma)$. The use of the logarithm is inspired by the function $s$ which is introduced in Chapter~\ref{chap:intro}, see \eqref{eq:s}, and will be plugged in later, see Proposition~\ref{prop:entropy-is-enough}. Furthermore using the logarithm allows for a simple formulation of the following fact. 
\end{rem}

\begin{prop} \label{prop:full-entropies} 
	If $Z\in C^\infty(\R)$ is such that $Z'(s)\geq 0$ and $Z''(s)\leq 0$ for all $s\in \R$, then $\eta$ given by Proposition~\ref{prop:full-companions} is convex, i.e. an entropy. Conversely if $\eta$ is an entropy of the form given by Proposition~\ref{prop:full-companions}, then $Z'(s)\geq 0$ for all $s\in \R$.
\end{prop}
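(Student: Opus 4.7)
The plan is as follows. Since $aE + \vm\cdot\vb + c$ is affine in the conserved variables $\vU=(\rho,\vm,E)$, it contributes nothing to $\Hess_\vU\eta$, so I would first reduce to the core case $\eta_0(\rho,\vm,E) := -\rho Z(s)$ with $s=\log\big(p/\rho^\gamma\big)$. I would then compute $\Hess_\vU\eta_0$ by going through the $(\rho,\vm,p)$ coordinates and applying the change-of-variables formula \eqref{eq:1-temp-full-entropy}. A direct differentiation of $-\rho Z(s)$ yields, in the notation of Proposition~\ref{prop:companion-full}, the triple
\begin{equation*}
(f_0,g_0,h_0) \;=\; \Big(0,\;\tfrac{\gamma Z''-Z'}{p},\;\tfrac{\rho(Z'-Z'')}{p^2}\Big),
\end{equation*}
and identity \eqref{eq:full-hess-zusatz} provides a useful consistency check.

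Next, for a test vector $\vw=(w_1,\vw_m,w_E)\in\R^{n+2}$ I would introduce $\til\vw := \Grad_\vU\til\vU \cdot \vw = (w_1,\vw_m,\delta)$, with $\delta$ the scalar given by the third row of \eqref{eq:2-temp-full-entropy}. Substituting $(f_0,g_0,h_0)$ into \eqref{eq:full-hess-eta} and combining with the contribution $\partial_p\til\eta_0\, \vw^\trans \Hess_\vU p\, \vw = \tfrac{(\gamma-1)Z'}{p}\big|w_1\vm/\rho - \vw_m\big|^2$ that comes from \eqref{eq:3-temp-full-entropy}, the $Z'$- and $Z''$-terms separate. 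The key algebraic step is the sum-of-squares identity
\begin{equation*}
\tfrac{\gamma}{\rho}w_1^2 - \tfrac{2 w_1 \delta}{p} + \tfrac{\rho\delta^2}{p^2} \;=\; \tfrac{1}{\rho}\Big(w_1 - \tfrac{\rho\delta}{p}\Big)^2 + \tfrac{\gamma-1}{\rho}w_1^2,
\end{equation*}
after which one arrives at the decomposition
\begin{equation*}
\vw^\trans \Hess_\vU\eta\, \vw \;=\; Z'(s)\,\mathcal{Q}_1(\vw) \;-\; Z''(s)\,\mathcal{Q}_2(\vw),
\end{equation*}
with $\mathcal{Q}_1 := \tfrac{\gamma-1}{p}|w_1\vm/\rho-\vw_m|^2 + \tfrac{1}{\rho}(w_1 - \rho\delta/p)^2 + \tfrac{\gamma-1}{\rho}w_1^2 \ge 0$ and $\mathcal{Q}_2 := \rho\big(\gamma w_1/\rho - \delta/p\big)^2 \ge 0$ both manifestly non-negative.

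The forward direction drops out at once: if $Z'\ge 0$ and $Z''\le 0$, both summands are non-negative for every $\vw$, so $\Hess_\vU\eta$ is positive semi-definite and $\eta$ is convex. For the converse, the linear map $\vw\mapsto\til\vw$ is an isomorphism of $\R^{n+2}$ (because $\partial_E p = \gamma-1 \neq 0$), so I may prescribe $(w_1,\vw_m,\delta)$ freely. Choosing $\vw_m = w_1\vm/\rho$ and $\delta = \gamma p w_1/\rho$ annihilates the first term of $\mathcal{Q}_1$ as well as all of $\mathcal{Q}_2$, leaving $\vw^\trans \Hess_\vU\eta\, \vw = Z'(s)\,\tfrac{\gamma(\gamma-1)}{\rho}\,w_1^2$. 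Non-negativity for all $w_1\neq 0$ forces $Z'(s)\ge 0$ at the base point $(\rho,\vm,p)\in\til\sO$, and as $(\rho,p)$ varies over $\R^+\times\R^+$ the argument $s = \log(p/\rho^\gamma)$ sweeps out all of $\R$.

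The main obstacle is purely algebraic: executing the change of variables \eqref{eq:1-temp-full-entropy} cleanly for this specific $\eta_0$ and recognising the sum-of-squares decomposition of $\mathcal{Q}_1$. Once that decomposition is in hand, both implications follow without any further structural input.
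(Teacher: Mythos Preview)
Your proposal is correct and follows essentially the same strategy as the paper: decompose $\vw^\trans\Hess_\vU\eta\,\vw$ into a $Z'$-part and a $Z''$-part and exhibit each as a manifestly signed sum of squares. The paper writes the Hessian as $\tfrac{Z'}{p^2}\mA+\tfrac{Z''}{p^2}\mB$ with explicit $(n{+}2)\times(n{+}2)$ matrices $\mA,\mB$ and then computes $\vw^\trans\mA\vw$, $\vw^\trans\mB\vw$; your $\mathcal{Q}_1,\mathcal{Q}_2$ are exactly $\tfrac{1}{p^2}\vw^\trans\mA\vw$ and $-\tfrac{1}{p^2}\vw^\trans\mB\vw$ once one substitutes your $\delta$ back in terms of $(w_t,\vw_\vx,w_s)$. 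Your route via the change-of-variables formula \eqref{eq:1-temp-full-entropy} and the structural form from Proposition~\ref{prop:companion-full} is arguably more streamlined than recomputing the full matrices from scratch. For the converse, the paper picks $\vm=\vz$, $\vw=(0,\va,0)^\trans$, which kills $\mB$ and leaves $\vw^\trans\mA\vw=(\gamma-1)p|\va|^2>0$; your choice $\vw_m=w_1\vm/\rho$, $\delta=\gamma p w_1/\rho$ is different but equally effective, and both yield $Z'(s)\ge 0$ for every $s\in\R$.
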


\begin{proof} 
	A lengthy calculation shows that the Hessian of $\eta$ with respect to $\vU$ is given by
	\begin{equation} \label{eq:8-temp-full-entropy}
	\Hess_\vU \eta = \frac{Z' \left(\log\left(\frac{p}{\rho^\gamma}\right)\right)}{p^2} \mA(\rho,\vm,p) + \frac{Z'' \left(\log\left(\frac{p}{\rho^\gamma}\right)\right)}{p^2} \mB(\rho,\vm,p) \ec
	\end{equation}
	where\footnote{Keeping in mind that $\mA$ and $\mB$ are symmetric, we write $\ast$ for the entries in the upper triangle for convenience.}
	$$
	\mA(\rho,\vm,p) := \left(\begin{array}{ccc}
	\gamma\frac{p^2}{\rho} + \frac{1}{4} (\gamma-1)^2 \frac{|\vm|^4}{\rho^3} & \ast & \ast \\
	-\half (\gamma-1)^2 \frac{|\vm|^2}{\rho^2} \vm & (\gamma-1)\left(p \id_n + (\gamma-1) \frac{\vm\otimes \vm}{\rho} \right) & \ast \\
	\half (\gamma-1)^2\frac{|\vm|^2}{\rho} - (\gamma-1) p & -(\gamma-1)^2 \vm^\trans & (\gamma-1)^2\rho 
	\end{array} \right)
	$$
	and 
	$$
	\mB(\rho,\vm,p) := \left(\begin{array}{ccc}
	-\frac{1}{\rho} \left(\half (\gamma-1) \frac{|\vm|^2}{\rho} - \gamma p \right)^2 & \ast & \ast \\
	(\gamma-1)\frac{\vm}{\rho} \left(\half (\gamma-1) \frac{|\vm|^2}{\rho} - \gamma p \right) & -(\gamma-1)^2 \frac{\vm\otimes \vm}{\rho} & \ast \\
	-(\gamma-1) \left(\half (\gamma-1) \frac{|\vm|^2}{\rho} - \gamma p \right) & (\gamma-1)^2 \vm^\trans & -(\gamma-1)^2 \rho
	\end{array} \right) \ed
	$$
	
	To show that $\eta$ is convex for all $Z\in C^\infty(\R)$ with the desired properties, i.e. $Z'\geq 0$ and $Z''\leq 0$, it suffices to show that $\mA(\rho,\vm,p)$ is positive semi-definite, whereas $\mB(\rho,\vm,p)$ is negative semi-definite. To this end let $\vw\in \R^{n+2}$ be arbitrary whose components are denoted by $\vw=(w_t,\vw_\vx,w_s)^\trans$. On can show that 
	\begin{align}
	\vw^\trans \cdot \mA(\rho,\vm,p)\cdot \vw &= \frac{(\gamma-1)^2}{\rho} \left(\left(\half \frac{|\vm|^2}{\rho} - \frac{p}{\gamma-1}\right) w_t - \vm\cdot\vw_\vx + \rho w_s\right)^2 \label{eq:wAw} \\
	&\qquad + \frac{(\gamma-1) p^2 w_t^2}{\rho} + (\gamma-1) p \left|\frac{\vm}{\rho} w_t - \vw_\vx\right|^2 \ec \notag
	\end{align}
	which is obviously non-negative. Thus $\mA(\rho,\vm,p)$ is positive semi-definite for all $(\rho,\vm,p)\in \R^+\times \R^n\times \R^+$. Furthermore
	\begin{equation} \label{eq:wBw}
	\vw^\trans \cdot \mB(\rho,\vm,p)\cdot \vw = -\frac{(\gamma-1)^2}{\rho} \left(\left(\half \frac{|\vm|^2}{\rho} - \frac{\gamma p}{\gamma-1}\right) w_t - \vm\cdot\vw_\vx + \rho w_s\right)^2 \ec
	\end{equation}
	which is non-positive. Hence $\mB(\rho,\vm,p)$ is negative semi-definite for any arbitrary $(\rho,\vm,p)\in \R^+\times \R^n\times \R^+$. 
	
	For the converse we assume that $\eta$ is convex, i.e. the Hessian \eqref{eq:8-temp-full-entropy} is positive semi-definite. Thus
	\begin{equation} \label{eq:9-temp-full-entropy}
		\frac{Z' \left(\log\left(\frac{p}{\rho^\gamma}\right)\right)}{p^2} \vw^\trans\cdot \mA(\rho,\vm,p) \cdot \vw+ \frac{Z'' \left(\log\left(\frac{p}{\rho^\gamma}\right)\right)}{p^2} \vw^\trans\cdot \mB(\rho,\vm,p)\cdot \vw \geq 0
	\end{equation}
	for all $\vw\in \R^{n+2}$ and all $(\rho,\vm,p)\in \R^+\times \R^n\times \R^+$. Assume that there was $s\in \R$ such that $Z'(s)<0$. Fix any $(\rho,p)\in\R^+\times \R^+$ such that $s=\log(p / \rho^\gamma)$. Moreover, choosing $\vm=\vz$ and $\vw=(0,\va,0)^\trans$ with $\va\in \R^n\setminus\{\vz\}$ we deduce from \eqref{eq:wAw} and \eqref{eq:wBw} that 
	\begin{align*}
		\vw^\trans \cdot \mA(\rho,\vm,p)\cdot \vw &= (\gamma-1) p |\va|^2 > 0 \qquad \text{ and } \\
		\vw^\trans \cdot \mB(\rho,\vm,p)\cdot \vw &= 0 \ec
	\end{align*}
	respectively. Plugging this into \eqref{eq:9-temp-full-entropy} we obtain the contradiction 
	$$
	0>\frac{Z' \big(\log(s)\big)}{p^2} \vw^\trans\cdot \mA(\rho,\vm,p) \cdot \vw+ \frac{Z'' \big(\log(s)\big)}{p^2} \vw^\trans\cdot \mB(\rho,\vm,p)\cdot \vw \geq 0 \ed
	$$
	Hence $Z'(s) \geq 0$ for all $s\in \R$.
\end{proof}

\begin{rem}
	Note that Propostion \ref{prop:full-entropies} does not provide an equivalent condition for a companion to be an entropy. More precisely it does not say which property of the second derivative $Z''$ is necessary and sufficient for $\eta$ being convex. In the sequel we call a weak solution of the full Euler system \eqref{eq:full-euler-cv-dens} - \eqref{eq:full-euler-cv-en} \emph{admissible} or \emph{entropy solution} if the entropy inequality \eqref{eq:admissibility-strong} holds for all $\eta$ as in Proposition~\ref{prop:full-companions} with $Z'(s)\geq 0$ for all $s\in \R$ in the sense of distributions. Due to the issue described a few lines above, this definition does not coincide with the actual admissibility criterion given by Definition~\ref{defn:admissibility}. More precisely the notion of admissibility for the full Euler system introduced in the current remark (i.e. with $Z'\geq 0$) is a bit stricter. Indeed every weak solution that is admissible in our sense (i.e. with $Z'\geq 0$) is also admissible in the sense that the entropy inequality hold for all entropies, because according to Propositions \ref{prop:full-companions} and \ref{prop:full-entropies} every entropy must be of the form \eqref{eq:full-entropies} with $Z'\geq 0$ plus a requirement on $Z''$. We show in Chapter~\ref{chap:appl-riemann} that there are initial data for the full Euler system which admit infinitely many admissible (in our sense, i.e. with $Z'\geq 0$) weak solutions. With the arguments above, these solutions are also admissible in the actual sense. Hence our work in Chapter~\ref{chap:appl-riemann} even proves that there are initial data for the full Euler system which lead to infinitely many admissible (in the actual sense) weak solutions.
\end{rem}

As in the barotropic case, the linear terms in \eqref{eq:full-entropies} are irrelevant. Additionally we can make use of the function $s$ introduced in Chapter~\ref{chap:intro}, see \eqref{eq:s}. More precisely, the following holds:
\begin{prop} \label{prop:entropy-is-enough} 
	A weak solution $\vU\in L^\infty\big((0,T)\times \Omega;\sO\big)$ of the initial (boundary) value problem \eqref{eq:cons-law-strong}, \eqref{eq:conslaws-ic}, \eqref{eq:conslaws-bc} for the full Euler system, i.e. $\sO$, $\vU$ and $\mF$ are given by \eqref{eq:full-O} - \eqref{eq:full-F}, is admissible\footnote{Again note that in the context of the full Euler system, we use a modified notion of \emph{admissibility}, see the remark above.} if and only if the entropy inequality \eqref{eq:admissibility-strong} holds for all $(\eta,\vq)$ of the form 
	$$
	(\eta,\vq) = - ( \rho,\vm) Z \Big(s\big(\rho,p(\rho,\vm,E)\big)\Big) \ec
	$$
	where $Z\in C^\infty(\R)$ with $Z'\geq 0$, in the sense of distributions.
\end{prop}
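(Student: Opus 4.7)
The ``only if'' direction is immediate: if the entropy inequality holds for every entropy pair admitted by the modified admissibility criterion (i.e.\ every $\eta$ of the form \eqref{eq:full-entropies} with $Z'\geq 0$), then in particular it holds for the subfamily obtained by setting $a=0$, $\vb=\vz$, $c=0$. So the entire content of the statement lies in the converse, and the strategy will mirror that of Proposition~\ref{prop:energy-is-enough} in the barotropic case.

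For the converse, I would start from the full classification given by Propositions~\ref{prop:full-companions} and~\ref{prop:full-entropies}: any $(\eta,\vq)$ that qualifies as an entropy pair in our (modified) sense must be of the form
\begin{align*}
\eta(\rho,\vm,E) &= -\rho Z\!\left(s\big(\rho,p(\rho,\vm,E)\big)\right) + aE + \vm\cdot\vb + c,\\
\vq(\rho,\vm,E) &= -\vm Z\!\left(s\big(\rho,p(\rho,\vm,E)\big)\right) + a\big(E+p\big)\tfrac{\vm}{\rho} + (\vm\cdot\vb)\tfrac{\vm}{\rho} + p\vb,
\end{align*}
with $a,c\in\R$, $\vb\in\R^n$ and $Z\in C^\infty(\R)$ satisfying $Z'\geq 0$. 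The natural move is then to split this pair additively into a ``nonlinear part'' $(\eta_1,\vq_1)=-(\rho,\vm)Z(s)$ and a ``linear part'' $(\eta_2,\vq_2)$ collecting the remaining terms involving $a$, $\vb$ and~$c$.

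Next I would handle the two parts separately. The nonlinear part is precisely the family singled out in the statement, so the hypothesis gives the entropy inequality \eqref{eq:admissibility-strong} for $(\eta_1,\vq_1)$ in the sense of distributions. For the linear part, note that $\eta_2=\va\cdot\vU+c$ with $\va=(0,\vb,a)^\trans\in\R^{n+2}$, and a direct computation with $\vF_k$ from \eqref{eq:full-F} shows that the flux $\vq_2$ above is exactly $q_{2,k}=\va\cdot\vF_k$, i.e.\ it coincides with the entropy flux attached to the linear companion from Example~\ref{ex:entropies}. Hence Proposition~\ref{prop:weak-linentropies} applies and yields the corresponding ``entropy inequality'' as an equality in the sense of distributions. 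Summing the distributional inequality for $(\eta_1,\vq_1)$ with the distributional equality for $(\eta_2,\vq_2)$ produces \eqref{eq:admissibility-strong} for $(\eta,\vq)=(\eta_1+\eta_2,\vq_1+\vq_2)$, which is the arbitrary entropy pair we started with.

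There is no genuine obstacle here: the two ingredients that make the argument go through, namely the complete classification of entropies for full Euler (Propositions~\ref{prop:full-companions}/\ref{prop:full-entropies}) and the automatic validity of the linear entropy identity for weak solutions (Proposition~\ref{prop:weak-linentropies}), are already in place. The only point where a small check is needed is the algebraic verification that the ``affine'' part of $\vq$ in \eqref{eq:full-entr-fluxes} actually equals $\va\cdot\vF_k$ componentwise, but this is a routine computation using the explicit form of $\mF$ in \eqref{eq:full-F} and in no way an obstacle.
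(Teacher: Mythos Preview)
Your approach is essentially the same as the paper's: split the general entropy pair from Proposition~\ref{prop:full-companions} into the nonlinear $-(\rho,\vm)Z(\cdots)$ part and the affine part $aE+\vm\cdot\vb+c$, handle the latter via Proposition~\ref{prop:weak-linentropies} exactly as in Proposition~\ref{prop:energy-is-enough}, and add the two distributional inequalities.

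There is, however, one small step you skipped that the paper makes explicit. The classification in Proposition~\ref{prop:full-companions} gives entropies of the form $-\rho Z\big(\log(p/\rho^\gamma)\big)$, whereas the hypothesis in the statement concerns the family $-\rho Z\big(s(\rho,p)\big)$ with $s(\rho,p)=\tfrac{1}{\gamma-1}\log(p/\rho^\gamma)$. So your sentence ``the nonlinear part is precisely the family singled out in the statement'' is not literally true: you still need to observe that $Z\big(\log(p/\rho^\gamma)\big)=\til{Z}\big(s(\rho,p)\big)$ with $\til{Z}(t):=Z\big((\gamma-1)t\big)$, and that $\til{Z}'=(\gamma-1)Z'\circ\big((\gamma-1)\,\cdot\,\big)\geq 0$ since $\gamma>1$. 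This is routine, but without it the hypothesis does not apply directly to $(\eta_1,\vq_1)$. Once this rescaling is noted, your argument is complete and matches the paper's.
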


\begin{proof}
The fact that the linear terms in \eqref{eq:full-entropies} are irrelevant, can be shown in the same way as in Proposition~\ref{prop:energy-is-enough}, where one makes use of Proposition~\ref{prop:weak-linentropies}. Note furthermore that 
$$
s(\rho,p)= \frac{1}{\gamma-1} \log \left(\frac{p}{\rho^\gamma}\right) \ed
$$ 
It can be simply proven that the positive factor $\frac{1}{\gamma-1}$ can be omitted by redefining $Z$ which does not change the sign of $Z'$. The details are left to the reader.
\end{proof}

\subsection{Admissible Weak Solutions} \label{subsec:euler-full-aws} 

For the full Euler system we consider only the case $\Omega=\R^2$, $T=\infty$ in this book. As in the barotropic case we recap the definition of an admissible weak solution to the corresponding initial value problem. Again notice that we can jump back and forth between primitive variables $(\rho,\vu,p)$ and conserved variables $(\rho,\vm,E)$ since we exclude vacuum. Moreover we bear in mind Proposition~\ref{prop:entropy-is-enough}. Finally we refer to Subsection~\ref{subsec:euler-baro-aws} for some remarks which are valid in the case of full Euler, too.

\begin{defn} \label{defn:aws-full-whole}
	A triple $(\rho,\vu,p) \in L^\infty\big((0,\infty) \times \R^2; \R^+ \times \R^2 \times \R^+\big)$ is a weak solution of the initial value problem \eqref{eq:full-euler-pv-dens} - \eqref{eq:full-euler-pv-en}, \eqref{eq:full-initial} if the following equations are satisfied for all test functions $(\phi,\vphi,\psi)\in \Cc\big([0,\infty) \times \R^2;\R\times \R^2\times \R\big)$:
	\begin{align}
		\int_0^\infty \int_{\R^2} \Big[\rho \partial_t \phi + \rho\vu\cdot\Grad \phi\Big]\dx\dt + \int_{\R^2} \rho_\init\phi(0,\cdot) \dx &= 0 \es \label{eq:full-euler-weak-dens} \\
		\int_0^\infty \int_{\R^2} \Big[\rho\vu \cdot\partial_t \vphi + \rho\vu\otimes\vu:\Grad \vphi + p\Div \vphi\Big]\dx\dt + \int_{\R^2} \rho_\init\vu_\init\cdot\vphi(0,\cdot) \dx &= 0 \es \label{eq:full-euler-weak-mom} \\
		\int_0^\infty \int_{\R^2} \left(\bigg(\half\rho|\vu|^2 + \rho e(\rho,p)\bigg) \partial_t \psi + \bigg(\half\rho|\vu|^2 + \rho e(\rho,p) + p\bigg)\vu\cdot\Grad \psi \right)\dx\dt \quad & \notag \\ 
		+ \int_{\R^2} \bigg(\half\rho_\init|\vu_\init|^2 + \rho_\init e(\rho_\init,p_\init)\bigg)\psi(0,\cdot) \dx &= 0 \ed \label{eq:full-euler-weak-en}
	\end{align}
	A weak solution is admissible if in addition
	\begin{equation} \label{eq:full-euler-weak-entropy}
		\int_0^\infty \int_{\R^2} \Big[\rho Z\big(s(\rho,p)\big) \partial_t \varphi + \rho Z\big(s(\rho,p)\big) \vu\cdot\Grad \varphi\Big]\dx\dt + \int_{\R^2} \rho_\init Z\big(s(\rho_\init,p_\init)\big)\varphi(0,\cdot) \dx \leq 0
	\end{equation}
	for all $Z\in C^\infty(\R)$ with $Z'\geq 0$, and all $\varphi \in \Cc\big([0,\infty) \times \R^2;\R^+_0\big)$. 
\end{defn}

\chapter{Convex Integration} \label{chap:convint} 

This chapter is the main part of this book. Here convex integration is applied to the barotropic Euler system. Our goal is to prove Theorem~\ref{thm:convint}, which can be seen as a ``compressible analogue'' of a result by \name{De~Lellis} and \name{Sz{\'e}kelyhidi}, see \cite[Proposition 2]{DelSze10} or \cite[Proposition 2.4]{DelSze12}. We mimick the procedure performed by \name{De~Lellis} and \name{Sz{\'e}kelyhidi} in \cite{DelSze09} and \cite{DelSze10}. Almost every definition and claim in this chapter has an ``incompressible variant'' in those papers. Note that Theorem~\ref{thm:convint} does not immediately provide admissible weak solutions to an initial boundary value problem for the barotropic Euler system. However this theorem is later in Chapters \ref{chap:appl-ibvp} and \ref{chap:appl-riemann} used to find solutions to particular initial (boundary) value problems, even for the full Euler equations.

This chapter is organized as follows. In Section~\ref{sec:convint-prel} we adjust the problem in such a way that we can apply convex integration. Furthermore we outline how the convex integration technique works. In Section~\ref{sec:convint-convex-hulls} we study $\Lambda$-convex hulls in general since the $\Lambda$-convex hull of a particular set $K$ is needed in order to implement convex integration. This particular $\Lambda$-convex hull is computed in Section~\ref{sec:convint-U}. In order to apply convex integration, one needs operators yielding solutions to a particular system of linear PDEs. Those operators are constructed in Section~\ref{sec:convint-operators}. Sections \ref{sec:convint-thm} and \ref{sec:convint-pp} represent the main part of this chapter. In Section~\ref{sec:convint-thm} we formulate Theorem~\ref{thm:convint} and prove it using the so-called \emph{Perturbation Property}. The latter is proven in Section~\ref{sec:convint-pp}. In fact this is the proof using convex integration. Finally in Section~\ref{sec:convint-nodens} we prove a fixed-density-version of Theorem~\ref{thm:convint} which represents in some sense the ``incompressible'' convex integration by \name{De~Lellis} and \name{Sz{\'e}kelyhidi}.

\section{Outline and Preliminaries} \label{sec:convint-prel} 

\subsection{Adjusting the Problem} \label{subsec:convint-prel-adjusting}

Mimicking the procedure by \name{De~Lellis} and \name{Sz{\'e}kelyhidi}, see e.g. \cite{DelSze09} and \cite{DelSze10}, where the kinetic energy $|\vv|^2=\tr(\vv\otimes\vv)$ was prescribed for the incompressible Euler system, we want to prescribe
\begin{equation} \label{eq:convint-temp-trace}
\tr\left(\frac{\vm\otimes\vm}{\rho} + p(\rho) \id\right)=\frac{|\vm|^2}{\rho} + np(\rho) \mathop{=}\limits^! 2\ov{e}\ec
\end{equation}
where $\ov{e}=\ov{e}(t,\vx)$. Since we want to work with traceless matrices, we reformulate \eqref{eq:baro-euler-cv-mom} as
$$
\partial_t \vm + \Div\left(\frac{\vm\otimes\vm}{\rho} + p(\rho) \id - \frac{2\ov{e}}{n} \id\right) + \frac{2}{n}\Grad \ov{e} =\vz\ed
$$

In this book we look for solutions with constant $\ov{e}=:c$.

\begin{rem}
	Note that this is a difference to the work by \name{De~Lellis} and \name{Sz{\'e}kelyhidi} for the incompressible Euler system, where the kinetic energy is prescribed by a not necessarily constant function $\ov{e}$. The generalization towards a non-constant $\ov{e}$ in the compressible case is future work.
\end{rem}

Now we can rewrite \eqref{eq:baro-euler-cv-dens}, \eqref{eq:baro-euler-cv-mom} as
\begin{align} 
\partial_t \rho + \Div \vm &=0 \ec \label{eq:euler-traceless-dens}\\
\partial_t \vm + \Div\left(\frac{\vm\otimes\vm}{\rho} + p(\rho) \id - \frac{2c}{n}\id\right) &=\vz\ed \label{eq:euler-traceless-mom}
\end{align}

\begin{rem} 
	Note that if a \emph{monoatomic gas} is considered, i.e. $p(\rho)=\rho^{\frac{2}{n}+1}$, then $P(\rho)=\frac{n}{2} p(\rho)$, see \eqref{eq:isentropic-P}. Hence in this case we have 
	$$
		\ov{e} = \half\frac{|\vm|^2}{\rho} + P(\rho)\ed
	$$
	In other words we prescribe the energy. 
\end{rem}

\subsection{Tartar's Framework} \label{subsec:convint-prel-tartar}

The following procedure is known as \emph{Tartar's Framework} since is has been introduced by \name{Tartar} \cite{Tartar79}. We replace $\frac{\vm\otimes\vm}{\rho} + p(\rho) \id - \frac{2c}{n}\id$ in \eqref{eq:euler-traceless-mom} by a new unknown $\mU=\mU(t,\vx)$, which takes values in $\szn$. According to the arguments in the preceding subsection, we obtain a linear system with the following property.

\smallskip 

If $(\rho,\vm,\mU)$ solves 
\begin{align} 
	\partial_t \rho + \Div \vm &=0 \ec \label{eq:euler-lin-dens}\\
	\partial_t \vm + \Div\mU &=\vz \ec \label{eq:euler-lin-mom}
\end{align}
and takes values in 
\begin{equation} \label{eq:K}
	K:=\left\{(\rho,\vm,\mU)\in\R^+\times\R^n\times\szn\,\Big|\,\mU=\frac{\vm\otimes\vm}{\rho} + \left(p(\rho) - \frac{2c}{n}\right)\id\right\}\ec
\end{equation}
then $(\rho,\vm)$ solves \eqref{eq:euler-traceless-dens}, \eqref{eq:euler-traceless-mom} and hence \eqref{eq:baro-euler-cv-dens}, \eqref{eq:baro-euler-cv-mom} together with 
\begin{equation} \label{eq:prescribed-trace}
\half\frac{|\vm|^2}{\rho} + \frac{n}{2}p(\rho)  = c\ed
\end{equation}

\smallskip

Hence in order to solve the barotropic Euler system \eqref{eq:baro-euler-cv-dens}, \eqref{eq:baro-euler-cv-mom} together with property \eqref{eq:prescribed-trace}, we may find $(\rho,\vm,\mU)$ solving \eqref{eq:euler-lin-dens}, \eqref{eq:euler-lin-mom} and taking values in $K$ as defined in \eqref{eq:K}.

\begin{rem}
	We donote the space $\R\times \R^n\times \szn$ as \emph{extended phase space}.
\end{rem}

\subsection{Plane Waves and the Wave Cone} \label{subsec:convint-prel-wavecone} 

Let us relax the set $K$ to a larger set $\sU$ which will be explained more detailed below. Justified by Tartar's framework above, we call a triple $(\rho,\vm,\mU)$ a \emph{solution} if it solves \eqref{eq:euler-lin-dens}, \eqref{eq:euler-lin-mom} and takes values in $K$. A triple $(\rho,\vm,\mU)$ solving \eqref{eq:euler-lin-dens}, \eqref{eq:euler-lin-mom} and taking values in $\sU$ is called a \emph{subsolution}. Let us assume that we have a subsolution $(\rho,\vm,\mU)$. The basic idea of convex integration is to construct another subsolution $(\rho_\pert,\vm_\pert,\mU_\pert)$ by adding oscillations to $(\rho,\vm,\mU)$, such that the new subsolution $(\rho_\pert,\vm_\pert,\mU_\pert)$ is 
\begin{itemize}
	\item in some sense closer to $K$ than $(\rho,\vm,\mU)$, 
	\item but still in another sense close to $(\rho,\vm,\mU)$, 
\end{itemize}
see Proposition~\ref{prop:pert-prop} for details. When \name{De~Lellis} and \name{Sz{\'e}kelyhidi} began to apply convex integration to the incompressible Euler equations \cite{DelSze09}, \cite{DelSze10}, they used planar waves as such oscillations. The relaxed set $\sU$ has to be chosen such that it is compatible with the oscillations that are added, which we choose to be planar waves as well. In order to specify $\sU$ we have to study planar waves and the wave cone. 

A plane wave solution to \eqref{eq:euler-lin-dens}, \eqref{eq:euler-lin-mom} is a solution $(\rho,\vm,\mU)$ of the form
\begin{equation} \label{eq:planewave}
(\rho,\vm,\mU)(t,\vx)= (\ov{\rho},\ov{\vm},\ov{\mU})\,h\big((t,\vx)\cdot \veta\big)
\end{equation}
with a constant triple $(\ov{\rho},\ov{\vm},\ov{\mU})\in \R\times\R^n\times\szn$, a function $h:\R\rightarrow\R$ and a direction in the space-time $\veta\in\R^{1+n}\setminus\{\vz\}$. 

\begin{defn} \label{defn:wavecone}
	Define the \emph{wave cone} as\footnote{See Subsection~\ref{subsec:not-spacetime} for the notation of the components of a vector in the space-time $\R^{1+n}$.}
	\begin{equation*}
	\begin{split}
	\Lambda:=\bigg\{(\ov{\rho},\ov{\vm},\ov{\mU})&\in\R\times\R^n\times\szn\,\Big|\,\\
	&\exists\veta\in\R^{1+n}\setminus\{\vz\}\text{ such that }\ \ov{\rho}\,\eta_t + \ov{\vm} \cdot \veta_\vx=0\ \text{ and }\ \ov{\vm}\,\eta_t + \ov{\mU} \,\veta_\vx=\vz\ \bigg\}\ed
	\end{split}
	\end{equation*}
\end{defn}

The following proposition shows the meaning of Definition~\ref{defn:wavecone}. 
\begin{prop}\label{prop:planarwave}
	Let $(\ov{\rho},\ov{\vm},\ov{\mU})\in \Lambda$ and $h\in C^1(\R)$ arbitrary. Then $(\rho,\vm,\mU)$ defined as in \eqref{eq:planewave} with the corresponding $\veta$, which exists according to Definition~\ref{defn:wavecone}, is a plane wave solution to \eqref{eq:euler-lin-dens}, \eqref{eq:euler-lin-mom}. 
\end{prop}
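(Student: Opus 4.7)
The plan is to simply differentiate the ansatz \eqref{eq:planewave} using the chain rule and then observe that the wave cone condition (Definition~\ref{defn:wavecone}) makes both equations vanish identically. There is no real obstacle here; the proposition is essentially the statement that Definition~\ref{defn:wavecone} was designed precisely so that this chain-rule computation works out.

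More concretely, I would compute
\[
\partial_t \rho(t,\vx) = \ov{\rho}\, h'\big((t,\vx)\cdot\veta\big)\, \eta_t, \qquad \partial_k m_j(t,\vx) = \ov{m}_j \, h'\big((t,\vx)\cdot\veta\big)\, \eta_{x_k},
\]
so that
\[
\partial_t \rho + \Div \vm = h'\big((t,\vx)\cdot\veta\big)\,\big(\ov{\rho}\,\eta_t + \ov{\vm}\cdot\veta_\vx\big).
\]
By the first half of the wave cone condition the bracket is zero, so \eqref{eq:euler-lin-dens} holds. The same reasoning applied componentwise to $\vm$ and $\mU$ gives
\[
\partial_t \vm + \Div \mU = h'\big((t,\vx)\cdot\veta\big)\,\big(\ov{\vm}\,\eta_t + \ov{\mU}\,\veta_\vx\big) = \vz
\]
by the second half of Definition~\ref{defn:wavecone}, establishing \eqref{eq:euler-lin-mom}.

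Since $h\in C^1(\R)$, all derivatives above exist in the classical sense and the resulting $(\rho,\vm,\mU)$ is a (classical, hence weak) solution of the linear system, so nothing further is needed. The only mildly delicate point worth flagging in the write-up is keeping track of the fact that $\Div$ acts in the spatial variables only, so only $\veta_\vx$ appears in the divergence terms, whereas $\partial_t$ produces $\eta_t$; but once one sets this up carefully, the wave cone relations give the result term by term.
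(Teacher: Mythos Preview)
Your proposal is correct and matches the paper's proof essentially line for line: both apply the chain rule to the plane-wave ansatz and invoke the two wave-cone relations to conclude. The paper additionally remarks that the symmetry of $\ov{\mU}$ is used when identifying $\Div\mU$ with $\ov{\mU}\,\veta_\vx\, h'$, a point you might make explicit in your write-up.
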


\begin{proof}
	The proof is a simple calculation. Indeed we have
	\begin{align*}
	\partial_t \rho + \Div \vm &= \ov{\rho} \, \partial_t \Big(h\big((t,\vx)\cdot \veta\big)\Big) + \ov{\vm} \cdot \Grad \Big(h\big((t,\vx)\cdot \veta\big)\Big) \\
	&=h'\big((t,\vx)\cdot \veta\big) \Big(\ov{\rho}\,\eta_t + \ov{\vm} \cdot \veta_\vx\Big) \ =\ 0
	\end{align*}
	and 
	\begin{align*}
	\partial_t \vm + \Div \mU &= \ov{\vm} \, \partial_t \Big(h\big((t,\vx)\cdot \veta\big)\Big) + \ov{\mU} \cdot \Grad \Big(h\big((t,\vx)\cdot \veta\big)\Big) \\
	&=h'\big((t,\vx)\cdot \veta\big) \Big(\ov{\vm}\,\eta_t + \ov{\mU} \, \veta_\vx\Big) \ =\ \vz\ec 
	\end{align*}
	where we have used the fact that $\ov{\mU}$ is symmetric.
\end{proof}

Note that we can shorten the definition of $\Lambda$ to 
\begin{equation} \label{eq:wavecone-kernel}
\Lambda=\left\{(\ov{\rho},\ov{\vm},\ov{\mU})\in\R\times\R^n\times\szn\,\Big|\,\exists\veta\in\R^{1+n}\smallsetminus\{\vz\}\text{ such that }\left(\begin{array}{cc} \ov{\rho} & \ov{\vm}^\trans \\ \ov{\vm} & \ov{\mU} \end{array}\right)\cdot \veta=\vz\ \right\}\ed
\end{equation}
In other words $(\ov{\rho},\ov{\vm},\ov{\mU})\in\Lambda$ if and only if the kernel of the matrix 
$$
\left(\begin{array}{cc} \ov{\rho} & \ov{\vm}^\trans \\ \ov{\vm} & \ov{\mU} \end{array}\right)
$$ 
contains $\veta\neq \vz$. This holds if and only if its determinant is zero. Hence we can write
\begin{equation} \label{eq:wavecone-det}
\Lambda = \left\{(\ov{\rho},\ov{\vm},\ov{\mU})\in\R\times\R^n\times\szn\,\Big|\,\det\left(\begin{array}{cc}
\ov{\rho} & \ov{\vm}^\trans \\
\ov{\vm} & \ov{\mU}
\end{array}\right)=0\right\}\ed
\end{equation}

\subsection{Sketch of the Convex Integration Technique} \label{subsec:convint-prel-U}

In this subsection we shall sketch how convex integration works and give a rough idea how the relaxed set $\sU$ must be defined.

\begin{figure}[tb] 
	\centering
	\subfloat[$H_2$-condition.\label{fig:HN-2}]{
		\centering
		\includegraphics[width=0.43\textwidth]{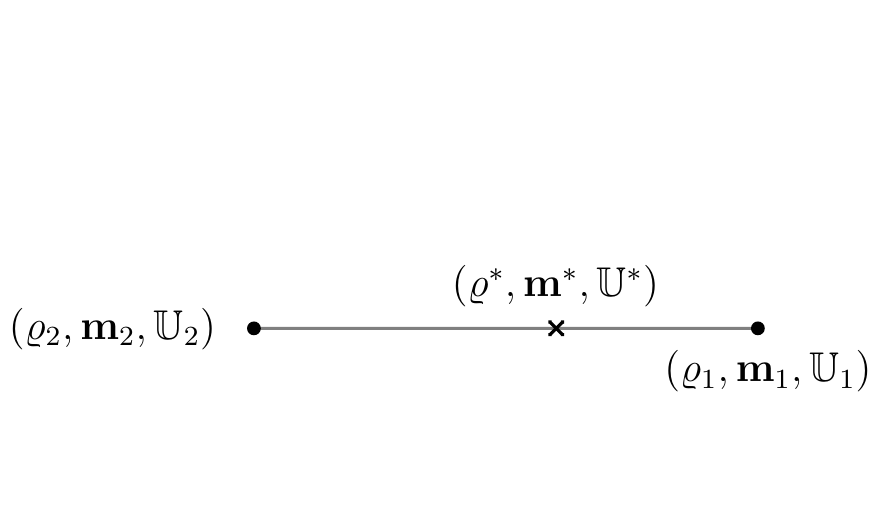}
	}
	\hspace{1.5cm}
	\subfloat{
		\centering
		\includegraphics[width=0.43\textwidth]{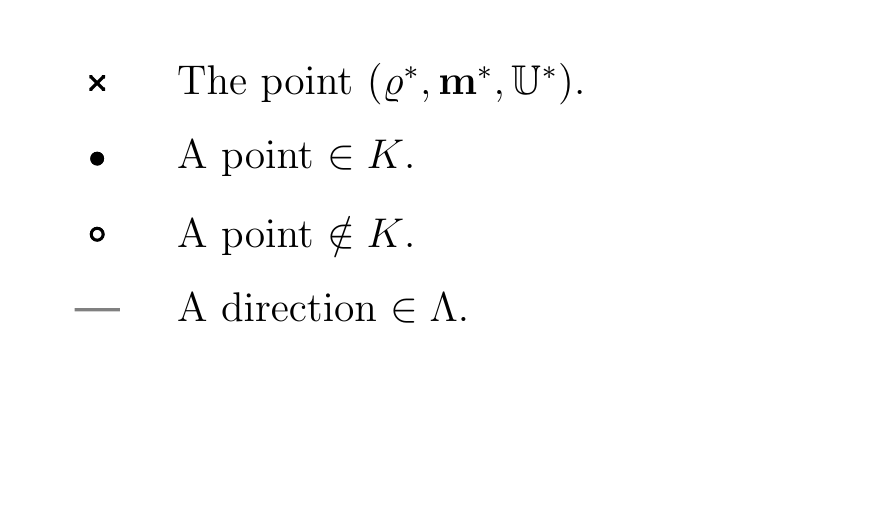}
	} \setcounter{subfigure}{1} \\[4mm]
	\subfloat[$H_3$-condition.\label{fig:HN-3}]{
		\centering
		\includegraphics[width=0.43\textwidth]{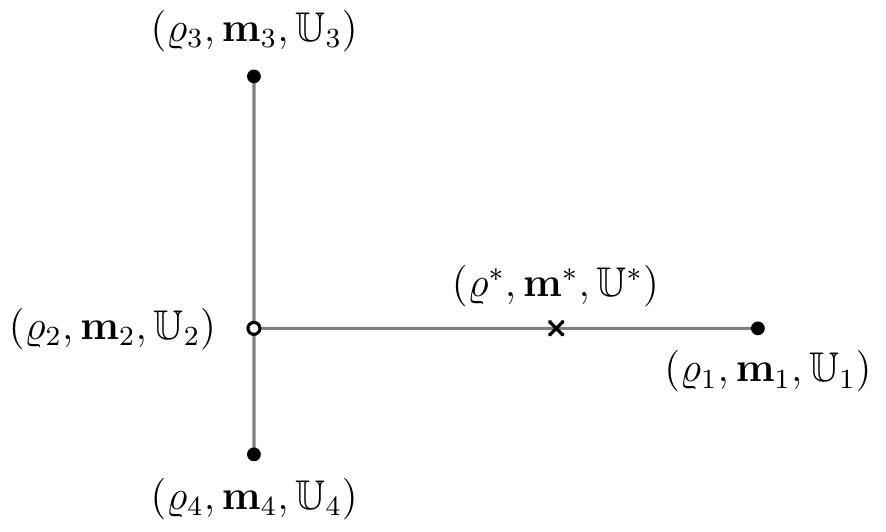}
	}
	\hspace{1.5cm}
	\subfloat[$H_4$-condition, version 1.\label{fig:HN-4_1}]{
		\centering 
		\includegraphics[width=0.43\textwidth]{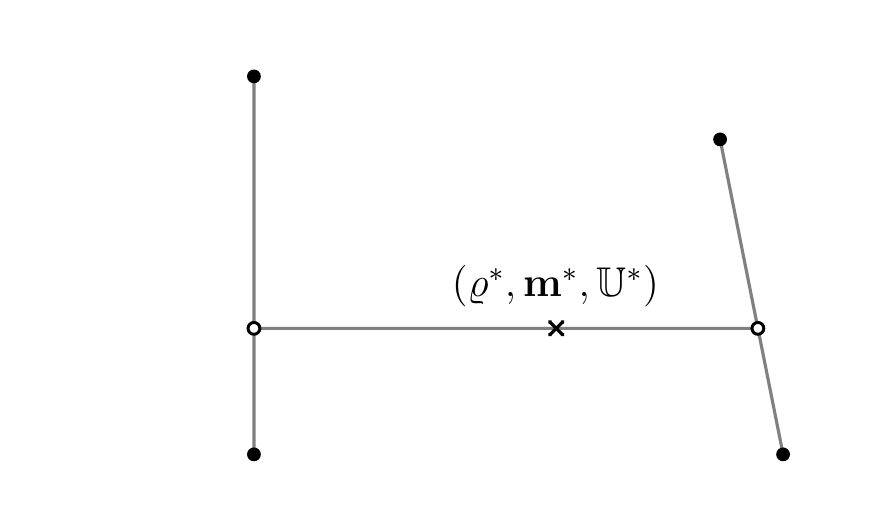}
	} \\[4mm]
	\subfloat[$H_4$-condition, version 2.\label{fig:HN-4_2}]{
		\centering
		\includegraphics[width=0.43\textwidth]{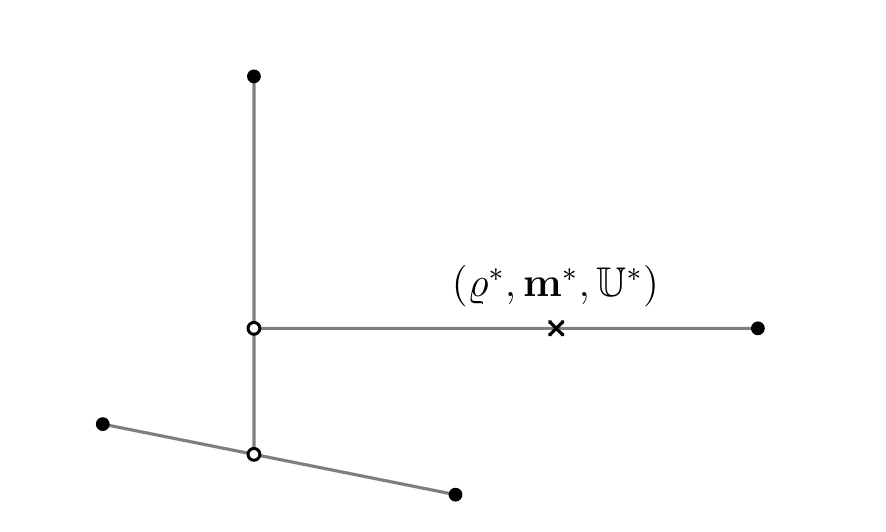}
	}
	\hspace{1.5cm}
	\subfloat[$H_5$-condition.\label{fig:HN-5}]{
		\centering
		\includegraphics[width=0.43\textwidth]{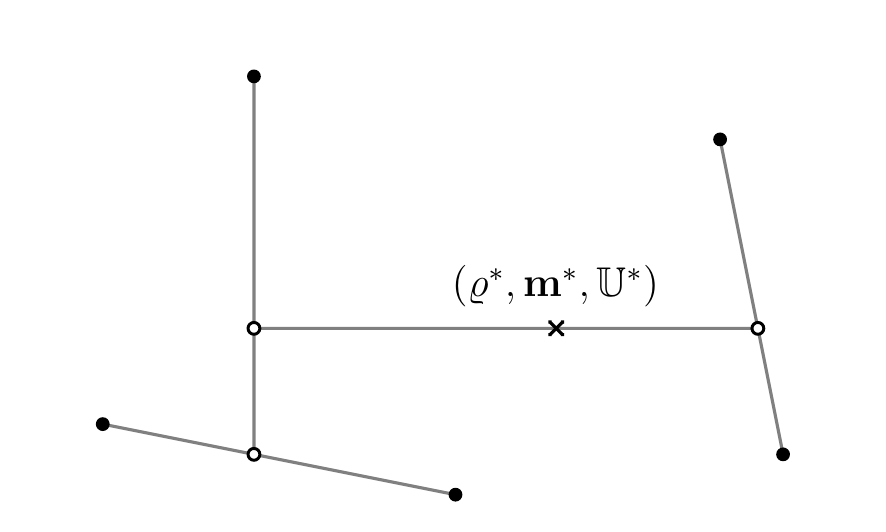}
	}
	\caption{Sample configurations in the extended phase space, which illustrate the $H_N$-condition introduced in Section~\ref{sec:convint-convex-hulls}.} 
	\label{fig:HN}
\end{figure}

The convex integration technique as used in this book in Sections \ref{sec:convint-thm} and \ref{sec:convint-pp} can be explained as follows. We start with one subsolution $(\rho,\vm,\mU)$ whose existence must be guaranteed somehow. In Theorem~\ref{thm:convint} we \emph{assume} existence of such a subsolution and shift the issue of existence to Chapters \ref{chap:appl-ibvp} and \ref{chap:appl-riemann}, where Theorem~\ref{thm:convint} is applied. As mentioned above, we want to construct another subsolution $(\rho_\pert,\vm_\pert,\mU_\pert)$ by adding plane waves to $(\rho,\vm,\mU)$ such that $(\rho_\pert,\vm_\pert,\mU_\pert)$ is in some sense closer to $K$ than $(\rho,\vm,\mU)$, but still in another sense close to $(\rho,\vm,\mU)$. To this end we divide the space-time domain, on which we want to solve the Euler system, into small cubes, on each of which we approximate $(\rho,\vm,\mU)$ by a constant. These constants lie in $\sU$ since $(\rho,\vm,\mU)$ is a subsolution and therefore takes values in $\sU$. In order to sketch how $\sU$ must be defined, let us first assume for simplicity that there was only one cube on which $(\rho,\vm,\mU)$ is approximated by a constant $(\rho^\ast,\vm^\ast,\mU^\ast)\in \sU$.

If $(\rho^\ast,\vm^\ast,\mU^\ast)\in K$, then the subsolution $(\rho,\vm,\mU)$ is already close to $K$. Hence $\sU$ shall contain $K$, i.e. $K\subset \sU$. Now assume that there exist $(\rho_1,\vm_1,\mU_1),(\rho_2,\vm_2,\mU_2)\in K$ with 
\begin{align}
	& \bullet \ (\rho_2,\vm_2,\mU_2)-(\rho_1,\vm_1,\mU_1)\ \in\ \Lambda \qquad \text{ and } \hspace{6.4cm} \label{eq:h2a} \\[2mm]
	& \bullet \ (\rho^\ast,\vm^\ast,\mU^\ast) \ \in\ \big[ (\rho_1,\vm_1,\mU_1) , (\rho_2,\vm_2,\mU_2) \big]\ec \label{eq:h2b}
\end{align}
see Figure~\ref{fig:HN-2}. According to Proposition~\ref{prop:planarwave}, we can thus find a plane wave solution to \eqref{eq:euler-lin-dens}, \eqref{eq:euler-lin-mom} oscillating between $(\rho_1,\vm_1,\mU_1)$ and $(\rho_2,\vm_2,\mU_2)$. Since the profile $h$ can be chosen arbitrarily, let $h$ jump between two values such that 
$$
(\rho^\ast,\vm^\ast,\mU^\ast) + \Big((\rho_2,\vm_2,\mU_2)-(\rho_1,\vm_1,\mU_1)\Big) h\big((t,\vx)\cdot \veta\big)
$$
is either equal to $(\rho_1,\vm_1,\mU_1)$ or $(\rho_2,\vm_2,\mU_2)$, both of which lie in $K$. Hence adding this plane wave to the constant $(\rho^\ast,\vm^\ast,\mU^\ast)$ gives something which takes values in $K$. Since the constant $(\rho^\ast,\vm^\ast,\mU^\ast)$ is just an approximation of the subsolution $(\rho,\vm,\mU)$, the sum  
$$
(\rho_\pert,\vm_\pert,\mU_\pert):=(\rho,\vm,\mU) + \Big((\rho_2,\vm_2,\mU_2)-(\rho_1,\vm_1,\mU_1)\Big) h\big((t,\vx)\cdot \veta\big)
$$
only takes values close to $K$. In addition to that, the property of taking values close to $K$ does not hold for all $(t,\vx)$ in the space-time domain, but for ``most'' of the pairs $(t,\vx)$, due to the fact that we have to mollify the $h$ considered above to obtain $h\in C^1(\R)$. Furthermore taking $h$ such that it changes its values rapidly, we achieve that $(\rho_\pert,\vm_\pert,\mU_\pert)$ is close to $(\rho,\vm,\mU)$ with respect to the $L^\infty$ weak-$\ast$ topology, cf. Lemma~\ref{lemma:not-periodic-weak-convergence}. The whole procedure is described in datail in Section~\ref{sec:convint-pp}. As a consequence, the relaxed set $\sU$ shall also contain points in $\R\times \R^n\times \szn$ with the property of $(\rho^\ast,\vm^\ast,\mU^\ast)$ above, i.e. existence of $(\rho_1,\vm_1,\mU_1),(\rho_2,\vm_2,\mU_2)\in K$ with \eqref{eq:h2a}, \eqref{eq:h2b}. 

In addition to that, more complex configurations are conceivable. For instance consider existence of $(\rho_1,\vm_1,\mU_1),(\rho_2,\vm_2,\mU_2)$ with \eqref{eq:h2a}, \eqref{eq:h2b}, where $(\rho_1,\vm_1,\mU_1)\in K$ but $(\rho_2,\vm_2,\mU_2)\notin K$. Instead there exist $(\rho_3,\vm_3,\mU_3),(\rho_4,\vm_4,\mU_4)\in K$ with 
\begin{align}
	& \bullet \ (\rho_4,\vm_4,\mU_4)-(\rho_3,\vm_3,\mU_3)\ \in\ \Lambda \qquad \text{ and } \hspace{6.4cm} \label{eq:h3a} \\[2mm]
	& \bullet \ (\rho_2,\vm_2,\mU_2) \ \in\ \big[ (\rho_3,\vm_3,\mU_3) , (\rho_4,\vm_4,\mU_4) \big]\ec \label{eq:h3b}
\end{align}
see Figure~\ref{fig:HN-3}. Then iterating the process outlined above, i.e. adding two plane wave oscillations, leads to the desired result, where $(\rho_\pert,\vm_\pert,\mU_\pert)$ takes values close to $K$ for ``most'' of the pairs $(t,\vx)$, but on the other hand $(\rho_\pert,\vm_\pert,\mU_\pert)$ is still close to $(\rho,\vm,\mU)$ with respect to the $L^\infty$ weak-$\ast$ topology. Hence points in $\R\times \R^n\times \szn$ such that there exist $(\rho_1,\vm_1,\mU_1),(\rho_3,\vm_3,\mU_3),(\rho_4,\vm_4,\mU_4)\in K$ and $(\rho_2,\vm_2,\mU_2)\notin K$ with \eqref{eq:h2a}, \eqref{eq:h2b}, \eqref{eq:h3a} and \eqref{eq:h3b} shall lie in $\sU$ as well.

Some even more complex configurations are sketched in Figures \ref{fig:HN-4_1} - \ref{fig:HN-5}. The $H_N$-condition, which is introduced in Section~\ref{sec:convint-convex-hulls} below, serves as an elegant way to form the ideas above into a rigorous definition of the relaxed set $\sU$. Later in Section~\ref{sec:convint-U} we give such a definition and compute $\sU$.

Above we have described convex integration performed on just one cube. However in general we have to deal with several cubes in each of which we want to add a different plane wave. To this end we have to cut off the plane waves in order to get oscillations that are compactly supported in one cube. One could also say that we use \emph{localized} plane waves. This localization or cut off requires some effort, see Section~\ref{sec:convint-operators}. 

The procedure described above allows to construct a sequence of subsolutions getting closer and closer to $K$ and hence converging to a solution, see for example \name{De~Lellis} and \name{Sz{\'e}kelyhidi} \cite[Section 5]{DelSze09} where this has been carried out for the incompressible Euler equations. However this \emph{constructive approach} only yields \emph{one} solution. Since one has much freedom when designing the localized plane waves, one could construct another sequence that converges to another solution. Hence one obtains \emph{two} solutions. Iterating this argument one could prove existence of \emph{infinitely many} solutions. However we will use a much more elegant way to prove existence of infinitely many solutions instead of the constructive approach by using Baire category arguments. Here we consider the set of solutions $\Xi$ and show that this set contains infinitely many elements using the Baire category theorem. Convex integration as described above enters this \emph{Baire category approach} as a contradiction argument, see Sections \ref{sec:convint-thm} and \ref{sec:convint-pp} for details.

\section{On $\Lambda$-Convex Hulls} \label{sec:convint-convex-hulls} 

The statements presented in this section hold for every cone\footnote{See Definition~\ref{defn:cone} for a proper definition of a \emph{cone}.} $\Lambda\subseteq\R^M$ and every subset $K\subseteq\R^M$ with $M>1$. In Section~\ref{sec:convint-U} we will come back to $\Lambda$ and $K$ as given by \eqref{eq:wavecone-det} and \eqref{eq:K} respectively.

\subsection{Definitions and Basic Facts} \label{subsec:convint-ch-defns}

We discuss some general facts about $\Lambda$-convex hulls in this section. Let us begin with some definitions.

\begin{defn} \label{defn:cone}
	A set $\Lambda\subseteq\R^M$ is called \emph{cone} if $\Lambda\neq\emptyset$ and $\forall \,\vp\in \Lambda, \forall \alpha\in \R: \alpha \vp \in \Lambda$.
\end{defn}

\begin{defn} \label{defn:convex}
	\begin{itemize}
	\item A set $S\subseteq\R^M$ is called 
	\begin{itemize}
		\item \emph{convex} if $\forall \,\vp,\vq \in S : [\vp,\vq]\subseteq S$, 
		\item \emph{$\Lambda$-convex} if $\forall \,\vp,\vq \in S$ with $\vp-\vq\in\Lambda : [\vp,\vq]\subseteq S$.
	\end{itemize}	
	\item The \emph{$\Lambda$-convex hull} $K^\Lambda$ is the smallest $\Lambda$-convex set which contains $K$.
	\end{itemize}
\end{defn}

\begin{rem} 
	The $\Lambda$-convex hull $K^\Lambda$ of a set $K\subset \R^M$ as defined above is well-defined. The reason for this is the same as for the convex hull, see Proposition~\ref{prop:convhull=convcombis}~\ref{item:convex-hull.a}. 
\end{rem}

Let us continue with some basic facts about the notions defined above.
\begin{prop} \label{prop:generalfacts} 
	\begin{enumerate}
		\item \label{item:generalfacts.a} Every convex set is $\Lambda$-convex.
		\item \label{item:generalfacts.b} $K^\Lambda\subseteq K^\co$.
	\end{enumerate}
\end{prop}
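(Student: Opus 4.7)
The plan is that both parts follow essentially directly from the definitions, so the proof is quite short; no real obstacle is expected.

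For part \ref{item:generalfacts.a}, I would simply unfold the definitions. Suppose $S$ is convex. Take any $\vp, \vq \in S$ with $\vp - \vq \in \Lambda$. Since $S$ is convex, Definition~\ref{defn:convex} gives $[\vp,\vq] \subseteq S$ without using the assumption $\vp-\vq \in \Lambda$ at all. Hence the $\Lambda$-convexity condition (which is the same implication restricted to pairs with $\vp-\vq\in\Lambda$) is automatically satisfied. So $S$ is $\Lambda$-convex.

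For part \ref{item:generalfacts.b}, I would argue as follows. The convex hull $K^{\co}$ is by definition a convex set containing $K$. By part \ref{item:generalfacts.a} it is then also $\Lambda$-convex. Now the $\Lambda$-convex hull $K^\Lambda$ is, by Definition~\ref{defn:convex}, the \emph{smallest} $\Lambda$-convex set containing $K$. Therefore $K^\Lambda \subseteq K^{\co}$.

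The only very mild subtlety worth pointing out is that the definitions of $K^{\co}$ and $K^\Lambda$ as ``smallest'' sets are well-defined, which the paper justifies via the intersection of all convex (respectively $\Lambda$-convex) sets containing $K$; this has already been flagged in the remark following Definition~\ref{defn:convex}, so I would not belabor it.
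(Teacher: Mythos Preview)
Your proof is correct and follows essentially the same approach as the paper, which simply notes that part \ref{item:generalfacts.a} is an immediate consequence of Definition~\ref{defn:convex} and that part \ref{item:generalfacts.b} follows directly from \ref{item:generalfacts.a}. Your write-up just makes explicit the one-line arguments behind these remarks.
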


\begin{proof}
	Part \ref{item:generalfacts.a} is an immediate consequence of Definition~\ref{defn:convex} and \ref{item:generalfacts.b} simply follows from \ref{item:generalfacts.a}.
\end{proof}

\subsection{The $H_N$-Condition and a Way to Define $\sU$} \label{subsec:convint-ch-hn}

We introduce the $H_N$-condition, cf. \name{Pedregal} \cite[Definition 9.1]{Pedregal} or \name{Crippa et al.} \cite[Definition 2.3 (a)]{CGSW17}.
\begin{defn} \label{defn:hn}
	Let $N\in \N$ and $(\tau_i,\vp_i)\in \R^+ \times \R^M$ for $i=1,...,N$. We say that the family of pairs $\big\{(\tau_i,\vp_i)\big\}_{i=1,...,N}$ satisfies the \emph{$H_N$-condition} if the following holds.
	\begin{itemize}
		\item If $N=1$, then $\tau_1=1$.
		\item If $N\geq 2$, then (after relabeling if necessary) $\vp_2-\vp_1 \in\Lambda$ and the family 
		\begin{equation} \label{eq:defn-hn-iteration}
			\left\{ \left( \tau_1 + \tau_2 , \frac{\tau_1}{\tau_1 + \tau_2} \vp_1 + \frac{\tau_2}{\tau_1 + \tau_2} \vp_2\right) \right\} \cup \big\{(\tau_i,\vp_i)\big\}_{i=3,...,N}
		\end{equation}
		satisfies the $H_{N-1}$-condition.
	\end{itemize}
\end{defn} 

\begin{defn}
	Let $N\in \N$ and $(\tau_i,\vp_i)\in \R^+ \times \R^M$ for $i=1,...,N$ with $\sum_{i=1}^N \tau_i =1 $. Then 
	$$
	\vp := \sum_{i=1}^N \tau_i \vp_i
	$$
	is the \emph{barycenter} of the family of pairs $\big\{(\tau_i,\vp_i)\big\}_{i=1,...,N}$.
\end{defn}

Before we define $\sU$, we state some simple facts concerning the $H_N$-condition.

\begin{lemma} \label{lemma:hn1}
	Let $N\in \N$ and $(\tau_i,\vp_i)\in\R^+\times \R^M$ for $i=1,...,N$ such that the family of pairs $\big\{(\tau_i,\vp_i)\big\}_{i=1,...,N}$ satisfies the $H_N$-condition. Then the following two claims are true.
	\begin{enumerate}
		\item \label{item:hn1.a} $\sum_{i=1}^N \tau_i = 1$;
		\item \label{item:hn1.b} If $N\geq 2$, then the family given by \eqref{eq:defn-hn-iteration} and the original family $\big\{(\tau_i,\vp_i)\big\}_{i=1,...,N}$ have the same barycenter.
	\end{enumerate}
\end{lemma}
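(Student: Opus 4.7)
The plan is to treat both parts as essentially one inductive argument on $N$, since the $H_N$-condition is defined recursively via the reduction to $H_{N-1}$ given in \eqref{eq:defn-hn-iteration}.

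For part \ref{item:hn1.a}, I would proceed by induction on $N$. The base case $N=1$ is immediate from the definition: the $H_1$-condition literally says $\tau_1=1$, so $\sum_{i=1}^{1} \tau_i = 1$. For the inductive step, suppose the claim holds for $N-1$ and let $\{(\tau_i,\vp_i)\}_{i=1,\dots,N}$ satisfy $H_N$. By Definition~\ref{defn:hn} (after relabeling), the reduced family \eqref{eq:defn-hn-iteration} satisfies $H_{N-1}$, so by the inductive hypothesis the sum of its first components equals $1$. But that sum is exactly
$$(\tau_1+\tau_2) + \tau_3 + \cdots + \tau_N = \sum_{i=1}^{N}\tau_i,$$
which completes the induction.

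For part \ref{item:hn1.b}, part \ref{item:hn1.a} first legitimizes the word ``barycenter'' for both families, since it also applies to the reduced family \eqref{eq:defn-hn-iteration} (which itself satisfies an $H_{N-1}$-condition). The claim then reduces to a one-line calculation: the barycenter of \eqref{eq:defn-hn-iteration} equals
$$(\tau_1+\tau_2)\cdot\frac{\tau_1 \vp_1 + \tau_2 \vp_2}{\tau_1+\tau_2} + \sum_{i=3}^{N}\tau_i \vp_i = \tau_1\vp_1 + \tau_2\vp_2 + \sum_{i=3}^{N}\tau_i \vp_i = \sum_{i=1}^{N}\tau_i\vp_i,$$
which is the barycenter of the original family.

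I do not expect any real obstacle here: the statement is essentially a sanity check that the recursive Definition~\ref{defn:hn} behaves as one would hope, and both parts fall out immediately by unpacking the definition. The only subtle point is the order of the two parts — part \ref{item:hn1.a} must be proved first so that the notion of barycenter used in part \ref{item:hn1.b} is well-defined on the reduced family.
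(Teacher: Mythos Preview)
Your proposal is correct and essentially identical to the paper's proof: part \ref{item:hn1.a} is handled by the same induction on $N$, and part \ref{item:hn1.b} is the same one-line expansion of the barycenter of the reduced family. Your additional remark about needing part \ref{item:hn1.a} first to make the word ``barycenter'' meaningful is a nice observation that the paper leaves implicit.
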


\begin{proof}
	\begin{enumerate}
		\item We proceed by induction over $N\in\N$. If $N=1$, the claim immediately follows from Definition~\ref{defn:hn}. If $N\geq 2$, then 
		$$
			\left\{ \left( \tau_1 + \tau_2 , \frac{\tau_1}{\tau_1 + \tau_2} \vp_1 + \frac{\tau_2}{\tau_1 + \tau_2} \vp_2\right) \right\} \cup \big\{(\tau_i,\vp_i)\big\}_{i=3,...,N}
		$$
		satisfy the $H_{N-1}$-condition (after relabeling if necassary) and hence by induction hypothesis
		$$
			\sum_{i=1}^N \tau_i = (\tau_1 + \tau_2) + \sum_{i=3}^N \tau_i = 1 \ed
		$$
		\item The proof is finished as soon as we have shown that 
		$$
		(\tau_1 + \tau_2) \left(\frac{\tau_1}{\tau_1 + \tau_2} \vp_1 + \frac{\tau_2}{\tau_1 + \tau_2} \vp_2\right) + \sum_{i=3}^N \tau_i \vp_i = \sum_{i=1}^{N} \tau_i \vp_i \ed
		$$
		But this is obvious.
	\end{enumerate}
\end{proof}

Let us consider an example.

\begin{ex}
	Figure~\ref{fig:HN} illustrates Definition~\ref{defn:hn} in the case $\R^M \cong\R\times \R^n\times \szn$. For instance let us look at Figure~\ref{fig:HN-3}, where
	$$
	\Big\{ \big(\tau_1,(\rho_1,\vm_1,\mU_1)\big),\big(\tau_3,(\rho_3,\vm_3,\mU_3)\big),\big(\tau_4,(\rho_4,\vm_4,\mU_4)\big) \Big\}
	$$
	satisfy the $H_3$-condition with some suitable $\tau_1,\tau_3,\tau_4\in (0,1)$. Note that the barycenter of the family above is given by $(\rho^\ast,\vm^\ast,\mU^\ast)$. Furthermore in this example the family \eqref{eq:defn-hn-iteration} simply reads 
	$$
	\Big\{ \big(\tau_1,(\rho_1,\vm_1,\mU_1)\big),\big(\tau_2,(\rho_2,\vm_2,\mU_2)\big) \Big\}
	$$
	where $\tau_2=\tau_3+\tau_4$ and 
	$$
	(\rho_2,\vm_2,\mU_2) = \frac{\tau_3}{\tau_3 + \tau_4} (\rho_3,\vm_3,\mU_3) + \frac{\tau_4}{\tau_3 + \tau_4} (\rho_4,\vm_4,\mU_4) \ed
	$$
\end{ex}

\begin{lemma} \label{lemma:hn2}
	Let $N_1,N_2\in \N$ and $(\tau_i,\vp_i)\in\R^+\times \R^M$ for $i=1,...,N_1$, $(\mu_i,\vq_i)\in\R^+\times \R^M$ for $i=1,...,N_1$ such that the families $\big\{(\tau_i,\vp_i)\big\}_{i=1,...,N_1}$ and $\big\{(\mu_i,\vq_i)\big\}_{i=1,...,N_2}$ satisfy the $H_{N_1}$- and $H_{N_2}$-condition, respectively. Assume furthermore that $\vp - \vq \in \Lambda$, where $\vp$ and $\vq$ denote the barycenters of the families $\big\{(\tau_i,\vp_i)\big\}_{i=1,...,N_1}$ and $\big\{(\mu_i,\vq_i)\big\}_{i=1,...,N_2}$ respectively. Then for every $\sigma \in(0,1)$ the pairs
	$$
		\big\{\big(\sigma \tau_i,\vp_i\big)\big\}_{i=1,...,N_1} \cup \big\{\big( (1-\sigma)\mu_i,\vq_i \big)\big\}_{i=1,...,N_2}
	$$
	satisfy the $H_{N_1+N_2}$-condition.
\end{lemma}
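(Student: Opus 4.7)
The plan is to proceed by strong induction on $N := N_1 + N_2$, using the recursive structure of the $H_N$-condition itself.

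For the base case $N = 2$ we necessarily have $N_1 = N_2 = 1$, so by Definition~\ref{defn:hn} the two families reduce to $\{(1,\vp_1)\}$ and $\{(1,\vq_1)\}$ with barycenters $\vp = \vp_1$ and $\vq = \vq_1$. The combined family is $\{(\sigma,\vp_1),((1-\sigma),\vq_1)\}$. Since $\vp_1 - \vq_1 = \vp - \vq \in \Lambda$ by hypothesis, the first step of Definition~\ref{defn:hn} is satisfied, and passing to \eqref{eq:defn-hn-iteration} gives the singleton $\{(1,\sigma\vp_1 + (1-\sigma)\vq_1)\}$, which trivially satisfies $H_1$.

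For the inductive step, assume the claim for every pair of families of total size less than $N$. By the symmetry $\sigma \leftrightarrow 1-\sigma$, $\vp \leftrightarrow \vq$ (and the fact that $\Lambda$ is a cone, so $\vq - \vp \in \Lambda$), we may assume $N_1 \geq 2$. Applying the definition of the $H_{N_1}$-condition to family~1, after possibly relabeling we have $\vp_2 - \vp_1 \in \Lambda$ and the reduced family
$$
\til{\mathcal{F}}_1 := \left\{\left(\tau_1+\tau_2,\ \frac{\tau_1 \vp_1 + \tau_2 \vp_2}{\tau_1+\tau_2}\right)\right\} \cup \{(\tau_i,\vp_i)\}_{i=3,\dots,N_1}
$$
satisfies the $H_{N_1-1}$-condition. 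By Lemma~\ref{lemma:hn1}\ref{item:hn1.b}, $\til{\mathcal{F}}_1$ has the same barycenter $\vp$, so $\vp - \vq \in \Lambda$ still holds. The induction hypothesis, applied to $\til{\mathcal{F}}_1$ and $\{(\mu_i,\vq_i)\}_{i=1,\dots,N_2}$ (total size $N_1 + N_2 - 1 < N$), now yields that the scaled union
$$
\Big\{\big(\sigma \til{\tau}_i,\til{\vp}_i\big)\Big\}_{i=1,\dots,N_1-1} \cup \Big\{\big((1-\sigma)\mu_i,\vq_i\big)\Big\}_{i=1,\dots,N_2}
$$
satisfies the $H_{N_1+N_2-1}$-condition.

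It remains to observe that this is exactly the output of one step of the $H_{N_1+N_2}$-condition applied to the original combined family. Indeed, since $\Lambda$ is a cone and $\sigma > 0$, we have $\sigma\vp_2 - \sigma\vp_1 = \sigma(\vp_2-\vp_1) \in \Lambda$, so the first two pairs of the combined family can be merged, and the key algebraic identity
$$
\frac{\sigma\tau_1}{\sigma\tau_1+\sigma\tau_2}\,\vp_1 + \frac{\sigma\tau_2}{\sigma\tau_1+\sigma\tau_2}\,\vp_2 = \frac{\tau_1 \vp_1 + \tau_2 \vp_2}{\tau_1+\tau_2}\ec \qquad \sigma\tau_1 + \sigma\tau_2 = \sigma\til{\tau}_1
$$
shows that this merge produces precisely $\{(\sigma\til{\tau}_1,\til{\vp}_1)\}$, so the resulting reduced family coincides with the one above. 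Thus the $H_{N_1+N_2}$-condition is verified, completing the induction. The only subtle point is the bookkeeping around relabeling and the invocation of the cone property to scale by $\sigma$; no geometric obstacle arises because the cancellation of $\sigma$ in the convex-combination weights makes the reduction step of $H_N$ commute with the multiplication of all masses by $\sigma$.
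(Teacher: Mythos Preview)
Your proof is correct and follows essentially the same inductive scheme as the paper: base case $N_1=N_2=1$, then reduce the first family via one $H_{N_1}$-step, invoke the induction hypothesis, and recognize the result as one $H_{N_1+N_2}$-step of the combined family. One minor remark: in the final paragraph you check $\sigma\vp_2 - \sigma\vp_1 \in \Lambda$, but the $H_N$-condition compares the \emph{points} $\vp_i$, not the weighted pairs, so $\vp_2 - \vp_1 \in \Lambda$ (which you already have) is all that is needed there --- the cone property is not required at that step.
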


\begin{proof}
	We proceed by induction. Let us start with the case $N_1=N_2=1$. Then $\tau_1=\mu_1=1$ and hence
	$$
		\big\{ \big(\sigma \tau_1 , \vp_1\big), \big((1-\sigma)\mu_1 , \vq_1\big) \big\} = \big\{ \big(\sigma , \vp_1\big), \big((1-\sigma) , \vq_1\big) \big\}
	$$
	satisfy the $H_2$-condition since $\vp_1 - \vq_1 \in \Lambda$ and 
	$$
		\big\{ \big(1 , \sigma \vp_1 + (1-\sigma) \vq_1\big) \big\}
	$$
	obviously satisfies the $H_1$-condition.
	
	Now let $N_1\geq 2$ and $N_2\in \N$. Then $\vp_2 - \vp_1 \in\Lambda$ and 
	\begin{equation} \label{eq:temp-defn-hn}
		\left\{ \left( \tau_1 + \tau_2 , \frac{\tau_1}{\tau_1 + \tau_2} \vp_1 + \frac{\tau_2}{\tau_1 + \tau_2} \vp_2\right) \right\} \cup \big\{(\tau_i,\vp_i)\big\}_{i=3,...,N_1}
	\end{equation}
	satisfy the $H_{N_1 - 1}$-condition by assumption (after relabeling if neccessary). Note that the barycenter of the family given in \eqref{eq:temp-defn-hn} is still $\vp$ according to Lemma~\ref{lemma:hn1}. Since $\vp-\vq\in \Lambda$ by assumption, the induction hypothesis yields, that 
	$$
		\left\{ \left( \sigma(\tau_1 + \tau_2) , \frac{\tau_1}{\tau_1 + \tau_2} \vp_1 + \frac{\tau_2}{\tau_1 + \tau_2} \vp_2\right) \right\} \cup \big\{(\sigma \tau_i,\vp_i)\big\}_{i=3,...,N_1} \cup \big\{\big( (1-\sigma)\mu_i,\vq_i \big)\big\}_{i=1,...,N_2}
	$$
	satisfy the $H_{N_1+N_2 - 1}$-condition. This immediately shows the claim due to the fact that the roles of $N_1$ and $N_2$ can be switched.
\end{proof}

Now we are ready to define the set $\sU$ as follows.
\begin{align}
\sU := \Bigg\{ \vp \in \R^M\,\Big|\,\exists &N\in\N, \exists(\tau_i,\vp_i)\in\R^+\times \R^M\text{ for all }i=1,...,N\text{ such that } \label{eq:U} \\
& \bullet \ \text{the family } \big\{(\tau_i,\vp_i)\big\}_{i=1,...,N}\text{ satisfies the }H_N\text{-condition\footnotemark} \ec \notag \\
& \bullet \ \vp_i\in K \text{ for all }i=1,...,N \text{ and } \notag \\
& \bullet \ \vp\text{ is the barycenter of the family }\big\{(\tau_i,\vp_i)\big\}_{i=1,...,N}\text{, i.e. }\vp=\sum_{i=1}^N \tau_i \vp_i \Bigg\} \ed\notag 
\end{align} 
\footnotetext{As pointed out in Lemma~\ref{lemma:hn1}, this implies that $\sum_{i=1}^N\tau_i = 1$.} 

The following proposition relates $\sU$ to the $\Lambda$-convex hull of $K$.

\begin{prop} \label{prop:KLambda=U}
	It holds that $K^\Lambda = \sU$.
\end{prop}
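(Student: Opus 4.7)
The plan is to prove the two inclusions $\sU \subseteq K^\Lambda$ and $K^\Lambda \subseteq \sU$ separately. The key observation for the first inclusion is that one should prove, by induction on $N$, a statement slightly stronger than needed: for every $\Lambda$-convex set $C \subseteq \R^M$, every family $\{(\tau_i,\vp_i)\}_{i=1,\dots,N}$ satisfying the $H_N$-condition with $\vp_i \in C$ for all $i$, the barycenter $\sum_{i=1}^N \tau_i \vp_i$ lies in $C$. The base case $N=1$ is immediate since then $\tau_1=1$ and the barycenter is $\vp_1 \in C$. For the inductive step, after relabeling, $\vp_2 - \vp_1 \in \Lambda$ and the reduced family \eqref{eq:defn-hn-iteration} satisfies the $H_{N-1}$-condition; its new first vertex $\vp' = \tfrac{\tau_1}{\tau_1+\tau_2}\vp_1 + \tfrac{\tau_2}{\tau_1+\tau_2}\vp_2$ lies in $[\vp_1,\vp_2]$ and hence in $C$ by $\Lambda$-convexity, while the remaining vertices are unchanged and thus still in $C$. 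The induction hypothesis gives that the barycenter of the reduced family lies in $C$, and by Lemma~\ref{lemma:hn1}\ref{item:hn1.b} this equals the original barycenter. Applying this to $C := K^\Lambda$, which is $\Lambda$-convex and contains all $\vp_i \in K$, yields $\sU \subseteq K^\Lambda$.

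For the reverse inclusion $K^\Lambda \subseteq \sU$, since $K^\Lambda$ is by definition the smallest $\Lambda$-convex set containing $K$, it suffices to verify that $\sU$ is $\Lambda$-convex and contains $K$. The inclusion $K \subseteq \sU$ is trivial: any $\vp \in K$ is the barycenter of the one-element family $\{(1,\vp)\}$, which satisfies the $H_1$-condition. For $\Lambda$-convexity, take $\vp,\vq \in \sU$ with $\vp - \vq \in \Lambda$, represented as barycenters of $H_{N_1}$- and $H_{N_2}$-families with vertices in $K$, respectively. For $\sigma \in (0,1)$, Lemma~\ref{lemma:hn2} directly produces an $H_{N_1+N_2}$-family with vertices still in $K$ whose barycenter is $\sigma\vp + (1-\sigma)\vq$; the endpoints $\sigma \in \{0,1\}$ are already in $\sU$. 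Thus $[\vp,\vq] \subseteq \sU$, as required.

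The only real subtlety is in the first inclusion, where naively iterating the reduction in Definition~\ref{defn:hn} would replace vertices in $K$ by convex combinations that may leave $K$, breaking a direct induction on families with vertices in $K$. The workaround is precisely to enlarge the target set from $K$ to an arbitrary $\Lambda$-convex superset $C$ so that the reduced family's new vertex $\vp'$ stays admissible; everything else is then bookkeeping handled by Lemmas~\ref{lemma:hn1} and~\ref{lemma:hn2}. This is where the care invested in stating these two lemmas pays off, and I expect this to be the only step requiring genuine thought in the proof.
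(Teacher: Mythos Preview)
Your proposal is correct and follows essentially the same approach as the paper: both directions are proved exactly as you outline, with the $\Lambda$-convexity of $\sU$ derived from Lemma~\ref{lemma:hn2} and the inclusion $\sU \subseteq K^\Lambda$ via induction on $N$ using Lemma~\ref{lemma:hn1}\ref{item:hn1.b}. The only cosmetic differences are that the paper treats the inclusions in the opposite order and states the inductive claim directly for $K^\Lambda$ rather than for an arbitrary $\Lambda$-convex set $C$, but your slightly more general formulation is harmless and the argument is identical.
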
	

\begin{proof} 
	We start with the inclusion $K^\Lambda \subset \sU$. To this end we show that $K\subset \sU$ and $\sU$ is $\Lambda$-convex. The desired inclusion $K^\Lambda \subset \sU$ then follows from the definition of $K^\Lambda$ as the smallest $\Lambda$-convex set which contains $K$. The fact that $K\subset\sU$ is quite easy: Let $\vp\in K$ then $(1,\vp)$ satisfies the $H_1$-condition, and hence $\vp\in\sU$. To prove that $\sU$ is $\Lambda$-convex, let $\vp,\vq\in \sU$ with $\vp-\vq\in\Lambda$, and $\vs\in [\vp,\vq]$. We have to show that $\vs\in\sU$. There exists $\sigma\in [0,1]$ with $\vs = \sigma\vp + (1-\sigma)\vq$. W.l.o.g. we may assume that $\sigma\in(0,1)$ (otherwise $\vs=\vp$ or $\vs=\vq$ and hence $\vs\in\sU$). Since $\vp,\vq\in \sU$ there exist two families of pairs $\big\{(\tau_i,\vp_i)\big\}_{i=1,...,N_1}$, $\big\{(\mu_i,\vq_i)\big\}_{i=1,...,N_2}$ which fulfill the $H_{N_1}$- and $H_{N_2}$-condition, respecitively, with $\vp_i\in K$ for all $i=1,...,N_1$ and $\vq_i\in K$ for all $i=1,...,N_2$, and such that $\vp$ and $\vq$ are the barycenters of the families $\big\{(\tau_i,\vp_i)\big\}_{i=1,...,N_1}$ and $\big\{(\mu_i,\vq_i)\big\}_{i=1,...,N_2}$, respectively. Due to Lemma~\ref{lemma:hn2} the family of pairs
	$$
		\big\{\big(\sigma \tau_i,\vp_i\big)\big\}_{i=1,...,N_1} \cup \big\{\big( (1-\sigma)\mu_i,\vq_i \big)\big\}_{i=1,...,N_2}
	$$
	satisfy the $H_{N_1+N_2}$-condition. It is simple to varify that its barycenter is $\vs$. Indeed,
	$$
		\sum_{i=1}^{N_1} \sigma \tau_i \vp_i + \sum_{i=1}^{N_2} (1-\sigma) \mu_i \vq_i = \sigma \vp + (1-\sigma) \vq = \vs\ed
	$$
	Hence $\vs\in\sU$.
	
	In order to prove the remaining inclusion $\sU \subset K^\Lambda$, let $\vp\in\sU$ arbitrary. Then by definition of the set $\sU$ there exist $N\in \N$, $(\tau_i,\vp_i)\in \R^+\times \R^M$ for $i=1,...,N$ such that the family $\{(\tau_i,\vp_i)\big\}_{i=1,...,N}$ satisfies the $H_N$-condition, $\vp_i\in K$ for all $i=1,...,N$ and $\vp$ is the barycenter of the family $\{(\tau_i,\vp_i)\big\}_{i=1,...,N}$. Note, that $\vp_i\in K^\Lambda$ since $K\subset K^\Lambda$. In order to show that $\vp\in K^\Lambda$ we prove the following by induction over $N\in \N$. 
	
	\smallskip
	
	\textbf{Claim:} If the family $\big\{(\tau_i,\vp_i)\big\}_{i=1,...,N}$ satisfies the $H_N$-condition and $\vp_i\in K^\Lambda$, then the barycenter $\vp$ of $\big\{(\tau_i,\vp_i)\big\}_{i=1,...,N}$ lies in $K^\Lambda$.
	
	\smallskip
	
	For $N=1$ the claim is trivial because $\vp=\vp_1\in K^\Lambda$. If $N\geq 2$, we have (after relabeling if necessary) $\vp_2-\vp_1\in\Lambda$ and the family 
	\begin{equation} \label{eq:temp-defn-hn-2}
		\left\{ \left( \tau_1 + \tau_2 , \frac{\tau_1}{\tau_1 + \tau_2} \vp_1 + \frac{\tau_2}{\tau_1 + \tau_2} \vp_2\right) \right\} \cup \big\{(\tau_i,\vp_i)\big\}_{i=3,...,N}
	\end{equation}
	satisfies the $H_{N-1}$-condition. Moreover $\frac{\tau_1}{\tau_1 + \tau_2} \vp_1 + \frac{\tau_2}{\tau_1 + \tau_2} \vp_2 \in K^\Lambda$ since $\vp_1,\vp_2\in K^\Lambda$, $\vp_2-\vp_1\in\Lambda$ and $\frac{\tau_1}{\tau_1 + \tau_2} + \frac{\tau_2}{\tau_1 + \tau_2} = 1$. Hence the induction hypothesis says that the barycenter of the family in \eqref{eq:temp-defn-hn-2} lies in $K^\Lambda$. According to Lemma~\ref{lemma:hn1} this barycenter coincides with $\vp$. This finishes the proof.
\end{proof}

\subsection{The $\Lambda$-Convex Hull of Slices} \label{subsec:convint-ch-slices}

The following proposition is a simple observation. However it will be an important ingredient when finding the $\Lambda$-convex hull for the Euler equations, see the proof of Proposition~\ref{prop:compKLambda}.

For conveniece we write 
$$
\{p_1=\alpha\} := \left\{\vp\in \R^M \,\big|\, p_1 = \alpha\right\} \ec
$$
for a given $\alpha\in\R$.

\begin{prop} \label{prop:Kstar} 
	Define $K^\ast:=\bigcup_{\alpha\in \R} \left(K\cap \{p_1 = \alpha\}\right)^\Lambda$. Then the following statements hold:
	\begin{enumerate}
		\item \label{item:Kstar.a} $K^\ast \subset K^\Lambda$; 
		\item \label{item:Kstar.b} If $K^\ast = K^\Lambda$, then $\left(K\cap \{p_1 = \alpha\}\right)^\Lambda = K^\Lambda \cap \{p_1 = \alpha\}$ for any fixed $\alpha\in\R$.
	\end{enumerate}
\end{prop}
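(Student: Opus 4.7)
The plan is to exploit two features: first, that $K^\Lambda$ is by definition the smallest $\Lambda$-convex set containing $K$, and second, that the slice $\{p_1 = \alpha\}$ is an affine hyperplane, therefore convex, therefore $\Lambda$-convex by Proposition~\ref{prop:generalfacts}~\ref{item:generalfacts.a}. These two facts give the minimality argument needed in both parts.

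For part \ref{item:Kstar.a}, I would fix $\alpha \in \R$ and observe that $K \cap \{p_1 = \alpha\} \subset K \subset K^\Lambda$. Since $K^\Lambda$ is $\Lambda$-convex, the minimality in the definition of $(K \cap \{p_1 = \alpha\})^\Lambda$ forces $(K \cap \{p_1 = \alpha\})^\Lambda \subset K^\Lambda$. Taking the union over all $\alpha$ yields $K^\ast \subset K^\Lambda$.

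For part \ref{item:Kstar.b}, fix $\alpha \in \R$. The inclusion $(K \cap \{p_1 = \alpha\})^\Lambda \subset K^\Lambda \cap \{p_1 = \alpha\}$ follows from two minimality arguments: from part \ref{item:Kstar.a} we already have $(K \cap \{p_1 = \alpha\})^\Lambda \subset K^\Lambda$, and applying the minimality of the hull to $K \cap \{p_1 = \alpha\} \subset \{p_1 = \alpha\}$ together with the $\Lambda$-convexity of the hyperplane $\{p_1 = \alpha\}$ gives $(K \cap \{p_1 = \alpha\})^\Lambda \subset \{p_1 = \alpha\}$. For the reverse inclusion, pick $\vq \in K^\Lambda \cap \{p_1 = \alpha\}$. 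Using the assumption $K^\ast = K^\Lambda$, there exists some $\beta$ with $\vq \in (K \cap \{p_1 = \beta\})^\Lambda$. But the same minimality argument applied to $\beta$ instead of $\alpha$ shows $(K \cap \{p_1 = \beta\})^\Lambda \subset \{p_1 = \beta\}$, so $\vq \in \{p_1 = \beta\}$ forces $\beta = \alpha$, and hence $\vq \in (K \cap \{p_1 = \alpha\})^\Lambda$.

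Overall, I do not expect any serious obstacle; the argument is entirely formal and rests on two observations that are already packaged in the excerpt, namely Definition~\ref{defn:convex} of the $\Lambda$-convex hull as a smallest set, and the remark that convex sets are automatically $\Lambda$-convex. The only subtle point worth flagging is the temptation to invoke the $H_N$-characterization from Proposition~\ref{prop:KLambda=U} to track slices through the iterative construction; this would also work, but going through affine-hyperplane convexity is much cleaner and avoids any computation.
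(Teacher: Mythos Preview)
Your argument is correct. The overall structure for part \ref{item:Kstar.b} --- in particular the reverse inclusion, where you pull $\vq$ back into some slice $(K\cap\{p_1=\beta\})^\Lambda$ and then pin down $\beta=\alpha$ --- matches the paper exactly. The difference lies in how you establish the key auxiliary fact $(K\cap\{p_1=\alpha\})^\Lambda \subset \{p_1=\alpha\}$. You argue via minimality of the $\Lambda$-convex hull together with the observation that the affine hyperplane $\{p_1=\alpha\}$ is convex and hence $\Lambda$-convex (Proposition~\ref{prop:generalfacts}~\ref{item:generalfacts.a}). The paper instead invokes the $H_N$-characterization of Proposition~\ref{prop:KLambda=U}: writing $\vp=\sum_i \tau_i \vp_i$ with $\vp_i\in K\cap\{p_1=\alpha\}$ and computing $[\vp]_1=\sum_i\tau_i\alpha=\alpha$. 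Amusingly, this is precisely the route you flag as ``tempting'' but less clean; your hyperplane argument is indeed shorter and avoids any computation, while the paper's barycenter calculation has the minor advantage of making explicit why the first coordinate is preserved.
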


\begin{proof} 
	Let $\alpha\in\R$ be fixed. We first show that 
	\begin{equation} \label{eq:temp-Kstar}
		\left(K\cap \{p_1 = \alpha\}\right)^\Lambda \subset K^\Lambda \cap \{p_1 = \alpha\} \ed
	\end{equation}
	According to Proposition~\ref{prop:KLambda=U} and \eqref{eq:U} we can write an arbitrary $\vp\in \left(K\cap \{p_1 = \alpha\}\right)^\Lambda$ as $\vp = \sum_{i=1}^N \tau_i \vp_i$ where $\big\{(\tau_i,\vp_i)\big\}_{i=1,...,N}$ satisfy the $H_N$-condition and $\vp_i\in K\cap \{p_1 = \alpha\}$ for all $i=1,...,N$. In particular these $\vp_i$ lie in $K$ and hence the same argumentation yields $\vp\in K^\Lambda$. Moreover $p_1=\sum_{i=1}^N \tau_i [\vp_i]_1 = \sum_{i=1}^N \tau_i \alpha = \alpha$, i.e. $\vp\in \{p_1 = \alpha\}$. Hence \eqref{eq:temp-Kstar} is proven. 
	
	\begin{enumerate}
		\item From \eqref{eq:temp-Kstar} we immediately obtain claim \ref{item:Kstar.a}. 
		\item Because of \eqref{eq:temp-Kstar} it suffices to show 
		$$
			\left(K\cap \{p_1 = \alpha\}\right)^\Lambda \supset K^\Lambda \cap \{p_1 = \alpha\} \ed
		$$
		Let $\vp\in K^\Lambda \cap \{p_1 = \alpha\}$. Together with the assumption and another application of \eqref{eq:temp-Kstar}, we deduce
		$$
			\vp\in K^\ast \cap \{p_1 = \alpha\} = \bigg(\bigcup_{\beta\in \R} \left(K\cap \{p_1 = \beta\}\right)^\Lambda\bigg) \cap \{p_1 = \alpha\} =  \left(K\cap \{p_1 = \alpha\}\right)^\Lambda \ed
		$$
	\end{enumerate}
\end{proof} 

\begin{figure}[tb] 
	\centering
	\subfloat[$K$ (black line).\label{fig:Kstar-K}]{ 
		\centering
		\includegraphics[width=0.43\textwidth]{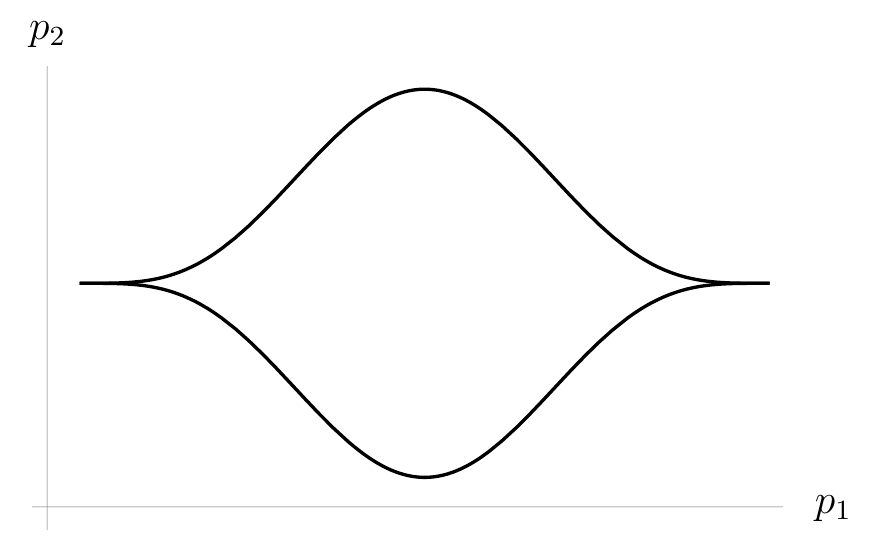}
	}
	\hspace{1.5cm}
	\subfloat[$K\cap \{p_1 = \alpha\}$ (blue points) for a particular $\alpha\in \R$, and $\left(K\cap \{p_1 = \alpha\}\right)^\Lambda$ (blue line segment).\label{fig:Kstar-alpha}]{
		\centering
		\includegraphics[width=0.43\textwidth]{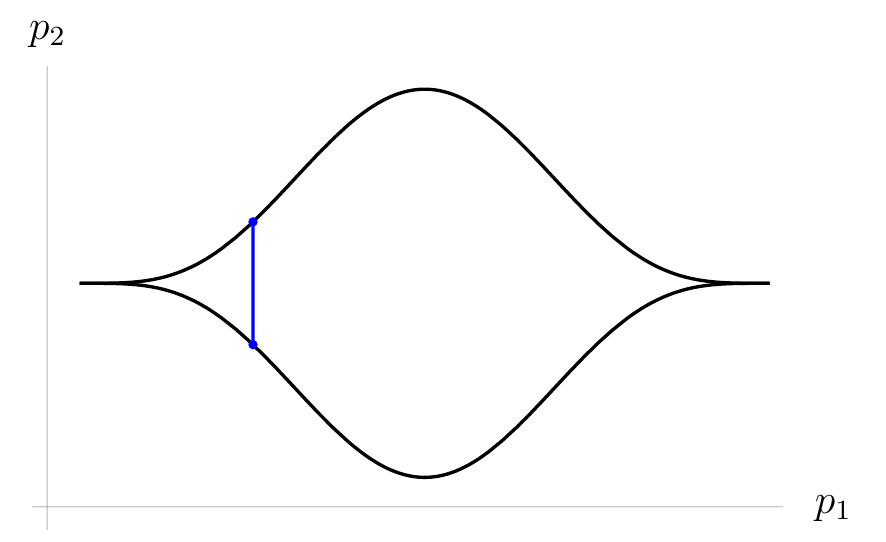}
	} \\[4mm]
	\subfloat[$K^\ast$ (blue region).\label{fig:Kstar-Kstar}]{
		\centering
		\includegraphics[width=0.43\textwidth]{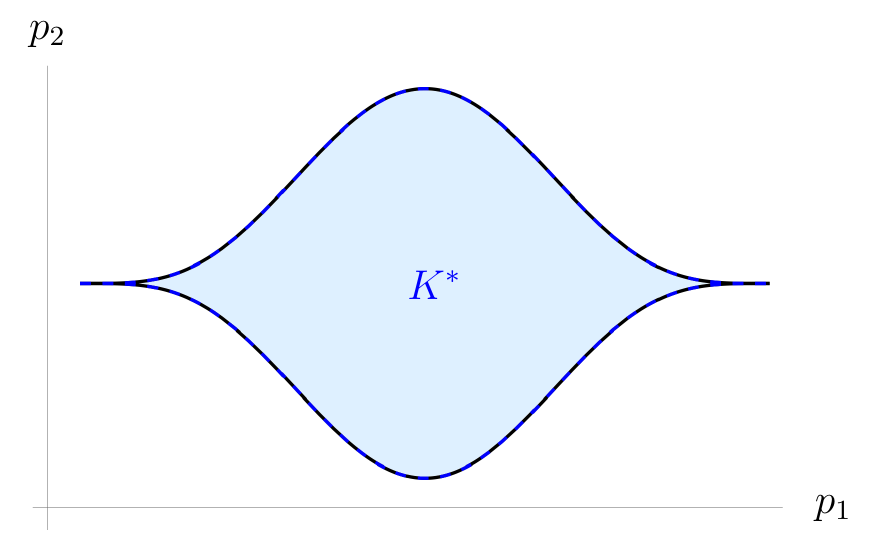}
	}
	\hspace{1.5cm}
	\subfloat[$K^\Lambda$ (red region).\label{fig:Kstar-KLambda}]{
		\centering 
		\includegraphics[width=0.43\textwidth]{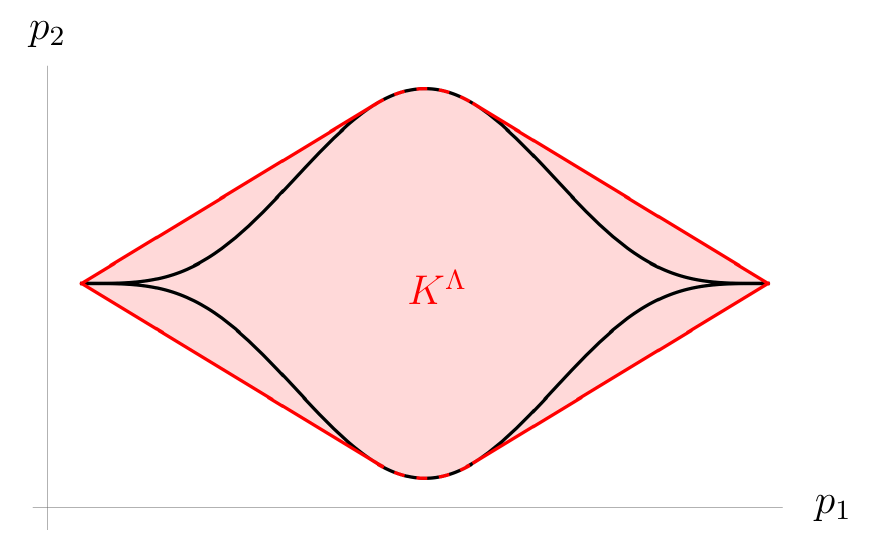}
	} 
	\caption{Example of a set $K$ and the corresponding $K^\ast$ and $K^\Lambda$.} 
	\label{fig:Kstar}
\end{figure}

\begin{ex} 
	To illustrate statement \ref{item:Kstar.a} of Proposition~\ref{prop:Kstar} let us consider $M=2$, $K$ given as in Figure~\ref{fig:Kstar-K} and $\Lambda=\R^2$. The latter condition implies that the notions \emph{convex} and \emph{$\Lambda$-convex} are equivalent. Hence $K^\Lambda = K^\co$ as well as $\left(K\cap \{p_1 = \alpha\}\right)^\Lambda = \left(K\cap \{p_1 = \alpha\}\right)^\co$ for all $\alpha\in\R$. The set $K\cap \{p_1 = \alpha\}$ consists of a most two points for each fixed $\alpha\in \R$. Hence $\left(K\cap \{p_1 = \alpha\}\right)^\Lambda$ is a straight line segment or empty for each $\alpha\in \R$. The set $\left(K\cap \{p_1 = \alpha\}\right)^\Lambda$ is depicted in Figure~\ref{fig:Kstar-alpha} for a particular $\alpha\in \R$ by a blue line. As a consequence $K^\ast$ is given by the blue colored region in Figure~\ref{fig:Kstar-Kstar}. On the other hand $K^\Lambda$ is equal to the red colored set in Figure~\ref{fig:Kstar-KLambda}. Observe that $K^\ast\subset K^\Lambda$ as stated by Proposition~\ref{prop:Kstar}~\ref{item:Kstar.a}.
\end{ex}

\subsection{The $\Lambda$-Convex  Hull if the Wave Cone is Complete} \label{subsec:convint-ch-complete}
 
Let us now study the important case where the wave cone is complete.

\begin{defn}
	The wave cone $\Lambda\subset \R^M$ is called \emph{complete with respect to $K\subset \R^M$} if $\vp-\vq\in \Lambda$ for all $\vp,\vq\in K$.
\end{defn}

The final goal of this subsection is to prove the following fact, which will follow from Proposition~\ref{prop:complete-wc}.

\begin{cor} \label{cor:complete-wc}
	If $\Lambda$ is complete with respect to $K$ then $K^\co=K^\Lambda$.
\end{cor}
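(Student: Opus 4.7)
The forward inclusion $K^\Lambda \subset K^\co$ is unconditional and given by Proposition~\ref{prop:generalfacts}~\ref{item:generalfacts.b}, so the content of the corollary is the reverse inclusion $K^\co \subset K^\Lambda$. In view of Proposition~\ref{prop:KLambda=U} this reduces to showing that every convex combination $\vp = \sum_{i=1}^N \tau_i \vp_i$ with $\vp_i \in K$, $\tau_i > 0$ and $\sum_i \tau_i = 1$ arises as the barycenter of some $H_M$-family whose atoms all lie in $K$; this is precisely the statement I would prove as Proposition~\ref{prop:complete-wc}, and the corollary follows at once upon combining both inclusions.

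I plan to prove this by induction on the number $N$ of atoms. For $N = 1$ the claim is trivial, and for $N = 2$ completeness gives $\vp_2 - \vp_1 \in \Lambda$, so the family $\{(\tau_1, \vp_1), (\tau_2, \vp_2)\}$ already satisfies the $H_2$-condition. The delicate step is $N \geq 3$. The tempting strategy of combining $\vp_1$ and $\vp_2$ into their weighted average $\vp_{12}$ and then invoking the inductive hypothesis on the shortened family fails, because although each individual difference $\vp_i - \vp_j$ lies in $\Lambda$, the averaged vector $\vp_{12} - \vp_j = \frac{\tau_1 (\vp_1 - \vp_j) + \tau_2 (\vp_2 - \vp_j)}{\tau_1 + \tau_2}$ generally does not: the wave cone is only a cone in the sense of Definition~\ref{defn:cone} and is not assumed closed under addition. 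This mismatch is the main obstacle.

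To surmount it, I would instead produce a decomposition $\vp = \sigma \va + (1 - \sigma) \vb$ with $\sigma \in (0, 1)$, in which $\va$ and $\vb$ are convex combinations of \emph{proper} subsets of $\{\vp_1, \ldots, \vp_N\}$, and crucially $\va - \vb \in \Lambda$. By the inductive hypothesis both $\va$ and $\vb$ then belong to $K^\Lambda$, each with an explicit $H$-representation whose atoms lie in $K$. Since $\va - \vb \in \Lambda$, Lemma~\ref{lemma:hn2} concatenates these two representations into a single $H_M$-family with barycenter $\vp$ and all atoms in $K$, closing the induction. Geometrically, producing such a $\Lambda$-compatible splitting amounts to finding a line through $\vp$ whose direction is drawn from $\Lambda$ and which meets the convex hull $K^\co$ at points lying on strictly lower-dimensional faces; the completeness hypothesis is what guarantees that the cone $\Lambda$ contains enough such directions. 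Establishing the existence of this splitting in full generality is the hard part and constitutes the technical core of Proposition~\ref{prop:complete-wc}.
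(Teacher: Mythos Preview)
Your overall strategy is sound, and the reduction to finding an $H_M$-representation of an arbitrary convex combination $\vp=\sum_{j=1}^N \mu_j\vq_j$ with $\vq_j\in K$ is exactly how the paper proceeds. The organizational difference is that the paper does not argue via a $\Lambda$-splitting together with Lemma~\ref{lemma:hn2}; instead Proposition~\ref{prop:complete-wc} writes down the entire $H_{2^{N-1}}$-family at once by an explicit doubling formula and then verifies the $H$-condition directly by induction on an auxiliary index~$k$. Your recursive scheme (split, apply the inductive hypothesis to each piece, glue via Lemma~\ref{lemma:hn2}) is a legitimate alternative and, when unfolded, produces precisely the same $2^{N-1}$-atom family; so the two routes are really two presentations of the same combinatorics.

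Where your proposal has a genuine gap is that you stop short of exhibiting the splitting, calling it ``the hard part''. In fact it is elementary, and you should simply write it down: pick any two indices, say $1$ and $2$, and move $\vp$ along the $\Lambda$-direction $\vq_1-\vq_2$ until one of the two relevant barycentric weights vanishes. Concretely,
\[
\va\;:=\;(\mu_1+\mu_2)\,\vq_1+\sum_{j\ge 3}\mu_j\,\vq_j\,,\qquad
\vb\;:=\;(\mu_1+\mu_2)\,\vq_2+\sum_{j\ge 3}\mu_j\,\vq_j\,,
\]
are convex combinations of the proper subsets $\{\vq_j:j\neq 2\}$ and $\{\vq_j:j\neq 1\}$ respectively, one has $\va-\vb=(\mu_1+\mu_2)(\vq_1-\vq_2)\in\Lambda$ since $\Lambda$ is a cone, and $\vp=\frac{\mu_1}{\mu_1+\mu_2}\va+\frac{\mu_2}{\mu_1+\mu_2}\vb$. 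This is all that is needed to close the induction; no geometric argument about faces of $K^\co$ is required, and no affine-independence hypothesis is used. Note also that completeness enters only through the single direction $\vq_1-\vq_2\in\Lambda$; the atoms of the inductive sub-families remain among the original $\vq_j$, so the pairwise-difference hypothesis is inherited verbatim by each sub-problem.
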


The inclusion $K^\Lambda\subset K^\co$ is always true (i.e. not only when the wave cone is complete) and has already been shown in Proposition~\ref{prop:generalfacts}. For the other inclusion  $K^\co\subset K^\Lambda$ we would like to proceed as follows. Let $\vp\in K^\co$ be arbitrary. According to Proposition~\ref{prop:convhull=convcombis}~\ref{item:convex-hull.b} there exist $N\in \N$ and $(\mu_j,\vq_j)\in \R^+ \times K$ for all $j=1,...,N$ such that $\sum_{j=1}^N \mu_j = 1$ and $\vp$ is the barycenter of the family $\big\{(\mu_j,\vq_j)\big\}_{j=1,...,N}$. If we could show that this family satisfies the $H_N$-condition, we were done because this would mean that $\vp\in \sU = K^\Lambda$, cf. \eqref{eq:U} and Proposition~\ref{prop:KLambda=U}. However, the family of pairs $\big\{(\mu_j,\vq_j)\big\}_{j=1,...,N}$ does not satisfy the $H_N$-condition in general, see Example~\ref{ex:complete-wc}. But we can construct a new family $\big\{(\tau_i,\vp_i)\big\}_{i=1,...,2^{N-1}}$ depending on the family $\big\{(\mu_j,\vq_j)\big\}_{j=1,...,N}$ and having the desired properties.

\begin{prop} \label{prop:complete-wc}
	Let $N\in \N$, $(\mu_j,\vq_j)\in \R^+\times \R^M$ for $j=1,...,N$ with $\sum_{j=1}^N \mu_j = 1$. Let furthermore the wave cone $\Lambda$ be such that $\vq_{j_1} - \vq_{j_2} \in \Lambda$ for all $j_1,j_2\in\{1,...,N\}$. Then there exist $(\tau_i,\vp_i)\in \R^+ \times \R^M$ for $i=1,...,2^{N-1}$ such that 
	\begin{enumerate}
		\item \label{item:comp-wc.a} the family $\big\{(\tau_i,\vp_i)\big\}_{i=1,...,2^{N-1}}$ satisfies the $H_{2^{N-1}}$-condition,
		\item \label{item:comp-wc.b} for all $i\in\{1,...,2^{N-1}\}$ there exists $j\in\{1,...,N\}$ such that $\vp_i = \vq_j$, and
		\item \label{item:comp-wc.c} the families $\big\{(\tau_i,\vp_i)\big\}_{i=1,...,2^{N-1}}$ and $\big\{(\mu_j,\vq_j)\big\}_{j=1,...,N}$ have the same barycenter, i.e. 
		$$
			\sum_{i=1}^{2^{N-1}} \tau_i \vp_i = \sum_{j=1}^N \mu_j \vq_j \ed
		$$
	\end{enumerate}
\end{prop}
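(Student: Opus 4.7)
The plan is to proceed by induction on $N$, using Lemma~\ref{lemma:hn2} as the bridge between consecutive steps. The base case $N=1$ is immediate: the choice $(\tau_1,\vp_1):=(1,\vq_1)$ trivially satisfies the $H_1$-condition, lies in $\{\vq_1\}$, and has barycenter $\vq_1$, so all of (a), (b), (c) hold.

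For the inductive step, assume the result holds for $N$ and consider $(\mu_j,\vq_j)_{j=1}^{N+1}$ with pairwise differences in $\Lambda$. The key idea is to pool the weights of the last two pairs into one slot and then run the induction twice in parallel. Define $w_j:=\mu_j$ for $j<N$ and $w_N:=\mu_N+\mu_{N+1}$, so that $\sum_{j=1}^N w_j=1$. I will apply the induction hypothesis separately to two $N$-element families, namely $A:=\{(w_j,\vq_j)\}_{j=1}^{N}$ and $B:=\{(w_j,\vq_j)\}_{j=1}^{N-1}\cup\{(w_N,\vq_{N+1})\}$. Each of $A$ and $B$ has weights summing to $1$ and vertices forming an $N$-element subset of $\{\vq_1,\ldots,\vq_{N+1}\}$, so each inherits the pairwise-$\Lambda$ hypothesis. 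The induction hypothesis therefore yields families $L$ and $R$, each of $2^{N-1}$ pairs, satisfying $H_{2^{N-1}}$ and consisting only of original vertices, with barycenters
\begin{align*}
\bar L&=\sum_{j<N}\mu_j\vq_j+(\mu_N+\mu_{N+1})\vq_N,\\
\bar R&=\sum_{j<N}\mu_j\vq_j+(\mu_N+\mu_{N+1})\vq_{N+1}.
\end{align*}

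The crucial computation is that $\bar L-\bar R=(\mu_N+\mu_{N+1})(\vq_N-\vq_{N+1})\in\Lambda$, thanks to the pairwise-$\Lambda$ hypothesis and the fact that $\Lambda$ is a cone. Therefore Lemma~\ref{lemma:hn2} applies with $\sigma:=\mu_N/(\mu_N+\mu_{N+1})\in(0,1)$, producing a combined family of $2^N$ pairs satisfying $H_{2^N}$. A direct computation gives $\sigma\bar L+(1-\sigma)\bar R=\sum_{j=1}^{N+1}\mu_j\vq_j$, while every vertex appearing in this combined family lies in $\{\vq_1,\ldots,\vq_{N+1}\}$; this closes the induction.

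The step I expect to be the main obstacle is choosing the right split. The naive attempt to apply the induction hypothesis to $\{\vq_1,\ldots,\vq_N\}$ and then combine the result with the singleton family $\{(\mu_{N+1},\vq_{N+1})\}$ fails, because it would force the barycenter of the former minus $\vq_{N+1}$ to lie in $\Lambda$; yet this difference is a genuine convex combination of the vectors $\vq_j-\vq_{N+1}$, and convex combinations need not stay in a cone. Pooling the last two weights into $w_N$ and running the induction in parallel on two relabelled copies $A$ and $B$ resolves this issue, because then the barycenter difference reduces to a scalar multiple of $\vq_N-\vq_{N+1}$, for which the cone axiom alone suffices. This is also why the size $2^{N-1}$ rather than $N$ appears: the parallel construction doubles the output at each induction step.
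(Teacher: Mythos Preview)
Your proof is correct and takes a genuinely different route from the paper. The paper gives an explicit recursive formula for the family $\{(\tau_i,\vp_i)\}$ (at stage $k$ it doubles the current block by appending rescaled copies of the earlier weights, all labelled $\vq_k$) and then verifies the $H_{2^{N-1}}$-condition directly via a rather technical induction on an auxiliary ``shifted'' family. Your argument is cleaner and more conceptual: you run the induction hypothesis twice in parallel on two $N$-element families differing only in whether $\vq_N$ or $\vq_{N+1}$ occupies the last slot, and then glue the outputs via Lemma~\ref{lemma:hn2}, so you never have to unwind the $H$-condition by hand. One thing the paper's explicit construction buys is precise bookkeeping of which $\vq_j$ sits at each index and how the $H$-condition collapses step by step; the paper later relies on this (in the proof of Lemma~\ref{lemma:existence-family-nodens}) to check the stronger $m_N$-condition. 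Your inductive argument proves the proposition as stated, but would need additional tracking to serve that later purpose.
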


\begin{proof} 
	We start by setting $(\tau_1,\vp_1) := (\mu_1,\vq_1)$. If $N\geq 2$, we additionally define $(\tau_i,\vp_i)$ for $i\geq 2$ inductively: Assume that $(\tau_i,\vp_i)$ have been set for $i\leq 2^{k-2}$ with $k\in\{2,...,N\}$. Then we define $(\tau_i,\vp_i)$ for $i\in\{2^{k-2}+1,...,  2^{k-1}\}$ by 
	\begin{align*}
		\tau_i &:= \frac{\mu_k}{\sum_{j=1}^{k-1}\mu_j} \tau_{i- 2^{k-2}} \ec \\
		\vp_i &:= \vq_k \ed
	\end{align*} 
		
	Hence \ref{item:comp-wc.b} is obvious. In order to show \ref{item:comp-wc.a}, we prove the following claim by induction over $k=1,...,N$. Then \ref{item:comp-wc.a} coincides with the case $k=N$.
	
	\medskip
	
	\textbf{Claim:} The family\footnote{Note that the sum, where $j=k+1,...,N$, might be an \emph{empty} sum. More precisely, this is the case if $k=N$. As common, we define the empty sum to be equal to $0$.}
	\begin{equation} \label{eq:complete-wc-family}
		\left\{\left( \frac{\tau_i}{\sum_{j=1}^k \mu_j} , \Big(\sum_{j = 1}^k \mu_j\Big) \vp_i + \sum_{j = k+1}^N \mu_j \vq_j \right) \right\}_{i=1,...,2^{k-1}}
	\end{equation}
	satisfies the $H_{2^{k-1}}$-condition for all $k=1,...,N$. 
	
	\medskip
	
	For $k=1$ we obtain 
	$$
		\left\{\left( \frac{\tau_1}{\mu_1} , \mu_1 \vp_1 + \sum_{j = 2}^N \mu_j \vq_j \right) \right\}
	$$
	which satisfies the $H_1$-condition due to the fact that $\tau_1=\mu_1$. For $k\geq 2$ we can write 
	\begin{align*}
		& \left\{\left( \frac{\tau_i}{\sum_{j=1}^k \mu_j} , \Big(\sum_{j = 1}^k \mu_j\Big) \vp_i + \sum_{j = k+1}^N \mu_j \vq_j \right) \right\}_{i=1,...,2^{k-1}} \\ 
		&= \left\{\left( \frac{\tau_i}{\sum_{j=1}^k \mu_j} , \Big(\sum_{j = 1}^k \mu_j\Big) \vp_i + \sum_{j = k+1}^N \mu_j \vq_j \right) \right\}_{i=1,...,2^{k-2}} \\
		&\qquad \cup \left\{\left( \frac{\tau_i}{\sum_{j=1}^k \mu_j} , \Big(\sum_{j = 1}^k \mu_j\Big) \vp_i + \sum_{j = k+1}^N \mu_j \vq_j \right) \right\}_{i=2^{k-2} + 1,...,2^{k-1}} \\
		&= \left\{\left( \frac{\tau_i}{\sum_{j=1}^k \mu_j} , \Big(\sum_{j = 1}^k \mu_j\Big) \vp_i + \sum_{j = k+1}^N \mu_j \vq_j \right) \right\}_{i=1,...,2^{k-2}} \\
		&\qquad \cup \left\{\left( \frac{\tau_i \mu_k}{\left(\sum_{j=1}^k \mu_j \right) \left(\sum_{j=1}^{k-1} \mu_j \right)} , \Big(\sum_{j = 1}^k \mu_j\Big) \vq_k + \sum_{j = k+1}^N \mu_j \vq_j \right) \right\}_{i=1,...,2^{k-2}} \ed
	\end{align*}
	Moreover we have 
	$$
		\left(\sum_{j = 1}^k \mu_j\right) \vp_i + \sum_{j = k+1}^N \mu_j \vq_j - \left(\sum_{j = 1}^k \mu_j\right) \vq_k - \sum_{j = k+1}^N \mu_j \vq_j = \left(\sum_{j = 1}^k \mu_j\right) (\vp_i - \vq_k) \in \Lambda
	$$
	by \ref{item:comp-wc.b} and assumption. Therefore we are done as soon as we have shown that $\big\{(\sigma_i,\vs_i)\big\}_{i=1,...,2^{k-2}}$ satisfies the $H_{2^{k-2}}$-condition, where 
	\begin{align*} 
		 \sigma_i &:= \frac{\tau_i}{\sum_{j=1}^k \mu_j} + \frac{\tau_i \mu_k}{\left(\sum_{j=1}^k \mu_j \right) \left(\sum_{j=1}^{k-1} \mu_j \right)} \\
		 &= \frac{\tau_i}{\sum_{j=1}^k \mu_j} \left( 1 + \frac{\mu_k}{\sum_{j=1}^{k-1} \mu_j } \right) \\
		 &= \frac{\tau_i}{\sum_{j=1}^{k-1} \mu_j} \ec \\
		 \vs_i &:= \frac{\tau_i}{\sum_{j=1}^k \mu_j} \frac{\sum_{j=1}^{k-1} \mu_j}{\tau_i}  \left(\left(\sum_{j = 1}^k \mu_j\right) \vp_i + \sum_{j = k+1}^N \mu_j \vq_j \right) \\
		 &\qquad + \frac{\tau_i \mu_k}{\left(\sum_{j=1}^k \mu_j \right) \left(\sum_{j=1}^{k-1} \mu_j \right)}  \frac{\sum_{j=1}^{k-1} \mu_j}{\tau_i} \left( \left(\sum_{j = 1}^k \mu_j\right) \vq_k + \sum_{j = k+1}^N \mu_j \vq_j \right)  \\
		 &= \left(\sum_{j = 1}^{k-1} \mu_j\right) \vp_i + \mu_k \vq_k + \left(\frac{\sum_{j=1}^{k-1} \mu_j}{\sum_{j=1}^k \mu_j} + \frac{\mu_k}{\sum_{j=1}^k \mu_j }  \right) \sum_{j = k+1}^N \mu_j \vq_j \\
		 &= \left(\sum_{j = 1}^{k-1} \mu_j\right) \vp_i + \sum_{j = k}^N \mu_j \vq_j \ed
	\end{align*}
	But $\big\{(\sigma_i,\vs_i)\big\}_{i=1,...,2^{k-2}}$ satisfies the $H_{2^{k-2}}$-condition by induction hypothesis. 
	
	It remains to prove \ref{item:comp-wc.c}. A simple computation shows 
	\begin{align*}
		\sum_{i=1}^{2^{N-1}} \tau_i \vp_i &= \tau_1 \vp_1 + \sum_{k=2}^N \sum_{i=2^{k-2}+1}^{2^{k-1}} \tau_i \vp_i \\
		&= \mu_1 \vq_1 + \sum_{k=2}^N \left( \sum_{i=2^{k-2}+1}^{2^{k-1}} \tau_i \right) \vq_k \ed 
	\end{align*}
	If $N=1$, we are done. To show \ref{item:comp-wc.c} for $N\geq 2$, we show the following claim by induction over $k=2,...,N$.
	
	\medskip
	
	\textbf{Claim:} For all $k=2,...,N$ it holds that $\mu_k = \sum_{i=2^{k-2} + 1}^{2^{k-1}} \tau_i$.
	
	\medskip
	
	If $k=2$, then $\sum_{i=2}^2 \tau_i = \tau_2 = \frac{\mu_2}{\mu_1} \tau_1 = \mu_2$ since $\tau_1=\mu_1$. For $k> 2$ we obtain
	\begin{align*}
		\sum_{i=2^{k-2} + 1}^{2^{k-1}} \tau_i &= \frac{\mu_k}{\sum_{j=1}^{k-1}\mu_j} \sum_{i=2^{k-2} + 1}^{2^{k-1}} \tau_{i- 2^{k-2}} \\
		&= \frac{\mu_k}{\sum_{j=1}^{k-1}\mu_j} \sum_{i=1}^{2^{k-2}} \tau_i \\
		&= \frac{\mu_k}{\sum_{j=1}^{k-1}\mu_j} \left( \tau_1 + \sum_{i=2}^{2^{k-2}} \tau_i\right) \ed
	\end{align*}
	By induction hypothesis, this is equal to 
	\begin{align*}
	\frac{\mu_k}{\sum_{j=1}^{k-1}\mu_j} \left( \mu_1 + \sum_{j=2}^{k-1} \mu_j\right) &= \frac{\mu_k}{\sum_{j=1}^{k-1}\mu_j} \sum_{j=1}^{k-1} \mu_j \\
	&= \mu_k \ed
	\end{align*}
\end{proof} 

\begin{proof}[Proof of Corollary~\ref{cor:complete-wc}]
	The proof is sketched above, where the conclusion immediately follows from Proposition~\ref{prop:complete-wc}.
\end{proof} 

The following example shows that if $\vp$ is the barycenter of a family $\big\{(\mu_i,\vq_i)\big\}_{i=1,2,3}$ with $\mu_1+\mu_2+\mu_3=1$, then this family does not need to satisfy the $H_3$-condition. However according to Proposition\ref{prop:complete-wc} there is another family $\big\{(\tau_i,\vp_i)\big\}_{i=1,2,3,4}$ whose barycenter is again $\vp$ and which satisfies the $H_4$-condition ($4=2^{3-1}$).

\begin{figure}[tbp] 
	\centering
	\subfloat[The points $\vp$ and $\vq_1,\vq_2,\vq_3$.\label{fig:triangular-plain}]{ 
		\centering
		\includegraphics[width=0.43\textwidth]{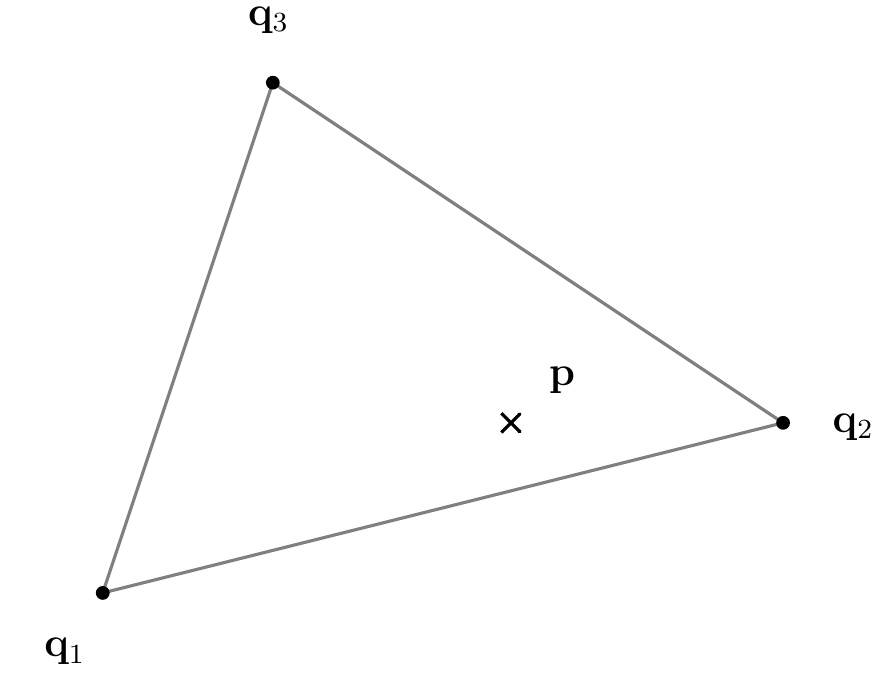}
	}
	\hspace{1.5cm}
	\subfloat{
		\centering
		\includegraphics[width=0.43\textwidth]{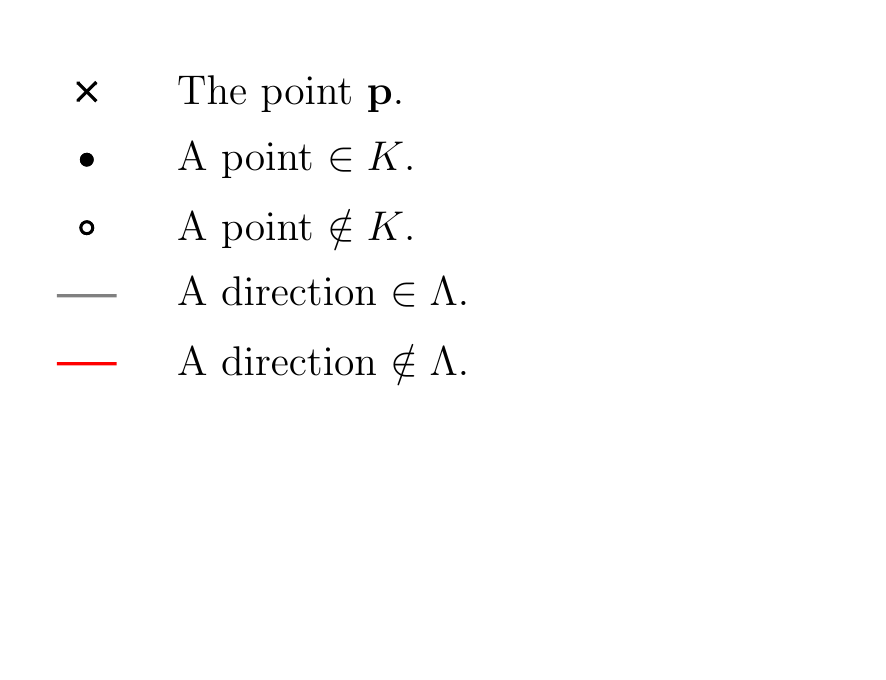} 
	} \setcounter{subfigure}{1} \\[5mm]
	\subfloat[The direction $\til{\vq}_1 - \vq_1 \notin \Lambda$.\label{fig:triangular-false1}]{
		\centering
		\includegraphics[width=0.43\textwidth]{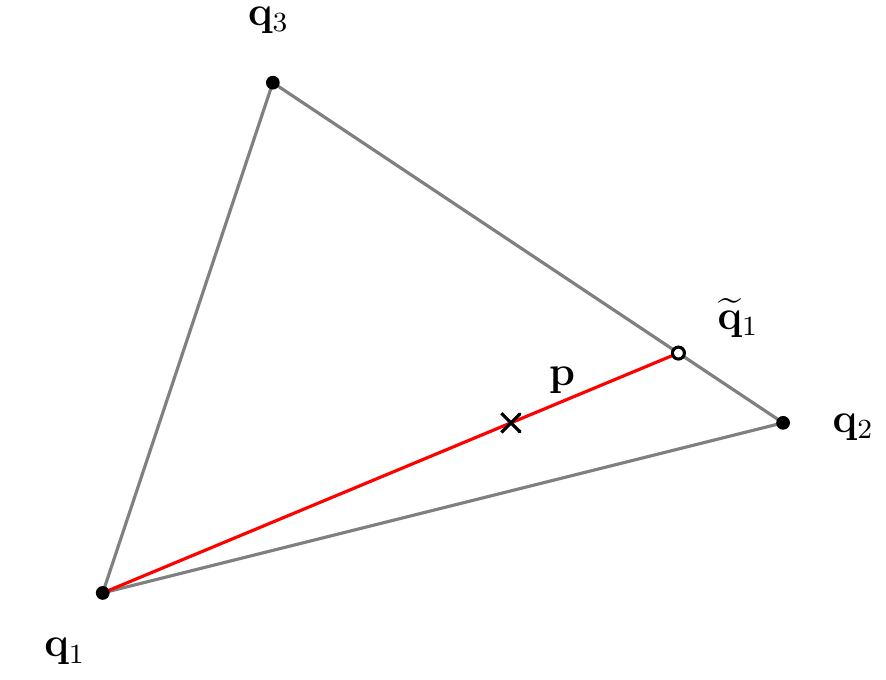}
	}
	\hspace{1.5cm}
	\subfloat[The direction $\til{\vq}_2 - \vq_2 \notin \Lambda$.\label{fig:triangular-false2}]{
		\centering
		\includegraphics[width=0.43\textwidth]{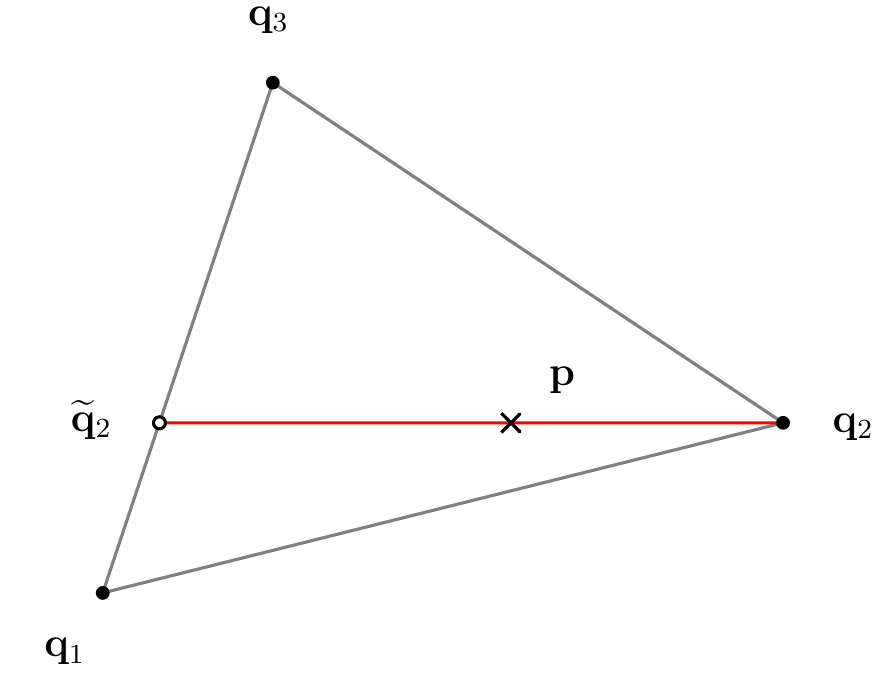}
	} \\[5mm]
	\subfloat[The direction $\til{\vq}_3 - \vq_3 \notin \Lambda$.\label{fig:triangular-false3}]{
		\centering
		\includegraphics[width=0.43\textwidth]{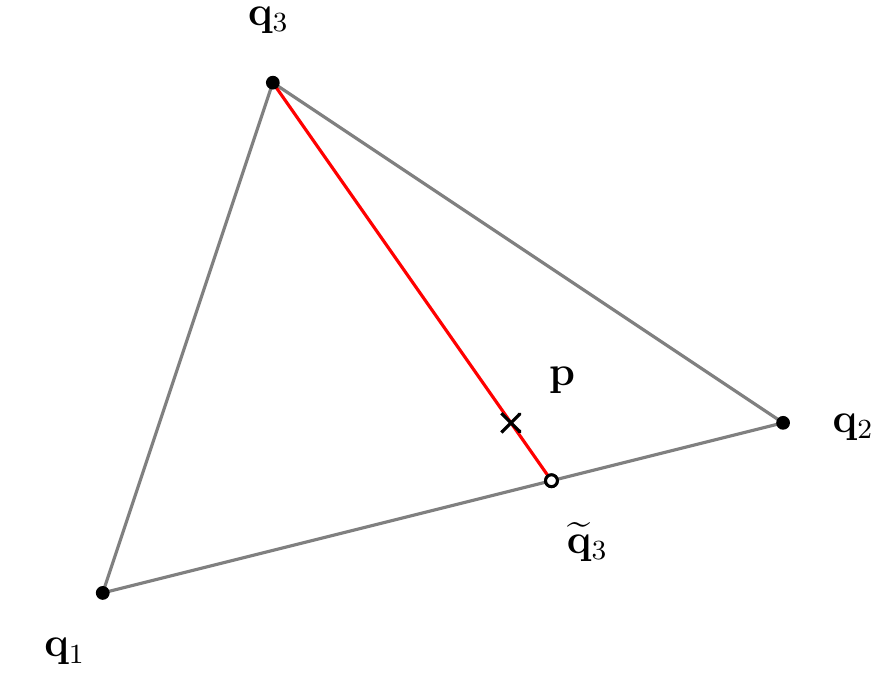}
	}
	\hspace{1.5cm}
	\subfloat[However the direction $\hat{\vq}_1 - \hat{\vq}_2 \in \Lambda$.\label{fig:triangular-true}]{
		\centering
		\includegraphics[width=0.43\textwidth]{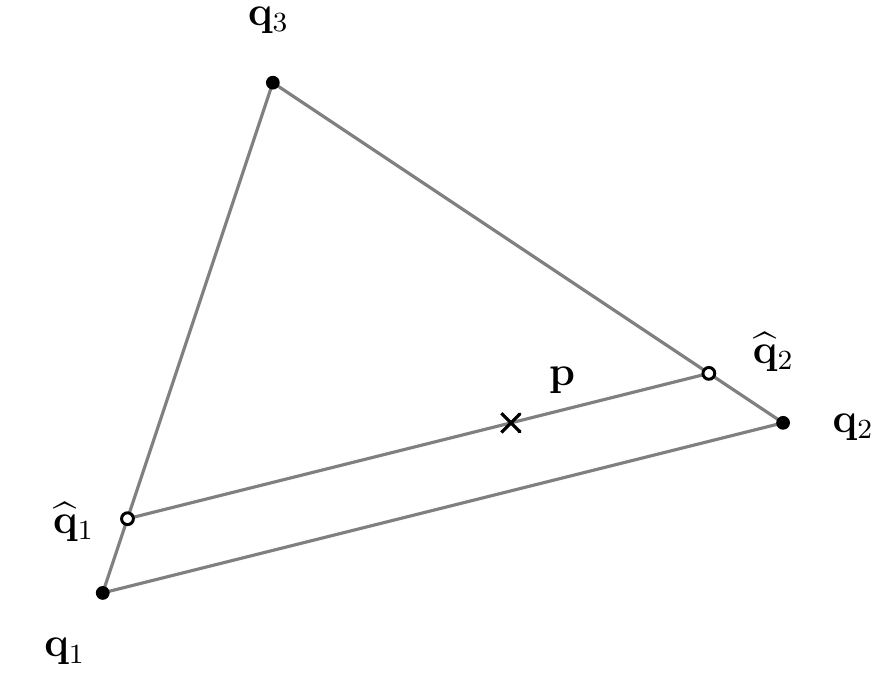}
	}
	\caption{Illustration of Example~\ref{ex:complete-wc}.}
	\label{fig:triangular} 
\end{figure}

\begin{ex} \label{ex:complete-wc} 
	Let $M=2$ and $K$ consist of three points $K:=\{\vq_1,\vq_2,\vq_3\}$. Let furthermore 
	\begin{align*}
		\Lambda:=\Big\{\vp\in\R^M\,\Big|\,\exists &\alpha\in\R\text{ such that one of the following cases holds: } \\
		& \bullet \ \vp=\alpha(\vq_1-\vq_2)\ec \\
		& \bullet \ \vp=\alpha(\vq_2-\vq_3)\ec \\
		& \bullet \ \vp=\alpha(\vq_1-\vq_3) \qquad \Big\} \ed
	\end{align*}
	Then $\Lambda$ is complete with respect to $K$ by construction. Furthermore $K^\co$ is the simplex (here a triangle) spanned by $\vq_1,\vq_2,\vq_3$. Let $\vp\in \interior{(K^\co)}$, then there exist $\mu_1,\mu_2,\mu_3\in \R^+$ such that $\vp=\mu_1 \vq_1 + \mu_2 \vq_2 + \mu_3 \vq_3$, see Figure~\ref{fig:triangular-plain}. It holds that 
	\begin{itemize}
		\item $\til{\vq}_3 - \vq_3 := \left(\frac{\mu_1}{\mu_1 + \mu_2}\vq_1 + \frac{\mu_2}{\mu_1 + \mu_2}\vq_2\right) - \vq_3 \notin \Lambda$, 
		\item $\til{\vq}_1 - \vq_1 := \left(\frac{\mu_2}{\mu_2 + \mu_3}\vq_2 + \frac{\mu_3}{\mu_2 + \mu_3}\vq_3\right) - \vq_1 \notin \Lambda$,
		\item $\til{\vq}_2 - \vq_2 := \left(\frac{\mu_1}{\mu_1 + \mu_3}\vq_1 + \frac{\mu_3}{\mu_1 + \mu_3}\vq_3\right) - \vq_2 \notin \Lambda$,
	\end{itemize}
	see Figures \ref{fig:triangular-false1} - \ref{fig:triangular-false3}.
	Hence none of the families
	\begin{itemize}
		\item $\left\{\left(\mu_1 + \mu_2 , \frac{\mu_1}{\mu_1 + \mu_2}\vq_1 + \frac{\mu_2}{\mu_1 + \mu_2}\vq_2\right),(\mu_3,\vq_3)\right\}$,
		\item $\left\{\left(\mu_2 + \mu_3 , \frac{\mu_2}{\mu_2 + \mu_3}\vq_2 + \frac{\mu_3}{\mu_2 + \mu_3}\vq_3\right),(\mu_1,\vq_1)\right\}$,
		\item $\left\{\left(\mu_1 + \mu_3 , \frac{\mu_1}{\mu_1 + \mu_3}\vq_1 + \frac{\mu_3}{\mu_1 + \mu_3}\vq_3\right),(\mu_2,\vq_2)\right\}$
	\end{itemize}
	satisfies the $H_2$-condition. This implies that the family $\big\{(\mu_1,\vq_1),(\mu_2,\vq_2),(\mu_3,\vq_3)\big\}$ does not satisfy the $H_3$-condition. 
	
	But the family 
	\begin{equation} \label{eq:temp-defn-hn-3}
		\left\{\left(\mu_1,\vq_1\right),\left(\mu_2,\vq_2\right),\left(\frac{\mu_1 \mu_3}{\mu_1+\mu_2},\vq_3\right),\left(\frac{\mu_2 \mu_3}{\mu_1+\mu_2},\vq_3\right)\right\} \ec
	\end{equation}
	which is constructed as in the proof of Proposition~\ref{prop:complete-wc}, satisfies the $H_4$-condition. This can be shown as follows. Because 
	$$
		\hat{\vq}_1 - \hat{\vq}_2 = (\mu_1+\mu_2) \vq_1 + \mu_3 \vq_3 - (\mu_1+\mu_2) \vq_2 - \mu_3 \vq_3 = (\mu_1+\mu_2) (\vq_1 - \vq_2)\in \Lambda \ec
	$$
	the family
	$$
		\left\{\left(\frac{\mu_1}{\mu_1+\mu_2} , (\mu_1+\mu_2) \vq_1 + \mu_3 \vq_3\right),\left( \frac{\mu_2}{\mu_1+\mu_2} , (\mu_1+\mu_2) \vq_2 + \mu_3 \vq_3\right)\right\}
	$$
	satisfies the $H_2$-condition. Due to $\vq_1-\vq_3 \in \Lambda$ this implies that 
	$$
		\left\{\left(\mu_1,\vq_1\right),\left(\frac{\mu_1 \mu_3}{\mu_1+\mu_2},\vq_3\right),\left( \frac{\mu_2}{\mu_1+\mu_2} , (\mu_1+\mu_2) \vq_2 + \mu_3 \vq_3\right)\right\}
	$$
	fulfills the $H_3$-condition and because of $\vq_1-\vq_2 \in \Lambda$ we deduce that \eqref{eq:temp-defn-hn-3} satisfies the $H_4$-condition. See also Figure~\ref{fig:triangular-true}. Note that Figure~\ref{fig:triangular-true} can be viewed as a degenerated version of Figure~\ref{fig:HN-4_1}, where two endpoints coincide.
\end{ex}

\section{The Relaxed Set $\sU$ Revisited} \label{sec:convint-U}

Now we turn our attention back to the Euler equations, i.e. we consider $K$ as in \eqref{eq:K} and $\Lambda$ as in \eqref{eq:wavecone-det}. 

\subsection{Definition of $\sU$} \label{subsec:convint-U-defn}

We define $\sU$ as in \eqref{eq:U}. Note, that this definition agrees with the heuristical description of $\sU$ given in Subsection~\ref{subsec:convint-prel-U}. Due to Proposition~\ref{prop:KLambda=U} we have $\sU=K^\Lambda$. In the following subsection we are going to find a better desription of $K^\Lambda$, which allows us to check quickly if a given point $(\rho,\vm,\mU)\in \R\times \R^n \times \szn$ lies in $K^\Lambda$ or not.

\subsection{Computation of $\sU$} 
\label{subsec:convint-U-comp}

First of all note that it is much easier to deduce facts for convex hulls rather than for $\Lambda$-convex hulls because for the former one can use e.g. Minkowski's Theorem (Proposition~\ref{prop:minkowski}) or describe them via a convex mapping. So at first we should check whether $\Lambda$ is complete with respect to $K$ since if this was the case, we could deduce that $K^\Lambda = K^\co$ by Corollary~\ref{cor:complete-wc}. Note, that as long as the incompressible Euler equations are considered, the corresponding wave cone is complete. This was observed and used by \name{De~Lellis} and \name{Sz{\'e}kelyhidi} in \cite{DelSze09} and \cite{DelSze10}\footnote{The completeness of the wave cone is hidden in the proof of \cite[Lemma 4.3]{DelSze09} and \cite[Proposition 4]{DelSze10}.}. However for the compressible Euler equations the wave cone is not complete, a fact which has been observed originally by \name{Chiodaroli et al.} \cite{CFKW17} (see also \name{Gallenm\"uller}-\name{Wiedemann} \cite{GalWie21}). To be precise, the settings, i.e. $K$ and $\Lambda$, in the cited papers are slightly different from ours, but the circumstance that compressible Euler is considered is a common feature of \cite{CFKW17}, \cite{GalWie21} and this book. The following example, which is highly inspired by the example given by \name{Chiodaroli et al.} \cite[Proof of Theorem 4]{CFKW17}, shows that the wave cone in our setting is generally not complete.

\begin{ex} 
	Let $n=2$, $p(\rho)=\rho^2$, $c=\frac{3}{2}$ and furthermore 
	\begin{align*}
		\rho_1 &= 1 \ec & \rho_2 &= \frac{\sqrt{3}-1}{2} \ec \\
		\vm_1 &= \left(\begin{array}{c} 1 \\ 0 \end{array}\right) \ec & \vm_2 &= \left(\begin{array}{c} 1 \\ 0 \end{array}\right) \ec \\
		\mU_1 &= \half \left(\begin{array}{rr} 1 & 0 \\ 0 & -1 \end{array}\right) \ec & \mU_2 &= \frac{\sqrt{3}+1}{2}\left(\begin{array}{rr} 1 & 0 \\ 0 & -1 \end{array}\right) \ed
	\end{align*}
	It is easy to check that $(\rho_1,\vm_1,\mU_1),(\rho_2,\vm_2,\mU_2)\in K$. Furthermore 
	$$
	\det \left(\begin{array}{cc} \rho_2 - \rho_1 & \vm_2^\trans - \vm_1^\trans \\ \vm_2-\vm_1 & \mU_2 - \mU_1 \end{array}\right) = \det\left(\begin{array}{ccc} \frac{\sqrt{3}-3}{2} & 0 & 0 \\ 0 & \frac{\sqrt{3}}{2} & 0 \\
	0 & 0 & -\frac{\sqrt{3}}{2} \end{array}\right) \neq 0
	$$
	and hence $(\rho_2,\vm_2,\mU_2) - (\rho_1,\vm_1,\mU_1) \notin \Lambda$.
\end{ex}

However we will show that we still have $K^\Lambda = K^\co$. To this end we define
\begin{equation} \label{eq:Kstar}
K^\ast=\bigcup\limits_{\ov{\rho}\in\R^+} \big(K\cap\{\rho=\ov{\rho}\}\big)^\Lambda
\end{equation} 
as in Proposition~\ref{prop:Kstar}.

We will finally prove the following.

\begin{prop} \label{prop:compKLambda} 
	It holds that $K^\ast = K^\Lambda = K^\co$.
\end{prop}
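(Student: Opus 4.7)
The inclusions $K^\ast \subset K^\Lambda$ and $K^\Lambda \subset K^\co$ come for free from Propositions \ref{prop:Kstar}\ref{item:Kstar.a} and \ref{prop:generalfacts}\ref{item:generalfacts.b}. Thus the real task is the reverse inclusion $K^\co \subset K^\ast$. The plan is to reduce to the slice analysis of Subsection~\ref{subsec:convint-ch-slices} and then exhibit a single convex function $E$ which pins the three sets together.

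First, I would fix $\bar\rho > 0$ such that $p(\bar\rho) \leq 2c/n$ and analyze the slice $K_{\bar\rho} := K \cap \{\rho = \bar\rho\}$. Because $\mU \in \szn$ is traceless, the defining relation in \eqref{eq:K} forces $|\vm|^2 = 2\bar\rho\bigl(c - \tfrac{n}{2}p(\bar\rho)\bigr)$, so $K_{\bar\rho}$ is parametrized by a sphere in $\vm$ (and $\mU$ is determined by $\vm$). The key claim is that the wave cone $\Lambda$ is complete with respect to $K_{\bar\rho}$: for any $(\bar\rho,\vm_i,\mU_i) \in K_{\bar\rho}$ ($i=1,2$) the difference $(0,\vm_2-\vm_1,\mU_2-\mU_1)$ lies in $\Lambda$. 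To verify this using \eqref{eq:wavecone-kernel}, take $\veta_\vx$ proportional to $(\vm_2-\vm_1)^\perp$ in the plane spanned by $\vm_1,\vm_2$ (with $\veta_\vx \in \{\vm_1,\vm_2\}^\perp$ available as a cheap choice when $n\geq 3$), exploit $|\vm_1|=|\vm_2|$ to see that $\vm_1\cdot\veta_\vx = \vm_2\cdot\veta_\vx$, and solve linearly for $\eta_t$. Corollary~\ref{cor:complete-wc} then gives $(K_{\bar\rho})^\Lambda = (K_{\bar\rho})^\co$.

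Next I would identify this slice hull explicitly. After the scaling $\mM := \bar\rho\,\mU$, the slice $K_{\bar\rho}$ matches the set $\{(\vm,\vm\otimes\vm - \tfrac{|\vm|^2}{n}\id) : |\vm|^2 = 2\bar\rho\bar e_0\}$ with $\bar e_0 := c - \tfrac{n}{2}p(\bar\rho)$, which is precisely the incompressible configuration whose convex hull was determined by \name{De~Lellis} and \name{Sz\'ekelyhidi}. Translating back yields
\[
(K_{\bar\rho})^\co = \Bigl\{(\bar\rho,\vm,\mU) \in \R^+\times\R^n\times\szn \,:\, \lambda_{\max}\bigl(\tfrac{\vm\otimes\vm}{\bar\rho} - \mU\bigr) \leq \tfrac{2c}{n} - p(\bar\rho)\Bigr\}.
\]

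Finally I would glue the slices by introducing the auxiliary function
\[
E(\rho,\vm,\mU) := \tfrac{n}{2}\lambda_{\max}\bigl(\tfrac{\vm\otimes\vm}{\rho} - \mU\bigr) + \tfrac{n}{2}p(\rho),\qquad (\rho,\vm,\mU)\in\R^+\times\R^n\times\szn.
\]
Convexity of $E$ follows because for each fixed $\vv\in\R^n$ the perspective $(\rho,\vm)\mapsto(\vm\cdot\vv)^2/\rho$ is convex on $\R^+\times\R^n$, hence $(\rho,\vm,\mU)\mapsto\vv^\trans\bigl(\tfrac{\vm\otimes\vm}{\rho}-\mU\bigr)\vv$ is convex; then $\lambda_{\max}$ is the supremum over $|\vv|=1$, and $p$ is convex by assumption. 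A direct substitution shows $E\equiv c$ on $K$, so $K\subset\{E\leq c\}$, and convexity of $\{E\leq c\}$ upgrades this to $K^\co\subset\{E\leq c\}$. On the other hand, the slice formula of the previous step is exactly $(K_{\bar\rho})^\co = \{E\leq c\}\cap\{\rho=\bar\rho\}$, so $\{E\leq c\} = \bigcup_{\bar\rho>0}(K_{\bar\rho})^\co = \bigcup_{\bar\rho>0}(K_{\bar\rho})^\Lambda = K^\ast$. Combined with the opening chain this proves $K^\co = K^\Lambda = K^\ast$. The main obstacle is the $\Lambda$-completeness check on a slice, which is subtle in $n=2$ because $\{\vm_1,\vm_2\}^\perp$ is generically trivial there and one must genuinely choose $\eta_t\neq 0$; the second delicate step is invoking (or reproducing) the De Lellis–Sz\'ekelyhidi characterization of the convex hull of the sphere-valued configuration.
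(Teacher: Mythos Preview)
Your proposal is correct and follows essentially the same route as the paper. Your auxiliary function $E$ is identical to the paper's $e$ from Lemma~\ref{lemma:e-convex} (since $\lambda_{\max}(A+p(\rho)\id)=\lambda_{\max}(A)+p(\rho)$), and the overall architecture---prove $K^\ast\subset K^\Lambda\subset K^\co$ for free, identify $K^\ast=\{e\leq c\}$ via slice-by-slice analysis, then use convexity of $e$ to close the loop---is exactly the paper's strategy. Two executional differences are worth noting: first, your $\Lambda$-completeness check on the slice constructs $\veta$ explicitly (pick $\veta_\vx\perp(\vm_2-\vm_1)$, observe $(\mU_2-\mU_1)\veta_\vx=\tfrac{\alpha}{\bar\rho}(\vm_2-\vm_1)$ with $\alpha=\vm_1\cdot\veta_\vx=\vm_2\cdot\veta_\vx$, solve $\eta_t=-\alpha/\bar\rho$), whereas the paper factors the relevant $(1+n)\times(1+n)$ matrix and reads off determinant zero---both are clean, and your remark about needing $|\vm_1|=|\vm_2|$ is actually superfluous since $\veta_\vx\perp(\vm_2-\vm_1)$ already forces $\vm_1\cdot\veta_\vx=\vm_2\cdot\veta_\vx$. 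Second, for the slice hull you invoke the De~Lellis--Sz\'ekelyhidi characterization directly, while the paper reproves it in Lemma~\ref{lemma:Kast} via Minkowski's theorem by showing the extreme points of $\{e\leq c\}\cap\{\rho=\bar\rho\}$ lie in $K_{\bar\rho}$; either is fine, though your restriction to $p(\bar\rho)\leq 2c/n$ is harmless only because both sides are empty otherwise, which you should state.
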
 

To prove this, we need three lemmas, the first of which (Lemma~\ref{lemma:e-convex}) is a ``compressible variant'' of \cite[Lemma 3 (i)]{DelSze10}. Lemma~\ref{lemma:U-bdd} is similar to \cite[Lemma 3 (ii) and (iii)]{DelSze10}. The third lemma (Lemma~\ref{lemma:Kast}) can be deduced form \cite[Lemma 3 (iv)]{DelSze10} since here the density $\ov{\rho}$ is constant. For completeness we redo this proof. The key to prove Lemma~\ref{lemma:Kast} is the observation that $\Lambda$ is complete with respect to $K\cap\{\rho=\ov{\rho}\}$ for each $\ov{\rho}\in \R^+$, see below for details. This is in accordance with the completeness of the wave cone in the incompressible case.

\begin{defn} 
	Define the mapping $e : \R^+ \times \R^n \times \szn \to \R$, by
	$$
	(\rho,\vm,\mU)\mapsto e(\rho,\vm,\mU):= \frac{n}{2}\lambda_{\max}\left(\frac{\vm\otimes\vm}{\rho} + p(\rho)\id - \mU\right) \ed
	$$ 
\end{defn}

\begin{lemma} \label{lemma:e-convex}
	The mapping $e : \R^+ \times \R^n \times \szn \to \R$ is continuous and convex. 
\end{lemma}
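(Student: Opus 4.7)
The plan is to rewrite $e$ as a supremum of affine/convex functions parametrized by unit vectors in $\R^n$, which will yield both convexity and (after a separate argument) also a quick route to continuity.

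First, I would use the variational characterisation of the largest eigenvalue of a symmetric matrix:
$$
\lambda_{\max}(A)=\max_{\vxi\in\sphere^{n-1}}\vxi^\trans A\,\vxi.
$$
Applied to $A=\frac{\vm\otimes\vm}{\rho}+p(\rho)\id-\mU$ this gives
$$
e(\rho,\vm,\mU)=\frac{n}{2}\max_{\vxi\in\sphere^{n-1}}\left(\frac{(\vm\cdot\vxi)^{2}}{\rho}+p(\rho)-\vxi^\trans\mU\vxi\right),
$$
so $e$ is a pointwise supremum of the family of functions $f_\vxi:\R^+\times\R^n\times\szn\to\R$ defined by $f_\vxi(\rho,\vm,\mU):=\frac{n}{2}\big(\frac{(\vm\cdot\vxi)^{2}}{\rho}+p(\rho)-\vxi^\trans\mU\vxi\big)$. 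Since a supremum of convex functions is convex, it suffices to show that each $f_\vxi$ is convex.

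For fixed $\vxi$, the term $-\vxi^\trans\mU\vxi$ is linear in $\mU$, and $p(\rho)$ is convex by the assumption on the pressure law (see the standing hypothesis on $p$ in Section~\ref{sec:intro-euler}). For the remaining summand $\frac{(\vm\cdot\vxi)^{2}}{\rho}$, I would use the elementary identity
$$
\frac{y^{2}}{\rho}=\sup_{s\in\R}\bigl(2sy-s^{2}\rho\bigr)\qquad\text{for }(\rho,y)\in\R^+\times\R,
$$
which exhibits $(\rho,y)\mapsto y^2/\rho$ as a supremum of affine functions and therefore convex on $\R^+\times\R$. Composing with the linear map $\vm\mapsto\vm\cdot\vxi$ preserves convexity, so $(\rho,\vm)\mapsto\frac{(\vm\cdot\vxi)^{2}}{\rho}$ is convex. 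Summing the three convex pieces shows $f_\vxi$ is convex, and convexity of $e$ follows.

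For continuity I would argue directly: the map $(\rho,\vm,\mU)\mapsto\frac{\vm\otimes\vm}{\rho}+p(\rho)\id-\mU$ is continuous from $\R^+\times\R^n\times\szn$ into $\sym{n}$ (the factor $1/\rho$ is harmless since $\rho>0$, and $p\in C^1$), and the map $\lambda_{\max}:\sym{n}\to\R$ is continuous (indeed $1$-Lipschitz with respect to the operator norm, by Weyl's inequality). Hence $e$ is a composition of continuous maps and therefore continuous. I do not expect a serious obstacle here; the only point requiring mild care is checking convexity of $y^2/\rho$, which is handled cleanly by the Legendre-type representation above (or equivalently by computing the Hessian and noting it is positive semi-definite).
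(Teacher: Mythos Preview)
Your proposal is correct and follows essentially the same route as the paper: both use the variational formula $\lambda_{\max}(A)=\max_{|\vxi|=1}\vxi^\trans A\vxi$ to write $e$ as a supremum over unit vectors, then verify convexity term by term ($p$ convex by assumption, the $\mU$-term linear, the quotient $(\vm\cdot\vxi)^2/\rho$ convex), and obtain continuity as a composition of continuous maps. The only cosmetic difference is that you establish convexity of $(\rho,y)\mapsto y^2/\rho$ via the Legendre-type identity $y^2/\rho=\sup_s(2sy-s^2\rho)$, whereas the paper invokes a separate lemma (Lemma~\ref{lemma:convexity-kin-en}) with a direct algebraic verification; both arguments are standard and equally short.
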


\begin{proof} 
	The continuity of $e$ simply follows from the facts, that $(\rho,\vm,\mU)\mapsto \frac{\vm\otimes\vm}{\rho}+p(\rho)\id-\mU$ as well as $\mA\mapsto \lambda_{\max}(\mA)$ are continuous. 
	
	To show the convexity, we mimick the proof of \cite[Lemma 3]{DelSze10}. The first steps are exactly the same as in \cite{DelSze10}. For completeness we redo them. First we show that 
	\begin{equation} \label{eq:temp-convint-1}
	\lambda_{\max}\left(\frac{\vm\otimes\vm}{\rho} + p(\rho)\id - \mU\right)=\max\limits_{\vy\in\R^n,|\vy|=1} \vy\cdot\left(\left(\frac{\vm\otimes \vm}{\rho} + p(\rho)\id - \mU\right)\vy\right)\ed
	\end{equation}
	Since $\frac{\vm\otimes \vm}{\rho} + p(\rho)\id - \mU$ is symmetric, it is diagonalizable with orthogonal eigenvectors. Let $\lambda_1\leq\ldots\leq\lambda_n$ be the eigenvalues and $\vb_1,\ldots,\vb_n\in\R^n$ the corresponding normed eigenvectors. Then 
	\begin{align*}
	\lambda_{\max}\left(\frac{\vm\otimes\vm}{\rho} + p(\rho)\id - \mU\right)&=\lambda_n =\lambda_n\left(\vb_n\cdot\vb_n\right)=\vb_n\cdot\left(\lambda_n\vb_n\right) \\
	&=\vb_n\cdot\left(\left(\frac{\vm\otimes \vm}{\rho} + p(\rho)\id - \mU\right)\vb_n\right) \\
	&\leq\max\limits_{\vy\in\R^n,|\vy|=1} \vy\cdot\left(\left(\frac{\vm\otimes \vm}{\rho} + p(\rho)\id - \mU\right)\vy\right)\ed
	\end{align*} 
	
	Let $\ov{\vy}\in\R^n$, $|\ov{\vy}|=1$ such that 
	$$
	\max\limits_{\vy\in\R^n,|\vy|=1} \vy\cdot\left(\left(\frac{\vm\otimes \vm}{\rho} + p(\rho)\id - \mU\right)\vy\right)=\ov{\vy}\cdot\left(\left(\frac{\vm\otimes \vm}{\rho} + p(\rho)\id - \mU\right)\ov{\vy}\right)\ed
	$$ 
	Because $\vb_1,...,\vb_n$ form a basis of $\R^n$, there exist unique coefficients $\alpha_1,...,\alpha_n\in\R$ such that $\ov{\vy}=\sum\limits_{i=1}^n \alpha_i \vb_i$, and from $|\ov{\vy}|=1$ and the fact that $\vb_1,...,\vb_n$ form an orthonormal basis we deduce $\sum\limits_{i=1}^n\alpha_i^2= 1$. So we obtain
	\begin{align*}
	\max\limits_{\vy\in\R^n,|\vy|=1} \vy\cdot\left(\left(\frac{\vm\otimes \vm}{\rho} + p(\rho)\id - \mU\right)\vy\right) &=\ov{\vy}\cdot\left(\left(\frac{\vm\otimes \vm}{r} + p(\rho)\id - \mU\right)\ov{\vy}\right) \\
	&= \sum_{i,j=1}^n \alpha_i\alpha_j\,\vb_i\cdot\left(\left(\frac{\vm\otimes \vm}{\rho} + p(\rho)\id - \mU\right)\vb_j\right)\\
	&= \sum_{i=1}^n \alpha_i^2\lambda_i \leq \lambda_n\ed
	\end{align*} 
	Hence \eqref{eq:temp-convint-1} is proven.
	
	An easy calculation yields 
	\begin{align*}
	\vy\cdot\left(\left(\frac{\vm\otimes \vm}{\rho} + p(\rho)\id - \mU\right)\vy\right) &=\frac{1}{\rho}\vy\cdot\left(\vm\vm^\trans\vy\right) + p(\rho) - \vy\cdot\left(\mU\vy\right) \\
	&= \frac{\left(\vy\cdot\vm\right)^2}{\rho} +p(\rho) - \vy\cdot\left(\mU\vy\right)
	\end{align*} 
	for all $\vy\in \R^n$ with $|\vy|=1$.
	
	Hence it suffices to show that 
	$$
	(\rho,\vm,\mU)\mapsto \max\limits_{\vy\in\R^n,|\vy|=1} \left(\frac{\left(\vy\cdot\vm\right)^2}{\rho}+p(\rho) - \vy\cdot\left(\mU\vy\right)\right)
	$$ 
	is convex. From here on the proof slightly differs from the one of \cite[Lemma 3]{DelSze10}.
	
	Let $(\rho_1,\vm_1,\mU_1),(\rho_2,\vm_2,\mU_2)\in \R^+\times \R^n\times \szn$ and $\tau\in [0,1]$. Furthermore let $\ov{\vy}\in\R^n$, $|\ov{\vy}|=1$ such that 
	\begin{equation} \label{eq:p31}
	\begin{split}
		&\max\limits_{\vy\in\R^n,|\vy|=1} \left(\frac{\big(\vy\cdot\left(\tau\vm_1+(1-\tau)\vm_2\right)\big)^2}{\tau\rho_1+(1-\tau)\rho_2} +p(\tau\rho_1+(1-\tau)\rho_2) - \vy\cdot\big(\left(\tau\mU_1+(1-\tau)\mU_2\right)\vy\big)\right) \\
		&= \frac{\big(\ov{\vy}\cdot\left(\tau\vm_1+(1-\tau)\vm_2\right)\big)^2}{\tau\rho_1+(1-\tau)\rho_2} +p(\tau\rho_1+(1-\tau)\rho_2) - \ov{\vy}\cdot\big(\left(\tau\mU_1+(1-\tau)\mU_2\right)\ov{\vy}\big)\ed
	\end{split}
	\end{equation}
	We consider each summand in \eqref{eq:p31} seperately and obtain
	$$
	p(\tau\rho_1+(1-\tau)\rho_2)  \leq \tau p(\rho_1) + (1-\tau) p(\rho_2)
	$$
	since $\rho\mapsto p(\rho)$ is convex. Furthermore 
	$$
	- \ov{\vy}\cdot\big(\left(\tau\mU_1+(1-\tau)\mU_2\right)\ov{\vy}\big) = - \tau\ov{\vy}\cdot\left(\mU_1\ov{\vy}\right) - (1-\tau)\ov{\vy}\cdot\left(\mU_2\ov{\vy}\right)\ed
	$$
	What remains is to look at the first summand in \eqref{eq:p31}. To handle this summand, we apply Lemma~\ref{lemma:convexity-kin-en} leading to
	$$
	\frac{\big(\tau \ov{\vy}\cdot\vm_1 + (1-\tau) \ov{\vy}\cdot\vm_2\big)^2}{\tau \rho_1 + (1-\tau) \rho_2}  \leq \tau \frac{\left(\ov{\vy}\cdot\vm_1\right)^2}{\rho_1} + (1-\tau) \frac{\left(\ov{\vy}\cdot\vm_2\right)^2}{\rho_2}\ed
	$$
	
	All in all we have
	\begin{equation*}
	\begin{split}
	&\max\limits_{\vy\in\R^n,|\vy|=1} \left(\frac{\big(\vy\cdot\left(\tau\vm_1+(1-\tau)\vm_2\right)\big)^2}{\tau\rho_1+(1-\tau)\rho_2} +p(\tau\rho_1+(1-\tau)\rho_2) - \vy\cdot\big(\left(\tau\mU_1+(1-\tau)\mU_2\right)\vy\big)\right) \\
	&\leq \tau\left(\frac{\left(\ov{\vy}\cdot\vm_1\right)^2}{\rho_1} +p(\rho_1) - \ov{\vy}\cdot\left(\mU_1\ov{\vy}\right)\right) + (1-\tau) \left(\frac{\left(\ov{\vy}\cdot\vm_2\right)^2}{\rho_2} +p(\rho_2) - \ov{\vy}\cdot\left(\mU_2\ov{\vy}\right)\right) \\
	&\leq \tau\max\limits_{\vy\in\R^n,|\vy|=1}\left(\frac{\left(\vy\cdot\vm_1\right)^2}{\rho_1} +p(\rho_1) - \vy\cdot\left(\mU_1\vy\right)\right) \\
	& \qquad + (1-\tau) \max\limits_{\vy\in\R^n,|\vy|=1}\left(\frac{\left(\vy\cdot\vm_2\right)^2}{\rho_2} +p(\rho_2) - \vy\cdot\left(\mU_2\vy\right)\right)\ed
	\end{split}
	\end{equation*} 
	Multiplying by $\frac{n}{2}$ yields the claim
	$$
	e \big( \tau (\rho_1,\vm_1,\mU_1) + (1-\tau) (\rho_2,\vm_2,\mU_2) \big) \leq \tau e(\rho_1,\vm_1,\mU_1) + (1-\tau) e(\rho_2,\vm_2,\mU_2)\ed
	$$
\end{proof}

\begin{lemma} \label{lemma:U-bdd} 
	There exists $M>0$ with the following property: If $(\rho,\vm,\mU)\in \R^+\times \R^n \times \szn$ with $e(\rho,\vm,\mU)\leq c$ then
	$$
		\max\left\{ |\rho|,|\vm|,\|\mU\|\right\} \leq M \ed
	$$
\end{lemma}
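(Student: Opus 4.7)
The plan is to pull three successive bounds out of the single hypothesis $\lambda_{\max}(A) \leq \tfrac{2c}{n}$, where $A := \tfrac{\vm\otimes\vm}{\rho} + p(\rho)\,\id - \mU$, by exploiting the tracelessness of $\mU$ and non-negativity of $p$.

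First I would bound $\rho$ and $|\vm|$ simultaneously. Since $A$ is symmetric and each of its eigenvalues is at most $\tfrac{2c}{n}$, the trace of $A$ is at most $2c$; using $\tr\mU = 0$ (as $\mU \in \szn$) this gives the key estimate
$$\frac{|\vm|^2}{\rho} + n\,p(\rho) \ \leq\ 2c.$$
Both summands on the left are non-negative, since $p(0)=0$ together with $p'>0$ on $\R^+$ implies $p \geq 0$. In particular $p(\rho) \leq \tfrac{2c}{n}$; as $p$ is continuous, strictly increasing and coercive on $\R^+_0$ (convexity together with $p'(\rho_0) > 0$ at any fixed $\rho_0 > 0$ forces $p(\rho) \geq p(\rho_0) + p'(\rho_0)(\rho - \rho_0) \to \infty$ as $\rho\to\infty$), this pins down $\rho \leq M_\rho := p^{-1}(2c/n)$. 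Plugging back yields $|\vm|^2 \leq 2c\rho \leq 2cM_\rho$.

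For $\|\mU\|$ I would argue as follows: the matrix $\tfrac{\vm\otimes\vm}{\rho} + p(\rho)\,\id$ is positive semi-definite, so $A \geq -\mU$ in the partial order on symmetric matrices, and combining this with $A \leq \tfrac{2c}{n}\id$ gives $\mU \geq -\tfrac{2c}{n}\,\id$. Hence every eigenvalue of $\mU$ is bounded below by $-\tfrac{2c}{n}$; since $\tr\mU = 0$, the largest eigenvalue cannot exceed $(n-1)\tfrac{2c}{n}$, because otherwise the remaining $n-1$ eigenvalues would sum to less than $-(n-1)\tfrac{2c}{n}$, contradicting the lower bound just obtained. Consequently $\|\mU\|$ (in any matrix norm, e.g.\ the operator norm) is controlled by a constant depending only on $n$ and $c$, and taking $M$ to be the maximum of the three bounds produced finishes the proof.

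The only non-algebraic ingredient is the coercivity of $p$; everything else is straightforward linear algebra plus the traceless constraint on $\mU$, so I do not expect any substantive obstacle.
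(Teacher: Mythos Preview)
Your proof is correct and follows essentially the same route as the paper: both first use the trace inequality $\tr A \leq n\lambda_{\max}(A)$ to bound $\rho$ and $|\vm|$, and then bound the eigenvalues of $\mU$ from below via the positive semi-definiteness of $\tfrac{\vm\otimes\vm}{\rho}+p(\rho)\id$ together with the tracelessness of $\mU$. Your explicit discussion of the coercivity of $p$ is a useful addition that the paper leaves implicit, and your phrasing in terms of the partial order $A\geq -\mU$ is equivalent to the paper's variational argument $-\lambda_{\min}(\mU)\leq\lambda_{\max}(A)$.
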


\begin{proof}
	Let $(\rho,\vm,\mU)\in \R^+\times \R^n \times \szn$ with $e(\rho,\vm,\mU)\leq c$ be arbitrary. First of all, notice that 
	$$
		\frac{|\vm|^2}{\rho} + np(\rho) = \tr\left(\frac{\vm\otimes\vm}{\rho} + p(\rho)\id - \mU\right) \leq n\lambda_{\max}\left(\frac{\vm\otimes\vm}{\rho} + p(\rho)\id - \mU\right)\leq 2c\ed
	$$
	Since
	$$
		n p(\rho) \leq \frac{|\vm|^2}{\rho} + np(\rho) \qquad \text{ and }\qquad 	\frac{|\vm|^2}{\rho} \leq \frac{|\vm|^2}{\rho} + np(\rho) \ec 
	$$
	we deduce $p(\rho)\leq \frac{2c}{n}$ and $|\vm|\leq \sqrt{2\rho c}$. These inequalities give the desired bounds on $\rho$ and $\vm$ respectively. 
		
	To show the bound on $\mU$, we proceed as in the proof of \cite[Lemma 3 (iii)]{DelSze10}. Note that $\|\mU\|=\max\{|\lambda_{\min}(\mU)|,|\lambda_{\max}(\mU)|\}$. First we show that $|\lambda_{\min}(\mU)|$ is bounded. Since $\tr(\mU)=0$, we have $\lambda_{\min}(\mU)\leq 0$ and therefore
	\begin{align*}
	|\lambda_{\min}(\mU)| &= -\lambda_{\min}(\mU) = - \min\limits_{\vy\in\R^n,|\vy|=1}\vy\cdot\left(\mU\vy\right) \leq \max\limits_{\vy\in\R^n,|\vy|=1} \left(\frac{1}{\rho}\left(\vy\cdot\vm\right)^2 +p(\ov{\rho}) - \vy\cdot\left(\mU\vy\right)\right) \\
	& = \lambda_{\max}\left(\frac{\vm\otimes\vm}{\ov{\rho}} + p(\rho)\id - \mU\right)\leq \frac{2c}{n}\ed
	\end{align*} 
	The fact that $|\lambda_{\max}(\mU)|$ is bounded, too, follows because $\mU$ is traceless. Indeed
	\begin{align*} 
	|\lambda_{\max}(\mU)| &\leq \sum\limits_{\text{pos. eigenvalues}} |\lambda_i| = \sum\limits_{\text{neg. eigenvalues}} |\lambda_i| \leq (n-1) |\lambda_{\min}(\mU)| \leq \frac{2c (n-1)}{n}\ed
	\end{align*}
\end{proof}

\begin{lemma} \label{lemma:Kast}
	It holds that 	
	$$
	K^\ast = \left\{(\rho,\vm,\mU)\in\R^+\times\R^n\times\szn\,\Big|\,e(\rho,\vm,\mU)\leq c\right\}\ed
	$$
\end{lemma}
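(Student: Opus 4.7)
The plan is to establish the two inclusions separately. For the easy direction $K^\ast\subset\{e\leq c\}$: observe that every $(\rho,\vm,\mU)\in K$ satisfies
$$
\frac{\vm\otimes\vm}{\rho}+p(\rho)\id-\mU=\frac{2c}{n}\id,
$$
so $e(\rho,\vm,\mU)=c$, and hence $K\subset\{e\leq c\}$. By Lemma~\ref{lemma:e-convex} the sublevel set $\{e\leq c\}$ is convex, and by Proposition~\ref{prop:generalfacts}~\ref{item:generalfacts.a} therefore $\Lambda$-convex. Since it contains $K\cap\{\rho=\ov{\rho}\}$ for any $\ov{\rho}\in\R^+$, minimality of the $\Lambda$-convex hull yields $(K\cap\{\rho=\ov{\rho}\})^\Lambda\subset\{e\leq c\}$, and taking the union over $\ov{\rho}$ gives $K^\ast\subset\{e\leq c\}$.

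For the reverse inclusion, the key reduction is that on each slice $\{\rho=\ov{\rho}\}$ the wave cone is complete. Given two points $(\ov{\rho},\vm_1,\mU_1),(\ov{\rho},\vm_2,\mU_2)\in K\cap\{\rho=\ov{\rho}\}$, their difference is $(0,\vm_2-\vm_1,(\vm_2\otimes\vm_2-\vm_1\otimes\vm_1)/\ov{\rho})$, and I would verify directly that one can choose $\veta_\vx\in\R^n\setminus\{\vz\}$ with $\veta_\vx\perp(\vm_2-\vm_1)$ (possible since $n\geq 2$) and $\eta_t=-(\vm_1\cdot\veta_\vx)/\ov{\rho}$, so that $\veta=(\eta_t,\veta_\vx)$ lies in the kernel of the matrix in \eqref{eq:wavecone-kernel}. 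Thus by Corollary~\ref{cor:complete-wc},
$$
(K\cap\{\rho=\ov{\rho}\})^\Lambda=(K\cap\{\rho=\ov{\rho}\})^\co,
$$
and it suffices to prove that every $(\ov{\rho},\vm^\ast,\mU^\ast)$ with $e(\ov{\rho},\vm^\ast,\mU^\ast)\leq c$ lies in the ordinary convex hull of $K\cap\{\rho=\ov{\rho}\}$.

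For this final step, one reformulates the problem as a moment problem. Writing $\mN:=\mU^\ast+\tfrac{2c}{n}\id-p(\ov{\rho})\id$, the goal becomes to find weights $\lambda_i\geq 0$ with $\sum\lambda_i=1$ and vectors $\vm_i\in\R^n$ such that $\sum\lambda_i\vm_i=\vm^\ast$ and $\sum\lambda_i\vm_i\otimes\vm_i=\ov{\rho}\mN$. The associated covariance matrix is
$$
\mSigma:=\ov{\rho}\mN-\vm^\ast\otimes\vm^\ast=-\ov{\rho}\Big(\tfrac{\vm^\ast\otimes\vm^\ast}{\ov{\rho}}+p(\ov{\rho})\id-\mU^\ast-\tfrac{2c}{n}\id\Big),
$$
whose largest eigenvalue equals $-\ov{\rho}\cdot\tfrac{2}{n}(e(\ov{\rho},\vm^\ast,\mU^\ast)-c)\leq 0$ times $-1$; since $e(\ov{\rho},\vm^\ast,\mU^\ast)\leq c$ this shows $\mSigma$ is positive semi-definite. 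Diagonalizing $\mSigma=\sum_{j=1}^{n}\sigma_j^2\,\vb_j\otimes\vb_j$ in an orthonormal basis, I would then take $2n$ atoms $\vm_j^\pm:=\vm^\ast\pm\sqrt{n}\,\sigma_j\vb_j$ with equal weights $\lambda_j^\pm=\tfrac{1}{2n}$; a short computation gives $\sum\lambda_j^\pm\vm_j^\pm=\vm^\ast$ and $\sum\lambda_j^\pm\vm_j^\pm\otimes\vm_j^\pm=\vm^\ast\otimes\vm^\ast+\mSigma=\ov{\rho}\mN$, as desired.

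The main obstacle is the wave-cone-completeness verification on slices: it is the geometric fact that makes the compressible hull computation collapse to a classical convex-hull (moment) problem, and it is the step where the specific structure of $K$ (momentum appearing quadratically while density is fixed) is used. The subsequent moment-matching construction is then routine linear algebra.
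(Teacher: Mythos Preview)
Your easy inclusion $K^\ast\subset\{e\leq c\}$ is fine, and your wave-cone completeness argument on slices is correct and in fact more direct than the paper's determinant computation. The reduction to the ordinary convex hull via Corollary~\ref{cor:complete-wc} is also sound.

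The gap is in the final moment-matching step. You have silently dropped a constraint: a point $(\ov{\rho},\vm_i,\mU_i)$ lies in $K$ only if $\mU_i\in\szn$, i.e.\ is trace-free, which forces
\[
|\vm_i|^2 \;=\; \ov{\rho}\bigl(2c - n\,p(\ov{\rho})\bigr)\,.
\]
So the atoms $\vm_i$ are not free vectors in $\R^n$; they must all lie on a fixed sphere of radius $r:=\sqrt{\ov{\rho}(2c-n p(\ov{\rho}))}$. Your explicit atoms $\vm_j^{\pm}=\vm^\ast\pm\sqrt{n}\,\sigma_j\vb_j$ have
\[
|\vm_j^{\pm}|^2 \;=\; |\vm^\ast|^2 \pm 2\sqrt{n}\,\sigma_j(\vm^\ast\cdot\vb_j) + n\sigma_j^2\,,
\]
which is generally not equal to $r^2$ (the cross term vanishes only if $\vm^\ast\cdot\vb_j=0$, and there is no reason the eigenvectors of $\mSigma$ should be orthogonal to $\vm^\ast$). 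Hence the corresponding $\mU_j^{\pm}$ are not traceless, the points are not in $K$, and the convex combination does not prove membership in $(K\cap\{\rho=\ov{\rho}\})^\co$. Your second-moment identity only guarantees that the \emph{average} of $|\vm_j^{\pm}|^2$ equals $r^2$, not each one individually.

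The paper avoids this difficulty by invoking Minkowski's Theorem (Proposition~\ref{prop:minkowski}): one shows that the slice $S:=\{e\leq c,\;\rho=\ov{\rho}\}$ is compact and convex, and then proves that every extreme point of $S$ lies in $K\cap\{\rho=\ov{\rho}\}$ via a perturbation argument along the smallest eigenvector of $\tfrac{\vm\otimes\vm}{\ov{\rho}}+p(\ov{\rho})\id-\mU$. This yields $S=(K\cap\{\rho=\ov{\rho}\})^\co$ without ever writing down an explicit convex combination, sidestepping the sphere constraint entirely. Your moment-problem route could in principle be salvaged, but the construction would have to respect the quadratic constraint $|\vm_i|=r$, which is a genuinely harder (Tchakaloff-type) problem on the sphere and not the ``routine linear algebra'' you describe.
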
 

\begin{proof}
	Let us fix $\ov{\rho}\in \R^+$. We want to show that 
	\begin{equation} \label{eq:p33}
		\big(K\cap\{\rho=\ov{\rho}\}\big)^\Lambda = \left\{(\rho,\vm,\mU)\in\R^+\times\R^n\times\szn\,\Big|\,e(\rho,\vm,\mU)\leq c\text{ and }\rho=\ov{\rho}\right\}\ec
	\end{equation}
	which proves the lemma.
	
	The proof of \eqref{eq:p33} is the same as the proof of \cite[Lemma 3]{DelSze10}. For completeness we recall this proof and furthermore we present more details. 
	
	First of all we show that $\Lambda$ is complete with respect to $K\cap\{\rho=\ov{\rho}\}$. To this end let $(\rho_1,\vm_1,\mU_1),(\rho_2,\vm_2,\mU_2)\in K\cap\{\rho=\ov{\rho}\}$ be arbitrary. To prove that 
	$$
		(\rho_1,\vm_1,\mU_1)-(\rho_2,\vm_2,\mU_2)\in \Lambda \ec
	$$
	we must look at 
	\begin{align*}
	\left(\begin{array}{cc}
	\rho_1-\rho_2 & \vm_1^\trans - \vm_2^\trans \\
	\vm_1 - \vm_2 & \mU_1-\mU_2
	\end{array}\right) &= \left(\begin{array}{cc}
	0 & \vm_1^\trans - \vm_2^\trans \\
	\vm_1 - \vm_2 & \frac{1}{\ov{\rho}}\big(\vm_1\otimes \vm_1-\vm_2\otimes \vm_2\big)
	\end{array}\right) \\ &= \left(\begin{array}{cc}
	0 & \vm_1^\trans - \vm_2^\trans \\
	\vm_1 - \vm_2 & \frac{1}{\ov{\rho}}\vm_1 \vm_1^\trans-\frac{1}{\ov{\rho}}\vm_2 \vm_2^\trans
	\end{array}\right)\ec
	\end{align*}
	since $\rho_1=\ov{\rho} = \rho_2$ and $(\rho_1,\vm_1,\mU_1),(\rho_2,\vm_2,\mU_2)\in K$. To compute the determinant, observe that 
	\begin{align*}
	&\left(\begin{array}{cc}
	1 & \vz^\trans \\
	-\frac{1}{\ov{\rho}}\vm_1& \id
	\end{array}\right)\cdot \left(\begin{array}{cc}
	0 & \vm_1^\trans - \vm_2^\trans \\
	\vm_1 - \vm_2 & \frac{1}{\ov{\rho}}\vm_1 \vm_1^\trans-\frac{1}{\ov{\rho}}\vm_2 \vm_2^\trans
	\end{array}\right) \cdot \left(\begin{array}{cc}
	1 & -\frac{1}{\ov{\rho}}\vm_2^\trans \\
	\vz & \id
	\end{array}\right) \\
	&= \left(\begin{array}{cc}
	0 & \vm_1^\trans - \vm_2^\trans \\
	\vm_1 - \vm_2 & \mathbb{O}
	\end{array}\right)\ed
	\end{align*}	
	Obviously
	\begin{align*}
		\det\left(\begin{array}{cc}
			1 & \vz^\trans \\
			-\frac{1}{\ov{\rho}}\vm_1& \id
		\end{array}\right) &=1 \ec\\
		\det\left(\begin{array}{cc}
			1 & -\frac{1}{\ov{\rho}}\vm_2^\trans \\
			\vz & \id
		\end{array}\right) &= 1\ec\\
		\det\left(\begin{array}{cc}
			0 & \vm_1^\trans - \vm_2^\trans \\
			\vm_1 - \vm_2 & \mathbb{O}
		\end{array}\right) &=0\ec 
	\end{align*}	
	and hence
	$$
	\det \left(\begin{array}{cc}
		0 & \vm_1^\trans - \vm_2^\trans \\
		\vm_1 - \vm_2 & \frac{1}{\ov{\rho}}\vm_1 \vm_1^\trans-\frac{1}{\ov{\rho}}\vm_2 \vm_2^\trans
	\end{array}\right) = 0\ed
	$$
	Therefore $(\rho_1,\vm_1,\mU_1)-(\rho_2,\vm_2,\mU_2)\in \Lambda$.
	
	Now Corollary~\ref{cor:complete-wc} yields that $\big(K\cap\{\rho=\ov{\rho}\}\big)^\Lambda=\big(K\cap\{\rho=\ov{\rho}\}\big)^\co$, which means that we can use Minkowski's Theorem (Proposition~\ref{prop:minkowski}) in order to find $\big(K\cap\{\rho=\ov{\rho}\}\big)^\Lambda$.
	
	Let us now check the assumptions of Proposition~\ref{prop:minkowski}. We first show that the set 
	$$
	S:= \left\{(\rho,\vm,\mU)\in\R^+\times\R^n\times\szn\,\Big|\,e(\rho,\vm,\mU)\leq c\text{ and }\rho=\ov{\rho}\right\} 
	$$ 
	is compact and convex. To prove compactness, it is enough to show that $S$ is closed and bounded. As $S$ can be written as intersection of the pre-image of the closed set $(-\infty,c]$ under $e$, where $e$ is continuous according to Lemma~\ref{lemma:e-convex}, and the closed set $\{\rho=\ov{\rho}\}$, $S$ is closed\footnote{To be precise the pre-image of $(-\infty,c]$ under $e$ is only closed with respect to the subspace topology on $\R^+\times\R^n\times \szn$. It may (and will) not be closed in $\R\times\R^n\times \szn$. However $S$ is closed even in $\R\times\R^n\times \szn$ since $S$ is the intersection of this pre-image and the set $\{\rho=\ov{\rho}\}$ with $\ov{\rho}>0$.}. Moreover, the convexity and boundedness of $S$ simply follow from Lemmas \ref{lemma:e-convex} and \ref{lemma:U-bdd}, respectively. 
	
	To show that $K\cap\{\rho=\ov{\rho}\}\subset S$, let $(\rho,\vm,\mU)\in K\cap\{\rho=\ov{\rho}\}$. Then
	$$
		e(\rho,\vm,\mU) = \frac{n}{2} \lambda_{\max}\left(\frac{\vm\otimes\vm}{\rho} + p(\rho)\id - \mU\right) = \frac{n}{2} \lambda_{\max}\left(\frac{2c}{n} \id \right) = c
	$$
	and $\rho=\ov{\rho}$, which proves that $(\rho,\vm,\mU)\in S$.
	
	Hence the assumptions of Minkowski's Theorem (Propostion \ref{prop:minkowski}) hold. What remains is to prove that the extreme points of $S$ lie in $K\cap\{\rho=\ov{\rho}\}$. In order to do this, let $(\rho,\vm,\mU)\in S$ but assume that $(\rho,\vm,\mU)\notin K\cap\{\rho=\ov{\rho}\}$. It suffices to show that this implies that $(\rho,\vm,\mU)$ is not an extreme point of $S$. 
	
	From $(\rho,\vm,\mU)\in S$ we obtain $\rho=\ov{\rho}$ and 
	\begin{equation} \label{eq:temp1}
		\lambda_{\max}\left(\frac{\vm\otimes\vm}{\ov{\rho}} + p(\ov{\rho})\id - \mU\right)\leq \frac{2c}{n}\ed
	\end{equation}
	Since the matrix 
	$$
	\frac{\vm\otimes\vm}{\ov{\rho}} + p(\ov{\rho})\id - \mU
	$$
	is symmetric, there exists an orthogonal matrix $\mT\in \orth{n}$ such that 
	\begin{equation} \label{eq:temp2}
		\frac{\vm\otimes\vm}{\ov{\rho}} + p(\ov{\rho})\id - \mU = \mT \left(\begin{array}{ccc}
		\lambda_1 & & \\ & \ddots & \\ & & \lambda_n
		\end{array}\right)	\mT^{-1}\ed
	\end{equation}
	We may assume without loss of generality that the eigenvalues are ordered $\lambda_1\leq...\leq \lambda_n$. We denote the normed eigenvector, which corresponds to the $i$-th eigenvalue $\lambda_i$, by $\vb_i$. Then the vectors $\vb_1,...,\vb_n$ form an orthonormal basis of $\R^n$ and $\mT=(\vb_1 \,\cdots\,\vb_n)$.
	
	From \eqref{eq:temp1} we deduce that $\lambda_i\leq \frac{2c}{n}$ for all $i=1,...,n$. Assume that $\lambda_1=\frac{2c}{n}$. Then we have $\frac{2c}{n}\leq\lambda_1\leq...\leq \lambda_n\leq \frac{2c}{n}$, i.e. $\lambda_i= \frac{2c}{n}$ for all $i=1,...,n$. Hence with \eqref{eq:temp2} we get 
	$$
		\frac{\vm\otimes\vm}{\ov{\rho}} + p(\ov{\rho})\id - \mU = \mT \frac{2c}{n} \id \mT^{-1} = \frac{2c}{n}\id\ec
	$$
	which means that $(\rho,\vm,\mU)\in K\cap\{\rho=\ov{\rho}\}$, a contradiction. Therefore $\lambda_1<\frac{2c}{n}$. 
	
	Because $\vb_1,...,\vb_n$ form a basis of $\R^n$, there exist unique coefficients $\alpha_1,...,\alpha_n\in\R$ such that $\vm = \sum\limits_{i=1}^n \alpha_i\vb_i$. Let us define
	$$
		(\widehat{\rho},\widehat{\vm},\widehat{\mU}) := \left(0, \vb_1 , \frac{\vm\otimes\vb_1 + \vb_1\otimes\vm - 2\alpha_1 \vb_1\otimes\vb_1}{\rho}\right)\ed
	$$
	Obviously $\widehat{\mU}$ is symmetric and furthermore
	$$
		\tr \widehat{\mU} = \frac{2}{\rho}\left(\vm\cdot \vb_1 - \alpha_1 |\vb_1|^2\right) = 0
	$$
	due to the facts that $\vm\cdot \vb_1 = \alpha_1$ and $|\vb_1|=1$. In other words $\widehat{\mU}\in\szn$.
	
	For $\tau\in\R$ we compute 
	\begin{align*}
	&\mT^{-1} \left( \frac{(\vm + \tau\widehat{\vm})\otimes(\vm + \tau\widehat{\vm})}{\rho + \tau\widehat{\rho}} + p(\rho + \tau\widehat{\rho})\id  - (\mU + \tau\widehat{\mU})\right) \mT \\
	&=\mT^{-1} \left( \frac{\vm\otimes\vm}{\rho} + p(\rho)\id  - \mU\right) \mT + \tau \mT^{-1} \left(\frac{\vm\otimes\vb_1 + \vb_1\otimes\vm}{\rho} - \widehat{\mU}\right) \mT + \tau^2 \mT^{-1} \frac{\vb_1\otimes\vb_1}{\rho}\mT \\
	&=\left(\begin{array}{ccc}
	\lambda_1 & & \\ & \ddots & \\ & & \lambda_n
	\end{array}\right) + (2\alpha_1\tau + \tau^2) \mT^{-1} \frac{\vb_1\otimes\vb_1}{\rho}\mT\ed
	\end{align*}
	Note that 
	$$
	\mT^{-1}\, \vb_1\otimes\vb_1 \,\mT = \left(\begin{array}{cccc}
	1 & & & \\ & 0 & & \\ & & \ddots & \\ & & & 0 
	\end{array}\right) \ec
	$$
	due to the facts that the columns of $\mT$ are the vectors $\vb_1,...,\vb_n$ and these vectors form an orthonormal basis. Hence 
	\begin{align*}
		\lambda_{\max}\left( \frac{(\vm + \tau\widehat{\vm})\otimes(\vm + \tau\widehat{\vm})}{\rho + \tau\widehat{\rho}} + p(\rho + \tau\widehat{\rho})\id  - (\mU + \tau\widehat{\mU})\right) &= \max\left\{\lambda_1 + \frac{2\alpha_1\tau + \tau^2}{\rho},\lambda_n\right\} \\
		&\leq \frac{2c}{n}\ec
	\end{align*}
	as long as $|\tau|$ is sufficiently small, because $\lambda_1<\frac{2c}{n}$ and $\lambda_n\leq \frac{2c}{n}$. Furthermore we have $\rho + \tau\widehat{\rho} = \rho = \ov{\rho}$ for all $\tau\in\R$ since $\widehat{\rho}=0$. In other words $(\rho + \tau\widehat{\rho},\vm + \tau\widehat{\vm},\mU + \tau\widehat{\mU})\in S$ for sufficiently small $|\tau|$. Since $(\rho,\vm,\mU)$ is a convex combination of $(\rho \pm \tau\widehat{\rho},\vm \pm \tau\widehat{\vm},\mU \pm \tau\widehat{\mU})$, $(\rho,\vm,\mU)$ is not an extreme point of $S$. This finishes the proof.
\end{proof}

\begin{proof}[Proof of Proposition~\ref{prop:compKLambda}]
	Due to Propositions \ref{prop:Kstar}~\ref{item:Kstar.a} and \ref{prop:generalfacts}~\ref{item:generalfacts.b} we have $K^\ast \subset K^\Lambda \subset K^\co$. Lemmas \ref{lemma:e-convex} and \ref{lemma:Kast} show that $K^\ast$ is convex. Since $K\subset K^\ast$, we have $K^\co\subset K^\ast$. This yields the claim. 
\end{proof}

\section{Operators} \label{sec:convint-operators} 

As mentioned in Subsection~\ref{subsec:convint-prel-U} we have to find suitable \emph{localized} plane wave oscillations. The operators considered in this section help to localize or cut-off arbitrary plane waves.

\subsection{Statement of the Operators} 
\label{subsec:convint-op-statement}

The following proposition claims that desired operators exist. Propostion \ref{prop:operators} is proven in Subsection~\ref{subsec:convint-op-proof}. Part \ref{item:operators.c} is only needed in Section~\ref{sec:convint-nodens}, where solutions with fixed density are constructed.

\begin{prop} \label{prop:operators}
	Let $n\in\{2,3\}$, $(\ov{\rho},\ov{\vm},\ov{\mU})\in\R\times\R^n\times\szn$ and assume that there exists $\veta\in\R^{1+n}\smallsetminus\{\vz\}$ such that
	\begin{equation} \label{eq:op-kernel}
		\left(\begin{array}{cc}
			\ov{\rho} & \ov{\vm}^\trans \\
			\ov{\vm} & \ov{\mU}  
		\end{array}\right) \veta = \vz \ed
	\end{equation}
	Then there exist third order homogeneous\footnote{A differential operator is called \emph{third order homogeneous} if it can be written as a sum of derivatives, \emph{all} of which are of third order.} differential operators 
	\begin{align*}
		\opL_\rho &: C^\infty(\R^{1+n}) \to C^\infty(\R^{1+n}) \ec\\
		\opL_\vm &: C^\infty(\R^{1+n}) \to C^\infty(\R^{1+n};\R^n) \ec\\
		\opL_\mU &: C^\infty(\R^{1+n}) \to C^\infty(\R^{1+n};\szn)\ec
	\end{align*}
	with the following properties:
	\begin{enumerate}
		\item \label{item:operators.a} For any function $g\in C^\infty(\R^{1+n})$ 
		the PDEs
		\begin{align}
			\partial_t \opL_\rho[g] + \Div \opL_\vm[g] &= 0 \ec \label{eq:op-p1a}\\
			\partial_t \opL_\vm[g] + \Div \opL_\mU[g] &= \vz \label{eq:op-p1b}
		\end{align}
		hold. 
		\item \label{item:operators.b} If we set $g(t,\vx):= h\big((t,\vx)\cdot \veta\big)$ with an arbitrary function $h\in C^\infty(\R)$, we obtain
		\begin{align}
			\opL_\rho [g](t,\vx) &= \ov{\rho} \,h'''\big((t,\vx)\cdot \veta\big) \ec \label{eq:op-p2a}\\
			\opL_\vm [g](t,\vx) &= \ov{\vm} \,h'''\big((t,\vx)\cdot \veta\big) \ec \label{eq:op-p2b}\\
			\opL_\mU [g](t,\vx) &= \ov{\mU} \,h'''\big((t,\vx)\cdot \veta\big) \ed \label{eq:op-p2c}
		\end{align}
		\item \label{item:operators.c} If $\ov{\rho}=0$ and $\veta$ is not parallel to $\ve_t$, then $\opL_\rho \equiv 0$.
	\end{enumerate}
\end{prop}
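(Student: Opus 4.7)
My plan is to pass to symbol (principal symbol) language and reduce the construction to an algebraic question about polynomial maps. A homogeneous differential operator $\opL$ of order $3$ on $C^\infty(\R^{1+n})$ is in bijection with a homogeneous polynomial $L\in\R[\vxi]$ of degree $3$ in the space--time frequency variable $\vxi=(\xi_t,\vxi_\vx)\in\R^{1+n}$: applying $\opL$ to the plane wave $g(t,\vx)=h\big((t,\vx)\cdot\vxi\big)$ yields $L(\vxi)\,h'''\big((t,\vx)\cdot\vxi\big)$. Under this dictionary the PDEs in \ref{item:operators.a} turn into the polynomial identities
\begin{align*}
\xi_t\,L_\rho(\vxi) + \vxi_\vx\cdot L_\vm(\vxi) &\equiv 0, \\
\xi_t\,L_\vm(\vxi) + L_\mU(\vxi)\,\vxi_\vx &\equiv \vz,
\end{align*}
in $\R[\vxi]$, while \ref{item:operators.b} simply requires $L_\rho(\veta)=\ov{\rho}$, $L_\vm(\veta)=\ov{\vm}$, $L_\mU(\veta)=\ov{\mU}$, with $L_\mU$ taking values in $\szn$. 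So the task is to exhibit a degree-$3$ polynomial map $\vxi\mapsto(L_\rho,L_\vm,L_\mU)(\vxi)$ with these properties.

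I would package the target data into the symmetric $(1+n)\times(1+n)$ matrix $M_0:=\bigl(\begin{smallmatrix}\ov{\rho}&\ov{\vm}^\trans\\ \ov{\vm}&\ov{\mU}\end{smallmatrix}\bigr)$, so that \eqref{eq:op-kernel} reads $M_0\veta=\vz$, and look for a symmetric matrix-valued polynomial $\mathbf{F}(\vxi)\in\R^{(1+n)\times(1+n)}$ of degree $3$ satisfying $\mathbf{F}(\vxi)\vxi\equiv\vz$ (the space-time divergence identity), $\mathbf{F}(\veta)=M_0$, and $\sum_{k=1}^{n}\mathbf{F}_{kk}(\vxi)\equiv 0$ (the traceless-$\mU$ constraint). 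The main engine is the Saint-Venant / double-curl symbol
$$
\mathbf{N}(\vxi;\mG):=|\vxi|^2\,\mG - \vxi(\mG\vxi)^\trans - (\mG\vxi)\vxi^\trans + (\vxi\cdot\mG\vxi)\,\id_{1+n},
$$
which is symmetric and satisfies $\mathbf{N}(\vxi;\mG)\vxi\equiv\vz$ for every constant symmetric $\mG$ (a one-line check). Choosing $\mG:=M_0/|\veta|^2$, the hypothesis $M_0\veta=\vz$ gives $\mathbf{N}(\veta;\mG)=|\veta|^2\mG=M_0$. Multiplying by any linear polynomial $\ell(\vxi)$ with $\ell(\veta)=1$ lifts the degree from $2$ to $3$ while preserving both the annihilation identity and the value at $\veta$, producing the first candidate $\mathbf{F}^{(0)}(\vxi):=\ell(\vxi)\mathbf{N}(\vxi;\mG)$.

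The main obstacle will be the tracelessness constraint on the $\mU$-block, since in general the cubic polynomial $\sum_{k=1}^{n}\mathbf{F}^{(0)}_{kk}(\vxi)$ is nonzero. To correct it I would add a piece of the same Saint-Venant shape, $\mathbf{C}(\vxi):=q(\vxi)\,\mathbf{N}(\vxi;\mG')$, with a linear factor $q$ vanishing at $\veta$ (so the value at $\veta$ is unchanged) and $\mG'$ chosen so that $\tr_{1..n}\mathbf{C}$ cancels $\tr_{1..n}\mathbf{F}^{(0)}$; this last step is the combinatorial core of the argument and I would carry it out by analysing the two subcases $\xi_{0,t}\neq 0$ and $\xi_{0,t}=0$ separately, in each of which a different linear normaliser $\ell$ keeps the denominators constant. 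For \ref{item:operators.c}, when $\ov{\rho}=0$ and $\veta$ is not parallel to $\ve_t$ (so $\veta_\vx\neq\vz$), I would bypass the generic recipe and build the operator by hand: set $\opL_\rho\equiv 0$, after which \eqref{eq:op-p1a} reduces to $\vxi_\vx\cdot L_\vm(\vxi)\equiv 0$; since $\ov{\vm}\cdot\veta_\vx=0$ already follows from \eqref{eq:op-kernel}, the ansatz $L_\vm(\vxi):=\ell(\vxi)\bigl(|\vxi_\vx|^2\ov{\vm}-(\ov{\vm}\cdot\vxi_\vx)\vxi_\vx\bigr)$ with a suitable normalising linear $\ell$ works, and $L_\mU$ is constructed by applying an analogous projection to \eqref{eq:op-p1b}. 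Finally I would read off the explicit differential operators from the polynomial symbols and verify \ref{item:operators.a}--\ref{item:operators.c} directly on plane waves; by the symbol-to-operator correspondence, all three properties are automatic.
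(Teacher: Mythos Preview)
Your symbol-language framework is sound and is a genuinely different route from the paper's proof. The paper proceeds by brute force: it writes down explicit third-order operators in three special alignments of $\veta$ (namely $\veta$ parallel to $\ve_t$, and $\veta=(a,b,0,\ldots)$ with $b\neq 0$ in dimensions $n=2,3$ separately), verifies properties \ref{item:operators.a}--\ref{item:operators.c} by direct computation in each case, and then reduces a general $\veta$ to the aligned case via a spatial rotation $\mA\in\orth{n}$, conjugating both the data $(\ov{\rho},\ov{\vm},\ov{\mU})$ and the resulting operators. Your Saint-Venant ansatz $\mathbf{N}(\vxi;\mG)$ is more conceptual and coordinate-free, and the reduction of \ref{item:operators.a}--\ref{item:operators.b} to the polynomial identities $\mathbf{F}(\vxi)\vxi\equiv\vz$ and $\mathbf{F}(\veta)=M_0$ is clean and correct.

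However, the step you yourself flag as ``the combinatorial core'' --- correcting the spatial trace $\sum_{k=1}^n\mathbf{F}_{kk}$ to vanish identically while preserving $\mathbf{F}(\veta)=M_0$ and $\mathbf{F}(\vxi)\vxi\equiv\vz$ --- is not carried out, and it is not clear that a single correction $q(\vxi)\mathbf{N}(\vxi;\mG')$ with linear $q$ vanishing at $\veta$ suffices: the spatial trace of $\mathbf{F}^{(0)}$ is a general cubic, whereas $q\cdot\tr_{1..n}\mathbf{N}(\cdot;\mG')$ factors as linear times quadratic, so at minimum you will need a linear combination of several such terms together with a dimension argument. Likewise, in your handling of \ref{item:operators.c} the construction of $L_\mU$ ``by an analogous projection'' is vague: you must produce a symmetric \emph{and traceless} matrix symbol satisfying $\xi_t L_\vm+L_\mU\vxi_\vx\equiv\vz$ and $L_\mU(\veta)=\ov{\mU}$, and tracelessness is again the nontrivial constraint. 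These are genuine gaps; your outline is a reasonable programme, but the proof is incomplete until the trace corrections are actually exhibited --- and doing so may in the end force you into a case analysis not far from the paper's explicit formulas.
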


\subsection{Lemmas for the Proof of Proposition~\ref{prop:operators}} \label{subsec:convint-op-lemmas}

Let us first prove Proposition~\ref{prop:operators} in some special cases. In Subsection~\ref{subsec:convint-op-proof} we reduce the general case to one of the special cases studied in the following lemmas.

\begin{lemma} \label{lemma:op-eta=e1} 
	Let the assumptions of Proposition~\ref{prop:operators} hold with $\veta= (a,\vz)^\trans \in \R^{1+n}$, where $a\neq 0$. Then the operators $\opL_\rho$, $\opL_\vm$ and $\opL_\mU$ defined by 
	\begin{align*}
		\opL_\rho[g] &:= \frac{1}{a^3} \sum_{i,j=1}^n \ov{U}_{ij} \,\parthree{t}{i}{j} g \ec\\
		\opL_{m_k}[g] &:= - \frac{1}{a^3} \sum_{i=1}^n \ov{U}_{ik} \,\parthree{t}{t}{i} g \qquad \text{ for }k\in\{1,...,n\} \ec\\
		\opL_\mU[g] &:= \frac{\ov{\mU}}{a^3}\parthree{t}{t}{t} g
	\end{align*}
	for all $g\in C^\infty(\R^{1+n})$, satisfy the properties \ref{item:operators.a}, \ref{item:operators.b} of Proposition~\ref{prop:operators}.
\end{lemma}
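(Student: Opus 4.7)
My plan is to observe first what the kernel condition forces on the coefficients, then to verify the two required properties by direct differentiation; no serious obstacle is expected beyond careful index bookkeeping.

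First I would unpack the hypothesis. Since $\veta = (a,\vz)^\trans$ with $a\neq 0$, the identity
\[
\left(\begin{array}{cc} \ov{\rho} & \ov{\vm}^\trans \\ \ov{\vm} & \ov{\mU} \end{array}\right) \veta = \vz
\]
reduces to $a\ov{\rho}=0$ and $a\ov{\vm}=\vz$, so $\ov{\rho}=0$ and $\ov{\vm}=\vz$. The remaining datum $\ov{\mU}\in \szn$ is unconstrained by the kernel condition. This is what makes the simple form of the operators work: two of the three target values are already zero and only $\opL_\mU$ needs to recover a nontrivial constant.

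Next I would verify property \ref{item:operators.b}. For $g(t,\vx) = h\big((t,\vx)\cdot\veta\big) = h(at)$ every spatial derivative of $g$ vanishes, whence $\partial^3_{tij} g = 0$ and $\partial^3_{tti} g = 0$ for all $i,j\in\{1,\ldots,n\}$; consequently $\opL_\rho[g] = 0$ and $\opL_\vm[g]=\vz$, which agree with $\ov{\rho}\,h'''(at)=0$ and $\ov{\vm}\,h'''(at)=\vz$ by the preceding paragraph. For the matrix operator one computes $\partial^3_{ttt} g = a^3 h'''(at)$, so $\opL_\mU[g] = \ov{\mU}\, h'''(at)$, as required.

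Finally I would verify property \ref{item:operators.a} by direct computation, using only equality of mixed partials (each operator acts on $C^\infty$ functions) and the symmetry $\ov{U}_{ij}=\ov{U}_{ji}$. For the first PDE,
\[
\partial_t \opL_\rho[g] + \Div \opL_\vm[g] = \frac{1}{a^3}\sum_{i,j=1}^n \ov{U}_{ij}\,\parfour{t}{t}{i}{j} g \;-\; \frac{1}{a^3}\sum_{k=1}^n\sum_{i=1}^n \ov{U}_{ik}\,\parfour{t}{t}{i}{k} g = 0,
\]
after relabeling the summation indices. For the $k$-th component of the second PDE,
\[
\partial_t \opL_{m_k}[g] + \sum_{j=1}^n \partial_j \opL_{\mU,jk}[g] = -\frac{1}{a^3}\sum_{i=1}^n \ov{U}_{ik}\,\parfour{t}{t}{t}{i} g \;+\; \frac{1}{a^3}\sum_{j=1}^n \ov{U}_{jk}\,\parfour{t}{t}{t}{j} g = 0.
\]
This completes all three verifications; the only place where care is needed is ensuring the summation indices in the divergence term of $\Div\opL_\vm[g]$ match those produced by $\partial_t\opL_\rho[g]$, which is straightforward.
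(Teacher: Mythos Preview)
Your proof is correct and follows essentially the same approach as the paper: both proceed by direct computation, first extracting $\ov{\rho}=0$ and $\ov{\vm}=\vz$ from the kernel condition (you do this up front, the paper does it while verifying property \ref{item:operators.b}), then checking the PDEs and the plane-wave identities by straightforward index manipulation. The only cosmetic difference is that the paper verifies \ref{item:operators.a} before \ref{item:operators.b}.
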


\begin{proof} 
\begin{enumerate}
	\item It is straightforward to check that \eqref{eq:op-p1a} and \eqref{eq:op-p1b} hold:
	\begin{align*}
		\partial_t \opL_\rho[g] + \Div \opL_\vm[g] &= \frac{1}{a^3} \sum_{i,j=1}^n \ov{U}_{ij} \parfour{t}{t}{i}{j} g  - \frac{1}{a^3} \sum_{i,j=1}^n \ov{U}_{ij} \parfour{t}{t}{i}{j} g = 0 \es \\
		\partial_t \opL_{m_k}[g] + \sum_{i=1}^n \partial_i \opL_{U_{ki}}[g] &= - \frac{1}{a^3} \sum_{i=1}^n \ov{U}_{ik} \parfour{t}{t}{t}{i} + \frac{1}{a^3} \sum_{i=1}^n \ov{U}_{ki} \parfour{t}{t}{t}{i} = 0\ed
	\end{align*}
	
	\item Now set $g(t,\vx):= h\big( (t,\vx)\cdot \veta \big)= h(at)$ with an arbitrary function $h\in C^\infty(\R)$. In particular we have $\partial_i g = 0$ for all $i=1,...,n$. This yields
	\begin{align}
		\opL_\rho[g](t,\vx) &= 0 \ec \label{eq:op-1temp}\\
		\opL_\vm[g](t,\vx) &= \vz \ec \label{eq:op-2temp}\\
		\opL_\mU[g](t,\vx) &= \ov{\mU}\,h'''\big( (t,\vx)\cdot \veta \big) \ed \label{eq:op-3temp}
	\end{align}
	Note that we obtain $\ov{\rho} = 0$ and $\ov{\vm} = \vz$ from \eqref{eq:op-kernel} and the fact that $\veta= (a,\vz)^\trans$. Thus \eqref{eq:op-1temp} - \eqref{eq:op-3temp} imply \eqref{eq:op-p2a} - \eqref{eq:op-p2c}.
\end{enumerate}
\end{proof}

\begin{rem}
	Notice that property \ref{item:operators.c} of Proposition~\ref{prop:operators} is also true, since $\veta$ is parallel to $\ve_t$ in Lemma~\ref{lemma:op-eta=e1} and hence property \ref{item:operators.c} of Proposition~\ref{prop:operators} is void. 
\end{rem}

\begin{lemma} \label{lemma:op-2D}
	Let $n=2$ and suppose that the assumptions of Proposition~\ref{prop:operators} hold with $\veta= (a,b,0)^\trans$, where $b\neq 0$. Then the operators $\opL_\rho$, $\opL_\vm$ and $\opL_\mU$ defined by 
	\begin{align}
		\opL_\rho[g] &:= \frac{\ov{\rho}}{b^3} \Big( \parthree{1}{1}{1} g + \parthree{1}{2}{2} g \Big) \ec \label{eq:op-defn-2D-rho} \\
		\opL_{m_1}[g] &:= \frac{\ov{\rho}}{b^3} \Big( -\parthree{t}{1}{1} g - \parthree{t}{2}{2} g \Big) + \frac{\ov{m}_2}{b^3} \Big( -\parthree{1}{1}{2} g - \parthree{2}{2}{2} g \Big) \ec\\
		\opL_{m_2}[g] &:= \frac{\ov{m}_2}{b^3} \Big( \parthree{1}{1}{1} g + \parthree{1}{2}{2} g \Big) \ec\\
		\opL_{U_{11}}[g] &:= \frac{\ov{\rho}}{b^3} \Big( \parthree{t}{t}{1} g \Big) + \frac{\ov{m}_2}{b^3} \Big( 2 \parthree{t}{1}{2} g \Big) \ec\\
		\opL_{U_{12}}[g] &:= \frac{\ov{\rho}}{b^3} \Big( \parthree{t}{t}{2} g \Big) + \frac{\ov{m}_2}{b^3} \Big( -\parthree{t}{1}{1} g + \parthree{t}{2}{2} g \Big) 
	\end{align}
	for all $g\in C^\infty(\R^{1+2})$, satisfy the properties \ref{item:operators.a} - \ref{item:operators.c} of Proposition~\ref{prop:operators}.
\end{lemma}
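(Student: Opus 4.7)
The plan is to verify the three properties by direct computation, exploiting the kernel condition \eqref{eq:op-kernel} to see that the coefficients defining the operators are forced by $(\ov\rho,\ov{m}_2,a,b)$ alone. Writing out the matrix equation with $\veta=(a,b,0)^\trans$ yields the relations
\[
\ov{m}_1 = -\tfrac{a}{b}\ov\rho,\qquad \ov{U}_{11} = \tfrac{a^2}{b^2}\ov\rho,\qquad \ov{U}_{12}=\ov{U}_{21} = -\tfrac{a}{b}\ov{m}_2,\qquad \ov{U}_{22} = -\ov{U}_{11},
\]
where the last equality uses $\ov\mU\in\sztwo$. These identities are the backbone of the whole argument and should be recorded first; they explain why the definitions in the lemma depend only on the two parameters $\ov\rho$ and $\ov{m}_2$ (every other component is determined).

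Next, I would check property \ref{item:operators.b}. With $g(t,\vx)=h(at+bx_1)$, differentiation in $t$ picks up a factor $a$, in $x_1$ a factor $b$, and in $x_2$ a factor $0$; every operator contains exactly three derivatives, so $g$ gets turned into $h'''$ times a trinomial in $a,b$. A term-by-term check gives $\opL_\rho[g]=\frac{\ov\rho}{b^3}\,b^3 h'''=\ov\rho h'''$; $\opL_{m_1}[g]=\frac{\ov\rho}{b^3}(-ab^2)h'''=-\tfrac{a}{b}\ov\rho\,h''' = \ov{m}_1 h'''$; $\opL_{m_2}[g]=\ov{m}_2 h'''$; $\opL_{U_{11}}[g]=\frac{a^2}{b^2}\ov\rho\,h''' = \ov{U}_{11}h'''$; $\opL_{U_{12}}[g]=-\tfrac{a}{b}\ov{m}_2\,h''' = \ov{U}_{12}h'''$. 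The remaining component $\opL_{U_{22}}=-\opL_{U_{11}}$ (since $\opL_\mU$ takes values in $\sztwo$) handles itself by tracelessness of $\ov\mU$.

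Property \ref{item:operators.a} is pure bookkeeping once one applies $\partial_t$ and $\Div$ to the explicit formulas, and is best done by collecting coefficients of $\ov\rho/b^3$ and $\ov{m}_2/b^3$ separately. For the scalar equation the $\ov\rho$-terms cancel between $\partial_t\opL_\rho$ and $\partial_1\opL_{m_1}$, while the $\ov{m}_2$-terms cancel between $\partial_1\opL_{m_1}$ and $\partial_2\opL_{m_2}$. For the first component of the momentum equation the $\ov\rho$-contributions from $\partial_t\opL_{m_1}$ are cancelled by $\partial_1\opL_{U_{11}}+\partial_2\opL_{U_{12}}$ (with the second derivatives falling into the pattern $-\partial_t^2\partial_1^2-\partial_t^2\partial_2^2+\partial_t^2\partial_1^2+\partial_t^2\partial_2^2=0$), while the $\ov{m}_2$-contributions combine as $(-1+2-1)\partial_t\partial_1^2\partial_2+(-1+1)\partial_t\partial_2^3=0$. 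The second component is analogous, using that $\partial_2\opL_{U_{22}}=-\partial_2\opL_{U_{11}}$.

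Finally, property \ref{item:operators.c} is immediate: since $b\neq 0$, the vector $\veta=(a,b,0)^\trans$ is not parallel to $\ve_t$, so the hypothesis ``$\ov\rho=0$ and $\veta\not\parallel\ve_t$'' simply reduces to $\ov\rho=0$, and then $\opL_\rho\equiv 0$ by inspection of \eqref{eq:op-defn-2D-rho}. The only genuine obstacle in the whole proof is not mathematical but notational — keeping careful track of the five distinct third-order derivatives appearing in each PDE — but since every operator is linear in $\ov\rho$ and $\ov{m}_2$, separating these two families of terms makes the cancellations transparent.
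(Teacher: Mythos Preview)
Your proposal is correct and follows essentially the same approach as the paper: direct verification of the three properties, using the kernel condition \eqref{eq:op-kernel} to rewrite $\ov{m}_1,\ov{U}_{11},\ov{U}_{12}$ in terms of $\ov\rho,\ov{m}_2,a,b$, and checking the PDEs by collecting fourth-order derivatives. The only cosmetic differences are that the paper verifies \ref{item:operators.a} before \ref{item:operators.b} and derives the kernel relations inside part \ref{item:operators.b} rather than upfront; mathematically nothing changes.
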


\begin{proof} 
\begin{enumerate}	
	\item It is straightforward to check that \eqref{eq:op-p1a} and \eqref{eq:op-p1b} hold:
	\begin{align*}
		\partial_t \opL_\rho[g] + \partial_1 \opL_{m_1}[g] + \partial_2 \opL_{m_2}[g] &= \frac{\ov{\rho}}{b^3} \Big( \parfour{t}{1}{1}{1} g + \parfour{t}{1}{2}{2} g  -\parfour{t}{1}{1}{1} g - \parfour{t}{1}{2}{2} g \Big) \\
		&\qquad + \frac{\ov{m}_2}{b^3} \Big( -\parfour{1}{1}{1}{2} g - \parfour{1}{2}{2}{2} g + \parfour{1}{1}{1}{2} g + \parfour{1}{2}{2}{2} g\Big) \\
		& = 0 \es \\
		\partial_t \opL_{m_1}[g] + \partial_1 \opL_{U_{11}}[g] + \partial_2 \opL_{U_{12}}[g] &= \frac{\ov{\rho}}{b^3} \Big( -\parfour{t}{t}{1}{1} g - \parfour{t}{t}{2}{2} g + \parfour{t}{t}{1}{1} g + \parfour{t}{t}{2}{2} g \Big) \\
		&\qquad + \frac{\ov{m}_2}{b^3} \Big( -\parfour{t}{1}{1}{2} g - \parfour{t}{2}{2}{2} g + 2 \parfour{t}{1}{1}{2} g -\parfour{t}{1}{1}{2} g + \parfour{t}{2}{2}{2} g\Big) \\
		& = 0\es \\
		\partial_t \opL_{m_2}[g] + \partial_1 \opL_{U_{12}}[g] - \partial_2 \opL_{U_{11}}[g] &= \frac{\ov{\rho}}{b^3} \Big( \parfour{t}{t}{1}{2} g - \parfour{t}{t}{1}{2} g \Big) \\
		&\qquad + \frac{\ov{m}_2}{b^3} \Big( \parfour{t}{1}{1}{1} g + \parfour{t}{1}{2}{2} g -\parfour{t}{1}{1}{1} g + \parfour{t}{1}{2}{2} g - 2 \parfour{t}{1}{2}{2} g \Big) \\
		& = 0 \ed
	\end{align*}
	
	\item Now set $g(t,\vx):= h\big( (t,\vx)\cdot \veta \big)= h(at+bx)$ with an arbitrary function $h\in C^\infty(\R)$. In particular we have $\partial_2 g = 0$. This yields
	\begin{align}
		\opL_\rho[g](t,\vx) &= \frac{\ov{\rho}}{b^3}  b^3 \,h'''\big( (t,\vx)\cdot \veta \big) = \ov{\rho}\,h'''\big( (t,\vx)\cdot \veta \big) \ec \label{eq:op-1temp2} \\
		\opL_{m_1}[g](t,\vx) &= - \frac{\ov{\rho}}{b^3} ab^2 \,h'''\big( (t,\vx)\cdot \veta \big) = - \frac{a\ov{\rho}}{b} \,h'''\big( (t,\vx)\cdot \veta \big) \ec \label{eq:op-2temp2} \\
		\opL_{m_2}[g](t,\vx) &= \frac{\ov{m}_2}{b^3} b^3 \,h'''\big( (t,\vx)\cdot \veta \big) = \ov{m}_2 \,h'''\big( (t,\vx)\cdot \veta \big) \ec \label{eq:op-3temp2}\\
		\opL_{U_{11}}[g](t,\vx) &= \frac{\ov{\rho}}{b^3} a^2 b \,h'''\big( (t,\vx)\cdot \veta \big) = \frac{a^2 \ov{\rho}}{b^2} \,h'''\big( (t,\vx)\cdot \veta \big) \ec \label{eq:op-4temp2}\\
		\opL_{U_{12}}[g](t,\vx) &= -\frac{\ov{m}_2}{b^3} ab^2 \,h'''\big( (t,\vx)\cdot \veta \big) = -\frac{a\ov{m}_2}{b}\,h'''\big( (t,\vx)\cdot \veta \big) \ed \label{eq:op-5temp2}
	\end{align}
	Note that we obtain 
	\begin{align*}
		a\ov{\rho} + b \ov{m}_1 &= 0 \ec \\
		a\ov{m}_1 + b \ov{U}_{11} &= 0 \ec \\
		a\ov{m}_2 + b \ov{U}_{12} &= 0 
	\end{align*}
	from \eqref{eq:op-kernel} and the fact that $\veta=(a,b,0)^\trans$. Thus \eqref{eq:op-1temp2} - \eqref{eq:op-5temp2} imply \eqref{eq:op-p2a} - \eqref{eq:op-p2c}.
	
	\item If $\ov{\rho}=0$ we immediately get $\opL_\rho \equiv 0 $ from \eqref{eq:op-defn-2D-rho}.
\end{enumerate}
\end{proof}

\begin{lemma} \label{lemma:op-3D}
	Let $n=3$ and suppose that the assumptions of Proposition~\ref{prop:operators} hold with $\veta= (a,b,0,0)^\trans$, where $b\neq 0$. Then the operators $\opL_\rho$, $\opL_\vm$ and $\opL_\mU$ defined by 
	\begin{align}
		\opL_\rho[g] &:= \frac{\ov{\rho}}{b^3} \Big( \parthree{1}{1}{1} g + \parthree{1}{2}{2} g \Big) \ec \label{eq:op-defn-3D-rho} \\
		\opL_{m_1}[g] &:= \frac{\ov{\rho}}{b^3} \Big( -\parthree{t}{1}{1} g - \parthree{t}{2}{2} g \Big) + \frac{\ov{m}_2}{b^3} \Big( -\parthree{1}{1}{2} g - \parthree{2}{2}{2} g \Big) + \frac{\ov{m}_3}{b^3} \Big( -\parthree{1}{1}{3} g - \parthree{3}{3}{3} g \Big) \ec\\
		\opL_{m_2}[g] &:= \frac{\ov{m}_2}{b^3} \Big( \parthree{1}{1}{1} g + \parthree{1}{2}{2} g \Big) \ec\\
		\opL_{m_3}[g] &:= \frac{\ov{m}_3}{b^3} \Big( \parthree{1}{1}{1} g + \parthree{1}{3}{3} g \Big) \ec \\
		\opL_{U_{11}}[g] &:= \frac{\ov{\rho}}{b^3} \Big( \parthree{t}{t}{1} g \Big) + \frac{\ov{m}_2}{b^3} \Big( 2 \parthree{t}{1}{2} g \Big) + \frac{\ov{m}_3}{b^3} \Big( 2 \parthree{t}{1}{3} g \Big) + \frac{\ov{U}_{11}+\ov{U}_{22}}{b^3} \Big( \parthree{1}{2}{2} g - \parthree{1}{3}{3} g \Big) \\
		&\qquad + \frac{\ov{U}_{23}}{b^3} \Big( 2 \parthree{1}{2}{3} g - \parthree{2}{3}{3} g - \parthree{2}{2}{2} g \Big) \ec \notag \\
		\opL_{U_{12}}[g] &:= \frac{\ov{\rho}}{b^3} \Big( \parthree{t}{t}{2} g \Big) + \frac{\ov{m}_2}{b^3} \Big( - \parthree{t}{1}{1} g + \parthree{t}{2}{2} g \Big) + \frac{\ov{U}_{11}+\ov{U}_{22}}{b^3} \Big( - \parthree{1}{1}{2} g - \parthree{2}{3}{3} g \Big) \\
		&\qquad + \frac{\ov{U}_{23}}{b^3} \Big( - \parthree{1}{1}{3} g - \parthree{3}{3}{3} g + \parthree{1}{3}{3} g + \parthree{1}{2}{2} g \Big) \ec \notag\\
		\opL_{U_{13}}[g] &:= \frac{\ov{m}_3}{b^3} \Big( - \parthree{t}{1}{1} g + \parthree{t}{3}{3} g \Big) + \frac{\ov{U}_{11}+\ov{U}_{22}}{b^3} \Big( \parthree{1}{1}{3} g + \parthree{2}{2}{3} g \Big) \\
		&\qquad + \frac{\ov{U}_{23}}{b^3} \Big( \parthree{2}{3}{3} g - \parthree{1}{1}{2} g \Big) \ec \notag \\
		\opL_{U_{22}}[g] &:= \frac{\ov{\rho}}{b^3} \Big( - \parthree{t}{t}{1} g \Big) + \frac{\ov{m}_2}{b^3} \Big( -2 \parthree{t}{1}{2} g \Big) + \frac{\ov{U}_{11}+\ov{U}_{22}}{b^3} \Big( \parthree{1}{1}{1} g + \parthree{1}{3}{3} g \Big) \\
		&\qquad + \frac{\ov{U}_{23}}{b^3} \Big( \parthree{2}{3}{3} g - \parthree{1}{1}{2} g \Big) \ec \notag\\
		\opL_{U_{23}}[g] &:= \frac{\ov{U}_{23}}{b^3} \Big( \parthree{1}{1}{1} g + \parthree{1}{3}{3} g - \parthree{2}{2}{3} g - \parthree{1}{1}{3} g \Big)
	\end{align}
	for all $g\in C^\infty(\R^{1+3})$, satisfy the properties \ref{item:operators.a} - \ref{item:operators.c} of Proposition~\ref{prop:operators}.
\end{lemma}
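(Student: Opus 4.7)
The plan is to mimic the two-dimensional calculation in Lemma~\ref{lemma:op-2D} with the bookkeeping extended to account for the additional spatial variable $x_3$ and for the new independent entries $\ov{m}_3$, $\ov{U}_{13}$, $\ov{U}_{23}$, $\ov{U}_{33}$ of the state vector. A first remark is that since $\ov{\mU}\in\szn$ is traceless, $\ov{U}_{33}=-(\ov{U}_{11}+\ov{U}_{22})$, which explains why this particular linear combination, rather than $\ov{U}_{11}$ and $\ov{U}_{22}$ separately, appears in the operator formulas: the definitions contain five scalar coefficients $\ov{\rho},\ov{m}_2,\ov{m}_3,\ov{U}_{11}+\ov{U}_{22},\ov{U}_{23}$, matching the five entries of $(\ov{\rho},\ov{\vm},\ov{\mU})$ that are \emph{not} already forced by the kernel relation \eqref{eq:op-kernel} for $\veta=(a,b,0,0)^\trans$.

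For part \ref{item:operators.a} I would verify the four conservation identities $\partial_t\opL_\rho[g]+\sum_{k=1}^3\partial_k\opL_{m_k}[g]=0$ and $\partial_t\opL_{m_k}[g]+\sum_{i=1}^3\partial_i\opL_{U_{ki}}[g]=0$ for $k=1,2,3$ separately, in each case grouping the fourth-order derivative terms by their scalar coefficient $\ov{\rho}$, $\ov{m}_2$, $\ov{m}_3$, $\ov{U}_{11}+\ov{U}_{22}$, $\ov{U}_{23}$ and checking that every group telescopes to zero. In the identity involving $\opL_{m_2}$ the nontrivial cancellations occur in the $\ov{m}_2$- and $\ov{U}_{23}$-blocks; in the one for $\opL_{m_3}$ the $\ov{m}_3$- and $\ov{U}_{23}$-blocks do the work; the $\ov{U}_{11}+\ov{U}_{22}$ block needs the symmetric substitution coming from $\ov{U}_{33}=-(\ov{U}_{11}+\ov{U}_{22})$. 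None of these require any new idea; they are routine, and I would carry them out term by term on paper rather than try to compress them here.

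For part \ref{item:operators.b} I substitute $g(t,\vx)=h(at+bx_1)$, so that $\partial_2 g=\partial_3 g=0$, $\partial_1 g=bh'$, $\partial_t g=ah'$. Every summand in the operator definitions containing at least one $\partial_2$ or $\partial_3$ vanishes, leaving only the terms built from $\partial_t$ and $\partial_1$. After evaluating these and pulling out the common factor $h'''((t,\vx)\cdot\veta)/b^3$, I would substitute the four linear relations
\begin{equation*}
\ov{m}_1=-\tfrac{a}{b}\ov{\rho},\qquad \ov{U}_{11}=\tfrac{a^2}{b^2}\ov{\rho},\qquad \ov{U}_{12}=-\tfrac{a}{b}\ov{m}_2,\qquad \ov{U}_{13}=-\tfrac{a}{b}\ov{m}_3,
\end{equation*}
which are precisely the nonzero rows of \eqref{eq:op-kernel}. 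This recovers \eqref{eq:op-p2a}--\eqref{eq:op-p2c}. Part \ref{item:operators.c} is immediate from \eqref{eq:op-defn-3D-rho}, as $\opL_\rho$ is a scalar multiple of $\ov{\rho}$.

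The only real obstacle is the volume of the algebra in part \ref{item:operators.a}, especially for $\opL_{m_1}$, where all five coefficient groups contribute; but no group creates anything conceptually new beyond symmetry of third partial derivatives and the traceless identity. If I wished to streamline the presentation I could observe that the space of valid operator sextuples $(\opL_\rho,\opL_\vm,\opL_\mU)$ satisfying \ref{item:operators.a} and depending linearly on the unknowns $(\ov{\rho},\ov{\vm},\ov{\mU})$ is determined by a finite-dimensional linear system, and that the prescribed formulas constitute one explicit solution which on planar profiles $h(at+bx_1)$ reproduces the desired triple; this reformulation, however, would not shorten the verification itself.
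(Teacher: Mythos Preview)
Your proposal is correct and follows essentially the same route as the paper: part~\ref{item:operators.a} is verified by writing out each of the four divergence identities, grouping fourth-order terms by the five scalar coefficients $\ov{\rho},\ov{m}_2,\ov{m}_3,\ov{U}_{11}+\ov{U}_{22},\ov{U}_{23}$ and observing that each group telescopes (with the $k=3$ momentum equation handled via $\opL_{U_{33}}=-\opL_{U_{11}}-\opL_{U_{22}}$, as you note); part~\ref{item:operators.b} reduces to the same four kernel relations you list after substituting $g=h(at+bx_1)$; and part~\ref{item:operators.c} is immediate. The paper simply carries out the term-by-term cancellations in full rather than sketching them.
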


\begin{proof} 
\begin{enumerate}
	\item It is straightforward to check that \eqref{eq:op-p1a} and \eqref{eq:op-p1b} hold:
	\begin{align*}
		&\partial_t \opL_{\rho}[g] + \partial_1 \opL_{m_1}[g] + \partial_2 \opL_{m_2}[g] + \partial_3 \opL_{m_3}[g] \\
		&= \frac{\ov{\rho}}{b^3} \Big( \parfour{t}{1}{1}{1} g + \parfour{t}{1}{2}{2} g - \parfour{t}{1}{1}{1} g - \parfour{t}{1}{2}{2} g \Big) \\
		&\qquad + \frac{\ov{m}_2}{b^3} \Big( \parfour{1}{1}{1}{2} g + \parfour{1}{2}{2}{2} g - \parfour{1}{1}{1}{2} g - \parfour{1}{2}{2}{2} g \Big) \\
		&\qquad + \frac{\ov{m}_3}{b^3} \Big( \parfour{1}{1}{1}{3} g + \parfour{1}{3}{3}{3} g - \parfour{1}{1}{1}{3} g - \parfour{1}{3}{3}{3} g \Big)\\
		& = 0 \es
	\end{align*}
	\begin{align*}
		&\partial_t \opL_{m_1}[g] + \partial_1 \opL_{U_{11}}[g] + \partial_2 \opL_{U_{12}}[g] + \partial_3 \opL_{U_{13}}[g] \\
		&= \frac{\ov{\rho}}{b^3} \Big( -\parfour{t}{t}{1}{1} g - \parfour{t}{t}{2}{2} g + \parfour{t}{t}{1}{1} g + \parfour{t}{t}{2}{2} g \Big) \\
		&\qquad + \frac{\ov{m}_2}{b^3} \Big( -\parfour{t}{1}{1}{2} g - \parfour{t}{2}{2}{2} g + 2 \parfour{t}{1}{1}{2} g - \parfour{t}{1}{1}{2} g + \parfour{t}{2}{2}{2} g \Big) \\
		&\qquad + \frac{\ov{m}_3}{b^3} \Big( -\parfour{t}{1}{1}{3} g - \parfour{t}{3}{3}{3} g + 2 \parfour{t}{1}{1}{3} g - \parfour{t}{1}{1}{3} g + \parfour{t}{3}{3}{3} g \Big) \\
		&\qquad + \frac{\ov{U}_{11}+\ov{U}_{22}}{b^3} \Big( \parfour{1}{1}{2}{2} g - \parfour{1}{1}{3}{3} g - \parfour{1}{1}{2}{2} g - \parfour{2}{2}{3}{3} g + \parfour{1}{1}{3}{3} g + \parfour{2}{2}{3}{3} g \Big) \\
		&\qquad + \frac{\ov{U}_{23}}{b^3} \Big( 2 \parfour{1}{1}{2}{3} g - \parfour{1}{2}{3}{3} g - \parfour{1}{2}{2}{2} g - \parfour{1}{1}{2}{3} g - \parfour{2}{3}{3}{3} g + \parfour{1}{2}{3}{3} g \\
		&\qquad \qquad \qquad \qquad + \parfour{1}{2}{2}{2} g + \parfour{2}{3}{3}{3} g - \parfour{1}{1}{2}{3} g \Big) \\
		& = 0\es 
	\end{align*}
	\begin{align*} 
		&\partial_t \opL_{m_2}[g] + \partial_1 \opL_{U_{12}}[g] + \partial_2 \opL_{U_{22}}[g] + \partial_3 \opL_{U_{23}}[g] \\
		&= \frac{\ov{\rho}}{b^3} \Big( \parfour{t}{t}{1}{2} g - \parfour{t}{t}{1}{2} g \Big) \\
		&\qquad + \frac{\ov{m}_2}{b^3} \Big( \parfour{t}{1}{1}{1} g + \parfour{t}{1}{2}{2} g - \parfour{t}{1}{1}{1} g + \parfour{t}{1}{2}{2} g - 2 \parfour{t}{1}{2}{2} g \Big) \\
		&\qquad + \frac{\ov{U}_{11}+\ov{U}_{22}}{b^3} \Big( - \parfour{1}{1}{1}{2} g - \parfour{1}{2}{3}{3} g + \parfour{1}{1}{1}{2} g + \parfour{1}{2}{3}{3} g \Big) \\
		&\qquad + \frac{\ov{U}_{23}}{b^3} \Big( - \parfour{1}{1}{1}{3} g - \parfour{1}{3}{3}{3} g + \parfour{1}{1}{3}{3} g + \parfour{1}{1}{2}{2} g + \parfour{2}{2}{3}{3} g \\
		&\qquad \qquad \qquad \qquad - \parfour{1}{1}{2}{2} g + \parfour{1}{1}{1}{3} g + \parfour{1}{3}{3}{3} g - \parfour{2}{2}{3}{3} g - \parfour{1}{1}{3}{3} g \Big) \\
		& = 0 \es
	\end{align*}
	\begin{align*} 
		&\partial_t \opL_{m_3}[g] + \partial_1 \opL_{U_{13}}[g] + \partial_2 \opL_{U_{23}}[g] - \partial_3 \opL_{U_{11}}[g] - \partial_3 \opL_{U_{22}}[g] \\
		&= \frac{\ov{\rho}}{b^3} \Big( - \parfour{t}{t}{1}{3} g + \parfour{t}{t}{1}{3} g\Big) \\
		&\qquad + \frac{\ov{m}_2}{b^3} \Big( - 2 \parfour{t}{1}{2}{3} g + 2 \parfour{t}{1}{2}{3} g \Big) \\
		&\qquad + \frac{\ov{m}_3}{b^3} \Big( \parfour{t}{1}{1}{1} g + \parfour{t}{1}{3}{3} g - \parfour{t}{1}{1}{1} g + \parfour{t}{1}{3}{3} g - 2 \parfour{t}{1}{3}{3} g \Big) \\
		&\qquad + \frac{\ov{U}_{11}+\ov{U}_{22}}{b^3} \Big( \parfour{1}{1}{1}{3} g + \parfour{1}{2}{2}{3} g - \parfour{1}{2}{2}{3} g + \parfour{1}{3}{3}{3} g - \parfour{1}{1}{1}{3} g - \parfour{1}{3}{3}{3} g \Big) \\
		&\qquad + \frac{\ov{U}_{23}}{b^3} \Big( \parfour{1}{2}{3}{3} g - \parfour{1}{1}{1}{2} g + \parfour{1}{1}{1}{2} g + \parfour{1}{2}{3}{3} g - \parfour{2}{2}{2}{3} g - \parfour{1}{1}{2}{3} g \\
		&\qquad \qquad \qquad \qquad - 2 \parfour{1}{2}{3}{3} g + \parfour{2}{3}{3}{3} g + \parfour{2}{2}{2}{3} g - \parfour{2}{3}{3}{3} g + \parfour{1}{1}{2}{3} g \Big) \\
		& = 0 \ed
	\end{align*}
	
	\item Now set $g(t,\vx):= h\big( (t,\vx)\cdot \eta \big)= h(at+bx)$ with an arbitrary function $h\in C^\infty(\R)$. In particular we have $\partial_2 g = \partial_3 g= 0$. This yields
	\begin{align}
		\opL_{\rho}[g](t,\vx) &= \ov{\rho} \,h'''\big( (t,\vx)\cdot \veta \big) \ec \label{eq:op-1temp3}\\
		\opL_{m_1}[g](t,\vx) &= -\frac{a}{b}\ov{\rho} \,h'''\big( (t,\vx)\cdot \veta \big) \ec \label{eq:op-2temp3}\\
		\opL_{m_2}[g](t,\vx) &= \ov{m}_2 \,h'''\big( (t,\vx)\cdot \veta \big) \ec \label{eq:op-3temp3}\\
		\opL_{m_3}[g](t,\vx) &= \ov{m}_3 \,h'''\big( (t,\vx)\cdot \veta \big) \ec \label{eq:op-4temp3}\\
		\opL_{U_{11}}[g](t,\vx) &= \frac{a^2}{b^2} \ov{\rho} \,h'''\big( (t,\vx)\cdot \veta \big) \ec \label{eq:op-5temp3}\\
		\opL_{U_{12}}[g](t,\vx) &= -\frac{a}{b} \ov{m}_2 \,h'''\big( (t,\vx)\cdot \veta \big) \ec \label{eq:op-6temp3}\\
		\opL_{U_{13}}[g](t,\vx) &= -\frac{a}{b} \ov{m}_3 \,h'''\big( (t,\vx)\cdot \veta \big) \ec \label{eq:op-7temp3}\\
		\opL_{U_{22}}[g](t,\vx) &= \Big( -\frac{a^2}{b^2} \ov{\rho} + \ov{U}_{11} + \ov{U}_{22} \Big) \,h'''\big( (t,\vx)\cdot \veta \big) \ec \label{eq:op-8temp3}\\
		\opL_{U_{23}}[g](t,\vx) &= \ov{U}_{23} \,h'''\big( (t,\vx)\cdot \veta \big) \ed \label{eq:op-9temp3}
	\end{align}
	Note that we obtain 
	\begin{align*}
		a\ov{\rho} + b \ov{m}_1 &= 0 \ec \\
		a\ov{m}_1 + b \ov{U}_{11} &= 0 \ec \\
		a\ov{m}_2 + b \ov{U}_{12} &= 0 \ec \\
		a\ov{m}_3 + b \ov{U}_{13} &= 0  
	\end{align*}
	from \eqref{eq:op-kernel} and the fact that $\veta=(a,b,0,0)^\trans$. Thus \eqref{eq:op-1temp3} - \eqref{eq:op-9temp3} imply \eqref{eq:op-p2a} - \eqref{eq:op-p2c}.
	
	\item If $\ov{\rho}=0$ we immediately get $\opL_\rho \equiv 0 $ from \eqref{eq:op-defn-3D-rho}.
\end{enumerate}
\end{proof}

In order to reduce the general case in the proof of Proposition~\ref{prop:operators} (see Subsection~\ref{subsec:convint-op-proof}) to one of the cases studied in Lemmas \ref{lemma:op-2D} and \ref{lemma:op-3D}, we need the following lemma:

\begin{lemma} \label{lemma:prop-A}
	Let $\mA\in\R^{(1+n)\times(1+n)}$ of the form
	$$
	\mA = \left( \begin{array}{cc}
		1 & \vz^\trans \\
		\vz & \mB 
	\end{array} \right)\ec
	$$
	where $\mB\in\R^{n\times n}$ is orthogonal. The the following statements hold:
	\begin{itemize}
		 \item $\mA$ is orthogonal;
		 \item For any symmetric matrix $\mM\in\R^{(1+n)\times(1+n)}$ 
		\begin{itemize}
			\item $\mA^\trans \mM \mA$ is symmetric, 
			\item $\tr (\mA^\trans \mM \mA) = \tr \mM$ and 
			\item $[\mA^\trans \mM \mA]_{tt}  = M_{tt}$.
		\end{itemize}
	\end{itemize}
\end{lemma}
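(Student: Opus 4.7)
The plan is to verify each of the four claims by direct block-matrix manipulation; none of these steps requires deeper structure beyond the orthogonality of $\mB$ and basic properties of the trace and transpose.

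First I would establish orthogonality of $\mA$ by computing $\mA^\trans \mA$ block-wise:
\[
\mA^\trans \mA = \begin{pmatrix} 1 & \vz^\trans \\ \vz & \mB^\trans \end{pmatrix} \begin{pmatrix} 1 & \vz^\trans \\ \vz & \mB \end{pmatrix} = \begin{pmatrix} 1 & \vz^\trans \\ \vz & \mB^\trans \mB \end{pmatrix} = \id_{1+n},
\]
where the last equality uses that $\mB$ is orthogonal, i.e. $\mB^\trans \mB = \id_n$.

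Next, for the three claims concerning $\mA^\trans \mM \mA$ with $\mM$ symmetric: symmetry follows from the standard computation $(\mA^\trans \mM \mA)^\trans = \mA^\trans \mM^\trans \mA = \mA^\trans \mM \mA$; the trace identity is immediate from the cyclic property combined with the orthogonality just proven, since $\tr(\mA^\trans \mM \mA) = \tr(\mA \mA^\trans \mM) = \tr(\id_{1+n} \mM) = \tr \mM$.

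Finally, to handle the $(t,t)$-entry, I would write $\mM$ in block form as
\[
\mM = \begin{pmatrix} M_{tt} & \vb^\trans \\ \vb & \mC \end{pmatrix}
\]
with $\vb\in\R^n$ and $\mC\in\R^{n\times n}$, and multiply out:
\[
\mA^\trans \mM \mA = \begin{pmatrix} 1 & \vz^\trans \\ \vz & \mB^\trans \end{pmatrix} \begin{pmatrix} M_{tt} & \vb^\trans \mB \\ \vb & \mC \mB \end{pmatrix} = \begin{pmatrix} M_{tt} & \vb^\trans \mB \\ \mB^\trans \vb & \mB^\trans \mC \mB \end{pmatrix},
\]
from which $[\mA^\trans \mM \mA]_{tt} = M_{tt}$ can be read off directly. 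There is no real obstacle here; the entire proof is a routine block-matrix exercise and the only ``thinking'' step is to recognize that the first row and column of $\mA$ protect the $(t,t)$-entry from any interaction with the $\mB$-block.
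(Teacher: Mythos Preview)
Your proof is correct and follows essentially the same approach as the paper: direct computation exploiting the block structure of $\mA$ and the orthogonality of $\mB$. The only cosmetic differences are that the paper computes $\mA\mA^\trans$ rather than $\mA^\trans\mA$, and verifies the trace and $(t,t)$-entry claims via explicit index sums rather than your slightly slicker use of the cyclic trace property and block multiplication.
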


\begin{proof} 
	A simple computation shows that 
	$$
		\mA \cdot \mA^\trans = \left( \begin{array}{cc}
			1 & \vz^\trans \\
			\vz & \mB 
		\end{array} \right) \cdot \left( \begin{array}{cc}
			1 & \vz^\trans \\
			\vz & \mB^\trans
		\end{array} \right) = \left( \begin{array}{cc}
			1 & \vz^\trans \\
			\vz & \mB \cdot \mB^\trans
		\end{array} \right) = \id_{1+n}
	$$
	and hence $\mA$ is orthogonal.
	
	The symmetry of $\mA^\trans \mM \mA$ is clear. Furthermore we have
	\begin{align*}
		\tr (\mA^\trans \mM \mA) &= \sum_{i,j,k \in\{t,1,...,n\}} [\mA^\trans]_{ij} M_{jk} [\mA]_{ki} = \sum_{j,k\in\{t,1,...,n\}} M_{jk} \sum_{i\in\{t,1,...,n\}} [\mA]_{ki} [\mA^\trans]_{ij} \\
	&= \sum_{j,k\in\{t,1,...,n\}} M_{jk} \delta_{kj} = \sum_{j\in\{t,1,...,n\}} M_{jj} = \tr\mM
	\end{align*}
	and
	\begin{align*}
		[\mA^\trans \mM \mA]_{tt} &= \sum_{j,k\in\{t,1,...,n\}} [\mA^\trans]_{tj} M_{jk} A_{kt} = \sum_{j,k\in\{t,1,...,n\}} A_{jt} M_{jk} A_{kt} = \sum_{j,k\in\{t,1,...,n\}} \delta_{jt} M_{jk} \delta_{kt} = M_{tt} \ed 
	\end{align*}
\end{proof}

\subsection{Proof of Proposition~\ref{prop:operators}} \label{subsec:convint-op-proof}

\begin{proof}[Proof of Proposition~\ref{prop:operators}]
	First of all note that the cases
	\begin{itemize}
		\item $\veta=(\eta_t,\vz)^\trans\in\R^{1+n}$ with $\eta_t\neq 0$,
		\item $n=2$ and $\veta=(\eta_t,\eta_1,0)^\trans$ with $\eta_1\neq 0$ and 
		\item $n=3$ and $\veta=(\eta_t,\eta_1,0,0)^\trans$ with $\eta_1\neq 0$
	\end{itemize}
	were already covered in Lemmas \ref{lemma:op-eta=e1}, \ref{lemma:op-2D} and \ref{lemma:op-3D}, respectively.
	
	It remains to consider the case $\veta=(\eta_t,\veta_\vx)^\trans$ with $\veta_\vx\neq \vz$. We reduce this case to the cases treated in Lemmas \ref{lemma:op-2D} and \ref{lemma:op-3D} via a rotation.
	
	Since $\veta_\vx\neq\vz$, we can find vectors $\vb_2,...,\vb_n\in\R^n$ such that $\left\{\frac{\veta_\vx}{|\veta_\vx|},\vb_2,...,\vb_n\right\}$ form an orthonormal basis of $\R^n$. Define a matrix $\mA\in\R^{(1+n)\times(1+n)}$ by
	\begin{equation}
	\mA:= \left(\begin{array}{c|c|c|c|c}
	1 & \multicolumn{1}{c}{0} & \multicolumn{1}{c}{\cdots} & \multicolumn{1}{c}{\cdots} & 0 \\ \hline
	0  & &  &  &  \\
	\vdots & \frac{\veta_\vx}{|\veta_\vx|} & \vb_2 & \cdots & \vb_n \\
	0 & &  &  & 
	\end{array}\right)\ed
	\end{equation}
	Note that the matrix $\mA$ is orthogonal due to Lemma~\ref{lemma:prop-A}.
	
	Now we define $(\ov{\rho}^\rot,\ov{\vm}^\rot,\ov{\mU}^\rot)\in\R\times\R^n\times\szn$ by 
	\begin{equation} \label{eq:op-forwardrot}
	\left(\begin{array}{cc}
		\ov{\rho}^\rot & (\ov{\vm}^\rot)^\trans \\
		\ov{\vm}^\rot & \ov{\mU}^\rot 
	\end{array}\right) = \mA^\trans \left(\begin{array}{cc}
		\ov{\rho} & \ov{\vm}^\trans \\
		\ov{\vm} & \ov{\mU} 
	\end{array}\right) \mA\ed
	\end{equation}
	According to Lemma~\ref{lemma:prop-A} this is a proper definition, i.e. the right-hand side of \eqref{eq:op-forwardrot} is symmetric and $\ov{\mU}^\rot $ is trace-less.
	
	In addtion to that we define $\veta^\rot\in\R^{1+n}$ by 
	\begin{equation} \label{eq:eta-rot}
		\veta^\rot:=\mA^\trans \veta \ed
	\end{equation} 
	This yields 
	\begin{equation*}
		\veta^\rot=\mA^\trans \veta = \left(\begin{array}{c|ccc}
			1 & 0 & \cdots & 0 \\ \hline
			0 & & \frac{\veta_\vx^\trans}{|\veta_\vx|} \vphantom{\vdots} & \\ \cline{2-4}
			\vdots & & \vb_2^\trans & \\ \cline{2-4}
			\vdots & & \vdots & \\ \cline{2-4}
			0 & & \vb_n^\trans \vphantom{\vdots} & 
		\end{array}\right) \left(\begin{array}{c}
			\eta_t \\
			\veta_\vx
		\end{array}\right) = \left(\begin{array}{c}
			\eta_t \\
			|\veta_\vx| \\
			0 \\
			\vdots \\
			0
		\end{array} \right)
	\end{equation*}
	and 
	\begin{equation*}
	\left(\begin{array}{cc}
		\ov{\rho}^\rot & (\ov{\vm}^\rot)^\trans \\
		\ov{\vm}^\rot & \ov{\mU}^\rot 
	\end{array}\right) \veta^\rot = \mA^\trans \left(\begin{array}{cc}
		\ov{\rho} & \ov{\vm}^\trans \\
		\ov{\vm} & \ov{\mU} 
	\end{array}\right) \mA \mA^\trans \veta = \mA^\trans \vz = \vz\ed
	\end{equation*} 
	Hence we can apply Lemmas \ref{lemma:op-2D} and \ref{lemma:op-3D} to $(\ov{\rho}^\rot,\ov{\vm}^\rot,\ov{\mU}^\rot)$ and $\veta^\rot$ to obtain operators $\opL_{\rho^\rot}$, $\opL_{\vm^\rot}$ and $\opL_{\mU^\rot}$. Next define  
	\begin{equation} \label{eq:tx-rot}
	\left(\begin{array}{c}
		t^\rot \\
		\vx^\rot 
	\end{array}\right) := \mA^\trans \left(\begin{array}{c}
		t \\
		\vx 
	\end{array}\right) 
	\end{equation}
	and 
	\begin{equation} \label{eq:op-backwardrot}
	\left(\begin{array}{cc}
		\opL_{\rho}[g](t,\vx) & \big(\opL_\vm[g](t,\vx)\big)^\trans \\
		\opL_\vm[g](t,\vx) & \opL_\mU[g](t,\vx)
	\end{array}\right) := \mA \left(\begin{array}{cc}
		\opL_{\rho^\rot}[g](t^\rot,\vx^\rot) & \big(\opL_{\vm^\rot}[g](t^\rot,\vx^\rot)\big)^\trans \\
		\opL_{\vm^\rot}[g](t^\rot,\vx^\rot) & \opL_{\mU^\rot}[g](t^\rot,\vx^\rot)
	\end{array}\right) \mA^\trans \ed
	\end{equation}
	This indeed yields homogeneous differential operators of order three 
	\begin{align*}
		\opL_\rho &: C^\infty(\R^{1+n}) \to C^\infty(\R^{1+n}) \ec\\
		\opL_\vm &: C^\infty(\R^{1+n}) \to C^\infty(\R^{1+n};\R^n) \ec\\
		\opL_\mU &: C^\infty(\R^{1+n}) \to C^\infty(\R^{1+n};\szn)\ec
	\end{align*}
	due to Lemma~\ref{lemma:prop-A}. It remains to show that the operators $\opL_{\rho}$, $\opL_{\vm}$ and $\opL_{\mU}$ satisfy the properties \ref{item:operators.a} - \ref{item:operators.c} of Proposition~\ref{prop:operators}.
	\begin{enumerate}
		\item Let $g\in C^\infty(\R^{1+n})$ be arbitrary. Note that we can write \eqref{eq:op-p1a} and \eqref{eq:op-p1b} as 
		\begin{equation} \label{eq:divtx} 
		\Divtx \left(\begin{array}{cc}
			\opL_{\rho}[g](t,\vx) & \big(\opL_\vm[g](t,\vx)\big)^\trans \\
			\opL_\vm[g](t,\vx) & \opL_\mU[g](t,\vx)
		\end{array}\right) = \left(\begin{array}{c}
			0 \\
			\vz
		\end{array}\right)\ec
		\end{equation}
		which is what we have to show. 
		
		From Lemmas \ref{lemma:op-2D} and \ref{lemma:op-3D} we obtain
		\begin{equation} \label{eq:divtxrot} 
		\Divtxrot \left(\begin{array}{cc}
			\opL_{\rho^\rot}[g](t^\rot,\vx^\rot) & \big(\opL_{\vm^\rot}[g](t^\rot,\vx^\rot)\big)^\trans \\
			\opL_{\vm^\rot}[g](t^\rot,\vx^\rot) & \opL_{\mU^\rot}[g](t^\rot,\vx^\rot)
		\end{array}\right) = \left(\begin{array}{c}
			0 \\
			\vz
		\end{array}\right)\ed
		\end{equation}
		
		Let us use the abbreviations 
		\begin{align*}
			\mM &:= \left(\begin{array}{cc}
				\opL_{\rho}[g](t,\vx) & \big(\opL_\vm[g](t,\vx)\big)^\trans \\
				\opL_\vm[g](t,\vx) & \opL_\mU[g](t,\vx)
			\end{array}\right) \ec \\
			\mM^\rot &:= \left(\begin{array}{cc}
				\opL_{\rho^\rot}[g](t^\rot,\vx^\rot) & \big(\opL_{\vm^\rot}[g](t^\rot,\vx^\rot)\big)^\trans \\
				\opL_{\vm^\rot}[g](t^\rot,\vx^\rot) & \opL_{\mU^\rot}[g](t^\rot,\vx^\rot)
			\end{array}\right) \ed
		\end{align*}
		For all $i = t,1,...,n$ we have 
		\begin{align*}
			\big[\Divtx \mM \big]_i &= \sum_{j\in\{t,1,...,n\}} \partial_j \mM_{ij} \\
			&= \sum_{j,k,\ell\in\{t,1,...,n\}} [\mA]_{ik} \, \partial_j [\mM^\rot]_{k\ell} \,[\mA^\trans]_{\ell j} \\
			&= \sum_{j,k,\ell,\mu\in\{t,1,...,n\}} [\mA]_{ik} \, \partial_{\mu^\rot} [\mM^\rot]_{k\ell}\,\partial_j \mu^\rot \,[\mA^\trans]_{\ell j} \ed
		\end{align*}
		From \eqref{eq:tx-rot} we obtain 
		$$
		\partial_j \mu^\rot = [\mA^\trans]_{\mu j} 
		$$
		for all $j,\mu\in\{t,1,...,n\}$ and hence
		\begin{align*}
			\big[\Divtx \mM \big]_i &= \sum_{j,k,\ell,\mu\in\{t,1,...,n\}} [\mA]_{ik} \, \partial_{\mu^\rot} [\mM^\rot]_{k\ell}\,\partial_j \mu^\rot \,[\mA^\trans]_{\ell j} \\
			&= \sum_{j,k,\ell,\mu\in\{t,1,...,n\}} [\mA]_{ik} \, \partial_{\mu^\rot} [\mM^\rot]_{k\ell}\,[\mA^\trans]_{\mu j} \,[\mA^\trans]_{\ell j} \\
			&= \sum_{k,\ell,\mu\in\{t,1,...,n\}} [\mA]_{ik} \, \partial_{\mu^\rot} [\mM^\rot]_{k\ell}\,\delta_{\mu \ell} \\
			&= \sum_{k,\ell\in\{t,1,...,n\}} [\mA]_{ik} \, \partial_{\ell^\rot} [\mM^\rot]_{k\ell} \quad = \quad 0
		\end{align*}
		due to \eqref{eq:divtxrot}. This shows \eqref{eq:divtx}.
		
		\item First of all note that 
		\begin{equation} \label{eq:rot-scalarproduct}
			(t^\rot,\vx^\rot) \cdot \veta^\rot = (t,\vx) \mA \mA^\trans \veta = (t,\vx) \cdot \veta
		\end{equation}
		according to \eqref{eq:eta-rot} and \eqref{eq:tx-rot}. Hence we have $g(t,\vx) = h\big( (t,x)\cdot \veta\big) = h\big( (t^\rot,\vx^\rot) \cdot \veta^\rot\big)$ and thus Lemmas \ref{lemma:op-2D} and \ref{lemma:op-3D} yield 
		\begin{align*} 
			&\left(\begin{array}{cc}
				\opL_{\rho^\rot}[g](t^\rot,\vx^\rot) & \big(\opL_{\vm^\rot}[g](t^\rot,\vx^\rot)\big)^\trans \\
				\opL_{\vm^\rot}[g](t^\rot,\vx^\rot) & \opL_{\mU^\rot}[g](t^\rot,\vx^\rot)
			\end{array}\right) \\
			&= \left(\begin{array}{cc}
				\ov{\rho}^\rot & (\ov{\vm}^\rot)^\trans \\
				\ov{\vm}^\rot & \ov{\mU}^\rot 
			\end{array}\right) h'''\big( (t^\rot,\vx^\rot) \cdot \veta^\rot\big) \ed
		\end{align*}
		With \eqref{eq:op-forwardrot}, \eqref{eq:op-backwardrot} and \eqref{eq:rot-scalarproduct} this shows 
		\begin{equation*} 
		\left(\begin{array}{cc}
			\opL_{\rho}[g](t,\vx) & \big(\opL_{\vm}[g](t,\vx)\big)^\trans \\
			\opL_{\vm}[g](t,\vx) & \opL_{\mU}[g](t,\vx)
		\end{array}\right) = \left(\begin{array}{cc}
			\ov{\rho} & \ov{\vm}^\trans \\
			\ov{\vm} & \ov{\mU} 
		\end{array}\right) h'''\big( (t,\vx) \cdot \veta\big) \ec
		\end{equation*}
		i.e. \eqref{eq:op-p2a} - \eqref{eq:op-p2c}.
		
		\item Let $\ov{\rho} = 0$. Then $\ov{\rho}^\rot = 0$ due to Lemma~\ref{lemma:prop-A} and \eqref{eq:op-forwardrot}. Hence Lemmas \ref{lemma:op-2D} and \ref{lemma:op-3D} yield $\opL_{\rho^\rot} \equiv 0$. Now again by Lemma~\ref{lemma:prop-A} we deduce form \eqref{eq:op-backwardrot} that $\opL_\rho \equiv 0$.
	\end{enumerate}
\end{proof}

\section{The Convex-Integration-Theorem} \label{sec:convint-thm}

\subsection{Statement of the Theorem} \label{subsec:convint-thm-statement}

Let us recall that 
\begin{align*}
	K &=  \left\{(\rho,\vm,\mU)\in\R^+\times\R^n\times\szn\,\Big|\,\mU=\frac{\vm\otimes\vm}{\rho} + \left(p(\rho) - \frac{2c}{n}\right)\id\right\}\ec \\
	\sU = K^\Lambda &= \left\{(\rho,\vm,\mU)\in\R^+\times\R^n\times\szn\,\Big|\,e(\rho,\vm,\mU) \leq c\right\}\ec 
\end{align*}
see \eqref{eq:K}, Propositions \ref{prop:KLambda=U} and \ref{prop:compKLambda}, and Lemma~\ref{lemma:Kast}. 

\begin{lemma} 
	The following statements hold.
	\begin{align}
		\interior{\sU} &= \left\{(\rho,\vm,\mU)\in\R^+\times\R^n\times\szn\,\Big|\,e(\rho,\vm,\mU) < c\right\} \ec \label{eq:interiorU}\\
		K &\subsetneq  \left\{(\rho,\vm,\mU)\in\R^+\times\R^n\times\szn\,\Big|\,e(\rho,\vm,\mU) = c\right\} \ed \label{eq:boundaryU}
	\end{align}
\end{lemma}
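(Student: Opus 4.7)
The plan is to handle the two identities in turn, leaning on the characterization $\sU=\{e\leq c\}$ from Lemma~\ref{lemma:Kast} together with the continuity and convexity of $e$ from Lemma~\ref{lemma:e-convex}.

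For \eqref{eq:interiorU}, the inclusion $\{e<c\}\subset\interior{\sU}$ is immediate from the continuity of $e$ on $\R^+\times\R^n\times\szn$ (which is itself open in the ambient space $\R\times\R^n\times\szn$). The reverse inclusion is where the main (small) obstacle lies: given $(\rho,\vm,\mU)\in\sU$ with $e(\rho,\vm,\mU)=c$, I must exhibit an arbitrarily small perturbation along which $e$ strictly increases. My strategy is to pick a unit eigenvector $\vb\in\R^n$ of the symmetric matrix $\frac{\vm\otimes\vm}{\rho}+p(\rho)\id-\mU$ corresponding to $\lambda_{\max}=\frac{2c}{n}$, and then subtract from $\mU$ the correction $\mM:=t\bigl(\vb\vb^\trans-\tfrac{1}{n}\id\bigr)$ for small $t>0$. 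This $\mM$ is symmetric and traceless, so $(\rho,\vm,\mU-\mM)$ still lies in $\R^+\times\R^n\times\szn$. Applying the perturbed matrix $\frac{\vm\otimes\vm}{\rho}+p(\rho)\id-(\mU-\mM)$ to $\vb$ yields $\bigl(\tfrac{2c}{n}+t\,\tfrac{n-1}{n}\bigr)\vb$, so $\lambda_{\max}$ strictly exceeds $\frac{2c}{n}$ and hence $e(\rho,\vm,\mU-\mM)>c$. Since $t>0$ is arbitrary, $(\rho,\vm,\mU)$ cannot be interior. The delicate point is remembering to enforce tracelessness via the $-\tfrac{1}{n}\id$ term so that $\mM$ genuinely stays inside $\szn$.

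For $K\subsetneq\{e=c\}$, the inclusion $K\subset\{e=c\}$ is immediate: for $(\rho,\vm,\mU)\in K$ the matrix $\frac{\vm\otimes\vm}{\rho}+p(\rho)\id-\mU$ simplifies to $\frac{2c}{n}\id$, whose largest eigenvalue is $\frac{2c}{n}$. The plan for strictness is to exhibit one triple in $\{e=c\}\setminus K$, exploiting that membership in $K$ forces the trace identity $\frac{|\vm|^2}{\rho}+np(\rho)=2c$ (obtained from $\tr\mU=0$). First I would choose $\rho_0\in\R^+$ with $p(\rho_0)<\frac{2c}{n}$, which is possible because $p$ is continuous and $p(0)=0$. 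Setting $\vm=\vz$ then violates the trace identity, so no triple $(\rho_0,\vz,\mU)$ can lie in $K$. It only remains to pick some $\mU\in\szn$ with $\lambda_{\min}(\mU)=p(\rho_0)-\frac{2c}{n}<0$; a concrete choice is the diagonal matrix whose first two entries are $p(\rho_0)-\frac{2c}{n}$ and $\frac{2c}{n}-p(\rho_0)$ and whose remaining entries (if any) vanish. For this $\mU$ one computes $\lambda_{\max}(p(\rho_0)\id-\mU)=p(\rho_0)-\lambda_{\min}(\mU)=\frac{2c}{n}$, so $e(\rho_0,\vz,\mU)=c$, and the strict inclusion follows.
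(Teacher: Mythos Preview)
Your proof is correct. The overall strategy matches the paper's: for \eqref{eq:interiorU} both arguments use continuity for one inclusion and, for the other, exhibit a perturbation that pushes $\lambda_{\max}$ strictly above $\tfrac{2c}{n}$; for \eqref{eq:boundaryU} both build an explicit triple with $e=c$ but violating the $K$-constraint. The main difference is the choice of perturbation in \eqref{eq:interiorU}: the paper perturbs $(\vm,\mU)$ together via the direction $\bigl(\vb_n,\tfrac{1}{\rho}(\vm\otimes\vb_n+\vb_n\otimes\vm-2\alpha_n\,\vb_n\otimes\vb_n)\bigr)$ (recycled from the proof of Lemma~\ref{lemma:Kast}) and then reads off the inequality $\lambda_n+\tfrac{2\alpha_n\tau+\tau^2}{\rho}\leq\tfrac{2c}{n}$ for all small $\tau$, whereas you perturb only $\mU$ by $t\bigl(\vb\vb^\trans-\tfrac{1}{n}\id\bigr)$. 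Your route is a bit cleaner here---no need to expand $\vm$ in the eigenbasis or track the cross terms---at the cost of introducing a perturbation direction that was not already used elsewhere in the paper. For \eqref{eq:boundaryU} your argument that the example lies outside $K$ via the trace identity $\tfrac{|\vm|^2}{\rho}+np(\rho)=2c$ is also slightly slicker than the paper's direct check that the matrix is not a multiple of the identity.
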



\begin{proof}
	Let us start with the prove of \eqref{eq:interiorU}. Let $S$ denote the right-hand side of \eqref{eq:interiorU}. Due to Lemma~\ref{lemma:e-convex}, the set $S$ is open as it is the pre-image of the open set $(-\infty,c)$ under the continuous mapping $e$. Moreover it is obvious that $S\subset\sU$. Hence $S\subset \interior{\sU}$. To prove the converse we proceed as in the proof of Lemma~\ref{lemma:Kast}. Let $(\rho,\vm,\mU)\in \interior{\sU}$. We want to show that $e(\rho,\vm,\mU)< c$. Since 
	$$
		\frac{\vm\otimes \vm}{\rho} + p(\rho)\id - \mU \ \in\  \sym{n} \ec
	$$
	there exists $T\in\orth{n}$ such that 
	$$
		\frac{\vm\otimes \vm}{\rho} + p(\rho)\id - \mU = \mT \left(\begin{array}{ccc}
			\lambda_1 & & \\ & \ddots & \\ & & \lambda_n \end{array} \right) \mT^{-1} \ec
	$$
	where we may assume that the eigenvalues are ordered $\lambda_1\leq ...\leq\lambda_n$. Since $(\rho,\vm,\mU)\in \sU$, we duduce that $\frac{n}{2}\lambda_n = e(\rho,\vm,\mU) \leq c$. Again we denote the normed eigenvector, which corresponds to the eigenvalue $\lambda_i$, with $\vb_i$. Then $\mT=(\vb_1\cdots \vb_n)$. Since $\vb_1,..., \vb_n$ form a basis of $\R^n$, there are unique coefficients $\alpha_1,...,\alpha_n\in \R$ such that $\vm=\sum_{i=1}^n \alpha_i\vb_i$. Define\footnote{The fact that $\hat{\mU}\in\szn$ can be shown as in the proof of Lemma~\ref{lemma:Kast}.}
	$$
		(\hat{\rho},\hat{\vm},\hat{\mU}) := \left(0,\vb_n,\frac{\vm\otimes\vb_n + \vb_n\otimes \vm - 2\alpha_n \vb_n\otimes \vb_n}{\rho}\right) \ \in\  \R\times \R^n\times \szn\ed
	$$ 
	For $\tau\in \R$ a straightforward computation, whose details can be found in the proof of Lemma~\ref{lemma:Kast}, yields
	\begin{align*}
		&\mT^{-1} \left(\frac{(\vm+\tau\hat{\vm})\otimes (\vm+\tau \hat{\vm})}{\rho+\tau \hat{\rho}} + p(\rho+\tau \hat{\rho})\id - (\mU+\tau \hat{\mU})\right) \mT \\
		&= \left(\begin{array}{ccc}
		\lambda_1 & & \\ & \ddots & \\ & & \lambda_n \end{array} \right) + \frac{2\alpha_n \tau + \tau^2}{\rho} \left(\begin{array}{cccc}
		0 & & & \\ & \ddots & & \\ & & 0 & \\ & & & 1 \end{array} \right) \ed
	\end{align*}
	From $(\rho,\vm,\mU)\in \interior{\sU}$ we deduce that $(\rho+\tau\hat{\rho},\vm+\tau\hat{\vm},\mU+\tau\hat{\mU})\in \sU$ if $|\tau|$ is sufficiently small. Hence 
	$$
		\frac{n}{2} \left(\lambda_n + \frac{2\alpha_n \tau + \tau^2}{\rho} \right) \leq e(\rho+\tau\hat{\rho},\vm+\tau\hat{\vm},\mU+\tau\hat{\mU}) \leq c \ed
	$$
	This implies that $e(\rho,\vm,\mU)= \frac{n}{2} \lambda_n < c$ and therefore $\interior{\sU}\subset S$. Hence we have proven \eqref{eq:interiorU}.
	
	Let us turn our attention towards \eqref{eq:boundaryU}. Let $(\rho,\vm,\mU)\in K$. Then all eigenvalues of the matrix
	$$
		\frac{\vm\otimes \vm}{\rho} + p(\rho)\id -\mU
	$$
	are equal to $\frac{2c}{n}$. Hence $e(\rho,\vm,\mU)=c$. The interesting issue in \eqref{eq:boundaryU} is the fact that $K$ is a \emph{strict} subset. This is shown by the following example. Set $\rho$ such that $p(\rho)=\frac{2c}{n^2}$, $\vm:=\vz$, and 
	$$
	\mU := \left(\begin{array}{cccc}
		(1-n)\frac{2c}{n^2} & & & \\
		& \frac{2c}{n^2} & & \\
		& & \ddots & \\
		& & & \frac{2c}{n^2} 
	\end{array} \right)\ed
	$$
	We claim that $e(\rho,\vm,\mU)=c$ but $(\rho,\vm,\mU)\notin K$.

	Indeed it holds, that
	$$
		\frac{\vm\otimes\vm}{\rho} + p(\rho)\id - \mU = \left(\begin{array}{cccc}
			\frac{2c}{n} & & & \\
			& 0& & \\
			& & \ddots & \\
			& & & 0 
		\end{array} \right) \neq \frac{2c}{n} \id
	$$
	and hence $(\rho,\vm,\mU)\notin K$. But on the other hand
	$$
		e(\rho,\vm,\mU) = \frac{n}{2} \lambda_{\max} \left(\frac{\vm\otimes\vm}{\rho} + p(\rho)\id - \mU\right)	= \frac{n}{2} \frac{2c}{n} = c \ed
	$$
\end{proof}

We are ready to state the most important theorem in this book, see \name{De~Lellis} and \name{Sz{\'e}kelyhidi} \cite[Proposition 2]{DelSze10} or \cite[Proposition 2.4]{DelSze12} for the corresponding ``incompressible version''. 

\begin{thm} \label{thm:convint} 
	Let $n\in \{2,3\}$, $\Gamma\subset\R^{1+n}$ be a Lipschitz space-time domain (not necessarily bounded) and let $r>0$ and $c>0$.
		
	Assume there exist $(\rho_0,\vm_0,\mU_0)\in C^1\big(\closure{\Gamma};\R^+ \times \R^n\times \szn\big)$ with the following properties:
	\begin{itemize}
		\item The PDEs 
			\begin{align}
				\partial_t \rho_0 + \Div \vm_0 &= 0 \ec \label{eq:p0-pde1}\\
				\partial_t \vm_0 + \Div \mU_0 &= \vz \label{eq:p0-pde2}
			\end{align}
			hold pointwise for all $(t,\vx)\in \Gamma$;
		\item They take values in $\interior{\sU}$, i.e. 
			\begin{equation}  \label{eq:p0-subs} 
				(\rho_0,\vm_0,\mU_0)(t,\vx) \ \in\  \interior{\sU}\qquad \text{ for all }(t,\vx)\in \Gamma \es 
			\end{equation} 
		\item The density is bounded from below
			\begin{equation} \label{eq:p0-dens-bdd}
				\rho_0(t,\vx) > r \qquad \text{ for all }(t,\vx)\in \closure{\Gamma} \ed 
			\end{equation}
	\end{itemize}
	
	Then there exist infinitely many $(\rho,\vm)\in L^\infty(\Gamma; \R^+ \times \R^n)$ which solve the barotropic Euler equations in the following sense: 
	\begin{enumerate} 
		\item \label{item:convint.a} The PDEs hold weakly, i.e. 
		\begin{align}
			\iint_\Gamma \big[\rho \partial_t\phi + \vm\cdot \Grad\phi \big] \dx\dt - \int_{\partial \Gamma} \big[\rho_0 \,n_t + \vm_0\cdot \vn_\vx \big] \phi \dS_{t,\vx} &= 0 \ec \label{eq:sol-pde1}\\ 
			\iint_\Gamma \bigg[\vm\cdot\partial_t\vphi + \frac{\vm\otimes\vm}{\rho} : \Grad\vphi + p(\rho) \Div\vphi\bigg] \dx\dt \ \;\quad \qquad & \notag \\
			- \int_{\partial\Gamma} \left[ \vm_0\cdot \vphi\,n_t + (\mU_0\cdot \vphi)\cdot \vn_\vx + \frac{2c}{n} \vphi \cdot \vn_\vx \right] \dS_{t,\vx} &= 0 \label{eq:sol-pde2} 
		\end{align} 
		for all test functions\footnote{Once more we would like to remark that this means that the test functions $(\phi,\vphi)$ might not vanish on the boundary $\partial\Gamma$ and hence one could also write $(\phi,\vphi)\in C_c^\infty(\R^{1+n};\R\times\R^n)$ instead.} $(\phi,\vphi)\in C_c^\infty(\closure{\Gamma};\R\times\R^n)$;
		
		\item \label{item:convint.b} The density is bounded from below, i.e.  
		\begin{equation} \label{eq:sol-dens-bdd}
			\rho(t,\vx) \geq r \qquad \text{ for a.e. }(t,\vx)\in \Gamma \es 
		\end{equation} 
		
		\item \label{item:convint.c} The ``trace condition'' holds, i.e.
		\begin{equation} \label{eq:sol-trace}
			\frac{|\vm(t,\vx)|^2}{2 \rho(t,\vx)} + \frac{n}{2} p\big(\rho(t,\vx)\big) = c \qquad \text{ for a.e. }(t,\vx)\in \Gamma \ed 
		\end{equation}
	\end{enumerate}
\end{thm}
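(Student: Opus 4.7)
The strategy is a Baire category argument in the style of \name{De~Lellis} and \name{Sz\'ekelyhidi}. I would introduce the space $X_0$ of ``strict subsolutions'': triples $(\rho, \vm, \mU) \in C\big(\closure{\Gamma}; \R^+\times\R^n\times\szn\big)$ that satisfy the linear system \eqref{eq:p0-pde1}-\eqref{eq:p0-pde2} pointwise, take values in $\interior{\sU}$, satisfy $\rho > r$, and coincide with the data $(\rho_0, \vm_0, \mU_0)$ on $\partial\Gamma$. By hypothesis $X_0$ is nonempty. Lemma~\ref{lemma:U-bdd} makes the image uniformly bounded, so I can metrize the $L^\infty$ weak-$\ast$ topology on $X_0$ by some metric $d$ and set $X := \closure{X_0}^d$. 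Since the linear PDEs and the inclusion in $\closure{\sU}$ are weak-$\ast$ closed and the boundary values are preserved in the limit, elements of $X$ are still ``relaxed subsolutions''. As a closed subset of a complete metric space, $X$ is a Baire space.

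Next I introduce the functional
$$
I(\rho, \vm, \mU) := \iint_\Gamma \left(\frac{|\vm|^2}{2\rho} + \frac{n}{2}\,p(\rho)\right)\dx\dt \ec
$$
assuming $\Gamma$ is bounded (for unbounded $\Gamma$ one exhausts by bounded subdomains and intersects the resulting residual sets). Since $(\vm,\rho) \mapsto |\vm|^2/\rho$ is jointly convex and $p$ is convex, $I$ is weak-$\ast$ lower semi-continuous on $X$; hence the set of continuity points of $I$ is residual by the standard Baire-$1$ argument. On the other hand, Lemma~\ref{lemma:Kast} together with the definition of $\sU$ gives $\frac{|\vm|^2}{2\rho} + \frac{n}{2}\,p(\rho) \le c$ a.e.\ on $\Gamma$ for every element of $X$, so $I \le c\cdot|\Gamma|$, with equality precisely when the trace condition \eqref{eq:sol-trace} holds.

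The main engine, which I defer to Section~\ref{sec:convint-pp}, is a \emph{Perturbation Property}: for every $(\rho, \vm, \mU) \in X_0$ and every $\ep > 0$ there is $(\til\rho, \til\vm, \til\mU) \in X_0$ with $d\big((\til\rho, \til\vm, \til\mU), (\rho, \vm, \mU)\big) < \ep$ and
$$
I(\til\rho, \til\vm, \til\mU) \ge I(\rho, \vm, \mU) + \beta\big(c\cdot|\Gamma| - I(\rho, \vm, \mU)\big) \ec
$$
where $\beta : [0,\infty)\to[0,\infty)$ is continuous and strictly positive on $(0,\infty)$. This is where convex integration actually enters: partition $\Gamma$ into small cubes on which $(\rho,\vm,\mU)$ is nearly constant equal to some $(\rho^\ast, \vm^\ast, \mU^\ast) \in \interior{\sU}$; use the identification $\sU = K^\Lambda$ from Section~\ref{sec:convint-U} together with the $H_N$-decomposition of Section~\ref{sec:convint-convex-hulls} to find a direction $(\ov\rho, \ov\vm, \ov\mU) \in \Lambda$ along which the value can be pushed toward $K$ (and hence the integrand of $I$ is locally increased); finally apply the operators of Proposition~\ref{prop:operators} to a compactly supported, rapidly oscillating profile $h$ to obtain perturbations that solve the linear system exactly and are localized inside each cube. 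The hard part will be precisely this perturbation construction: one must simultaneously (i) keep $(\til\rho,\til\vm,\til\mU)$ strictly inside $\interior{\sU}$ despite adding plane waves, (ii) obtain a quantitative gain in $I$ proportional to $\beta$ of the current gap, and (iii) control the $d$-distance via Lemma~\ref{lemma:not-periodic-weak-convergence}. The match between the order of the operators and the size of the oscillations is what makes the whole construction work.

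Granting the Perturbation Property, the conclusion is routine. At any continuity point $(\rho, \vm, \mU) \in X$ of $I$ one must have $I(\rho, \vm, \mU) = c\cdot|\Gamma|$: otherwise, approximating by $(\rho_k, \vm_k, \mU_k) \in X_0$ and applying the Perturbation Property with $\ep = 1/k$, we obtain $(\til\rho_k, \til\vm_k, \til\mU_k) \in X_0$ still converging to $(\rho, \vm, \mU)$ but with $I(\til\rho_k, \til\vm_k, \til\mU_k) \ge I(\rho_k, \vm_k, \mU_k) + \beta(\delta)$ for some fixed $\delta > 0$, contradicting continuity of $I$ at the limit. Hence the (uncountable) residual set of continuity points consists of triples $(\rho, \vm, \mU)$ satisfying the trace condition \eqref{eq:sol-trace} a.e. For any such triple the identity $\mU = \frac{\vm\otimes\vm}{\rho} + (p(\rho) - \frac{2c}{n})\id$ holds a.e., so the weak form of the linear system \eqref{eq:p0-pde1}-\eqref{eq:p0-pde2} reduces exactly to the weak Euler equations \eqref{eq:sol-pde1}-\eqref{eq:sol-pde2}, while the bound \eqref{eq:sol-dens-bdd} and the correct boundary trace survive the weak-$\ast$ closure; this gives the infinitely many solutions claimed.
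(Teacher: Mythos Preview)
Your proposal is essentially correct and follows the same Baire-category strategy as the paper: define $X_0$ as strict subsolutions, take its weak-$\ast$ closure $X$, show the trace functional is lower semi-continuous, invoke a Perturbation Property to force continuity points to be solutions, and conclude via Baire. The paper carries this out in Subsections~\ref{subsec:convint-thm-fs}--\ref{subsec:convint-thm-proof}, with the Perturbation Property (Proposition~\ref{prop:pert-prop}) proved in Section~\ref{sec:convint-pp}.

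Two differences are worth flagging. First, your Perturbation Property is the ``$\beta$-gain'' version from \name{De~Lellis}--\name{Sz\'ekelyhidi}, where the improvement is proportional to the current gap. The paper instead proves a \emph{stronger} statement (see Proposition~\ref{prop:pert-prop} and the remark following it): for any target $\ep>0$ the perturbation achieves $I_{\Gamma_0}(\rho_\pert,\vm_\pert) \ge -\ep$ in a single step. Both versions suffice for the Baire contradiction, but the paper's formulation simplifies that step slightly and is one of the points it explicitly contrasts with the incompressible literature. Second, your claim that the residual set is ``uncountable'' needs justification (a residual set in a complete metric space need not be uncountable without ruling out isolated points); the paper instead shows directly that $X_0$ is infinite (Lemma~\ref{lemma:X0infinite}) and argues that a finite residual set would force $\Xi = X \supset X_0$, a contradiction. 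Finally, a small slip: your $X_0$ should consist of $C^1$ functions, not merely continuous ones, since the linear PDEs are required to hold pointwise.
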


\begin{rem}
	Note that in this book initial data are denoted by $(\rho_\init,\vm_\init)$ whereas in the literature it is common to write $(\rho_0,\vm_0)$ instead. Consequently the triple $(\rho_0,\vm_0,\mU_0)$ in Theorem~\ref{thm:convint} does not represent initial data but a subsolution. In particular the functions $\rho_0,\vm_0,\mU_0$ do depend on time $t$. As already mentioned in the beginning of this chapter, Theorem~\ref{thm:convint} does not immediately yield solutions to an initial (boundary) value problem. Hence initial data do not appear in Theorem~\ref{thm:convint}. We will consider special initial (boundary) value problems in Chapters \ref{chap:appl-ibvp} and \ref{chap:appl-riemann}. Here we will have to find a suitable triple $(\rho_0,\vm_0,\mU_0)$ such that Theorem~\ref{thm:convint} can be applied and the boundary terms in \eqref{eq:sol-pde1}, \eqref{eq:sol-pde2} coincide with the boundary terms determined by the problem.
\end{rem}

\begin{rem}
	Almost every argument used to prove Theorem~\ref{thm:convint} is valid for higher dimensions $n\geq 4$ as well. The only ingredient which is not applicable for $n\geq 4$ is Proposition~\ref{prop:operators}. Note that this does not mean that operators like in Proposition~\ref{prop:operators} do not exist in the case $n\geq 4$. However we didn't try to find such operators because $n\geq 4$ is physically not relevant.
\end{rem}

The remaining part of the current section is dedicated to prove Theorem~\ref{thm:convint}.

\subsection{Functional Setup} \label{subsec:convint-thm-fs} 

Let $n\in\{2,3\}$, $\Gamma\subset \R^{1+n}$, $r,c>0$ and $(\rho_0,\vm_0,\mU_0)\in C^1(\closure{\Gamma};\R^+ \times \R^n\times \szn)$ given such that the properties in the assumption of Theorem~\ref{thm:convint}, i.e. \eqref{eq:p0-pde1} - \eqref{eq:p0-dens-bdd}, hold. 

\begin{defn} \label{defn:X0}
Define the set $X_0$ by
\begin{equation*}
	X_0 := \Big\{ (\rho,\vm) \in C^1\big(\closure{\Gamma};\R^+\times \R^n\big) \,\Big|\,\exists\, \mU\in C^1\big(\closure{\Gamma};\szn\big) \text{ such that \ref{item:pfs-pde} - \ref{item:pfs-dens-bdd} hold} \Big\} \ec
\end{equation*}
where the properties \ref{item:pfs-pde} - \ref{item:pfs-dens-bdd} read as follows:
\begin{enumerate}
	\item \label{item:pfs-pde} The PDEs 
	\begin{align}
		\partial_t \rho + \Div \vm &= 0 \ec \label{eq:pfs-pde1}\\
		\partial_t \vm + \Div \mU &= \vz \label{eq:pfs-pde2}
	\end{align}
	hold pointwise for all $(t,\vx)\in \Gamma$; 
		
	\item \label{item:pfs-subs} The $(\rho,\vm,\mU)$ take values in $\interior{\sU}$, i.e. 
	\begin{equation}  \label{eq:pfs-subs}
		(\rho,\vm,\mU)(t,\vx) \ \in\ \interior{\sU}\qquad \text{ for all }(t,\vx)\in \Gamma \es 
	\end{equation} 
	
	\item \label{item:pfs-bc} On the boundary, $(\rho,\vm,\mU)$ coincide with $(\rho_0,\vm_0,\mU_0)$, i.e.
	\begin{equation} \label{eq:pfs-bc}
		(\rho,\vm,\mU)(t,\vx) = (\rho_0,\vm_0,\mU_0)(t,\vx) \qquad \text{ for all }(t,\vx)\in \partial \Gamma \es 
	\end{equation} 
	
	\item \label{item:pfs-dens-bdd} The density is bounded from below, i.e.
	\begin{equation} \label{eq:pfs-dens-bdd} 
		\rho(t,\vx) > r \qquad \text{ for all }(t,\vx)\in \closure{\Gamma} \ed 
	\end{equation} 
\end{enumerate}
\end{defn}

First of all we observe that $(\rho,\vm,\mU)$ take values on a bounded set if $(\rho,\vm)\in X_0$:

\begin{lemma} \label{lemma:valuesX0bounded}
	There exists $M>r$ with the following properties. If $(\rho,\vm)\in X_0$, then
	\begin{equation} \label{eq:31-temp-convint} 
		(\rho,\vm,\mU)(t,\vx) \ \in\  [r,M]\times \closure{B_n(\vz,M)} \times \closure{B_{\szn}(\mZ,M)} 
	\end{equation}
	for all $(t,\vx)\in\closure{\Gamma}$, where $\mU$ is the matrix-valued function which corresponds to $(\rho,\vm)$.
\end{lemma}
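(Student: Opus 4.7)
My plan is to reduce the claim to Lemma~\ref{lemma:U-bdd} by a straightforward continuity argument. First, let $M_0 > 0$ be the constant provided by Lemma~\ref{lemma:U-bdd}, so that for every triple $(\rho,\vm,\mU) \in \R^+\times\R^n\times\szn$ satisfying $e(\rho,\vm,\mU) \leq c$, we have $\max\{|\rho|,|\vm|,\|\mU\|\} \leq M_0$. This constant depends only on $c$, $n$, and the pressure law $p$, not on the particular element of $X_0$ we will consider.

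Now fix an arbitrary $(\rho,\vm)\in X_0$ and let $\mU\in C^1\bigl(\closure{\Gamma};\szn\bigr)$ be the associated matrix-valued function from Definition~\ref{defn:X0}. By property~\ref{item:pfs-subs}, the triple $(\rho,\vm,\mU)(t,\vx) \in \interior{\sU}$ for every $(t,\vx)\in\Gamma$, and hence in particular $e(\rho,\vm,\mU)(t,\vx) < c$ there, thanks to \eqref{eq:interiorU}. Since $(\rho,\vm,\mU)$ is continuous on $\closure{\Gamma}$ and $e$ is continuous on $\R^+\times\R^n\times\szn$ by Lemma~\ref{lemma:e-convex}, passing to the limit at any boundary point of $\Gamma$ gives $e(\rho,\vm,\mU)(t,\vx) \leq c$ on all of $\closure{\Gamma}$. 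Note that the limit stays in the open half-space $\{\rho > 0\}$ thanks to property~\ref{item:pfs-dens-bdd}, so $e$ is in fact continuous at every point in question. Therefore Lemma~\ref{lemma:U-bdd} applies pointwise, yielding
\[
\max\bigl\{|\rho(t,\vx)|,\,|\vm(t,\vx)|,\,\|\mU(t,\vx)\|\bigr\} \leq M_0 \qquad \text{for all }(t,\vx)\in \closure{\Gamma}.
\]

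For the lower bound on the density, property~\ref{item:pfs-dens-bdd} directly gives $\rho(t,\vx) > r$ on $\closure{\Gamma}$, so in particular $\rho \geq r$ on $\closure{\Gamma}$. Setting $M := M_0$, we see that $M > r$ (since at any fixed point of $\closure{\Gamma}$ we have $r < \rho \leq M_0$), and all three bounds in \eqref{eq:31-temp-convint} follow. The argument is essentially a direct combination of Lemmas~\ref{lemma:e-convex} and~\ref{lemma:U-bdd} with the continuity of $(\rho,\vm,\mU)$ up to the boundary; there is no real obstacle, only the observation that the constant $M_0$ from Lemma~\ref{lemma:U-bdd} is independent of the particular element of $X_0$, which is the point that makes the bound uniform over $X_0$.
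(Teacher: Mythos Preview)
Your proof is correct and follows essentially the same approach as the paper: apply Lemma~\ref{lemma:U-bdd} via property~\ref{item:pfs-subs} to get the upper bounds on the open set $\Gamma$, use property~\ref{item:pfs-dens-bdd} for the lower bound on $\rho$, and pass to the closure by continuity of $(\rho,\vm,\mU)$. The only cosmetic difference is that you first extend the inequality $e\leq c$ to $\closure{\Gamma}$ and then apply Lemma~\ref{lemma:U-bdd}, whereas the paper applies the lemma on $\Gamma$ and then extends the resulting bounds; both orderings are equivalent.
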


\begin{proof} 
		From \eqref{eq:pfs-dens-bdd} we obtain the bound of $\rho$ from below. Due to Lemma~\ref{lemma:U-bdd} (which is applicable according to \eqref{eq:pfs-subs}), \eqref{eq:31-temp-convint} holds for all $(t,\vx)\in \Gamma$. Since every $(\rho,\vm)\in X_0$ is continuous and so is the corresponding $\mU$, we deduce that \eqref{eq:31-temp-convint} even holds on the closure, i.e. for all $(t,\vx)\in \closure{\Gamma}$.
\end{proof}

\begin{defn} \label{defn:X}
	Let $X$ be the closure of $X_0$ with respect to the $L^\infty$ weak-$\ast$ topology. 
\end{defn}

\begin{rem}
	With the notation used in Section~\ref{sec:convint-prel}, $X$ is the set of subsolutions. In this manner one could call $X_0$ the set of \emph{strict} subsolutions since they take values in the interior $\interior{\sU}$, see \eqref{eq:pfs-subs}.
\end{rem}

\begin{prop} \label{prop:Xcomplete}
	There exists a metric $d$ on $X$ which induces the weak-$\ast$ topology, and furthermore the metric space $(X,d)$ is compact and complete. 
\end{prop}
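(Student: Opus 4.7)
The plan is to reduce the statement to standard functional-analytic facts about the weak-$\ast$ topology on bounded subsets of duals of separable Banach spaces. The key observation is that Lemma~\ref{lemma:valuesX0bounded} guarantees that every $(\rho,\vm)\in X_0$ is essentially bounded by some constant $M$, so $X_0$ lies in the closed ball $B := \{\vU \in L^\infty(\Gamma;\R^{1+n}) \mid \|\vU\|_{L^\infty}\leq 2M\}$. Since $B$ is weak-$\ast$ closed (in fact weak-$\ast$ compact by the Banach--Alaoglu theorem) and $X$ is by definition the weak-$\ast$ closure of $X_0\subset B$, we automatically get $X\subset B$.

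Next, since $\Gamma\subset\R^{1+n}$ is equipped with the $\sigma$-finite Lebesgue measure, the space $L^1(\Gamma;\R^{1+n})$ is separable. I would fix a countable dense subset $\{\vpsi_k\}_{k\in\N}$ of the unit ball of $L^1(\Gamma;\R^{1+n})$ and define
$$
d(\vU,\vV) := \sum_{k=1}^\infty 2^{-k} \left| \iint_\Gamma (\vU-\vV)\cdot \vpsi_k \dx\dt \right|,\qquad \vU,\vV\in X.
$$
The series converges absolutely (each summand is bounded by $4M\cdot 2^{-k}$). Symmetry, non-negativity, and the triangle inequality are immediate; positive-definiteness follows from the density of $\{\vpsi_k\}$ in the unit ball of $L^1$, which via the $L^\infty$--$L^1$ duality implies that $\langle \vU-\vV,\vpsi_k\rangle = 0$ for all $k$ forces $\vU=\vV$. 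A standard argument then shows that on the bounded set $B$ (and hence on $X$) the metric $d$ generates exactly the weak-$\ast$ topology: weak-$\ast$ convergence implies convergence of every term of the series and $d$-convergence by dominated convergence; conversely, $d$-convergence yields convergence of $\langle\cdot,\vpsi_k\rangle$ for every $k$, and a standard $3\varepsilon$-argument using the density of $\{\vpsi_k\}$ together with the uniform $L^\infty$-bound extends this to convergence against every $\vpsi \in L^1(\Gamma;\R^{1+n})$.

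For compactness, I would combine the above with Banach--Alaoglu: $B$ is weak-$\ast$ compact, and since $d$ metrizes the weak-$\ast$ topology on $B$, the metric space $(B,d)$ is compact. The set $X$ is weak-$\ast$ closed by its very definition as a closure, so $X$ is a closed subset of the compact metric space $(B,d)$ and is therefore itself compact. Completeness of $(X,d)$ then follows immediately from compactness, since every compact metric space is complete.

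The routine part is the verification that $d$ is a metric and induces the weak-$\ast$ topology; the only conceptual step worth double-checking is the separability of $L^1(\Gamma;\R^{1+n})$ for a possibly unbounded Lipschitz domain $\Gamma$, which holds because the Lebesgue measure on $\Gamma$ is $\sigma$-finite. There is no real obstacle here: essentially everything reduces to Banach--Alaoglu plus metrizability of the weak-$\ast$ topology on norm-bounded subsets of the dual of a separable Banach space.
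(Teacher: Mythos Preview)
Your proof is correct and follows essentially the same approach as the paper: use Lemma~\ref{lemma:valuesX0bounded} for boundedness, metrizability of the weak-$\ast$ topology on bounded sets via separability of the predual, Banach--Alaoglu for compactness, and then completeness from compactness. The only difference is that you spell out the explicit construction of $d$ and the verification that it metrizes the weak-$\ast$ topology, whereas the paper simply cites these as standard facts from its appendix on weak-$\ast$ convergence.
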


\begin{proof}
	From Lemma~\ref{lemma:valuesX0bounded} we obtain that the set $X_0$ is bounded with respect to the $\|\cdot \|_{L^\infty}$-norm. Hence the properties of the weak-$\ast$ topology given in Section~\ref{sec:not-weakconv} yield existence of $d$, which induces the weak-$\ast$ topology, and compactness of $(X,d)$. It is a well-known fact that compact metric spaces are complete, i.e. $(X,d)$ is complete.
\end{proof}

\subsection{The Functionals $I_{\Gamma_0}$ and the Perturbation Property} \label{subsec:convint-thm-pp} 

In this subsection we prepare the two main ingredients of the proof of Theorem~\ref{thm:convint}. In order to state them, we introduce functionals $I_{\Gamma_0}$. The first ingredient is Lemma~\ref{lemma:prop-I}~\ref{item:prop-I.e} which yields a sufficient condition for a pair $(\rho,\vm)\in X$ to be a solution in the sense of Theorem~\ref{thm:convint}. The second main ingredient is the so-called \emph{Perturbation Property} (Proposition~\ref{prop:pert-prop}). This property will yield a wanted contradiction in order to prove Theorem~\ref{thm:convint}.

\begin{defn} \label{defn:E}
	Define the mapping $E:\R^+\times \R^n \to \R$ by 
	$$
		(\rho,\vm)\mapsto E(\rho,\vm) := \frac{|\vm|^2}{2\rho} + \frac{n}{2} p(\rho) -c \ed
	$$
\end{defn}

Notice, that $E(\rho,\vm)>-c$ for all $(\rho,\vm)\in\R^+\times \R^n$. This simple fact will be used several times later. 
With the following lemma we summarize some properties of $E$.

\begin{lemma} \label{lemma:prop-E}
	The following claims hold.
	\begin{enumerate}
		\item \label{item:prop-E.a} The restriction of $E$ to the set $[r,M]\times \closure{B_n(\vz,M)}$ is uniformly continuous\footnote{This statement is valid for all $M>r$. What we need here in particular, is the $M$ from Lemma~\ref{lemma:valuesX0bounded}.}. Similarly the restriction of $e$ to the set $[r,M]\times \closure{B_n(\vz,M)} \times \closure{B_{\szn}(\mZ,M)}$ is uniformly continuous.
		\item \label{item:prop-E.b} The mapping $E$ is convex.
		\item \label{item:prop-E.c} The following inequality is valid for all $ (\rho,\vm,\mU)\in \R^+\times \R^n\times \szn$:
		\begin{equation} \label{eq:40-temp-convint}
			E(\rho,\vm) \leq e(\rho,\vm,\mU) - c  \ed
		\end{equation}
		\item \label{item:prop-E.d} If $(\rho,\vm)\in X_0$, then $E\big(\rho(t,\vx),\vm(t,\vx)\big)<0$ for all $(t,\vx)\in \Gamma$. 
	\end{enumerate}
\end{lemma}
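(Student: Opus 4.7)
The proof of the four parts is largely a direct assembly of results already established. My plan is as follows.

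For part \ref{item:prop-E.a}, I would argue that $E$ is continuous on $\R^+\times\R^n$ as a composition of continuous operations (the division by $\rho>0$ poses no problem), while the restriction domain $[r,M]\times\closure{B_n(\vz,M)}$ is compact. Heine--Cantor then gives uniform continuity. The same argument applies to $e$, whose continuity was already recorded in Lemma~\ref{lemma:e-convex}. For part \ref{item:prop-E.b}, I would decompose $E$ as a sum of three functions: $(\rho,\vm)\mapsto |\vm|^2/(2\rho)$ is convex by Lemma~\ref{lemma:convexity-kin-en} (the same tool used inside the proof of Lemma~\ref{lemma:e-convex}); $\rho\mapsto \tfrac{n}{2}p(\rho)$ is convex because $p$ was assumed convex; and the constant $-c$ is trivially convex. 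A sum of convex functions is convex.

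For part \ref{item:prop-E.c} the key observation, which is also the only point where one must be a little careful, is that $\mU\in\szn$ is traceless. Computing,
\[
\tr\!\left(\frac{\vm\otimes\vm}{\rho}+p(\rho)\id-\mU\right)=\frac{|\vm|^2}{\rho}+np(\rho).
\]
Since the trace of a symmetric matrix is the sum of its eigenvalues and therefore at most $n$ times the largest eigenvalue, this gives
\[
\frac{|\vm|^2}{\rho}+np(\rho)\leq n\,\lambda_{\max}\!\left(\tfrac{\vm\otimes\vm}{\rho}+p(\rho)\id-\mU\right)=2\,e(\rho,\vm,\mU).
\]
Dividing by $2$ and subtracting $c$ yields $E(\rho,\vm)\leq e(\rho,\vm,\mU)-c$.

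Finally, for part \ref{item:prop-E.d}, if $(\rho,\vm)\in X_0$ then Definition~\ref{defn:X0} provides an $\mU\in C^1(\closure{\Gamma};\szn)$ with $(\rho,\vm,\mU)(t,\vx)\in\interior{\sU}$ for every $(t,\vx)\in\Gamma$. By the characterization \eqref{eq:interiorU} this means $e(\rho,\vm,\mU)(t,\vx)<c$, so combining with \ref{item:prop-E.c} gives $E\bigl(\rho(t,\vx),\vm(t,\vx)\bigr)\leq e(\rho,\vm,\mU)(t,\vx)-c<0$. None of these steps presents a real obstacle; the only substantive ingredient is the tracelessness of $\mU$ in step \ref{item:prop-E.c}, which is precisely what links the simpler functional $E$ to the geometric functional $e$ governing the relaxed set $\sU$.
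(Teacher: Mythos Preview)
Your proof is correct and essentially identical to the paper's own argument: each of the four parts uses exactly the same ingredients (Heine--Cantor on a compact set for \ref{item:prop-E.a}, Lemma~\ref{lemma:convexity-kin-en} plus convexity of $p$ for \ref{item:prop-E.b}, the trace computation via $\tr\mU=0$ together with $\tr\leq n\lambda_{\max}$ for \ref{item:prop-E.c}, and the combination of \ref{item:prop-E.c} with \eqref{eq:interiorU} for \ref{item:prop-E.d}).
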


\begin{proof} 
	\begin{enumerate}
		\item Obviously $E$ is continuous on $\R^+\times \R^n$. Hence $E$ is uniformly continuous on the compact set $[r,M]\times \closure{B_n(\vz,M)}$. In order to prove the analogous claim for $e$, we argue in the same way. 
		
		\item The convexity of $E$ simply follows from Lemma~\ref{lemma:convexity-kin-en} and the convexity of $p$. 
		
		\item This follows from Lemma~\ref{lemma:not-lmax-tr}. Indeed  
		\begin{align*}
			E(\rho,\vm) &= \frac{|\vm|^2}{2\rho} + \frac{n}{2} p(\rho) - c \\
			&= \half \tr \left(\frac{\vm\otimes\vm}{\rho} + p(\rho)\id - \mU \right) -c \\
			&\leq \frac{n}{2} \lambda_{\max} \left(\frac{\vm\otimes\vm}{\rho} + p(\rho)\id - \mU \right) -c  \\
			&=  e(\rho,\vm,\mU) - c \ed
		\end{align*} 
		
		\item Let $(\rho,\vm)\in X_0$. According to the definition of the set $X_0$, there exists a matrix-valued function $\mU\in C^1(\closure{\Gamma};\szn)$ such that in particular \eqref{eq:pfs-subs} holds for all $(t,\vx)\in\Gamma$. Using \eqref{eq:interiorU} and \eqref{eq:40-temp-convint} we obtain
		\begin{align*}
			E\big((\rho,\vm)(t,\vx)\big) \leq  e\big((\rho,\vm,\mU)(t,\vx)\big) - c  <0 
		\end{align*}
		for all $(t,\vx)\in \Gamma$.
	\end{enumerate}
\end{proof}

Now we are ready to define the functionals $I_{\Gamma_0}$ via the mapping $E$.

\begin{defn} \label{defn:I}
	For any open and bounded subset $\Gamma_0 \subset \R^{1+n}$ we define the functional $I_{\Gamma_0}: L^\infty (\Gamma_0;\R^+\times \R^n) \to \R$ by
	\begin{align*} 
	(\rho,\vm)\mapsto I_{\Gamma_0} (\rho,\vm) := &\iint_{\Gamma_0} E\big(\rho(t,\vx),\vm(t,\vx)\big)\dx\dt \\
	= &\iint_{\Gamma_0} \left(\frac{\big|\vm(t,\vx)\big|^2}{2\rho(t,\vx)} + \frac{n}{2} p\big(\rho(t,\vx)\big) -c \right)\dx\dt \ed
	\end{align*}
\end{defn}

Let us summarize some properties of $I_{\Gamma_0}$.

\begin{lemma}  \label{lemma:prop-I}
	The following claims hold.
	\begin{enumerate}
		\item \label{item:prop-I.a} For all open and bounded sets $\Gamma_0\subset \Gamma$ the map $I_{\Gamma_0}:X\to \R$ is lower semi-continuous with respect to $d$.
		\item \label{item:prop-I.b} For all non-empty, open and bounded sets $\Gamma_0\subset \Gamma$ and all $(\rho,\vm) \in X_0$ we have $I_{\Gamma_0} (\rho,\vm) < 0$. 
		\item \label{item:prop-I.c} For all open and bounded sets $\Gamma_0\subset \Gamma$ and all $(\rho,\vm) \in X$ we have $I_{\Gamma_0} (\rho,\vm) \leq 0$. 
		\item \label{item:prop-I.d} If $(\rho,\vm)\in X$, then $E\big(\rho(t,\vx),\vm(t,\vx)\big)\leq 0$ for a.e. $(t,\vx)\in \Gamma$. 
		\item \label{item:prop-I.e} If $(\rho,\vm) \in X$ with $I_{\Gamma_0} (\rho,\vm) = 0$ for all open and bounded sets $\Gamma_0\subset \Gamma$, then the pair $(\rho,\vm)$ is a solution in the sense of Theorem~\ref{thm:convint}, i.e. $(\rho,\vm)$ satisfies properties \ref{item:convint.a} - \ref{item:convint.c} of Theorem~\ref{thm:convint}.
	\end{enumerate}
\end{lemma}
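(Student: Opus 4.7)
The approach is to dispatch the five parts in order, with \ref{item:prop-I.e} as the payoff and the earlier parts as building blocks.

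For \ref{item:prop-I.a}, lower semi-continuity of $I_{\Gamma_0}$ with respect to the $L^\infty$ weak-$\ast$ topology is the classical consequence of convexity of $E$ established in Lemma~\ref{lemma:prop-E}~\ref{item:prop-E.b}. Concretely, for any measurable selection $\boldsymbol{\xi}(t,\vx)$ of the subdifferential $\partial E$ at $(\rho,\vm)(t,\vx)$ (bounded, since $(\rho,\vm)$ is valued in a compact set by Lemma~\ref{lemma:valuesX0bounded}), the pointwise inequality $E(u_k)\geq E(u)+\boldsymbol{\xi}\cdot(u_k-u)$ integrates and passes to the weak-$\ast$ limit, because $\boldsymbol{\xi}\in L^\infty(\Gamma_0)\subset L^1(\Gamma_0)$ on the bounded domain $\Gamma_0$. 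Part \ref{item:prop-I.b} is immediate: Lemma~\ref{lemma:prop-E}~\ref{item:prop-E.d} gives $E(\rho,\vm)<0$ pointwise on $\Gamma$, and continuity of $E\circ(\rho,\vm)$ on the non-empty open set $\Gamma_0$ forces the integral to be strictly negative. Part \ref{item:prop-I.c} then follows by approximation: since $X$ is the weak-$\ast$ closure of $X_0$ and is metrizable (Proposition~\ref{prop:Xcomplete}), any $(\rho,\vm)\in X$ is the weak-$\ast$ limit of a sequence in $X_0$, and combining \ref{item:prop-I.a} with \ref{item:prop-I.b} yields $I_{\Gamma_0}(\rho,\vm)\leq 0$.

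For \ref{item:prop-I.d}, I argue by contradiction using Lebesgue differentiation. If $E\circ(\rho,\vm)>0$ on a set of positive measure inside $\Gamma$, there exists an interior Lebesgue point $(t_0,\vx_0)\in\Gamma$ of $E\circ(\rho,\vm)$ where this function has strictly positive value. For all sufficiently small $\epsilon>0$ the ball $B_\epsilon(t_0,\vx_0)$ lies in $\Gamma$, is open and bounded, and satisfies $I_{B_\epsilon(t_0,\vx_0)}(\rho,\vm)>0$, contradicting \ref{item:prop-I.c}.

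For \ref{item:prop-I.e}, the hypothesis together with \ref{item:prop-I.d} forces $E(\rho,\vm)=0$ almost everywhere on $\Gamma$, which is precisely the trace identity \eqref{eq:sol-trace}, settling \ref{item:convint.c}. Choosing a sequence $(\rho_k,\vm_k)\in X_0$ weak-$\ast$ converging to $(\rho,\vm)$ with corresponding $\mU_k$ uniformly bounded by Lemma~\ref{lemma:valuesX0bounded}, I extract a weak-$\ast$ limit $\mU$ along a subsequence. Preservation of pointwise lower bounds under $L^\infty$ weak-$\ast$ convergence applied to $\rho_k>r$ yields $\rho\geq r$ a.e., giving \ref{item:convint.b}. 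For \ref{item:convint.a}, integrating by parts in \eqref{eq:pfs-pde1}, \eqref{eq:pfs-pde2} against a test function $(\phi,\vphi)\in C_c^\infty(\closure{\Gamma};\R\times\R^n)$, using the boundary matching \eqref{eq:pfs-bc}, produces weak formulations for each $k$ with the $k$-independent boundary terms built from $(\rho_0,\vm_0,\mU_0)$. Passing to the weak-$\ast$ limit gives \eqref{eq:sol-pde1} directly, and an analogous momentum identity involving the limit $\mU$ in place of the nonlinear flux. To identify $\mU$ pointwise, I combine three facts: $E(\rho,\vm)=0$ a.e.; the inequality $E\leq e(\cdot,\cdot,\mU)-c$ of Lemma~\ref{lemma:prop-E}~\ref{item:prop-E.c}; and $e(\rho,\vm,\mU)\leq c$ a.e., the latter following because each $(\rho_k,\vm_k,\mU_k)$ takes values in the convex set $\sU=\{e\leq c\}$ (Proposition~\ref{prop:compKLambda} combined with Lemma~\ref{lemma:Kast}) and $e$ is convex (Lemma~\ref{lemma:e-convex}), so the epigraph condition passes to weak-$\ast$ limits. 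These three facts sandwich equality in $E\leq e-c$, which, tracing back to the estimate $\tfrac{1}{n}\tr A\leq\lambda_{\max}(A)$ of Lemma~\ref{lemma:not-lmax-tr}, forces all eigenvalues of $\tfrac{\vm\otimes\vm}{\rho}+p(\rho)\id-\mU$ to coincide at $\tfrac{2c}{n}$, i.e.\ $\mU=\tfrac{\vm\otimes\vm}{\rho}+(p(\rho)-\tfrac{2c}{n})\id$ a.e. Substituting into the limiting momentum identity and reorganising the constant $-\tfrac{2c}{n}\id$ term into a boundary contribution via the space-time divergence theorem applied to $(0,\vphi)$ reproduces exactly \eqref{eq:sol-pde2}.

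The main obstacle is the pointwise algebraic identification of $\mU$ in \ref{item:prop-I.e}, since a priori the weak-$\ast$ limit of the nonlinear flux $\vm_k\otimes\vm_k/\rho_k$ is not controlled. The rescue comes from the rigidity of equality in the trace-versus-maximal-eigenvalue estimate: $E(\rho,\vm)=0$ together with $(\rho,\vm,\mU)\in\sU$ leaves only one algebraic possibility for $\mU$, and this is exactly what is needed to convert the abstract limit identity into the Euler momentum equation. The remaining ingredients — convex-integrand lower semi-continuity, Lebesgue differentiation, and weak-$\ast$ passage in the linear boundary terms — are routine.
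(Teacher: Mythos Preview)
Your proposal is correct and follows essentially the same route as the paper: parts \ref{item:prop-I.a}--\ref{item:prop-I.c} via convexity of $E$ plus approximation from $X_0$, part \ref{item:prop-I.d} by localisation, and part \ref{item:prop-I.e} by passing the linear system to the weak-$\ast$ limit and then using the rigidity of equality in $\tfrac{1}{n}\tr A\leq\lambda_{\max}(A)$ to identify $\mU$. The only cosmetic differences are that the paper packages the lower semi-continuity and the localisation steps into prepared lemmas (Lemma~\ref{lemma:not-wls-convex} and Lemma~\ref{lemma:not-local-property}) rather than invoking a subdifferential argument and Lebesgue differentiation directly, and it verifies $e(\rho,\vm,\mU)\leq c$ a.e.\ by the same integrate-and-localise trick rather than phrasing it as an ``epigraph'' condition.
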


\begin{rem}
	Statement \ref{item:prop-I.d} of Lemma~\ref{lemma:prop-I} is an analogue of Lemma~\ref{lemma:prop-E}~\ref{item:prop-E.d} for $(\rho,\vm)\in X_0$. Whereas the latter (i.e. Lemma~\ref{lemma:prop-E}~\ref{item:prop-E.d}) was quite simple to prove, the former (i.e. Lemma~\ref{lemma:prop-I}~\ref{item:prop-I.d}) is more difficult. In particular we will use Lemma~\ref{lemma:prop-I}~\ref{item:prop-I.c} together with Lemma~\ref{lemma:not-local-property} to prove it. 
\end{rem}

\begin{proof}
	\begin{enumerate}
		\item According to Lemma~\ref{lemma:prop-E}~\ref{item:prop-E.b}, $E$ is convex. Keeping in mind that the metric $d$ induces the weak-$\ast$ topology on $X$, the claim immediately follows from Lemma~\ref{lemma:not-wls-convex}.
		
		\item This follows immediately from Lemma~\ref{lemma:prop-E}~\ref{item:prop-E.d}.
		
		\item Let $(\rho,\vm)\in X$. Since $X$ is the closure of $X_0$ with respect to $d$, there exists a sequence $(\rho_k,\vm_k)_{k\in\N} \subset X_0$ with $(\rho_k,\vm_k)\mathop{\to}\limits^d (\rho,\vm)$ as $k\to\infty$. Hence we obtain using \ref{item:prop-I.a} and \ref{item:prop-I.b}
		$$
			I_{\Gamma_0} (\rho,\vm)\leq \liminf\limits_{k\to\infty} I_{\Gamma_0} (\rho_k,\vm_k) \leq 0 \ed
		$$
				
		\item This is a direct consequence of \ref{item:prop-I.c} and Lemma~\ref{lemma:not-local-property}, where we set $f:= - E(\rho,\vm)$.
				
		\item As in the proof of \ref{item:prop-I.d}, we can apply Lemma~\ref{lemma:not-local-property} to $f:= - E(\rho,\vm)$ and $f:= E(\rho,\vm)$ to obtain $E\big(\rho(t,\vx),\vm(t,\vx)\big)=0$ for a.e. $(t,\vx)\in\Gamma$. This shows \eqref{eq:sol-trace}, i.e. property \ref{item:convint.c} of Theorem~\ref{thm:convint}.
		
		Next, note again that there exists a sequence $(\rho_k,\vm_k)_{k\in\N} \subset X_0$ with $(\rho_k,\vm_k)\mathop{\to}\limits^d (\rho,\vm)$ as $k\to\infty$, see the proof of \ref{item:prop-I.c}. Because the $\rho_k$ satisfy $\rho_k(t,\vx)>r$ for all $(t,\vx)\in \Gamma$ (see \eqref{eq:pfs-dens-bdd}, property \ref{item:pfs-dens-bdd} of Definition~\ref{defn:X0}), we obtain $\rho(t,\vx)\geq r $ for a.e. $(t,\vx)\in \Gamma$. Indeed if this was not true, we would have $|S|>0$ where 
		\begin{align*}
			S:= \Big\{ (t,\vx) \in \Gamma \,\Big|\, \rho(t,\vx) < r \Big\} \ed
		\end{align*}
		Similar arguments as in the proof of Lemma~\ref{lemma:not-local-property} allow us to assume without loss of generality that $S$ is bounded.  Since 
		$$ 
			\iint_\Gamma \rho_k \charf_S \dx\dt > r|S|\qquad \text{ for all }k\in \N
		$$
		and $\rho_k \mathop{\rightharpoonup}\limits^\ast \rho$, we find\footnote{Use $\varphi:=\charf_S\in L^1(\Gamma)$ in Definition~\ref{defn:not-weak-ast}.}
		$$
			\iint_\Gamma \rho \charf_S \dx\dt \geq r|S| \ed
		$$
		On the other hand from the assumption above we get 
		$$
			\iint_\Gamma \rho \charf_S \dx\dt < r|S| \ec
		$$
		a contradiction. Hence we have proven \eqref{eq:sol-dens-bdd}, i.e. property \ref{item:convint.b} of Theorem~\ref{thm:convint}.
		
		By definition of the set $X_0$ (see Definition~\ref{defn:X0}) there exists $\mU_k \in C^1\big(\closure{\Gamma};\szn\big)$ for each $k\in\N$ with the properties given in Definition~\ref{defn:X0}. From \eqref{eq:pfs-subs} and Lemma~\ref{lemma:valuesX0bounded} we obtain that the sequence $(\mU_k)_{k\in \N}$ is uniformly bounded in $L^\infty$. Due to the facts about the weak-$\ast$ topology given in Section~\ref{sec:not-weakconv}, there exists $\mU\in L^\infty(\Gamma;\szn)$ such that\footnote{To be precise, we might have to consider a subsequence of $(\mU_k)_{k\in\N}$.} $\mU_k \mathop{\rightharpoonup}\limits^\ast \mU$ in $L^\infty$. It remains to prove \eqref{eq:sol-pde1} and \eqref{eq:sol-pde2}, i.e. property \ref{item:convint.a} of Theorem~\ref{thm:convint}.
		
		For each $k\in\N$ we multiply the PDEs \eqref{eq:pfs-pde1} and \eqref{eq:pfs-pde2} with arbitrary test functions $\phi\in \Cc(\closure{\Gamma})$ and $\vphi\in \Cc(\closure{\Gamma};\R^n)$, respectively. Afterwards we integrate over $\Gamma$ to obtain
		\begin{align}
			\iint_\Gamma \big[ \phi\, \partial_t \rho_k + \phi\, \Div\vm_k \big] \dx\dt &= 0 \ec \label{eq:temp18-convint}\\
			\iint_\Gamma \big[ \vphi \cdot \partial_t \vm_k + \vphi \cdot \Div\mU_k \big] \dx\dt &= \vz \ed \label{eq:temp19-convint}
		\end{align}
		Next, we are going to apply the Divergence Theorem (Proposition~\ref{prop:not-divergence}) to the integrals
		\begin{align}
			&\iint_\Gamma \big[ \partial_t (\rho_k\,\phi) + \Div(\vm_k\,\phi) \big] \dx\dt \ec \label{eq:temp20-convint} \\
			&\iint_\Gamma \big[ \partial_t (\vm_k\cdot \vphi) + \Div(\mU_k\cdot \vphi) \big] \dx\dt \ed \label{eq:temp21-convint}
		\end{align}
		
		At a first glance the fact that $\Gamma$ might be unbounded seems to cause problems. However since $\phi$ and $\vphi$ have compact support, we can integrate over a bounded (and still Lipschitz) domain instead of $\Gamma$ in \eqref{eq:temp20-convint} and \eqref{eq:temp21-convint}. Note that the integral over the additional boundary (i.e. the part which appears due to restriction to a bounded domain) vanishes because of the compact support of the test functions $\phi$ and $\vphi$. Hence we obtain
		\begin{align}
			\iint_\Gamma \big[ \partial_t (\rho_k\,\phi) + \Div(\vm_k\,\phi) \big] \dx\dt &= \int_{\partial\Gamma} \big[\rho_k\,n_t + \vm_k\cdot \vn_\vx\big]\phi \dS_{t,\vx} \ec \label{eq:temp22-convint} \\
			\iint_\Gamma \big[ \partial_t (\vm_k\cdot \vphi) + \Div(\mU_k\cdot \vphi) \big] \dx\dt &= \int_{\partial\Gamma} \big[ \vm_k\cdot \vphi\,n_t + (\mU_k\cdot \vphi)\cdot \vn_\vx\big] \dS_{t,\vx} \ed \label{eq:temp23-convint}
		\end{align}
		Together with \eqref{eq:temp18-convint}, \eqref{eq:temp19-convint} and the chain rule, we deduce from \eqref{eq:temp22-convint} and \eqref{eq:temp23-convint} that
		\begin{align} 
		 	\iint_\Gamma \big[ \rho_k\,\partial_t\phi + \vm_k \cdot \Grad\phi) \big] \dx\dt &= \int_{\partial\Gamma} \big[\rho_k\,n_t + \vm_k\cdot \vn_\vx\big]\phi \dS_{t,\vx} \ec \label{eq:temp24-convint} \\
		 	\iint_\Gamma \big[ \vm_k\cdot \partial_t\vphi + \mU_k : \Grad\vphi) \big] \dx\dt &= \int_{\partial\Gamma} \big[ \vm_k\cdot \vphi\,n_t + (\mU_k\cdot \vphi)\cdot \vn_\vx\big] \dS_{t,\vx} \ec \label{eq:temp25-convint}
		 \end{align}
		 where in addition we made use of the fact that $\mU_k$ is symmetric. Due to \eqref{eq:pfs-bc}, the right-hand sides of \eqref{eq:temp24-convint} and \eqref{eq:temp25-convint} can be simplified, which yields
		 \begin{align} 
		 	\iint_\Gamma \big[ \rho_k\,\partial_t\phi + \vm_k \cdot \Grad\phi) \big] \dx\dt &= \int_{\partial\Gamma} \big[\rho_0\,n_t + \vm_0\cdot \vn_\vx\big]\phi \dS_{t,\vx} \ec \label{eq:temp26-convint} \\
			\iint_\Gamma \big[ \vm_k\cdot \partial_t\vphi + \mU_k : \Grad\vphi) \big] \dx\dt &= \int_{\partial\Gamma} \big[ \vm_0\cdot \vphi\,n_t + (\mU_0\cdot \vphi)\cdot \vn_\vx\big] \dS_{t,\vx} \ed \label{eq:temp27-convint}
		\end{align}
		Since $(\rho_k,\vm_k,\mU_k)\mathop{\rightharpoonup}\limits^\ast (\rho,\vm,\mU)$ in $L^\infty$ as $k\to\infty$, we deduce from \eqref{eq:temp26-convint} and \eqref{eq:temp27-convint} that
		\begin{align} 
			\iint_\Gamma \big[ \rho\,\partial_t\phi + \vm \cdot \Grad\phi) \big] \dx\dt &= \int_{\partial\Gamma} \big[\rho_0\,n_t + \vm_0\cdot \vn_\vx\big]\phi \dS_{t,\vx} \ec \label{eq:temp28-convint} \\
			\iint_\Gamma \big[ \vm\cdot \partial_t\vphi + \mU : \Grad\vphi) \big] \dx\dt &= \int_{\partial\Gamma} \big[ \vm_0\cdot \vphi\,n_t + (\mU_0\cdot \vphi)\cdot \vn_\vx\big] \dS_{t,\vx} \ed \label{eq:temp29-convint}
		\end{align} 
		Note, that \eqref{eq:temp28-convint} is equivalent to \eqref{eq:sol-pde1}.
		
		Next, we're going to show that 
		\begin{equation} \label{eq:temp40-convint}
			(\rho,\vm,\mU)(t,\vx) \ \in \ \sU \qquad \text{ for a.e. }(t,\vx)\in \Gamma \ed
		\end{equation}
		This is done once we have proven that $e\big((\rho,\vm,\mU)(t,\vx)\big) \leq c$ for a.e. $(t,\vx)\in\Gamma$. We proceed analogously to \ref{item:prop-I.d}: Since $(\rho_k,\vm_k\mU_k) \mathop{\rightharpoonup}\limits^\ast (\rho,\vm,\mU)$ in $L^\infty$ and $e$ is convex and continuous (see Lemma~\ref{lemma:e-convex}), we obtain from Lemma~\ref{lemma:not-wls-convex} and the fact that $e\big((\rho_k,\vm_k,\mU_k)(t,\vx)\big) < c$ for all $(t,\vx)\in\Gamma$ and all $k\in \N$ (see \eqref{eq:pfs-subs}), that 
		\begin{align*}
			\iint_{\Gamma_0} \Big[e\big((\rho,\vm,\mU)(t,\vx)\big) - c\Big] \dx\dt &\leq \liminf\limits_{k\to \infty} \iint_{\Gamma_0} \Big[e\big((\rho_k,\vm_k,\mU_k)(t,\vx)\big) - c\Big] \dx\dt  \\
			& \leq 0
		\end{align*}
		for all open and bounded subsets $\Gamma_0\subset \Gamma$. Using Lemma~\ref{lemma:not-local-property} we conclude with $e\big((\rho,\vm,\mU)(t,\vx)\big) \leq c$ for a.e. $(t,\vx)\in\Gamma$ and hence \eqref{eq:temp40-convint}.
		 
		From \eqref{eq:temp40-convint} and \eqref{eq:sol-trace}, which we already proved above, we obtain 
		\begin{align*}
			\frac{n}{2} \lambda_{\max}\left(\frac{\vm\otimes \vm}{\rho} + p(\rho) \id - \mU\right) &= e(\rho,\vm,\mU) \\
			&\leq c \\
			&= \frac{|\vm|^2}{2\rho} + \frac{n}{2} p(\rho) \\
			&= \half \tr\left(\frac{\vm\otimes \vm}{\rho} + p(\rho) \id - \mU\right)
		\end{align*}
		a.e. on $\Gamma$. According to Lemma~\ref{lemma:not-lmax-tr} this yields 
		$$
			\frac{\vm\otimes \vm}{\rho} + p(\rho) \id - \mU = \frac{1}{n} \tr\left(\frac{\vm\otimes \vm}{\rho} + p(\rho) \id - \mU\right) \id = \frac{2c}{n} \id 
		$$
		a.e. on $\Gamma$. In other words $(\rho,\vm,\mU)$ takes values in $K$ for a.e. $(t,\vx)\in \Gamma$. Hence we can replace $\mU$ in \eqref{eq:temp29-convint} to obtain
		\begin{align} \label{eq:temp30-convint}
			&\iint_\Gamma \left[ \vm\cdot \partial_t\vphi + \left(\frac{\vm\otimes\vm}{\rho} + p(\rho) \id - \frac{2c}{n} \id\right) : \Grad\vphi) \right] \dx\dt \notag \\
			&= \int_{\partial\Gamma} \big[ \vm_0\cdot \vphi\,n_t + (\mU_0\cdot \vphi)\cdot \vn_\vx\big] \dS_{t,\vx} \ed
		\end{align}
		Another application of the Divergence Theorem (Proposition~\ref{prop:not-divergence}) shows
		$$
			\iint_\Gamma \frac{2c}{n} \id : \Grad\vphi \dx\dt = \iint_\Gamma \frac{2c}{n} \Div\vphi \dx\dt = \int_{\partial\Gamma} \frac{2c}{n} \vphi\cdot \vn_\vx \dS_{t,\vx} \ed
		$$
		Plugging this into \eqref{eq:temp30-convint}, we find
		\begin{align*}
			&\iint_\Gamma \left[ \vm\cdot \partial_t\vphi + \left(\frac{\vm\otimes\vm}{\rho} + p(\rho) \id \right) : \Grad\vphi) \right] \dx\dt \\
			&= \int_{\partial\Gamma} \left[ \vm_0\cdot \vphi\,n_t + (\mU_0\cdot \vphi)\cdot \vn_\vx + \frac{2c}{n} \vphi \cdot \vn_\vx \right] \dS_{t,\vx} 
		\end{align*}
		and hence \eqref{eq:sol-pde2}. 
	\end{enumerate}
\end{proof}

Finally we state the Perturbation Property, which we prove in Section~\ref{sec:convint-pp}.

\begin{prop}[The Perturbation Property] \label{prop:pert-prop} 
	Let $\ep,\ov{\ep}>0$ and $\Gamma_0\subset \Gamma$ open and bounded. For all $(\rho,\vm)\in X_0$,
	there exists a perturbation $(\rho_\pert,\vm_\pert)\in X_0$ with the following properties:
	\begin{align}
		d\big((\rho_\pert,\vm_\pert),(\rho,\vm)\big) &\leq \ov{\ep} \es \label{eq:pert-prop-1}\\
		I_{\Gamma_0}(\rho_\pert,\vm_\pert) &\geq - \ep \ed \label{eq:pert-prop-2} 
	\end{align} 
\end{prop}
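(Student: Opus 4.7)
The plan is to construct $(\rho_\pert,\vm_\pert)$ as $(\rho,\vm)$ plus a finite sum of localized, rapidly oscillating plane waves that perturb the associated tensor field $\mU$ along admissible directions in the wave cone $\Lambda$. Since $(\rho,\vm)\in X_0$, by Definition~\ref{defn:X0} there is an associated $\mU\in C^1(\closure{\Gamma};\szn)$ with $(\rho,\vm,\mU)(\Gamma)\subset\interior{\sU}$, and by Lemma~\ref{lemma:valuesX0bounded} the triple takes values in a fixed compact subset of $\interior{\sU}$, so it is uniformly continuous. First I would partition (a compact exhaustion of) $\Gamma_0$ into finitely many pairwise disjoint open subcubes $\{Q_j\}$ on each of which $(\rho,\vm,\mU)$ differs from a constant $(\rho_j^\ast,\vm_j^\ast,\mU_j^\ast)\in\interior{\sU}$ by less than a parameter $\delta>0$ to be chosen later.

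The key geometric step is to exhibit, on each cube, an admissible oscillation direction that provides a quantitative gain in $I_{\Gamma_0}$. Because Proposition~\ref{prop:compKLambda} gives $\sU=K^\ast$, and because in the proof of Lemma~\ref{lemma:Kast} the wave cone is shown to be complete with respect to every slice $K\cap\{\rho=\rho_j^\ast\}$, the fiber $\sU\cap\{\rho=\rho_j^\ast\}$ coincides with the ordinary convex hull of the compact ``sphere-like'' set $K\cap\{\rho=\rho_j^\ast\}=\{|\vm|^2=2c\rho_j^\ast-n\,p(\rho_j^\ast)\rho_j^\ast\}$ (together with the constitutive $\mU$). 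This lets me choose a constant-density direction $(\ov{\rho}_j,\ov{\vm}_j,\ov{\mU}_j)=(0,\ov{\vm}_j,\ov{\mU}_j)\in\Lambda$ such that both $(\rho_j^\ast,\vm_j^\ast\pm\ov{\vm}_j,\mU_j^\ast\pm\ov{\mU}_j)$ still lie in $\sU$, with the sharp lower bound $|\ov{\vm}_j|^2\geq C\,\rho_j^\ast\,|E(\rho_j^\ast,\vm_j^\ast)|$ for a universal $C>0$, obtained by expanding the sphere condition around $\vm_j^\ast$ and using that the distance of $\vm_j^\ast$ from the sphere is controlled by $-E(\rho_j^\ast,\vm_j^\ast)>0$ (cf.\ the $H_2$-configuration in Figure~\ref{fig:HN-2}). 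Now I apply Proposition~\ref{prop:operators} to $(0,\ov{\vm}_j,\ov{\mU}_j)$ and to a product $\chi_j(t,\vx)\,h_N\bigl((t,\vx)\cdot\veta_j\bigr)$, where $\chi_j\in\Cc(Q_j)$ is a cutoff, $\veta_j$ is the space-time frequency vector coming from Definition~\ref{defn:wavecone}, and $h_N(s)=N^{-3}\sin(Ns)$. By parts~\ref{item:operators.a}--\ref{item:operators.c} of Proposition~\ref{prop:operators}, the resulting triple solves the linear system \eqref{eq:euler-lin-dens}--\eqref{eq:euler-lin-mom}, is compactly supported in $Q_j$, and has vanishing density component.

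Summing over $j$ yields the perturbation; three estimates close the argument. Taking $N$ large makes the $L^\infty$ norm of the perturbation $O(N^{-1})$, which preserves membership in $\interior{\sU}$ (hence in $X_0$), preserves the boundary values, and -- by the weak convergence of fast oscillations (Lemma~\ref{lemma:not-periodic-weak-convergence}) -- makes $d\bigl((\rho_\pert,\vm_\pert),(\rho,\vm)\bigr)<\ov{\ep}$. For the functional gain, on each cube $\rho$ is unperturbed, so $E(\rho,\vm_\pert)=E(\rho,\vm)+\rho^{-1}\vm\cdot(\vm_\pert-\vm)+\frac{1}{2\rho}|\vm_\pert-\vm|^2$; averaging over the oscillation annihilates the linear term and leaves, up to $o(1)$ as $N\to\infty$, a nonnegative contribution of order $|\ov{\vm}_j|^2/\rho_j^\ast\gtrsim|E(\rho_j^\ast,\vm_j^\ast)|$ per unit space-time volume in $Q_j$. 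Combining the cubes and applying Jensen's inequality gives the single-step improvement $I_{\Gamma_0}(\rho_\pert,\vm_\pert)\geq I_{\Gamma_0}(\rho,\vm)+c\,|\Gamma_0|^{-1}\,|I_{\Gamma_0}(\rho,\vm)|^2$. Iterating this estimate finitely many times (each step further localized and at a higher frequency, so that the accumulated $d$-distance remains below $\ov{\ep}$) drives the functional above $-\ep$.

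The main obstacle will be the simultaneous balancing of three competing requirements: the amplitude $|\ov{\vm}_j|$ must realize the sharp geometric bound in terms of $|E|$ to give a quadratic-in-$I_{\Gamma_0}$ gain; the $L^\infty$ size of the perturbation must stay small enough that $(\rho_\pert,\vm_\pert,\mU_\pert)$ remains in $\interior{\sU}$ and the density stays above $r$; and the cutoffs $\chi_j$ and the variation of $(\rho,\vm,\mU)$ within each $Q_j$ produce commutator-type error terms (the third-order operators of Proposition~\ref{prop:operators} acting on $\chi_j\,h_N$ generate lower-frequency cross terms) which must be shown to vanish in the limit $N\to\infty$. I expect the delicate point to be the quantitative geometric lemma producing $\ov{\vm}_j$ with the sharp lower bound depending on $|E(\rho_j^\ast,\vm_j^\ast)|$; all subsequent estimates are standard convex-integration bookkeeping once this ingredient is in hand.
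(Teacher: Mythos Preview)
Your strategy is a legitimate alternative---essentially the single-direction, quadratic-gain scheme of \name{De~Lellis}--\name{Sz{\'e}kelyhidi} \cite[Proposition~3]{DelSze10}, followed by finitely many iterations---whereas the paper takes a different route. The paper does \emph{not} iterate: via Lemmas~\ref{lemma:existence-family}--\ref{lemma:U-oscillatory-lemma} it writes each frozen value $(\rho^\ast,\vm^\ast,\mU^\ast)$ as a barycenter of an $H_N$-configuration with endpoints in $K$ (then retreated slightly into $\interior{\sU}$ by Lemma~\ref{lemma:UstattK}), and uses mollified \emph{step}-function profiles so that the perturbed triple sits \emph{exactly} at one of the endpoints on almost all of each cube. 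Since $E=0$ on $K$, this drives $I_{\Gamma^\ast}$ above $-\ep$ in a single step (Lemma~\ref{lemma:K-oscillatory-lemma}); the paper explicitly remarks after Proposition~\ref{prop:pert-prop} that this is stronger than the DLS-style improvement you describe. Your approach buys simplicity (one $\Lambda$-direction per cube, no $H_N$-machinery); the paper's buys the sharper conclusion without iteration.

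There is, however, a genuine error in your mechanism for staying in $\interior{\sU}$. You write that ``taking $N$ large makes the $L^\infty$ norm of the perturbation $O(N^{-1})$''. This is false: the operators of Proposition~\ref{prop:operators} are \emph{third}-order homogeneous, and with $h_N(s)=N^{-3}\sin(Ns)$ one has $h_N'''(s)=-\cos(Ns)$, so by \eqref{eq:op-p2a}--\eqref{eq:op-p2c} the leading part of the oscillation equals $(\ov{\rho}_j,\ov{\vm}_j,\ov{\mU}_j)\cos\bigl(N(t,\vx)\cdot\veta_j\bigr)\,\chi_j(t,\vx)$, which has $L^\infty$ size $\sim|\ov{\vm}_j|=O(1)$. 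Only the commutator terms (derivatives hitting $\chi_j$) are $O(N^{-1})$. The actual reason the perturbed triple stays in $\interior{\sU}$ is geometric, not perturbative: the main term moves along the segment $\big[(\rho_j^\ast,\vm_j^\ast-\ov{\vm}_j,\mU_j^\ast-\ov{\mU}_j),(\rho_j^\ast,\vm_j^\ast+\ov{\vm}_j,\mU_j^\ast+\ov{\mU}_j)\big]$, which lies in $\sU$ by convexity provided both endpoints do. But to get your ``sharp lower bound'' $|\ov{\vm}_j|^2\gtrsim\rho_j^\ast|E(\rho_j^\ast,\vm_j^\ast)|$ you are pushing the endpoints to $\partial\sU$, and then the segment is only in $\closure{\sU}$, not $\interior{\sU}$, so $(\rho_\pert,\vm_\pert)\notin X_0$. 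You must retreat the endpoints by a fixed factor (as in Lemma~\ref{lemma:UstattK}), accepting a slightly worse constant in the geometric bound; this also leaves room to absorb the $O(N^{-1})$ cutoff errors and the $\delta$-variation of $(\rho,\vm,\mU)$ on $Q_j$. Once you correct this, your iterative argument goes through.
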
 

Roughly speaking, for each $(\rho,\vm)\in X_0$ there exists a perturbation $(\rho_\pert,\vm_\pert)\in X_0$ which is arbitrary close to $(\rho,\vm)$ with respect to $d$ (see \eqref{eq:pert-prop-1}) and, in addition to that, the value of $I_{\Gamma_0}(\rho_\pert,\vm_\pert)$ is almost zero (see \eqref{eq:pert-prop-2}). 

Comparing Proposition~\ref{prop:pert-prop} with the corresponding statement by \name{De~Lellis} and \name{Sz{\'e}kelyhidi} \cite[Proposition 3]{DelSze10}, one observes three differences. First of all our Perturbation Property can be viewed as a compressible version of the one by \name{De~Lellis} and \name{Sz{\'e}kelyhidi}. Second, the functionals $I_{\Gamma_0}$ are integrals over space and time, whereas in \cite{DelSze10} they are only spatial integrals and infima with respect to $t$. This helps \name{De~Lellis} and \name{Sz{\'e}kelyhidi} to achieve an analogue of property \ref{item:convint.c} in Theorem~\ref{thm:convint}, where \eqref{eq:sol-trace} holds for \emph{all} times $t$ and a.e. $\vx$. In this sense our Proposition~\ref{prop:pert-prop} is merely like \cite[Lemma 4.6]{DelSze09}. We will come back to this issue in Chapter~\ref{chap:appl-ibvp}, more precisely in Section~\ref{sec:ibvp-other}. The third difference is that we can achieve that \eqref{eq:pert-prop-2} holds for arbitrary small $\ep>0$ whereas in \cite[Proposition 3]{DelSze10} the right-hand side of the inequality which corresponds to \eqref{eq:pert-prop-2} cannot be arbitrary close to 0 but depends on $I_{\Gamma_0} (\rho,\vm)$.

\subsection{Proof of the Convex-Integration-Theorem} \label{subsec:convint-thm-proof} 

Before we prove Theorem~\ref{thm:convint}, we state and prove a corollary of the Perturbation Property.

\begin{lemma} \label{lemma:X0infinite} 
	The set $X_0$ contains infinitely many elements.
\end{lemma}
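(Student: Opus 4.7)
The plan is to iteratively apply the Perturbation Property (Proposition~\ref{prop:pert-prop}) to produce a strictly increasing sequence of values $I_{\Gamma_0}(\rho_k,\vm_k)$ for suitable $(\rho_k,\vm_k)\in X_0$, which will force infinitely many distinct elements.

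First I would observe that the assumption of Theorem~\ref{thm:convint} provides $(\rho_0,\vm_0,\mU_0)\in C^1(\closure{\Gamma};\R^+\times\R^n\times\szn)$ satisfying exactly the conditions \ref{item:pfs-pde}--\ref{item:pfs-dens-bdd} of Definition~\ref{defn:X0} (the boundary condition is trivially fulfilled because $(\rho_0,\vm_0,\mU_0)$ equals itself on $\partial\Gamma$). Hence $(\rho_0,\vm_0)\in X_0$, so $X_0$ is non-empty. Next I would fix any non-empty open and bounded subset $\Gamma_0\subset \Gamma$, for instance a small open ball chosen inside $\Gamma$, so that Lemma~\ref{lemma:prop-I}~\ref{item:prop-I.b} applies and guarantees $I_{\Gamma_0}(\rho,\vm) < 0$ for every $(\rho,\vm)\in X_0$.

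Now I would construct a sequence $\big((\rho_k,\vm_k)\big)_{k\in\N_0}\subset X_0$ inductively as follows. Having $(\rho_k,\vm_k)\in X_0$, set $\alpha_k := I_{\Gamma_0}(\rho_k,\vm_k)$, which is strictly negative by Lemma~\ref{lemma:prop-I}~\ref{item:prop-I.b}. Apply the Perturbation Property with the choices
\[
\ep := -\tfrac{\alpha_k}{2} > 0 \qquad \text{and} \qquad \ov{\ep} := 1
\]
to obtain $(\rho_{k+1},\vm_{k+1})\in X_0$ with
\[
I_{\Gamma_0}(\rho_{k+1},\vm_{k+1}) \;\geq\; -\ep \;=\; \tfrac{\alpha_k}{2} \;>\; \alpha_k \ed
\]
Thus $\alpha_{k+1} > \alpha_k$, so the sequence $(\alpha_k)_{k\in\N_0}$ is strictly increasing. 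Consequently the values $I_{\Gamma_0}(\rho_k,\vm_k)$ are pairwise distinct, which in turn implies that the pairs $(\rho_k,\vm_k)$ themselves are pairwise distinct elements of $X_0$. This exhibits infinitely many elements in $X_0$ and finishes the proof.

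The argument is essentially bookkeeping once the Perturbation Property is available; there is no real obstacle here since the strict-negativity statement of Lemma~\ref{lemma:prop-I}~\ref{item:prop-I.b} provides room to strictly increase $I_{\Gamma_0}$ at every step, and Proposition~\ref{prop:pert-prop} delivers the needed perturbation (the parameter $\ov{\ep}$ plays no role in this particular proof and is only fixed to invoke the statement). All the difficulty is hidden in Proposition~\ref{prop:pert-prop}, which is postponed to Section~\ref{sec:convint-pp}.
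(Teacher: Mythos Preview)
Your proof is correct and follows essentially the same approach as the paper: both iteratively apply the Perturbation Property to produce elements of $X_0$ whose $I_{\Gamma_0}$-values are strictly increasing (hence pairwise distinct). The only cosmetic difference is that the paper chooses $\ep$ at each step as $\tfrac{1}{2}\min_{j<k}\big(-I_{\Gamma_0}(\rho_j,\vm_j)\big)$ and argues pairwise distinctness by contradiction, while you use $\ep = -\tfrac{1}{2}I_{\Gamma_0}(\rho_k,\vm_k)$ and invoke the strict monotonicity of the resulting sequence; these amount to the same thing.
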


\begin{proof}
	First of all, note that $(\rho_0,\vm_0,\mU_0)$ satisfy the properties \ref{item:pfs-pde} - \ref{item:pfs-dens-bdd} of Definition~\ref{defn:X0} and hence $(\rho_0,\vm_0)\in X_0$. This means that $X_0\neq \emptyset$. In order to prove the claim, we construct a sequence $(\rho_k,\vm_k)_{k\in \N_0}\subset X_0$ whose members are pairwise different. Assume that we already have $(\rho_0,\vm_0),...,(\rho_{k-1},\vm_{k-1})\in X_0$ pairwise different for some $k\in \N_0$. Let us construct $(\rho_k,\vm_k)$. Choose an arbitrary non-empty, open and bounded subset $\Gamma_0\subset\Gamma$. Moreover, set 
	$$
		\ep:=\half \min_{j=0,...,k-1} \left( -I_{\Gamma_0}(\rho_j,\vm_j)\right)
	$$ 
	and choose $\ov{\ep}>0$ arbitrary. Lemma~\ref{lemma:prop-I}~\ref{item:prop-I.b} ensures that $\ep>0$. Hence the Perturbation Property (Proposition~\ref{prop:pert-prop}), guarantees existence of $(\rho_k,\vm_k):=(\rho_\pert,\vm_\pert)\in X_0$ satisfying \eqref{eq:pert-prop-1} and \eqref{eq:pert-prop-2}. Assume that there was $i\in\{0,...,k-1\}$ such that $(\rho_k,\vm_k)=(\rho_i,\vm_i)$. Then we obtain from \eqref{eq:pert-prop-2}
	$$
	- I_{\Gamma_0}(\rho_i,\vm_i) = - I_{\Gamma_0}(\rho_k,\vm_k) \leq \ep = \half \min_{j=0,...,k-1} \left( -I_{\Gamma_0}(\rho_j,\vm_j)\right) \leq - \half I_{\Gamma_0}(\rho_i,\vm_i) \ec
	$$
	which is a contradiction since $I_{\Gamma_0}(\rho_i,\vm_i)<0$, see Lemma~\ref{lemma:prop-I}~\ref{item:prop-I.b}. Therefore we conclude that $(\rho_0,\vm_0),...,(\rho_{k},\vm_{k})\in X_0$ are pairwise different.
\end{proof}

Now we are ready to prove Theorem~\ref{thm:convint}. In order to do this, we define a subset $\Xi\subset X$ for which we show two claims. First, on $\Xi$ the assumptions of Lemma~\ref{lemma:prop-I}~\ref{item:prop-I.e} hold. This is shown via contradiction using the Perturbation Property. Second, we prove that $\Xi$ contains infinitely many elements using the facts from Section~\ref{sec:not-baire} together with Lemmas~\ref{lemma:prop-I}~\ref{item:prop-I.a} and \ref{lemma:X0infinite}.

\begin{proof}[Proof of Theorem~\ref{thm:convint}] 
	Let $(\Gamma_j)_{j\in \N}$ be an exhausting sequence of open and bounded subsets of $\Gamma$. Define
	\begin{align*}
		\Xi_j &:= \left\{(\rho,\vm)\in X\,\Big|\, I_{\Gamma_j}\text{ is continuous at }(\rho,\vm)\text{ with respect to }d\right\} \ec \\
		\Xi \hphantom{_j}&:=\bigcap_{j\in\N} \Xi_j \ed 
	\end{align*}
	The proof of Theorem~\ref{thm:convint} is finished as soon as we have shown the following two claims.
	\begin{itemize}
		\item[ ] \textbf{Claim 1:} If $(\rho,\vm)\in \Xi$, then $I_{\Gamma_0}(\rho,\vm)=0$ for all open and bounded $\Gamma_0\subset \Gamma$.  
		\item[ ] \textbf{Claim 2:} The set $\Xi$ contains infinitely many elements.
	\end{itemize}
	Indeed Lemma~\ref{lemma:prop-I}~\ref{item:prop-I.e} together with Claim 1 implies that every $(\rho,\vm)\in \Xi$ is a solution in the sense of Theorem~\ref{thm:convint}, i.e. $(\rho,\vm)$ satisfies properties \ref{item:convint.a} - \ref{item:convint.c} of Theorem~\ref{thm:convint}. Claim~2 says that there are infinitely many such solutions, which proves Theorem~\ref{thm:convint}.
	
	Hence it remains to show that the two claims above are valid.
	
	\medskip
	
	\textbf{Claim 1:} Let us fix $j\in\N$ for a moment. Let us first prove that if $(\rho,\vm)\in \Xi_j$, then $I_{\Gamma_j} (\rho,\vm) = 0$. Assume that this was not true, i.e. there exists $(\rho,\vm) \in \Xi_j$ but $I_{\Gamma_j} (\rho,\vm) < 0$. In particular $(\rho,\vm)\in X$ and since $X$ is the closure of $X_0$ with respect to $d$, there exists a sequence $(\rho_k,\vm_k)_{k\in\N} \subset X_0$ with $(\rho_k,\vm_k) \mathop{\to}\limits^d (\rho,\vm)$ as $k \to \infty$. We may assume that $d\big((\rho_k,\vm_k),(\rho,\vm)\big)\leq \frac{1}{k}$ for all $k\in\N$ by considering a subsequence of $(\rho_k,\vm_k)_{k\in\N}$ if necessary. Now we apply the Perturbation Property (Proposition~\ref{prop:pert-prop}), to each $(\rho_k,\vm_k)$ with $\ep:=-\half I_{\Gamma_j} (\rho,\vm) > 0$, $\ov{\ep}:=\frac{1}{k}$ and $\Gamma_0 := \Gamma_j$. For each $k\in\N$ this proposition yields a pair $(\rho_{k,\pert},\vm_{k,\pert})\in X_0$ with 
	\begin{align}
		d\big((\rho_{k,\pert},\vm_{k,\pert}),(\rho_k,\vm_k)\big) &\leq \frac{1}{k} \ec \label{eq:temp-pp1} \\
		I_{\Gamma_j}(\rho_{k,\pert},\vm_{k,\pert}) &\geq \half I_{\Gamma_j} (\rho,\vm) \ed \label{eq:temp-pp2}
	\end{align}
	Because of  
	$$
		d\big((\rho_{k,\pert},\vm_{k,\pert}),(\rho,\vm)\big) \leq d\big((\rho_{k,\pert},\vm_{k,\pert}),(\rho_k,\vm_k)\big) + d\big((\rho_k,\vm_k),(\rho,\vm)\big) \leq \frac{2}{k} \ec
	$$
	we deduce that $(\rho_{k,\pert},\vm_{k,\pert}) \mathop{\to}\limits^d (\rho,\vm)$ as $k\to \infty$. Since $I_{\Gamma_j}$ is continuous at $(\rho,\vm)$ with respect to $d$ according to the definition of the set $\Xi_j$, we have 
	$$
		\lim_{k\to \infty} I_{\Gamma_j} (\rho_{k,\pert},\vm_{k,\pert}) = I_{\Gamma_j} (\rho,\vm) \ed
	$$
	Together with \eqref{eq:temp-pp2} this implies
	$$
		\half I_{\Gamma_j} (\rho,\vm) \leq \lim_{k\to \infty} I_{\Gamma_j} (\rho_{k,\pert},\vm_{k,\pert}) = I_{\Gamma_j} (\rho,\vm) \ec
	$$
	which contradicts $I_{\Gamma_j} (\rho,\vm) < 0$. Thus 
	\begin{equation} \label{eq:18-temp-convint}
		I_{\Gamma_j} (\rho,\vm) = 0\ed
	\end{equation}
	
	Let now $(\rho,\vm)\in \Xi$ and $\Gamma_0\subset \Gamma$ be any open and bounded subset. From Lemma~\ref{lemma:prop-I}~\ref{item:prop-I.c} we obtain $I_{\Gamma_0}(\rho,\vm)\leq 0$. Assume that $I_{\Gamma_0}(\rho,\vm)< 0$. Then there exists $j\in \N$ such that $\Gamma_0\subset \Gamma_j$ because $(\Gamma_j)_{j\in \N}$ is a exhausting sequence of $\Gamma$. With Lemma~\ref{lemma:prop-I}~\ref{item:prop-I.d} we obtain
	$$
		I_{\Gamma_j} (\rho,\vm) = I_{\Gamma_0} (\rho,\vm) + \iint_{\Gamma_j \setminus\Gamma_0} E(\rho,\vm)\dx\dt <0 \ed
	$$
	which contradicts \eqref{eq:18-temp-convint}. Hence $I_{\Gamma_0}(\rho,\vm)= 0$ for all open and bounded $\Gamma_0\subset \Gamma$. 
	
	\medskip
	
	\textbf{Claim 2:} According to Lemma~\ref{lemma:prop-I}~\ref{item:prop-I.a} the functionals $I_{\Gamma_j}$ are lower semi-continuous with respect to $d$ for each $j\in \N$.
	Furthermore the $I_{\Gamma_j}$ take values in a bounded interval of $\R$. Indeed the upper bound is given by 0 according to Lemma~\ref{lemma:prop-I}~\ref{item:prop-I.c} whereas the lower bound is trivially given by $-c |\Gamma_j|$. In other words $I_{\Gamma_j}$ takes values in the bounded interval $[-c|\Gamma_j|, 0]$. Hence Proposition~\ref{prop:lsc-residual} says, that the sets $\Xi_j$ are residual in $X$. As $\Xi$ is an intersection of countably many residual sets, it is residual itself, see Proposition~\ref{prop:intersection-residual}. 
	Because $\Xi$ is residual in $X$ and $(X,d)$ is a complete metric space, the Baire Category Theorem (Proposition~\ref{prop:Baire}) says that $\Xi$ is dense in $X$. Assume $\Xi$ was finite. Since finite sets are closed (with respect to any metric), this and the density of $\Xi$ imply $\Xi=X$, and hence $X$ is finite. Since $X_0\subset X$, we deduce from Lemma~\ref{lemma:X0infinite} that $X$ contains infinitely many elements, a contradiction. Therefore $\Xi$ contains infinitely many elements. 
\end{proof}

\section{Proof of the Perturbation Property} \label{sec:convint-pp} 

We prove the Perturbation Property in Subsection~\ref{subsec:convint-pp-proof}. The key ingredient of this proof is Lemma~\ref{lemma:K-oscillatory-lemma} below. For the proof of Lemma~\ref{lemma:K-oscillatory-lemma} we need three further lemmas, see Subsection~\ref{subsec:convint-pp-lemmas}.

\subsection{Lemmas for the Proof} \label{subsec:convint-pp-lemmas} 

Let in this subsection $R>r$. See the remark below Lemma~\ref{lemma:K-oscillatory-lemma} for details why we consider $R$ instead of $r$ here. 

To start with the proof of the Perturbation Property we divide the space-time domain $\Gamma_0$ into small pieces, in each of which $(\rho,\vm,\mU)$ is approximated by a constant. We next state and prove several lemmas which look at one of those pieces denoted by $\Gamma^\ast$. The constant which approximates $(\rho,\vm,\mU)$ in $\Gamma^\ast$ is called $(\rho^\ast,\vm^\ast,\mU^\ast)$. We would like to prove the following lemma, which will be the building block in the proof of the Perturbation Property, see Subsection~\ref{subsec:convint-pp-proof}. Later on in the proof of the Perturbation Property, the pieces $\Gamma^\ast$ will be cubes. However for the time being, we assume $\Gamma^\ast$ to be a general open and bounded space-time domain (not necessarily a cube).

\begin{lemma} \label{lemma:K-oscillatory-lemma} 
	For all $\gamma,C>0$ there exists\footnote{Note that $\beta$ may (and will) also depend on $r$, $c$ and $M$.} 
	$\beta=\beta(\gamma,C)>0$ such that the following holds: Let $\Gamma^\ast \subset \R^{1+n}$ be an open and bounded space-time domain and $\ep>0$ such that $\frac{\ep}{|\Gamma^\ast|}= C$. Let furthermore $(\rho^\ast,\vm^\ast,\mU^\ast)\in \interior{\sU}$ with $\rho^\ast>R$ and $e(\rho^\ast,\vm^\ast,\mU^\ast)\leq c-\gamma$. Then there exists a sequence of oscillations 
	\begin{equation} \label{eq:K-oscillations}
		(\til{\rho}_k,\til{\vm}_k,\til{\mU}_k)_{k\in\N} \subset \Cc \big(\Gamma^\ast;\R\times \R^n\times \szn\big)
	\end{equation}
	with the following properties: 
	\begin{enumerate}
		\item \label{item:K-pp.a} The sequence $(\til{\rho}_k,\til{\vm}_k)_{k\in\N}$ converges to $(0,\vz)$ with respect to $d$, i.e. 
		\begin{equation} \label{eq:K-pp-conv}
			(\til{\rho}_k,\til{\vm}_k) \mathop{\to}\limits^d (0,\vz)\qquad \text{ as }k\to \infty\ed
		\end{equation}
	\end{enumerate}
	For each fixed $k\in \N$ the following statements hold:
	\begin{enumerate} \setcounter{enumi}{1}		
		\item \label{item:K-pp.b} The PDEs 
		\begin{align}
			\partial_t \til{\rho}_k + \Div \til{\vm}_k &= 0 \ec \label{eq:K-pp-pde1}\\
			\partial_t \til{\vm}_k + \Div \til{\mU}_k &= \vz \label{eq:K-pp-pde2}
		\end{align}
		hold pointwise for all $(t,\vx)\in \Gamma^\ast$;
		\item \label{item:K-pp.c} The sum of $(\rho^\ast,\vm^\ast,\mU^\ast)$ and the oscillations still takes values in $\interior{\sU}$, more precisely
		\begin{equation}  \label{eq:K-pp-subs}
			e\Big((\rho^\ast,\vm^\ast,\mU^\ast) + (\til{\rho}_k,\til{\vm}_k,\til{\mU}_k)(t,\vx)\Big) \leq c - \beta \qquad \text{ for all }(t,\vx)\in \Gamma^\ast \es 
		\end{equation} 
		\item \label{item:K-pp.d} The density is bounded from below, i.e. 
		\begin{equation} \label{eq:K-pp-dens-bdd} 
			\rho^\ast + \til{\rho}_k(t,\vx) > R \qquad \text{ for all }(t,\vx)\in \Gamma^\ast\es
		\end{equation}
		\item \label{item:K-pp.e} The value of the functional $I_{\Gamma^\ast}$ is close to 0, i.e. 
		\begin{equation} \label{eq:K-pp-impr}
			I_{\Gamma^\ast}(\rho^\ast + \til{\rho}_k,\vm^\ast + \til{\vm}_k) > -\ep \ed 
		\end{equation}
	\end{enumerate} 
\end{lemma}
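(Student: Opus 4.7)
The idea is to construct $(\til\rho_k,\til\vm_k,\til\mU_k)$ as compactly supported, highly oscillatory localized plane waves built from the differential operators of Proposition~\ref{prop:operators}. These operators automatically yield solutions of the linear PDEs \eqref{eq:K-pp-pde1}--\eqref{eq:K-pp-pde2}, so property \ref{item:K-pp.b} will be for free. The oscillation directions will come from the $H_N$-structure of $\sU=K^\Lambda$ (Proposition~\ref{prop:KLambda=U}), which supplies admissible vectors in the wave cone $\Lambda$. The cutoff will be spatial, making the perturbations compactly supported in $\Gamma^\ast$, and the high frequency will produce both weak convergence to $0$ (property \ref{item:K-pp.a}) and the improvement in $I_{\Gamma^\ast}$ (property \ref{item:K-pp.e}).

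\textbf{Step 1 (Geometric setup).} Since $(\rho^\ast,\vm^\ast,\mU^\ast)\in\interior\sU=K^\Lambda$, Proposition~\ref{prop:KLambda=U} together with \eqref{eq:U} gives a family $\{(\tau_i,\vp_i)\}_{i=1,\ldots,N}$ satisfying the $H_N$-condition with $\vp_i\in K$ and barycenter $(\rho^\ast,\vm^\ast,\mU^\ast)$. Unrolling the $H_N$-recursion yields a binary tree structure producing $N-1$ oscillation directions $(\ov\rho^{(j)},\ov\vm^{(j)},\ov\mU^{(j)})\in\Lambda$, each accompanied by some $\veta^{(j)}\in\R^{1+n}\setminus\{\vz\}$ satisfying \eqref{eq:op-kernel}. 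We choose the $\veta^{(j)}$ in ``generic'' directions so that the corresponding plane waves decouple at leading order. To each such direction we apply Proposition~\ref{prop:operators}, obtaining third-order homogeneous operators $\opL^{(j)}_\rho$, $\opL^{(j)}_\vm$, $\opL^{(j)}_\mU$.

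\textbf{Step 2 (Construction of the oscillations).} Fix a cutoff $\phi\in\Cc(\Gamma^\ast)$ with $0\leq\phi\leq 1$ and $\phi\equiv 1$ on a subset $\Gamma^\ast_\phi\subset\Gamma^\ast$ with $|\Gamma^\ast\setminus\Gamma^\ast_\phi|\leq\delta|\Gamma^\ast|$, where $\delta=\delta(\gamma,C)>0$ is small and chosen later. Choose smooth profiles $\chi^{(j)}\in C^\infty(\R)$ whose third derivatives $(\chi^{(j)})'''$ approximate signed step functions of appropriate amplitude $a_j>0$; the amplitudes $a_j$ and step-fractions are dictated by the barycentric weights $\tau_i$ through the tree of Step~1. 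For $k\in\N$ set
\begin{equation*}
g^{(j)}_k(t,\vx):=\frac{1}{k^3}\,\phi(t,\vx)\,\chi^{(j)}\bigl(k\,\veta^{(j)}\cdot(t,\vx)\bigr),
\end{equation*}
and define $(\til\rho_k,\til\vm_k,\til\mU_k):=\sum_{j=1}^{N-1}\bigl(\opL^{(j)}_\rho[g^{(j)}_k],\opL^{(j)}_\vm[g^{(j)}_k],\opL^{(j)}_\mU[g^{(j)}_k]\bigr)$. By homogeneity of the operators and the product rule, each summand splits into a principal term $\phi(t,\vx)\,(\chi^{(j)})'''(k\,\veta^{(j)}\cdot(t,\vx))\,(\ov\rho^{(j)},\ov\vm^{(j)},\ov\mU^{(j)})$ (by Proposition~\ref{prop:operators}~\ref{item:operators.b}) plus remainders of order $O(1/k)$ uniformly in $(t,\vx)$. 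Compact support in $\Gamma^\ast$ is inherited from $\phi$. Property \ref{item:K-pp.b} is immediate from Proposition~\ref{prop:operators}~\ref{item:operators.a}, and property \ref{item:K-pp.a} follows from the $L^\infty$ bound on $(\til\rho_k,\til\vm_k)$ together with Lemma~\ref{lemma:not-periodic-weak-convergence} applied to the oscillatory principal parts, after observing that the $O(1/k)$ terms converge uniformly to zero.

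\textbf{Step 3 (Quantitative estimates and main obstacle).} Properties \ref{item:K-pp.c}--\ref{item:K-pp.e} are the heart of the lemma. For \ref{item:K-pp.c}--\ref{item:K-pp.d} one uses that, by construction, the principal part of $(\rho^\ast,\vm^\ast,\mU^\ast)+(\til\rho_k,\til\vm_k,\til\mU_k)$ lies a.e.\ on segments whose endpoints are arbitrarily close to the points $\vp_i\in K\subset\closure\sU$ in the tree, with the ``middle'' portions being convex combinations that remain strictly inside $\sU$; thanks to the assumption $e(\rho^\ast,\vm^\ast,\mU^\ast)\leq c-\gamma$ and the strict density bound $\rho^\ast>R$, the principal part takes values in a compact subset $K_0\subsetneq\interior\sU\cap\{\rho>R\}$, yielding a uniform margin $e\leq c-2\beta_0$ and $\rho>R+\text{const}$ on $K_0$ for some $\beta_0(\gamma,C)>0$; choosing $k$ large absorbs the $O(1/k)$ remainders and gives \ref{item:K-pp.c} with $\beta=\beta_0$ and \ref{item:K-pp.d}. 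For \ref{item:K-pp.e} one computes the limit of $I_{\Gamma^\ast}(\rho^\ast+\til\rho_k,\vm^\ast+\til\vm_k)$ as $k\to\infty$: because the $(\chi^{(j)})'''$ are chosen to approximate step functions, the values $(\rho^\ast+\til\rho_k,\vm^\ast+\til\vm_k)$ concentrate on $\Gamma^\ast_\phi$ near the $(\rho_i,\vm_i)$ with frequency $\tau_i$, and since $\vp_i\in K$ implies $E(\rho_i,\vm_i)=0$, the contribution from $\Gamma^\ast_\phi$ tends to $0$; the contribution from $\Gamma^\ast\setminus\Gamma^\ast_\phi$ is bounded in absolute value by $c\cdot\delta|\Gamma^\ast|$, which is made smaller than $\tfrac{C}{2}|\Gamma^\ast|$ by choosing $\delta<C/(2c)$. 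The hard part is to organize (i) the profile approximation of step functions, (ii) the amplitude balance (large enough to drive $E$ close to $0$ on average, small enough to stay inside $\interior\sU$ with uniform margin $\beta$), and (iii) the handling of the $N-1$ nested plane waves without losing quantitative control; this is precisely where the tree structure from the $H_N$-condition and the convexity of $e$ (Lemma~\ref{lemma:e-convex}) interact to give a $\beta=\beta(\gamma,C)$ depending only on the given constants.
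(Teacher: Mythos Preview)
Your outline has the right ingredients (the $H_N$-tree, the operators of Proposition~\ref{prop:operators}, a cutoff, high frequency), but it contains a structural gap and an unresolved tension that the paper handles explicitly.

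\textbf{The tension between \ref{item:K-pp.c} and \ref{item:K-pp.e}.} You claim simultaneously that the principal part lies in a compact $K_0\subset\interior\sU$ with $e\le c-2\beta_0$, \emph{and} that the values ``concentrate near $(\rho_i,\vm_i)$'' with $E(\rho_i,\vm_i)=0$ because $\vp_i\in K$. These are in conflict: $\vp_i\in K$ forces $e(\vp_i)=c$, so being close to $\vp_i$ forces $e$ close to $c$, destroying any uniform $\beta$. The paper resolves this by \emph{not} going all the way to $K$: it replaces the endpoints $\vp_i$ by $\hat\vp_i:=\tau\vp^\ast+(1-\tau)\vp_i\in\interior\sU$ (Lemma~\ref{lemma:UstattK}) with a specific $\tau\in(0,1)$ depending only on $C$ and the a~priori bounds $r,c,M$ (see \eqref{eq:definition-tau}). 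This gives $e(\hat\vp_i)\le c-\tau\gamma$, hence $\beta=\tau\gamma/2$, while a convexity argument on $E$ along the segment $[\vp^\ast,\vp_i]$ yields the quantitative lower bound $E(\hat\rho_i,\hat\vm_i)\ge -C/2$ needed for \ref{item:K-pp.e}. You acknowledge this balance as ``the hard part'' in your closing sentence but never produce the mechanism; without it there is no $\beta=\beta(\gamma,C)$.

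\textbf{The sum construction does not stay in $\sU$.} You propose $(\til\rho_k,\til\vm_k,\til\mU_k)=\sum_{j=1}^{N-1}\opL^{(j)}[g^{(j)}_k]$, a \emph{sum} of plane waves in the $N-1$ tree directions. Already for $N=3$ (Figure~\ref{fig:HN-3}) this fails: with directions $\ov\vp^{(1)}=\vp_2-\vp_1$ and $\ov\vp^{(2)}=\vp_4-\vp_3$, a generic value $\vp^\ast+c_1\ov\vp^{(1)}+c_2\ov\vp^{(2)}$ with $c_1$ chosen so that the first oscillation sits at $\vp_1$ and $c_2\neq 0$ lands at $\vp_1+c_2(\vp_4-\vp_3)$, which need not lie on any segment of the tree, nor in $\sU$. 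The paper avoids this via a \emph{nested} construction (Lemma~\ref{lemma:U-oscillatory-lemma}, proved by induction on $N$): the second oscillation is applied only on the subregion $\Gamma_a$ where the first oscillation has already placed the state at $\vp_a=\vp_2$, so the superposition is always a point on a single segment of the tree. Your phrase ``generic directions so that the plane waves decouple at leading order'' does not address this.
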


\begin{rem}
	As already mentioned, in the proof of Propostion \ref{prop:pert-prop} we divide the space-time domain $\Gamma_0$ into small pieces and approximate $(\rho,\vm,\mU)$ in each of this pieces by a constant. In Lemma~\ref{lemma:K-oscillatory-lemma} $\Gamma^\ast$ is such a piece and $(\rho^\ast,\vm^\ast,\mU^\ast)$ is the corresponding constant. Since $(\rho^\ast,\vm^\ast,\mU^\ast)$ is merely an approximation of $(\rho,\vm,\mU)$, we need not only that the left-hand side of \eqref{eq:K-pp-subs} is $<c$, but it must be bounded away from $c$ where this bound must not depend on the piece under consideration. This the reason why we have to deal with $\beta$ in \eqref{eq:K-pp-subs}, where $\beta$ does not depend on $\Gamma^\ast$. For the same reason we work with $R$ instead of $r$ in \eqref{eq:K-pp-dens-bdd}. We refer to Subsection~\ref{subsec:convint-pp-proof} for more details.
\end{rem}

We prove Lemma~\ref{lemma:K-oscillatory-lemma} after we have considered the following three lemmas.

As a first step in the proof of Lemma~\ref{lemma:K-oscillatory-lemma}, we have to find a family 
$$
\big\{\big(\tau_i,(\rho_i,\vm_i,\mU_i)\big)\big\}_{i=1,...,N}
$$ 
with some properties:

\begin{lemma} \label{lemma:existence-family}
	Let $(\rho^\ast,\vm^\ast,\mU^\ast)\in \interior{\sU}$ with $\rho^\ast>R$. Then there exist $N\in\N$ with $N\geq 2$ and $\big(\tau_i,(\rho_i,\vm_i,\mU_i)\big)\in \R^+ \times K$ for $i=1,...,N$ such that 
	\begin{itemize}
		\item $\rho_i>R$ for all $i=1,...,N$,
		\item the family $\big\{\big(\tau_i,(\rho_i,\vm_i,\mU_i)\big)\big\}_{i=1,...,N}$ satisfies the $H_N$-condition and
		\item $(\rho^\ast,\vm^\ast,\mU^\ast) = \sum_{i=1}^N \tau_i (\rho_i,\vm_i,\mU_i) $.
	\end{itemize}
\end{lemma}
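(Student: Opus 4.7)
The overall plan is to exploit the slice structure of $K^\Lambda$ so that all densities $\rho_i$ in the desired family are equal to $\rho^\ast$, making the bound $\rho_i > R$ automatic. First I would note that since $(\rho^\ast,\vm^\ast,\mU^\ast) \in \interior{\sU}$, the characterization \eqref{eq:interiorU} together with Lemma~\ref{lemma:Kast} gives $e(\rho^\ast,\vm^\ast,\mU^\ast) < c$; in view of \eqref{eq:boundaryU} this also means $(\rho^\ast,\vm^\ast,\mU^\ast) \notin K$, a fact I would keep in reserve to rule out the degenerate case $N=1$ at the end.

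Next I would restrict to the horizontal slice at height $\rho^\ast$. By Proposition~\ref{prop:compKLambda} we have $K^\ast = K^\Lambda$, so Proposition~\ref{prop:Kstar}~\ref{item:Kstar.b} applied with $\alpha = \rho^\ast$ yields
$$
(K \cap \{\rho = \rho^\ast\})^\Lambda = K^\Lambda \cap \{\rho = \rho^\ast\}.
$$
Since $(\rho^\ast,\vm^\ast,\mU^\ast) \in \sU = K^\Lambda$ and has first component $\rho^\ast$, it belongs to the right-hand side, hence to $(K \cap \{\rho = \rho^\ast\})^\Lambda$. I would then apply Proposition~\ref{prop:KLambda=U} (to the set $K \cap \{\rho = \rho^\ast\}$ in place of $K$) and read off from the defining condition \eqref{eq:U} an integer $N \in \N$ and pairs $(\tau_i,(\rho_i,\vm_i,\mU_i)) \in \R^+ \times (K \cap \{\rho = \rho^\ast\})$ satisfying the $H_N$-condition with barycenter $(\rho^\ast,\vm^\ast,\mU^\ast)$. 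By construction $\rho_i = \rho^\ast > R$ for every $i$, so the density lower bound comes for free.

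The only remaining step is to ensure that $N$ can be taken $\geq 2$. If $N=1$ were forced, the $H_1$-condition would require $\tau_1 = 1$ and $(\rho_1,\vm_1,\mU_1) = (\rho^\ast,\vm^\ast,\mU^\ast) \in K$, contradicting the first paragraph; hence some $N \geq 2$ must work. The main point of the argument, and where a naive attempt would stumble, is the passage to the slice $\{\rho = \rho^\ast\}$: without it one would have to first represent $(\rho^\ast,\vm^\ast,\mU^\ast)$ as a $\Lambda$-convex combination of arbitrary elements of $K$ and then modify the representation so that each density exceeds $R$. That modification is exactly what Proposition~\ref{prop:Kstar}~\ref{item:Kstar.b} --- itself relying on the non-trivial identity $K^\ast = K^\Lambda$ from Proposition~\ref{prop:compKLambda} --- sidesteps by giving us a representation inside the slice from the outset.
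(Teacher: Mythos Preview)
Your proof is correct and follows essentially the same route as the paper's: both use Proposition~\ref{prop:compKLambda} together with Proposition~\ref{prop:Kstar}~\ref{item:Kstar.b} to pass to the slice $\{\rho=\rho^\ast\}$, then invoke Proposition~\ref{prop:KLambda=U} (applied to $K\cap\{\rho=\rho^\ast\}$) to obtain an $H_N$-family with all $\rho_i=\rho^\ast>R$, and rule out $N=1$ via \eqref{eq:boundaryU}. The only difference is ordering---the paper first pulls a generic $H_N$-family from $\sU$ and afterwards argues it can be chosen inside the slice, whereas you go to the slice from the outset---but the substance is identical.
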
 

Keeping the definition of $\sU$ \eqref{eq:U} in mind, the proof is quite easy.

\begin{proof}
	Since $(\rho^\ast,\vm^\ast,\mU^\ast)\in \interior{\sU}\subset \sU$ there exist $N\in\N$ and $\big(\tau_i,(\rho_i,\vm_i,\mU_i)\big)\in \R^+ \times K$ for $i=1,...,N$ such that 
	\begin{itemize}
		\item the family $\big\{\big(\tau_i,(\rho_i,\vm_i,\mU_i)\big)\big\}_{i=1,...,N}$ satisfies the $H_N$-condition and
		\item $(\rho^\ast,\vm^\ast,\mU^\ast) = \sum_{i=1}^N \tau_i (\rho_i,\vm_i,\mU_i) $,
	\end{itemize}
	according to the definition of $\sU$, see \eqref{eq:U}.	Assume that $N=1$, i.e. 
	$$
		(\rho^\ast,\vm^\ast,\mU^\ast) = (\rho_1,\vm_1,\mU_1)\ \in\ K\ed
	$$ 
	But according to \eqref{eq:boundaryU} this implies $e(\rho^\ast,\vm^\ast,\mU^\ast)=c$ which contradicts $(\rho^\ast,\vm^\ast,\mU^\ast) \in \interior{\sU}$. Hence $N\geq 2$. 
	
	It remains to show that we can achieve that $\rho_i>R$ for all $i=1,...,N$. From Propositions~\ref{prop:compKLambda} and \ref{prop:Kstar}~\ref{item:Kstar.b} we deduce that $(\rho^\ast,\vm^\ast,\mU^\ast)\in (K\cap\{\rho=\rho^\ast\})^\Lambda$. This implies together with Proposition~\ref{prop:KLambda=U} that the $(\rho_i,\vm_i,\mU_i)$ can be chosen to lie in $K\cap\{\rho=\rho^\ast\}$. Hence $\rho_i=\rho^\ast>R$ for all $i=1,...,N$.
\end{proof} 

The fact that the $(\rho_i,\vm_i,\mU_i)$ given by Lemma~\ref{lemma:existence-family} lie in $K$ rather than $\interior{\sU}$ is an obstacle. However the $(\rho_i,\vm_i,\mU_i)$ can be slightly perturbed to obtain $(\hat{\rho}_i,\hat{\vm}_i,\hat{\mU}_i)\in \interior{\sU}$ with similar properties. This is the content of the following lemma.

\begin{lemma} \label{lemma:UstattK} 
	Let $(\rho^\ast,\vm^\ast,\mU^\ast)\in \interior{\sU}$ with $\rho^\ast>R$ and furthermore $N\in\N$ with $N\geq 2$, $\big(\tau_i,(\rho_i,\vm_i,\mU_i)\big)\in \R^+ \times K$ for $i=1,...,N$ such that 
	\begin{itemize}
		\item $\rho_i>R$ for all $i=1,...,N$,
		\item the family $\big\{\big(\tau_i,(\rho_i,\vm_i,\mU_i)\big)\big\}_{i=1,...,N}$ satisfies the $H_N$-condition and
		\item $(\rho^\ast,\vm^\ast,\mU^\ast) = \sum_{i=1}^N \tau_i (\rho_i,\vm_i,\mU_i) $.
	\end{itemize}
	Define a family $\big\{(\hat{\rho}_i,\hat{\vm}_i,\hat{\mU}_i)\big\}_ {i=1,...,N}$ by 
	\begin{equation} \label{eq:defn-hat}
		(\hat{\rho}_i,\hat{\vm}_i,\hat{\mU}_i) := \tau (\rho^\ast,\vm^\ast,\mU^\ast) + (1-\tau) (\rho_i,\vm_i,\mU_i) 
	\end{equation}
	with fixed $\tau\in (0,1)$. Then the following statements hold:
	\begin{enumerate}
		\item $\hat{\rho}_i>R$ for all $i=1,...,N$;
		\item $(\hat{\rho}_i,\hat{\vm}_i,\hat{\mU}_i)\in \interior{\sU}$ for all $i=1,...,N$;
		\item The family $\big\{\big(\tau_i,(\hat{\rho}_i,\hat{\vm}_i,\hat{\mU}_i)\big)\big\}_{i=1,...,N}$ satisfies the $H_N$-condition;
		\item $(\rho^\ast,\vm^\ast,\mU^\ast) = \sum_{i=1}^N \tau_i (\hat{\rho}_i,\hat{\vm}_i,\hat{\mU}_i)$.
	\end{enumerate}
\end{lemma}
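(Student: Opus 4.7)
My plan is to verify the four claims sequentially; (a) and (d) are essentially immediate, (b) uses the convexity of $e$, and (c) is the substantive step, proved by induction on $N$. For (a), since $\hat{\rho}_i = \tau\rho^\ast + (1-\tau)\rho_i$ is a convex combination of two numbers both exceeding $R$, one gets $\hat{\rho}_i > R$ directly. For (d), I would compute, using Lemma~\ref{lemma:hn1}\ref{item:hn1.a} to obtain $\sum_i \tau_i = 1$,
\[
\sum_{i=1}^N \tau_i (\hat{\rho}_i,\hat{\vm}_i,\hat{\mU}_i) = \tau (\rho^\ast,\vm^\ast,\mU^\ast) + (1-\tau)\sum_{i=1}^N \tau_i (\rho_i,\vm_i,\mU_i) = (\rho^\ast,\vm^\ast,\mU^\ast),
\]
since the remaining sum is the barycenter of the original family, which equals $(\rho^\ast,\vm^\ast,\mU^\ast)$ by hypothesis.

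For (b), I would invoke the convexity of $e$ from Lemma~\ref{lemma:e-convex}, yielding
\[
e(\hat{\rho}_i,\hat{\vm}_i,\hat{\mU}_i) \leq \tau\, e(\rho^\ast,\vm^\ast,\mU^\ast) + (1-\tau)\, e(\rho_i,\vm_i,\mU_i).
\]
By \eqref{eq:interiorU} we have $e(\rho^\ast,\vm^\ast,\mU^\ast) < c$, and by \eqref{eq:boundaryU} we have $e(\rho_i,\vm_i,\mU_i) = c$. Hence the right-hand side is strictly less than $c$ for every $\tau \in (0,1)$, so $(\hat{\rho}_i,\hat{\vm}_i,\hat{\mU}_i) \in \interior{\sU}$, again by \eqref{eq:interiorU}.

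The heart of the lemma is (c). I would prove the following slightly more abstract assertion by induction on $N$: for any $\vp^\ast \in \R^M$ and any $\tau \in (0,1)$, if a family $\{(\tau_i,\vp_i)\}_{i=1,\dots,N}$ satisfies the $H_N$-condition, then so does $\{(\tau_i,\tau\vp^\ast + (1-\tau)\vp_i)\}_{i=1,\dots,N}$. The base case $N=1$ is trivial. For $N\geq 2$, after the relabeling that gives $\vp_2 - \vp_1 \in \Lambda$, one observes $\hat{\vp}_2 - \hat{\vp}_1 = (1-\tau)(\vp_2 - \vp_1) \in \Lambda$ because $\Lambda$ is a cone (Definition~\ref{defn:cone}). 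The key algebraic identity is
\[
\frac{\tau_1}{\tau_1+\tau_2}\hat{\vp}_1 + \frac{\tau_2}{\tau_1+\tau_2}\hat{\vp}_2 = \tau\vp^\ast + (1-\tau)\left(\frac{\tau_1}{\tau_1+\tau_2}\vp_1 + \frac{\tau_2}{\tau_1+\tau_2}\vp_2\right),
\]
which says that the merging operation of Definition~\ref{defn:hn} commutes with the affine transformation $\vp \mapsto \tau\vp^\ast + (1-\tau)\vp$. Thus the reduced family \eqref{eq:defn-hn-iteration} for the hatted family is precisely the affine transform of the reduced family for the original family, and the induction hypothesis applies.

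The only potential difficulty I foresee is arranging the induction so that the reference point $\vp^\ast$ and the parameter $\tau$ remain the same across all recursive calls; this is automatic once the commutativity identity above is spotted, because both sides of the identity involve the same $\vp^\ast$ and $\tau$. No use is made of the assumption $\rho_i > R$ or of any special feature of $K$ or $\sU$ beyond $\Lambda$ being a cone, which makes the argument clean and robust.
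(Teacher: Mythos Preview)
Your proposal is correct and follows essentially the same approach as the paper: parts (a), (b), (d) are handled identically, and for (c) both you and the paper use induction, showing that differences of hatted elements lie in $\Lambda$ via the cone property and that the merge operation of Definition~\ref{defn:hn} commutes with the affine map $\vp \mapsto \tau\vp^\ast + (1-\tau)\vp$. The only cosmetic differences are that you state the inductive claim abstractly (for any $\vp^\ast$ and any cone $\Lambda$) and start the induction at $N=1$, whereas the paper keeps the concrete setup and starts at $N=2$; your framing is slightly cleaner but the content is the same.
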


The proof is straightforward.

\begin{proof}
	\begin{enumerate}
		\item For $i=1,...,N$ we easily see, that 
		$$
			\hat{\rho}_i = \tau \rho^\ast + (1-\tau) \rho_i > \tau R + (1-\tau) R = R\ed
		$$
		\item The fact that $(\hat{\rho}_i,\hat{\vm}_i,\hat{\mU}_i)\in \interior{\sU}$ for all $i=1,...,N$ follows from the convexity of $e$, see Lemma~\ref{lemma:e-convex}. Indeed we find
		$$
			e(\hat{\rho}_i,\hat{\vm}_i,\hat{\mU}_i) \leq \tau e(\rho^\ast,\vm^\ast,\mU^\ast) + (1-\tau) e(\rho_i,\vm_i,\mU_i) < c \ec
		$$
		because $(\rho^\ast,\vm^\ast,\mU^\ast)\in\interior{\sU}$ and $(\rho_i,\vm_i,\mU_i)\in K$, which mean that $e(\rho^\ast,\vm^\ast,\mU^\ast)<c$ and $e(\rho_i,\vm_i,\mU_i)=c$ respectively.
		
		\item We proceed by induction over $N\geq 2$. To begin with, let $N=2$. Since the family 
		$$
			\Big\{\big(\tau_1,(\rho_1,\vm_1,\mU_1)\big),\big(\tau_2,(\rho_2,\vm_2,\mU_2)\big)\Big\}
		$$	
		satisfies the $H_2$-condition, we have $(\rho_2,\vm_2,\mU_2)-(\rho_1,\vm_1,\mU_1)\in \Lambda$ and $\tau_1+\tau_2=1$. The former implies
		\begin{align}
			&(\hat{\rho}_2,\hat{\vm}_2,\hat{\mU}_2)-(\hat{\rho}_1,\hat{\vm}_1,\hat{\mU}_1) \notag \\
			&= \tau (\rho^\ast,\vm^\ast,\mU^\ast) + (1-\tau) (\rho_2,\vm_2,\mU_2) - \tau (\rho^\ast,\vm^\ast,\mU^\ast) - (1-\tau) (\rho_1,\vm_1,\mU_1) \notag \\
			&= (1-\tau) \Big( (\rho_2,\vm_2,\mU_2)-(\rho_1,\vm_1,\mU_1) \Big) \ \in\  \Lambda\ed \label{eq:20-temp-convint}
		\end{align}
		Together with $\tau_1+\tau_2=1$ this means that $\big\{\big(\tau_1,(\hat{\rho}_1,\hat{\vm}_1,\hat{\mU}_1)\big),\big(\tau_2,(\hat{\rho}_2,\hat{\vm}_2,\hat{\mU}_2)\big)\big\}$ satisfy the $H_2$-condition as well.
		
		If $N>2$, the fact that $\big\{\big(\tau_i,(\rho_i,\vm_i,\mU_i)\big)\big\}_{i=1,...,N}$ satisfies the $H_N$-condition, implies (after relabeling if necessary) that $(\rho_2,\vm_2,\mU_2)-(\rho_1,\vm_1,\mU_1)\in \Lambda$ and
		$$
			\left\{ \left( \tau_1 + \tau_2 , \frac{\tau_1}{\tau_1 + \tau_2} (\rho_1,\vm_1,\mU_1) + \frac{\tau_2}{\tau_1 + \tau_2} (\rho_2,\vm_2,\mU_2)\right) \right\} \cup \big\{\big(\tau_i,(\rho_i,\vm_i,\mU_i)\big)\big\}_{i=3,...,N}
		$$
		satisfies the $H_{N-1}$-condition. Note first that $(\hat{\rho}_2,\hat{\vm}_2,\hat{\mU}_2)-(\hat{\rho}_1,\hat{\vm}_1,\hat{\mU}_1)\in \Lambda$ which can be shown exactly as in \eqref{eq:20-temp-convint}. Second, the induction hypothesis says that 
		\begin{align}
			&\left\{ \left( \tau_1 + \tau_2 , \tau (\rho^\ast,\vm^\ast,\mU^\ast) + (1-\tau) \frac{\tau_1}{\tau_1 + \tau_2} (\rho_1,\vm_1,\mU_1) + (1-\tau) \frac{\tau_2}{\tau_1 + \tau_2} (\rho_2,\vm_2,\mU_2) \right) \right\} \notag \\
			&\qquad\qquad \cup \big\{\big(\tau_i,(\hat{\rho}_i,\hat{\vm}_i,\hat{\mU}_i)\big)\big\}_{i=3,...,N} \notag \\
			&=\left\{ \left( \tau_1 + \tau_2 , \frac{\tau_1}{\tau_1 + \tau_2} (\hat{\rho}_1,\hat{\vm}_1,\hat{\mU}_1) + \frac{\tau_2}{\tau_1 + \tau_2} (\hat{\rho}_2,\hat{\vm}_2,\hat{\mU}_2)\right) \right\} \cup \big\{\big(\tau_i,(\hat{\rho}_i,\hat{\vm}_i,\hat{\mU}_i)\big)\big\}_{i=3,...,N} \label{eq:19-temp-convint}
		\end{align}
		satisfies the $H_{N-1}$-condition. Hence $\big\{\big(\tau_i,(\hat{\rho}_i,\hat{\vm}_i,\hat{\mU}_i)\big)\big\}_{i=1,...,N}$ satisfies the $H_N$-condi\-tion. 
		\item A simple calculation yields
		\begin{align*}
			\sum_{i=1}^N \tau_i (\hat{\rho}_i,\hat{\vm}_i,\hat{\mU}_i) &= \sum_{i=1}^N \tau_i \Big(\tau (\rho^\ast,\vm^\ast,\mU^\ast) + (1-\tau) (\rho_i,\vm_i,\mU_i)\Big) \\
			&= \tau (\rho^\ast,\vm^\ast,\mU^\ast) \sum_{i=1}^N \tau_i + (1-\tau) \sum_{i=1}^N \tau_i (\rho_i,\vm_i,\mU_i)\\
			&= \tau (\rho^\ast,\vm^\ast,\mU^\ast) + (1-\tau ) (\rho^\ast,\vm^\ast,\mU^\ast) \\
			&= (\rho^\ast,\vm^\ast,\mU^\ast) \ed
		\end{align*}
	\end{enumerate}
\end{proof}

The key to prove Lemma~\ref{lemma:K-oscillatory-lemma} is the following lemma. Here convex integration is implemented. The suitable oscillations are constructed using the differential operators studied in Section~\ref{sec:convint-operators}. 

\begin{lemma} \label{lemma:U-oscillatory-lemma} 
	Let $\Gamma^\ast \subset \R^{1+n}$ be an open and bounded set (not necessarily connected) and $\ep>0$. Let furthermore $(\rho^\ast,\vm^\ast,\mU^\ast)\in \interior{\sU}$ with $\rho^\ast>R$, $N\in\N$ with $N\geq 2$, and $\big(\tau_i,(\rho_i,\vm_i,\mU_i)\big)\in \R^+ \times \interior{\sU}$ for $i=1,...,N$ such that 
	\begin{itemize}
		\item $\rho_i>R$ for all $i=1,...,N$,
		\item the family $\big\{\big(\tau_i,(\rho_i,\vm_i,\mU_i)\big)\big\}_{i=1,...,N}$ satisfies the $H_N$-condition and
		\item $(\rho^\ast,\vm^\ast,\mU^\ast) = \sum_{i=1}^N \tau_i (\rho_i,\vm_i,\mU_i) $.
	\end{itemize}
	Then there exists a sequence of oscillations 
	\begin{equation} \label{eq:U-oscillations}
		(\til{\rho}_k,\til{\vm}_k,\til{\mU}_k)_{k\in\N}\subset \Cc \big(\Gamma^\ast;\R\times \R^n\times \szn\big)
	\end{equation} 
	with the following properties:
	\begin{enumerate}
		\item \label{item:U-pp.a} The sequence $(\til{\rho}_k,\til{\vm}_k)_{k\in \N}$ converges to $(0,\vz)$ with respect to $d$, i.e. 
		\begin{equation} \label{eq:U-pp-conv}
			(\til{\rho}_k,\til{\vm}_k) \mathop{\to}\limits^d (0,\vz) \qquad \text{ as }k\to \infty \ed
		\end{equation}
	\end{enumerate}
	For each fixed $k\in \N$ the following statements hold:
	\begin{enumerate} \setcounter{enumi}{1}
		\item \label{item:U-pp.b} The PDEs 
		\begin{align}
			\partial_t \til{\rho}_k + \Div \til{\vm}_k &= 0 \ec \label{eq:U-pp-pde1}\\
			\partial_t \til{\vm}_k + \Div \til{\mU}_k &= \vz \label{eq:U-pp-pde2}
		\end{align}
		hold pointwise for all $(t,\vx)\in \Gamma^\ast$;
		
		\item \label{item:U-pp.c} There exist open sets $\Gamma_i \subsetcomp \Gamma^\ast$ for all $i = 1,...,N$, such that 
		\begin{align}
			&\bullet \ \text{the }\ov{\Gamma_i}\text{ are pairwise disjoint and } \label{eq:U-pp-gdis} \\
			&\bullet \ (\rho^\ast,\vm^\ast,\mU^\ast) + (\til{\rho}_k,\til{\vm}_k,\til{\mU}_k)(t,\vx) = (\rho_i,\vm_i,\mU_i)\qquad \text{ for all } (t,\vx)\in \Gamma_i \es  \label{eq:U-pp-gconst}
		\end{align}
		
		\item \label{item:U-pp.d} The sum of $(\rho^\ast,\vm^\ast,\mU^\ast)$ and the oscillations still takes values in $\interior{\sU}$, more precisely we have\footnote{Note that $\max_{i=1,...,N} e(\rho_i,\vm_i,\mU_i)<c$ since $(\rho_i,\vm_i,\mU_i)\in\interior{\sU}$ for all $i=1,...,N$. Hence the right-hand side of \eqref{eq:U-pp-subs} is $<c$.} 
		\begin{equation}  \label{eq:U-pp-subs}
			e\Big((\rho^\ast,\vm^\ast,\mU^\ast) + (\til{\rho}_k,\til{\vm}_k,\til{\mU}_k)(t,\vx)\Big) \leq \half\Big(\max_{i=1,...,N} e(\rho_i,\vm_i,\mU_i) + c \Big)
		\end{equation} 
		for all $(t,\vx)\in \Gamma^\ast$;
		
		\item \label{item:U-pp.e} The density is bounded from below, i.e. 
		\begin{equation} \label{eq:U-pp-dens-bdd} 
			\rho^\ast + \til{\rho}_k(t,\vx) > R \qquad \text{ for all }(t,\vx)\in \Gamma^\ast\es
		\end{equation}
		
		\item \label{item:U-pp.f} The value of the functional $I_{\Gamma^\ast}$ can be estimated as follows:
		\begin{equation}  \label{eq:U-pp-impr}
			I_{\Gamma^\ast}(\rho^\ast + \til{\rho}_k,\vm^\ast + \til{\vm}_k) > -\ep + \sum_{i = 1}^N I_{\Gamma_i}(\rho_i,\vm_i) \ed 
		\end{equation}
	\end{enumerate}
\end{lemma}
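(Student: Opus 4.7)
The plan is to prove Lemma~\ref{lemma:U-oscillatory-lemma} by induction on $N \geq 2$, writing $\vp_i := (\rho_i, \vm_i, \mU_i)$ and $\vp^* := (\rho^*, \vm^*, \mU^*)$. The main tool will be Proposition~\ref{prop:operators}, which produces, for any $\Lambda$-direction $\vp_2 - \vp_1$ together with an associated $\veta$, third-order homogeneous operators $\opL_\rho, \opL_\vm, \opL_\mU$ sending a scalar potential $g$ to a triple automatically solving the linear system \eqref{eq:U-pp-pde1}--\eqref{eq:U-pp-pde2}, and which reproduces the plane wave $(\vp_2 - \vp_1)\, h'''(\veta\cdot(t,\vx))$ when $g(t,\vx) = h(\veta\cdot(t,\vx))$.

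For the base case $N=2$, I will fix a $\veta$ witnessing $\vp_2 - \vp_1 \in \Lambda$ and a smooth $1$-periodic profile $\psi\colon \R \to [-\tau_2, \tau_1]$ of mean zero that is constantly $-\tau_2$ on an interval of length just below $\tau_1$ and constantly $\tau_1$ on an interval of length just below $\tau_2$, joined by thin smooth transitions. Let $h$ be a third primitive of $\psi$. For each $k$, I pick a cutoff $\chi_k \in \Cc(\Gamma^\ast; [0,1])$ equal to $1$ outside a boundary layer of measure tending to zero, and set $g_k(t,\vx) := k^{-3}\, \chi_k(t,\vx)\, h(k\veta\cdot(t,\vx))$; define $\til\vp_k := (\opL_\rho[g_k], \opL_\vm[g_k], \opL_\mU[g_k])$. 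This triple belongs to $\Cc(\Gamma^\ast)$, solves the linear PDEs by Proposition~\ref{prop:operators}\ref{item:operators.a}, and on $\{\chi_k = 1\}$ equals $(\vp_2 - \vp_1)\,\psi(k\veta\cdot(t,\vx))$ up to an $O(k^{-1})$ remainder coming from derivatives landing on $\chi_k$. Taking $\Gamma_1, \Gamma_2$ to be the interiors of the sets where $\chi_k \equiv 1$ and $\psi(k\veta\cdot(\cdot))$ is constantly $-\tau_2$ (respectively $\tau_1$) verifies \ref{item:U-pp.c}, since there $\vp^* + \til\vp_k = \vp_1$ and $=\vp_2$ respectively (the choice $\vp^* = \tau_1 \vp_1 + \tau_2 \vp_2$ precisely matches these identities). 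Item~\ref{item:U-pp.a} follows from Lemma~\ref{lemma:not-periodic-weak-convergence} because $\psi$ has mean zero. For items~\ref{item:U-pp.d} and \ref{item:U-pp.e}, away from the transition/cutoff layer the perturbed triple lies on the segment $[\vp_1, \vp_2] \subset \interior\sU$, so by convexity of $e$ (Lemma~\ref{lemma:e-convex}) one has $e \leq \max\{e(\vp_1), e(\vp_2)\} < \tfrac12(\max_i e(\vp_i) + c)$ and density $\geq \min\{\rho_1, \rho_2\} > R$; inside the thin layer the $O(k^{-1})$ corrections are negligible for $k$ large. Item~\ref{item:U-pp.f} will follow by splitting $I_{\Gamma^\ast}$ into contributions from $\Gamma_1 \cup \Gamma_2$ (equal to $I_{\Gamma_1}(\vp_1) + I_{\Gamma_2}(\vp_2)$) and its complement (at worst $-c\,|\Gamma^\ast \setminus (\Gamma_1 \cup \Gamma_2)|$, which is $<\ep$ for $k$ large).

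For the inductive step, after relabeling so that $\vp_2 - \vp_1 \in \Lambda$, set $\vp' := \tfrac{\tau_1 \vp_1 + \tau_2 \vp_2}{\tau_1 + \tau_2}$. The reduced family $\{(\tau_1+\tau_2,\vp')\} \cup \{(\tau_i,\vp_i)\}_{i=3}^N$ satisfies the $H_{N-1}$-condition with the same barycenter $\vp^*$; moreover $\vp' \in \interior\sU$ (by convexity of $e$) and its density component exceeds $R$. I will first apply the inductive hypothesis with tolerance $\ep/2$ to produce oscillations $\til\vp_k^{(1)} \in \Cc(\Gamma^\ast)$ and open sets $\Gamma', \Gamma_3, \ldots, \Gamma_N \subsetcomp \Gamma^\ast$ with pairwise disjoint closures on which $\vp^* + \til\vp_k^{(1)}$ attains the values $\vp', \vp_3, \ldots, \vp_N$ respectively. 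Then, working inside $\Gamma'$ with $\vp'$ in place of $\vp^*$, I invoke the base case (with tolerance $\ep/2$) on the pair $\{(\tfrac{\tau_1}{\tau_1+\tau_2},\vp_1),(\tfrac{\tau_2}{\tau_1+\tau_2},\vp_2)\}$, obtaining oscillations $\til\vp_k^{(2)} \in \Cc(\Gamma')$ and open subsets $\Gamma_1, \Gamma_2 \subsetcomp \Gamma'$ on which $\vp' + \til\vp_k^{(2)} = \vp_1$ and $= \vp_2$. Extending $\til\vp_k^{(2)}$ by zero to $\Gamma^\ast$ and setting $\til\vp_k := \til\vp_k^{(1)} + \til\vp_k^{(2)}$, linearity of the PDEs gives \ref{item:U-pp.b}; because $\til\vp_k^{(2)}$ vanishes on $\Gamma_3, \ldots, \Gamma_N \subset \Gamma^\ast\setminus\ov{\Gamma'}$, one recovers \ref{item:U-pp.c} from the two applications; \ref{item:U-pp.d} and \ref{item:U-pp.e} follow by checking separately on $\Gamma^\ast\setminus\ov{\Gamma'}$ (only $\til\vp_k^{(1)}$ acts) and on $\Gamma'$ (there $\vp^* + \til\vp_k^{(1)} \equiv \vp'$, so the base-case bounds apply to the second oscillation); \ref{item:U-pp.a} and \ref{item:U-pp.f} follow by combining the two contributions within the $\ep/2 + \ep/2 = \ep$ budget.

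The hard part will be the pointwise bound in \ref{item:U-pp.d}: through the transition layers of $\psi$ and on the cutoff region $\{\chi_k < 1\}$, the triple $\vp^* + \til\vp_k$ does not quite lie on the convex hull of the $\vp_i$. To keep $e$ strictly below $\tfrac12(\max_i e(\vp_i) + c)$ there, the profile $\psi$ and the cutoff $\chi_k$ must be chosen so that the leading-order perturbation always stays in a small neighbourhood of $[\vp_1, \vp_2]$ (or, in the inductive step, of the relevant convex segment inside $\Gamma'$), and the $O(k^{-1})$ error arising from $\Grad \chi_k$ falls inside the margin separating this neighbourhood's $e$-value from the target bound; this is the only place where $k$ is ultimately chosen large.
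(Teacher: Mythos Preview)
Your proposal follows the same route as the paper: induction on $N$, with the base case built by applying Proposition~\ref{prop:operators} to a cutoff times a rescaled third primitive of a mean-zero periodic profile, and the induction step obtained by first running the hypothesis on the reduced $H_{N-1}$-family (with barycenter still $\vp^\ast$) and then invoking the $N=2$ case inside the resulting set $\Gamma'$, followed by a diagonal choice. One technical slip: you make the cutoff $\chi_k$ depend on $k$ with support shrinking to $\Gamma^\ast$, but then the ``$O(k^{-1})$ remainder'' claim is inconsistent --- the error term involves up to third derivatives of $\chi_k$, which blow up as the boundary layer shrinks. The paper avoids this by fixing the cutoff $\Phi$ once (with a frame of measure $<\ep/(2c)$) and making the transition slices thin via the mollification parameter $\delta$; neither smallness comes from $k\to\infty$, which is used only for weak-$\ast$ convergence and to kill the (genuinely $O(k^{-1})$) remainder against the \emph{fixed} cutoff.
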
 

\begin{rem} 
	The reader might have noticed that $\Gamma^\ast$ in Lemma~\ref{lemma:K-oscillatory-lemma} is a domain, i.e. connected, whereas $\Gamma^\ast$ in Lemma~\ref{lemma:U-oscillatory-lemma} is not necessarily connected. Note that we actually need Lemma~\ref{lemma:U-oscillatory-lemma} only for connected $\Gamma^\ast$. However in order to prove Lemma~\ref{lemma:U-oscillatory-lemma}, we proceed by induction over $N\in \N$, $N\geq 2$, and in the induction step we have to deal with a not necessarily connected $\Gamma^\ast$, see also Example~\ref{ex:CI}.
\end{rem} 
\begin{proof} 
	We prove the proposition by induction over $N\in \N$, $N\geq 2$. 
	
	\medskip
	
	\textbf{Induction basis:} Let first $N=2$. Then by assumption we have 
	$$
		(\rho^\ast,\vm^\ast,\mU^\ast) = \tau_1 (\rho_1,\vm_1,\mU_1) + \tau_2 (\rho_2,\vm_2,\mU_2)
	$$ 
	where $ (\rho_2,\vm_2,\mU_2) - (\rho_1,\vm_1,\mU_1) \in \Lambda$, $\tau_1 + \tau_2 = 1$, and $\rho_1,\rho_2>R$. Set
	$$
		(\ov{\rho},\ov{\vm},\ov{\mU}) := (\rho_2,\vm_2,\mU_2) - (\rho_1,\vm_1,\mU_1) \ed
	$$
	According to the definition of the wave cone $\Lambda$ (see \eqref{eq:wavecone-kernel}) there exists $\veta\in \R^{1+n}\setminus\{\vz\}$ such that 
	\begin{equation} \label{eq:6-temp-convint}
	\left(\begin{array}{cc}
		\ov{\rho} & \ov{\vm}^\trans \\
		\ov{\vm} & \ov{\mU}  
	\end{array}\right) \veta = \vz \ed
	\end{equation}
	Now we fix an open set $\til{\Gamma}\subsetcomp \Gamma^\ast$ in such a way that for the ``frame'' $\Gamma_\tf := \Gamma^\ast \setminus \til{\Gamma}$ 
	\begin{equation} \label{eq:smallness-frame} 
		| \Gamma_\tf | < \frac{\ep}{2c} \ec
	\end{equation}
	see Figure~\ref{fig:CI-frame}.
	
	\begin{figure}[tb] 
		\centering
		\includegraphics[width=0.7\textwidth]{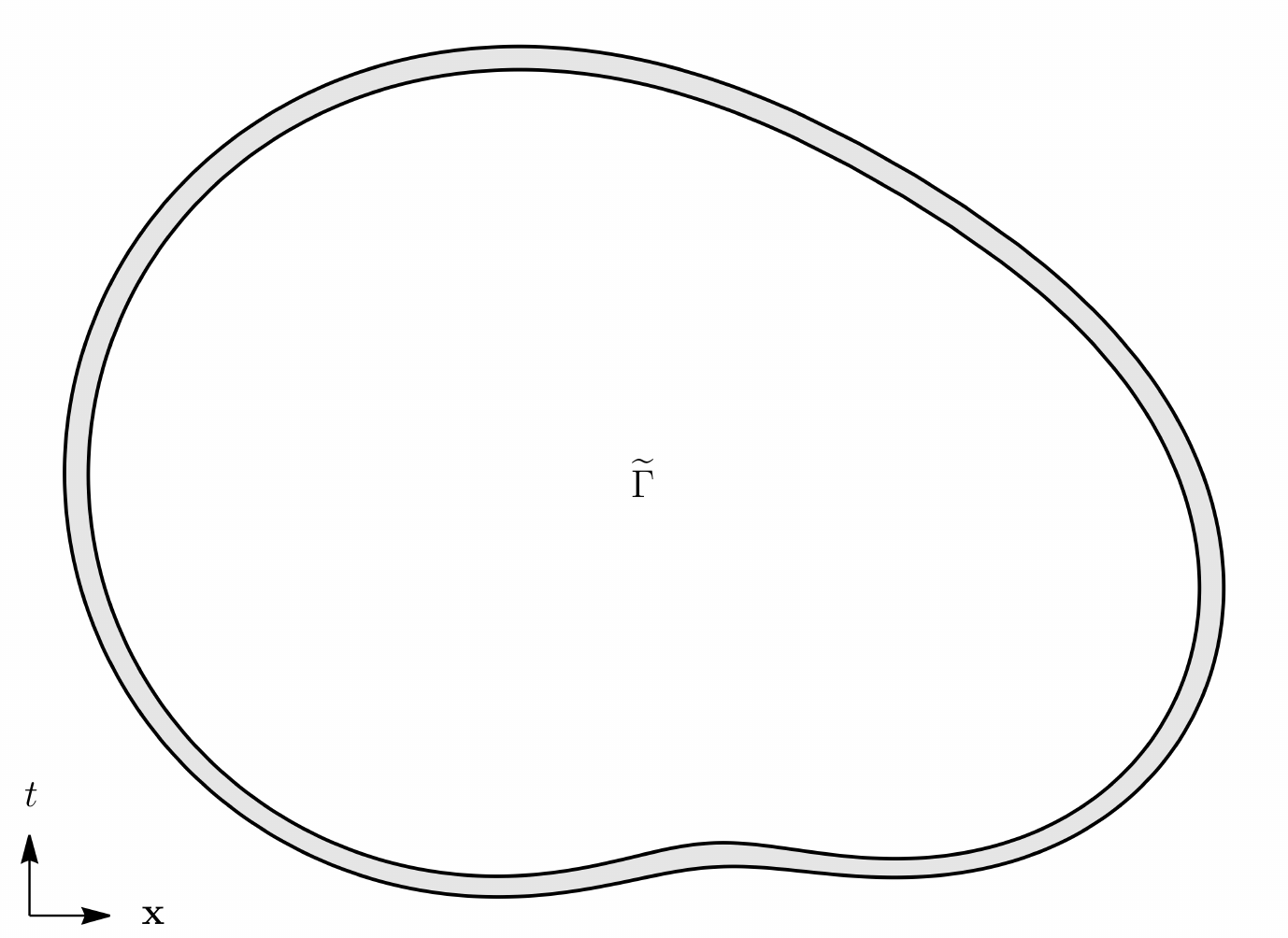} 
		\caption{An example of $\Gamma^\ast$ and $\til{\Gamma} \subsetcomp\Gamma^\ast$ with the corresponding frame $\Gamma_\tf = \Gamma^\ast \setminus \til{\Gamma}$ (gray).} 
		\label{fig:CI-frame}
	\end{figure}
	
	Let $Q\in \R^{1+n}$ be an open cube such that $\til{\Gamma} \subsetcomp Q$ and one edge of $Q$ is parallel to $\veta$. Furthermore we require the edge length of $Q$ to be a natural multiple of $\frac{1}{|\veta|}$. Fix 
	\begin{equation} \label{eq:smallness-delta}
		0<\delta< \min\left\{\frac{\ep}{8c |Q| },\frac{\tau_1}{2} , \frac{\tau_2}{2} \right\} \ed
	\end{equation} 
	The reason for choosing $\delta$ in this way will become clear later, see \eqref{eq:50-temp-convint}. For an example of $\til{\Gamma}$ and $Q$ see also Figure~\ref{fig:CI-cube-around}.
	
	Let $\Phi\in \Cc(\Gamma^\ast ; [0,1])$ be a smooth cutoff function with $\Phi\equiv 1 $ on $\til{\Gamma}$. Let furthermore $f:\R \to \R$ be defined as
	$$
	f(t) := \left\{ \begin{array}{rl}
		-\tau_2 & \text{ if } t\in[0,\tau_1) + \Z \ec \\
		\tau_1 & \text{ if } t\in[\tau_1,1) + \Z  \ed
	\end{array} \right. 
	$$
	Note that $-\tau_2 = \tau_1 -1 $ and hence $f$ is of the form \eqref{eq:periodic-function} in Subsection~\ref{subsec:not-periodic-functions}. 
	
	Next we mollify $f$ to obtain $f_\delta \in C^\infty (\R)$, which is periodic with zero mean, takes values in the interval $[-\tau_2,\tau_1 ]$ and satisfies 
	\begin{align}
		f_\delta (t) &= -\tau_2 & &\hspace{-2.5cm}\text{ for all }t\in [\delta, \tau_1- \delta] + \Z \ec \label{eq:fdelta1}\\
		f_\delta (t) &= \tau_1 & &\hspace{-2.5cm}\text{ for all }t\in [\tau_1+\delta , 1-\delta] + \Z \ec \label{eq:fdelta2}
	\end{align}
	according to Lemma~\ref{lemma:periodic-mollification}. Because of Lemma~\ref{lemma:not-periodic-primitive} there exists $h\in C^\infty(\R)$ such that $h'''=f_\delta$ and $h$, as well as all derivatives of $h$ up to third order, are bounded. Next we define $g_k\in  C^\infty(\R^{1+n})$ by 
	$$
		g_k(t,\vx) := \frac{1}{k^3} h\big( k (t,\vx)\cdot \veta\big)
	$$
	for $k\in\N$. Then the product $g_k\Phi\in \Cc(\Gamma^\ast)$. Furthermore we set 
	\begin{equation} \label{eq:defn-perturbation}
		(\til{\rho}_k,\til{\vm}_k,\til{\mU}_k) := \big(\opL_\rho[g_k\Phi], \opL_\vm[g_k\Phi],\opL_\mU[g_k\Phi]\big) \ec
	\end{equation}
	where the operators $\opL_\rho$, $\opL_\vm$ and $\opL_\mU$ are given by Proposition~\ref{prop:operators} (note that the assumption \eqref{eq:op-kernel} of Proposition~\ref{prop:operators} holds according to \eqref{eq:6-temp-convint}). 
	
	In order to obtain the desired properties of the oscillations $(\til{\rho}_k,\til{\vm}_k,\til{\mU}_k)$ defined in \eqref{eq:defn-perturbation}, we have to choose $k$ sufficiently large. Let us now specify how large $k$ must be, i.e. let us find $k_{\min}\in \N$ such that $(\til{\rho}_k,\til{\vm}_k,\til{\mU}_k)$ satisfies the desired properties for each $k\geq k_{\min}$. 
	
	Since $\opL_j$ for $j\in\{\rho,\vm,\mU\}$ are third order homogeneous differential operators according to Proposition~\ref{prop:operators}, the expression
	$$
	\opL_j [g_k\Phi] - \opL_j [g_k] \Phi
	$$ 
	can be written as a sum of pruducts of second derivatives of $g_k$ and first derivatives of $\Phi$, of first derivatives of $g_k$ and second derivatives of $\Phi$ and of $g_k$ and third derivatives of $\Phi$. Thus we deduce that 
	\begin{equation} \label{eq:2-temp-convint}
	\max_{j\in\{\rho,\vm,\mU\}}\Big\| \opL_j [g_k\Phi] - \opL_j [g_k] \Phi\Big\|_\infty  \leq \frac{C}{k} 
	\end{equation}
	for a suitable constant $C$ since $h$ and the occuring derivatives of $h$ are bounded. 
	
	According to Proposition~\ref{prop:operators}, we have
	\begin{align}
		\big(\opL_\rho[g_k], \opL_\vm[g_k],\opL_\mU[g_k]\big)(t,\vx) &= (\ov{\rho},\ov{\vm},\ov{\mU}) h'''\big(k (t,\vx)\cdot \veta\big) \notag\\
		&= (\ov{\rho},\ov{\vm},\ov{\mU}) f_\delta\big(k (t,\vx)\cdot \veta\big) \ed \label{eq:5-temp-convint}
	\end{align}
	The fact that $f_\delta$ takes values in $[-\tau_2,\tau_1]$ and $\Phi$ in $[0,1]$, implies that 
	$$
		f_\delta\big(k (t,\vx)\cdot \veta\big) \Phi(t,\vx) \ \in\  [-\tau_2,\tau_1] \qquad \text{ for all }(t,\vx)\in \Gamma^\ast \ed
	$$ 
	In other words for each $(t,\vx)\in \Gamma^\ast$ there exists $\tau\in[-\tau_2,\tau_1]$ such that $f_\delta\big(k (t,\vx)\cdot \veta\big) \Phi(t,\vx) = \tau$. A simple computation shows that 
	\begin{align}
		&(\rho^\ast,\vm^\ast,\mU^\ast) + \tau (\ov{\rho},\ov{\vm},\ov{\mU}) \notag \\
		&= \tau_1 (\rho_1,\vm_1,\mU_1) + \tau_2 (\rho_2,\vm_2,\mU_2) + \tau (\rho_2,\vm_2,\mU_2) - \tau (\rho_1,\vm_1,\mU_1) \notag \\
		&= (\tau_1 - \tau) (\rho_1,\vm_1,\mU_1) + (\tau_2 + \tau) (\rho_2,\vm_2,\mU_2) \ed \label{eq:22-temp-convint}
	\end{align}
	Hence we obtain for each $(t,\vx)\in \Gamma^\ast$
	$$
		\rho^\ast + \opL_\rho[g_k](t,\vx) \, \Phi(t,\vx) = (\tau_1 - \tau) \rho_1 + (\tau_2 + \tau) \rho_2 > R
	$$
	where we used $\rho_1, \rho_2>R$ and the fact that $\tau_1+\tau_2 = 1$. Using \eqref{eq:2-temp-convint} we deduce that there exists $k_1\in \N$ such that
	\begin{equation} \label{eq:8-temp-convint}
		\rho^\ast + \opL_\rho[g_k\Phi](t,\vx) >R \qquad\text{ for all }(t,\vx)\in \Gamma^\ast
	\end{equation}
	if $k\geq k_1$.
	
	Furthermore \eqref{eq:5-temp-convint}, \eqref{eq:22-temp-convint}, the convexity of $e$ and the fact that $\tau_1+\tau_2 = 1$ imply
	\begin{align}
		&e\Big((\rho^\ast,\vm^\ast,\mU^\ast) + \big(\opL_\rho[g_k]\Phi, \opL_\vm[g_k]\Phi,\opL_\mU[g_k]\Phi\big)(t,\vx)\Big) \notag \\
		&= e\Big((\tau_1 - \tau) (\rho_1,\vm_1,\mU_1) + (\tau_2 + \tau) (\rho_2,\vm_2,\mU_2)\Big) \notag \\
		& \leq (\tau_1 - \tau) e(\rho_1,\vm_1,\mU_1) + (\tau_2 + \tau) e(\rho_2,\vm_2,\mU_2) \notag \\
		& \leq \max_{i=1,2} e(\rho_i,\vm_i,\mU_i) \label{eq:7-temp-convint} \\
		& < c \notag \ec
	\end{align}
	which shows that 
	$$
		(\rho^\ast,\vm^\ast,\mU^\ast) + \big(\opL_\rho[g_k]\Phi, \opL_\vm[g_k]\Phi,\opL_\mU[g_k]\Phi\big)(t,\vx) \ \in\ \interior{\sU} \qquad \text{ for all }(t,\vx)\in \Gamma^\ast \ed
	$$
	This implies together with \eqref{eq:2-temp-convint} that
	$$
		(\rho^\ast,\vm^\ast,\mU^\ast) + \big(\opL_\rho[g_k\Phi], \opL_\vm[g_k\Phi],\opL_\mU[g_k\Phi]\big)(t,\vx) \ \in\ \interior{\sU} \qquad \text{ for all }(t,\vx)\in \Gamma^\ast \ec
	$$
	too, as long as $k$ is sufficently large, say $k\geq k_2$ with a suitable $k_2\in\N$. Due to Lemma~\ref{lemma:U-bdd} and the fact that $R>r$, we deduce that both
	\begin{align*}
		(\rho^\ast,\vm^\ast,\mU^\ast) + \big(\opL_\rho[g_k]\Phi, \opL_\vm[g_k]\Phi,\opL_\mU[g_k]\Phi\big)(t,\vx) &\ \in\  [r,M]\times \closure{B_n(\vz,M)}\times \closure{B_{\szn}(\mZ,M)} \ec \\
		(\rho^\ast,\vm^\ast,\mU^\ast) + \big(\opL_\rho[g_k\Phi], \opL_\vm[g_k\Phi],\opL_\mU[g_k\Phi]\big)(t,\vx) &\ \in\  [r,M]\times \closure{B_n(\vz,M)}\times \closure{B_{\szn}(\mZ,M)}  
	\end{align*}
	for all $(t,\vx)\in \Gamma^\ast$ as long as $k\geq \max\{k_1,k_2\}$. Now \eqref{eq:2-temp-convint} and the uniform continuity of $e$ (see Lemma~\ref{lemma:prop-E}~\ref{item:prop-E.a}) yield existence of $k_{\min}\geq \max\{k_1,k_2\}$ such that 
	\begin{align}
		&\bigg|e\Big((\rho^\ast,\vm^\ast,\mU^\ast) + \big(\opL_\rho[g_k\Phi], \opL_\vm[g_k\Phi],\opL_\mU[g_k\Phi]\big)(t,\vx)\Big) \notag \\
		&\qquad - e\Big((\rho^\ast,\vm^\ast,\mU^\ast) + \big(\opL_\rho[g_k]\Phi, \opL_\vm[g_k]\Phi,\opL_\mU[g_k]\Phi\big)(t,\vx)\Big)\bigg| \notag \\
		&\leq \half \Big( c - \max_{i=1,2} e(\rho_i,\vm_i,\mU_i) \Big) \label{eq:23-temp-convint}
	\end{align}	
	for all $(t,\vx)\in\Gamma^\ast$ as long as $k\geq k_{\min}$. 
	
	We claim that the sequence\footnote{Note, that we can simply redefine $k$ to obtain a sequence $(\til{\rho}_k,\til{\vm}_k,\til{\mU}_k)_{k\in\N}$.} $(\til{\rho}_k,\til{\vm}_k,\til{\mU}_k)_{k\geq k_{\min}}$ has the desired properties, which shall be proven in the sequel.
	
	Keeping \eqref{eq:defn-perturbation} in mind, we have $(\til{\rho}_k,\til{\vm}_k,\til{\mU}_k)\in C^\infty (\R^{1+n};\R\times \R^n \times \szn)$ according to Proposition~\ref{prop:operators} and, since $\Phi\in \Cc (\Gamma^\ast)$, we even have 
	$$
		(\til{\rho}_k,\til{\vm}_k,\til{\mU}_k) \ \in\  \Cc \big(\Gamma^\ast;\R\times \R^n \times \szn\big)
	$$ 
	for all $k\geq k_{\min}$.
	
	Let us turn our attention towards the properties \ref{item:U-pp.a} - \ref{item:U-pp.f}:
	\begin{enumerate}
		\item For any fixed $\varphi\in L^1(\R^{1+n})$ we obtain 
		\begin{align*}
			&\left| \iint_{\R^{1+n}} \big(\til{\rho}_k,\til{\vm}_k\big) \varphi \dt\dx \right| \\
			&= \left| \iint_{\R^{1+n}} \Big(\opL_{\rho}[g_k\Phi],\opL_{\vm}[g_k\Phi]\Big) \varphi \dt\dx \right| \\
			&= \bigg| \iint_{\R^{1+n}} \Big(\opL_{\rho}[g_k\Phi] - \opL_{\rho}[g_k] \Phi,\opL_{\vm}[g_k\Phi] - \opL_{\vm}[g_k] \Phi\Big) \varphi \dt\dx \\
			&\qquad + \iint_{\R^{1+n}} \Big(\opL_{\rho}[g_k] \Phi,\opL_{\vm}[g_k] \Phi\Big) \varphi \dt\dx \bigg|\\
			&\leq \Big\| \Big(\opL_{\rho}[g_k\Phi] - \opL_{\rho}[g_k] \Phi,\opL_{\vm}[g_k\Phi] - \opL_{\vm}[g_k] \Phi\Big) \Big\|_{L^\infty} \big\|\varphi\big\|_{L^1} \\
			&\qquad + \left| \iint_{\R^{1+n}} \Big(\ov{\rho}\Phi f_\delta\big(k(t,\vx)\cdot\veta\big),\ov{\vm}\Phi f_\delta\big(k(t,\vx)\cdot\veta\big) \Big) \varphi(t,\vx) \dt\dx \right| \ \to \ 0 
		\end{align*}
		as $k\to \infty$ due to \eqref{eq:2-temp-convint} and Lemma~\ref{lemma:not-periodic-weak-convergence}. Hence $(\til{\rho}_k,\til{\vm}_k)\mathop{\rightharpoonup}\limits^\ast (0,\vz)$ as $k\to\infty$, i.e. \eqref{eq:U-pp-conv}.
	\end{enumerate}
	Let $k\geq k_{\min}$ be fixed.
	\begin{enumerate} \setcounter{enumi}{1}
		\item The validity of the PDEs \eqref{eq:U-pp-pde1} and \eqref{eq:U-pp-pde2} follows immediately from \eqref{eq:defn-perturbation} and Proposition~\ref{prop:operators}.
		
		\item On $\til{\Gamma}$ it holds that $\Phi\equiv 1$ and hence \eqref{eq:5-temp-convint}, \eqref{eq:fdelta1}, \eqref{eq:fdelta2} and \eqref{eq:22-temp-convint} yield that $(\rho^\ast,\vm^\ast,\mU^\ast) + (\til{\rho}_k,\til{\vm}_k,\til{\mU}_k)(t,\vx) = (\rho_1,\vm_1,\mU_1)$ for 
		$$
			 (t,\vx) \ \in\ \Gamma_1 := \til{\Gamma} \cap \left\{ (t,\vx)\in\R^{1+n}\,\Big|\, k(t,\vx)\cdot \veta \in (\delta, \tau_1- \delta) + \Z\right\}
		$$
		and $(\rho^\ast,\vm^\ast,\mU^\ast) + (\til{\rho}_k,\til{\vm}_k,\til{\mU}_k)(t,\vx) = (\rho_2,\vm_2,\mU_2)$ for 
		$$
			(t,\vx) \ \in\ \Gamma_2 := \til{\Gamma} \cap \left\{ (t,\vx)\in\R^{1+n}\,\Big|\, k(t,\vx)\cdot \veta \in (\tau_1 + \delta, 1 - \delta) + \Z\right\} \ed
		$$
		Note that $\Gamma_1,\Gamma_2$ are open, and $\Gamma_i\subsetcomp \Gamma^\ast$ for $i=1,2$ due to the fact that $\til{\Gamma} \subsetcomp\Gamma^\ast$ and $\Gamma_1,\Gamma_2 \subset \til{\Gamma}$. Furthermore $\closure{\Gamma_1}\cap \closure{\Gamma_2} = \emptyset$ because of the choice of $\delta$, \eqref{eq:smallness-delta}. See Figure~\ref{fig:CI-2} for a sketch of $\Gamma_1$ and $\Gamma_2$. 
		
		\begin{figure}[b]
			\centering
			\includegraphics[width=0.7\textwidth]{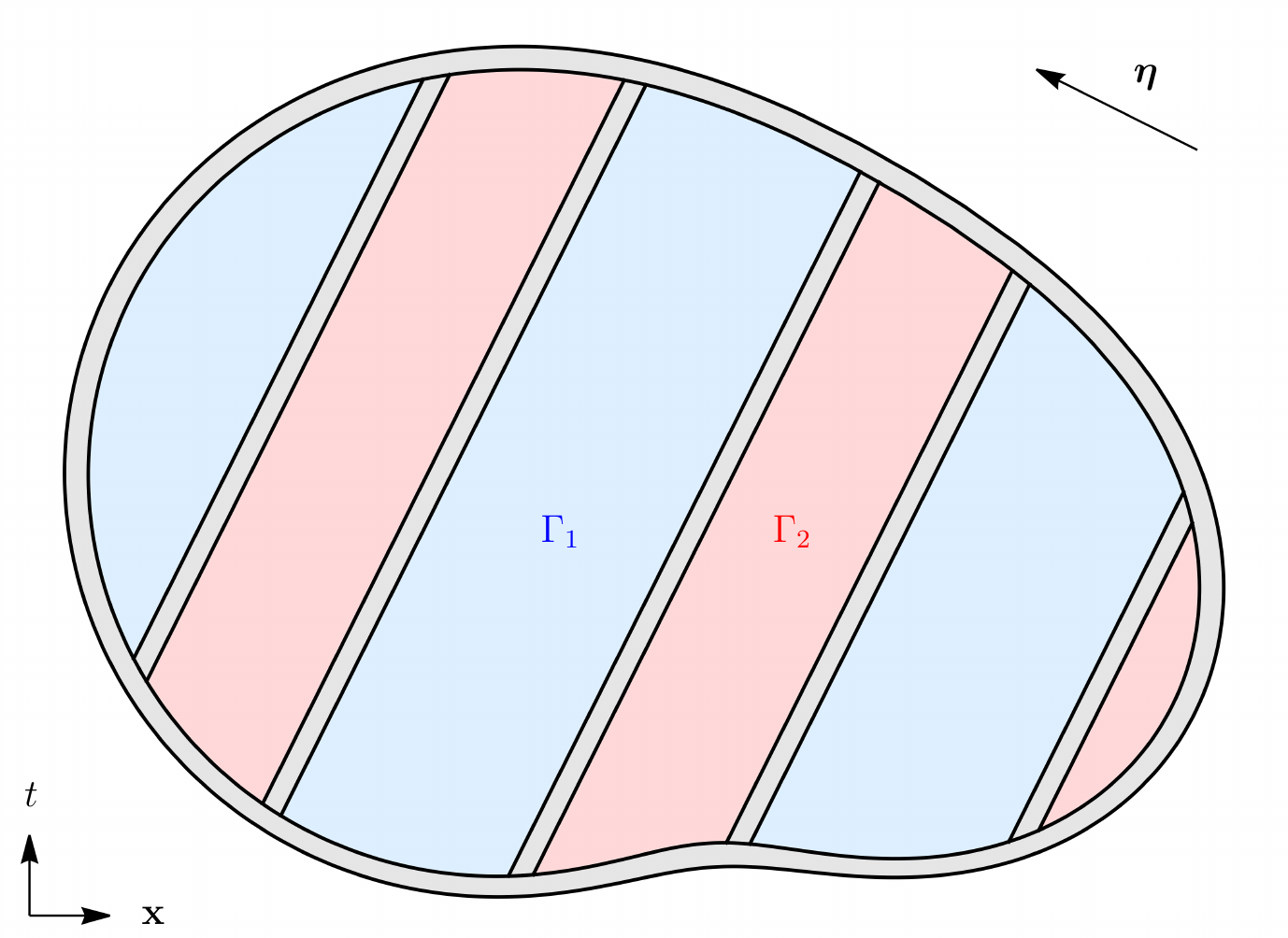} 
			\caption{The set $\til{\Gamma}$ is divided into sets $\Gamma_1$ (blue) and $\Gamma_2$ (red). The slices $\Gamma_\ts$ and the frame $\Gamma_\tf$ are colored in gray.} 
			\label{fig:CI-2}
		\end{figure}

		\item Equations \eqref{eq:defn-perturbation}, \eqref{eq:7-temp-convint} and \eqref{eq:23-temp-convint} imply
		\begin{align*}
			&e\Big((\rho^\ast,\vm^\ast,\mU^\ast) + \big(\til{\rho}_k,\til{\vm}_k,\til{\mU}_k\big)(t,\vx)\Big) \\
			&\leq e\Big((\rho^\ast,\vm^\ast,\mU^\ast) + \big(\opL_\rho[g_k]\Phi, \opL_\vm[g_k]\Phi,\opL_\mU[g_k]\Phi\big)(t,\vx)\Big) + \half \Big( c - \max_{i=1,2} e(\rho_i,\vm_i,\mU_i) \Big) \\
			&\leq  \max_{i=1,2} e(\rho_i,\vm_i,\mU_i) + \half \Big( c - \max_{i=1,2} e(\rho_i,\vm_i,\mU_i) \Big) \\
			&= \half \Big( \max_{i=1,2} e(\rho_i,\vm_i,\mU_i) + c\Big)
		\end{align*} 
		for all $(t,\vx)\in \Gamma^\ast$.
		
		\item Equation \eqref{eq:U-pp-dens-bdd} has already been shown, see \eqref{eq:8-temp-convint}.
		
		\item Let us define the ``slices''
		$$
			\Gamma_\ts := \til{\Gamma} \cap \left\{ (t,\vx)\in\R^{1+n}\,\Big|\, k(t,\vx)\cdot \veta \in \Big([0,\delta] \cup [\tau_1 - \delta , \tau_1 + \delta] \cup [1-\delta , 1)\Big) + \Z\right\} \ec
		$$
		see also Figure~\ref{fig:CI-2}.	Then we have 
		\begin{align}
			&I_{\Gamma^\ast}(\rho^\ast+ \til{\rho}_k,\vm^\ast+ \til{\vm}_k) \notag \\
			&= \iint_{\Gamma_\tf} E(\rho^\ast+ \til{\rho}_k,\vm^\ast+ \til{\vm}_k) \dx\dt + \iint_{\Gamma_\ts} E(\rho^\ast+ \til{\rho}_k,\vm^\ast+ \til{\vm}_k) \dx\dt \notag \\
			&\qquad + \sum_{i=1}^2I_{\Gamma_i}(\rho^\ast+ \til{\rho}_k,\vm^\ast+ \til{\vm}_k) \notag \\ 
			& = \iint_{\Gamma_\tf} E(\rho^\ast+ \til{\rho}_k,\vm^\ast+ \til{\vm}_k) \dx\dt + \iint_{\Gamma_\ts} E(\rho^\ast+ \til{\rho}_k,\vm^\ast+ \til{\vm}_k) \dx\dt + \sum_{i = 1}^2 I_{\Gamma_i}(\rho_i,\vm_i) \label{eq:10-temp-convint}
		\end{align} 
		according to \eqref{eq:U-pp-gconst}.
	
		Furthermore we obtain using \eqref{eq:smallness-frame}, that
		\begin{equation} \label{eq:int-frame-temp-convint}
			\iint_{\Gamma_\tf} E(\rho^\ast+ \til{\rho}_k,\vm^\ast+ \til{\vm}_k) \dx\dt > -c |\Gamma_\tf| > -\frac{\ep}{2} \ed
		\end{equation}
		
		\begin{figure}[tb]
			\centering
			\includegraphics[width=0.9\textwidth]{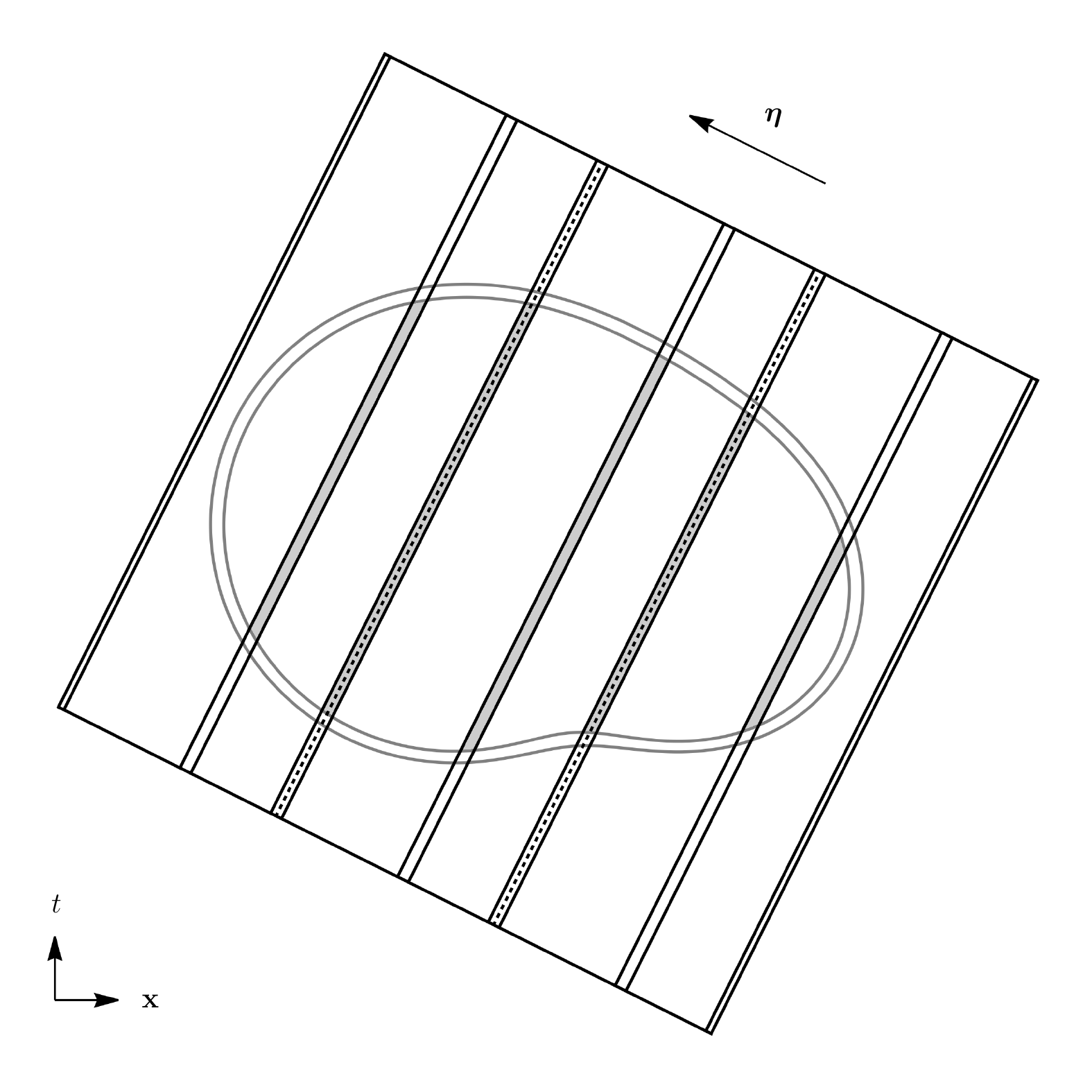} 
			\caption{Sketch of the slices $\Gamma_\ts$ (gray) and the cube $Q$ which is used to estimate $|\Gamma_\ts|$. Here $\ell=3$ and $k=1$.} 
			\label{fig:CI-cube-around}
		\end{figure}
	
		For the remaining integral, note that 
		\begin{equation} \label{eq:50-temp-convint}
			\Gamma_\ts \subset Q \cap \left\{ (t,\vx)\in\R^{1+n}\,\Big|\, k(t,\vx)\cdot \veta \in \Big([0,\delta] \cup [\tau_1 - \delta , \tau_1 + \delta] \cup [1-\delta , 1)\Big) + \Z\right\} \ec
		\end{equation}
		see Figure~\ref{fig:CI-cube-around}. Let us now compute 
		\begin{equation*}
			\left| Q \cap \left\{ (t,\vx)\in\R^{1+n}\,\Big|\, k(t,\vx)\cdot \veta \in \Big([0,\delta] \cup [\tau_1 - \delta , \tau_1 + \delta] \cup [1-\delta , 1)\Big) + \Z\right\} \right| \ed
		\end{equation*}
		Since the edge length of $Q$ is a natural multiple of $\frac{1}{|\veta|}$, we can write it as $\frac{\ell}{|\veta|}$ with $\ell\in\N$. Via rotation and translation we see that
		\begin{align}
			&\left| Q \cap \left\{ (t,\vx)\in\R^{1+n}\,\Big|\, k(t,\vx)\cdot \veta \in \Big([0,\delta] \cup [\tau_1 - \delta , \tau_1 + \delta] \cup [1-\delta , 1)\Big) + \Z\right\} \right| \notag \\
			&= \left| \left(0,\frac{\ell}{|\veta|}\right)^{1+n} \cap \left\{ (t,\vx)\in\R^{1+n}\,\Big|\, k|\veta| t \in \Big([0,\delta] \cup [\tau_1 - \delta , \tau_1 + \delta] \cup [1-\delta , 1)\Big) + \Z\right\} \right| \notag \\
			&= \left(\frac{\ell}{|\veta|}\right)^{n} \int_0^{\frac{\ell}{|\veta|}} \chi(k|\veta| t) \dt \ec \label{eq:51-temp-convint}
		\end{align}
		where $\chi:\R\to \R$, 
		$$
		\chi(t) := \left\{\begin{array}{ll}
			1 & \text{ if } t \in \Big([0,\delta] \cup [\tau_1 - \delta , \tau_1 + \delta] \cup [1-\delta , 1)\Big) + \Z \ec \\
			0 & \text{ else}\ed
		\end{array} \right.
		$$
		A simple substitution yields together with the fact that $\chi$ is periodic and $\ell k\in\N$, that 
		\begin{equation} \label{eq:52-temp-convint}
			\int_0^{\frac{\ell}{|\veta|}} \chi(k|\veta| t) \dt = \frac{1}{k|\veta|}\int_0^{\ell k} \chi(t) \dt = \frac{\ell k}{k|\veta|}\int_0^{1} \chi(t) \dt \ed
		\end{equation}
		Combining \eqref{eq:50-temp-convint}, \eqref{eq:51-temp-convint} and \eqref{eq:52-temp-convint} with the obvious fact, that $\int_0^{1} \chi(t) \dt = 4\delta$, we obtain 
		$$
		|\Gamma_\ts| \leq 4\delta\left(\frac{\ell}{|\veta|}\right)^{1+n} = 4\delta |Q| \ed
		$$
	
		Hence 
		\begin{equation} \label{eq:int-slices-temp-convint}
			\iint_{\Gamma_\ts} E(\rho^\ast+ \til{\rho}_k,\vm^\ast+ \til{\vm}_k) \dx\dt > -c |\Gamma_\ts| \geq -4\delta c |Q| > -\frac{4c |Q| \ep}{8c |Q|} = -\frac{\ep}{2}
		\end{equation}
		due to \eqref{eq:smallness-delta}.
	
		Combining \eqref{eq:int-frame-temp-convint} and \eqref{eq:int-slices-temp-convint} with \eqref{eq:10-temp-convint}, we conclude with \eqref{eq:U-pp-impr}.
	\end{enumerate}

	This finishes the inducion basis. The induction step is deduced from the induction hypothesis in combination with the (already proven) statement of Lemma~\ref{lemma:U-oscillatory-lemma} (the lemma we are proving at the moment) for $N=2$.
	
	\textbf{Induction step:} Let $N\geq 3$. Then according to Definition~\ref{defn:hn} we have (after relabeling if necessary) 
	\begin{equation} \label{eq:11-temp-convint}
		(\rho_2,\vm_2,\mU_2) - (\rho_1,\vm_1,\mU_1) \in \Lambda
	\end{equation} 
	and the family 
	\begin{equation} \label{eq:13-temp-convint}
		\left\{ \left( \tau_1 + \tau_2 , \frac{\tau_1}{\tau_1 + \tau_2} (\rho_1,\vm_1,\mU_1) + \frac{\tau_2}{\tau_1 + \tau_2} (\rho_2,\vm_2,\mU_2)\right) \right\} \cup \big\{\big(\tau_i,(\rho_i,\vm_i,\mU_i)\big)\big\}_{i=3,...,N}
	\end{equation} 
	satisfies the $H_{N-1}$-condition.
	
	Note, that due to Lemma~\ref{lemma:hn1}, the family \eqref{eq:13-temp-convint} has the same barycenter $(\rho^\ast,\vm^\ast,\mU^\ast)$ as the original family $\big\{\big(\tau_i,(\rho_i,\vm_i,\mU_i)\big)\big\}_{i=1,...,N}$. Furthermore 
	\begin{equation} \label{eq:14-temp-convint}
		(\rho_a,\vm_a,\mU_a) := \frac{\tau_1}{\tau_1 + \tau_2} (\rho_1,\vm_1,\mU_1) + \frac{\tau_2}{\tau_1 + \tau_2} (\rho_2,\vm_2,\mU_2) 
	\end{equation}
	satisfies
	\begin{align}
		e(\rho_a,\vm_a,\mU_a) &\leq \frac{\tau_1}{\tau_1 + \tau_2} e(\rho_1,\vm_1,\mU_1) + \frac{\tau_2}{\tau_1 + \tau_2} e(\rho_2,\vm_2,\mU_2) \notag\\
		&\leq \max_{i=1,2} e(\rho_i,\vm_i,\mU_i) \label{eq:24b-temp-convint}\\
		& < c \label{eq:24-temp-convint}
	\end{align}
	due to the convexity of $e$ and the fact that both $(\rho_1,\vm_1,\mU_1),(\rho_2,\vm_2,\mU_2)\in\interior{\sU}$. In particular $(\rho_a,\vm_a,\mU_a)\in \interior{\sU}$. Moreover we obtain from \eqref{eq:14-temp-convint} that $\rho_a > R$ because $\rho_1,\rho_2>R$ by assumption. Hence the family \eqref{eq:13-temp-convint} fulfills the assumptions of Lemma~\ref{lemma:U-oscillatory-lemma} (the lemma we are proving at the moment) with $N-1$ and we can apply the induction hypothesis\footnote{More precisely we set here ``$\ep:=\half \ep$'', i.e. we write $\frac{\ep}{2}$ instead of $\ep$ in \eqref{eq:U-pp-impr-is-ih}.}. This yields a sequence of oscillations 
	$$
		(\til{\rho}_{\oscA,k},\til{\vm}_{\oscA,k},\til{\mU}_{\oscA,k})_{k\in\N}\subset \Cc \big(\Gamma^\ast;\R\times \R^n\times \szn\big)
	$$ 
	with the following properties:
	\begin{enumerate}
		\item The sequence $(\til{\rho}_{\oscA,k},\til{\vm}_{\oscA,k})_{k\in\N}$ converges to $(0,\vz)$ with respect to $d$, i.e. 
			\begin{equation} \label{eq:U-pp-conv-is-ih}
				(\til{\rho}_{\oscA,k},\til{\vm}_{\oscA,k})\mathop{\to}\limits^d (0,\vz)\qquad\text{ as }k\to\infty \ed
			\end{equation}
	\end{enumerate}
	For each fixed $k\in \N$ the following statements hold:
	\begin{enumerate} \setcounter{enumi}{1}
		\item The PDEs 
			\begin{align}
				\partial_t \til{\rho}_{\oscA,k} + \Div \til{\vm}_{\oscA,k} &= 0 \ec \label{eq:U-pp-pde1-is-ih}\\
				\partial_t \til{\vm}_{\oscA,k} + \Div \til{\mU}_{\oscA,k} &= \vz \label{eq:U-pp-pde2-is-ih}
			\end{align}
			hold pointwise for all $(t,\vx)\in \Gamma^\ast$;
		\item There exist open sets $\Gamma_i \subsetcomp \Gamma^\ast$ for all $i = a,3,...,N$, such that 
			\begin{align}
				&\bullet \ \text{the }\ov{\Gamma_i}\text{ are pairwise disjoint and } \label{eq:U-pp-gdis-is-ih} \\
				&\bullet \ (\rho^\ast,\vm^\ast,\mU^\ast) + (\til{\rho}_{\oscA,k},\til{\vm}_{\oscA,k},\til{\mU}_{\oscA,k})(t,\vx) = (\rho_i,\vm_i,\mU_i)\qquad \text{ for all } (t,\vx)\in \Gamma_i \es \label{eq:U-pp-gconst-is-ih}
			\end{align}
		\item The sum of $(\rho^\ast,\vm^\ast,\mU^\ast)$ and the oscillations still takes values in $\interior{\sU}$, more precisely we have 
			\begin{equation} \label{eq:U-pp-subs-is-ih}
				e\Big((\rho^\ast,\vm^\ast,\mU^\ast) + (\til{\rho}_{\oscA,k},\til{\vm}_{\oscA,k},\til{\mU}_{\oscA,k})(t,\vx) \Big) \leq \half \Big(\max_{i=a,3,...,N} e(\rho_i,\vm_i,\mU_i) + c \Big)
			\end{equation} 
			for all $(t,\vx)\in \Gamma^\ast$;
		\item The density is bounded from below, i.e. 
			\begin{equation} \label{eq:U-pp-dens-bdd-is-ih}
				\rho^\ast + \til{\rho}_{\oscA,k}(t,\vx) > R \qquad \text{ for all } (t,\vx)\in\Gamma^\ast\es
			\end{equation}
		\item The value of the functional $I_{\Gamma^\ast}$ can be estimated as follows:
			\begin{equation} \label{eq:U-pp-impr-is-ih}
				I_{\Gamma^\ast}(\rho^\ast + \til{\rho}_{\oscA,k},\vm^\ast + \til{\vm}_{\oscA,k}) > -\frac{\ep}{2} + I_{\Gamma_a}(\rho_a,\vm_a) + \sum_{i = 3}^N I_{\Gamma_i}(\rho_i,\vm_i) \ed 
			\end{equation}
	\end{enumerate} 

	Note that the sets $\Gamma_i$ for $i=a,3,...,N$, in particular the set $\Gamma_a$, depend on $k\in \N$. Now for each $k\in \N$ we apply Lemma~\ref{lemma:U-oscillatory-lemma} (the lemma we are proving at the moment)\footnote{Since we already proved the lemma in the case $N=2$, this procedure is valid.} with $N=2$, where $\Gamma_a$, $\frac{\ep}{2}$, $(\rho_a,\vm_a,\mU_a)$ and $\big\{\big(\frac{\tau_1}{\tau_1 + \tau_2},(\rho_1,\vm_1,\mU_1)\big),\big(\frac{\tau_2}{\tau_1 + \tau_2},(\rho_2,\vm_2,\mU_2)\big)\big\}$ play the role of $\Gamma^\ast$, $\ep$, $(\rho^\ast,\vm^\ast,\mU^\ast)$ and $\big\{\big(\tau_i,(\rho_i,\vm_i,\mU_i)\big)\big\}_{i=1,2}$, respectively\footnote{Note that $\Gamma_a$ is in general not connected, which is the reason why we treat $\Gamma^\ast$ as not necessarily connected in Lemma~\ref{lemma:U-oscillatory-lemma}, see also Example~\ref{ex:CI} and Figure~\ref{fig:CI-example-2}.}. 
	
	The assumptions can easily be checked: 
	Note first, that $\Gamma_a \subsetcomp \Gamma^\ast$ and hence $\Gamma_a$ is bounded. Furthermore $(\rho_a,\vm_a,\mU_a)\in \interior{\sU}$, see \eqref{eq:24-temp-convint}, and $\rho_a>R$. In addition to that, it is clear that $\big(\frac{\tau_1}{\tau_1 + \tau_2},(\rho_1,\vm_1,\mU_1)\big),\big(\frac{\tau_2}{\tau_1 + \tau_2},(\rho_2,\vm_2,\mU_2)\big)\in \R^+\times \interior{\sU}$. Finally we have that 
	\begin{itemize}
		\item $\rho_1,\rho_2>R$ by assumption, 
		\item the family 
		\begin{equation} \label{eq:80-temp-convint}
			\left\{\left(\frac{\tau_1}{\tau_1 + \tau_2},(\rho_1,\vm_1,\mU_1)\right),\left(\frac{\tau_2}{\tau_1 + \tau_2},(\rho_2,\vm_2,\mU_2)\right)\right\}
		\end{equation}
		satisfies the $H_2$-condition because $(\rho_2,\vm_2,\mU_2)-(\rho_1,\vm_1,\mU_1)\in \Lambda$, see \eqref{eq:11-temp-convint}, and $\frac{\tau_1}{\tau_1 + \tau_2} + \frac{\tau_2}{\tau_1 + \tau_2} = 1$, and 
		\item the barycenter of the family in \eqref{eq:80-temp-convint} is equal to $(\rho_a,\vm_a,\mU_a)$ by \eqref{eq:14-temp-convint}.
	\end{itemize}

	Hence for each fixed $k\in\N$ we find a sequence 
	$$
	(\til{\rho}_{\oscB,\ell},\til{\vm}_{\oscB,\ell},\til{\mU}_{\oscB,\ell})_{\ell\in\N}\subset \Cc \big(\Gamma_a;\R\times \R^n\times \szn\big)
	$$ 
	with the following properties:	
	\begin{enumerate}
		\item The sequence $(\til{\rho}_{\oscB,\ell},\til{\vm}_{\oscB,\ell})_{\ell\in\N}$ converges to $(0,\vz)$ with respect to $d$, i.e.
			\begin{equation} \label{eq:U-pp-conv-is-2-temp}
				(\til{\rho}_{\oscB,\ell},\til{\vm}_{\oscB,\ell}) \mathop{\to}\limits^d (0,\vz) \qquad \text{ as }\ell\to \infty \ed 
			\end{equation}
	\end{enumerate}
	For each fixed $\ell\in \N$ the following statements hold:
	\begin{enumerate} \setcounter{enumi}{1}
		\item The PDEs 
			\begin{align}
				\partial_t \til{\rho}_{\oscB,\ell} + \Div \til{\vm}_{\oscB,\ell} &= 0 \ec \label{eq:U-pp-pde1-is-2} \\
				\partial_t \til{\vm}_{\oscB,\ell} + \Div \til{\mU}_{\oscB,\ell} &= \vz \label{eq:U-pp-pde2-is-2}
			\end{align}
			hold pointwise for all $(t,\vx)\in \Gamma_a$;
		
		\item There exist open sets $\Gamma_1,\Gamma_2 \subsetcomp \Gamma_a$ such that 
			\begin{align} 
				&\bullet \ \closure{\Gamma_1}\cap \closure{\Gamma_2} = \emptyset \quad \text{ and} \label{eq:U-pp-gdis-is-2}\\
				&\bullet \ (\rho_a,\vm_a,\mU_a) + (\til{\rho}_{\oscB,\ell},\til{\vm}_{\oscB,\ell},\til{\mU}_{\oscB,\ell})(t,\vx) = (\rho_i,\vm_i,\mU_i)\qquad \text{ for all } (t,\vx)\in \Gamma_i \es \label{eq:U-pp-gconst-is-2}
			\end{align}
		\item The sum of $(\rho_a,\vm_a,\mU_a)$ and the oscillations still takes values in $\interior{\sU}$, more precisely we have
			\begin{equation} \label{eq:U-pp-subs-is-2}
				e\Big((\rho_a,\vm_a,\mU_a) + (\til{\rho}_{\oscB,\ell},\til{\vm}_{\oscB,\ell},\til{\mU}_{\oscB,\ell})(t,\vx) \Big) \leq \half \Big(\max_{i=1,2} e(\rho_i,\vm_i,\mU_i) + c\Big)
			\end{equation} 
			for all $(t,\vx)\in \Gamma_a$;
		\item The density if bounded from below, i.e. 
			\begin{equation} \label{eq:U-pp-dens-bdd-is-2}
				\rho_a + \til{\rho}_{\oscB,\ell}(t,\vx) > R \qquad\text{ for all }(t,\vx)\in \Gamma_a \es
			\end{equation}
		\item The value of the functional $I_{\Gamma_a}$ can be estimated as follows:
			\begin{equation} \label{eq:U-pp-impr-is-2}
				I_{\Gamma_a}(\rho_a + \til{\rho}_{\oscB,\ell},\vm_a + \til{\vm}_{\oscB,\ell}) > -\frac{\ep}{2} + \sum_{i = 1}^2 I_{\Gamma_i}(\rho_i,\vm_i) \ed 
			\end{equation}
	\end{enumerate} 
	
	Note again that the sequence $(\til{\rho}_{\oscB,\ell},\til{\vm}_{\oscB,\ell})_{\ell\in\N}$ still depends on $k\in \N$. In other words we deal with a sequence of sequences. Since \eqref{eq:U-pp-conv-is-2-temp} holds for each $k\in\N$, we find $\ell(k)\in \N$ for each $k\in \N$ such that 
	$$
	d\big((\til{\rho}_{\oscB,\ell(k)},\til{\vm}_{\oscB,\ell(k)}),(0,\vz)\big) \leq \frac{1}{k} \ed
	$$
	This means that the sequence $(\til{\rho}_{\oscB,\ell(k)},\til{\vm}_{\oscB,\ell(k)})_{k\in\N}$ converges to $(0,\vz)$, i.e. 
	\begin{equation} \label{eq:U-pp-conv-is-2}
		(\til{\rho}_{\oscB,\ell(k)},\til{\vm}_{\oscB,\ell(k)}) \mathop{\to}\limits^d (0,\vz) \qquad \text{ as }k\to \infty \ec
	\end{equation} 
	where the properties \eqref{eq:U-pp-pde1-is-2} - \eqref{eq:U-pp-impr-is-2} still hold for each fixed $k\in \N$.

	Finally define the sequence of oscillations
	\begin{equation*}
		(\til{\rho}_k,\til{\vm}_k,\til{\mU}_k)_{k\in\N} := (\til{\rho}_{\oscA,k},\til{\vm}_{\oscA,k},\til{\mU}_{\oscA,k})_{k\in\N} + (\til{\rho}_{\oscB,\ell(k)},\til{\vm}_{\oscB,\ell(k)},\til{\mU}_{\oscB,\ell(k)})_{k\in\N} \ed
	\end{equation*}
	
	Since 
	$$
		(\til{\rho}_{\oscA,k},\til{\vm}_{\oscA,k},\til{\mU}_{\oscA,k})\in \Cc \big(\Gamma^\ast;\R\times \R^n\times \szn\big)
	$$ 
	and 
	$$
		(\til{\rho}_{\oscB,\ell(k)},\til{\vm}_{\oscB,\ell(k)},\til{\mU}_{\oscB,\ell(k)})\in \Cc \big(\Gamma_a;\R\times \R^n\times \szn\big) \ec
	$$ 
	we deduce 
	$$
		(\til{\rho}_k,\til{\vm}_k,\til{\mU}_k)\in \Cc \big(\Gamma^\ast;\R\times \R^n\times \szn\big) \ed
	$$ 
	It remains to prove the desired properties \ref{item:U-pp.a} - \ref{item:U-pp.f}: 
	\begin{enumerate} 
		\item We simply derive \eqref{eq:U-pp-conv} from \eqref{eq:U-pp-conv-is-ih} and \eqref{eq:U-pp-conv-is-2}.
	\end{enumerate}
	Let now $k\in\N$ be fixed. For convenience we write $(\til{\rho}_{\oscA},\til{\vm}_{\oscA},\til{\mU}_{\oscA})$ and $(\til{\rho}_{\oscB},\til{\vm}_{\oscB},\til{\mU}_{\oscB})$ instead of $(\til{\rho}_{\oscA,k},\til{\vm}_{\oscA,k},\til{\mU}_{\oscA,k})$ and $(\til{\rho}_{\oscB,\ell(k)},\til{\vm}_{\oscB,\ell(k)},\til{\mU}_{\oscB,\ell(k)})$, respectively. 
	\begin{enumerate} \setcounter{enumi}{1}
		\item The validity of the PDEs \eqref{eq:U-pp-pde1} and \eqref{eq:U-pp-pde2} immediately follow from \eqref{eq:U-pp-pde1-is-ih}, \eqref{eq:U-pp-pde1-is-2} and \eqref{eq:U-pp-pde2-is-ih}, \eqref{eq:U-pp-pde2-is-2}, respectively.
		
		\item Due to \eqref{eq:U-pp-gdis-is-ih} and \eqref{eq:U-pp-gdis-is-2}, we obtain \eqref{eq:U-pp-gdis}. To show \eqref{eq:U-pp-gconst} we consider two cases.
		\begin{itemize}
			\item Let first $i=3,...,N$. On $\Gamma_i$ we have $(\til{\rho}_{\oscB},\til{\vm}_{\oscB},\til{\mU}_{\oscB}) \equiv (0,\vz,\mZ)$ due to the fact that $(\til{\rho}_{\oscB},\til{\vm}_{\oscB},\til{\mU}_{\oscB})\in \Cc (\Gamma_a;\R\times \R^n\times \szn)$ and $\closure{\Gamma_i}\cap \closure{\Gamma_a} = \emptyset$, see \eqref{eq:U-pp-gdis-is-ih}. Hence \eqref{eq:U-pp-gconst} follows from \eqref{eq:U-pp-gconst-is-ih}.
			\item Let now $i=1,2$. Then \eqref{eq:U-pp-gconst} is a consequence of \eqref{eq:U-pp-gconst-is-ih} and \eqref{eq:U-pp-gconst-is-2}. Indeed for all $(t,\vx)\in \Gamma_i \subset\Gamma_a$ we obtain
			\begin{align*}
				&(\rho^\ast,\vm^\ast,\mU^\ast) + (\til{\rho}_k,\til{\vm}_k,\til{\mU}_k)(t,\vx) \\
				&= (\rho^\ast,\vm^\ast,\mU^\ast) + (\til{\rho}_{\oscA},\til{\vm}_{\oscA},\til{\mU}_{\oscA})(t,\vx) + (\til{\rho}_{\oscB},\til{\vm}_{\oscB},\til{\mU}_{\oscB})(t,\vx) \\
				&= (\rho_a,\vm_a,\mU_a) + (\til{\rho}_{\oscB},\til{\vm}_{\oscB},\til{\mU}_{\oscB})(t,\vx) \\
				&= (\rho_i,\vm_i,\mU_i) \ed
			\end{align*}
		\end{itemize}
		
		\item We again distinguish between two cases. 
		\begin{itemize}
			\item Let first $(t,\vx)\in \Gamma^\ast \setminus \Gamma_a$. As above this implies that $(\til{\rho}_{\oscB},\til{\vm}_{\oscB},\til{\mU}_{\oscB})(t,\vx) = (0,\vz,\mZ)$. Hence
			\begin{align*} 
				&e\Big((\rho^\ast,\vm^\ast,\mU^\ast) + (\til{\rho}_k,\til{\vm}_k,\til{\mU}_k)(t,\vx)\Big) \\
				&= e\Big((\rho^\ast,\vm^\ast,\mU^\ast) + (\til{\rho}_{\oscA},\til{\vm}_{\oscA},\til{\mU}_{\oscA})(t,\vx)\Big) \\
				&\leq \half \Big(\max_{i=a,3,...,N} e(\rho_i,\vm_i,\mU_i) + c \Big) \\
				&\leq \half \Big(\max_{i=1,...,N} e(\rho_i,\vm_i,\mU_i) + c \Big)
			\end{align*}
			according to \eqref{eq:U-pp-subs-is-ih} and \eqref{eq:24b-temp-convint}.
			
			\item Let now $(t,\vx)\in \Gamma_a$. Then by \eqref{eq:U-pp-gconst-is-ih} and \eqref{eq:U-pp-subs-is-2}
			\begin{align*}
				&e\Big((\rho^\ast,\vm^\ast,\mU^\ast) + (\til{\rho}_k,\til{\vm}_k,\til{\mU}_k)(t,\vx)\Big) \\
				&= e\Big((\rho^\ast,\vm^\ast,\mU^\ast) + (\til{\rho}_{\oscA},\til{\vm}_{\oscA},\til{\mU}_{\oscA})(t,\vx) + (\til{\rho}_{\oscB},\til{\vm}_{\oscB},\til{\mU}_{\oscB})(t,\vx)\Big) \\
				&= e\Big((\rho_a,\vm_a,\mU_a) + (\til{\rho}_{\oscB},\til{\vm}_{\oscB},\til{\mU}_{\oscB})(t,\vx)\Big) \\
				&\leq \half \Big(\max_{i=1,2} e(\rho_i,\vm_i,\mU_i) + c\Big) \\
				&\leq \half \Big(\max_{i=1,...,N} e(\rho_i,\vm_i,\mU_i) + c\Big) \ed
			\end{align*}
		\end{itemize}
		So we have shown that \eqref{eq:U-pp-subs} holds in both cases. 
		
		\item Let us again consider two cases. 
		\begin{itemize}
			\item Let $(t,\vx)\in\Gamma^\ast\setminus \Gamma_a$. Then $\rho^\ast+\til{\rho}_k(t,\vx) = \rho^\ast + \til{\rho}_{\oscA}(t,\vx)$, and hence \eqref{eq:U-pp-dens-bdd-is-ih} yields \eqref{eq:U-pp-dens-bdd}.
			\item Let now $(t,\vx)\in \Gamma_a$. Then $\rho^\ast+\til{\rho}_k(t,\vx) = \rho_a + \til{\rho}_{\oscB}(t,\vx)$ due to \eqref{eq:U-pp-gconst-is-ih}. We deduce \eqref{eq:U-pp-dens-bdd} from \eqref{eq:U-pp-dens-bdd-is-2}.
		\end{itemize}
		
		\item Using once again, that $(\til{\rho}_{\oscB},\til{\vm}_{\oscB},\til{\mU}_{\oscB}) \equiv (0,\vz,\mZ)$ on $\Gamma^\ast \setminus \Gamma_a$, we obtain
		\begin{align} 
			&I_{\Gamma^\ast} (\rho^\ast + \til{\rho}_k, \vm^\ast + \til{\vm}_k ) \notag\\
			&= I_{\Gamma^\ast} (\rho^\ast + \til{\rho}_{\oscA} + \til{\rho}_{\oscB}, \vm^\ast + \til{\vm}_{\oscA} + \til{\vm}_{\oscB} ) \notag\\
			&= \iint_{\Gamma^\ast\setminus\Gamma_a} E(\rho^\ast + \til{\rho}_{\oscA}, \vm^\ast + \til{\vm}_{\oscA}) \dx\dt + I_{\Gamma_a} (\rho^\ast + \til{\rho}_{\oscA} + \til{\rho}_{\oscB}, \vm^\ast + \til{\vm}_{\oscA} + \til{\vm}_{\oscB} ) \ed \label{eq:15-temp-convint} 
		\end{align}
		We treat both integrals in \eqref{eq:15-temp-convint} separately.
		
		First, as a consequence of \eqref{eq:U-pp-impr-is-ih} and \eqref{eq:U-pp-gconst-is-ih}, we get 
		\begin{align}
			&\iint_{\Gamma^\ast\setminus\Gamma_a} E(\rho^\ast + \til{\rho}_{\oscA}, \vm^\ast + \til{\vm}_{\oscA}) \dx\dt \notag \\
			&= I_{\Gamma^\ast} (\rho^\ast + \til{\rho}_{\oscA}, \vm^\ast + \til{\vm}_{\oscA}) - I_{\Gamma_a} (\rho^\ast + \til{\rho}_{\oscA}, \vm^\ast + \til{\vm}_{\oscA}) \notag \\
			&> -\frac{\ep}{2} + I_{\Gamma_a} (\rho_a,\vm_a) + \sum_{i=3}^N I_{\Gamma_i} (\rho_i,\vm_i) - I_{\Gamma_a} (\rho_a, \vm_a) \notag \\
			&= -\frac{\ep}{2} + \sum_{i=3}^N I_{\Gamma_i} (\rho_i,\vm_i) \ed \label{eq:16-temp-convint}
		\end{align}
		
		Second, \eqref{eq:U-pp-gconst-is-ih} and \eqref{eq:U-pp-impr-is-2} yield
		\begin{align}
			&I_{\Gamma_a} (\rho^\ast + \til{\rho}_{\oscA} + \til{\rho}_{\oscB}, \vm^\ast + \til{\vm}_{\oscA} + \til{\vm}_{\oscB} ) \notag \\
			&= I_{\Gamma_a} (\rho_a + \til{\rho}_{\oscB}, \vm_a + \til{\vm}_{\oscB} ) \notag \\
			&> -\frac{\ep}{2} + \sum_{i=1}^2 I_{\Gamma_i} (\rho_i,\vm_i) \ed \label{eq:17-temp-convint}
		\end{align}
		
		Plugging \eqref{eq:16-temp-convint} and \eqref{eq:17-temp-convint} into \eqref{eq:15-temp-convint} yields \eqref{eq:U-pp-impr}.
	\end{enumerate}
	This finishes the proof. 
\end{proof} 

In order to illustrate the induction step, let us consider the following example where $N=3$.

\begin{figure}[tb] 
	\centering
	\subfloat[Constellation in the extended phase space.\label{fig:CI-example-HN}]{ 
		\centering
		\includegraphics[width=0.43\textwidth]{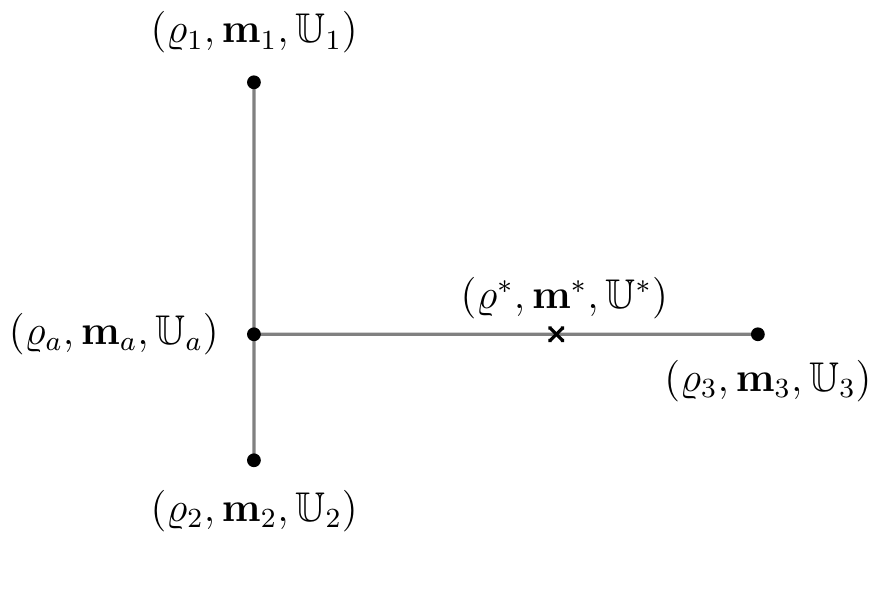} 
	}
	\hspace{1.5cm}
	\subfloat[Illustration of the induction hypothesis.\label{fig:CI-example-2}]{
		\centering
		\includegraphics[width=0.43\textwidth]{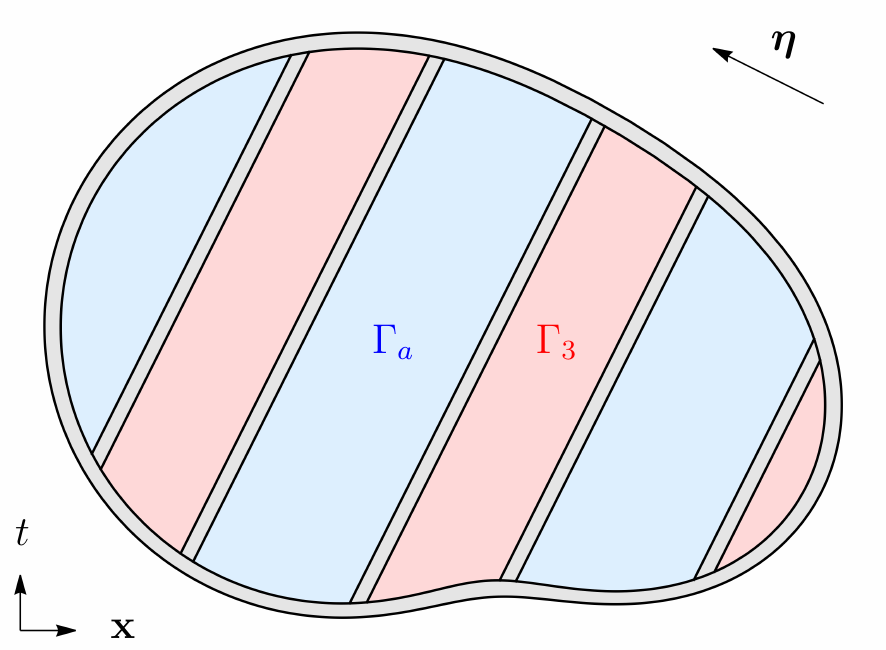} 
	} \\
	\subfloat[For the induction step we apply Lemma~\ref{lemma:U-oscillatory-lemma} for $N=2$ on $\Gamma_a$.\label{fig:CI-example-induction}]{
		\centering
		\includegraphics[width=0.43\textwidth]{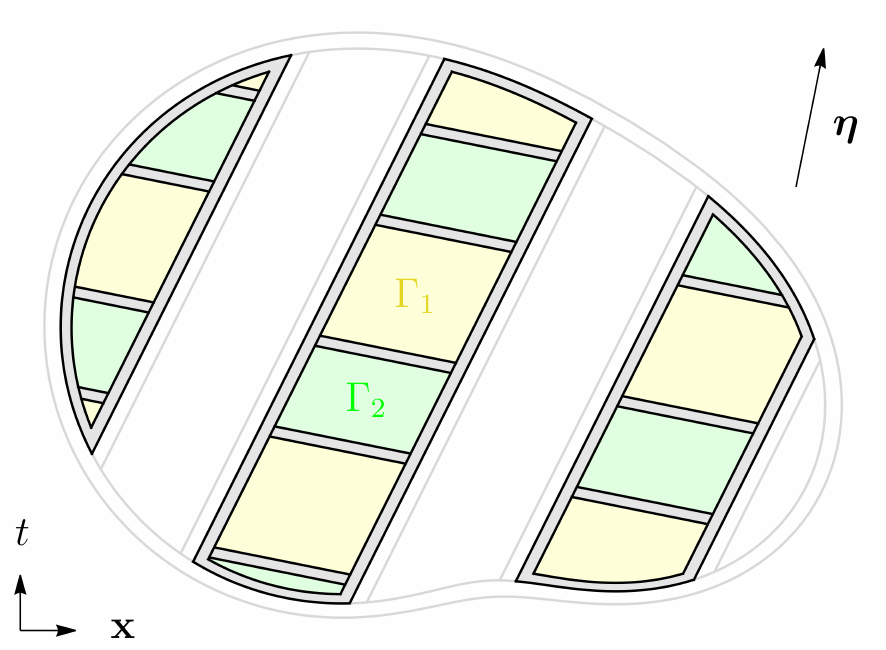}
	}
	\hspace{1.5cm}
	\subfloat[Combination of Figures (b) and (c).\label{fig:CI-example-3}]{
		\centering
		\includegraphics[width=0.43\textwidth]{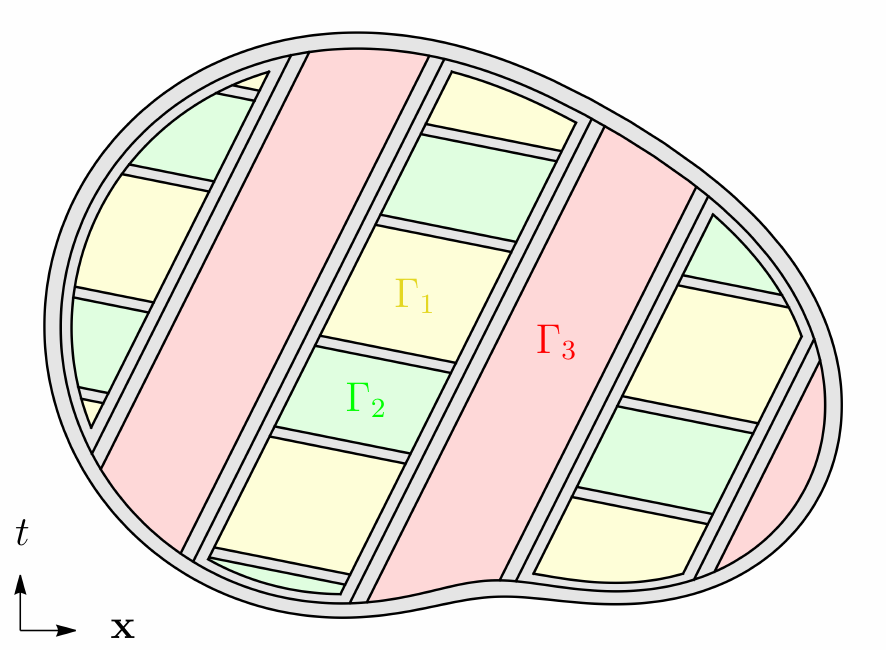} 
	} 
	\caption{Illustration of Example~\ref{ex:CI}. Frames and slices are marked in gray, whereas the sets $\Gamma_1$, $\Gamma_2$, $\Gamma_3$ and $\Gamma_a$ are colored yellow, green, red and blue, respectively.}
	\label{fig:CI-example} 
\end{figure}

\begin{ex} \label{ex:CI}
	Let the assumptions of Lemma~\ref{lemma:U-oscillatory-lemma} hold with $N=3$, i.e. the family 
	$$
		\Big\{ \big( \tau_1,(\rho_1,\vm_1,\mU_1)\big) , \big( \tau_2,(\rho_2,\vm_2,\mU_2)\big) , \big( \tau_3,(\rho_3,\vm_3,\mU_3)\big) \Big\}
	$$
	satisfies the $H_3$-condition. The constellation in the extended phase space is depicted in Figure~\ref{fig:CI-example-HN}. Let $(\rho_a,\vm_a,\mU_a)$ defined as in \eqref{eq:14-temp-convint}. Then the family in \eqref{eq:13-temp-convint} reads 
	$$
		\big\{\big(\tau_1+\tau_2,(\rho_a,\vm_a,\mU_a)\big),\big(\tau_3,(\rho_3,\vm_3,\mU_3)\big)\big\}
	$$ 
	and satisfies the $H_2$-condition. Applying Lemma~\ref{lemma:U-oscillatory-lemma} to the latter family, we obtain a sequence of oscillations 
	$$
		(\til{\rho}_{\oscA,k},\til{\vm}_{\oscA,k},\til{\mU}_{\oscA,k})_{k\in\N}\subset \Cc \big(\Gamma^\ast;\R\times \R^n\times \szn\big)
	$$
	fulfilling \eqref{eq:U-pp-conv-is-ih} - \eqref{eq:U-pp-impr-is-ih}, see Figure~\ref{fig:CI-example-2}. In particular 
	$$
		 (\rho^\ast,\vm^\ast,\mU^\ast) + (\til{\rho}_{\oscA,k},\til{\vm}_{\oscA,k},\til{\mU}_{\oscA,k})(t,\vx) = (\rho_a,\vm_a,\mU_a)\qquad \text{ for all } (t,\vx)\in \Gamma_a \ec
	$$
	see \eqref{eq:U-pp-gconst-is-ih}. Since the family
	\begin{equation} \label{eq:9-temp-convint}
		\left\{\left(\frac{\tau_1}{\tau_1 + \tau_2},(\rho_1,\vm_1,\mU_1)\right),\left(\frac{\tau_2}{\tau_1 + \tau_2},(\rho_2,\vm_2,\mU_2)\right)\right\}
	\end{equation}
	satisfies the $H_2$-condition with barycenter $(\rho_a,\vm_a,\mU_a)$, we can apply Lemma~\ref{lemma:U-oscillatory-lemma} again, this time to the family in \eqref{eq:9-temp-convint} and on $\Gamma_a$. Figure~\ref{fig:CI-example-2} shows that $\Gamma_a$ is not connected. This is the reason why we treat $\Gamma^\ast$ as not necessarily connected in Lemma~\ref{lemma:U-oscillatory-lemma}. Applying Lemma~\ref{lemma:U-oscillatory-lemma} yields 
	$$
		(\til{\rho}_{\oscB,\ell},\til{\vm}_{\oscB,\ell},\til{\mU}_{\oscB,\ell})_{\ell\in\N}\subset \Cc \big(\Gamma_a;\R\times \R^n\times \szn\big)
	$$
	such that \eqref{eq:U-pp-conv-is-2-temp} - \eqref{eq:U-pp-impr-is-2} hold, see Figure~\ref{fig:CI-example-induction}. 
	Finally the sum 
	$$
		(\til{\rho}_k,\til{\vm}_k,\til{\mU}_k)_{k\in\N} := (\til{\rho}_{\oscA,k},\til{\vm}_{\oscA,k},\til{\mU}_{\oscA,k})_{k\in\N} + (\til{\rho}_{\oscB,\ell(k)},\til{\vm}_{\oscB,\ell(k)},\til{\mU}_{\oscB,\ell(k)})_{k\in\N} 
	$$ 
	satisfies \eqref{eq:U-pp-conv} - \eqref{eq:U-pp-impr}, see Figure~\ref{fig:CI-example-3}.
\end{ex}

Now we are ready to prove Lemma~\ref{lemma:K-oscillatory-lemma} using Lemmas \ref{lemma:existence-family}, \ref{lemma:UstattK} and \ref{lemma:U-oscillatory-lemma}.

\begin{proof}[Proof of Lemma~\ref{lemma:K-oscillatory-lemma}] 
	From Lemma~\ref{lemma:existence-family} we obtain $N\in\N$ with $N\geq 2$ and furthermore $\big(\tau_i,(\rho_i,\vm_i,\mU_i)\big)\in \R^+\times K$ for $i=1,...,N$ such that 
	\begin{itemize}
		\item $\rho_i>R$ for all $i=1,...,N$,
		\item the family $\big\{\big(\tau_i,(\rho_i,\vm_i,\mU_i)\big)\big\}_{i=1,...,N}$ satisfies the $H_N$-condition and
		\item $(\rho^\ast,\vm^\ast,\mU^\ast) = \sum_{i=1}^N \tau_i (\rho_i,\vm_i,\mU_i) $.
	\end{itemize}

	Set 
	\begin{equation} \label{eq:definition-tau}
		\tau := \min\left\{ \frac{\frac{C}{2} }{c + 2\frac{M^4}{r^3} + n\big(p(M) + p'(M) M\big)} , \half\right\} \ec 
	\end{equation}
	with the $M$ of Lemma~\ref{lemma:U-bdd}. Note that $\tau\in(0,1)$. Then define 
	\begin{equation} \label{eq:26-temp-convint}
		(\hat{\rho}_i,\hat{\vm}_i,\hat{\mU}_i) := \tau (\rho^\ast,\vm^\ast,\mU^\ast) + (1-\tau) (\rho_i,\vm_i,\mU_i)
	\end{equation}
	for all $i=1,...,N$. 
	
	
	\begin{figure}[b] 
		\centering
		\hspace{1cm}
		\subfloat{ 
			\centering
			\includegraphics[width=0.43\textwidth]{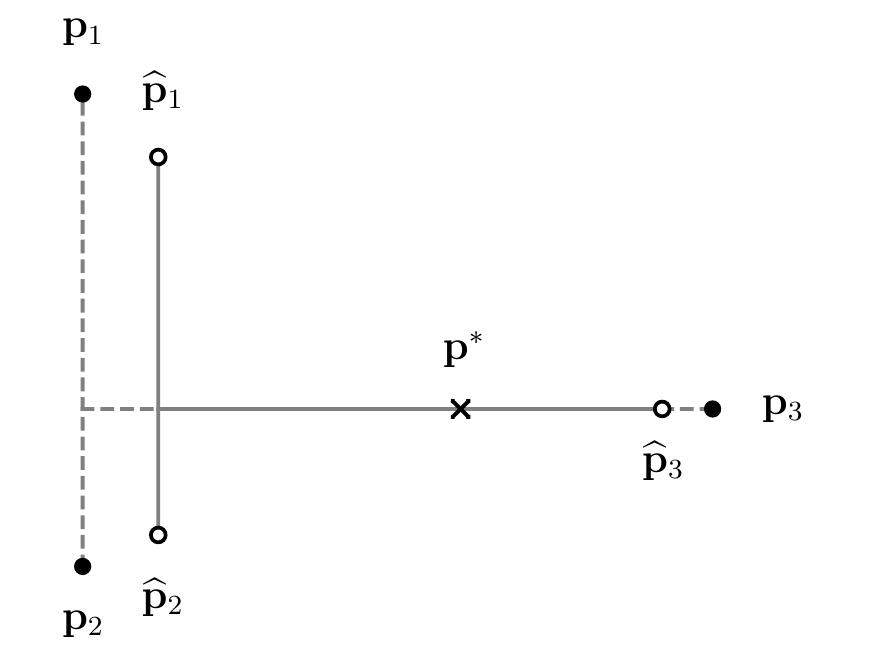} 
		}
		\hspace{0.5cm}
		\subfloat{
			\centering
			\includegraphics[width=0.43\textwidth]{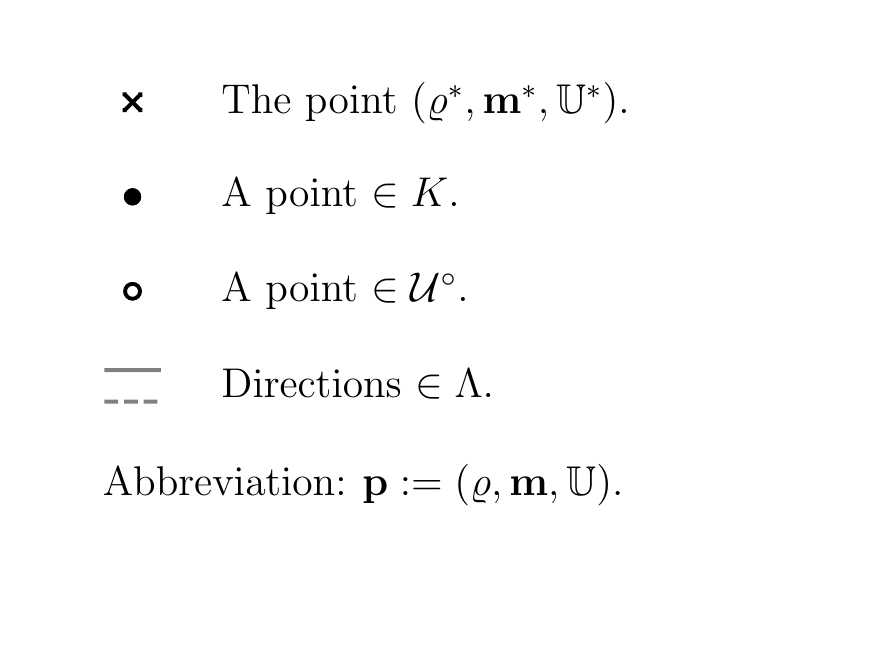} 
		} 
		\caption{An exemplary constellation in the extended phase space where $N=3$.} 
		\label{fig:CI-UstattK}
	\end{figure}
	
	According to Lemma~\ref{lemma:UstattK} we have
	\begin{itemize}
		\item $\hat{\rho}_i>R$ for all $i=1,...,N$,
		\item $(\hat{\rho}_i,\hat{\vm}_i,\hat{\mU}_i)\in \interior{\sU}$ for all $i=1,...,N$,
		\item The family $\big\{\big(\tau_i,(\hat{\rho}_i,\hat{\vm}_i,\hat{\mU}_i)\big)\big\}_{i=1,...,N}$ satisfies the $H_N$-condition and
		\item $(\rho^\ast,\vm^\ast,\mU^\ast) = \sum_{i=1}^N \tau_i (\hat{\rho}_i,\hat{\vm}_i,\hat{\mU}_i)$,
	\end{itemize}
	see Figure~\ref{fig:CI-UstattK}. In other words the assumptions of Lemma~\ref{lemma:U-oscillatory-lemma} hold for the family 
	$$
	\Big\{\big(\tau_i,(\hat{\rho}_i,\hat{\vm}_i,\hat{\mU}_i)\big)\Big\}_{i=1,...,N} \ed
	$$
	Hence Lemma~\ref{lemma:U-oscillatory-lemma} yields a sequence 
	$$
		(\til{\rho}_k,\til{\vm}_k,\til{\mU}_k)_{k\in \N}\subset \Cc \big(\Gamma^\ast;\R\times \R^n\times \szn\big)
	$$ 
	fulfilling properties \ref{item:U-pp.a} - \ref{item:U-pp.f} of Lemma~\ref{lemma:U-oscillatory-lemma}, where we set ``$\ep:=\half\ep$'', i.e. we obtain \eqref{eq:U-pp-impr} with $\half\ep$ instead of $\ep$. Let us show that this sequence satisfies the desired properties \ref{item:K-pp.a} - \ref{item:K-pp.e} of Lemma~\ref{lemma:K-oscillatory-lemma}.
	
	\begin{enumerate}
		\item The convergence \eqref{eq:K-pp-conv} holds according to \eqref{eq:U-pp-conv}.
	\end{enumerate}
	Let now $k\in\N$ be fixed. 
	\begin{enumerate} \setcounter{enumi}{1}
		\item The PDEs \eqref{eq:K-pp-pde1}, \eqref{eq:K-pp-pde2} hold due to \eqref{eq:U-pp-pde1}, \eqref{eq:U-pp-pde2}.
		
		\item For all $(t,\vx)\in \Gamma^\ast$ we obtain from \eqref{eq:U-pp-subs}, \eqref{eq:26-temp-convint} and the convexity of $e$
		\begin{align}
			&e\Big((\rho^\ast,\vm^\ast,\mU^\ast) + (\til{\rho}_k,\til{\vm}_k,\til{\mU}_k)(t,\vx)\Big) \notag \\
			&\leq \half \Big(\max_{i=1,...,N} e(\hat{\rho}_i,\hat{\vm}_i,\hat{\mU}_i) + c\Big) \notag\\
			&\leq \half \tau e(\rho^\ast,\vm^\ast,\mU^\ast) + \half (1-\tau) \max_{i=1,...,N} e(\rho_i,\vm_i,\mU_i) + \frac{c}{2} \ed \label{eq:25-temp-convint}
		\end{align}
		Since $(\rho_i,\vm_i,\mU_i)\in K$, we have $e(\rho_i,\vm_i,\mU_i)=c$ for all $i=1,...,N$, see \eqref{eq:boundaryU}. Hence \eqref{eq:25-temp-convint} implies together with the assumption, that $e(\rho^\ast,\vm^\ast,\mU^\ast) \leq c-\gamma$, 
		\begin{align*}
			e\Big((\rho^\ast,\vm^\ast,\mU^\ast) + (\til{\rho}_k,\til{\vm}_k,\til{\mU}_k)(t,\vx)\Big) &\leq \half \tau \Big(e(\rho^\ast,\vm^\ast,\mU^\ast) -c \Big) +c \\
			&\leq c - \frac{\tau \gamma}{2}\ed
		\end{align*}
		Setting $\beta:=\frac{\tau \gamma}{2}$, this yields \eqref{eq:K-pp-subs}. Note that $\beta$ only depends on $\gamma$ and $C$, see \eqref{eq:definition-tau}.
		
		\item We obtain \eqref{eq:K-pp-dens-bdd} from \eqref{eq:U-pp-dens-bdd}.
		
		\item In order to show \eqref{eq:K-pp-impr} we start by proving 
		\begin{equation} \label{eq:29-temp-convint}
			E(\hat{\rho}_i,\hat{\vm}_i) \geq -\frac{C}{2} \qquad \text{ for all }i=1,...,N \ed
		\end{equation} 
		To this end we define the mappings
		\begin{align*}
			\tau\mapsto E_i(\tau) &:= E\big(\tau(\rho^\ast,\vm^\ast)+(1-\tau)(\rho_i,\vm_i)\big) \\
			&= \frac{\big|\tau\vm^\ast + (1-\tau) \vm_i\big|^2}{2\big(\tau\rho^\ast + (1-\tau) \rho_i\big)} + \frac{n}{2} p\big(\tau\rho^\ast + (1-\tau) \rho_i\big) - c \ec
		\end{align*}
		for $i=1,...,N$. It simply follows from the convexity of $E$, see Lemma~\ref{lemma:prop-E}~\ref{item:prop-E.b}, that the mappings $E_i$ are convex as well. This implies 
		\begin{equation} \label{eq:30-temp-convint}
			E_i(\tau) \geq \tau E_i'(0) + E_i(0)
		\end{equation}
		for all $\tau\in [0,1]$, in particular for $\tau$ as in \eqref{eq:definition-tau}. Note that
		\begin{align}
			E_i(0) = E(\rho_i,\vm_i) &= \frac{|\vm_i|^2}{2\rho_i} + \frac{n}{2} p(\rho_i) - c \notag\\
			&= \half \tr\left( \frac{\vm_i\otimes \vm_i}{\rho_i} + p(\rho_i) \id - \mU_i \right) - c \notag\\
			&= 0 \qquad \qquad \text{ for all }i=1,...,N \label{eq:27-temp-convint}
		\end{align}
		because $ (\rho_i,\vm_i,\mU_i)\in K$. Furthermore a straighforward computation yields
		\begin{align*}
			E_i'(\tau) &= \frac{\big(\tau\vm^\ast + (1-\tau) \vm_i\big)\cdot (\vm^\ast-\vm_i)}{\tau \rho^\ast + (1-\tau)\rho_i} -  \frac{\big|\tau\vm^\ast + (1-\tau) \vm_i\big|^2}{2\big(\tau\rho^\ast + (1-\tau) \rho_i\big)^2} (\rho^\ast - \rho_i) \\
			&\qquad + \frac{n}{2} p'\big(\tau\rho^\ast + (1-\tau) \rho_i\big) (\rho^\ast - \rho_i)
		\end{align*}
		and in particular
		\begin{align*}
			E_i'(0) &= \frac{\vm_i\cdot (\vm^\ast-\vm_i)}{\rho_i} -  \frac{|\vm_i|^2}{2\rho_i^2} (\rho^\ast - \rho_i) + \frac{n}{2} p'(\rho_i) (\rho^\ast - \rho_i) \ed
		\end{align*}
		Using \eqref{eq:27-temp-convint} the latter can be written as
		\begin{equation*}
			E_i'(0) = E(\rho^\ast,\vm^\ast) - \half \frac{|\rho_i\vm^\ast - \rho^\ast \vm_i|^2}{\rho_i^2 \rho^\ast} + \frac{n}{2} \Big( p(\rho_i) - p(\rho^\ast) + p'(\rho_i) (\rho^\ast - \rho_i) \Big) \ec
		\end{equation*}
		which we estimate using Lemma~\ref{lemma:U-bdd} to 
		\begin{equation} \label{eq:28-temp-convint}
			E_i'(0) \geq -c - \frac{2 M^4}{r^3} - n \Big( p(M) + p'(M) M \Big) \ed 
		\end{equation}
		Putting together \eqref{eq:30-temp-convint}, \eqref{eq:27-temp-convint}, \eqref{eq:28-temp-convint} and \eqref{eq:definition-tau}, we end up with \eqref{eq:29-temp-convint}.
		
		Now with \eqref{eq:U-pp-impr} and \eqref{eq:29-temp-convint} we obtain
		\begin{align*}
			I_{\Gamma^\ast}(\rho^\ast + \til{\rho}_k,\vm^\ast + \til{\vm}_k) &> -\frac{\ep}{2} + \sum_{i = 1}^N I_{\Gamma_i} (\hat{\rho}_i,\hat{\vm}_i)\\
			&= -\frac{\ep}{2} + \sum_{i = 1}^N |\Gamma_i| E(\hat{\rho}_i,\hat{\vm}_i) \\
			&\geq -\frac{\ep}{2} - \frac{C}{2}\sum_{i = 1}^N |\Gamma_i| \\
			&\geq -\frac{\ep}{2} - \frac{\ep}{2} \\
			& = -\ep \ec
		\end{align*}
		where we made use of $\sum_{i = 1}^N |\Gamma_i| \leq |\Gamma^\ast| = \frac{\ep}{C}$, which follows from the fact that the $\closure{\Gamma_i}\subset\Gamma^\ast$ are pairwise disjoint, see \eqref{eq:U-pp-gdis}. This shows \eqref{eq:K-pp-impr} and hence finishes the proof.
	\end{enumerate}
\end{proof}

\subsection{Proof} \label{subsec:convint-pp-proof} 
With Lemma~\ref{lemma:K-oscillatory-lemma} at hand we are ready to prove the Perturbation Property.

\begin{proof}[Proof of Proposition~\ref{prop:pert-prop}]
Note first of all, that we may assume without loss of generality that $\Gamma_0\neq \emptyset$ because the claim is trivial in the case $\Gamma_0=\emptyset$ by setting $(\rho_\pert,\vm_\pert)=(\rho,\vm)$.

Let us introduce a grid in the space-time $\R^{1+n}$ with size $h>0$: We consider open cubes $Q_{(i,\valpha),h}\subset \R^{1+n}$ for $(i,\valpha)\in \Z\times \Z^n$ with mid points $(ih,\valpha h)$ and edge length $h$, i.e. 
$$
	Q_{(i,\valpha),h} := (ih,\valpha h) + \left(-\frac{h}{2},\frac{h}{2}\right)^{1+n} \ed
$$

We fix $\ov{h}>0$ so small such that 
\begin{equation} \label{eq:smallness-frame-pp} 
	|\Gamma_\tf| < \frac{\ep}{3c} \ec 
\end{equation}
where the ``frame'' is given by $\Gamma_\tf := \Gamma_0\setminus \til{\Gamma}$ and 
\begin{equation*}
	\til{\Gamma} := \interior{\left(\bigcup_{(i,\valpha)\in \left\{ (i,\valpha)\in \Z\times \Z^n\,|\, Q_{(i,\valpha),\ov{h}}\subsetcomp \Gamma_0\right\}} \closure{Q_{(i,\valpha),\ov{h}}}\right)} \ed 
\end{equation*}
Note that the set 
$$
	\left\{ (i,\valpha)\in \Z\times \Z^n\,\Big|\, Q_{(i,\valpha),\ov{h}}\subsetcomp \Gamma_0\right\}\ed
$$
contains only finitely many pairs $(i,\valpha)$ since $\Gamma_0$ is bounded. See Figure~\ref{fig:Gamma-large-cubes} for an example of $\Gamma_0$. 

\begin{figure}[b]
	\centering
	\includegraphics[width=0.7\textwidth]{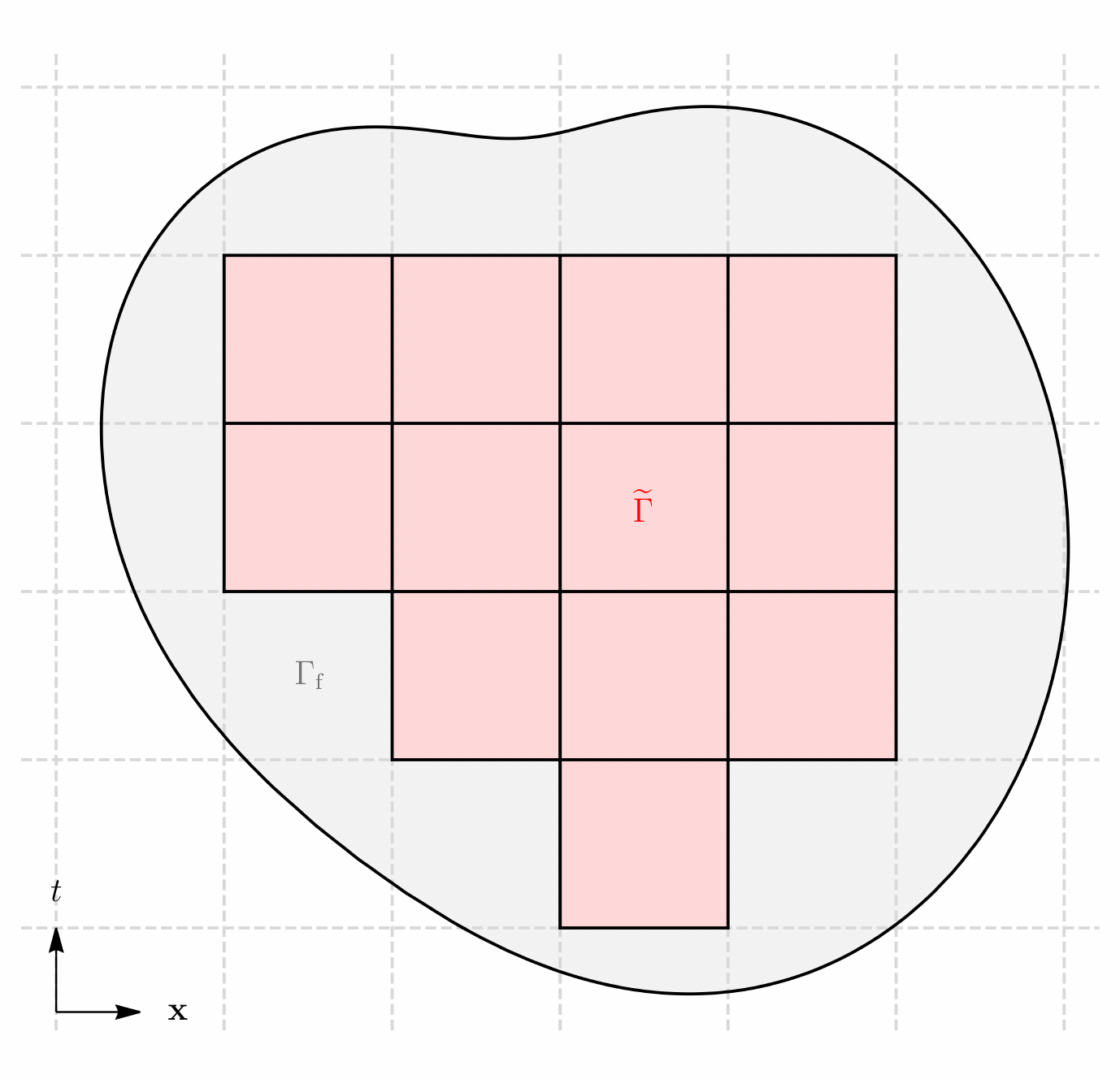} 
	\caption{An example of the domain $\Gamma_0$, the grid with grid size $\ov{h}$, the set $\til{\Gamma}$ (red) and the frame $\Gamma_\tf$ (gray).} 
	\label{fig:Gamma-large-cubes}
\end{figure}

By continuity of $e$ and due to the fact that $\til{\Gamma} \subsetcomp \Gamma_0$ and $e(\rho,\vm,\mU)(t,\vx)<c$ for all $(t,\vx)\in \Gamma_0$, there exists $\gamma>0$ such that 
\begin{equation} \label{eq:32-temp-convint}
	e\big((\rho,\vm,\mU)(t,\vx)\big)\leq c-\gamma \qquad \text{ for all } (t,\vx)\in \til{\Gamma} \ed
\end{equation}	
Similarly, since $\rho(t,\vx)>r$ for all $(t,\vx)\in \Gamma_0$, there exists $R>r$ such that 
\begin{equation} \label{eq:38-temp-convint}
	\rho(t,\vx)>R \qquad \text{ for all } (t,\vx)\in \til{\Gamma} \ed
\end{equation}

For $\gamma$ as above and $C:=\frac{\ep}{3|\til{\Gamma}|}$, Lemma~\ref{lemma:K-oscillatory-lemma} yields $\beta=\beta(\gamma,C)>0$ with the properties stated in Lemma~\ref{lemma:K-oscillatory-lemma}. 

Accoring to Lemma~\ref{lemma:prop-E}~\ref{item:prop-E.a}, $E$ and $e$ are uniformly continuous on $[r,M]\times \closure{B_n(\vz,M)}$ and $[r,M]\times \closure{B_n(\vz,M)} \times \closure{B_{\szn}(\mZ,M)}$, respectively. Hence there exists $\delta>0$ such that 
\begin{align}
	\Big| E(\rho_1,\vm_1)  - E(\rho_2,\vm_2) \Big| &< \frac{\ep}{3 |\til{\Gamma}|} \qquad \text{ and } \label{eq:33-temp-convint} \\ 
	\Big| e(\rho_1,\vm_1,\mU_1)  - e(\rho_2,\vm_2,\mU_2) \Big| &< \beta \ec \label{eq:34-temp-convint}
\end{align}
whenever $\big|(\rho_1,\vm_1,\mU_1)-(\rho_2,\vm_2,\mU_2)\big|<\delta$. 

Due to Lemma~\ref{lemma:valuesX0bounded}, $(\rho,\vm,\mU)$ takes values in $[r,M]\times \closure{B_n(\vz,M)} \times \closure{B_{\szn}(\mZ,M)}$. Since $\Gamma_0$ is a bounded subset of $\Gamma$ and $(\rho,\vm,\mU)\in C^1(\closure{\Gamma};\R^+ \times \R^n\times \szn)$, the restriction of $(\rho,\vm,\mU)$ onto $\closure{\Gamma_0}$ is uniformly continuous. Therefore we find $q\in\N$ such that for $h:=\frac{\ov{h}}{q}$ the following holds: If $(t_1,\vx_1)$ and $(t_2,\vx_2)$ lie in the same cube $Q_{(i,\valpha),h}$, then
\begin{equation} \label{eq:35-temp-convint} 
	\Big|\big(\rho(t_1,\vx_1),\vm(t_1,\vx_1),\mU(t_1,\vx_1)\big)-\big(\rho(t_2,\vx_2),\vm(t_2,\vx_2),\mU(t_2,\vx_2)\big)\Big|<\min\{\delta,R-r\} \ed 
\end{equation} 

Let 
$$
	\mathcal{J} := \left\{ (i,\valpha)\in \Z\times \Z^n\,\Big|\, Q_{(i,\valpha),h}\subset \til{\Gamma}\right\}\ed
$$
Since $\til{\Gamma}$ is bounded, $\mathcal{J}$ contains only finitely many pairs $(i,\valpha)$. Note that 
$$
	\til{\Gamma}=\interior{\left(\bigcup_{(i,\valpha)\in \mathcal{J}} \closure{Q_{(i,\valpha),h}}\right)}
$$ 
by construction of $h$ and hence 
\begin{equation} \label{eq:60-temp-convint}
	|\til{\Gamma}|=|\mathcal{J}| \cdot |Q_{(i,\valpha),h}| \ec 
\end{equation}
see also Figure~\ref{fig:Gamma-small-cubes}.

\begin{figure}[tb] 
	\centering
	\includegraphics[width=0.7\textwidth]{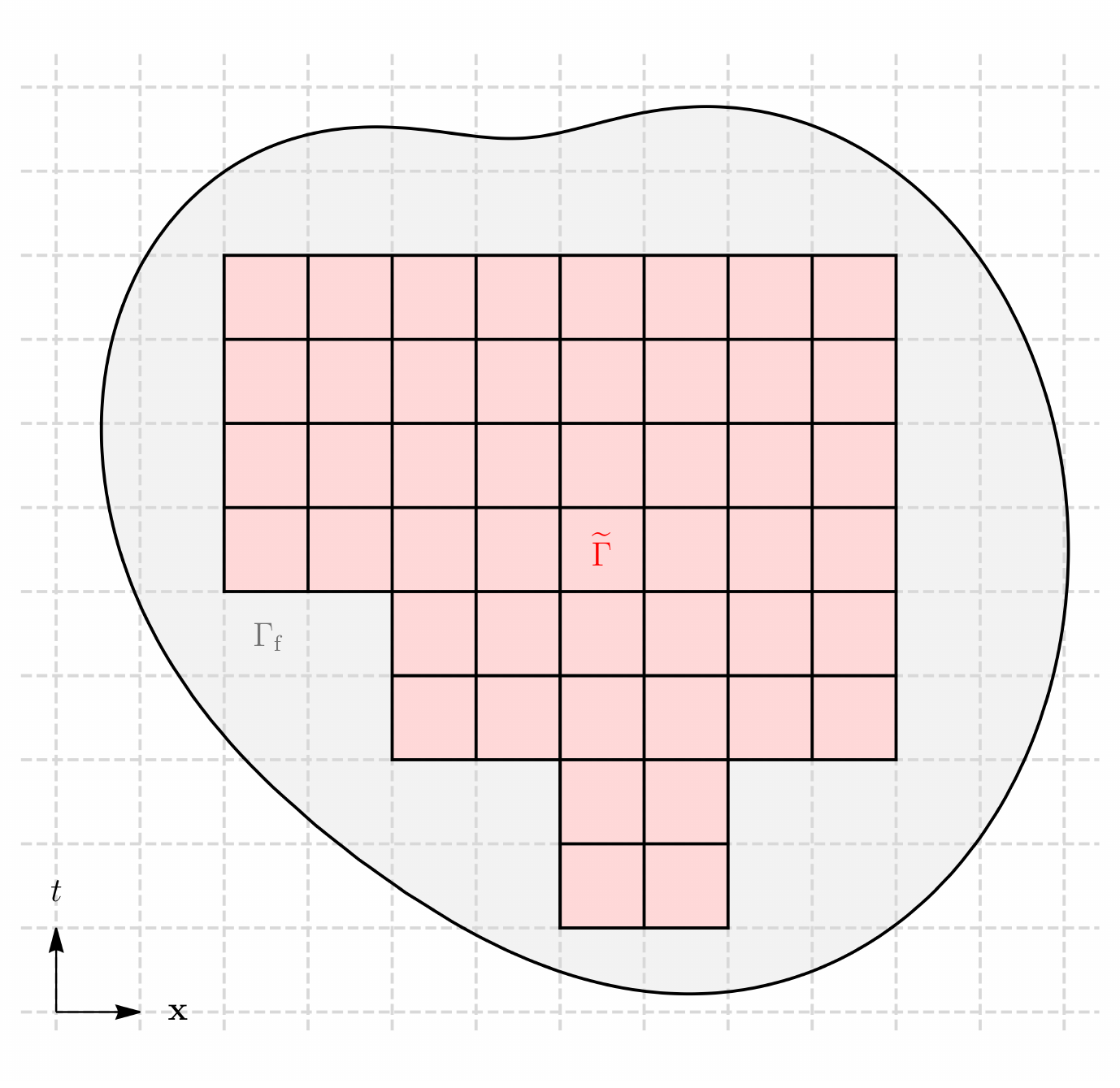} 
	\caption{The cubes $Q_{(i,\valpha),h}$, where here $h=\frac{\ov{h}}{2}$, exhaust the set $\til{\Gamma}$ (red).} 
	\label{fig:Gamma-small-cubes}
\end{figure}
 
Next, for each $(i,\valpha)\in\mathcal{J}$ we apply Lemma~\ref{lemma:K-oscillatory-lemma} where $Q_{(i,\valpha),h}$, $(\rho,\vm,\mU)(ih,\valpha h)$ and $\frac{\ep}{3 |\mathcal{J}|}$ play the role of $\Gamma^\ast$, $(\rho^\ast,\vm^\ast,\mU^\ast)$ and $\ep$, respectively. Note that $\rho(ih,\valpha h)>R$ for all $(i,\valpha)\in \mathcal{J}$ because of \eqref{eq:38-temp-convint}. In addition to that, we have $e\big((\rho,\vm,\mU)(ih,\valpha h)\big)\leq c-\gamma$ for all $(i,\valpha)\in \mathcal{J}$, see \eqref{eq:32-temp-convint}. Moreover 
$$
	\frac{\frac{\ep}{3 |\mathcal{J}|}}{|Q_{(i,\valpha),h}|} = \frac{\ep}{3 |\til{\Gamma}|} = C 
$$
by \eqref{eq:60-temp-convint} and definition of $C$. Hence the assumptions of Lemma~\ref{lemma:K-oscillatory-lemma} hold. 

We obtain for each $(i,\valpha)\in\mathcal{J}$ a sequence of oscillations 
$$
	(\til{\rho}_{i,\valpha,k},\til{\vm}_{i,\valpha,k},\til{\mU}_{i,\valpha,k})_{k\in \N}\subset \Cc\big(Q_{(i,\valpha),h};\R\times \R^n\times \szn\big)
$$ 
with properties \ref{item:K-pp.a} - \ref{item:K-pp.e} stated in Lemma~\ref{lemma:K-oscillatory-lemma}. 
 
Now define a sequence
\begin{equation} \label{eq:defn-pert-sequence}
	(\rho_{\pert,k},\vm_{\pert,k},\mU_{\pert,k}) := (\rho,\vm,\mU) + \sum_{(i,\valpha)\in\mathcal{J}} (\til{\rho}_{i,\valpha,k},\til{\vm}_{i,\valpha,k},\til{\mU}_{i,\valpha,k}) \ed
\end{equation}
Let us first show that $(\rho_{\pert,k},\vm_{\pert,k})\in X_0$ for each fixed $k\in \N$. It is obvious that $(\rho_{\pert,k},\vm_{\pert,k},\mU_{\pert,k})\in C^1\big(\closure{\Gamma};\R^+\times \R^n\times \szn\big)$. Furthermore properties \ref{item:pfs-pde} - \ref{item:pfs-dens-bdd} of Definition~\ref{defn:X0} hold, which we show in the sequel. 

\begin{enumerate}
	\item The PDEs \eqref{eq:pfs-pde1} and \eqref{eq:pfs-pde2} for $(\rho_{\pert,k},\vm_{\pert,k},\mU_{\pert,k})$ hold pointwise for all $(t,\vx)\in \Gamma$ due to the fact they hold for $(\rho,\vm,\mU)$, since $(\rho,\vm)\in X_0$, and because the PDEs \eqref{eq:K-pp-pde1} and \eqref{eq:K-pp-pde2} hold for each $(\til{\rho}_{i,\valpha,k},\til{\vm}_{i,\valpha,k},\til{\mU}_{i,\valpha,k})$. 
	
	\item For each $(t,\vx)\in \Gamma$ there exists at most one pair $(i,\valpha)\in \mathcal{J}$ such that $(t,\vx)\in Q_{(i,\valpha),h}$. If there is no such pair, then 
	$$
		(\rho_{\pert,k},\vm_{\pert,k},\mU_{\pert,k})(t,\vx) = (\rho,\vm,\mU)(t,\vx) 
	$$ 
	since the oscillations $(\til{\rho}_{i,\valpha,k},\til{\vm}_{i,\valpha,k},\til{\mU}_{i,\valpha,k})$ are compactly supported in $Q_{(i,\valpha),h}$. Hence \eqref{eq:pfs-subs} for $(\rho_{\pert,k},\vm_{\pert,k},\mU_{\pert,k})$ follows from the fact that $(\rho,\vm)\in X_0$. 
	
	Let now $(t,\vx)\in Q_{(i,\valpha),h}$ with a particular pair $(i,\valpha)\in \mathcal{J}$. Then 
	$$
		(\rho_{\pert,k},\vm_{\pert,k},\mU_{\pert,k})(t,\vx) = (\rho,\vm,\mU)(t,\vx) + (\til{\rho}_{i,\valpha,k},\til{\vm}_{i,\valpha,k},\til{\mU}_{i,\valpha,k})(t,\vx) \ed
	$$
	Because of our choice of $h$ we have  
	\begin{align}
		&\Big| (\rho,\vm,\mU)(t,\vx) + (\til{\rho}_{i,\valpha,k},\til{\vm}_{i,\valpha,k},\til{\mU}_{i,\valpha,k})(t,\vx) \notag\\
		& \qquad - (\rho,\vm,\mU)(ih,\valpha h) - (\til{\rho}_{i,\valpha,k},\til{\vm}_{i,\valpha,k},\til{\mU}_{i,\valpha,k})(t,\vx) \Big| \notag\\
		&= \Big| (\rho,\vm,\mU)(t,\vx) - (\rho,\vm,\mU)(ih,\valpha h) \Big| \ <\ \delta \ec \label{eq:36-temp-convint}
	\end{align}
	see \eqref{eq:35-temp-convint}. Hence \eqref{eq:34-temp-convint} together with \eqref{eq:K-pp-subs} yields
	\begin{align*}
		&e\big( (\rho_{\pert,k},\vm_{\pert,k},\mU_{\pert,k})(t,\vx) \big) \\
		&= e\Big((\rho,\vm,\mU)(t,\vx) + (\til{\rho}_{i,\valpha,k},\til{\vm}_{i,\valpha,k},\til{\mU}_{i,\valpha,k})(t,\vx) \Big) \\
		&< e\Big((\rho,\vm,\mU)(ih,\valpha h) + (\til{\rho}_{i,\valpha,k},\til{\vm}_{i,\valpha,k},\til{\mU}_{i,\valpha,k})(t,\vx) \Big) + \beta \\
		&\leq c \ed 
	\end{align*}
	This shows 
	$$
		(\rho_{\pert,k},\vm_{\pert,k},\mU_{\pert,k})(t,\vx)\ \in \ \interior{\sU} \qquad \text{ for all }(t,\vx)\in Q_{(i,\valpha),h} \ed
	$$
	
	\item Since $(\til{\rho}_{i,\valpha,k},\til{\vm}_{i,\valpha,k},\til{\mU}_{i,\valpha,k})$ have compact support in $Q_{(i,\valpha),h}\subset \Gamma_0 \subset \Gamma$ and the cubes $Q_{(i,\valpha),h}$ are open, we have
	$$
		\sum_{(i,\valpha)\in\mathcal{J}} (\til{\rho}_{i,\valpha,k},\til{\vm}_{i,\valpha,k},\til{\mU}_{i,\valpha,k})(t,\vx) = (0,\vz,\mZ)\qquad \text{ for all }(t,\vx)\in \partial\Gamma\ed
	$$
	Because $(\rho,\vm)\in X_0$ and hence \eqref{eq:pfs-bc} holds for $(\rho,\vm,\mU)$, we deduce
	$$
		(\rho_{\pert,k},\vm_{\pert,k},\mU_{\pert,k})(t,\vx) = (\rho,\vm,\mU)(t,\vx) + (0,\vz,\mZ) = (\rho_0,\vm_0,\mU_0)(t,\vx) 
	$$
	for all $(t,\vx)\in \partial\Gamma$, i.e. \eqref{eq:pfs-bc} holds for $(\rho_{\pert,k},\vm_{\pert,k},\mU_{\pert,k})$ as well.
	
	\item Let $(t,\vx)\in \Gamma$. Again we consider two cases. Assume first that there is no $(i,\valpha)\in\mathcal{J}$ such that $(t,\vx)\in Q_{(i,\valpha),h}$. Then $\rho_{\pert,k}(t,\vx)=\rho(t,\vx)$ and \eqref{eq:pfs-dens-bdd} for $\rho_{\pert,k}$ follows from \eqref{eq:pfs-dens-bdd} for $\rho$. 
	
	Let now $(t,\vx)\in Q_{(i,\valpha),h}$ with a particular pair $(i,\valpha)\in\mathcal{J}$. Hence 
	\begin{align*}
		\rho_{\pert,k}(t,\vx) &= \rho(t,\vx) + \til{\rho}_{i,\valpha,k}(t,\vx) \\
		&= \rho(ih,\valpha h) + \til{\rho}_{i,\valpha,k}(t,\vx) + \big(\rho(t,\vx) - \rho(ih,\valpha h)\big) \\
		&\geq \rho(ih,\valpha h) + \til{\rho}_{i,\valpha,k}(t,\vx) - \big|\rho(t,\vx) - \rho(ih,\valpha h)\big| \\
		&> R - (R-r) \\ 
		&= r
	\end{align*} 
	due to \eqref{eq:K-pp-dens-bdd} and \eqref{eq:35-temp-convint}. Thus $\rho_{\pert,k}$ satisfies \eqref{eq:pfs-dens-bdd}.
\end{enumerate} 

We have proven that $(\rho_{\pert,k},\vm_{\pert,k})\in X_0$ for each $k\in \N$.

Due to the convergence of $(\til{\rho}_{i,\valpha,k},\til{\vm}_{i,\valpha,k})_{k\in \N}$ to $(0,\vz)$ with respect to $d$ for all $(i,\valpha)\in\mathcal{J}$ (see \eqref{eq:K-pp-conv}), we easily see from \eqref{eq:defn-pert-sequence}, that
$$
	(\rho_{\pert,k},\vm_{\pert,k}) \mathop{\rightharpoonup}\limits^\ast (\rho,\vm) \qquad \text{ as }k\to \infty\ec
$$ 
in other words
$$
	(\rho_{\pert,k},\vm_{\pert,k}) \mathop{\to}\limits^d (\rho,\vm) \qquad \text{ as }k\to \infty \ed
$$
Hence we may fix $k\in\N$ large enough such that 
$$
	d\big( (\rho_{\pert,k},\vm_{\pert,k}), (\rho,\vm)\big) \leq \ov{\ep}
$$
and set 
\begin{equation*} 
	(\rho_\pert,\vm_\pert) := (\rho_{\pert,k},\vm_{\pert,k})\ed 
\end{equation*}
Then \eqref{eq:pert-prop-1} holds.

It remains to show \eqref{eq:pert-prop-2}. Let us write
\begin{align} 
	&I_{\Gamma_0} (\rho_\pert,\vm_\pert) \notag \\ 
	&= \iint_{\Gamma_\tf} E(\rho_\pert,\vm_\pert) \dx\dt + I_{\til{\Gamma}} (\rho_\pert,\vm_\pert) \notag\\
	&= \iint_{\Gamma_\tf} E(\rho_\pert,\vm_\pert) \dx\dt + \sum_{(i,\valpha)\in\mathcal{J}} I_{Q_{(i,\valpha),h}} (\rho_\pert,\vm_\pert) \notag\\
	&= \iint_{\Gamma_\tf} E(\rho_\pert,\vm_\pert) \dx\dt  + \sum_{(i,\valpha)\in\mathcal{J}} I_{Q_{(i,\valpha),h}} \Big((\rho,\vm)(ih,\valpha h) + (\til{\rho}_{i,\valpha,k},\til{\vm}_{i,\valpha,k})\Big)\label{eq:39-temp-convint} \\
	& \qquad + \sum_{(i,\valpha)\in\mathcal{J}} \bigg[ I_{Q_{(i,\valpha),h}} \Big((\rho,\vm) + (\til{\rho}_{i,\valpha,k},\til{\vm}_{i,\valpha,k})\Big) \notag\\
	&\qquad \qquad \qquad \qquad - I_{Q_{(i,\valpha),h}} \Big((\rho,\vm)(ih,\valpha h) + (\til{\rho}_{i,\valpha,k},\til{\vm}_{i,\valpha,k})\Big) \bigg] \notag
\end{align} 
and consider each summand on the right-hand side of \eqref{eq:39-temp-convint} separately.

First, with the help of \eqref{eq:smallness-frame-pp} we have 
\begin{equation}  \label{eq:i1-temp-convint}
	\iint_{\Gamma_\tf} E(\rho_\pert,\vm_\pert) \dx\dt > -c |\Gamma_\tf| > -\frac{\ep}{3} \ed
\end{equation} 

Second, \eqref{eq:K-pp-impr} implies
\begin{equation} \label{eq:i2-temp-convint}
\sum_{(i,\valpha)\in\mathcal{J}} I_{Q_{(i,\valpha),h}}\Big((\rho,\vm)(ih,\valpha h) + (\til{\rho}_{i,\valpha,k},\til{\vm}_{i,\valpha,k})\Big) > - \frac{\ep}{3|\mathcal{J}|} |\mathcal{J}| = -\frac{\ep}{3} \ed 
\end{equation}

Third, due to \eqref{eq:36-temp-convint} we obtain from \eqref{eq:33-temp-convint}
\begin{align}
	&\sum_{(i,\valpha)\in\mathcal{J}} \bigg[ I_{Q_{(i,\valpha),h}} \Big((\rho,\vm) + (\til{\rho}_{i,\valpha,k},\til{\vm}_{i,\valpha,k})\Big) \notag\\
	&\qquad \qquad \quad - I_{Q_{(i,\valpha),h}} \Big((\rho,\vm)(ih,\valpha h) + (\til{\rho}_{i,\valpha,k},\til{\vm}_{i,\valpha,k})\Big) \bigg] \notag \\
	&= \sum_{(i,\valpha)\in\mathcal{J}} \iint_{Q_{(i,\valpha),h}} \bigg[ E\Big((\rho,\vm)(t,\vx) + (\til{\rho}_{i,\valpha,k},\til{\vm}_{i,\valpha,k})(t,\vx)\Big) \notag\\
	&\qquad \qquad \qquad\qquad - E\Big((\rho,\vm)(ih,\valpha h) + (\til{\rho}_{i,\valpha,k},\til{\vm}_{i,\valpha,k})(t,\vx)\Big) \bigg] \dx\dt \notag\\
	&\geq -\frac{\ep}{3|\til{\Gamma}|}\sum_{(i,\valpha)\in\mathcal{J}} |Q_{(i,\valpha),h}| \notag\\
	&= -\frac{\ep}{3} \ec \label{eq:i3-temp-convint}
\end{align}
where we have used that $\sum_{(i,\valpha)\in\mathcal{J}} |Q_{(i,\valpha),h}|= |\til{\Gamma}|$. 

Plugging \eqref{eq:i1-temp-convint}, \eqref{eq:i2-temp-convint} and \eqref{eq:i3-temp-convint} into \eqref{eq:39-temp-convint}, we end up with \eqref{eq:pert-prop-2}.

\end{proof}

\section{Convex Integration with Fixed Density} 
\label{sec:convint-nodens}

Theorem~\ref{thm:convint} generally allows for solutions $(\rho,\vm)$ where $\rho\not\equiv \rho_0$. 

\begin{rem}
	At this point we should be more precise. The reader might have noticed that in the proof of Lemma~\ref{lemma:existence-family} we proved existence of $(\rho_i,\vm_i,\mU_i)\in K$ where in particular $\rho_i=\rho^\ast$ for all $i=1,...,N$. As a consequence the $\hat{\rho}_i$ obtained in Lemma~\ref{lemma:UstattK} have the same property, i.e. $\hat{\rho}_i=\rho^\ast$ for all $i=1,...,N$. Hence if $\veta$ is not parallel to $\ve_t$, then the oscillations $\til{\rho}_k$ constructed in Lemma~\ref{lemma:U-oscillatory-lemma} are equal to 0, since $\opL_\rho\equiv 0$ according to Proposition~\ref{prop:operators}~\ref{item:operators.c}. Thus $\rho\equiv \rho_0$, which seems to contradict that in general $\rho\not\equiv \rho_0$. However the $\rho_i$ in Lemma~\ref{lemma:existence-family} can be found in a different way as well. Indeed consider $(\rho^\ast \pm a , \vm^\ast,\mU^\ast)$ where $|a|$ is sufficiently small such that $\rho^\ast \pm a >R$ and $(\rho^\ast \pm a , \vm^\ast,\mU^\ast)\in \interior{\sU}$. An application of the proved version of Lemma~\ref{lemma:existence-family} to each $(\rho^\ast \pm a , \vm^\ast,\mU^\ast)$ yields then two families which can be combined to one family with barycenter $(\rho^\ast , \vm^\ast,\mU^\ast)$ according to Lemma~\ref{lemma:hn2}. Consequently the resulting family has the property that either $\rho_i=\rho^\ast - a$ or $\rho_i=\rho^\ast +a$. In particular the $\rho_i$ are not all equal, which in fact leads to an oscillation $\til{\rho}_k\not\equiv 0$ in Lemma~\ref{lemma:U-oscillatory-lemma} and finally $\rho\not\equiv \rho_0$.
\end{rem}

As pointed out in Section~\ref{sec:intro-well-posedness}, solutions to the compressible Euler system have been constructed by using clever ansatzes to reduce the system to some kind of ``incompressible Euler system'' and apply convex integration to the latter. This way the density $\rho$ does not join in the convex integration which means (using our notation) $\rho\equiv \rho_0$.

In this section we briefly explain how the steps we made in Sections \ref{sec:convint-thm} and \ref{sec:convint-pp} can be modified to obtain solutions with $\rho\equiv\rho_0$. We call this a \emph{fixed-density-version} of Theorem~\ref{thm:convint}. The main ingredients are Corollary~\ref{cor:complete-wc} and Proposition~\ref{prop:operators}~\ref{item:operators.c}.

\subsection{A Modified Version of the Convex-Integration-Theorem} \label{subsec:convint-nodens-thm}

We want to prove the following version of Theorem~\ref{thm:convint}.

\begin{thm} \label{thm:convint-nodens}
	Let the assumptions of Theorem~\ref{thm:convint} be true. Then infinitely many among the solutions $(\rho,\vm)\in L^\infty(\Gamma; \R^+ \times \R^n)$ have the additional property that $\rho\equiv \rho_0$. 
\end{thm}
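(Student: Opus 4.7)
The plan is to rerun the entire Baire-category machinery of Sections~\ref{sec:convint-thm} and \ref{sec:convint-pp} on a restricted set of strict subsolutions whose density is frozen at $\rho_0$. Concretely, I would define
$$X_0^{\rm fix} := \big\{(\rho,\vm) \in X_0 \,\big|\, \rho \equiv \rho_0 \big\}$$
and let $X^{\rm fix}$ denote its $d$-closure inside $X$. Since $(\rho_0,\vm_0) \in X_0^{\rm fix}$, this set is non-empty, and $(X^{\rm fix},d)$ inherits compactness and completeness as a $d$-closed subset of $(X,d)$. The functionals $I_{\Gamma_0}$ restrict to lower semi-continuous functions on $X^{\rm fix}$, and Lemma~\ref{lemma:prop-I}~\ref{item:prop-I.e} continues to apply: any $(\rho,\vm) \in X^{\rm fix}$ with $I_{\Gamma_0}(\rho,\vm)=0$ for every open bounded $\Gamma_0 \subset \Gamma$ is a solution in the sense of Theorem~\ref{thm:convint}. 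Moreover, because every element of $X_0^{\rm fix}$ has density $\rho_0$ and the weak-$\ast$ limit of the constant sequence $\rho_0$ is $\rho_0$, every $(\rho,\vm) \in X^{\rm fix}$ automatically satisfies $\rho \equiv \rho_0$.

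The heart of the argument is a fixed-density version of the Perturbation Property: given $(\rho,\vm) \in X_0^{\rm fix}$ one must produce $(\rho_\pert,\vm_\pert) \in X_0^{\rm fix}$ satisfying \eqref{eq:pert-prop-1} and \eqref{eq:pert-prop-2}. Inspecting the proof in Subsection~\ref{subsec:convint-pp-proof}, the perturbed pair is assembled by adding cube-wise oscillations $(\til\rho_{i,\valpha,k},\til\vm_{i,\valpha,k},\til\mU_{i,\valpha,k})$ produced by Lemma~\ref{lemma:K-oscillatory-lemma} to $(\rho,\vm,\mU)$. It therefore suffices to arrange that each density oscillation $\til\rho_{i,\valpha,k}$ vanishes identically, so that $\rho_\pert = \rho = \rho_0$.

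The key observation is that this vanishing is essentially built into the existing construction, thanks to Proposition~\ref{prop:operators}~\ref{item:operators.c}. In Lemma~\ref{lemma:existence-family} the family $\{(\tau_i,(\rho_i,\vm_i,\mU_i))\}$ is already chosen inside the slice $K \cap \{\rho=\rho^\ast\}$ by appealing to Propositions~\ref{prop:compKLambda} and \ref{prop:Kstar}~\ref{item:Kstar.b}; hence every $\rho_i$ coincides with $\rho^\ast$. Lemma~\ref{lemma:UstattK} respects this, since $\hat\rho_i = \tau\rho^\ast+(1-\tau)\rho_i = \rho^\ast$. In the induction basis of Lemma~\ref{lemma:U-oscillatory-lemma} the plane-wave direction is then $(\ov\rho,\ov\vm,\ov\mU) = (0,\vm_2-\vm_1,\mU_2-\mU_1)$. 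Whenever this difference is non-trivial one must have $\ov\vm \neq \vz$ (otherwise the formula \eqref{eq:K} forces $\mU_1=\mU_2$ as well), and the kernel condition $\ov\vm\,\eta_t = \vz$ then implies $\eta_t=0$, so that $\veta$ is not parallel to $\ve_t$. Proposition~\ref{prop:operators}~\ref{item:operators.c} therefore yields $\opL_\rho \equiv 0$, hence $\til\rho_k \equiv 0$ in \eqref{eq:defn-perturbation}. The induction step of Lemma~\ref{lemma:U-oscillatory-lemma} and the subsequent assembly in Lemma~\ref{lemma:K-oscillatory-lemma} and Proposition~\ref{prop:pert-prop} preserve this vanishing, since they only sum oscillations whose density components are already zero.

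The main obstacle I anticipate is purely bookkeeping: I must verify that every statement in Lemmas~\ref{lemma:prop-I}, \ref{lemma:X0infinite}, \ref{lemma:K-oscillatory-lemma}--\ref{lemma:U-oscillatory-lemma} remains valid when $X_0$, $X$ are replaced by $X_0^{\rm fix}$, $X^{\rm fix}$; all verifications are local and immediate because $X_0^{\rm fix}$ is stable under the perturbation construction. In particular the analogue of Lemma~\ref{lemma:X0infinite} must be re-derived inside $X_0^{\rm fix}$, again via the fixed-density Perturbation Property, which is straightforward. Once this is in place, the Baire-category argument in the proof of Theorem~\ref{thm:convint} transfers verbatim, producing a residual, hence infinite, subset $\Xi^{\rm fix} \subset X^{\rm fix}$ of solutions, each automatically satisfying $\rho \equiv \rho_0$.
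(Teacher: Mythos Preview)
Your overall plan---restricting to $X_0^{\rm fix}$ and establishing a fixed-density Perturbation Property---is exactly the paper's approach. The gap is in your claim that ``the induction step of Lemma~\ref{lemma:U-oscillatory-lemma} \dots\ preserve[s] this vanishing, since they only sum oscillations whose density components are already zero.''

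The induction step does \emph{not} merely sum previously constructed oscillations: it first applies the induction hypothesis to the reduced family \eqref{eq:13-temp-convint}, which contains the barycenter $(\rho_a,\vm_a,\mU_a)$. That point lies in $\interior{\sU}$, not in $K$, so your $K$-formula argument is unavailable there. Nothing rules out $\vm_a=\vm_j$ for some $j\geq 3$; in that event the relevant plane-wave direction has $\ov\rho=0$, $\ov\vm=\vz$, $\ov\mU\neq\mZ$. For $n=2$ every nonzero element of $\sztwo$ is invertible, so the kernel condition \eqref{eq:op-kernel} forces $\veta_\vx=\vz$, i.e.\ $\veta$ parallel to $\ve_t$, and Proposition~\ref{prop:operators}~\ref{item:operators.c} no longer applies. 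One is then thrown back onto the operators of Lemma~\ref{lemma:op-eta=e1}, whose $\opL_\rho$ is not identically zero once the cut-off $\Phi$ enters.

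The paper handles this by introducing the $m_N$-condition, which records precisely that the two momenta combined at every level of the $H_N$-reduction are distinct. Establishing this is the substance of Lemma~\ref{lemma:existence-family-nodens}: one first uses Proposition~\ref{prop:caratheodory-inner-points} to place $(\vm^\ast,\mU^\ast)$ in the interior of a finite polytope with vertices in $K_{\rho^\ast}$, perturbs those vertices \emph{within} $K_{\rho^\ast}$ so that their momenta become pairwise distinct, and then invokes the explicit construction of Proposition~\ref{prop:complete-wc}, verifying that its particular structure propagates momentum-distinctness through all barycenters. Your argument needs this ingredient.
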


In order to prove this, we need a variant of the Perturbation Property (Proposition~\ref{prop:pert-prop}).

\begin{prop} \label{prop:pert-prop-nodens}
	Let the assumptions of Proposition~\ref{prop:pert-prop} be true. Then one can achieve that $\rho_\pert\equiv \rho$ on $\closure{\Gamma}$ in addition to the properties given in Proposition~\ref{prop:pert-prop}.
\end{prop}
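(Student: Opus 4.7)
The plan is to revisit the construction of the perturbation in the proof of Proposition~\ref{prop:pert-prop} and modify the choice of wave cone directions so that every individual plane wave has vanishing density component. The decisive tool is Proposition~\ref{prop:operators}~\ref{item:operators.c}: whenever $\ov{\rho}=0$ and the wave cone direction $\veta$ is not parallel to $\ve_t$, the density operator $\opL_\rho$ is identically zero, so the plane wave oscillates in $\vm$ and $\mU$ only. If I can run the whole machinery of Lemmas~\ref{lemma:existence-family}--\ref{lemma:K-oscillatory-lemma} using only such directions, then in the proof of Proposition~\ref{prop:pert-prop} the sum $\sum_{(i,\valpha)\in\mathcal J}\til{\rho}_{i,\valpha,k}$ vanishes identically and hence $\rho_\pert=\rho$ on $\closure{\Gamma}$.

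First I would use the version of Lemma~\ref{lemma:existence-family} already established in its own proof: Propositions~\ref{prop:compKLambda} and \ref{prop:Kstar}~\ref{item:Kstar.b} allow us to choose the vertices $(\rho_i,\vm_i,\mU_i)$ in $K\cap\{\rho=\rho^\ast\}$, so that $\rho_i=\rho^\ast$ for every $i=1,\dots,N$. Since two elements of $K\cap\{\rho=\rho^\ast\}$ with the same $\vm$-component automatically coincide (via the defining relation $\mU=\frac{\vm\otimes\vm}{\rho^\ast}+(p(\rho^\ast)-\frac{2c}{n})\id$), distinct vertices automatically have distinct $\vm_i$. The perturbation in Lemma~\ref{lemma:UstattK} preserves this structure: $\hat\rho_i=\rho^\ast$ for all $i$, and $\hat\vm_i-\hat\vm_j=(1-\tau)(\vm_i-\vm_j)\neq\vz$ whenever $i\neq j$. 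Hence every triple $(\ov{\rho},\ov{\vm},\ov{\mU})\in\Lambda$ that occurs in the induction basis of Lemma~\ref{lemma:U-oscillatory-lemma} satisfies $\ov{\rho}=0$ and $\ov{\vm}\neq\vz$.

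The key observation is then that, whenever $\ov{\rho}=0$ and $\ov{\vm}\neq\vz$, the vector $\ve_t$ is not in the kernel of the matrix $\left(\begin{array}{cc}0 & \ov{\vm}^\trans\\ \ov{\vm} & \ov{\mU}\end{array}\right)$, while this kernel is still nontrivial since $(\ov{\rho},\ov{\vm},\ov{\mU})\in\Lambda$, see~\eqref{eq:wavecone-kernel}. Therefore one can select a direction $\veta$ with $\veta_\vx\neq\vz$, and Proposition~\ref{prop:operators}~\ref{item:operators.c} forces $\opL_\rho\equiv 0$, so the density part of the resulting oscillation vanishes identically. This handles the induction basis of Lemma~\ref{lemma:U-oscillatory-lemma}. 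The main obstacle will be the induction step, where barycenters $(\rho_a,\vm_a,\mU_a)=\frac{\tau_1}{\tau_1+\tau_2}(\hat\rho_1,\hat\vm_1,\hat\mU_1)+\frac{\tau_2}{\tau_1+\tau_2}(\hat\rho_2,\hat\vm_2,\hat\mU_2)$ enter the reduced family and could in principle collide in their $\vm$-coordinate with some remaining $\hat\vm_i$; however, all such barycenters still have $\rho$-component equal to $\rho^\ast$, and a generic-position argument on the initial choice in Lemma~\ref{lemma:existence-family}---for instance perturbing the $\vm_i$ slightly within $K\cap\{\rho=\rho^\ast\}$ (with $\mU_i$ readjusted accordingly)---avoids any such degenerate coincidence and preserves the condition $\ov{\vm}\neq\vz$ at every stage of the induction.

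Propagating this modification through Lemmas~\ref{lemma:U-oscillatory-lemma} and~\ref{lemma:K-oscillatory-lemma} into the proof of Proposition~\ref{prop:pert-prop}, every oscillation $\til{\rho}_{i,\valpha,k}\in\Cc(Q_{(i,\valpha),h})$ appearing in~\eqref{eq:defn-pert-sequence} vanishes identically. Hence
$$
\rho_\pert=\rho+\sum_{(i,\valpha)\in\mathcal J}\til{\rho}_{i,\valpha,k}\equiv\rho\qquad\text{on }\closure{\Gamma}.
$$
The estimates \eqref{eq:pert-prop-1} and \eqref{eq:pert-prop-2} remain valid because their verification uses only the weak-$\ast$ convergence of the momentum oscillations, the pointwise control on $e$ away from the boundary of $\sU$, and pointwise estimates on $E$, none of which is affected by the constraint $\ov{\rho}=0$. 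This yields the claimed stronger version of the Perturbation Property.
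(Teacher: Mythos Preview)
Your overall strategy matches the paper's: reduce to fixed-density oscillations by choosing all vertices in $K\cap\{\rho=\rho^\ast\}$, use Proposition~\ref{prop:operators}~\ref{item:operators.c} once you know $\ov{\rho}=0$ and $\ov{\vm}\neq\vz$, and propagate through the chain of lemmas. You also correctly identify the real obstacle, namely that the barycenters created in the induction step of Lemma~\ref{lemma:U-oscillatory-lemma} might accidentally share their $\vm$-component with some remaining vertex.

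Where your argument is incomplete is precisely the resolution of that obstacle. The sentence ``a generic-position argument on the initial choice \dots\ avoids any such degenerate coincidence'' glosses over a real difficulty: perturbing the $\vm_i$ within $K\cap\{\rho=\rho^\ast\}$ changes the $\mU_i$ and hence the barycenter, so you must simultaneously readjust the $\tau_i$; but the non-coincidence conditions you wish to avoid (e.g.\ $\vm_a\neq\vm_j$ at each recursion level) themselves depend on the $\tau_i$. You never verify that the space of admissible perturbations is large enough to dodge all of these simultaneously while keeping the $H_N$-structure and the barycenter fixed. The paper addresses this by formalizing the required non-coincidence at every recursion level as the $m_N$-condition, and then---rather than arguing by genericity---building the family explicitly via Proposition~\ref{prop:complete-wc}: start from $(\vm^\ast,\mU^\ast)\in\interior{\big((K_{\rho^\ast})^\co\big)}$, use Proposition~\ref{prop:caratheodory-inner-points} to obtain finitely many extreme points $(\hat\vm_j,\hat\mU_j)$, perturb them so the $\hat\vm_j$ are pairwise distinct (which is safe since the barycenter lies in the \emph{interior} of their convex hull), and then apply the construction of Proposition~\ref{prop:complete-wc}. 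That construction has the feature that every $\vp_i$ equals some original $\vq_j$, so each $\vm$-difference encountered in the $m_N$-recursion is a nonzero multiple of some $\hat\vm_{j_1}-\hat\vm_{j_2}$, and pairwise distinctness of the $\hat\vm_j$ suffices. Your generic-position sketch could probably be completed along similar lines, but as written it is not a proof.
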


\begin{proof}[Proof of Theorem~\ref{thm:convint-nodens}] 
In order to prove Theorem~\ref{thm:convint-nodens}, one proceeds as in the proof of Theorem~\ref{thm:convint}. To start with, define a set $\til{X_0}$, which is used instead of $X_0$, by 
$$
\til{X_0} := \left\{ \vm \in C^1\big(\closure{\Gamma};\R^n\big) \, \Big|\, (\rho_0,\vm)\in X_0\right\} \ed
$$
For the remaining steps one has to slightly modify the steps in the proof of Theorem~\ref{thm:convint}. In particular one has to use Proposition~\ref{prop:pert-prop-nodens} rather than Proposition~\ref{prop:pert-prop}. We leave the details to the reader.
\end{proof}

\subsection{Proof the Modified Perturbation Property} \label{subsec:convint-nodens-pp}

To prove Propostion \ref{prop:pert-prop-nodens} we proceed in the same way as in the proof of Proposition~\ref{prop:pert-prop}. The priciple ingredient is the following lemma, a version of Lemma~\ref{lemma:K-oscillatory-lemma}. The final proof of  Propostion \ref{prop:pert-prop-nodens} is postponed to the end of this section.

\begin{lemma} \label{lemma:K-oscillatory-lemma-nodens}
	Let the assumptions of Lemma~\ref{lemma:K-oscillatory-lemma} be true. We can achieve that the sequence of oscillations \eqref{eq:K-oscillations} has the additional property that $\til{\rho}_k\equiv 0$ for all $k\in\N$. 
\end{lemma}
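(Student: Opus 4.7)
The plan is to retrace the proof of Lemma~\ref{lemma:K-oscillatory-lemma}, verifying at each step that the density component of every ingredient can be kept equal to $\rho^\ast$, so that Proposition~\ref{prop:operators}~\ref{item:operators.c} forces the resulting density oscillation to vanish.

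First, I would invoke Lemma~\ref{lemma:existence-family} to obtain the family $\big\{\big(\tau_i,(\rho_i,\vm_i,\mU_i)\big)\big\}_{i=1,\ldots,N}$. Crucially, as was pointed out in the proof of that lemma, we may arrange $(\rho_i,\vm_i,\mU_i)\in K\cap\{\rho=\rho^\ast\}$ for every $i$ (this is where the completeness of $\Lambda$ with respect to the slice $K\cap\{\rho=\rho^\ast\}$, encoded in Corollary~\ref{cor:complete-wc} via Lemma~\ref{lemma:Kast}, is used). Next, Lemma~\ref{lemma:UstattK} with this choice gives $\hat{\rho}_i=\tau\rho^\ast+(1-\tau)\rho^\ast=\rho^\ast$ for all $i$. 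Since barycenters and the intermediate points $(\rho_a,\vm_a,\mU_a)$ constructed in the induction step of Lemma~\ref{lemma:U-oscillatory-lemma} are convex combinations of elements sharing the same density $\rho^\ast$, every point that appears anywhere in the recursive construction has density $\rho^\ast$.

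The key observation is now the following: whenever the convex integration procedure adds a plane wave with direction $(\ov{\rho},\ov{\vm},\ov{\mU})=(\hat{\rho}_2,\hat{\vm}_2,\hat{\mU}_2)-(\hat{\rho}_1,\hat{\vm}_1,\hat{\mU}_1)\in\Lambda$, we have $\ov{\rho}=0$. If the corresponding direction $\veta$ supplied by Definition~\ref{defn:wavecone} is not parallel to $\ve_t$, then by Proposition~\ref{prop:operators}~\ref{item:operators.c} the operator $\opL_\rho$ vanishes identically, hence so does the density component $\til{\rho}_k=\opL_\rho[g_k\Phi]$ of the oscillation. To secure that we can always pick $\veta$ with $\veta_\vx\neq\vz$, I would argue as follows. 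Writing out the kernel condition in \eqref{eq:wavecone-kernel} with $\ov{\rho}=0$ yields $\ov{\vm}\cdot\veta_\vx=0$ and $\ov{\vm}\eta_t+\ov{\mU}\veta_\vx=\vz$. If $\veta_\vx=\vz$, the second equation forces $\ov{\vm}=\vz$; but $\ov{\vm}=\hat{\vm}_2-\hat{\vm}_1=(1-\tau)(\vm_2-\vm_1)$, and since $(\rho_i,\vm_i,\mU_i)\in K\cap\{\rho=\rho^\ast\}$, the relation $\mU_i=\frac{\vm_i\otimes\vm_i}{\rho^\ast}+(p(\rho^\ast)-\frac{2c}{n})\id$ shows that $\vm_1=\vm_2$ implies $\mU_1=\mU_2$, i.e.\ the two points coincide. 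Such a coincidence can be ruled out up front by shortening the family (merging equal entries), or equivalently by observing that the resulting zero oscillation trivially has the desired property $\til{\rho}_k\equiv 0$. In either case the situation $\veta\parallel\ve_t$ can be avoided whenever a genuine oscillation is added.

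With this verified, the proof of Lemma~\ref{lemma:U-oscillatory-lemma} (and hence of Lemma~\ref{lemma:K-oscillatory-lemma}) is now carried out verbatim, but using Proposition~\ref{prop:operators}~\ref{item:operators.c} at each application of the operators $\opL_\rho,\opL_\vm,\opL_\mU$: all density perturbations produced at the induction basis vanish, and by induction all the oscillations $\til{\rho}_{\oscA,k}$ and $\til{\rho}_{\oscB,\ell}$ are identically zero, so their sum $\til{\rho}_k$ is zero. The remaining assertions \ref{item:K-pp.a}--\ref{item:K-pp.e} of Lemma~\ref{lemma:K-oscillatory-lemma} follow from the unchanged construction. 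The main obstacle is precisely the geometric lemma I flagged in the previous paragraph, namely ensuring that on every plane wave we add, the direction $\veta$ can be chosen with nonzero spatial part; this is where we must exploit that coincident endpoints in the $H_N$-family can be merged without losing the barycenter property or the $H_N$-condition (via Lemma~\ref{lemma:hn1}).
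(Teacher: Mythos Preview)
Your outline correctly identifies the two key ingredients --- keeping all densities equal to $\rho^\ast$ and invoking Proposition~\ref{prop:operators}~\ref{item:operators.c} --- but there is a genuine gap in the step that guarantees $\veta_\vx\neq\vz$ throughout the recursion. Your coincidence argument (``$\vm_1=\vm_2$ forces $\mU_1=\mU_2$ because the points lie in $K$'') only applies to the original endpoints. In the induction step of Lemma~\ref{lemma:U-oscillatory-lemma}, the reduced family \eqref{eq:13-temp-convint} contains the barycenter $(\rho_a,\vm_a,\mU_a)\in\interior{\sU}$, which is \emph{not} in $K$; when the recursion subsequently peels off $(\rho_a,\vm_a,\mU_a)$ against some later $(\hat\rho_j,\hat\vm_j,\hat\mU_j)$, nothing prevents $\vm_a=\hat\vm_j$ while $\mU_a\neq\hat\mU_j$. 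In that situation (at least for $n=2$, where every nonzero matrix in $\sztwo$ is invertible) the only $\veta$ satisfying \eqref{eq:op-kernel} is a multiple of $\ve_t$, and Lemma~\ref{lemma:op-eta=e1} then gives $\opL_\rho[g]=\frac{1}{a^3}\sum_{i,j}\ov U_{ij}\,\parthree{t}{i}{j}g$, which does not vanish on $g_k\Phi$. Your proposed fixes (merging coincident endpoints, or declaring the oscillation to be zero) do not apply, since the two points are genuinely different --- only their momenta coincide --- and the required oscillation is nonzero in its $\mU$-component.

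The paper closes exactly this gap by introducing the $m_N$-condition, a strengthening of $H_N$ demanding $\vm_2-\vm_1\neq\vz$ at every step of the recursion, and by rebuilding the family so that this condition holds (Lemma~\ref{lemma:existence-family-nodens}). This is nontrivial: one first obtains finitely many $(\hat\vm_j,\hat\mU_j)\in K_{\rho^\ast}$ via Proposition~\ref{prop:caratheodory-inner-points}, perturbs them so that the $\hat\vm_j$ are pairwise distinct, and then builds the $H_N$-family from them through the explicit tree of Proposition~\ref{prop:complete-wc}. That particular construction has the feature that at each recursion step the $\vm$-difference is a scalar multiple of $\hat\vm_\ell-\hat\vm_k$ for some $\ell<k$, hence nonzero. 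This extra layer of construction is what your plan is missing.
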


We prove Lemma~\ref{lemma:K-oscillatory-lemma-nodens} analogously to Lemma~\ref{lemma:K-oscillatory-lemma}. To this end we have to show suitable versions of Lemmas \ref{lemma:existence-family}, \ref{lemma:UstattK} and \ref{lemma:U-oscillatory-lemma}. The main idea is to make use of part \ref{item:operators.c} of Proposition~\ref{prop:operators}, which says that the operator $\opL_{\rho}$, which yields the oscillation in the density in the proof of Lemma~\ref{lemma:U-oscillatory-lemma}, can be chosen to be equal zero. However this only works as long as $\ov{\rho}=0$ and $\veta$ is not parallel to $\ve_t$. Hence the variant of Lemma~\ref{lemma:existence-family} which is needed here, must yield a family whose densities $\rho_i$ are equal to $\rho^\ast$ in order to guarantee $\ov{\rho}=0$. This will be quite easy. However the fact that we must achieve that the occuring $\veta$ is not parallel to $\ve_t$, is an obstacle. To get along with that, we need the following definition.

\begin{defn}
	Let $N\in\N$ with $N\geq 2$ and $\big(\tau_i,(\rho_i,\vm_i,\mU_i)\big)\in \R^+ \times \interior{\sU}$ for $i=1,...,N$. We say that the family $\big\{\big(\tau_i,(\rho_i,\vm_i,\mU_i)\big)\big\}_{i=1,...,N}$ satisfies the \emph{$m_N$-condition} if it satisfies both the $H_N$-condition and the following:
	\begin{itemize}
		\item If $N=2$, then $\vm_2-\vm_1\neq \vz$. 
		\item If $N\geq 3$, then $\vm_2-\vm_1\neq \vz$ and the family
		\begin{equation} \label{eq:defn-hn-iteration-b}
			\left\{ \left( \tau_1 + \tau_2 , \frac{\tau_1}{\tau_1 + \tau_2} (\rho_1,\vm_1,\mU_1) + \frac{\tau_2}{\tau_1 + \tau_2} (\rho_2,\vm_2,\mU_2)\right) \right\} \cup \big\{\big(\tau_i,(\rho_i,\vm_i,\mU_i)\big)\big\}_{i=3,...,N}
		\end{equation}
		satisfies the $m_{N-1}$-condition, where the family $\big\{\big(\tau_i,(\rho_i,\vm_i,\mU_i)\big)\big\}_{i=1,...,N}$ is relabeled\footnote{Note that this is possible according to the definition of the $H_N$-condition (Definition~\ref{defn:hn}).} such that $(\rho_2,\vm_2,\mU_2)-(\rho_1,\vm_1,\mU_1)\in\Lambda$ and the family \eqref{eq:defn-hn-iteration-b}	satisfies the $H_{N-1}$-condition. 
	\end{itemize}
\end{defn}

Now we are able to state Lemma~\ref{lemma:existence-family-nodens}, a modified version of Lemma~\ref{lemma:existence-family}. 

\begin{lemma} \label{lemma:existence-family-nodens} 
	Let the assumptions of Lemma~\ref{lemma:existence-family} be true. We can achieve that the family $\big\{\big(\tau_i,(\rho_i,\vm_i,\mU_i)\big)\big\}_{i=1,...,N}$ additionally satisfies
	\begin{itemize}
		\item the $m_N$-condition and
		\item $\rho_i=\rho^\ast$ for all $i=1,...,N$.
	\end{itemize}
\end{lemma}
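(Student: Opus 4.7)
The property $\rho_i = \rho^\ast$ is already implicit in the proof of Lemma~\ref{lemma:existence-family}: Propositions~\ref{prop:compKLambda} and \ref{prop:Kstar}~\ref{item:Kstar.b} place $(\rho^\ast,\vm^\ast,\mU^\ast)$ in $(K \cap \{\rho = \rho^\ast\})^{\Lambda}$, which permits an $H_N$-decomposition entirely within that slice. The substantive new content is upgrading the $H_N$-condition to the $m_N$-condition.

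My starting point is the fact, extracted from the proof of Lemma~\ref{lemma:Kast}, that $\Lambda$ is complete with respect to $K \cap \{\rho = \rho^\ast\}$. Together with Corollary~\ref{cor:complete-wc} and Proposition~\ref{prop:Kstar}~\ref{item:Kstar.b} this yields $(K\cap\{\rho=\rho^\ast\})^\Lambda = (K\cap\{\rho=\rho^\ast\})^\co$, so Carath\'eodory's theorem (Proposition~\ref{prop:convhull=convcombis}~\ref{item:convex-hull.b}) produces a convex decomposition $(\rho^\ast,\vm^\ast,\mU^\ast) = \sum_{j=1}^{N'}\mu_j\vq_j$ with $\mu_j > 0$, $\sum_j\mu_j = 1$ and $\vq_j = (\rho^\ast,\vm_j^\sharp,\mU_j^\sharp) \in K$. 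Since every element of $K \cap \{\rho = \rho^\ast\}$ is uniquely parametrized by its momentum via the $K$-constraint $\mU = \frac{\vm\otimes\vm}{\rho^\ast} + \bigl(p(\rho^\ast) - \frac{2c}{n}\bigr)\id$, I may combine coincident entries and so assume the momenta $\vm_j^\sharp$ are pairwise distinct; and $N' \geq 2$ by \eqref{eq:interiorU} and \eqref{eq:boundaryU}. A further mild perturbation of the $\vq_j$ within $K\cap\{\rho=\rho^\ast\}$ together with a relabeling allows me to assume that $\vm_k^\sharp$ is an extreme point of $\mathrm{conv}\{\vm_1^\sharp,\ldots,\vm_k^\sharp\}$ for every $k=2,\ldots,N'$ (ordering backward and picking extreme points of the shrinking set).

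Now I apply Proposition~\ref{prop:complete-wc} to $\{(\mu_j,\vq_j)\}_{j=1,\ldots,N'}$, producing $\{(\tau_i,\vp_i)\}_{i=1,\ldots,N}$ with $N := 2^{N'-1}$ satisfying the $H_N$-condition and with each $\vp_i \in \{\vq_1,\ldots,\vq_{N'}\}$, so $\rho_i = \rho^\ast$, $(\rho_i,\vm_i,\mU_i) \in K$ and $\rho_i > R$. The $m_N$-condition is then verified by induction on the level $k$ used in the proof of Proposition~\ref{prop:complete-wc}, using an \emph{$m$-analog of Lemma~\ref{lemma:hn2}}: combining an $m_{N_1}$-family and an $m_{N_2}$-family whose barycenters $\vp,\vq$ satisfy $\vp - \vq \in \Lambda$ \emph{and} $\vm(\vp) \neq \vm(\vq)$ yields an $m_{N_1+N_2}$-family. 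This $m$-analog follows by inspection of the proof of Lemma~\ref{lemma:hn2}: the interior merges inside each sub-family are supplied by its own $m$-condition, and the single cross-merge inherits momentum distinctness by hypothesis. At level $k$ the first half's barycenter has momentum $\bigl(\sum_{j < k}\mu_j\bigr)\bar\vm + \sum_{j \geq k}\mu_j \vm_j^\sharp$ with $\bar\vm \in \mathrm{conv}\{\vm_1^\sharp,\ldots,\vm_{k-1}^\sharp\}$, while the second half's barycenter has momentum $\bigl(\sum_{j < k}\mu_j\bigr)\vm_k^\sharp + \sum_{j \geq k}\mu_j\vm_j^\sharp$; the extreme-point ordering ensures $\vm_k^\sharp \notin \mathrm{conv}\{\vm_1^\sharp,\ldots,\vm_{k-1}^\sharp\}$, hence $\bar\vm \neq \vm_k^\sharp$.

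The main obstacle is a subtle combinatorial issue: the ``second half'' of the level-$k$ family in Proposition~\ref{prop:complete-wc}'s construction consists of $2^{k-2}$ identical copies of a single point, so \emph{intra-half} one-step merges among them necessarily have equal momenta and violate the $m$-condition if used naively. I would resolve this by reordering the one-step $H$-reduction so that each would-be intra-second-half merge is replaced by a cross-merge between first and second halves — these always have distinct momenta, since any first-half element $\vp_i = \vq_{j'}$ with $j' < k$ has momentum $\vm_{j'}^\sharp \neq \vm_k^\sharp$. Verifying carefully that the reshuffled sequence of one-step merges remains a valid $H_N$-reduction while satisfying the $m$-condition at every step is the technical heart of the proof.
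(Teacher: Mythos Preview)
Your overall strategy is right, but you route the verification of the $m_N$-condition through an $m$-analog of Lemma~\ref{lemma:hn2}, and that detour manufactures the very obstacle you then struggle with. The paper bypasses Lemma~\ref{lemma:hn2} entirely and instead reads off the $m_N$-condition from the explicit merge scheme inside the proof of Proposition~\ref{prop:complete-wc}: passing from the level-$k$ family \eqref{eq:complete-wc-family} to the level-$(k-1)$ family is done by $2^{k-2}$ one-step merges, each of which pairs a first-half element $\vp_i=\vq_\ell$ (some $\ell<k$) against a second-half copy of $\vq_k$; the momentum difference is proportional to $\hat\vm_\ell-\hat\vm_k$, nonzero as soon as the $\hat\vm_j$ are pairwise distinct. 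There are no intra-second-half merges in this scheme at all, so the ``combinatorial issue'' you flag never arises, and what you call the technical heart becomes a two-line induction on $k$. In particular your extreme-point ordering of the $\vm_j^\sharp$ is unnecessary: pairwise distinctness suffices.

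Two side remarks on how the approaches differ elsewhere. First, your shortcut to pairwise-distinct momenta by combining coincident $\vq_j$'s is legitimate and actually simpler than the paper's route; the paper instead proves $(\vm^\ast,\mU^\ast)\in\interior{\big((K_{\rho^\ast})^{\co}\big)}$, uses Proposition~\ref{prop:caratheodory-inner-points} to place it in the interior of a finite polytope, and then perturbs the vertices within $K_{\rho^\ast}$ to force distinct momenta while keeping $(\vm^\ast,\mU^\ast)$ interior. Second, your own ``mild perturbation'' is unjustified as written---you never established the interior position that would let the barycenter absorb it---but it is also unnecessary: the extreme-point ordering can be obtained by relabeling alone (peel off extreme points backward), and as noted that ordering is itself superfluous once you follow the cross-merge scheme.
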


\begin{proof} 
	Due to Propositions \ref{prop:KLambda=U}, \ref{prop:Kstar}~\ref{item:Kstar.b} and \ref{prop:compKLambda} we have 
	$$
		\sU \cap \{\rho=\rho^\ast\} = (K\cap \{\rho=\rho^\ast\})^\Lambda \ed
	$$
	From Corollary~\ref{cor:complete-wc} we obtain $(K\cap \{\rho=\rho^\ast\})^\Lambda = (K\cap \{\rho=\rho^\ast\})^\co$, since $\Lambda$ is complete with respect to $K\cap \{\rho=\rho^\ast\}$, see the proof of Lemma~\ref{lemma:Kast} for details. Hence	
	\begin{equation} \label{eq:71-temp-convint}
		\sU \cap \{\rho=\rho^\ast\} = (K\cap \{\rho=\rho^\ast\})^\co \ed
	\end{equation}
	
	Let us now define
	$$
		K_{\rho^\ast} := \Big\{(\vm,\mU)\in \R^n\times \szn\,\Big|\, (\rho^\ast,\vm,\mU)\in K\Big\} \ed
	$$
	Next, we show\footnote{Note that the interior in \eqref{eq:70-temp-convint} is an interior in the space $\R^n\times \szn$ and not in $\R\times\R^n\times \szn$.}
	\begin{equation} \label{eq:70-temp-convint}
		(\vm^\ast,\mU^\ast) \in \interior{\big((K_{\rho^\ast})^\co\big)} \ed
	\end{equation}
	
	Since $(\rho^\ast,\vm^\ast,\mU^\ast)\in \interior{\sU}$, there exists $\delta>0$ such that $(\rho,\vm,\mU)\in \sU$ for all $(\rho,\vm,\mU)\in \R\times\R^n\times\szn$ with $|(\rho,\vm,\mU) - (\rho^\ast,\vm^\ast,\mU^\ast)| <\delta$. Hence for all $(\vm,\mU)\in \R^n\times\szn$ with $|(\vm,\mU) - (\vm^\ast,\mU^\ast)| <\delta$ we duduce that $(\rho^\ast,\vm,\mU)\in \sU$. Together with \eqref{eq:71-temp-convint} this implies that $(\rho^\ast,\vm,\mU)\in (K\cap\{\rho=\rho^\ast\})^\co$. Proposition~\ref{prop:convhull=convcombis} yields $\un{N}\in \N$ and 
	$$
		\big(\mu_i,(\rho^\ast,\un{\vm}_i,\un{\mU}_i)\big)\ \in\  \R^+\times \big(K\cap\{\rho=\rho^\ast\}\big)
	$$ 
	for $i=1,...,\un{N}$ such that $\sum_{i=1}^{\un{N}} \mu_i=1 $ and 
	$$
		(\rho^\ast,\vm,\mU)=\sum_{i = 1}^{\un{N}} \mu_i (\rho^\ast,\un{\vm}_i,\un{\mU}_i) \ed
	$$ 
	In particular $(\vm,\mU)=\sum_{i = 1}^{\un{N}} \mu_i (\un{\vm}_i,\un{\mU}_i)$ and $(\un{\vm}_i,\un{\mU}_i)\in K_{\rho^\ast}$ for all $i=1,...,\un{N}$. Therefore $(\vm,\mU)\in (K_{\rho^\ast})^\co$ which shows \eqref{eq:70-temp-convint}.
	
	Let us now apply Proposition~\ref{prop:caratheodory-inner-points} to obtain $\hat{N}\in\N$ and $(\hat{\vm}_j,\hat{\mU}_j) \in K_{\rho^\ast}$ for $j=1,...,\hat{N}$ such that 
	\begin{equation} \label{eq:72-temp-convint}
		(\vm^\ast,\mU^\ast) \in \interior{\Big(\big\{(\hat{\vm}_1,\hat{\mU}_1),...,(\hat{\vm}_{\hat{N}},\hat{\mU}_{\hat{N}})\big\}^\co\Big)} \ed
	\end{equation}
	
	Note that the $(\hat{\vm}_j,\hat{\mU}_j)\in K_{\rho^\ast}$ can be perturbed\footnote{This idea was orginally used by \name{De~Lellis} and \name{Sz{\'e}kelyhidi} \cite[Proof of Lemma 6]{DelSze10}.} such that $\hat{\vm}_1,...,\hat{\vm}_{\hat{N}}$ are pairwise disjoint and $(\hat{\vm}_j,\hat{\mU}_j)$ still lie in $K_{\rho^\ast}$. Indeed for any $\vm\in \R^n$ and $\alpha>0$ we obtain by definition of $K$, see \eqref{eq:K},
	$$
		\left(\hat{\vm}_j + \alpha \vm , \frac{(\hat{\vm}_j + \alpha\vm)\otimes(\hat{\vm}_j + \alpha\vm)}{\rho^\ast} + \left(p(\rho^\ast) - \frac{2c}{n}\right)\id \right) \in K_{\rho^\ast} \ed
	$$
	Furthermore
	$$
	\big| (\hat{\vm}_j + \alpha \vm) - \hat{\vm}_j \big| = \alpha |\vm| \ec 
	$$
	and 
	\begin{align*}
		&\left\|\frac{(\hat{\vm}_j + \alpha\vm)\otimes(\hat{\vm}_j + \alpha\vm)}{\rho^\ast} + \left(p(\rho^\ast) - \frac{2c}{n}\right)\id - \hat{\mU}_j \right\| \\
		&= \left\|\frac{(\hat{\vm}_j + \alpha\vm)\otimes(\hat{\vm}_j + \alpha\vm)}{\rho^\ast} - \frac{\hat{\vm}_j\otimes \hat{\vm}_i}{\rho^\ast} \right\| \\
		&\leq \alpha \left\|\frac{\vm\otimes\hat{\vm}_j + \hat{\vm}_j\otimes\vm}{\rho^\ast} \right\| + \alpha^2 \left\|\frac{\vm\otimes\vm}{\rho^\ast}\right\| \ec
	\end{align*}
	where we again used the definition of $K$. Thus \eqref{eq:72-temp-convint} implies that if $\alpha$ is sufficiently small, then \eqref{eq:72-temp-convint} still holds for the perturbed points $(\hat{\vm}_j,\hat{\mU}_j)$.
	
	Define $\hat{\rho}_j:=\rho^\ast$ for all $j=1,...,\hat{N}$. Moreover note, that \eqref{eq:72-temp-convint} yields $\hat{N}\geq 2$ and existence of $\mu_1,...,\mu_{\hat{N}}\in\R^+$ such that $(\rho^\ast,\vm^\ast,\mU^\ast) = \sum_{j=1}^{\hat{N}} \mu_j (\hat{\rho}_j,\hat{\vm}_j,\hat{\mU}_j)$.
	
	Next we construct a new family $\big\{\big(\tau_i,(\rho_i,\vm_i,\mU_i)\big)\big\}_{i=1,...,N}$ as in Proposition~\ref{prop:complete-wc}, where $N:=2^{\hat{N}-1}\geq 2$. Proposition~\ref{prop:complete-wc} is applicable since $(\hat{\rho}_j,\hat{\vm}_j,\hat{\mU}_j)\in K\cap \{\rho=\rho^\ast\}$ for all $j=1,...,\hat{N}$ and $\Lambda$ is complete with respect to $K\cap \{\rho=\rho^\ast\}$. Hence we obtain that  
	\begin{itemize}
		\item the family $\big\{\big(\tau_i,(\rho_i,\vm_i,\mU_i)\big)\big\}_{i=1,...,N}$ satisfies the $H_N$-condition,
		\item for all $i\in\{1,...,N\}$ there exists $j\in\{1,...,\hat{N}\}$ such that $\rho_i=\hat{\rho}_j = \rho^\ast >R$ and 
		\item $\sum_{i = 1}^N \tau_i (\rho_i,\vm_i,\mU_i) = \sum_{j = 1}^{\hat{N}} \mu_j (\hat{\rho}_j,\hat{\vm}_j,\hat{\mU}_j) = (\rho^\ast,\vm^\ast,\mU^\ast)$.
	\end{itemize}

	It remains to show that the family $\big\{\big(\tau_i,(\rho_i,\vm_i,\mU_i)\big)\big\}_{i=1,...,N}$ satisfies the $m_N$-condition. To do so, we have to look into the proof of Proposition~\ref{prop:complete-wc} again. We are done once we have shown that the family \eqref{eq:complete-wc-family} satisfies the $m_{2^{k-1}}$-condition for $k=2,...,\hat{N}$, where 
	\begin{align*}
		\vp_i &= (\rho_i,\vm_i,\mU_i) \ec \\
		\vq_j &= (\hat{\rho}_j,\hat{\vm}_j,\hat{\mU}_j) \ed
	\end{align*} 
	
	For the induction basis, let $k=2$. Then the family \eqref{eq:complete-wc-family} reads
	$$
		\left\{\left( \frac{\tau_1}{\mu_1 + \mu_2} , \Big(\mu_1+\mu_2\Big) \vp_1 + \sum_{j = k+1}^{\hat{N}} \mu_j \vq_j \right) ,\left( \frac{\tau_2}{\mu_1 + \mu_2} , \Big(\mu_1+\mu_2\Big) \vp_2 + \sum_{j = k+1}^{\hat{N}} \mu_j \vq_j \right) \right\}
	$$
	where $\vp_1=\vq_1$ and $\vp_2=\vq_2$. Hence
	$$
		(\mu_1+\mu_2) (\vm_2-\vm_1) = (\mu_1+\mu_2) (\hat{\vm}_2-\hat{\vm}_1) \neq \vz
	$$
	since the $\hat{\vm}_j$ are pairwise disjoint, i.e. the $m_2$-condition holds. 
	
	Let now $3\leq k\leq \hat{N}$. It remains to prove that 
	$$
		\left(\sum_{j = 1}^k \mu_j\right) (\vm_i - \hat{\vm}_k) \neq \vz
	$$
	for all $i=1,...,2^{k-2}$. By construction there exists $\ell<k$ for all $i=1,...,2^{k-2}$ such that $\vp_i=\vq_\ell$, i.e. $\vm_i=\hat{\vm}_\ell$. Together with the fact that the $\hat{\vm}_j$ are pairwise disjoint, this yields the claim.
\end{proof}

The fact that the family in Lemma~\ref{lemma:existence-family-nodens} satisfies the $m_N$-condition was not easy to prove and made a lot of effort. However the variants of Lemmas \ref{lemma:UstattK} and \ref{lemma:U-oscillatory-lemma} are quite simple.

\begin{lemma} \label{lemma:UstattK-nodens}
	Let the assumptions of Lemma~\ref{lemma:UstattK} be true. Assume in addition to that, that the family $\big\{\big(\tau_i,(\rho_i,\vm_i,\mU_i)\big)\big\}_{i=1,...,N}$ satisfies
	\begin{itemize}
		\item the $m_N$-condition and
		\item $\rho_i=\rho^\ast$ for all $i=1,...,N$.
	\end{itemize}
	Then the family $\big\{\big(\tau_i,(\hat{\rho}_i,\hat{\vm}_i,\hat{\mU}_i)\big)\big\}_{i=1,...,N}$ given by \eqref{eq:defn-hat} additionlly satisfies
	\begin{enumerate} \setcounter{enumi}{4}
		\item the $m_N$-condition and
		\item $\hat{\rho}_i=\rho^\ast$ for all $i=1,...,N$.
	\end{enumerate}
\end{lemma}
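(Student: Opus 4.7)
The plan is to verify the two new properties one at a time, treating Property 6 as cosmetic and devoting the real work to Property 5. Property 6 is immediate from the definition \eqref{eq:defn-hat}: if every $\rho_i=\rho^\ast$, then $\hat{\rho}_i=\tau\rho^\ast+(1-\tau)\rho^\ast=\rho^\ast$. In particular the entire hat-family still sits in the slice $\{\rho=\rho^\ast\}$ of the extended phase space.

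For Property 5, I would first note the key identity
$$\hat{\vm}_j-\hat{\vm}_i=(1-\tau)(\vm_j-\vm_i),\qquad i,j\in\{1,\dots,N\},$$
which follows at once from \eqref{eq:defn-hat}. Since $\tau\in(0,1)$, any non-degeneracy of the form $\vm_j-\vm_i\neq\vz$ transfers verbatim from the original family to the hat-family. Combined with the $H_N$-condition already provided by Lemma~\ref{lemma:UstattK}, this settles the base case $N=2$: the $m_2$-condition reduces to $\hat{\vm}_2-\hat{\vm}_1\neq\vz$, which holds because $\vm_2-\vm_1\neq\vz$ by assumption.

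For the inductive step $N\geq3$ I would proceed by induction on $N$. Having confirmed $\hat{\vm}_2-\hat{\vm}_1\neq\vz$ as above, one must check that the merged family appearing in \eqref{eq:defn-hn-iteration-b} (built from the hat-variables) satisfies the $m_{N-1}$-condition. The structural observation driving the argument is that
\begin{align*}
&\frac{\tau_1}{\tau_1+\tau_2}(\hat{\rho}_1,\hat{\vm}_1,\hat{\mU}_1)+\frac{\tau_2}{\tau_1+\tau_2}(\hat{\rho}_2,\hat{\vm}_2,\hat{\mU}_2) \\
&\qquad =\tau(\rho^\ast,\vm^\ast,\mU^\ast)+(1-\tau)\left[\frac{\tau_1}{\tau_1+\tau_2}(\rho_1,\vm_1,\mU_1)+\frac{\tau_2}{\tau_1+\tau_2}(\rho_2,\vm_2,\mU_2)\right],
\end{align*}
so the hat-merged family coincides with the family obtained by applying the construction \eqref{eq:defn-hat} (with the same $\tau$) to the merged original family. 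The latter still has all densities equal to $\rho^\ast$ and, by the $m_N$-condition on the original family, satisfies the $m_{N-1}$-condition; the induction hypothesis therefore delivers the $m_{N-1}$-condition for the hat-merged family.

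The only real obstacle is a formal one. To invoke the induction hypothesis, one would like all assumptions of Lemma~\ref{lemma:UstattK-nodens} to hold for the merged family, but the merged vertex $\frac{\tau_1}{\tau_1+\tau_2}(\rho_1,\vm_1,\mU_1)+\frac{\tau_2}{\tau_1+\tau_2}(\rho_2,\vm_2,\mU_2)$ lies in $\sU\cap\{\rho=\rho^\ast\}$ but need not belong to $K$. As is already implicit in the inductive step of the proof of Lemma~\ref{lemma:UstattK}, however, neither the $H_N$-condition preservation nor the preservation of the non-degeneracy $\vm_2-\vm_1\neq\vz$ actually uses the hypothesis $(\rho_i,\vm_i,\mU_i)\in K$: both are purely affine/structural properties of the hat-construction relative to the cone $\Lambda$. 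Once this is made explicit (or, equivalently, the lemma is re-proved for families taking values in $\interior{\sU}$ rather than $K$), the induction goes through with no further ingredient.
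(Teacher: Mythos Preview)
Your proof is correct and follows essentially the same route as the paper: both argue Property~(f) directly from \eqref{eq:defn-hat}, both use the identity $\hat{\vm}_j-\hat{\vm}_i=(1-\tau)(\vm_j-\vm_i)$ for the base case, and both run an induction on $N$ exploiting that the merged hat-family coincides with the hat of the merged family (the paper records this as \eqref{eq:19-temp-convint} in the proof of Lemma~\ref{lemma:UstattK}). Your explicit discussion of the formal issue---that the merged vertex need not lie in $K$, yet the induction only uses affine/structural properties of the hat-construction---is a point the paper leaves implicit, but the argument is otherwise identical.
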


\begin{proof}
	\begin{enumerate} \setcounter{enumi}{4}
		\item Let first $N=2$. Since the family $\big\{\big(\tau_1,(\rho_1,\vm_1,\mU_1)\big),\big(\tau_2,(\rho_2,\vm_2,\mU_2)\big)\big\}$ satisfies the $m_2$-condition, it holds that $\vm_ 2-\vm_1\neq \vz$. Hence 
		$$
			\hat{\vm}_2 - \hat{\vm}_1 = \tau \vm^\ast + (1-\tau) \vm_2 - \tau \vm^\ast - (1-\tau) \vm_1 = (1-\tau) (\vm_2 - \vm_1) \neq \vz \ed
		$$
		Let now $N>2$. Because the family $\big\{\big(\tau_i,(\rho_i,\vm_i,\mU_i))\big)\big\}_{i=1,...,N}$ satisfies the $m_N$-condition, it holds (after relabeling if necessary) that $\vm_2-\vm_1\neq \vz$, 
		$$
			(\rho_2,\vm_2,\mU_2)-(\rho_1,\vm_1,\mU_1) \ \in\ \Lambda 
		$$ 
		and the family \eqref{eq:defn-hn-iteration-b} satisfies both the $m_{N-1}$- and the $H_{N-1}$-condition. 
		
		We have already shown in the proof of Lemma~\ref{lemma:UstattK} that $(\hat{\rho}_2,\hat{\vm}_2,\hat{\mU}_2)-(\hat{\rho}_1,\hat{\vm}_1,\hat{\mU}_1)\in \Lambda$ and the family in \eqref{eq:19-temp-convint} satisfies the $H_{N-1}$-condition. The same computation as in the case $N=2$ yields that $\hat{\vm}_2 - \hat{\vm}_1\neq \vz$. Finally by induction hypothesis the family in \eqref{eq:19-temp-convint} satisfies the $m_{N-1}$-condition. This proves that the family $\big\{\big(\tau_i,(\hat{\rho}_i,\hat{\vm}_i,\hat{\mU}_i)\big)\big\}_{i=1,...,N}$ satisfies the $m_N$-condition.
		
		\item From $\rho_i=\rho^\ast$ we simply deduce that 
		$$
			\hat{\rho}_i = \tau\rho^\ast + (1-\tau) \rho_i = \tau\rho^\ast + (1-\tau) \rho^\ast = \rho^\ast\ed
		$$
	\end{enumerate}
\end{proof}

Finally the version of Lemma~\ref{lemma:U-oscillatory-lemma} which we need here, is proven by using Proposition~\ref{prop:operators}~\ref{item:operators.c}. 

\begin{lemma} \label{lemma:U-oscillatory-lemma-nodens}
	Let the assumptions of Lemma~\ref{lemma:U-oscillatory-lemma} be true. Assume in addition to that, that the family $\big\{\big(\tau_i,(\rho_i,\vm_i,\mU_i)\big)\big\}_{i=1,...,N}$ satisfies
	\begin{itemize}
		\item the $m_N$-condition and
		\item $\rho_i=\rho^\ast$ for all $i=1,...,N$.
	\end{itemize}
	Then we can achieve that the sequence of oscillations \eqref{eq:U-oscillations} has the additional property that $\til{\rho}_k\equiv 0$ for all $k\in\N$. 
\end{lemma}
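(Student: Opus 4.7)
The plan is to mirror the induction argument of Lemma~\ref{lemma:U-oscillatory-lemma} over $N\geq 2$, modifying only the construction of the plane-wave profile in the base case so that Proposition~\ref{prop:operators}~\ref{item:operators.c} produces $\opL_\rho \equiv 0$. The two extra hypotheses ($m_N$-condition plus $\rho_i \equiv \rho^\ast$) are exactly what is needed to force this at every level of the induction.

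For the base case $N=2$, I would set $(\ov{\rho},\ov{\vm},\ov{\mU}) := (\rho_2,\vm_2,\mU_2)-(\rho_1,\vm_1,\mU_1)$ as in the original proof. Since $\rho_1=\rho_2=\rho^\ast$, we immediately get $\ov{\rho}=0$, and the $m_2$-condition gives $\ov{\vm}=\vm_2-\vm_1\neq \vz$. The wave-cone condition provides a nonzero $\veta\in\R^{1+n}$ annihilating the block matrix in \eqref{eq:wavecone-kernel}; I would then check that $\veta$ cannot be parallel to $\ve_t$, since $\veta=(\eta_t,\vz)^\trans$ combined with the second row of \eqref{eq:wavecone-kernel} would yield $\ov{\vm}\,\eta_t=\vz$, forcing either $\eta_t=0$ (so $\veta=\vz$) or $\ov{\vm}=\vz$, both impossible. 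Hence Proposition~\ref{prop:operators}~\ref{item:operators.c} applies and gives operators with $\opL_\rho\equiv 0$. Constructing $(\til\rho_k,\til\vm_k,\til\mU_k):=\big(\opL_\rho[g_k\Phi],\opL_\vm[g_k\Phi],\opL_\mU[g_k\Phi]\big)$ with the same cutoff $\Phi$, mollified step function $f_\delta$, and primitive $h$ as in the proof of Lemma~\ref{lemma:U-oscillatory-lemma}, everything in the verification of \ref{item:U-pp.a}--\ref{item:U-pp.f} carries over unchanged, and in addition $\til\rho_k \equiv 0$.

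For the induction step $N\geq 3$, I would follow the structure of the original proof verbatim. After relabeling so that the family \eqref{eq:defn-hn-iteration-b} satisfies both the $H_{N-1}$- and $m_{N-1}$-condition, I define $(\rho_a,\vm_a,\mU_a)$ by \eqref{eq:14-temp-convint}. Since $\rho_1=\rho_2=\rho^\ast$, we still have $\rho_a=\rho^\ast$, and by the definition of the $m_N$-condition the reduced family's densities are all equal to $\rho^\ast$. Thus the induction hypothesis applies and produces $(\til\rho_{\oscA,k},\til\vm_{\oscA,k},\til\mU_{\oscA,k})$ with $\til\rho_{\oscA,k}\equiv 0$. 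On the (possibly disconnected) set $\Gamma_a$ I would then apply the already-proved base case $N=2$ to the pair $\big\{(\tfrac{\tau_1}{\tau_1+\tau_2},(\rho_1,\vm_1,\mU_1)),(\tfrac{\tau_2}{\tau_1+\tau_2},(\rho_2,\vm_2,\mU_2))\big\}$, which satisfies the $m_2$-condition (from $\vm_2-\vm_1\neq\vz$) and has common density $\rho^\ast=\rho_a$; this yields $(\til\rho_{\oscB,\ell},\til\vm_{\oscB,\ell},\til\mU_{\oscB,\ell})$ with $\til\rho_{\oscB,\ell}\equiv 0$. Summing the two and diagonalizing $\ell=\ell(k)$ exactly as in the original proof gives the sought sequence, and $\til\rho_k = \til\rho_{\oscA,k}+\til\rho_{\oscB,\ell(k)}\equiv 0$.

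The main conceptual point to nail down is that the $m_N$-condition is precisely engineered to propagate through the induction: at every reduction step one must verify both that $\ov{\vm}\neq\vz$ for the current pair $(\rho_1,\vm_1,\mU_1),(\rho_2,\vm_2,\mU_2)$ and that the merged pair $(\rho_a,\vm_a,\mU_a)$ together with $(\rho_3,\vm_3,\mU_3),\ldots,(\rho_N,\vm_N,\mU_N)$ still satisfies $m_{N-1}$ with all densities equal to $\rho^\ast$. The latter is immediate from $\rho_i\equiv \rho^\ast$, while the former is exactly what the $m_N$-condition supplies; so apart from this bookkeeping the argument is a direct decoration of the proof of Lemma~\ref{lemma:U-oscillatory-lemma}.
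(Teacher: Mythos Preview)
Your proposal is correct and follows essentially the same approach as the paper: induction over $N\geq 2$, using $\ov{\rho}=0$ together with $\ov{\vm}\neq\vz$ to rule out $\veta\parallel\ve_t$ and invoke Proposition~\ref{prop:operators}~\ref{item:operators.c} in the base case, then propagating the $m_N$-condition and the common density $\rho^\ast$ through the induction step exactly as in the proof of Lemma~\ref{lemma:U-oscillatory-lemma}. Your explicit check that $\veta=(\eta_t,\vz)^\trans$ would force $\ov{\vm}\eta_t=\vz$ is a slight elaboration of the paper's one-line ``\eqref{eq:6-temp-convint} would yield the contradiction $\ov{\vm}=\vz$'', but the logic is identical.
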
 

\begin{proof}
	Let first $N=2$. By assumtion $\rho_1=\rho_2=\rho^\ast$ and $\vm_2 - \vm_1 \neq \vz$ (since the $m_2$-condition holds). Hence if we proceed as in the proof of Lemma~\ref{lemma:U-oscillatory-lemma}, we find $\ov{\rho}=0$ and $\ov{\vm}\neq \vz$. If $\veta$ was parallel to $\ve_t$, then \eqref{eq:6-temp-convint} would yield the contradiction $\ov{\vm}=\vz$. Therefore $\veta$ is not parallel to $\ve_t$. Thus Proposition~\ref{prop:operators}~\ref{item:operators.c} tells us that $\opL_{\rho}\equiv 0$, i.e. $\til{\rho}_k\equiv 0$ for all $k\in\N$, see \eqref{eq:defn-perturbation}. 
	
	Let $N\geq 3$. Since the family $\big\{\big(\tau_i,(\rho_i,\vm_i,\mU_i)\big)\big\}_{i=1,...,N}$ satisfies the $m_N$-condition by assumption, we obtain that $\vm_2-\vm_1 \neq \vz$ for $\vm_1,\vm_2$ in \eqref{eq:11-temp-convint}, and furthermore the family \eqref{eq:13-temp-convint} satisfies the $m_{N-1}$-condition. Moreover $\rho_a=\rho^\ast$ due to the fact that $\rho_1=\rho_2=\rho^\ast$, see \eqref{eq:14-temp-convint}. Therefore the induction hypothesis yields $\til{\rho}_{\oscA,k}\equiv 0$ for all $k\in \N$. In addition to that the family in \eqref{eq:80-temp-convint} satisfies the $m_2$-condition because $\vm_2-\vm_1 \neq \vz$, see above. Hence $\til{\rho}_{\oscB,\ell}\equiv 0$ for all $\ell\in \N$, due to the induction basis. Together we obtain for all $k\in\N$
	$$ 
		\til{\rho}_k = \til{\rho}_{\oscA,k} + \til{\rho}_{\oscB,\ell(k)} \equiv 0 \ed
	$$
\end{proof}

We are ready to prove Lemma~\ref{lemma:K-oscillatory-lemma-nodens} and Proposition~\ref{prop:pert-prop-nodens}.

\begin{proof}[Proof of Lemma~\ref{lemma:K-oscillatory-lemma-nodens}]
	First, the family $\big\{\big(\tau_i,(\rho_i,\vm_i,\mU_i)\big)\big\}_{i=1,...,N}$ which is used in the proof of Lemma~\ref{lemma:K-oscillatory-lemma} can be chosen in such a way that in addition the $m_N$-condition is fulfilled and $\rho_i=\rho^\ast$ for all $i=1,...,N$, due to Lemma~\ref{lemma:existence-family-nodens}. 
	
	Second, the family $\big\{\big(\tau_i,(\hat{\rho}_i,\hat{\vm}_i,\hat{\mU}_i)\big)\big\}_{i=1,...,N}$ defined in \eqref{eq:26-temp-convint} still satisfies additionally the $m_N$-condition and $\hat{\rho}_i=\rho^\ast$ for all $i=1,...,N$ according to Lemma~\ref{lemma:UstattK-nodens}.
	
	Finally Lemma~\ref{lemma:U-oscillatory-lemma-nodens} says that we can achieve $\til{\rho}_k\equiv 0$ for all $k\in \N$ in addition to the properties given in Lemma~\ref{lemma:K-oscillatory-lemma}. This proves Lemma~\ref{lemma:K-oscillatory-lemma-nodens}.
\end{proof}

\begin{proof}[Proof of Proposition~\ref{prop:pert-prop-nodens}] 
	We proceed as in the proof of Proposition~\ref{prop:pert-prop} where now we make use of Lemma~\ref{lemma:K-oscillatory-lemma-nodens}. In view of \eqref{eq:defn-pert-sequence} this immediately yields $\rho_{\pert,k} \equiv \rho$ for all $k\in\N$ because $\til{\rho}_{i,\valpha,k} \equiv 0$ for all $(i,\valpha)\in\mathcal{J}$ according to Lemma~\ref{lemma:K-oscillatory-lemma-nodens}. Thus $\rho_{\pert} \equiv \rho$.
\end{proof}

\chapter{Application: Infinitely Many Solutions of the Initial Boundary Value Problem for Barotropic Euler} \label{chap:appl-ibvp}
\chaptermark{Solutions of the Initial Boundary Value Problem}

In this chapter we consider the initial boundary value problem for the barotropic Euler system \eqref{eq:baro-euler-pv-dens}, \eqref{eq:baro-euler-pv-mom} with any given initial data \eqref{eq:baro-initial} and impermeability boundary condition \eqref{eq:impermeability} on a bounded domain $\Omega\subset\R^n$. What is meant by an (admissible) weak solution to this problem is defined in Definition~\ref{defn:aws-baro-bdd}. 

We are going to show how convex integration is used to produce solutions to this initial boundary value problem. With the help of Theorem~\ref{thm:convint} we only present in detail a less impressive result in the sense that the solutions, which are obtained here, are only weak solutions and not admissible, see Section~\ref{sec:ibvp-weak}. Furthermore this result only works for a narrow class of initial data. The proof of this result boils down to finding a subsolution as required by Theorem~\ref{thm:convint} which additionally complies with the initial and boundary condition. 

In view of much more general results in the literature, we indicate in Sections \ref{sec:ibvp-adm} and \ref{sec:ibvp-other} how Theorem~\ref{thm:convint} has to be modified in order to get better results. Note that in this book we actually focus on the application of Theorem~\ref{thm:convint} to the so-called Riemann problem, which is considered in Chapter~\ref{chap:appl-riemann}. This is the reason why Theorem~\ref{thm:convint} is formulated in such a way that it can easily be applied there. However this formulation may be viewed as ``not ideal'' as far as the application in the current chapter is concerned, in the sense that it does not yield most general results. 

In order to apply Theorem~\ref{thm:convint} to the initial boundary value problem under consideration, we set $\Gamma:= (0,T)\times \Omega$. In this chapter we allow $T=\infty$. However for the closure of $(0,T)$ we will write for simplicity $[0,T]$ which actually means $[0,T]$ if $T\in \R$ and $[0,\infty)$ if $T=\infty$.

\section{A Simple Result on Weak Solutions} \label{sec:ibvp-weak}

The following statement can be easily derived from Theorem~\ref{thm:convint}.

\begin{thm} \label{thm:ibvp-many-weak}
	Assume there exist $r,c>0$ and $(\rho_0,\vm_0,\mU_0)\in C^1\big(\closure{\Gamma};\R^+ \times \R^n\times \szn\big)$ with the following properties:
	\begin{itemize}
		\item The assumptions of Theorem~\ref{thm:convint}, i.e. \eqref{eq:p0-pde1} - \eqref{eq:p0-dens-bdd}, hold;
		\item The initial condition is fulfilled, i.e. 
		\begin{equation} \label{eq:ibvp-ic}
			(\rho_0,\vm_0)(0,\cdot) = (\rho_\init,\rho_\init \vu_\init)\es
		\end{equation}
		\item The boundary condition is satisfied in the following sense:
		\begin{align}
			\vm_0 \cdot \vn \big|_{\partial\Omega} &= 0 \qquad \text{ and} \label{eq:ibvp-bc1} \\
			(\mU_0\cdot \vphi)\cdot \vn\big|_{\partial \Omega} &= 0 \qquad \text{ for all }\vphi \in \Cc\big([0,T) \times \closure{\Omega};\R^n\big) \text{ with }\vphi\cdot \vn\big|_{\partial \Omega}=0\ed \label{eq:ibvp-bc2}
		\end{align}
	\end{itemize}
	Then there exist infinitely many weak solutions (not necessarily admissible\footnote{In fact these solutions \emph{are} not admissible, see Section~\ref{sec:ibvp-adm} for details.}) of the initial boundary value problem \eqref{eq:baro-euler-pv-dens}, \eqref{eq:baro-euler-pv-mom}, \eqref{eq:baro-initial}, \eqref{eq:impermeability}.
\end{thm}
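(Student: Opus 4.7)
The plan is to apply Theorem~\ref{thm:convint} directly, with $\Gamma = (0,T)\times\Omega$, and then translate the boundary integrals appearing in \eqref{eq:sol-pde1}, \eqref{eq:sol-pde2} into the initial-data integrals required in Definition~\ref{defn:aws-baro-bdd}. Since $(\rho_0,\vm_0,\mU_0)$ satisfies the hypotheses \eqref{eq:p0-pde1}--\eqref{eq:p0-dens-bdd} of Theorem~\ref{thm:convint} by assumption, that theorem yields infinitely many pairs $(\rho,\vm)\in L^\infty(\Gamma;\R^+\times\R^n)$ satisfying \ref{item:convint.a}--\ref{item:convint.c}. For each such pair I would define $\vu:=\vm/\rho$, which is a bounded function thanks to \eqref{eq:sol-dens-bdd} and the $L^\infty$-bound inherent to property \ref{item:convint.c} (via Lemma~\ref{lemma:U-bdd}). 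Distinct $(\rho,\vm)$ produce distinct $(\rho,\vu)$, so the infinitude passes through.

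The key verification is to match the boundary terms. Decompose $\partial\Gamma$ into $\{0\}\times\Omega$, $(0,T)\times\partial\Omega$, and (if $T<\infty$) $\{T\}\times\Omega$; the test functions from Definition~\ref{defn:aws-baro-bdd} vanish on the last piece. On $\{0\}\times\Omega$ the space-time outward normal is $(n_t,\vn_\vx)=(-1,\vz)$, while on $(0,T)\times\partial\Omega$ it is $(0,\vn)$ with $\vn$ the outer spatial normal to $\Omega$. For any test function $\phi\in\Cc\big([0,T)\times\closure{\Omega}\big)$, the boundary term in \eqref{eq:sol-pde1} reduces to
\begin{align*}
\int_{\partial\Gamma}\big[\rho_0\,n_t+\vm_0\cdot\vn_\vx\big]\phi\dS_{t,\vx}
&=-\int_\Omega\rho_0(0,\cdot)\,\phi(0,\cdot)\dx+\int_0^T\!\int_{\partial\Omega}\vm_0\cdot\vn\,\phi\dS_\vx\dt \\
&=-\int_\Omega\rho_\init\,\phi(0,\cdot)\dx,
\end{align*}
using \eqref{eq:ibvp-ic} and \eqref{eq:ibvp-bc1}. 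Substituting into \eqref{eq:sol-pde1} gives exactly \eqref{eq:baro-euler-weak-bdd-dens}.

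Analogously, for $\vphi\in\Cc\big([0,T)\times\closure{\Omega};\R^n\big)$ with $\vphi\cdot\vn|_{\partial\Omega}=0$, the boundary term in \eqref{eq:sol-pde2} becomes
\begin{align*}
&\int_{\partial\Gamma}\left[\vm_0\cdot\vphi\,n_t+(\mU_0\cdot\vphi)\cdot\vn_\vx+\tfrac{2c}{n}\vphi\cdot\vn_\vx\right]\dS_{t,\vx} \\
&\qquad=-\int_\Omega\vm_0(0,\cdot)\cdot\vphi(0,\cdot)\dx+\int_0^T\!\int_{\partial\Omega}\left[(\mU_0\cdot\vphi)\cdot\vn+\tfrac{2c}{n}\vphi\cdot\vn\right]\dS_\vx\dt \\
&\qquad=-\int_\Omega\rho_\init\vu_\init\cdot\vphi(0,\cdot)\dx,
\end{align*}
where the three boundary contributions vanish by \eqref{eq:ibvp-ic}, \eqref{eq:ibvp-bc2}, and the constraint $\vphi\cdot\vn|_{\partial\Omega}=0$ respectively. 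Writing $\vm=\rho\vu$ so that $\vm\otimes\vm/\rho=\rho\vu\otimes\vu$, substitution into \eqref{eq:sol-pde2} reproduces exactly \eqref{eq:baro-euler-weak-bdd-mom}.

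Thus each $(\rho,\vu)$ is a weak solution in the sense of Definition~\ref{defn:aws-baro-bdd}, and we obtain infinitely many of them. The main (albeit mild) obstacle is the bookkeeping of the boundary decomposition and checking that each of the three imposed conditions on $(\rho_0,\vm_0,\mU_0)$ kills exactly the right boundary contribution; note in particular that the condition \eqref{eq:ibvp-bc2} is the precise ``translation through the constitutive relation'' of the physical impermeability condition \eqref{eq:impermeability} needed to make the $\mU_0$-boundary integral vanish against admissible test functions.
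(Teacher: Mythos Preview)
Your proposal is correct and follows essentially the same approach as the paper: apply Theorem~\ref{thm:convint} on $\Gamma=(0,T)\times\Omega$, set $\vu=\vm/\rho$, and verify that the boundary integrals in \eqref{eq:sol-pde1}, \eqref{eq:sol-pde2} reduce (via \eqref{eq:ibvp-ic}, \eqref{eq:ibvp-bc1}, \eqref{eq:ibvp-bc2}, and $\vphi\cdot\vn|_{\partial\Omega}=0$) to the initial-data terms in Definition~\ref{defn:aws-baro-bdd}. Your explicit decomposition of $\partial\Gamma$ and identification of which hypothesis kills which boundary piece is slightly more detailed than the paper's presentation, but the argument is the same.
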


\begin{proof}
	An application of Theorem~\ref{thm:convint} yields infinitely many bounded functions $(\rho,\vm)\in L^\infty\big((0,T)\times \Omega;\R^+\times \R^n\big)$ such that in particular property \ref{item:convint.a} of Theorem~\ref{thm:convint} holds. In other words \eqref{eq:sol-pde1} and \eqref{eq:sol-pde2} hold for all test functions $(\phi,\vphi) \in \Cc\big([0,T] \times \closure{\Omega}; \R\times \R^n\big)$. For each such $\vm$, define $\vu:= \frac{\vm}{\rho}$. Note that $\rho>0$ a.e. on $(0,T)\times \Omega$ due to \eqref{eq:sol-dens-bdd}. In order to show that each pair $(\rho,\vu)$ is a weak solution in the sense of Definition~\ref{defn:aws-baro-bdd}, let $(\phi,\vphi) \in \Cc\big([0,T) \times \closure{\Omega}; \R\times \R^n\big)$ be arbitrary test functions with $\vphi\cdot \vn\big|_{\partial \Omega}=0$. From \eqref{eq:ibvp-ic}, \eqref{eq:ibvp-bc1} and \eqref{eq:sol-pde1} we obtain
	\begin{align*}
		&\int_0^T \int_{\Omega} \Big[\rho \partial_t \phi + \rho\vu\cdot\Grad \phi\Big]\dx\dt + \int_{\Omega} \rho_\init\phi(0,\cdot) \dx \\
		&= \int_0^T \int_{\Omega} \Big[\rho \partial_t \phi + \vm\cdot\Grad \phi\Big]\dx\dt + \int_{\Omega} \rho_0\phi(0,\cdot) \dx - \int_0^T \int_{\partial\Omega} \vm_0 \cdot \vn \, \phi \dS_\vx \dt \\
		&= 0 \ec
	\end{align*}	
	whereas \eqref{eq:ibvp-ic}, \eqref{eq:ibvp-bc2}, the fact that $\vphi\cdot \vn\big|_{\partial \Omega}=0$, and \eqref{eq:sol-pde2} yield
	\begin{align*}
		&\int_0^T \int_{\Omega} \Big[\rho\vu \cdot\partial_t \vphi + \rho\vu\otimes\vu:\Grad \vphi + p(\rho)\Div \vphi\Big]\dx\dt + \int_{\Omega} \rho_\init\vu_\init\cdot\vphi(0,\cdot) \dx \\
		&= \int_0^T \int_{\Omega} \Big[\vm \cdot\partial_t \vphi + \frac{\vm\otimes\vm}{\rho}:\Grad \vphi + p(\rho)\Div \vphi\Big]\dx\dt + \int_{\Omega} \vm_0\cdot\vphi(0,\cdot) \dx \\
		&\qquad\qquad - \int_0^T \int_{\partial\Omega} \left[ (\mU_0\cdot \vphi)\cdot \vn + \frac{2c}{n} \vphi\cdot \vn\right] \dS_\vx \dt \\
		&= 0 \ed
	\end{align*}		
	In other words \eqref{eq:baro-euler-weak-bdd-dens} and \eqref{eq:baro-euler-weak-bdd-mom} hold, i.e. each pair $(\rho,\vu)$ is in fact a weak solution.
\end{proof}

With Theorem \ref{thm:ibvp-many-weak} at hand, one finds simple examples of initial data for which there exist infinitely many solutions, see e.g. the following lemma. 

\begin{cor} \label{cor:ibvp-many-weak-divcond}
	Let $(\rho_\init,\vu_\init)\in C^1(\closure{\Omega},\R^+\times \R^n)$ where $\vu_\init\cdot \vn\big|_{\partial\Omega} = 0$. Moreover we assume that $\Div(\rho_\init\vu_\init) = 0$. Then the initial boundary value problem \eqref{eq:baro-euler-pv-dens}, \eqref{eq:baro-euler-pv-mom}, \eqref{eq:baro-initial}, \eqref{eq:impermeability} has infinitely many weak solutions. 
\end{cor}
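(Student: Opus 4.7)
The plan is to apply Theorem~\ref{thm:ibvp-many-weak} by constructing an explicit time-independent subsolution directly from the initial data. Specifically, I would set
\begin{align*}
	\rho_0(t,\vx) &:= \rho_\init(\vx), \\
	\vm_0(t,\vx) &:= \rho_\init(\vx)\vu_\init(\vx), \\
	\mU_0(t,\vx) &:= \mZ,
\end{align*}
for all $(t,\vx)\in\closure{\Gamma}$. Since $(\rho_\init,\vu_\init)\in C^1(\closure{\Omega};\R^+\times\R^n)$, the triple $(\rho_0,\vm_0,\mU_0)$ is of class $C^1$ on $\closure{\Gamma}$. The initial condition \eqref{eq:ibvp-ic} holds by construction. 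For the boundary conditions, $\vm_0\cdot \vn|_{\partial\Omega} = \rho_\init(\vu_\init\cdot\vn)|_{\partial\Omega} = 0$ gives \eqref{eq:ibvp-bc1}, while \eqref{eq:ibvp-bc2} is trivial because $\mU_0\equiv\mZ$.

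Next I would verify the PDEs \eqref{eq:p0-pde1}, \eqref{eq:p0-pde2}: $\partial_t\rho_0 + \Div\vm_0 = \Div(\rho_\init\vu_\init) = 0$ by assumption, and $\partial_t\vm_0 + \Div\mU_0 = \vz$ since both $\vm_0$ is time-independent and $\mU_0$ is constant. The lower density bound \eqref{eq:p0-dens-bdd} follows because $\rho_\init$ is continuous and strictly positive on the compact set $\closure{\Omega}$, so we may choose
\[
	r := \tfrac{1}{2}\min_{\vx\in\closure{\Omega}}\rho_\init(\vx) > 0.
\]

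The main point to check is \eqref{eq:p0-subs}, i.e.\ that $(\rho_0,\vm_0,\mU_0)$ takes values in $\interior{\sU}$ for some admissible choice of the constant $c$. Using the characterization \eqref{eq:interiorU}, this amounts to
\[
	e(\rho_\init,\rho_\init\vu_\init,\mZ) = \frac{n}{2}\lambda_{\max}\bigl(\rho_\init\,\vu_\init\otimes\vu_\init + p(\rho_\init)\id\bigr) < c
\]
on $\closure{\Gamma}$. Since $\closure{\Omega}$ is compact and $\rho_\init,\vu_\init,p$ are continuous, the left-hand side is bounded on $\closure{\Omega}$, and I simply pick
\[
	c > \max_{\vx\in\closure{\Omega}} \frac{n}{2}\lambda_{\max}\bigl(\rho_\init(\vx)\,\vu_\init(\vx)\otimes\vu_\init(\vx) + p(\rho_\init(\vx))\id\bigr).
\]
With this choice all hypotheses of Theorem~\ref{thm:ibvp-many-weak} are met, and the existence of infinitely many weak solutions follows directly. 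There is no real obstacle here beyond choosing the constant $c$ large enough; the divergence-free condition on the initial momentum was exactly what was needed to make a purely stationary subsolution consistent with the continuity equation, and the impermeability of $\vu_\init$ is exactly what is needed to make the boundary term \eqref{eq:ibvp-bc1} vanish.
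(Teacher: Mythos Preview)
Your proposal is correct and follows essentially the same approach as the paper: both construct the time-independent subsolution $(\rho_0,\vm_0,\mU_0)=(\rho_\init,\rho_\init\vu_\init,\mZ)$, verify the hypotheses of Theorem~\ref{thm:ibvp-many-weak} using the divergence-free assumption and compactness of $\closure{\Omega}$, and choose $c$ large enough to ensure \eqref{eq:p0-subs}. Your version is slightly more explicit in writing out the choices of $r$ and $c$, but the argument is otherwise identical.
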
 

\begin{proof} 
	Define
	\begin{align*}
		\rho_0(t,\cdot) &:= \rho_\init \ec \\
		\vm_0(t,\cdot) &:= \rho_\init \vu_\init \ec \\
		\mU_0(t,\cdot) &:= \mZ
	\end{align*}
	for all $t\in [0,T]$. Together with the assumption $\Div(\rho_\init\vu_\init) = 0$ it is easy to show that \eqref{eq:p0-pde1}, \eqref{eq:p0-pde2} hold. Since $\closure{\Omega}$ is compact and $\rho_0,\vm_0,\mU_0$ do not depend on $t$, there exists $c>0$ such that $e(\rho_0,\vm_0,\mU_0)(t,\vx) < c$ for all $(t,\vx)\in \closure{\Gamma}$, i.e. \eqref{eq:p0-subs} is fulfilled. Analogously there exists $r>0$ such that $\rho_0(t,\vx)>r$ for all $(t,\vx)\in \closure{\Gamma}$, which shows \eqref{eq:p0-dens-bdd}.
	
	Moreover \eqref{eq:ibvp-ic} and \eqref{eq:ibvp-bc2} are satisfied by construction, whereas \eqref{eq:ibvp-bc1} holds by assumption. Thus Theorem~\ref{thm:ibvp-many-weak} yields the claim.
\end{proof}

In order to show that for \emph{all} initial data there exist infinitely many weak solutions (not necessarily admissible), one needs a more refined version convex integration rather than Theorem \ref{thm:convint}. In particular one has to replace the constant $c$ by a function $\ov{e}$ which depends on $t$ and $\vx$ as indicated in Subsection~\ref{subsec:convint-prel-adjusting}, see also Section \ref{sec:ibvp-other} and the references cited there.

\section[Possible Improvements to Obtain Admissible Weak Solutions]{Possible Improvements to Obtain Admissible Weak Solutions%
	\sectionmark{Admissible Weak Solutions}}
\label{sec:ibvp-adm}
\sectionmark{Admissible Weak Solutions}

Note that Theorem~\ref{thm:convint} does not allow to produce admissible solutions when using a cylindrical space-time domain $\Gamma=(0,T)\times \Omega$. The reason for this fact is that the requirement that the subsolution $(\rho_0,\vm_0,\mU_0)$ lies in $C^1\big([0,T]\times \closure{\Omega};\R^+ \times \R^n\times \szn\big)$, see Theorem~\ref{thm:convint}, is an obstacle in order to achieve admissible solutions. Indeed this implies that $(\rho_0,\vm_0,\mU_0)(0,\cdot) \in C^1\big( \closure{\Omega};\R^+ \times \R^n\times \szn\big)$ and due to \eqref{eq:sol-pde1} and \eqref{eq:sol-pde2} we must require $(\rho_0,\vm_0)(0,\cdot) = (\rho_\init,\rho_\init \vu_\init)$ in order to satisfy the initial condition. Hence $\rho_\init,\vu_\init$ are $C^1$ which means that there exists a unique strong solution at least on a short time interval, which is even unique in the class of admissible weak solutions due to the \emph{weak-strong-uniqueness} principle. 

This problem can be overcome by requiring the subsolution $(\rho_0,\vm_0,\mU_0)$ in Theorem~\ref{thm:convint} to lie in $C^1\big((0,T)\times \closure{\Omega};\R^+ \times \R^n\times \szn\big)$. Mind the small difference: Now the time interval $(0,T)$ is an open interval. Then one has to prescribe the initial values in \eqref{eq:sol-pde1}  and \eqref{eq:sol-pde2} that are included in the boundary integrals over $\partial\Gamma$ since $\rho_0,\vm_0,\mU_0$ are not defined for $t=0$. Similarly convex integration has been carried out in the literature, see e.g. \name{De~Lellis}-\name{Sz{\'e}kelyhidi} \cite[Proposition 2]{DelSze10} for the incompressible case, \name{Chiodaroli} \cite[Proposition 4.1]{Chiodaroli14} or \name{Feireisl} \cite{Feireisl14}.

Another problem is that we must guarantee that the solutions additionaly satisfy the energy inquality \eqref{eq:baro-euler-weak-bdd-admissibility}. Here the ``trace-condition'' \eqref{eq:sol-trace} is helpful. As already pointed out in a remark in Subsection~\ref{subsec:convint-prel-adjusting}, in the case of a monoatomic gas, i.e. $p(\rho)=\rho^{\frac{2}{n}+1}$, we have $P(\rho)=\frac{n}{2} p(\rho)$ and hence \eqref{eq:sol-trace} turns into
$$
\frac{|\vm|^2}{2\rho} + P(\rho) = c \qquad \text{ for a.e. }(t,\vx)\in (0,T)\times \Omega \ed
$$
Note that the left-hand side is the energy, i.e. \eqref{eq:sol-trace} says that the energy is constant for a.e. $(t,\vx)\in(0,T)\times \Omega$.

However this is not enough to make the energy inquality valid as we don't know anything about the behaviour of the energy flux. To solve this issue, one may use the fixed-density-version Theorem~\ref{thm:convint-nodens} rather than Theorem~\ref{thm:convint}. Then it is not even necessary to study a monoatomic gas. We find using \eqref{eq:sol-trace} 
\begin{equation} \label{eq:5-temp-1} 
	\frac{|\vm|^2}{2\rho} + P(\rho) = \frac{|\vm|^2}{2\rho} +\frac{n}{2}p(\rho) + P(\rho) - \frac{n}{2}p(\rho) = c + P(\rho_0) - \frac{n}{2}p(\rho_0)
\end{equation}
for a.e. $(t,\vx)\in (0,T)\times \Omega$. For simplicity we choose $\rho_\init\equiv \ov{\rho} = \const$ and also $\rho_0\equiv \ov{\rho}$. This way we obtain from \eqref{eq:sol-pde1} together with the Divergence Theorem (Proposition~\ref{prop:not-divergence}) and the impermeability boundary condition, that 
\begin{equation} \label{eq:5-temp-2}
\int_0^T \int_\Omega \vm\cdot \Grad\phi \dx\dt = 0 
\end{equation}
for all test functions $\phi\in \Cc\big([0,T]\times \closure{\Omega};\R\big)$. With \eqref{eq:5-temp-2} we are able to handle the energy flux. Indeed plugging \eqref{eq:5-temp-1} and \eqref{eq:5-temp-2} into the left-hand side of \eqref{eq:baro-euler-weak-bdd-admissibility}, we obtain
\begin{align*}
	&\int_0^T \int_{\Omega} \left[\bigg(c + P(\rho_0) - \frac{n}{2}p(\rho_0)\bigg) \partial_t \varphi + \frac{1}{\rho_0}\bigg(c + P(\rho_0) +  \left(1- \frac{n}{2}\right) p(\rho_0)\bigg)\vm\cdot\Grad \varphi \right]\dx\dt \\ 
	&\qquad + \int_{\Omega} \bigg(\half\rho_\init|\vu_\init|^2 + P(\rho_\init)\bigg) \varphi(0,\cdot) \dx \\
	&= -\int_{\Omega} \bigg(c + P(\ov{\rho}) - \frac{n}{2}p(\ov{\rho})\bigg) \varphi(0,\cdot) \dx + \int_{\Omega} \bigg(\half\ov{\rho} |\vu_\init|^2 + P(\ov{\rho})\bigg) \varphi(0,\cdot) \dx \\
	&= \int_{\Omega} \bigg(-c  - \frac{n}{2}p(\ov{\rho}) + \half\ov{\rho} |\vu_\init|^2\bigg) \varphi(0,\cdot) \dx
\end{align*} 
for all test functions $\varphi \in \Cc\big([0,T) \times \closure{\Omega};\R^+_0\big)$. Note that this would be equal to zero if 
\begin{equation} \label{eq:5-temp-3}
	\half\ov{\rho} |\vu_\init|^2 + \frac{n}{2} p(\ov{\rho}) = c \qquad \text{ for a.e. }\vx\in \Omega \ed
\end{equation}
In other words we must require the energy to be continuous at $t=0$.

However for simple choices of the subsolution like the one in Corollary~\ref{cor:ibvp-many-weak-divcond} this is generally not true. The reason for this is the fact that one first fixes the subsolution $(\rho_0,\vm_0,\mU_0)$ and then chooses $c>0$ sufficiently large to achieve \eqref{eq:p0-subs}. This typically leads to a jump of the energy at $t=0$. If $c$ is already fixed by \eqref{eq:5-temp-3}, then there is no such jump, but on the other it is more difficult to guarantee that \eqref{eq:p0-subs} holds. In fact in the literature subsolutions which satisfy \eqref{eq:5-temp-3} are constructed using convex integration once more, see e.g. \name{De~Lellis} and \name{Sz{\'e}kelyhidi} \cite[Section~5]{DelSze10} for the incompressible case, \name{Chiodaroli} \cite[Section 7]{Chiodaroli14} or \name{Feireisl} \cite[Theorem~1.4]{Feireisl14}. Note that it is however not possible to find a subsolution fulfilling \eqref{eq:5-temp-3} for all initial data, even if one replaces the constant $c$ by a function $\ov{e}$. Instead one constructs suitable initial data and a corresponding subsolution $(\rho_0,\vm_0,\mU_0)$ which fulfills \eqref{eq:5-temp-3} simultaneously.

\section{Further Possible Improvements} \label{sec:ibvp-other}

Let us finish this chapter with mentioning how Theorem~\ref{thm:convint} can be further improved.

As indicated in Section~\ref{sec:ibvp-weak} and in Subsection~\ref{subsec:convint-prel-adjusting} one could replace $c$ by a function $\ov{e}$ which depends on $t$ and $\vx$. For example if one wants to find a possibly large class of initial data that admit infinitely many admissible weak solutions, i.e. \eqref{eq:5-temp-3} must hold, then the requirement that $\ov{e}\equiv c=\const$ is quite restrictive. Indeed there are not many initial data for which the left-hand side of \eqref{eq:5-temp-3} is constant. Note that in fact in many papers on convex integration for compressible Euler that are available in the literature, e.g. \name{De~Lellis}-\name{Sz{\'e}kelyhidi} \cite{DelSze10}, \name{Chiodaroli} \cite{Chiodaroli14} and \name{Feireisl} \cite{Feireisl14}, the trace which corresponds to \eqref{eq:convint-temp-trace} in our framework, is considered as not necessarily constant.

Another issue is the following. It is natural to require weak solutions to be continuous maps from $[0,T)$ to $L^\infty(\Omega)$ where the latter is endowed with the weak-$\ast$ topology. The corresponding function space is denoted by $C_{\rm weak\text{-}\ast}\big([0,T);L^\infty(\Omega)\big)$. In fact one can prove that weak solutions as defined in Section~\ref{sec:conslaws-ibvp} can be modified (if necessary) on a set of zero measure such that they lie in the space $C_{\rm weak\text{-}\ast}\big([0,T);L^\infty(\Omega)\big)$, see \name{Dafermos} \cite[Lemma~1.3.3]{Dafermos}. In other words the \emph{instantaneous values} $\vU(t,\cdot)$ are well-defined for all times $t\in[0,T)$ and the equation
\begin{align*} 
	\int_{t_0}^{t_1} \int_{\Omega} \Big(\vU \cdot \partial_t\vpsi + \mF(\vU) : \Grad \vpsi \Big)\dx \dt - \bigg[\int_\Omega \vU(t,\cdot) \cdot \vpsi(t,\cdot) \dx\bigg]_{t=t_0}^{t=t_1} \qquad &  \\
	- \int_{t_0}^{t_1} \int_{\partial\Omega} \vpsi \cdot \vF_{\partial \Omega} \dS_\vx \dt &= 0 
\end{align*}
holds for all test functions $\vpsi \in \Cc\big([0,T)\times\closure{\Omega};\R^m\big)$ and all $0\leq t_0  \leq t_1 < T$, rather than just \eqref{eq:conslaws-ivp-weak}, see also \cite[Lemma 1.3.3]{Dafermos}. In the context of the barotropic Euler equations, this means that every weak solution $(\rho,\vu)$ in the sense of Definition~\ref{defn:aws-baro-bdd} is also a weak solution in the sense described above. However one could ask for solutions which fulfill also the energy inequality in the sense above instead of \eqref{eq:baro-euler-weak-bdd-admissibility}. In other words one requires 
\begin{align*}
	\int_{t_0}^{t_1} \int_{\Omega} \left[\bigg(\half\rho|\vu|^2 + P(\rho)\bigg) \partial_t \varphi + \bigg(\half\rho|\vu|^2 + P(\rho) + p(\rho)\bigg)\vu\cdot\Grad \varphi \right]\dx\dt \qquad &  \\ 
	- \bigg[\int_{\Omega} \bigg(\half\rho|\vu|^2 + P(\rho)\bigg)(t,\cdot)\ \varphi(t,\cdot) \dx \bigg]_{t=t_0}^{t=t_1} &\geq 0 
\end{align*}
for all $\varphi \in \Cc\big([0,T) \times \closure{\Omega};\R^+_0\big)$ and all $0\leq t_0  \leq t_1 < T$. To include this in the convex integration method, the solutions we are looking for must satisfy \eqref{eq:sol-trace} not only for a.e. $(t,\vx)\in (0,T)\times \Omega$ but for all $t\in (0,T)$ and a.e. $\vx\in \Omega$. For the incompressible Euler system this has been done by \name{De~Lellis} and \name{Sz{\'e}kelyhidi} \cite{DelSze10}, see the beginning of Section 4 therein for a more extensive discussion of this issue. In order to achieve a similar result in the framework presented in Chapter~\ref{chap:convint} one needs to implement the ideas of \cite{DelSze10}.

\chapter{Application: Riemann Initial Data in Two Space Dimensions} \label{chap:appl-riemann} 
\chaptermark{Riemann Problem}

In this chapter we consider the Euler equations -- both isentropic, i.e. barotropic with the particular pressure law \eqref{eq:isentropic-EOS}, and full -- on the whole two-dimensional space, i.e. $\Omega=\R^2$. Keep in mind the definitions of admissible weak solutions to the corresponding initial value problems: Definition~\ref{defn:aws-baro-whole} for isentropic, and Definition~\ref{defn:aws-full-whole} for the full Euler system. As we are considering the two-dimensional Euler equations in this chapter, we slightly differ from our notation used in the previous chapters: The components of the velocity and the spatial variable are from now on denoted by $\vu=(u,v)^\trans$ and $\vx=(x,y)^\trans$ respectively. 

Moreover we look at a special type of initial data, for a subtype of which we prove existence of infinitely many \emph{admissible} weak solutions. In particular these data are constant in each of the two half spaces, separated by a discontinuity along a straight line. Such data can be viewed as a one-dimensional Riemann problem which is extended to two space dimensions in a trivial way, i.e. constant with respect to the second dimension. More precisely the initial data for the isentropic Euler equations read
\begin{equation} \label{eq:riemann-init-isen}
	(\rho_\init,\vu_\init)(\vx) :=\left\{
	\begin{array}{ll}
		(\rho_-,\vu_-) & \text{ if }y<0\ec \\
		(\rho_+,\vu_+) & \text{ if }y>0\ec
	\end{array}
	\right. 
\end{equation}
with constants $\rho_\pm\in\R^+$ and $\vu_\pm\in\R^2$, and for the full Euler system 
\begin{equation} \label{eq:riemann-init-full}
	(\rho_\init,\vu_\init,p_\init)(\vx) :=\left\{
	\begin{array}{ll}
		(\rho_-,\vu_-,p_-) & \text{ if }y<0\ec \\
		(\rho_+,\vu_+,p_+) & \text{ if }y>0\ec
	\end{array}
	\right.
\end{equation}
with constants $\rho_\pm\in\R^+$, $\vu_\pm\in\R^2$ and $p_\pm\in\R^+$. Such type of initial data is illustrated in Figure~\ref{fig:riemann-init}. As indicated above, we denote the components of the velocities as $\vu_-=(u_-,v_-)^\trans$ and $\vu_+=(u_+,v_+)^\trans$. 

\begin{figure}[tb] 
	\centering
	\includegraphics[width=0.7\textwidth]{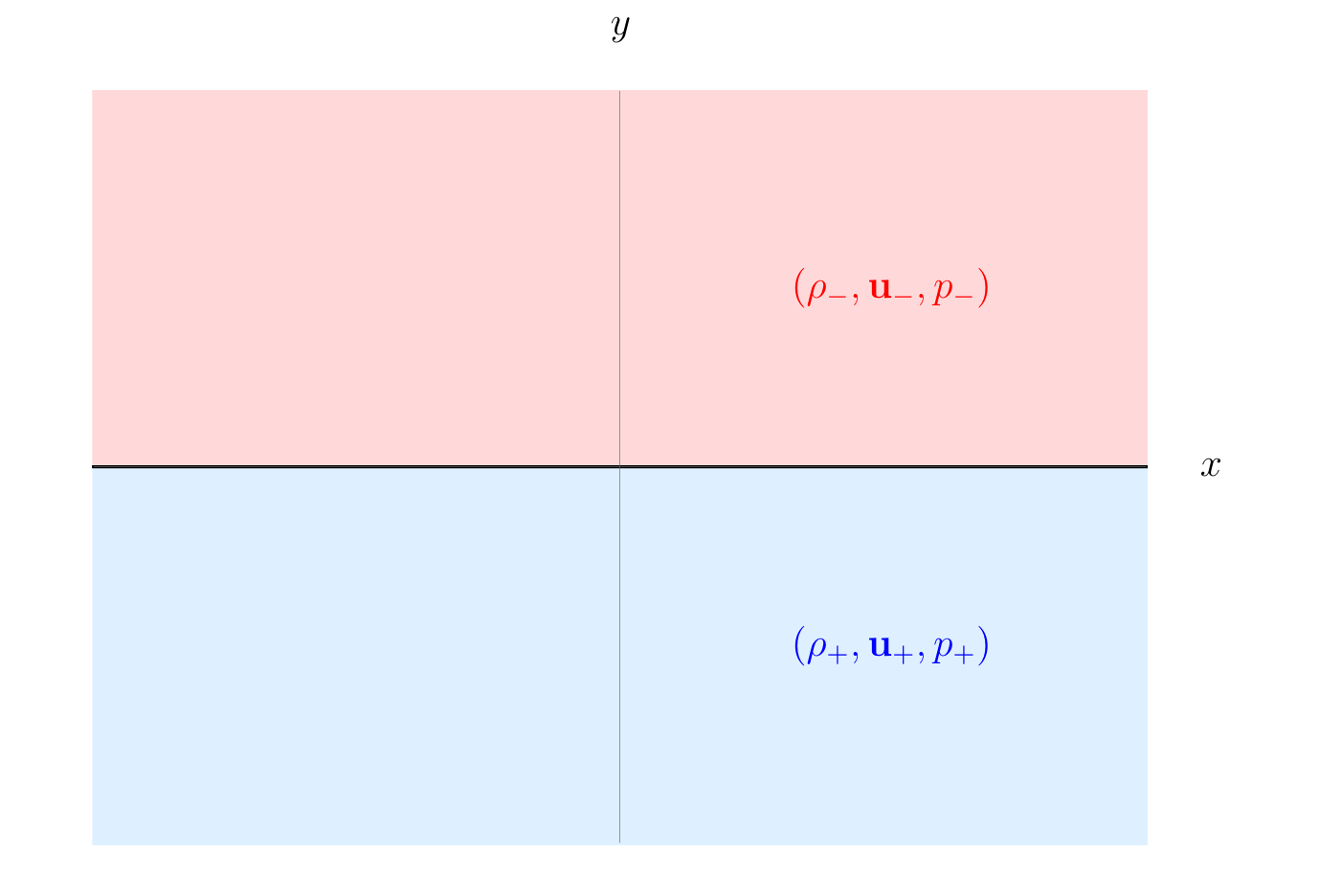} 
	\caption{Initial data considered in Chapter~\ref{chap:appl-riemann}, see \eqref{eq:riemann-init-full} and also \eqref{eq:riemann-init-isen}.} 
	\label{fig:riemann-init}
\end{figure}

One is able to solve the initial value problem for such data by considering a corresponding one-dimensional Riemann problem, which can be solved by classical, well-known methods, see e.g. the textbooks by \name{Dafermos} \cite[Chapters 7 - 9]{Dafermos} or \name{Smoller} \cite[Chapters 17 and 18]{Smoller}. This solution is admissible, self-similar (i.e. a function of a single variable $\frac{y}{t}$) and consists of constant states that are separated by shocks, rarefactions and contact discontinuities. Interpreting this one-dimensional solution -- i.e. a function of $(t,y)\in[0,\infty)\times \R$ -- as a function of $(t,\vx)\in[0,\infty)\times \R^2$ which is constant with respect to $x\in\R$, yields an admissible weak solution to the original two-dimensional problem as well. Hence the initial value problem under consideration (isentropic Euler \eqref{eq:baro-euler-pv-dens}, \eqref{eq:baro-euler-pv-mom}, \eqref{eq:isentropic-EOS} with initial data \eqref{eq:riemann-init-isen} or full Euler \eqref{eq:full-euler-pv-dens} - \eqref{eq:full-euler-pv-en} with initial data \eqref{eq:riemann-init-full}) has at least one admissible weak solution. This solution is referred to as the \emph{one-dimensional self-similar} or simply \emph{self-similar} solution. For more details on the self-similar solution see Subsections \ref{subsec:riemann-isen-1D} and \ref{subsec:riemann-full-1D}.

Initial data of the form \eqref{eq:riemann-init-isen} for the isentropic compressible Euler system have been first studied in the context convex integration by \name{Chiodaroli}, \name{De~Lellis} and \name{Kreml} \cite{ChiDelKre15}, who developed most of the ingredients of the non-uniqueness proof presented in this chapter. They proved that there exist data of the form \eqref{eq:riemann-init-isen} which lead to infinitely many admissible weak solutions using convex integration. In contrast to the self-similar solution, those solutions are genuinely two-dimensional. The work in \cite{ChiDelKre15} was inspired by a result by \name{Sz{\'e}kelyhidi} \cite{Szekelyhidi11} for the incompressible Euler system \eqref{eq:incomp-euler-divfree}, \eqref{eq:incomp-euler-velocity}. Here the shear flow was considered, i.e. 
$$ 
\vv_\init(\vx) :=\left\{
\begin{array}{rl}
	(-1,0)^\trans & \text{ if }y<0\ec \\
	(1,0)^\trans & \text{ if }y>0\ec
\end{array}
\right. 
$$ 
which can be viewed as a particular example of incompressible data of the type \eqref{eq:riemann-init-isen} or \eqref{eq:riemann-init-full}. It is simple to show that there is a stationary solution to this problem. However \name{Sz{\'e}kelyhidi} \cite{Szekelyhidi11} proved existence of infinitely many other solutions. Note that using a vanishing viscosity criterion one is able to rule out all the non-stationary solutions in this case, which was shown by \name{Bardos}, \name{Titi} and \name{Wiedemann} \cite{BarTitWie12}. 

Let us come back to the compressible Euler equations. The principal idea of the non-uniqueness proof originally developed by \name{Chiodaroli}, \name{De~Lellis} and \name{Kreml} \cite{ChiDelKre15}, is to work with a suitable notion of a subsolution, the so-called \emph{fan subsolution}. Such an object consists of piecewise constant functions which are constant in three sets: In the two exterior sets the fan subsolution takes the initial states $(\rho_-,\vu_-)$ and $(\rho_+,\vu_+)$ respectively, whereas in the set in the middle\footnote{Because of its shape, we later call this middle set a \emph{wedge}.} the fan subsolutions is in fact a subsolution in the sense that \eqref{eq:p0-subs} holds. Then convex integration, i.e. Theorem~\ref{thm:convint} or Theorem~\ref{thm:convint-nodens}, can be applied on the middle set. Hence again the proof of non-uniqueness of solutions reduces to showing existence of a fan subsolution. 

This way it was shown in \cite{ChiDelKre15} that there exist particular data of the form \eqref{eq:riemann-init-isen} for the isentropic Euler equations which admit infinitely many admissible weak solution. On the other hand it was proven by \name{Chen} and \name{Chen} \cite{CheChe07} and independently also by \name{Feireisl} and \name{Kreml} \cite{FeiKre15} that there exists other data, still of the form \eqref{eq:riemann-init-isen} but for which the self-similar solution is unique. So it has been worked on a classification of Riemann data \eqref{eq:riemann-init-isen} with respect to uniqueness or non-uniqueness of the corresponding solutions. It turned out that the structure of the corresponding self-similar solution determines whether there is a unique admissible weak solution or infinitely many. We refer to Section~\ref{sec:riemann-isen} for more details on the Riemann problem for the isentropic Euler system. 

In the context of the full Euler system \eqref{eq:full-euler-pv-dens} - \eqref{eq:full-euler-pv-en} one established a similar classification with analogue techniques, see Section~\ref{sec:riemann-full}.

\begin{rem}
	The results presented in this chapter can be simply transferred to a corresponding three-dimensional setup. 
\end{rem}

This chapter is organized as follows. In two sections we consider the isentropic Euler system and the full Euler system respectively. In each section we begin with a summary of some facts concerning the self-similar solution, see Subsections \ref{subsec:riemann-isen-1D} and \ref{subsec:riemann-full-1D}. Next we summarize the results concerning non-/uniqueness of admissible weak solutions in Subsections \ref{subsec:riemann-isen-results} and \ref{subsec:riemann-full-results}. In Subsection~\ref{subsec:riemann-isen-SR} we explain the non-uniqueness proof by taking the example where the self-similar solution consists of one shock and one rarefaction. Note that the above mentioned principal idea of considering fan subsolutions, which is explained in detail in Subsection~\ref{subsec:riemann-isen-SR}, is also used to prove non-uniqueness in the other cases. For the latter we just mention what has to be adapted, see Subsection~\ref{subsec:riemann-isen-other-cases}. Finally in Subsection~\ref{subsec:riemann-isen-other-results} we collect some further results related to the problem under consideration, i.e. \eqref{eq:baro-euler-pv-dens}, \eqref{eq:baro-euler-pv-mom}, \eqref{eq:isentropic-EOS}, \eqref{eq:riemann-init-isen}. 

For the full Euler system we take the example where the self-similar solution consists of two shocks , see Subsection~\ref{subsec:riemann-full-SS}, and mention in Subsection~\ref{subsec:riemann-full-other-cases} how the other cases can be handled. As in the isentropic case, we finish in Subsection~\ref{subsec:riemann-full-other-results} with mentioning other results which are related to the initial value problem \eqref{eq:full-euler-pv-dens} - \eqref{eq:full-euler-pv-en}, \eqref{eq:riemann-init-full}.

\section[Riemann Problem for the Isentropic Euler System]{Riemann Problem for the Isentropic Euler System%
	\sectionmark{Isentropic Euler}} \label{sec:riemann-isen}
\sectionmark{Isentropic Euler}

\subsection{One-Dimensional Self-Similar Solution} \label{subsec:riemann-isen-1D} 

One solution to the initial value problem for the isentropic Euler equations \eqref{eq:baro-euler-pv-dens}, \eqref{eq:baro-euler-pv-mom}, \eqref{eq:isentropic-EOS} with initial data \eqref{eq:riemann-init-isen} can be found by solving the corresponding one-dimensional Riemann problem. More precisely this one-dimensional problem reads
\begin{align}
	\partial_t \rho + \partial_2 (\rho v) &= 0 \ec \label{eq:isen-euler-1d-dens} \\
	\partial_t (\rho u) + \partial_2 (\rho u v) &=0 \ec \label{eq:isen-euler-1d-mom1} \\
	\partial_t (\rho v) + \partial_2 \big(\rho v^2 + p(\rho)\big) &= 0 \label{eq:isen-euler-1d-mom2} 
\end{align}
with initial data \eqref{eq:riemann-init-isen} and results from the ansatz that we are looking for solutions $(\rho,\vu)$ which do not depend on $x$. In other words the unknowns $\rho$ and $\vu$ of system \eqref{eq:isen-euler-1d-dens} - \eqref{eq:isen-euler-1d-mom2} are functions of $t\in[0,\infty)$ and $y\in\R$ and still take values in $\R^+$ and $\R^2$ respectively. Note that system \eqref{eq:isen-euler-1d-dens}, \eqref{eq:isen-euler-1d-mom2} with isentropic equation of state \eqref{eq:isentropic-EOS} can be viewed as the one-dimensional isentropic Euler equations for $\rho$ and $v$, whereas \eqref{eq:isen-euler-1d-mom1} represents an additional transport equation for $u$.

One can show that the solution of \eqref{eq:isen-euler-1d-dens} - \eqref{eq:isen-euler-1d-mom2}, \eqref{eq:isentropic-EOS}, \eqref{eq:riemann-init-isen} consists of three waves, the slowest and the fastest of which is either a shock or a rarefaction, whereas the ``middle'' wave is a contact discontinuity where $u$ jumps from $u_-$ to $u_+$. This solution of \eqref{eq:isen-euler-1d-dens} - \eqref{eq:isen-euler-1d-mom2}, \eqref{eq:riemann-init-isen} is an admissible weak solution of the two-dimensional problem \eqref{eq:baro-euler-pv-dens}, \eqref{eq:baro-euler-pv-mom}, \eqref{eq:isentropic-EOS}, \eqref{eq:riemann-init-isen} as well, called \emph{self-similar solution}.

Let us summarize this in the following proposition.

\begin{prop} \label{prop:isen-selfsimilar} 
	Let $\rho_\pm\in\R^+$, $\vu_\pm\in\R^2$. The self-similar solution to the problem \eqref{eq:baro-euler-pv-dens}, \eqref{eq:baro-euler-pv-mom}, \eqref{eq:isentropic-EOS}, \eqref{eq:riemann-init-isen} is constant in four regions which are separated by three waves. The leftmost and rightmost states are given by the initial states $(\rho_-,\vu_-)$ and $(\rho_+,\vu_+)$ respectively. The left intermediate state, i.e. the state in the left intermediate region, is equal to $\big(\rho_M,(u_-,v_M)^\trans\big)$, whereas the right intermediate state equals $\big(\rho_M,(u_+,v_M)^\trans\big)$, see Figure~\ref{fig:isen-SCR} for an example. In particular, the density and the velocity component, that is perpendicular to the initial discontinuity, coincide in both intermediate regions, whereas the velocity component, that is parallel to the initial discontinuity, only takes two values $u_-$ and $u_+$ with a jump at the 2-wave. For the 1- and 3-wave there are the following possibilities, whereas the 2-wave is a contact discontinuity.
	\begin{enumerate}
		\item \label{item:isen-cases.RRvac} If
		\begin{equation*}
			v_+ - v_- \geq \int_0^{\rho_-}\frac{\sqrt{p'(r)}}{r} \dr + \int_0^{\rho_+}\frac{\sqrt{p'(r)}}{r} \dr \ec
		\end{equation*}
		then the self-similar solution consists of a 1-rarefaction and a 3-rarefaction. The intermediate states are vacuum states, i.e. $\rho_M=0$.
		\item \label{item:isen-cases.RR} If 
		\begin{equation*}
			\bigg|\int_{\rho_-}^{\rho_+}\frac{\sqrt{p'(r)}}{r} \dr\bigg| < v_+ - v_- < \int_0^{\rho_-}\frac{\sqrt{p'(r)}}{r} \dr + \int_0^{\rho_+}\frac{\sqrt{p'(r)}}{r} \dr \ec
		\end{equation*}
		then the self-similar solution consists of a 1-rarefaction and a 3-rarefaction, where $\rho_M,v_M$ are uniquely determined by 
		\begin{align*}
			\rho_M&<\min\{\rho_-,\rho_+\} \ec \\
			v_+ - v_- &= \int_{\rho_M}^{\rho_-}\frac{\sqrt{p'(r)}}{r} \dr + \int_{\rho_M}^{\rho_+}\frac{\sqrt{p'(r)}}{r} \dr \ec \\
			v_M &= v_- + \int_{\rho_M}^{\rho_-}\frac{\sqrt{p'(r)}}{r} \dr \ed
		\end{align*}
		\item \label{item:isen-cases.R} If
		\begin{equation*}
			\bigg|\int_{\rho_-}^{\rho_+}\frac{\sqrt{p'(r)}}{r} \dr\bigg| = v_+ - v_- \ec
		\end{equation*}
		then the self-similar solution consists of one rarefaction. More precisely this rarefaction is a 1-rarefaction if $\rho_->\rho_+$ and a 3-rarefaction if $\rho_-<\rho_+$.
		\item \label{item:isen-cases.RS} If $\rho_->\rho_+$ and
		\begin{equation*}
			-\sqrt{\frac{\big(\rho_- - \rho_+\big) \big(p(\rho_-)-p(\rho_+)\big)}{\rho_- \rho_+}} < v_+ - v_- < \int_{\rho_+}^{\rho_-}\frac{\sqrt{p'(r)}}{r} \dr \ec
		\end{equation*}
		then the self-similar solution consists of a 1-rarefaction and a 3-shock, where $\rho_M,v_M$ are uniquely determined by 
		\begin{align*}
		\rho_+&<\rho_M<\rho_- \ec \\
			v_+ - v_- &= \int_{\rho_M}^{\rho_-}\frac{\sqrt{p'(r)}}{r} \dr - \sqrt{\frac{\big(\rho_M - \rho_+\big) \big(p(\rho_M) - 	p(\rho_+)\big)}{\rho_M \rho_+}} \ec \\
			v_M &= v_- + \int_{\rho_M}^{\rho_-}\frac{\sqrt{p'(r)}}{r} \dr \ed
		\end{align*}
		\item \label{item:isen-cases.SR} If $\rho_-<\rho_+$ and
		\begin{equation*}
			-\sqrt{\frac{\big(\rho_- - \rho_+\big) \big(p(\rho_-)-p(\rho_+)\big)}{\rho_- \rho_+}} < v_+ - v_- < \int_{\rho_-}^{\rho_+}\frac{\sqrt{p'(r)}}{r} \dr \ec
		\end{equation*}
		then the self-similar solution consists of a 1-shock and a 3-rarefaction, where $\rho_M,v_M$ are uniquely determined by 
		\begin{align*}
			\rho_-&<\rho_M<\rho_+ \ec \\
			v_+ - v_- &= 	\int_{\rho_M}^{\rho_+}\frac{\sqrt{p'(r)}}{r}\dr - \sqrt{\frac{\big(\rho_M - \rho_-\big) \big(p(\rho_M) - p(\rho_-)\big)}{\rho_M\rho_-}} \ec \\
			v_M &= v_- - \sqrt{\frac{\big(\rho_M - \rho_-\big)\big(p(\rho_M) - p(\rho_-)\big)}{\rho_M\rho_-}} \ed
		\end{align*}
		\item \label{item:isen-cases.S} If 
		\begin{equation*}
			v_+ - v_- = -\sqrt{\frac{\big(\rho_- - \rho_+\big)\big(p(\rho_-)-p(\rho_+)\big)}{\rho_- \rho_+}} \ec
		\end{equation*}
		then the self-similar solution consists of one shock. More precisely this shock is a 1-shock if $\rho_-<\rho_+$ and a 3-shock if $\rho_->\rho_+$.
		\item \label{item:isen-cases.SS} If 
		\begin{equation*}
			v_+ - v_- < -\sqrt{\frac{\big(\rho_- - \rho_+\big)\big(p(\rho_-)-p(\rho_+)\big)}{\rho_-\rho_+}} \ec
		\end{equation*}
		then the self-similar solution consists of a 1-shock and a 3-shock, where $\rho_M,v_M$ are uniquely determined by 
		\begin{align*}
			\rho_M&>\max\{\rho_-,\rho_+\} \ec \\
			v_+ - v_- &= - \sqrt{\frac{\big(\rho_M - \rho_+\big) \big(p(\rho_M) - p(\rho_+)\big)}{\rho_M\rho_+}} - \sqrt{\frac{\big(\rho_M - \rho_-\big)\big(p(\rho_M) - p(\rho_-)\big)}{\rho_M\rho_-}} \ec \\
			v_M &= v_- - \sqrt{\frac{\big(\rho_M - \rho_-\big)\big(p(\rho_M) - p(\rho_-)\big)}{\rho_M\rho_-}} \ed
		\end{align*}
	\end{enumerate}
\end{prop}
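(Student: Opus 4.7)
The plan is to reduce the two-dimensional problem to the classical one-dimensional Riemann problem and then invoke the Lax theory for strictly hyperbolic, genuinely nonlinear $2\times 2$ systems. First I would make the self-similar ansatz $(\rho,\vu)(t,\vx)=(\rho,\vu)(t,y)$, which (as noted in the excerpt) turns \eqref{eq:baro-euler-pv-dens}, \eqref{eq:baro-euler-pv-mom} into the $3\times 3$ system \eqref{eq:isen-euler-1d-dens}--\eqref{eq:isen-euler-1d-mom2}. The key structural observation is that, combining \eqref{eq:isen-euler-1d-mom1} with \eqref{eq:isen-euler-1d-dens}, the tangential component $u$ satisfies $\rho(\partial_t u+v\,\partial_y u)=0$, a linear transport equation with speed $v$; thus $u$ decouples from the sub-system for $(\rho,v)$ and is only permitted to jump across the linearly degenerate characteristic of speed $\lambda_2=v$.

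Second, I would solve the remaining $2\times 2$ sub-system for $(\rho,v)$, which is the standard one-dimensional isentropic Euler system. Under our assumptions $p'>0$ and $p$ convex, it is strictly hyperbolic with eigenvalues $\lambda_{1,3}=v\mp\sqrt{p'(\rho)}$ and both fields are genuinely nonlinear. In the $(\rho,v)$ half-plane I would construct the forward 1-wave curve $\mathcal{W}_1$ through $(\rho_-,v_-)$ and the backward 3-wave curve $\mathcal{W}_3$ through $(\rho_+,v_+)$; each consists of a rarefaction branch, on which the appropriate Riemann invariant $v\pm\int^{\rho}\sqrt{p'(r)}/r\,\mathrm{d}r$ is preserved, joined at $\rho=\rho_\pm$ to an admissible shock branch determined by the Rankine--Hugoniot relations together with the Lax condition, which after elimination gives exactly the square-root expressions appearing in items~\ref{item:isen-cases.RS}--\ref{item:isen-cases.SS}. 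A direct computation shows that along each curve $v$ is a strictly monotone function of $\rho$, with the two branches meeting smoothly at $\rho=\rho_\pm$; intersecting $\mathcal{W}_1$ and $\mathcal{W}_3$ then produces a unique intermediate state $(\rho_M,v_M)$ with $\rho_M>0$ as long as $v_+-v_-$ stays strictly below the vacuum threshold $\int_0^{\rho_-}\sqrt{p'(r)}/r\,\mathrm{d}r+\int_0^{\rho_+}\sqrt{p'(r)}/r\,\mathrm{d}r$ (which is finite for the isentropic law \eqref{eq:isentropic-EOS} because $\gamma>1$). Beyond this threshold the two curves fail to meet and one inserts a vacuum interval, yielding case~\ref{item:isen-cases.RRvac}.

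Third, the seven cases are obtained by partitioning the $(\rho_+,v_+)$ plane according to which branch of $\mathcal{W}_1(\rho_-,v_-)$ contains the image of $(\rho_+,v_+)$ under the $\mathcal{W}_3$-matching: whether $\rho_M$ is less than or greater than $\rho_-$ decides if the 1-wave is a rarefaction or a shock, and analogously for the 3-wave; the degenerate cases~\ref{item:isen-cases.R} and~\ref{item:isen-cases.S} correspond to $\rho_M\in\{\rho_-,\rho_+\}$, so that one of the two waves collapses. The threshold formulas on $v_+-v_-$ separating the cases are obtained by evaluating the wave-curve expressions at the boundary densities $\rho=\rho_\pm$. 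Once $(\rho,v)$ is determined, $u$ is reconstructed by taking $u\equiv u_-$ to the left of the 2-characteristic $y=v_M t$ and $u\equiv u_+$ to the right; this piecewise constant $u$ solves the transport equation \eqref{eq:isen-euler-1d-mom1} weakly, since the jump of $u$ occurs exactly where the transport speed $v=v_M$ coincides with the speed of the discontinuity, so the associated Rankine--Hugoniot relation is trivially satisfied. Admissibility then follows by standard arguments: rarefactions are Lipschitz away from the origin and satisfy the energy identity via the chain rule, while the shock branches are designed precisely so that the Lax condition holds, which under convexity of $p$ is equivalent to the energy entropy inequality \eqref{eq:baro-euler-adm(energy)} (see \name{Dafermos} \cite{Dafermos}). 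The main obstacle is bookkeeping in the case distinction and the monotonicity analysis of the wave curves; the genuinely delicate point is the vacuum case~\ref{item:isen-cases.RRvac}, which requires checking that the Riemann invariants admit a finite limit as $\rho\to 0^+$ under \eqref{eq:isentropic-EOS} and that the resulting ``solution with vacuum'' is still an admissible weak solution.
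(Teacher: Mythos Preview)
Your proposal is correct and follows essentially the same approach as the paper: the paper's proof merely refers to \name{Chiodaroli}--\name{Kreml} \cite[Section~2]{ChiKre14} and \name{Dafermos} \cite[Chapters~7--9]{Dafermos} for the $2\times 2$ subsystem \eqref{eq:isen-euler-1d-dens}, \eqref{eq:isen-euler-1d-mom2}, and remarks that adjoining the transport equation \eqref{eq:isen-euler-1d-mom1} for $u$ yields the additional 2-contact discontinuity. Your sketch simply spells out in more detail what those references contain---the decoupling of $u$, the wave-curve construction, the monotonicity and intersection argument, and the vacuum threshold---so there is nothing to add.
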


\begin{proof} 
	We refer to \name{Chiodaroli} and \name{Kreml} \cite[Section 2]{ChiKre14} or the textbook by \name{Dafermos} \cite[Chapters 7 - 9]{Dafermos}. At this point it is important that the system \eqref{eq:isen-euler-1d-dens}, \eqref{eq:isen-euler-1d-mom2} is hyperbolic, see Definition~\ref{defn:hyperbolicity}. Note that in the references above, the system \eqref{eq:isen-euler-1d-dens}, \eqref{eq:isen-euler-1d-mom2} -- i.e. without \eqref{eq:isen-euler-1d-mom1} -- is considered. However the extension to the system with equation \eqref{eq:isen-euler-1d-mom1} is not difficult and simply leads to the 2-contact discontinuity, at which $u$ jumps. 
\end{proof}

\begin{figure}[htb] 
	\centering
	\includegraphics[width=0.7\textwidth]{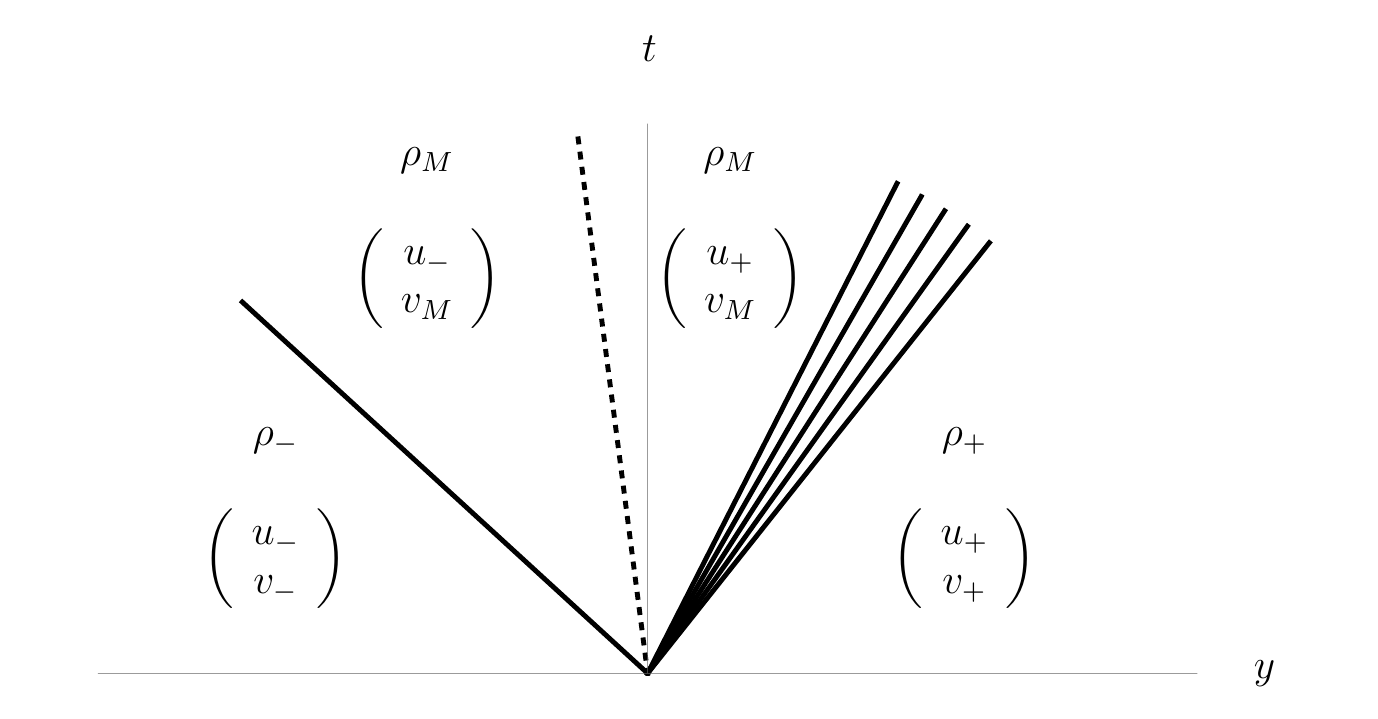} 
	\caption{An example of the self-similar solution for isentropic Euler where the 1-wave is a shock, the 2-wave a contact discontinuity and the 3-wave a rarefaction.} 
	\label{fig:isen-SCR}
\end{figure} 

\begin{rem}
	In case \ref{item:isen-cases.R} and case \ref{item:isen-cases.S}, only one wave among the 1- and 2- wave is ``visible''. This means that for example if there is only a 1-shock, then the right intermediate state and the rightmost state (i.e. the initial state $(\rho_+,\vu_+)$) coincide. The same happens for the 2-contact discontinuity, which ``disappears'' if $u_-=u_+$.
\end{rem}

\subsection{Summary of the Results on Non-/Uniqueness} \label{subsec:riemann-isen-results}

\begin{thm} \label{thm:isen-summary-riemann}
	It depends on the shape of the self-similar solution whether or not there is a unique admissible weak solution of the initial value problem for the isentropic Euler system \eqref{eq:baro-euler-pv-dens}, \eqref{eq:baro-euler-pv-mom}, \eqref{eq:isentropic-EOS} with initial data \eqref{eq:riemann-init-isen}. Table~\ref{tab:isen-results} summarizes the results. In the cases where the solution is not unique, there are even infinitely many admissible weak solutions.
\end{thm}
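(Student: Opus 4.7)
The plan is to split the argument along the classification in Proposition~\ref{prop:isen-selfsimilar}, treating the uniqueness side and the non-uniqueness side by fundamentally different techniques. For cases where the self-similar solution contains only rarefactions and contact discontinuities (cases \ref{item:isen-cases.RRvac}, \ref{item:isen-cases.RR}, and \ref{item:isen-cases.R}), the expectation is that the self-similar solution is the unique admissible weak solution; here I would invoke the existing results by \name{Chen}-\name{Chen} and \name{Feireisl}-\name{Kreml} rather than reprove them, since these rest on relative entropy / weak-strong uniqueness arguments that are orthogonal to the convex integration machinery developed in Chapter~\ref{chap:convint}. All remaining cases, namely those in which at least one shock appears (\ref{item:isen-cases.RS}, \ref{item:isen-cases.SR}, \ref{item:isen-cases.S}, \ref{item:isen-cases.SS}), are to be handled by convex integration, and this is where the real work lies.

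For the shock cases, I would first introduce the concept of a \emph{fan subsolution}: a piecewise constant triple $(\rho_0,\vm_0,\mU_0)$ which, on the space-time $(0,\infty)\times\R^2$, takes the initial values $(\rho_\pm,\rho_\pm\vu_\pm,\cdot)$ in two outer half-planes separated from a middle wedge $\Gamma\subset (0,\infty)\times\R^2$ bounded by two rays $y=\nu_\pm t$ through the origin. Inside the wedge, one prescribes a constant triple $(\rho_1,\vm_1,\mU_1)\in\interior{\sU}$ (with $\rho_1>r$ for some $r>0$) that, together with the outer states, satisfies the Rankine-Hugoniot conditions for the linearized system \eqref{eq:euler-lin-dens}, \eqref{eq:euler-lin-mom} across each ray, and such that the jump of the energy density across each ray is non-positive so as to make the energy inequality \eqref{eq:baro-euler-weak-bdd-admissibility} hold in the distributional sense across the boundary of the wedge. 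The key step is then to show that, given such a fan subsolution, Theorem~\ref{thm:convint-nodens} applied to $\Gamma$ with constants $r,c$ chosen so that $c=\tfrac{|\vm_1|^2}{2\rho_1}+\tfrac{n}{2}p(\rho_1)$ produces infinitely many functions $(\rho,\vm)$ coinciding with $(\rho_0,\vm_0)$ on $\partial\Gamma$, preserving the density $\rho\equiv\rho_1$ inside the wedge, and satisfying the ``trace condition'' \eqref{eq:sol-trace}. Because the density is unchanged, the internal energy term $P(\rho)$ contributes no new divergence, and the trace condition pins the kinetic energy pointwise, so the energy inequality reduces on the wedge to an identity; combined with the non-positive jump across $\partial\Gamma$ one obtains admissibility in the sense of Definition~\ref{defn:aws-baro-whole}.

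The main obstacle, and the place where most concrete work is required, is the \emph{existence of a fan subsolution} in each of the four shock cases. This reduces to solving a finite algebraic/semi-algebraic system: the Rankine-Hugoniot relations for $(\rho,\vm,\mU)$ across the two rays $y=\nu_\pm t$, the trace requirement $e(\rho_1,\vm_1,\mU_1)<c$ (equivalently, strict subsolution inequality), and the energy admissibility inequalities across each ray. I would set up this system explicitly in the SR case first (Subsection~\ref{subsec:riemann-isen-SR}), parametrizing the middle state by $\rho_1$ and the tangential velocity jump, and show that there is an open set of parameters satisfying all constraints; continuity and a perturbation argument off the self-similar shock-rarefaction configuration (replacing the rarefaction fan with an additional ``artificial'' shock state contained in the subsolution wedge) will produce the required open set. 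The remaining cases RS, S and SS follow by the same scheme after the corresponding symmetry reductions or after shrinking/expanding the rarefaction part of the self-similar solution to a single point as in case S.

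Finally, to upgrade the ``infinitely many solutions on $\Gamma$'' from Theorem~\ref{thm:convint-nodens} to ``infinitely many admissible weak solutions on $(0,\infty)\times\R^2$'' in the sense of Definition~\ref{defn:aws-baro-whole}, I would glue the convex-integration solutions inside $\Gamma$ with the constant states $(\rho_\pm,\vu_\pm)$ outside and verify \eqref{eq:baro-euler-weak-whole-dens}--\eqref{eq:baro-euler-weak-whole-admissibility} using the matching of traces provided by \eqref{eq:sol-pde1}, \eqref{eq:sol-pde2} together with the Rankine-Hugoniot and entropy conditions encoded in the fan subsolution. The core of the proof is thus the algebraic construction of fan subsolutions; everything else is a careful bookkeeping of boundary terms.
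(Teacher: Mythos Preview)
Your outline captures the broad two-pronged strategy (relative entropy for uniqueness, convex integration via fan subsolutions for non-uniqueness), and for the shock cases with $u_-=u_+$ your description of the fan-subsolution mechanism is essentially what the paper does in Theorem~\ref{thm:isen-condition} and Proposition~\ref{prop:isen-alg-eq}. However, there are two genuine gaps in your classification.

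First, you overclaim on the uniqueness side. You assert that whenever the self-similar solution contains ``only rarefactions and contact discontinuities'' (your reading of cases \ref{item:isen-cases.RRvac}, \ref{item:isen-cases.RR}, \ref{item:isen-cases.R}) the \name{Chen}--\name{Chen} and \name{Feireisl}--\name{Kreml} results yield uniqueness. This is not what the theorem says: those relative-entropy results cover only the cases \emph{without} contact discontinuity, i.e.\ rows 1, 3, 7, 9 of Table~\ref{tab:isen-results}. When $u_-\neq u_+$ and the 1- and 3-waves are rarefactions or absent (rows 10, 12, 16, 18), the question of uniqueness is \emph{open} --- the table records exactly this. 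Your sketch would ``prove'' cases that the theorem itself explicitly leaves unresolved.

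Second, your single-wedge fan subsolution does not handle the non-uniqueness cases with a contact discontinuity (rows 11, 13, 14, 15, 17). When $u_-\neq u_+$ the contact carries a jump in the first velocity component, and the paper (following \name{B{\v r}ezina}--\name{Chiodaroli}--\name{Kreml}) uses a \emph{two-wedge} fan partition as in Definition~\ref{defn:full-fanpart}, with an extra interface absorbing that jump. Your Rankine--Hugoniot system for a single middle state cannot accommodate it, so the algebraic existence step would fail for those rows.

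A minor technical point: your choice $c=\tfrac{|\vm_1|^2}{2\rho_1}+\tfrac{n}{2}p(\rho_1)$ is incompatible with $(\rho_1,\vm_1,\mU_1)\in\interior{\sU}$, since Lemma~\ref{lemma:prop-E}~\ref{item:prop-E.c} forces $\tfrac{|\vm_1|^2}{2\rho_1}+\tfrac{n}{2}p(\rho_1)<c$ whenever $e(\rho_1,\vm_1,\mU_1)<c$. In the paper $c=c_1$ is an additional free parameter in the fan subsolution (Definition~\ref{defn:isen-fansubs}), strictly larger than the subsolution's trace; the trace condition \eqref{eq:sol-trace} then fixes the energy of the \emph{solutions}, not of the subsolution.
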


\renewcommand{\arraystretch}{1.2}
\begin{table}[h] 
	\centering 
	\begin{tabular}{|c|c|c|c|c|c|} \cline{2-6} 
		\multicolumn{1}{c|}{}& \multicolumn{3}{c|}{Structure of the self-similar solution} & \centering Solution & \centering Reference \tabularnewline 
		\multicolumn{1}{c|}{}& \centering 1-wave & \centering 2-wave & \centering 3-wave & \centering unique? & \tabularnewline \cline{1-6}
		1 & \centering - & \centering - & \centering - & \textcolor{green}{yes} & e.g. \cite{CheChe07} or \cite{FeiKre15} \tabularnewline \cline{1-6}
		2 & \centering - & \centering - & \centering shock & \textcolor{red}{no} & \cite{KliMar18_1} \tabularnewline \cline{1-6}
		3 & \centering - & \centering - & \centering rarefaction & \textcolor{green}{yes} & \cite{CheChe07}, \cite{FeiKre15} \tabularnewline \cline{1-6}
		4 & \centering shock & \centering - & \centering - & \textcolor{red}{no} & \cite{KliMar18_1} \tabularnewline \cline{1-6}
		5 & \centering shock & \centering - & \centering shock & \textcolor{red}{no} & \cite{ChiKre14} \tabularnewline \cline{1-6}
		6 & \centering shock & \centering - & \centering rarefaction & \textcolor{red}{no} & \cite{KliMar18_1}, partially in \cite{ChiKre18} \tabularnewline \cline{1-6}
		7 & \centering rarefaction & \centering - & \centering - & \textcolor{green}{yes} & \cite{CheChe07}, \cite{FeiKre15} \tabularnewline \cline{1-6}
		8 & \centering rarefaction & \centering - & \centering shock & \textcolor{red}{no} & \cite{KliMar18_1}, partially in \cite{ChiKre18} \tabularnewline \cline{1-6}
		9 &\centering rarefaction  & \centering - & \centering rarefaction & \textcolor{green}{yes} & \cite{CheChe07}, \cite{FeiKre15} \tabularnewline \cline{1-6}
		10 & \centering - & \centering contact & \centering - & open & \tabularnewline \cline{1-6}
		11 & \centering - & \centering contact & \centering shock & \textcolor{red}{no} & combine ideas of \cite{BreChiKre18} and \cite{KliMar18_1} \tabularnewline \cline{1-6}
		12 & \centering - & \centering contact & \centering rarefaction & open &  \tabularnewline \cline{1-6}
		13 & \centering shock & \centering contact & \centering - & \textcolor{red}{no} & combine ideas of \cite{BreChiKre18} and \cite{KliMar18_1} \tabularnewline \cline{1-6}
		14 & \centering shock & \centering contact & \centering shock & \textcolor{red}{no} & \cite{BreChiKre18} \tabularnewline \cline{1-6}
		15 & \centering shock & \centering contact & \centering rarefaction & \textcolor{red}{no} & combine ideas of \cite{BreChiKre18} and \cite{KliMar18_1}, \tabularnewline 
		 &  &  &  &  & partially in \cite{BreChiKre18} \tabularnewline \cline{1-6}
		16 & \centering rarefaction & \centering contact & \centering - & open &  \tabularnewline \cline{1-6}
		17 & \centering rarefaction & \centering contact & \centering shock & \textcolor{red}{no} & combine ideas of \cite{BreChiKre18} and \cite{KliMar18_1}, \tabularnewline 
		 &  &  &  &  & partially in \cite{BreChiKre18} \tabularnewline \cline{1-6}
		18 &\centering rarefaction & \centering contact & \centering rarefaction & open & \tabularnewline \cline{1-6}
	\end{tabular}
	\caption{All the 18 possibilities of the structure of the self-similar solution to the initial value problem for the isentropic Euler system \eqref{eq:baro-euler-pv-dens}, \eqref{eq:baro-euler-pv-mom}, \eqref{eq:isentropic-EOS} with initial data \eqref{eq:riemann-init-isen}. Furthermore, if known, the answer to the question on uniqueness.} \label{tab:isen-results} 
\end{table}
\renewcommand{\arraystretch}{1}

The uniqueness result by \name{Chen} and \name{Chen} \cite{CheChe07} as well as the one by \name{Feireisl} and \name{Kreml} \cite{FeiKre15}, both of which address the cases (1,)\footnote{Case 1 follows immediately from the weak-strong-uniqueness principle without any adaptions.} 3, 7  and 9 in Table~\ref{tab:isen-results}, are achieved by adapting the well-known \emph{weak-strong-uniqueness} principle: If there exists a strong solution of the initial value problem for the barotropic Euler system \eqref{eq:baro-euler-pv-dens}, \eqref{eq:baro-euler-pv-mom}, then this solution is unique in the class of admissible\footnote{In fact it is unique in the even larger class of \emph{dissipative} weak solutions which fulfill the energy inequality in an integral sense, see \cite[Section 2.1]{FeiKre15}.} weak solutions. As shown in the cited papers, a solution containing only rarefaction waves is ``strong enough'' (here piecewise $C^1$) such that this principle still holds, even if such a solution is not strong in sense of Definition~\ref{defn:strongsol} as it is not $C^1$. Note furthermore that the result of \name{Chen} and \name{Chen} \cite{CheChe07} also holds if the intermediate state is a vacuum state. Moreover the result by \name{Feireisl} and \name{Kreml} \cite{FeiKre15} is true for any convex and strictly increasing pressure law $p\in C^1$, i.e. not only for the isentropic equation of state \eqref{eq:isentropic-EOS}.

As already mentioned, the initial value problem considered in this chapter was first studied in the context of non-uniqueness of solutions by \name{Chiodaroli}, \name{De~Lellis} and \name{Kreml} \cite{ChiDelKre15}. In this paper it was shown that there exists a particular example of initial data of the form \eqref{eq:riemann-init-isen} which admit infinitely many solutions. Those data belong to class 6 in Table~\ref{tab:isen-results}, i.e. the corresponding self-similar solution consists of one shock and one rarefaction. 

In \cite{ChiKre14} it was shown by \name{Chiodaroli} and \name{Kreml} that if the initial states $(\rho_\pm,\vu_\pm)$ are such that the self-similar solution contains two shocks (i.e. case 5 in Table~\ref{tab:isen-results}), then there are infinitely many solutions. For the remaining cases where $u_-=u_+$ (2, 4, 6, 8 in Table~\ref{tab:isen-results}) the non-uniqueness proof was achieved by the \name{Klingenberg} and the author, see \cite{KliMar18_1}. Subcases of 6 and 8 in Table~\ref{tab:isen-results} have been also solved independently by \name{Chiodaroli} and \name{Kreml}~\cite{ChiKre18}. Note that both results \cite{ChiKre14} and \cite{KliMar18_1} are still valid if $\gamma=1$. 

The case 14 in Table~\ref{tab:isen-results} (and subcases of 15 and 17) has been covered by \name{B{\v r}ezina}, \name{Chiodaroli} and \name{Kreml} \cite{BreChiKre18}. Here one uses similar ideas together with an additional wedge in the fan, cf. Definition~\ref{defn:isen-fanpart}. The cases 11, 13, 15, 17 in Table~\ref{tab:isen-results} can be treated by combining the results of \cite{BreChiKre18} and \cite{KliMar18_1}.

Finally if the self-similar solution consists of a contact discontinuity and possible rarefactions (cases 10, 12, 16, 18 in Table~\ref{tab:isen-results}) the question on uniqueness remains open. Note that a generic example for initial data for which the self-similar solution consists only of a contact discontinuity (case 10) is the shear flow
$$
(\rho_\init,\vu_\init)(\vx) :=\left\{
\begin{array}{ll}
	\Big(1,(-1,0)^\trans\Big) & \text{ if }y<0\ec \\
	\Big(1,(1,0)^\trans\Big) & \text{ if }y>0\ed
\end{array}
\right. 
$$
In view of the above mentioned result by \name{Sz{\'e}kelyhidi} \cite{Szekelyhidi11}, where it has been shown that analogue initial data for the incompressible Euler system lead to infinitely many solutions, it is surprising that the question on uniqueness remains open in case 10.

\subsection{Non-Uniqueness Proof if the Self-Similar Solution Consists of One Shock and One Rarefaction} \label{subsec:riemann-isen-SR}

In this subsection we prove existence of infinitely many admissible weak solutions if the self-similar solution consists of one shock and one rarefaction (6 and 8 in Table~\ref{tab:isen-results}). To this end we exhibit the principal strategy developed in \cite{ChiDelKre15}: We introduce \emph{fan subsolutions} and apply Theorem~\ref{thm:convint}, more precisely its fixed-density-version, i.e. Theorem~\ref{thm:convint-nodens}. We need to use the latter theorem in order to obtain \emph{admissible} solutions. Hence it suffices to show existence of a fan subsolution, which is equivalent to finding a solution of a suitable system of algebraic equations and inequalities. In particular for the case where the self-similar solution consists of one shock and one rarefaction, the construction of a fan subsolution requires to introduce an auxiliary state, see below. The content of this subsection has already been published in \cite{KliMar18_1}. Parts can be also found in \name{Chiodaroli}-\name{Kreml} \cite{ChiKre18}.

Since the isentropic Euler equations \eqref{eq:baro-euler-pv-dens}, \eqref{eq:baro-euler-pv-mom} are invariant under rotations, we may restrict ourselves to the case where the self-similar solution consists of a 1-shock and 3-rarefaction. Indeed if the self-similar solution consists of a 1-rarefaction and a 3-shock, we rotate the coordinate system 180 degrees to obtain new initial data for which the self-similar solution consists of a 1-shock and a 3-rarefaction. 

In a nutshell, the objective of the current subsection is to prove the following theorem.

\begin{thm} \label{thm:isen-SR} 
	Let $\rho_\pm\in \R^+$, $\vu_\pm\in \R^2$ be such that the self-similar solution consists of a 1-shock and a 3-rarefaction. Then there exist infinitely many admissible weak solutions to the initial value problem \eqref{eq:baro-euler-pv-dens}, \eqref{eq:baro-euler-pv-mom}, \eqref{eq:isentropic-EOS}, \eqref{eq:riemann-init-isen}.
\end{thm}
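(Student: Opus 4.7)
By rotational invariance I would assume the self-similar solution contains a 1-shock (say, at speed $\nu_1$) and a 3-rarefaction (with head/tail speeds $\nu_2<\nu_3$), separated by a 2-contact discontinuity at speed $\mu\in(\nu_1,\nu_2)$; write $(\rho_M, u_-, v_M)$ and $(\rho_M, u_+, v_M)$ for the two intermediate states from Proposition~\ref{prop:isen-selfsimilar}. The plan is to replace the self-similar solution on the ``wild'' parts of the fan by wild solutions produced by the fixed-density convex integration theorem (Theorem~\ref{thm:convint-nodens}), applied on suitable open wedges $\Gamma_j\subset(0,\infty)\times\R^2$ of the form $\{\nu_j t < y < \nu_{j+1} t\}$. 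To make this work I first introduce a \emph{fan subsolution}: a triple $(\rho_0,\vm_0,\mU_0)$ which is piecewise constant on a fan of such wedges, equals the initial Riemann data on the two outermost regions, takes values in $\interior{\sU}$ on each ``wild'' wedge (for a wedge-dependent constant $c=c_j$), and whose jumps across each interface satisfy the Rankine--Hugoniot relations for the linear system \eqref{eq:euler-lin-dens}, \eqref{eq:euler-lin-mom}. Given such an object, Theorem~\ref{thm:convint-nodens} applied separately on each wild wedge (with $\Gamma:=\Gamma_j$, $\rho\equiv\rho_j^\ast$) delivers infinitely many density-preserving bounded $\vm$ so that, patched together with the untouched wedges, one obtains infinitely many weak solutions of \eqref{eq:baro-euler-pv-dens}, \eqref{eq:baro-euler-pv-mom} attaining the Riemann data \eqref{eq:riemann-init-isen}.

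\textbf{From trace condition to admissibility.} Because we use the fixed-density version, the energy and its flux on each wild wedge reduce to
\[
\tfrac12\tfrac{|\vm|^2}{\rho_j^\ast}+P(\rho_j^\ast)=c_j+P(\rho_j^\ast)-\tfrac{n}{2}p(\rho_j^\ast),
\qquad
\Big(\tfrac12\tfrac{|\vm|^2}{\rho_j^\ast}+P(\rho_j^\ast)+p(\rho_j^\ast)\Big)\tfrac{\vm}{\rho_j^\ast},
\]
by the trace condition \eqref{eq:sol-trace}, and $\vm$ is divergence-free on $\Gamma_j$ (which follows from $\rho\equiv\rho_j^\ast$ plus the continuity equation in distributional form). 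Hence the energy is constant in time on each wild wedge and the energy flux is a multiple of $\vm$, so testing the energy inequality \eqref{eq:baro-euler-weak-whole-admissibility} against any non-negative $\varphi\in\Cc([0,\infty)\times\R^2;\R^+_0)$ reduces, after integration by parts inside each wedge, to a sum of surface contributions across the fan interfaces and an initial term. I would then impose, as part of the definition of an admissible fan subsolution, two algebraic conditions: (i) continuity of the energy across each interface (this controls the surface terms, exactly analogous to the Rankine--Hugoniot relation for the energy), and (ii) the physical entropy (Lax) sign at those interfaces playing the role of genuine shocks. Combined with the matching at $t=0$ dictated by \eqref{eq:riemann-init-isen}, this makes the resulting solutions admissible.

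\textbf{Construction of a fan subsolution.} The heart of the proof is the algebraic construction. For this case I would use a fan with three interior interfaces, which necessitates introducing an auxiliary state. The middle interface is kept as the $u$-jump (the 2-contact) between the two wild wedges, along which $\rho$ is continuous and equals $\rho_M$ and $v$ equals $v_M$. On the left side we keep the 1-shock as a genuine shock (so no convex integration there, just a Rankine--Hugoniot jump between $(\rho_-,\vu_-)$ and $(\rho_M,(u_-,v_M))$). On the right side I replace the 3-rarefaction by a \emph{single fake shock} interface from $(\rho_M,(u_+,v_M))$ to an auxiliary state $(\rho_\ast,(u_+,v_\ast))$ with $\rho_M<\rho_\ast<\rho_+$, followed by a genuine 3-shock from $(\rho_\ast,(u_+,v_\ast))$ to $(\rho_+,\vu_+)$. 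The unknowns are the speeds $\nu_1,\mu,\nu_2,\nu_3$, the auxiliary density/velocity $(\rho_\ast,v_\ast)$, the constants $c_1,c_2$ (one per wild wedge) and, on each wild wedge, the traceless symmetric matrix $\mU_j^\ast\in\sztwo$. The equations are: Rankine--Hugoniot for $(\rho,\vm,\mU)$ across the four interfaces (with the two outer interfaces carrying the genuine shock relations of isentropic Euler), the trace identity $\tfrac12|\vm_j^\ast|^2/\rho_j^\ast+p(\rho_j^\ast)=c_j$, the subsolution inequality $e(\rho_j^\ast,\vm_j^\ast,\mU_j^\ast)<c_j$ on each wild wedge, and the two energy-continuity plus Lax-sign conditions that encode admissibility.

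\textbf{Main obstacle and conclusion.} The real difficulty is showing that this coupled algebraic system admits a solution for \emph{every} Riemann datum producing a 1-shock/3-rarefaction self-similar solution, uniformly in the strength of the rarefaction. One degree of freedom (for example, $\rho_\ast$) must be left free and shown to range over a non-empty open interval on which all strict inequalities are simultaneously satisfied. I would verify this by starting from the limiting ``Lax extremal'' choice $\rho_\ast\to\rho_M^+$, where the fake right interface degenerates to a zero-strength jump and the subsolution inequality holds trivially for $\mU_j^\ast=\mZ$ provided $c_j$ is taken slightly larger than the constant kinetic energy, and then propagating this openness to a small interval of admissible $\rho_\ast$. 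Because the system of equations is the same, up to signs, as in the two-shock case \cite{ChiKre14} and the shock--rarefaction subcases of \cite{ChiKre18}, the explicit computations are routine; the main work lies in identifying the one-parameter family of algebraically consistent configurations and checking that the entropy sign at the fake interface is compatible with the Lax inequality at the genuine 3-shock. Once a fan subsolution is exhibited, Theorem~\ref{thm:convint-nodens} applied on each of the two wild wedges (both of which are Lipschitz space-time domains with bounded density bounded away from zero) produces infinitely many density-preserving momenta, and gluing gives infinitely many admissible weak solutions in the sense of Definition~\ref{defn:aws-baro-whole}, proving Theorem~\ref{thm:isen-SR}.
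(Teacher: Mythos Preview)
Your proposal has a genuine gap on the right side of the fan. You want to replace the 3-rarefaction by a fake interface from $(\rho_M,(u_+,v_M))$ to an auxiliary state $(\rho_\ast,(u_+,v_\ast))$ with $\rho_M<\rho_\ast<\rho_+$, followed by a \emph{genuine} admissible 3-shock to $(\rho_+,\vu_+)$. But an admissible 3-shock requires the left density to exceed the right density (cf.\ Proposition~\ref{prop:isen-selfsimilar}~\ref{item:isen-cases.S}), so $\rho_\ast<\rho_+$ rules this out. Your limiting choice $\rho_\ast\to\rho_M^+$ then degenerates to a ``3-shock'' from $(\rho_M,v_M)$ to $(\rho_+,v_+)$, which again has the wrong sign since $\rho_M<\rho_+$ in the SR case. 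In short, a rarefaction cannot be replaced by admissible shocks going in the same direction; this is precisely why the SR case is harder than the two-shock case of \cite{ChiKre14}. A secondary point: the hypothesis of Theorem~\ref{thm:isen-SR} forces $u_-=u_+$ (see \eqref{eq:1S3R.b}), so there is no 2-contact and the two-wedge structure you introduce is unnecessary here.

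The paper's route is different. It first establishes a \emph{smallness} result (Proposition~\ref{prop:isen-SR-smallness}): if the right state $\til{\vU}_+$ is sufficiently close to $\vU_M$, a single-wedge admissible fan subsolution exists, with $\rho_1<\rho_M$. For the general case one picks an auxiliary state $\vU_a=(\rho_a,v_a)$ \emph{on the 3-rarefaction curve emanating from} $\vU_M$, with $\rho_a\in(\rho_M,\rho_+)$ close to $\rho_M$. The Riemann problem between $\vU_-$ and $\vU_a$ is again of type 1-shock/3-rarefaction but now with a small rarefaction, so the smallness result yields infinitely many wild solutions on a wedge $\{\mu_0 t<y<\mu_1 t\}$. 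The remaining Riemann problem between $\vU_a$ and $\vU_+$ is a \emph{pure 3-rarefaction}, solved by its self-similar solution (no convex integration there). The two pieces are then patched after verifying the speed compatibility $\mu_1<v_a+\sqrt{p'(\rho_a)}$, which follows from $\rho_1<\rho_a$ and the subsolution inequality. Thus the rarefaction is not replaced by shocks but \emph{truncated}: only a small portion near $\vU_M$ is absorbed into the wild wedge, and the rest is kept classical.
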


According to Proposition~\ref{prop:isen-selfsimilar} \ref{item:isen-cases.SR} the fact that the self-similar solution consists of a 1-shock and a 3-rarefaction means that  
\begin{align}
 	\rho_-&<\rho_+\ec \label{eq:1S3R.a} \\
	u_-&=u_+ \qquad\text{ and} \label{eq:1S3R.b}\\
	-\sqrt{\frac{(\rho_- - \rho_+) \big(p(\rho_-)-p(\rho_+)\big)}{\rho_- \rho_+}} &< v_+ - v_- < \int_{\rho_-}^{\rho_+}\frac{\sqrt{p'(r)}}{r} \dr \ed \label{eq:1S3R.c}
\end{align}

\subsubsection{Condition for Non-Uniqueness}

In order to state a sufficient condition for the existence of infinitely many solutions, let us define \emph{admissible fan subsolutions} as first introduced by \name{Chiodaroli}, \name{De~Lellis} and \name{Kreml}~\cite[Definitions 3.4 and 3.5]{ChiDelKre15}, where our notation is tailored to the usage of Theorems~\ref{thm:convint} and \ref{thm:convint-nodens}. 

\begin{defn} \label{defn:isen-fanpart}
	Let $\mu_0<\mu_1$ be real numbers. A \emph{fan partition} of the space-time domain $(0,\infty)\times\R^2$ consists of three open sets $\Gamma_-,\Gamma_1,\Gamma_+$ of the form
	\begin{align*}
		\Gamma_-&=\{(t,\vx):t>0\text{ and }y<\mu_0 t\}\ec \\
		\Gamma_1\: &=\{(t,\vx):t>0\text{ and }\mu_0 t<y<\mu_1 t\} \ec \\
		\Gamma_+&=\{(t,\vx):t>0\text{ and }y>\mu_1 t\} \ec
	\end{align*}
	see Figure~\ref{fig:isen-fanpart}.
\end{defn}

\begin{figure}[htb] 
	\centering
	\includegraphics[width=0.7\textwidth]{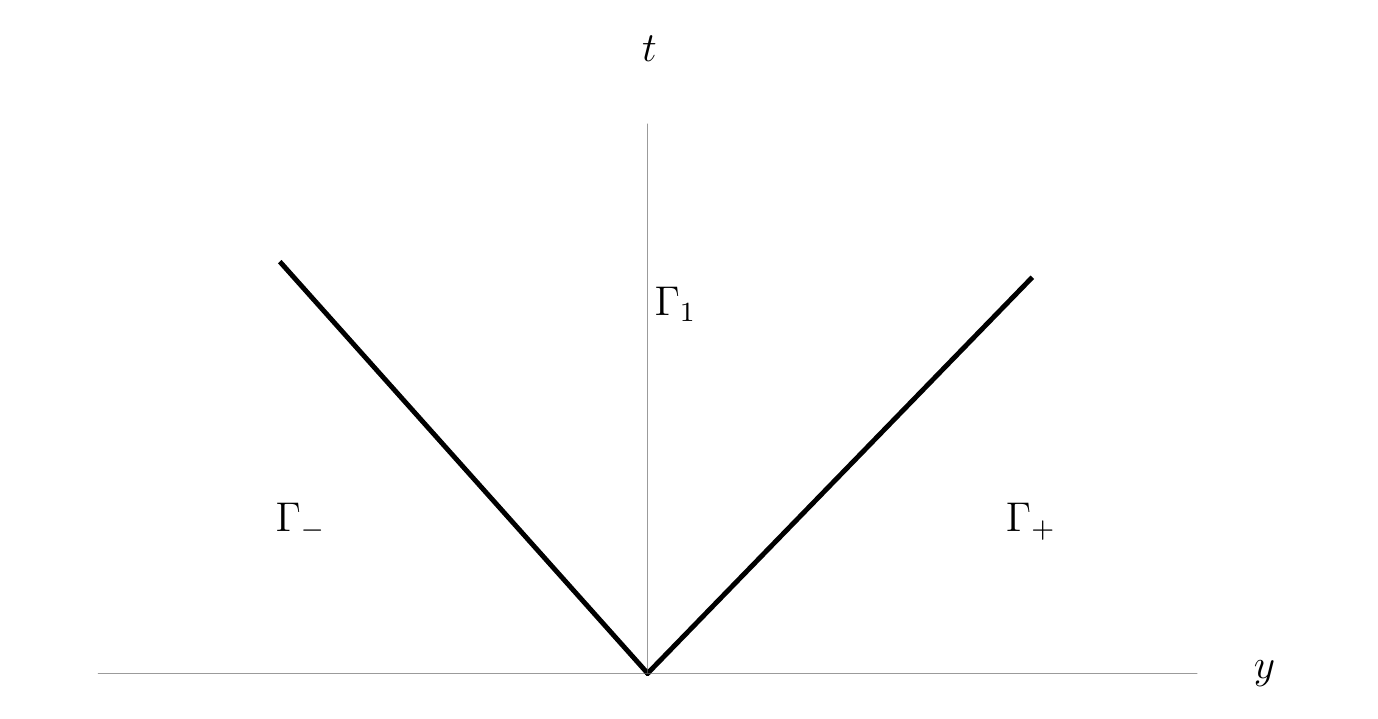} 
	\caption{A fan partition, see Definition~\ref{defn:isen-fanpart}.} 
	\label{fig:isen-fanpart}
\end{figure} 

\begin{defn} \label{defn:isen-fansubs}
	An \emph{admissible fan subsolution} to the initial value problem for the isentropic Euler system \eqref{eq:baro-euler-pv-dens}, \eqref{eq:baro-euler-pv-mom}, \eqref{eq:isentropic-EOS} with initial data \eqref{eq:riemann-init-isen} is a quadruple
	$$
	(\ov{\rho},\ov{\vm},\ov{\mU},\ov{c}) \ \in\  L^\infty\big( (0,\infty)\times\R^2;\R^+\times\R^2\times\sztwo\times \R^+\big)
	$$ 
	of piecewise constant functions, which satisfies the following properties:
	\begin{enumerate}
		\item There exists a fan partition $\Gamma_-,\Gamma_1,\Gamma_+$ of $(0,\infty)\times\R^2$ and constants $\rho_1\in\R^+$, $\vm_1\in\R^2$, $\mU_1\in\sztwo$ and $c_1 \in \R^+$, such that
		\begin{align*}
			(\ov{\rho},\ov{\vm},\ov{\mU},\ov{c})&= \left\{ 
			\begin{array}[c]{ll}
				\big(\rho_-,\vm_-,\mU_-, c_- \big) & \text{ if } (t,\vx)\in \Gamma_- \ec\\
				\big(\rho_1\: ,\vm_1 \:,\mU_1 \:,c_1\: \big) & \text{ if } (t,\vx)\in \Gamma_1 \:\ec\\
				\big(\rho_+,\vm_+,\mU_+, c_+\big) & \text{ if } (t,\vx)\in \Gamma_+ \ec 
			\end{array}
			\right.
		\end{align*}
		where 
		\begin{align*}
			\vm_\pm &:= \rho_\pm \vu_\pm \ec \\
			\mU_\pm &:= \rho_\pm \vu_\pm \otimes \vu_\pm - \frac{\rho_\pm |\vu_\pm|^2}{2}\id \ec\\
			c_\pm &:=  \frac{\rho_\pm |\vu_\pm|^2}{2} + p(\rho_\pm) 
		\end{align*}
		with the given initial states $(\rho_\pm,\vu_\pm) \in \R^+\times \R^2$;
		
		\item The following inequality holds: 
		\begin{equation} \label{eq:isen-fansubs-subs}
			e(\rho_1,\vm_1,\mU_1) < c_1 \es
		\end{equation} 
		\item For all test functions $(\phi,\vphi)\in \Cc \big([0,\infty)\times\R^2;\R\times\R^2\big)$ the following identities hold:
		\begin{align}
			\int_0^\infty \int_{\R^2} \Big[\ov{\rho} \partial_t \phi + \ov{\vm}\cdot\Grad \phi\Big]\dx\dt + \int_{\R^2} \rho_\init\phi(0,\cdot) \dx &= 0 \es \label{eq:isen-fansubs-dens}\\
			\int_0^\infty \int_{\R^2} \Big[\ov{\vm} \cdot\partial_t \vphi + \ov{\mU}:\Grad \vphi + \ov{c} \Div\vphi\Big]\dx\dt + \int_{\R^2} \rho_\init\vu_\init\cdot\vphi(0,\cdot) \dx &= 0\es \label{eq:isen-fansubs-mom}
		\end{align}
		\item For every non-negative test function $\varphi\in \Cc\big([0,\infty)\times\R^2;\R_0^+\big)$, the inequality
		\begin{align}
			\int_0^\infty \int_{\R^2} \left[\Big(\ov{c} -p(\ov{\rho}) + P(\ov{\rho}) \Big) \partial_t \varphi + \Big(\ov{c} + P(\ov{\rho})\Big)\frac{\ov{\vm}}{\ov{\rho}}\cdot\Grad \varphi \right]\dx\dt \qquad & \notag\\ 
			+ \int_{\R^2} \bigg(\half\rho_\init|\vu_\init|^2 + P(\rho_\init)\bigg)\varphi(0,\cdot) \dx &\geq 0 \label{eq:isen-fansubs-adm}
		\end{align}
		is fulfilled.
	\end{enumerate}
\end{defn}

For the existence of infinitely many admissible weak solutions it suffices that there exists an admissible fan subsolution. This is the content of the following theorem. To prove it we apply Theorem~\ref{thm:convint}. More precisely we have to apply the version with fixed density (i.e. Theorem~\ref{thm:convint-nodens}) to obtain \emph{admissible} weak solutions.

\begin{thm} \label{thm:isen-condition}
	Let the initial states $(\rho_\pm,\vu_\pm)$ be such that there exists an admissible fan subsolution $(\ov{\rho},\ov{\vm},\ov{\mU},\ov{c})$ to the initial value problem \eqref{eq:baro-euler-pv-dens}, \eqref{eq:baro-euler-pv-mom}, \eqref{eq:isentropic-EOS}, \eqref{eq:riemann-init-isen}. Then this initial value problem admits infinitely many admissible weak solutions $(\rho,\vu)$ with the following properties:
	\begin{enumerate}
		\item \label{item:isen-riemann.a} $\rho=\ov{\rho}$,
		\item \label{item:isen-riemann.b} $\vu(t,\vx)=\vu_-$ for all $(t,\vx)\in \Gamma_-$ and $\vu(t,\vx)=\vu_+$ for all $(t,\vx)\in \Gamma_+$,
		\item \label{item:isen-riemann.c} $|\vu(t,\vx)|^2=\frac{2}{\rho_1} \big(c_1 - p(\rho_1)\big)$ for\footnote{Note that $c_1-p(\rho_1)>0$ due to \eqref{eq:isen-fansubs-subs}. Indeed we obtain using Lemma~\ref{lemma:not-lmax-tr}
		\begin{align*}
			p(\rho_1) &\leq \frac{|\vm_1|^2}{2\rho_1} + p(\rho_1) = \half\tr \left(\frac{\vm_1\otimes\vm_1}{\rho_1} + p(\rho_1)\id - \mU_1\right) \\ 
			&\leq \lambda_{\max} \left(\frac{\vm_1\otimes\vm_1}{\rho_1} + p(\rho_1)\id - \mU_1\right) = e(\rho_1,\vm_1,\mU_1) < c_1\ed
		\end{align*}} a.e. $(t,\vx)\in \Gamma_1$. 
	\end{enumerate}
\end{thm}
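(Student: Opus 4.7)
The strategy is to apply the fixed-density convex integration theorem (Theorem~\ref{thm:convint-nodens}) on the middle wedge $\Gamma_1$ only, keeping the prescribed constant states $(\rho_\pm,\vu_\pm)$ on the two outer regions $\Gamma_\pm$. Using the fixed-density version is essential: it ensures $\rho\equiv\rho_1$ inside $\Gamma_1$, which plays a decisive role in the admissibility step below.

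First I would invoke Theorem~\ref{thm:convint-nodens} with $\Gamma:=\Gamma_1$, the constant subsolution $(\rho_0,\vm_0,\mU_0)\equiv(\rho_1,\vm_1,\mU_1)$, constant $c:=c_1$, and any $r\in(0,\rho_1)$. The PDE conditions \eqref{eq:p0-pde1}, \eqref{eq:p0-pde2} are trivially satisfied, \eqref{eq:p0-subs} follows from \eqref{eq:isen-fansubs-subs}, and \eqref{eq:p0-dens-bdd} holds by the choice of $r$. The theorem then provides infinitely many $(\rho,\vm)\in L^\infty(\Gamma_1;\R^+\times\R^2)$ with $\rho\equiv\rho_1$ that satisfy properties \ref{item:convint.a}--\ref{item:convint.c} of Theorem~\ref{thm:convint}. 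Extending by $(\rho,\vm):=(\rho_\pm,\vm_\pm)$ on $\Gamma_\pm$ and setting $\vu:=\vm/\rho$, properties \ref{item:isen-riemann.a} and \ref{item:isen-riemann.b} hold by construction, while \ref{item:isen-riemann.c} is the trace identity \eqref{eq:sol-trace} specialized to $n=2$.

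To verify the weak equations \eqref{eq:baro-euler-weak-whole-dens}, \eqref{eq:baro-euler-weak-whole-mom} for a test function $(\phi,\vphi)\in\Cc([0,\infty)\times\R^2;\R\times\R^2)$, split the bulk integrals over $\Gamma_-,\Gamma_1,\Gamma_+$. On $\Gamma_\pm$ the integrand is a constant field contracted with a derivative of the test function, so the Divergence Theorem converts it to a boundary integral on $\partial\Gamma_\pm$; on $\Gamma_1$ the matching boundary integral on $\partial\Gamma_1$ is produced directly by \eqref{eq:sol-pde1}, \eqref{eq:sol-pde2}, with the subsolution trace $(\rho_1,\vm_1,\mU_1+c_1\id)$ playing the role of $(\rho_0,\vm_0,\mU_0+\tfrac{2c}{n}\id)$. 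Because of the opposite orientations, contributions on each of the two lines $y=\mu_it$ combine into a single jump term for $(\rho,\vm,\mU+c\,\id)$, while the $\{t=0\}$ contributions reassemble into the initial data. Applying exactly the same splitting to the fan-subsolution identities \eqref{eq:isen-fansubs-dens}, \eqref{eq:isen-fansubs-mom} shows that the required Rankine--Hugoniot jumps already vanish, so the weak equations hold.

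The delicate step is admissibility. Subtracting \eqref{eq:isen-fansubs-adm} from the left-hand side of \eqref{eq:baro-euler-weak-whole-admissibility}, one checks that the energy densities match on all three regions: on $\Gamma_\pm$ by direct computation, and on $\Gamma_1$ because the trace identity gives $\tfrac12\rho|\vu|^2+P(\rho)=c_1-p(\rho_1)+P(\rho_1)$, identical to $\ov c-p(\ov\rho)+P(\ov\rho)$ there. The energy fluxes, however, agree only on $\Gamma_\pm$: on $\Gamma_1$ the solution has flux $(c_1+P(\rho_1))\vu$ while the subsolution uses $(c_1+P(\rho_1))\vm_1/\rho_1$, so their difference reduces to
\[
(c_1+P(\rho_1))\iint_{\Gamma_1}\big(\vu-\vm_1/\rho_1\big)\cdot\Grad\varphi\dx\dt\ed
\]
To show that this vanishes -- and herein lies the main obstacle -- I would test the already-established weak continuity equation for $(\rho,\vm)$ and the fan-subsolution continuity \eqref{eq:isen-fansubs-dens} against the scalar multiple $\lambda\varphi$ with $\lambda:=(c_1+P(\rho_1))/\rho_1$ and subtract; because $\rho\equiv\ov\rho$ everywhere and $\vm=\ov\vm$ outside $\Gamma_1$, the subtraction collapses to exactly the displayed integral, which must therefore equal zero. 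Consequently \eqref{eq:baro-euler-weak-whole-admissibility} reduces to \eqref{eq:isen-fansubs-adm}, which holds by assumption, and the construction yields infinitely many admissible weak solutions with the asserted properties.
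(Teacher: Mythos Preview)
Your proposal is correct and follows essentially the same approach as the paper: apply Theorem~\ref{thm:convint-nodens} on $\Gamma_1$ with the constant subsolution $(\rho_1,\vm_1,\mU_1)$, glue with the outer constant states, verify the weak equations via boundary matching, and handle admissibility by exploiting $\rho\equiv\rho_1$ on $\Gamma_1$ to kill the flux discrepancy. The only organizational difference is that the paper derives the key identity $\iint_{\Gamma_1}\til{\vm}\cdot\Grad\varphi\dx\dt=\int_{\partial\Gamma_1}\vm_1\cdot\vn_\vx\,\varphi\dS_{t,\vx}$ directly from \eqref{eq:sol-pde1} via the Divergence Theorem, whereas you obtain the equivalent statement by subtracting the two global continuity equations---both routes yield the same cancellation.
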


\begin{proof}
	In order to apply Theorem~\ref{thm:convint} we set $\Gamma:= \Gamma_1$, $r:= \half \rho_1$, $c:=c_1$ and the triple of functions $(\rho_0,\vm_0,\mU_0)\in C^1\big(\closure{\Gamma};\R^+\times \R^n\times \sztwo\big)$ constant, more precisely 
	$$
		(\rho_0,\vm_0,\mU_0)(t,\vx) := (\rho_1,\vm_1,\mU_1) \qquad \text{ for all }(t,\vx)\in \closure{\Gamma} \ed
	$$
	Let us check the assumptions of Theorem~\ref{thm:convint}:
	\begin{itemize}
		\item The PDEs \eqref{eq:p0-pde1}, \eqref{eq:p0-pde2} hold obviously because the functions $\rho_0,\vm_0,\mU_0$ are constant.
		\item From \eqref{eq:isen-fansubs-subs} and \eqref{eq:interiorU} we duduce that \eqref{eq:p0-subs} is satisfied.
		\item The density is bounded from below (i.e. \eqref{eq:p0-dens-bdd} holds) by construction. Indeed for all $(t,\vx)\in \closure{\Gamma}$ we have 
		$$
		\rho_0(t,\vx) = \rho_1 > \frac{\rho_1}{2} = r \ed
		$$
	\end{itemize} 
	Hence Theorem~\ref{thm:convint} yields infinitely many bounded functions $(\til{\rho},\til{\vm})\in L^\infty(\Gamma;\R^+\times \R^2)$ with the properties \ref{item:convint.a} - \ref{item:convint.c} stated in Theorem~\ref{thm:convint}. For each such pair $(\til{\rho},\til{\vm})$ we define $(\rho,\vu)\in L^\infty \big((0,\infty)\times \R^2;\R^+\times \R^2\big)$ by 
	\begin{equation} \label{eq:i1-temp-riemann}
		(\rho,\vu)= \left\{ 
		\begin{array}[c]{ll}
			\big(\rho_-,\vu_-\big) & \text{ if } (t,\vx)\in \Gamma_- \ec\\
			\big(\til{\rho},\til{\vm} / \til{\rho}\big) & \text{ if } (t,\vx)\in \Gamma_1 \:\ec\\
			\big(\rho_+,\vu_+\big) & \text{ if } (t,\vx)\in \Gamma_+ \ed
		\end{array}
		\right.
	\end{equation} 
	We claim that each $(\rho,\vu)$ is indeed an admissible weak solution to the initial value problem \eqref{eq:baro-euler-pv-dens}, \eqref{eq:baro-euler-pv-mom}, \eqref{eq:isentropic-EOS}, \eqref{eq:riemann-init-isen} with the desired properties. To show this, we choose arbitrary test functions $(\phi,\vphi,\varphi) \in \Cc\big([0,\infty) \times \R^2; \R\times \R^2\times\R^+_0\big)$. With \eqref{eq:isen-fansubs-dens}, the Divergence Theorem (Proposition~\ref{prop:not-divergence}) and \eqref{eq:sol-pde1}, we find
	\begin{align*} 
		&\int_0^\infty \int_{\R^2} \Big[\rho \partial_t \phi + \rho\vu\cdot\Grad \phi\Big]\dx\dt + \int_{\R^2} \rho_\init\phi(0,\cdot) \dx \\
		&= \int_0^\infty \int_{\R^2} \Big[\ov{\rho} \partial_t \phi + \ov{\vm}\cdot\Grad \phi\Big]\dx\dt + \int_{\R^2} \rho_\init\phi(0,\cdot) \dx \\
		&\qquad - \iint_{\Gamma_1} \Big[\rho_1 \partial_t \phi + \vm_1\cdot\Grad \phi\Big]\dx\dt + \iint_{\Gamma_1} \Big[\til{\rho} \partial_t \phi + \til{\vm}\cdot\Grad \phi\Big]\dx\dt \\
		&= -\int_{\partial \Gamma_1} \big[\rho_1 \,n_t + \vm_1\cdot \vn_\vx \big] \phi \dS_{t,\vx} + \int_{\partial \Gamma} \big[\rho_1 \,n_t + \vm_1\cdot \vn_\vx \big] \phi \dS_{t,\vx} \ = \ 0\ec
	\end{align*}
	i.e. \eqref{eq:baro-euler-weak-whole-dens} holds. Similarly \eqref{eq:isen-fansubs-mom}, the Divergence Theorem and \eqref{eq:sol-pde2} imply
	\begin{align*}
		&\int_0^\infty \int_{\R^2} \Big[\rho\vu \cdot\partial_t \vphi + \rho\vu\otimes\vu:\Grad \vphi + p(\rho)\Div \vphi\Big]\dx\dt + \int_{\R^2} \rho_\init\vu_\init\cdot\vphi(0,\cdot) \dx \\
		&= \int_0^\infty \int_{\R^2} \Big[\ov{\vm} \cdot\partial_t \vphi + \ov{\mU}:\Grad \vphi + \ov{c} \Div\vphi\Big]\dx\dt + \int_{\R^2} \rho_\init\vu_\init\cdot\vphi(0,\cdot) \dx \\
		&\qquad - \iint_{\Gamma_1} \Big[\vm_1 \cdot\partial_t \vphi + \mU_1:\Grad \vphi + c_1 \Div\vphi\Big]\dx\dt \\
		&\qquad + \iint_{\Gamma_1} \bigg[\til{\vm}\cdot\partial_t\vphi + \frac{\til{\vm}\otimes\til{\vm}}{\til{\rho}} : \Grad\vphi + p(\til{\rho}) \Div\vphi\bigg] \dx\dt \\
		&= - \int_{\partial\Gamma_1} \left[ \vm_1\cdot \vphi\,n_t + (\mU_1\cdot \vphi)\cdot \vn_\vx + c_1 \vphi \cdot \vn_\vx \right] \dS_{t,\vx} \\
		&\qquad + \int_{\partial\Gamma_1} \left[ \vm_1\cdot \vphi\,n_t + (\mU_1\cdot \vphi)\cdot \vn_\vx + c_1 \vphi \cdot \vn_\vx \right] \dS_{t,\vx}\ =\ 0 \ec
	\end{align*}
	and thus \eqref{eq:baro-euler-weak-whole-mom} is fulfilled. 
	
	It remains to show the properties \ref{item:isen-riemann.a} - \ref{item:isen-riemann.c} and the energy inequality \eqref{eq:baro-euler-weak-whole-admissibility}. To achieve that, one needs to apply Theorem~\ref{thm:convint-nodens} rather than Theorem~\ref{thm:convint}. Hence we may assume that $\til{\rho}\equiv \rho_1$. Then we obtain from \eqref{eq:i1-temp-riemann} that the properties \ref{item:isen-riemann.a} and \ref{item:isen-riemann.b} hold. Moreover \eqref{eq:sol-trace} yields 
	\begin{equation} \label{eq:i3-temp-riemann}
		\frac{|\til{\vm}(t,\vx)|^2}{2\rho_1} + p(\rho_1) = \frac{|\til{\vm}(t,\vx)|^2}{2\til{\rho}} + p\big(\til{\rho}(t,\vx)\big) =  c_1
	\end{equation} 
	for a.e. $(t,\vx)\in \Gamma_1$ and thus \ref{item:isen-riemann.c}. Using the fact that $\til{\rho}\equiv \rho_1$ again, we deduce from \eqref{eq:sol-pde1} and the Divergence Theorem that 
	\begin{equation} \label{eq:i2-temp-riemann}
		\iint_\Gamma \til{\vm}\cdot \Grad\varphi \dx\dt - \int_{\partial \Gamma} \vm_1\cdot \vn_\vx \varphi \dS_{t,\vx} = 0\ed
	\end{equation}
	
	Finally with the help of \eqref{eq:i2-temp-riemann} we are able to prove the energy inequality \eqref{eq:baro-euler-weak-whole-admissibility}. Indeed, \eqref{eq:isen-fansubs-adm}, the Divergence Theorem, \eqref{eq:i3-temp-riemann} and \eqref{eq:i2-temp-riemann} imply
	\begin{align*} 
		&\int_0^\infty \int_{\R^2} \left[\bigg(\half\rho|\vu|^2 + P(\rho)\bigg) \partial_t \varphi + \bigg(\half\rho|\vu|^2 + P(\rho) + p(\rho)\bigg)\vu\cdot\Grad \varphi \right]\dx\dt  \\ 
		&\qquad + \int_{\R^2} \bigg(\half\rho_\init|\vu_\init|^2 + P(\rho_\init)\bigg)\varphi(0,\cdot) \dx \\
		&= \int_0^\infty \int_{\R^2} \left[\Big(\ov{c} -p(\ov{\rho}) + P(\ov{\rho}) \Big) \partial_t \varphi + \Big(\ov{c} + P(\ov{\rho})\Big)\frac{\ov{\vm}}{\ov{\rho}}\cdot\Grad \varphi \right]\dx\dt  \\ 
		&\qquad + \int_{\R^2} \bigg(\half\rho_\init|\vu_\init|^2 + P(\rho_\init)\bigg)\varphi(0,\cdot) \dx \\
		&\qquad - \iint_{\Gamma_1} \left[\Big(c_1 -p(\rho_1) + P(\rho_1) \Big) \partial_t \varphi + \Big(c_1 + P(\rho_1)\Big)\frac{\vm_1}{\rho_1}\cdot\Grad \varphi \right]\dx\dt \\
		&\qquad + \iint_{\Gamma_1} \left[\bigg(\half\frac{|\til{\vm}|^2}{\til{\rho}} + P(\til{\rho})\bigg) \partial_t \varphi + \bigg(\half\frac{|\til{\vm}|^2}{\til{\rho}} + P(\til{\rho}) + p(\til{\rho})\bigg)\frac{\til{\vm}}{\til{\rho}}\cdot\Grad \varphi \right]\dx\dt \\
		&\geq - \iint_{\Gamma_1} \left[\Big(c_1 -p(\rho_1) + P(\rho_1) \Big) \partial_t \varphi + \Big(c_1 + P(\rho_1)\Big)\frac{\vm_1}{\rho_1}\cdot\Grad \varphi \right]\dx\dt \\ 
		&\qquad + \iint_{\Gamma_1} \left[\Big(c_1 -p(\rho_1) + P(\rho_1) \Big) \partial_t \varphi + \Big(c_1 + P(\rho_1) \Big)\frac{\til{\vm}}{\rho_1}\cdot\Grad \varphi \right]\dx\dt \\
		&= - \frac{c_1 + P(\rho_1)}{\rho_1} \int_{\partial \Gamma} \vm_1\cdot \vn_\vx \varphi \dS_{t,\vx} + \frac{c_1 + P(\rho_1)}{\rho_1} \int_{\partial \Gamma} \vm_1\cdot \vn_\vx \varphi \dS_{t,\vx} \ = \ 0 \ed
	\end{align*}
	Hence each $(\rho,\vu)$ defined in \eqref{eq:i1-temp-riemann} is an admissible weak solution to the initial value problem \eqref{eq:baro-euler-pv-dens}, \eqref{eq:baro-euler-pv-mom}, \eqref{eq:isentropic-EOS}, \eqref{eq:riemann-init-isen}. This finishes the proof.
\end{proof}

\begin{rem}
	The density of the solutions obtained from Theorem~\ref{thm:isen-condition} is piecewise constant, whereas the velocity is constant in $\Gamma_-$ and $\Gamma_+$ and ``wild'' in the wedge $\Gamma_1$. All solutions which originate from the same fan subsolution only differ in the velocity in wedge $\Gamma_1$. Note that in most cases there will be more than one admissible fan subsolutions with different $\ov{\rho}$ and different underlying fan partitions. Hence there are also admissible weak solutions which do differ in the density as well. 
\end{rem}

\begin{rem}
	Theorem~\ref{thm:isen-condition} corresponds to \cite[Proposition 3.6]{ChiDelKre15}. Note that our notation slightly differs from the one used in \cite{ChiDelKre15}, especially the notion of a fan subsolution is formulated differently. The reason for this is that our notation is adjusted to the notation used in Theorems \ref{thm:convint} and \ref{thm:convint-nodens}, whereas the notation used in \cite{ChiDelKre15} is suitable to apply a version of \name{De~Lellis}'s and \name{Sz{\'e}kelyhidi}'s ``incompressible'' convex integration.
\end{rem}

\subsubsection{The Corresponding System of Algebraic Equations and Inequalities}

Due to Theorem~\ref{thm:isen-condition} we search for an admissible fan subsolution in order to prove existence of infinitely many admissible weak solutions. Since a fan subsolution consists of piecewise constant functions, the conditions \eqref{eq:isen-fansubs-dens} - \eqref{eq:isen-fansubs-adm} can be translated into a set of algebraic equations and inequalities. This is the content of the following proposition, which was orginally established by \name{Chiodaroli}, \name{De~Lellis} and \name{Kreml} \cite[Propostion 5.1]{ChiDelKre15}.

\begin{prop} \label{prop:isen-alg-eq} 
	Let $\rho_\pm\in\R^+$, $\vu_\pm\in\R^2$. Assume that there exist numbers $\mu_0,\mu_1\in\R$, $\rho_1\in\R^+$, $\alpha_1,\beta_1,\gamma_1,\delta_1\in \R$ and\footnote{Mind the difference between capital $C_1$ and small $c_1$.} $C_1 \in \R^+$ which fulfill the following algebraic equations and inequalities: 
	\begin{itemize}
		\item Order of the speeds:
		\begin{equation} \label{eq:isen-order}
			\mu_0<\mu_1 \es
		\end{equation}
		\item Rankine Hugoniot conditions on the left interface:
		\begin{align}
			\mu_0 (\rho_- - \rho_1) &= \rho_- v_- - \rho_1 \beta_1 \es \label{eq:isen-rhl1}\\
			\mu_0 (\rho_- u_- - \rho_1 \alpha_1) &= \rho_- u_- v_- - \rho_1 \delta_1 \es \label{eq:isen-rhl2}\\
			\mu_0 (\rho_- v_- - \rho_1 \beta_1) &= \rho_- \big(v_-\big)^2 - \rho_1 \left(\frac{C_1}{2} - \gamma_1\right) + p(\rho_-) - p(\rho_1) \es \label{eq:isen-rhl3}
		\end{align}
		\item Rankine Hugoniot conditions on the right interface:
		\begin{align}
			\mu_1 (\rho_1 - \rho_+) &= \rho_1 \beta_1 - \rho_+ v_+ \es \label{eq:isen-rhr1}\\
			\mu_1 (\rho_1 \alpha_1 - \rho_+ u_+) &= \rho_1 \delta_1 - \rho_+ u_+ v_+ \es \label{eq:isen-rhr2}\\
			\mu_1 (\rho_1 \beta_1 - \rho_+ v_+) &= \rho_1 \left(\frac{C_1}{2} - \gamma_1\right) - \rho_+ \big(v_+\big)^2 + p(\rho_1) - p(\rho_+)  \es \label{eq:isen-rhr3}
		\end{align}
		\item Subsolution condition:
		\begin{align}
			C_1 - \big(\alpha_1\big)^2 - \big(\beta_1\big)^2 &>0 \es \label{eq:isen-sc1}\\
			\bigg(\frac{C_1}{2} - \big(\alpha_1\big)^2 + \gamma_1\bigg) \bigg(\frac{C_1}{2} - \big(\beta_1\big)^2 - \gamma_1\bigg) - (\delta_1-\alpha_1 \beta_1)^2 &> 0 \es \label{eq:isen-sc2} 
		\end{align}
		\item Admissibility condition on the left interface: 
		\begin{align}
			&\mu_0 \left(\rho_- \frac{|\vu_-|^2}{2} + P(\rho_-) -\rho_1 \frac{C_1}{2}-P(\rho_1)\right) \notag\\
			&\leq\left(\rho_- \frac{|\vu_-|^2}{2} + P(\rho_-)+p(\rho_-)\right) v_- - \left(\rho_1 \frac{C_1}{2} + P(\rho_1)+p(\rho_1)\right) \beta_1 \es	\label{eq:isen-adml} 
		\end{align}
		\item Admissibility condition on the right interface: 
		\begin{align}
			&\mu_1 \bigg(\rho_1 \frac{C_1}{2} + P(\rho_1) - \rho_+ \frac{|\vu_+|^2}{2} -P(\rho_+) \bigg) \notag\\
			&\leq\left(\rho_1 \frac{C_1}{2} + P(\rho_1) + p(\rho_1)\right) \beta_1 - \left(\rho_+ \frac{|\vu_+|^2}{2} + P(\rho_+)+p(\rho_+)\right) v_+ \es \label{eq:isen-admr} 
		\end{align}
	\end{itemize}
	Then 
	\begin{align}
		\rho_1 &\ec \label{eq:isen-fansubs-defn-rho}\\
		\vm_1&:=\rho_1 \left(
		\begin{array}{c}
			\alpha_1 \\
			\beta_1
		\end{array}\right) \ec \label{eq:isen-fansubs-defn-m}\\
		\mU_1&:=\rho_1\left( 
		\begin{array}{cc}
			\gamma_1 & \delta_1 \\
			\delta_1 & -\gamma_1
		\end{array} \right) \ec \text{ and } \label{eq:isen-fansubs-defn-U} \\
		c_1 &:= \rho_1 \frac{C_1}{2} + p(\rho_1) \label{eq:isen-fansubs-defn-c}
	\end{align} 
	define an admissible fan subsolution to the initial value problem \eqref{eq:baro-euler-pv-dens}, \eqref{eq:baro-euler-pv-mom}, \eqref{eq:isentropic-EOS}, \eqref{eq:riemann-init-isen}, where the corresponding fan partition is determined by the speeds $\mu_0,\mu_1$.
\end{prop}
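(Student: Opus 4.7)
The plan is to verify one by one the four defining properties of an admissible fan subsolution (Definition~\ref{defn:isen-fansubs}), showing that each property corresponds to one block of algebraic conditions. Property~1 (piecewise constant structure on a fan partition) is built into the definitions \eqref{eq:isen-fansubs-defn-rho}--\eqref{eq:isen-fansubs-defn-c} together with the fan partition $\Gamma_-,\Gamma_1,\Gamma_+$ determined by \eqref{eq:isen-order}, so there is nothing to check there beyond pointing out that the values on $\Gamma_\pm$ coincide with the prescribed $(\rho_\pm,\vm_\pm,\mU_\pm,c_\pm)$ by construction.

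For the subsolution condition~2 I would compute directly, using $n=2$ and the definitions \eqref{eq:isen-fansubs-defn-rho}--\eqref{eq:isen-fansubs-defn-c}, that
\[
 c_1\id - \left(\frac{\vm_1\otimes\vm_1}{\rho_1}+p(\rho_1)\id-\mU_1\right)
 \;=\; \rho_1 \begin{pmatrix}\tfrac{C_1}{2}-\alpha_1^2+\gamma_1 & -(\alpha_1\beta_1-\delta_1)\\ -(\alpha_1\beta_1-\delta_1) & \tfrac{C_1}{2}-\beta_1^2-\gamma_1\end{pmatrix}.
\]
The bound $e(\rho_1,\vm_1,\mU_1)<c_1$ is equivalent (using $n=2$ and the definition of $e$) to this symmetric $2\times 2$ matrix being positive definite, which by Sylvester's criterion amounts to positivity of its trace and its determinant; these are exactly \eqref{eq:isen-sc1} and \eqref{eq:isen-sc2}.

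For property~3 (the weak formulations \eqref{eq:isen-fansubs-dens}, \eqref{eq:isen-fansubs-mom}) I would argue as in the standard derivation of the Rankine--Hugoniot conditions: since $\ov\rho,\ov\vm,\ov\mU,\ov c$ are piecewise constant on $\Gamma_-\cup\Gamma_1\cup\Gamma_+$, applying the Divergence Theorem on each of these three regions and collecting the resulting boundary terms reduces \eqref{eq:isen-fansubs-dens} and \eqref{eq:isen-fansubs-mom} to a vanishing initial-time trace (automatic, since $\ov\rho(0,\cdot)=\rho_\init$ and $\ov\vm(0,\cdot)=\rho_\init\vu_\init$ by definition) together with jump conditions across the two interfaces $y=\mu_0 t$ and $y=\mu_1 t$. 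The unit outer normal to $\Gamma_-$ along $y=\mu_0 t$ is proportional to $(-\mu_0,0,1)^\trans$ in $(t,x,y)$-coordinates, so the jump of $\ov\rho n_t+\ov\vm\cdot \vn_\vx$ across the left interface reads $\mu_0(\rho_--\rho_1)-(\rho_-v_--\rho_1\beta_1)=0$, which is exactly \eqref{eq:isen-rhl1}. The same computation with the momentum flux matrix $\ov\mU+\ov c\,\id$, which in $\Gamma_1$ has second column $\rho_1(\delta_1,C_1/2-\gamma_1)^\trans+(0,p(\rho_1))^\trans$, yields \eqref{eq:isen-rhl2}--\eqref{eq:isen-rhl3}; the analogous computation at $y=\mu_1 t$ yields \eqref{eq:isen-rhr1}--\eqref{eq:isen-rhr3}.

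Property~4 (the energy inequality \eqref{eq:isen-fansubs-adm}) is handled by the same Divergence Theorem argument applied to the energy flux $(\ov c-p(\ov\rho)+P(\ov\rho),(\ov c+P(\ov\rho))\ov\vm/\ov\rho)$ acting on a non-negative test function; since this test function is arbitrary, the result is equivalent to a sign condition on the jump across each of the two interfaces, which after substituting the definitions \eqref{eq:isen-fansubs-defn-rho}--\eqref{eq:isen-fansubs-defn-c} coincides with \eqref{eq:isen-adml} and \eqref{eq:isen-admr} respectively. The only subtlety I expect is bookkeeping of signs of the normal components (outward vs.\ inward) across $y=\mu_0 t$ and $y=\mu_1 t$, so that the Rankine--Hugoniot relations come out with the correct orientation to match \eqref{eq:isen-rhl1}--\eqref{eq:isen-admr} as stated; apart from this, each step is a direct substitution, and the principal conceptual content is the positive-definiteness translation \eqref{eq:isen-sc1}--\eqref{eq:isen-sc2} $\Leftrightarrow$ $e(\rho_1,\vm_1,\mU_1)<c_1$ carried out above.
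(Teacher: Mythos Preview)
Your proposal is correct and follows essentially the same approach as the paper: reduce the weak formulations \eqref{eq:isen-fansubs-dens}--\eqref{eq:isen-fansubs-adm} to Rankine--Hugoniot jump conditions via the Divergence Theorem on each piece of the fan partition, and verify \eqref{eq:isen-fansubs-subs} by checking that the symmetric $2\times 2$ matrix $c_1\id - \big(\frac{\vm_1\otimes\vm_1}{\rho_1}+p(\rho_1)\id-\mU_1\big)$ has positive trace and determinant, which is precisely what the paper does via $\sum_i(c_1-\lambda_i)>0$ and $\prod_i(c_1-\lambda_i)>0$. One small terminological quibble: positivity of trace and determinant is not Sylvester's criterion (which uses leading principal minors), but for a symmetric $2\times 2$ matrix it is an equivalent characterization of positive definiteness, so the substance is fine.
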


\begin{proof} 
	Because of \eqref{eq:isen-order}, the numbers $\mu_0,\mu_1$ determine a fan partition. 
	
	Since an admissible fan subsolution $(\ov{\rho},\ov{\vm},\ov{\mU},\ov{c})$ consists of piecewise constant functions, equations \eqref{eq:isen-fansubs-dens} and \eqref{eq:isen-fansubs-mom} are equivalent to their corresponding Rankine-Hugoniot equations\footnote{For more details on \emph{Rankine-Hugoniot conditions} we refer to standard textbooks on hyperbolic conservation laws, e.g. \name{Dafermos} \cite{Dafermos}. The principal ideal in order to derive the algebraic equations \eqref{eq:isen-rhl1-temp} - \eqref{eq:isen-rhr3-temp} is to apply the Divergence Theorem in \eqref{eq:isen-fansubs-dens} and \eqref{eq:isen-fansubs-mom} on each $\Gamma_i$ with $i=-,1,+$. This procedure yields boundary terms which hold if and only if \eqref{eq:isen-rhl1-temp} - \eqref{eq:isen-rhr3-temp} are satisfied.}, i.e.
	\begin{align}
		\mu_0 (\rho_- - \rho_1) &= \big[ \vm_- - \vm_1\big]_2 \ec \label{eq:isen-rhl1-temp} \\
		\mu_0 \big[ \vm_- - \vm_1\big]_1 &= \big[ \mU_- - \mU_1 \big]_{12} \label{eq:isen-rhl2-temp}\ec\\
		\mu_0 \big[ \vm_- - \vm_1\big]_2 &= \big[ \mU_- - \mU_1 \big]_{22} + c_- - c_1 \label{eq:isen-rhl3-temp}
	\end{align} 
	on the left interface, and 
	\begin{align}
		\mu_1 (\rho_1 - \rho_+) &= \big[ \vm_1 - \vm_+\big]_2 \ec \label{eq:isen-rhr1-temp}\\
		\mu_1 \big[ \vm_1 - \vm_+\big]_1 &= \big[ \mU_1 - \mU_+ \big]_{12} \ec \label{eq:isen-rhr2-temp}\\
		\mu_1 \big[ \vm_1 - \vm_+\big]_2 &= \big[ \mU_1 - \mU_+ \big]_{22} + c_1 - c_+ \label{eq:isen-rhr3-temp}
	\end{align}
	on the right interface. Analogously we obtain that \eqref{eq:isen-fansubs-adm} is equivalent to the admissibility conditions
	\begin{align} 
		&\mu_0\Big( c_- - p(\rho_-) + P(\rho_-) - c_1 + p(\rho_1) - P(\rho_1) \Big) \notag \\
		&\leq \Big(c_- + P(\rho_-) \Big) \frac{[\vm_-]_2}{\rho_-} - \Big(c_1 + P(\rho_1) \Big) \frac{[\vm_1]_2}{\rho_1} \label{eq:isen-adml-temp}
	\end{align}
	on the left interface, and 
	\begin{align} 
		&\mu_1\Big( c_1 - p(\rho_1) + P(\rho_1) - c_+ + p(\rho_+) - P(\rho_+) \Big) \notag \\
		&\leq \Big(c_1 + P(\rho_1) \Big) \frac{[\vm_1]_2}{\rho_1} - \Big(c_+ + P(\rho_+) \Big) \frac{[\vm_+]_2}{\rho_+} \label{eq:isen-admr-temp}
	\end{align}
	on the right interface. 
	
	In order to show \eqref{eq:isen-fansubs-dens} - \eqref{eq:isen-fansubs-adm}, it suffices to prove \eqref{eq:isen-rhl1-temp} - \eqref{eq:isen-admr-temp}. Indeed using \eqref{eq:isen-fansubs-defn-rho} - \eqref{eq:isen-fansubs-defn-c}, the equations \eqref{eq:isen-rhl1-temp} - \eqref{eq:isen-admr-temp} turn into \eqref{eq:isen-rhl1} - \eqref{eq:isen-rhr3} and \eqref{eq:isen-adml}, \eqref{eq:isen-admr} which shows that \eqref{eq:isen-rhl1-temp} - \eqref{eq:isen-admr-temp} hold.
	
	It remains to show \eqref{eq:isen-fansubs-subs}. To this end we must prove that both eigenvalues $\lambda_1,\lambda_2$ of the matrix
	\begin{equation} \label{eq:i4-temp-riemann}
		\frac{\vm_1\otimes\vm_1}{\rho_1} + p(\rho_1)\id -\mU_1 = \rho_1 \left( 
		\begin{array}{cc}
			\big(\alpha_1\big)^2 -\gamma_1 & \alpha_1 \beta_1 - \delta_1 \\
			\alpha_1 \beta_1 - \delta_1 & \big(\beta_1\big)^2 + \gamma_1
		\end{array} \right) + p(\rho_1)\id 
	\end{equation}
	are smaller than $c_1$. In other words we need to show that 
	\begin{align*}
		c_1 - \lambda_1 &> 0 \qquad \text{ and }\\
		c_1 - \lambda_2 &> 0 \ec
	\end{align*}
	which is equivalent to 
	\begin{align}
		2c_1 - (\lambda_1 + \lambda_2) = (c_1 - \lambda_1) + (c_1 - \lambda_2) &> 0 \qquad \text{ and } \label{eq:i5-temp-riemann} \\
		c_1^2 - c_1 (\lambda_1 + \lambda_2) + \lambda_1\lambda_2 = (c_1 - \lambda_1) (c_1 - \lambda_2) &> 0 \ed \label{eq:i6-temp-riemann}
	\end{align}
	With \eqref{eq:i4-temp-riemann} we obtain
	\begin{align*}
		\lambda_1+\lambda_2 &= \tr \left(\frac{\vm_1\otimes\vm_1}{\rho_1} + p(\rho_1)\id -\mU_1\right) = \rho_1\big((\alpha_1)^2 + (\beta_1)^2 \big) + 2p(\rho_1) \qquad \text{ and } \\
		\lambda_1 \lambda_2 &= \det \left(\frac{\vm_1\otimes\vm_1}{\rho_1} + p(\rho_1)\id -\mU_1\right) \\
		&= \Big( \rho_1 \big((\alpha_1)^2 - \gamma_1\big) +p(\rho_1) \Big) \Big( \rho_1 \big((\beta_1)^2 + \gamma_1\big) +p(\rho_1) \Big) - \rho_1^2 (\alpha_1\beta_1 - \delta_1)^2 \ed
	\end{align*}
	Plugging this and \eqref{eq:isen-fansubs-defn-c} into \eqref{eq:i5-temp-riemann} and \eqref{eq:i6-temp-riemann}, we observe that \eqref{eq:isen-fansubs-subs} is true as soon as \eqref{eq:isen-sc1} and \eqref{eq:isen-sc2} are satisfied.

	Thus $(\rho_1,\vm_1,\mU_1,c_1)$ indeed define an admissible fan subsolution.
\end{proof}

\begin{rem} 
	The converse of Proposition~\ref{prop:isen-alg-eq} is also true: If there exists an admissible fan subsolution to \eqref{eq:baro-euler-pv-dens}, \eqref{eq:baro-euler-pv-mom}, \eqref{eq:isentropic-EOS}, \eqref{eq:riemann-init-isen} then $\alpha_1,\beta_1,\gamma_1,\delta_1\in \R$ and $C_1\in\R^+$, which are uniquely determined by \eqref{eq:isen-fansubs-defn-m} - \eqref{eq:isen-fansubs-defn-c}, satisfy \eqref{eq:isen-rhl1} - \eqref{eq:isen-admr}. In other words: If there are no numbers $\mu_0,\mu_1\in \R$, $\rho_1\in \R^+$, $\alpha_1,\beta_1,\gamma_1,\delta_1\in \R$ and $C_1\in\R^+$ that fulfill \eqref{eq:isen-order} - \eqref{eq:isen-admr}, then there is no admissible fan subsolution. 
\end{rem}

\begin{rem}
	In contrast to our definition of an admissible fan subsolution (Definition~\ref{defn:isen-fansubs}) and Theorem~\ref{thm:isen-condition}, where we use a notation that is suitable for the application of Theorems \ref{thm:convint} and \ref{thm:convint-nodens}, we now switch to the notation used in literature, e.g. in \cite{ChiDelKre15}, \cite{ChiKre14} and \cite{KliMar18_1}. This is also the reason for replacing small $c_1$ by capital $C_1$ via \eqref{eq:isen-fansubs-defn-c}.
\end{rem}

\begin{rem} 
	The index $1$ of $\alpha,\beta,$ etc. indicates to which set $\Gamma_i$ of the fan partition the quantity corresponds. Since there is only $\Gamma_1$, see Definition~\ref{defn:isen-fanpart}, only the index $1$ appears. Hence one could drop it for convenience. However we decided to keep it for consistency. Later when we consider the full Euler system, we need slightly different fan partitions, which contain two wedges $\Gamma_1$ and $\Gamma_2$, see Definition~\ref{defn:full-fanpart}. Then the index becomes essential.
\end{rem}

\begin{rem}
	Once more we would like to emphasize that 
	\begin{itemize}
		\item the consideration of fan subsolutions (Definition~\ref{defn:isen-fansubs}), 
		\item the fact that existence of infinitely many solutions reduces to finding a fan subsolution (Theorem~\ref{thm:isen-condition}) and 
		\item the reformulation of Definition~\ref{defn:isen-fansubs} as a set of algebraic equations and inequalities (Proposition~\ref{prop:isen-alg-eq}), 
	\end{itemize}
	are not only used in the case where the self-similar solution consists of one shock and one rarefaction, see also Subsection~\ref{subsec:riemann-isen-other-cases}. 
\end{rem}

\subsubsection{Simplification of the Algebraic System}

Now we start with the actual proof of Theorem~\ref{thm:isen-SR} as we have discussed the preliminaries. So let the assumptions of Theorem~\ref{thm:isen-SR}, i.e. equations \eqref{eq:1S3R.a} - \eqref{eq:1S3R.c}, be satisfied. The following can be also found in \name{Klingenberg}-\name{Markfelder} \cite{KliMar18_1}.

The proof of Theorem~\ref{thm:isen-SR} requires some inequalities which we summarize in the following Lemma. 

\begin{lemma} \label{lemma:isen-inequalities} 
	\begin{enumerate}
		\item \label{item:isen-inequ.a} For all $\rho_-\neq\rho_+$, $\rho_\pm>0$ it holds that 
		\begin{equation*}
		p(\rho_-) + p(\rho_+) - 2 \frac{\rho_- P(\rho_+) - \rho_+ P(\rho_-)}{\rho_+ - \rho_-} > 0 \ed
		\end{equation*}
		
		\item \label{item:isen-inequ.b} For all $\rho_- < \rho_+$ it holds that 
		\begin{equation*}
		\int_{\rho_-}^{\rho_+}\frac{\sqrt{p'(r)}}{r} \dr < \sqrt{\frac{(\rho_- - \rho_+) \big(p(\rho_-)-p(\rho_+)\big)}{\rho_- \rho_+}} \ed
		\end{equation*} 
		
		\item \label{item:isen-inequ.c} For all $\rho_- < \rho_M < \rho_+$ it holds that 
		\begin{equation*}
		\sqrt{\frac{\big(\rho_M - \rho_-\big) \big(p(\rho_M)-p(\rho_-)\big)}{\rho_- \rho_M}} < \sqrt{\frac{\big(\rho_+ - \rho_-\big) \big(p(\rho_+)-p(\rho_-)\big)}{\rho_- \rho_+}} \ed
		\end{equation*} 
	\end{enumerate}
\end{lemma}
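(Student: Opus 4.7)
\textbf{Part (a).} The expression is symmetric under the swap $\rho_-\leftrightarrow\rho_+$, so I may assume without loss of generality that $\rho_-<\rho_+$. The plan is to multiply through by $\rho_+-\rho_-$ and treat the resulting quantity as a function of $\rho_+$ with $\rho_-$ fixed; that is, set
\[
g(\rho_+):=(\rho_+-\rho_-)\bigl(p(\rho_-)+p(\rho_+)\bigr)-2\bigl(\rho_- P(\rho_+)-\rho_+ P(\rho_-)\bigr).
\]
I would then verify $g(\rho_-)=0$ by direct substitution, compute $g'$ using $P'(\rho)=(P(\rho)+p(\rho))/\rho$ (which follows from \eqref{eq:pressure-potential}), and check that $g'(\rho_-)=0$ after the algebraic cancellations. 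Finally, using $P''(\rho)=p'(\rho)/\rho$, I would obtain
\[
g''(\rho_+)=(\rho_+-\rho_-)\Bigl(\tfrac{2\,p'(\rho_+)}{\rho_+}+p''(\rho_+)\Bigr),
\]
which is strictly positive for $\rho_+>\rho_-$ because $p'>0$ and $p''\geq 0$ by convexity of $p$. The two vanishing initial conditions together with $g''>0$ give $g(\rho_+)>0$, which is the claim after dividing by $\rho_+-\rho_-$.

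\textbf{Part (b).} This should follow from the Cauchy--Schwarz inequality in the form
\[
\int_{\rho_-}^{\rho_+}\sqrt{p'(r)}\cdot\tfrac{1}{r}\dr \leq \Bigl(\int_{\rho_-}^{\rho_+}p'(r)\dr\Bigr)^{1/2}\Bigl(\int_{\rho_-}^{\rho_+}\tfrac{1}{r^2}\dr\Bigr)^{1/2}=\sqrt{\tfrac{(\rho_+-\rho_-)(p(\rho_+)-p(\rho_-))}{\rho_-\rho_+}}.
\]
To promote this to a strict inequality, I would rule out the Cauchy--Schwarz equality case: equality on $[\rho_-,\rho_+]$ forces $p'(r)=\lambda^2/r^2$ throughout, but then $p'$ would be strictly decreasing, contradicting the fact that $p'$ is nondecreasing (since $p$ is convex).

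\textbf{Part (c).} After squaring and multiplying by $\rho_-$, the inequality reduces to
\[
\Bigl(1-\tfrac{\rho_-}{\rho_M}\Bigr)\bigl(p(\rho_M)-p(\rho_-)\bigr)<\Bigl(1-\tfrac{\rho_-}{\rho_+}\Bigr)\bigl(p(\rho_+)-p(\rho_-)\bigr).
\]
Both sides are products of two strictly positive factors (using $\rho_-<\rho_M<\rho_+$ for positivity and $p$ strictly increasing), and each factor on the left is strictly smaller than the corresponding factor on the right, so the conclusion is immediate.

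The main obstacle is part (a): identifying the auxiliary function $g$ and carefully exploiting the two identities $\rho P'(\rho)=P(\rho)+p(\rho)$ and $P''(\rho)=p'(\rho)/\rho$ to obtain clean vanishing of $g(\rho_-)$ and $g'(\rho_-)$, so that the sign of $g''$ alone drives the conclusion. Parts (b) and (c) are routine once (a) is in hand.
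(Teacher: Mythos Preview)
Your proof is correct in all three parts. The paper itself does not give a proof but simply refers to the literature (\name{Chiodaroli}-\name{Kreml} for \ref{item:isen-inequ.a}, \name{Klingenberg}-\name{Markfelder} for \ref{item:isen-inequ.b} and \ref{item:isen-inequ.c}), so your self-contained argument is strictly more informative than what appears in the text. A few minor remarks: in part~\ref{item:isen-inequ.a} your computation of $g''$ is exactly right, and the strict positivity follows already from $p'>0$ alone (so you do not even need $p''\geq 0$, though it does no harm); in part~\ref{item:isen-inequ.b} the Cauchy--Schwarz argument is clean and the equality-case analysis is sound (you should also exclude $\lambda=0$, which contradicts $p'>0$, but this is implicit); part~\ref{item:isen-inequ.c} is indeed the most elementary of the three and your factorisation handles it completely.
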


\begin{proof}
	For the proof of Lemma~\ref{lemma:isen-inequalities} we refer to the literature: \ref{item:isen-inequ.a} can be found in \name{Chiodaroli}-\name{Kreml} \cite[Lemma 2.1]{ChiKre14}, whereas \ref{item:isen-inequ.b} and \ref{item:isen-inequ.c} are proven by \name{Klingenberg} and the author \cite[Lemmas 4.2 and 4.3]{KliMar18_1}. 
\end{proof}

Next we rearrange the equations in inequalities in Proposition~\ref{prop:isen-alg-eq}. Since there are 6 equations (\eqref{eq:isen-rhl1} - \eqref{eq:isen-rhr3}) for 8 unknowns, we choose two unknowns as parameters and express the other 6 quantities as functions of those parameters via equations \eqref{eq:isen-rhl1} - \eqref{eq:isen-rhr3}. 

Define the functions $\beta_1,\ep_1:(\rho_-,\rho_+)\to \R$ by 
\begin{align}
	\beta_1(\rho_1) &:= \frac{1}{\rho_1 (\rho_- - \rho_+)} \Bigg(-\rho_- v_- (\rho_+ - \rho_1) - \rho_+ v_+ (\rho_1 - \rho_-) \label{eq:SRdefnv12}\\
	&\quad + \sqrt{\Big[(\rho_- - \rho_+) \big(p(\rho_-) - p(\rho_+)\big) - \rho_+ \rho_- (v_- - v_+)^2\Big] (\rho_1 - \rho_-) (\rho_+ - \rho_1)}\Bigg) \ec \notag \\
	\ep_1(\rho_1) &:= -\frac{p(\rho_1) - p(\rho_-)}{\rho_1} + \frac{\rho_- (\rho_1 - \rho_-)}{\rho_1^2 (\rho_- - \rho_+)^2} \Bigg(\rho_+ (v_- - v_+) \label{eq:SRdefndelta1} \\
	&\quad + \sqrt{\Big[(\rho_- - \rho_+) \big(p(\rho_-) - p(\rho_+)\big) - \rho_+ \rho_- (v_- - v_+)^2\Big] \frac{\rho_+ - \rho_1}{\rho_1 - \rho_-}}\Bigg)^2 \ed \notag
\end{align}

Note that these functions are well-defined for initial states $(\rho_\pm,\vu_\pm)$ fulfilling \eqref{eq:1S3R.a} - \eqref{eq:1S3R.c} and for $\rho_- < \rho_1 < \rho_+$. Indeed \eqref{eq:1S3R.a}, \eqref{eq:1S3R.c} and Lemma~\ref{lemma:isen-inequalities} \ref{item:isen-inequ.b} imply 
$$
	\frac{(\rho_- - \rho_+)\big(p(\rho_-) - p(\rho_+)\big)}{\rho_- \rho_+} > (v_- - v_+)^2
$$
which is equivalent to 
$$
	(\rho_- - \rho_+) \big(p(\rho_-) - p(\rho_+)\big) - \rho_+ \rho_- (v_- - v_+)^2 > 0\ed
$$
Hence the square roots in \eqref{eq:SRdefnv12} and \eqref{eq:SRdefndelta1} are well-defined.

With the help of the functions $\beta_1$ and $\ep_1$ we are able to define numbers $\mu_0,\mu_1\in\R$, $\rho_1\in\R^+$, $\alpha_1,\beta_1,\gamma_1,\delta_1\in \R$ and $C_1 \in \R^+$ as required by Propostion \ref{prop:isen-alg-eq}:

\begin{prop} \label{prop:SRsubs} 
	Assume that there exist numbers $\rho_1,\til{\ep}_1\in\R^+$ that fulfill the following inequalities:
	\begin{align} 
		\rho_- &< \rho_1 < \rho_+\es \label{eq:SRcond1} \\
		0 &<\ep_1(\rho_1) \es \label{eq:SRcond2} 
	\end{align} 
	\begin{align}
		&(\beta_1(\rho_1)-v_-) \bigg(p(\rho_-)+p(\rho_1)-2  \frac{\rho_1 P(\rho_-)-\rho_- P(\rho_1)}{\rho_- -\rho_1}\bigg) \notag \\
		&\leq\ep_1(\rho_1) \rho_1 (\beta_1(\rho_1)+v_-)-(\ep_1(\rho_1)+\til{\ep}_1) \frac{\rho_- \rho_1 (\beta_1(\rho_1)-v_-)}{\rho_- -\rho_1} \es \label{eq:SRcond3} \\
		&(v_+-\beta_1(\rho_1)) \bigg(p(\rho_1)+p(\rho_+)-2 \frac{\rho_+ P(\rho_1)-\rho_1 P(\rho_+)}{\rho_1 -\rho_+}\bigg) \notag \\
		&\leq-\ep_1(\rho_1) \rho_1 (v_++\beta_1(\rho_1))+(\ep_1(\rho_1)+\til{\ep}_1) \frac{\rho_1 \rho_+ (v_+-\beta_1(\rho_1))}{\rho_1 -\rho_+} \ed \label{eq:SRcond4}
	\end{align}
	Then 
	\begin{align}
		\mu_0 &:= \frac{\rho_- v_- - \rho_+ v_+}{\rho_- - \rho_+} \notag \\
		&\qquad + \frac{1}{\rho_- - \rho_+} \sqrt{\Big[(\rho_- - \rho_+) \big(p(\rho_-) - p(\rho_+)\big) - \rho_+ \rho_- (v_- - v_+)^2\Big] \frac{\rho_+ - \rho_1}{\rho_1 - \rho_-}} \ec \notag \\
		\mu_1 &:= \frac{\rho_- v_- - \rho_+ v_+}{\rho_- - \rho_+} \notag \\
		&\qquad - \frac{1}{\rho_- - \rho_+} \sqrt{\Big[(\rho_- - \rho_+) \big(p(\rho_-) - p(\rho_+)\big) - \rho_+ \rho_- (v_- - v_+)^2\Big] \frac{\rho_1 - \rho_-}{\rho_+ - \rho_1}} \ec \notag \\
		\rho_1 & \ec \notag \\
		\alpha_1 &:= u_- \ec \notag \\ 
		\beta_1 &:= \beta_1(\rho_1) \ec \notag \\
		\gamma_1 &:= \half \Big(\til{\ep}_1 - \ep_1(\rho_1) + \big(u_-\big)^2- \beta_1(\rho_1)^2\Big)\ec \label{eq:defn-gamma-temp} \\ 
		\delta_1 &:= u_- \beta_1(\rho_1) \ec \notag \\ 
		C_1 &:=\til{\ep}_1 + \ep_1(\rho_1) + \big(u_-\big)^2 + \beta_1(\rho_1)^2 \label{eq:defn-C-temp}
	\end{align} 
	satisfy \eqref{eq:isen-order} - \eqref{eq:isen-admr}.
\end{prop}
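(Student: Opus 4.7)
The proof will consist entirely of direct verification: substitute the definitions of $\mu_0,\mu_1,\rho_1,\alpha_1,\beta_1,\gamma_1,\delta_1,C_1$ into each of the eleven conditions \eqref{eq:isen-order}--\eqref{eq:isen-admr} and check them, using the three input hypotheses \eqref{eq:SRcond1}--\eqref{eq:SRcond4} together with \eqref{eq:1S3R.a}--\eqref{eq:1S3R.c} and Lemma~\ref{lemma:isen-inequalities}~\ref{item:isen-inequ.b}. For brevity I will write $A := (\rho_--\rho_+)\big(p(\rho_-)-p(\rho_+)\big)-\rho_+\rho_-(v_--v_+)^2$; note that $A>0$ because Lemma~\ref{lemma:isen-inequalities}~\ref{item:isen-inequ.b} combined with the right inequality of \eqref{eq:1S3R.c} gives $(v_+-v_-)^2 < \frac{(\rho_--\rho_+)(p(\rho_-)-p(\rho_+))}{\rho_-\rho_+}$.

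First I would handle the order of speeds and the two mass-Rankine--Hugoniot conditions together. Subtracting the defining formulas of $\mu_0$ and $\mu_1$ yields
\[
\mu_0-\mu_1 \;=\; \frac{1}{\rho_- - \rho_+}\left(\sqrt{A\,\tfrac{\rho_+-\rho_1}{\rho_1-\rho_-}}+\sqrt{A\,\tfrac{\rho_1-\rho_-}{\rho_+-\rho_1}}\right),
\]
and since $\rho_-<\rho_+$ by \eqref{eq:1S3R.a} while the bracket is positive by \eqref{eq:SRcond1} and $A>0$, \eqref{eq:isen-order} follows. For \eqref{eq:isen-rhl1} and \eqref{eq:isen-rhr1}, one rearranges to solve for $\beta_1$, squares and eliminates $\mu_0$ (respectively $\mu_1$); the resulting formulas for $\rho_1\beta_1$ are exactly \eqref{eq:SRdefnv12}, confirming the two equations. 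The tangential conditions \eqref{eq:isen-rhl2} and \eqref{eq:isen-rhr2} are then immediate: using $\alpha_1=u_-=u_+$ and $\delta_1=u_-\beta_1$, both sides factor as $u_-$ times the mass conditions just verified.

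The most laborious step, and the main obstacle of the proof, is verifying the normal-momentum Rankine--Hugoniot conditions \eqref{eq:isen-rhl3} and \eqref{eq:isen-rhr3}. Using the mass conservation \eqref{eq:isen-rhl1} one rewrites the left-hand side of \eqref{eq:isen-rhl3} as $\mu_0^2(\rho_--\rho_1)$, and by direct evaluation of $\mu_0^2$ from its defining formula one reduces \eqref{eq:isen-rhl3} to the identity $\rho_1(\tfrac{C_1}{2}-\gamma_1-\beta_1^2)=p(\rho_-)-p(\rho_1)+\rho_1\ep_1(\rho_1)\cdot\tfrac{\rho_-(\rho_1-\rho_-)}{\rho_1^2(\rho_--\rho_+)^2}[\,\cdot\,]$ which matches \eqref{eq:SRdefndelta1} after noting the algebraic identity $\tfrac{C_1}{2}-\gamma_1=\ep_1(\rho_1)+\beta_1^2$ obtained from \eqref{eq:defn-gamma-temp}, \eqref{eq:defn-C-temp}. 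The condition \eqref{eq:isen-rhr3} on the right interface is handled symmetrically. These are precisely the equations that motivated the construction of the functions $\beta_1(\rho_1)$ and $\ep_1(\rho_1)$ in \eqref{eq:SRdefnv12}--\eqref{eq:SRdefndelta1}, so the verification is tedious but purely mechanical.

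Finally, the subsolution and admissibility conditions are short. From \eqref{eq:defn-gamma-temp}, \eqref{eq:defn-C-temp} one computes
\[
C_1-\alpha_1^2-\beta_1^2=\til{\ep}_1+\ep_1(\rho_1),\qquad \tfrac{C_1}{2}-\alpha_1^2+\gamma_1=\til{\ep}_1,\qquad \tfrac{C_1}{2}-\beta_1^2-\gamma_1=\ep_1(\rho_1),
\]
while $\delta_1-\alpha_1\beta_1=0$. Hence \eqref{eq:isen-sc1} reduces to $\til{\ep}_1+\ep_1(\rho_1)>0$ and \eqref{eq:isen-sc2} reduces to $\til{\ep}_1\,\ep_1(\rho_1)>0$, both granted by $\til{\ep}_1>0$ and \eqref{eq:SRcond2}. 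For the admissibility inequalities \eqref{eq:isen-adml}, \eqref{eq:isen-admr}, substituting $\rho_1 C_1=\rho_1\til{\ep}_1+\rho_1\ep_1(\rho_1)+\rho_1 u_-^2+\rho_1\beta_1^2$, using $u_-^2+v_-^2=|\vu_-|^2$ and $u_-=u_+$, eliminating $\mu_0$ and $\mu_1$ via the already-established mass conditions, and collecting terms produces exactly \eqref{eq:SRcond3} and \eqref{eq:SRcond4}. This completes the verification.
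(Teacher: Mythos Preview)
Your proposal is correct and follows exactly the approach the paper indicates: the paper's own proof simply states that it ``is a matter of straightforward calculation, where one has to recall that by \eqref{eq:1S3R.b} $u_-=u_+$'' and refers to \cite[Theorem 5.2]{KliMar18_1} for details. You have supplied precisely the outline of that calculation---verifying each of the eleven conditions by direct substitution, with the key algebraic identities $\tfrac{C_1}{2}-\gamma_1=\ep_1(\rho_1)+\beta_1^2$, $\tfrac{C_1}{2}-\alpha_1^2+\gamma_1=\til{\ep}_1$, and $\delta_1-\alpha_1\beta_1=0$ doing the work for the subsolution conditions, and the definitions \eqref{eq:SRdefnv12}, \eqref{eq:SRdefndelta1} being tailored so that the Rankine--Hugoniot conditions hold.
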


\begin{proof}
	The proof of Proposition~\ref{prop:SRsubs} is a matter of straightforward calculation, where one has to recall that by \eqref{eq:1S3R.b} $u_-=u_+$. See also \cite[Theorem 5.2]{KliMar18_1} for more details.
\end{proof}

\begin{rem} 
	The converse of Proposition~\ref{prop:SRsubs} is also true, see \cite[Theorem 5.2]{KliMar18_1}: If there are no numbers $\rho_1,\til{\ep}_1\in\R^+$ that fulfill \eqref{eq:SRcond1} - \eqref{eq:SRcond4}, then there are no numbers $\mu_0,\mu_1\in\R$, $\rho_1\in\R^+$, $\alpha_1,\beta_1,\gamma_1,\delta_1\in \R$ and $C_1 \in \R^+$ which satisfy \eqref{eq:isen-order} - \eqref{eq:isen-admr}. Hence in this case there is no admissible fan subsolution, see the remark below Proposition~\ref{prop:isen-alg-eq}. In fact there are initial states for which the self-similar solution consists of a 1-shock and a 3-rarefaction, such that there are no numbers as required by Proposition~\ref{prop:SRsubs}, see the master thesis of the author \cite[Example 5.9]{MarkfelderMaster} for an example. This means that there is no admissible fan subsolution. However this does not mean that the self-similar solution is the only solution. Indeed we can still prove existence of infinitely many solutions. The idea is to work with an auxiliary state, see below.
\end{rem}

\subsubsection{Solution of the Algebraic System if the Rarefaction is ``Small''}

Let us now prove that there exist numbers $\mu_0,\mu_1\in\R$, $\rho_1\in\R^+$, $\alpha_1,\beta_1,\gamma_1,\delta_1\in \R$ and $C_1 \in \R^+$ as required by Proposition~\ref{prop:SRsubs}, i.e. fulfilling \eqref{eq:SRcond1} - \eqref{eq:SRcond4}, as long as the rarefaction is ``small''. This fact has also been shown independently by \name{Chiodaroli} and \name{Kreml} \cite{ChiKre18}.

For convenience we use from now on the notation $\sO:=\R^+\times\R$ for the phase space and $\vU:=(\rho,v)\in\sO$ for a state. Note that there is a small difference between the notation used here and in Section~\ref{sec:euler-barotropic}. Here $\vU$ is a state in primitive variables where as in \eqref{eq:baro-U} it is a state in conserved variables. Furthermore only the relevant component of the velocity, i.e. $v=[\vu]_2$, is contained in $\vU$ here. Analogously there is a difference between $\sO$ used here and in \eqref{eq:baro-O}.

The following ``smallness'' result will help to prove Theorem~\ref{thm:isen-SR}. We will forget about the given initial states $\vU_-=(\rho_-,v_-)$, $\vU_+=(\rho_+,v_+)$ for a moment.

\begin{prop} \label{prop:isen-SR-smallness}
	Let $\til{\vU}_-=(\til{\rho}_-,\til{v}_-)\in\sO$ be any given state and $\til{\vU}_M=(\til{\rho}_M,\til{v}_M)\in\sO$ a state that can be connected to $\til{\vU}_-$ by a 1-shock. Then the following statement is valid: If $\til{\vU}_+=(\til{\rho}_+,\til{v}_+)\in\sO$ is a state such that  
	\begin{itemize}
		\item $\til{\rho}_+>\til{\rho}_M$,
		\item $\til{\vU}_+$ is sufficiently close to $\til{\vU}_M$, and
		\item the self-similar solution to the initial value problem \eqref{eq:baro-euler-pv-dens}, \eqref{eq:baro-euler-pv-mom}, \eqref{eq:isentropic-EOS}, \eqref{eq:riemann-init-isen} with 
		$$
		\big(\til{\rho}_-,(u_-,\til{v}_-)^\trans\big)\quad \text{ and }\quad \big(\til{\rho}_+,(u_-,\til{v}_+)^\trans\big)
		$$ 
		as initial states consists of a 1-shock and a 3-rarefaction,
	\end{itemize}
	then there exists an admissible fan subsolution to the initial value problem \eqref{eq:baro-euler-pv-dens}, \eqref{eq:baro-euler-pv-mom}, \eqref{eq:isentropic-EOS}, \eqref{eq:riemann-init-isen} with $\big(\til{\rho}_-,(u_-,\til{v}_-)^\trans\big)$ and $\big(\til{\rho}_+,(u_-,\til{v}_+)^\trans\big)$ as initial states. In addition to that the density $\rho_1$ that appears in the admissible fan subsolution satisfies $\rho_1<\til{\rho}_M$.
\end{prop}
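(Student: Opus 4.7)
\medskip

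The plan is to reduce, via Proposition~\ref{prop:SRsubs}, the existence of an admissible fan subsolution to finding real numbers $\rho_1 \in (\tilde{\rho}_-, \tilde{\rho}_+)$ and $\tilde{\epsilon}_1 > 0$ satisfying the algebraic system \eqref{eq:SRcond1}--\eqref{eq:SRcond4}. I would carry this out by perturbing off the degenerate configuration $\tilde{\vU}_+ = \tilde{\vU}_M$, in which the self-similar solution collapses to a single 1-shock and $\rho_1 = \tilde{\rho}_M$ is the natural choice. The additional bound $\rho_1 < \tilde{\rho}_M$ required in the statement will then be automatic, since the perturbed $\rho_1$ will be chosen strictly below $\tilde{\rho}_M$.

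First I would analyse that degenerate limit. Plugging $\tilde{\vU}_+ = \tilde{\vU}_M$ and $\rho_1 = \tilde{\rho}_M$ into the definitions \eqref{eq:SRdefnv12}, \eqref{eq:SRdefndelta1} and using the Rankine--Hugoniot relation for the 1-shock from $\tilde{\vU}_-$ to $\tilde{\vU}_M$ gives $\beta_1(\tilde{\rho}_M) = \tilde{v}_M$ and $\epsilon_1(\tilde{\rho}_M) = 0$. Hence \eqref{eq:SRcond1} and \eqref{eq:SRcond2} degenerate only at the upper endpoint of the admissible $\rho_1$-interval, and strict inequalities are restored as soon as we pick $\rho_1$ slightly below $\tilde{\rho}_M$. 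For \eqref{eq:SRcond2}, an expansion of $\epsilon_1(\rho_1)$ near $\rho_1 = \tilde{\rho}_M$ (with $\tilde{\vU}_+$ at the limit) shows, using the convexity of $p$, that $\epsilon_1$ decreases away from $\tilde{\rho}_M$ only on one side, and a direct check identifies that side as $\rho_1 < \tilde{\rho}_M$, giving $\epsilon_1(\rho_1) > 0$ there.

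The admissibility condition \eqref{eq:SRcond3} on the left interface, evaluated at $\rho_1 = \tilde{\rho}_M$ and $\tilde{\vU}_+ = \tilde{\vU}_M$ (so that $\beta_1 = \tilde{v}_M$), is, up to the term involving $\tilde{\epsilon}_1$, precisely the entropy/Lax inequality for the 1-shock $\tilde{\vU}_- \to \tilde{\vU}_M$; this inequality is strict because the 1-shock is admissible and the system is genuinely nonlinear. The strict inequality therefore persists by continuity once $\tilde{\vU}_+$ is sufficiently close to $\tilde{\vU}_M$, once $\rho_1$ is taken sufficiently close to $\tilde{\rho}_M$, and once $\tilde{\epsilon}_1 > 0$ is chosen sufficiently small. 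I would then collect these three smallness conditions into a single neighbourhood of the degenerate configuration.

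The hard part will be condition \eqref{eq:SRcond4} on the right interface. At the degenerate limit it reduces to $0 \leq 0$, so continuity alone is not enough: I have to compare the orders of vanishing of both sides as $\tilde{\vU}_+$ approaches $\tilde{\vU}_M$ and as $\rho_1$ approaches $\tilde{\rho}_M$. The plan is to introduce a small parameter $\sigma$ measuring $|\tilde{\vU}_+ - \tilde{\vU}_M|$ along the 3-rarefaction curve and to expand both sides of \eqref{eq:SRcond4} to leading order in $\sigma$, treating $\rho_1 = \tilde{\rho}_M - O(\sigma)$ and $\tilde{\epsilon}_1$ small. Writing $v_+ - \beta_1(\rho_1) = O(\sigma)$ and noting that the parenthesised pressure-potential quantity on the left of \eqref{eq:SRcond4} also has a definite sign (by Lemma~\ref{lemma:isen-inequalities}~\ref{item:isen-inequ.a}), one compares the two sides and shows that for $\sigma$ sufficiently small, the right-hand side dominates as soon as one adds a small but strictly positive $\tilde{\epsilon}_1$. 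This Taylor analysis, which is the technical crux and where the ``smallness of the rarefaction'' is used essentially, produces the desired $\rho_1$ and $\tilde{\epsilon}_1$ and completes the proof via Proposition~\ref{prop:SRsubs}.
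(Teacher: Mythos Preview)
Your overall strategy is right and matches the paper's: reduce via Proposition~\ref{prop:SRsubs} to \eqref{eq:SRcond1}--\eqref{eq:SRcond4}, analyse the degenerate configuration $\til{\vU}_+=\til{\vU}_M$, and then perturb. Your treatment of \eqref{eq:SRcond3} is also morally correct: at the degenerate limit it becomes the strict admissibility of the $1$-shock (in the paper this is verified via Lemma~\ref{lemma:isen-inequalities}~\ref{item:isen-inequ.a}).

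There is, however, a genuine gap in your handling of \eqref{eq:SRcond4}. You commit to $\rho_1=\til{\rho}_M-O(\sigma)$, i.e.\ always take $\rho_1$ close to $\til{\rho}_M$. But the sign of the leading term in the expansion of \eqref{eq:SRcond4} near $\rho_1=\til{\rho}_M$ (with $\til{\vU}_+=\til{\vU}_M$) depends on the \emph{frame}: concretely, the derivative $\partial_{\rho_1}B$ at $\rho_1=\til{\rho}_M$ carries the factor
\[
-\tfrac{2\til{\rho}_M-\til{\rho}_-}{\til{\rho}_M-\til{\rho}_-}\,R+2\til{v}_-\,,
\]
whose sign flips once $\til{v}_-$ is large. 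When this factor is positive, $B<0$ for $\rho_1$ slightly below $\til{\rho}_M$ and your scheme fails. The paper deals with this by a case split on $\til{v}_-$: in one case $\rho_1$ is chosen near $\til{\rho}_M$ (your choice), in the other near $\til{\rho}_-$, where now $A\to 0$ with the correct derivative sign and $B$ has a strictly positive limit. Your hand-waved Taylor expansion does not see this dichotomy, and the appeal to Lemma~\ref{lemma:isen-inequalities}~\ref{item:isen-inequ.a} alone does not pin down the sign you need.

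A secondary remark: the paper's order of operations is cleaner than yours. Rather than coupling $\rho_1$ to $\sigma$, it first \emph{freezes} $\til{\vU}_+=\til{\vU}_M$, finds $\rho_1\in(\til{\rho}_-,\til{\rho}_M)$ and then $\til{\ep}_1>0$ making all three of $\ep_1^\ast>0$, $A>0$, $B>0$ \emph{strict}, and only then perturbs $\til{\vU}_+$ by continuity. This avoids the joint expansion entirely and makes the ``sufficiently close'' in the statement transparent. If you want to salvage a single-endpoint argument, you could alternatively invoke Galilean invariance to shift into a frame where the problematic factor is negative, but this must be stated explicitly.
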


\begin{proof}
	We are going to use Propositions \ref{prop:SRsubs} and \ref{prop:isen-alg-eq}. Hence in order to prove that there exists an admissible fan subsolution, it suffices to show that there exist $\rho_1,\til{\ep}_1\in\R^+$ which satisfy \eqref{eq:SRcond1} - \eqref{eq:SRcond4}. 
	
	In view of the functions $\beta_1$ and $\ep_1$ (see \eqref{eq:SRdefnv12}, \eqref{eq:SRdefndelta1}), we define $\beta_1^\ast,\ep_1^\ast:\R^+\times\sO\rightarrow\R$ as 
	\begin{align*} 
		\beta_1^\ast(\rho_1,\til{\vU}_+) &:= \frac{1}{\rho_1 (\til{\rho}_- - \til{\rho}_+)} \Bigg(-\til{\rho}_- \til{v}_- (\til{\rho}_+ - \rho_1) - \til{\rho}_+ \til{v}_+ (\rho_1 - \til{\rho}_-) \\
		&\quad + \sqrt{\Big[(\til{\rho}_- - \til{\rho}_+) \big(p(\til{\rho}_-) - p(\til{\rho}_+)\big) - \til{\rho}_+ \til{\rho}_- (\til{v}_- - \til{v}_+)^2\Big] (\rho_1 - \til{\rho}_-) (\til{\rho}_+ - \rho_1)}\Bigg) \ec\\
		\ep_1^\ast(\rho_1,\til{\vU}_+) &:= -\frac{p(\rho_1) - p(\til{\rho}_-)}{\rho_1} + \frac{\til{\rho}_- (\rho_1 - \til{\rho}_-)}{\rho_1^2 (\til{\rho}_- - \til{\rho}_+)^2} \Bigg(\til{\rho}_+ (\til{v}_- - \til{v}_+) \\
		&\quad+ \sqrt{\Big[(\til{\rho}_- - \til{\rho}_+) \big(p(\til{\rho}_-) - p(\til{\rho}_+)\big) - \til{\rho}_+ \til{\rho}_- (\til{v}_- - \til{v}_+)^2\Big] \frac{\til{\rho}_+ - \rho_1}{\rho_1 - \til{\rho}_-}}\Bigg)^2 \ed
	\end{align*}
	In addition to that we define functions $A,B:\R^+\times\R^+\times\sO\rightarrow\R$ as\footnote{Note that the functions $\beta_1^\ast,\ep_1^\ast$ and $A,B$ are not well-defined on the whole sets $\R^+\times \sO$ and $\R^+\times \R^+\times \sO$ respectively. However we will only look at points where they are well-defined or consider limits to the boundary of those domains.}
	\begin{align*}
		A(\rho_1,\til{\ep}_1,\til{\vU}_+) &:= \ep_1^\ast(\rho_1,\til{\vU}_+) \rho_1 \big(\beta_1^\ast(\rho_1,\til{\vU}_+)+\til{v}_-\big) \\
		&\qquad -\big(\ep_1^\ast(\rho_1,\til{\vU}_+)+\til{\ep}_1\big) \frac{\til{\rho}_- \rho_1 \big(\beta_1^\ast(\rho_1,\til{\vU}_+)-\til{v}_-\big)}{\til{\rho}_- -\rho_1} \\
		&\qquad - \big(\beta_1^\ast(\rho_1,\til{\vU}_+)-\til{v}_-\big) \bigg(p(\til{\rho}_-)+p(\rho_1)- 2  \frac{\rho_1 P(\til{\rho}_-) - \til{\rho}_- P(\rho_1)}{\til{\rho}_- -\rho_1}\bigg) \ec \\
		B(\rho_1,\til{\ep}_1,\til{\vU}_+) &:= -\ep_1^\ast(\rho_1,\til{\vU}_+) \rho_1 \big(\til{v}_++\beta_1^\ast(\rho_1,\til{\vU}_+)\big) \\
		&\qquad +\big(\ep_1^\ast(\rho_1,\til{\vU}_+)+\til{\ep}_1\big) \frac{\rho_1 \til{\rho}_+ \big(\til{v}_+-\beta_1^\ast(\rho_1,\til{\vU}_+)\big)}{\rho_1 -\til{\rho}_+} \\
		&\qquad - \big(\til{v}_+-\beta_1^\ast(\rho_1,\til{\vU}_+)\big) \bigg(p(\rho_1)+p(\til{\rho}_+)- 2 \frac{\til{\rho}_+ P(\rho_1) - \rho_1  P(\til{\rho}_+)}{\rho_1 -\til{\rho}_+}\bigg) \ed
	\end{align*}
	
	Since $\til{\vU}_-$ and $\til{\vU}_M$ can be connected by a 1-shock we obtain according to Proposition~\ref{prop:isen-selfsimilar}~\ref{item:isen-cases.S} that $\til{\rho}_-<\til{\rho}_M$ and 
	\begin{equation} \label{eq:temp42}
		\til{v}_- - \til{v}_M = \sqrt{\frac{(\til{\rho}_M - \til{\rho}_-) \big(p(\til{\rho}_M) - p(\til{\rho}_-)\big)}{\til{\rho}_M \til{\rho}_-}} \ed
	\end{equation} 
	
	Next we show that there exists $\rho_1\in(\til{\rho}_-,\til{\rho}_M)$ such that 
	\begin{align}
		\ep_1^\ast(\rho_1,\til{\vU}_+=\til{\vU}_M) &>0 \ec \label{eq:SRconddeltaABd}\\
		A(\rho_1,\til{\ep}_1=0,\til{\vU}_+=\til{\vU}_M) &>0 \ec \label{eq:SRconddeltaABA}\\
		B(\rho_1,\til{\ep}_1=0,\til{\vU}_+=\til{\vU}_M) &>0 \ed \label{eq:SRconddeltaABB}
	\end{align}
	
	First we prove that \eqref{eq:SRconddeltaABd} is true for all $\rho_1\in(\til{\rho}_-,\til{\rho}_M)$. Using \eqref{eq:temp42} we obtain
	\begin{equation*}
		\ep_1^\ast(\rho_1,\til{\vU}_+=\til{\vU}_M) = -\frac{p(\rho_1) - p(\til{\rho}_-)}{\rho_1} + \frac{\til{\rho}_M}{\rho_1^2}\ \frac{p(\til{\rho}_M) - p(\til{\rho}_-)}{\til{\rho}_M - \til{\rho}_-}\ (\rho_1 - \til{\rho}_-) \ed
	\end{equation*}
	Each $\rho_1\in(\til{\rho}_-,\til{\rho}_M)$ can be written as a convex combination of $\til{\rho}_-$ and $\til{\rho}_M$. In other words there exists $\tau\in(0,1)$ such that 
	\begin{equation*}
		\rho_1=\tau \til{\rho}_- + (1-\tau) \til{\rho}_M \ed
	\end{equation*} 
	Since $p$ is a convex function of $\rho$ we have 
	\begin{equation*}
		p(\rho_1)=p\big(\tau \til{\rho}_- + (1-\tau) \til{\rho}_M\big) \leq \tau p(\til{\rho}_-) + (1-\tau) p(\til{\rho}_M)
	\end{equation*}
	and hence
	\begin{align*}
		\ep_1^\ast(\rho_1,\til{\vU}_+=\til{\vU}_M) &= \frac{1}{\rho_1} \bigg( -p(\rho_1) + p(\til{\rho}_-) + \frac{\til{\rho}_M}{\rho_1}\ \frac{p(\til{\rho}_M) - p(\til{\rho}_-)}{\til{\rho}_M - \til{\rho}_-}\ (\rho_1 - \til{\rho}_-)\bigg) \\
		&\geq \frac{1}{\rho_1^2} \tau (1-\tau) \big(p(\til{\rho}_M) - p(\til{\rho}_-)\big) (\til{\rho}_M - \til{\rho}_-)\ >\ 0 \ed
	\end{align*}
	Therefore \eqref{eq:SRconddeltaABd} is true for all $\rho_1\in(\til{\rho}_-,\til{\rho}_M)$. 
	
	For convenience we define
	\begin{equation*}
		R := \sqrt{\frac{(\til{\rho}_M - \til{\rho}_-) \big(p(\til{\rho}_M) - p(\til{\rho}_-)\big)}{\til{\rho}_M \til{\rho}_-}} \ed
	\end{equation*}
	
	To show the existence of $\rho_1\in(\til{\rho}_-,\til{\rho}_M)$ that satisfies \eqref{eq:SRconddeltaABA} and \eqref{eq:SRconddeltaABB} we consider two cases: Let first 
	\begin{equation*}
		\til{v}_->\frac{\til{\rho}_M}{2 (\til{\rho}_M-\til{\rho}_-)} R \ed
	\end{equation*}
	An easy computation leads to
	\begin{equation*}
	\lim\limits_{\rho_1\rightarrow\til{\rho}_-} A(\rho_1,\til{\ep}_1=0,\til{\vU}_+=\til{\vU}_M)=0 \ec
	\end{equation*}
	and also
	\begin{align*} 
	\lim\limits_{\rho_1\rightarrow\til{\rho}_-}&\bigg(\frac{\partial}{\partial\rho_1}A(\rho_1,\til{\ep}_1=0,\til{\vU}_+=\til{\vU}_M)\bigg)\\
	&=\bigg(-\frac{\til{\rho}_M}{\til{\rho}_M -\til{\rho}_-} R + 2 \til{v}_-\bigg) \bigg(-p'(\til{\rho}_-) + \frac{\til{\rho}_M}{\til{\rho}_-}\ \ \frac{p(\til{\rho}_M) - p(\til{\rho}_-)}{\til{\rho}_M - \til{\rho}_-}\bigg) \ed
	\end{align*}
	
	In the case under consideration it holds that 
	\begin{equation*}
		-\frac{\til{\rho}_M}{\til{\rho}_M -\til{\rho}_-} R + 2 v_- > -\frac{\til{\rho}_M}{\til{\rho}_M -\til{\rho}_-} R + \frac{\til{\rho}_M}{\til{\rho}_M-\til{\rho}_-} R = 0 \ed
	\end{equation*}
	In addition to that, the fact that $\til{\rho}_-<\til{\rho}_M$ and the convexity of $p$ lead to 
	\begin{equation*}
		-p'(\til{\rho}_-) + \frac{\til{\rho}_M}{\til{\rho}_-}\ \ \frac{p(\til{\rho}_M) - p(\til{\rho}_-)}{\til{\rho}_M - \til{\rho}_-} > -p'(\til{\rho}_-) + \frac{p(\til{\rho}_M) - p(\til{\rho}_-)}{\til{\rho}_M - \til{\rho}_-} \geq 0 \ed
	\end{equation*}
	Hence
	\begin{equation*} 
		\lim\limits_{\rho_1\rightarrow\til{\rho}_-}\bigg(\frac{\partial}{\partial\rho_1}A(\rho_1,\til{\ep}_1=0,\til{\vU}_+=\til{\vU}_M)\bigg) > 0 \ed
	\end{equation*}
	
	By continuity of the function $\rho_1 \mapsto A(\rho_1,\til{\ep}_1=0,\til{\vU}_+=\til{\vU}_M)$ there exists $\rho_1\in(\til{\rho}_-,\til{\rho}_M)$ where $\rho_1\approx\til{\rho}_-$ such that \eqref{eq:SRconddeltaABA} holds. 
	
	Another computation shows that 
	\begin{align*} 
		\lim\limits_{\rho_1\rightarrow\til{\rho}_-}&B(\rho_1,\til{\ep}_1=0,\til{\vU}_+=\til{\vU}_M) \\
		&=R \bigg(p(\til{\rho}_-)+p(\til{\rho}_M) - 2 \frac{\til{\rho}_- P(\til{\rho}_M) - \til{\rho}_M P(\til{\rho}_-)}{\til{\rho}_M - \til{\rho}_-}\bigg)>0 \ec 
	\end{align*} 
	according to Lemma~\ref{lemma:isen-inequalities} \ref{item:isen-inequ.a}. Hence by continuity of $\rho_1 \mapsto B(\rho_1,\til{\ep}_1=0,\til{\vU}_+=\til{\vU}_M)$ we can choose $\rho_1\in(\til{\rho}_-,\til{\rho}_M)$ such that \eqref{eq:SRconddeltaABB} is fulfilled in addition to \eqref{eq:SRconddeltaABA}. 
	
	Suppose now the second case
	\begin{equation*}
		\til{v}_- \leq\frac{\til{\rho}_M}{2 (\til{\rho}_M-\til{\rho}_-)} R \ed
	\end{equation*}
	Similar computations yield
	\begin{align*}
		\lim\limits_{\rho_1\rightarrow\til{\rho}_-}&A(\rho_1,\til{\ep}_1=0,\til{\vU}_+=\til{\vU}_M) \\ 
		&=R \bigg(p(\til{\rho}_-)+p(\til{\rho}_M) - 2 \frac{\til{\rho}_- P(\til{\rho}_M) - \til{\rho}_M P(\til{\rho}_-)}{\til{\rho}_M - \til{\rho}_-}\bigg)>0 \ec 
	\end{align*}
	and furthermore
	\begin{equation*} 
		\lim\limits_{\rho_1\rightarrow\til{\rho}_M} B(\rho_1,\til{\ep}_1=0,\til{\vU}_+=\til{\vU}_M)=0 \ec
	\end{equation*}
	together with
	\begin{align*}
		\lim\limits_{\rho_1\rightarrow\til{\rho}_M}&\bigg(\frac{\partial}{\partial\rho_1}B(\rho_1,\til{\ep}_1=0,\til{\vU}_+=\til{\vU}_M)\bigg)\\
		&=\bigg(-\frac{2 \til{\rho}_M-\til{\rho}_-}{\til{\rho}_M -\til{\rho}_-} R + 2 \til{v}_-\bigg) \bigg(p'(\til{\rho}_M) - \frac{\til{\rho}_-}{\til{\rho}_M}\ \ \frac{p(\til{\rho}_M) - p(\til{\rho}_-)}{\til{\rho}_M - \til{\rho}_-}\bigg) \ed
	\end{align*}
	
	In the considered case we have
	\begin{equation*}
		-\frac{2 \til{\rho}_M-\til{\rho}_-}{\til{\rho}_M -\til{\rho}_-} R + 2 \til{v}_- \leq -\frac{2 \til{\rho}_M-\til{\rho}_-}{\til{\rho}_M -\til{\rho}_-} R + \frac{\til{\rho}_M}{\til{\rho}_M-\til{\rho}_-} R = -R < 0 \ec
	\end{equation*}
	Additionally the convexity of $p$ and $\til{\rho}_-<\til{\rho}_M$ lead to
	\begin{equation*}
		p'(\til{\rho}_M) - \frac{\til{\rho}_-}{\til{\rho}_M}\ \ \frac{p(\til{\rho}_M) - p(\til{\rho}_-)}{\til{\rho}_M - \til{\rho}_-} > p'(\til{\rho}_M) - \frac{p(\til{\rho}_M) - p(\til{\rho}_-)}{\til{\rho}_M - \til{\rho}_-} \geq 0 \ed
	\end{equation*}
	Hence
	\begin{equation*} 
		\lim\limits_{\rho_1\rightarrow\til{\rho}_M}\bigg(\frac{\partial}{\partial\rho_1}B(\rho_1,\til{\ep}_1=0,\til{\vU}_+=\til{\vU}_M)\bigg) < 0 \ed
	\end{equation*}
	Thus by continuity of $\rho_1 \mapsto A(\rho_1,\til{\ep}_1=0,\til{\vU}_+=\til{\vU}_M)$ and $\rho_1 \mapsto B(\rho_1,\til{\ep}_1=0,\til{\vU}_+=\til{\vU}_M)$ there exists $\rho_1\in(\til{\rho}_-,\til{\rho}_M)$ such that \eqref{eq:SRconddeltaABA} and \eqref{eq:SRconddeltaABB} hold, where $\rho_1\approx\til{\rho}_M$. 
	
	By continuity of the functions $\til{\ep}_1 \mapsto A(\rho_1,\til{\ep}_1,\til{\vU}_+=\til{\vU}_M)$ and $\til{\ep}_1 \mapsto B(\rho_1,\til{\ep}_1,\til{\vU}_+=\til{\vU}_M)$, we can find $\til{\ep}_1>0$ in addition to $\rho_1$ found above, such that 
	\begin{align*}
		\ep_1^\ast(\rho_1,\til{\vU}_+=\til{\vU}_M) &>0 \ec \\
		A(\rho_1,\til{\ep}_1,\til{\vU}_+=\til{\vU}_M) &>0 \ec \\
		B(\rho_1,\til{\ep}_1,\til{\vU}_+=\til{\vU}_M) &>0 \ed
	\end{align*}
	
	Again by continuity of $\til{\vU}_+ \mapsto \ep_1^\ast(\rho_1,\til{\vU}_+)$, $\til{\vU}_+ \mapsto A(\rho_1,\til{\ep}_1,\til{\vU}_+)$ and $\til{\vU}_+ \mapsto B(\rho_1,\til{\ep}_1,\til{\vU}_+)$ we have 
	\begin{align}
		\ep_1^\ast(\rho_1,\til{\vU}_+) &>0 \ec \label{eq:dg0} \\
		A(\rho_1,\til{\ep}_1,\til{\vU}_+) &>0 \ec \label{eq:Ag0}\\
		B(\rho_1,\til{\ep}_1,\til{\vU}_+) &>0 \label{eq:Bg0}
	\end{align} 
	as long as $\til{\vU}_+$ is sufficiently close to $\til{\vU}_M$.
	
	In other words if $\til{\vU}_+$ is sufficiently close to $\til{\vU}_M$, we can find $\rho_1,\til{\ep}_1\in\R^+$ such that $\til{\rho}_-<\rho_1<\til{\rho}_M$ and \eqref{eq:dg0} - \eqref{eq:Bg0} are true. By assumption we have $\til{\rho}_M<\til{\rho}_+$ and hence \eqref{eq:SRcond1} is true. Additionally \eqref{eq:SRcond2} holds because of \eqref{eq:dg0} and finally \eqref{eq:Ag0} and \eqref{eq:Bg0} imply \eqref{eq:SRcond3} and \eqref{eq:SRcond4} respectively. 
\end{proof}

\subsubsection{Proof of Theorem~\ref{thm:isen-SR} via an Auxiliary State}

With Proposition~\ref{prop:isen-SR-smallness} at hand, we are ready to prove Theorem~\ref{thm:isen-SR}.

\begin{proof}[Proof of Theorem~\ref{thm:isen-SR}] 
	Let $\vU_M$ be the intermediate state of the self-similar solution. In other words $\vU_M$ is connected to $\vU_-$ by a 1-shock. Thus we can apply Proposition~\ref{prop:isen-SR-smallness}. For $\rho_a\in (\rho_M,\rho_+)$ define 
	$$
	v_a=v_M + \int_{\rho_M}^{\rho_a} \frac{\sqrt{p'(r)}}{r} \dr \ed
	$$
	Note that $v_a \to v_M$ as $\rho_a\to \rho_M$. Hence we can fix $\rho_a\in (\rho_M,\rho_+)$ such that the state $\vU_a=(\rho_a,v_a)$ is as close to $\vU_M$ as required by Proposition~\ref{prop:isen-SR-smallness}.
	
	Then consider the two Riemann initial value problems for isentropic Euler with initial states 
	\begin{align*}
		\big(\til{\rho}_-,\til{\vu}_-\big) &= \big(\rho_-,\vu_-\big) \ec\\
		\big(\til{\rho}_+,\til{\vu}_+\big) &= \big(\rho_a,(u_-,v_a)^\trans\big) \ec
	\end{align*}
	called problem $\sim$, and 
	\begin{align*}
		\big(\hat{\rho}_-,\hat{\vu}_-\big) &= \big(\rho_a,(u_-,v_a)^\trans\big) \ec \\
		\big(\hat{\rho}_+,\hat{\vu}_-\big) &= \big(\rho_-,\vu_+\big) \ec
	\end{align*}
	which we call problem $ \wedge$. 
	
	Let us first discuss problem $\sim$. Using Propostion \ref{prop:isen-selfsimilar}, it is easy to check that the self-similar solution of problem $\sim$ consists of a 1-shock and a 3-rarefaction: We have $\rho_-<\rho_M$ and $\rho_M<\rho_a$ and hence $\rho_-<\rho_a$. In addition to that it holds that
	\begin{align*}
		v_a - v_- &= v_M - v_- + \int_{\rho_M}^{\rho_a} \frac{\sqrt{p'(r)}}{r} \dr \\
		&= -\sqrt{\frac{\big(\rho_M - \rho_-\big) \big(p(\rho_M) - p(\rho_-)\big)}{\rho_M \rho_-}} + \int_{\rho_M}^{\rho_a} \frac{\sqrt{p'(r)}}{r} \dr \\
		&< \int_{\rho_M}^{\rho_a} \frac{\sqrt{p'(r)}}{r} \dr \\
		&< \int_{\rho_-}^{\rho_a} \frac{\sqrt{p'(r)}}{r} \dr
	\end{align*}
	and 
	\begin{align*}
		v_a - v_- &= -\sqrt{\frac{\big(\rho_M - \rho_-\big) \big(p(\rho_M) - p(\rho_-)\big)}{\rho_M \rho_-}} + \int_{\rho_M}^{\rho_a} \frac{\sqrt{p'(r)}}{r} \dr \\
		&> -\sqrt{\frac{\big(\rho_M - \rho_-\big) \big(p(\rho_M) - p(\rho_-)\big)}{\rho_M \rho_-}} \\
		&> -\sqrt{\frac{\big(\rho_a - \rho_-\big) \big(p(\rho_a) - p(\rho_-)\big)}{\rho_a \rho_-}} \ec
	\end{align*}
	where the last inequality comes from Lemma~\ref{lemma:isen-inequalities} \ref{item:isen-inequ.c}. This shows that the self-similar solution to problem $\sim$ consists of a 1-shock and a 3-rarefaction. 
	
	Hence there exists an admissible fan subsolution to problem $\sim$ according to Propostion~\ref{prop:isen-SR-smallness}. Thus there are infinitely many admissible weak solutions to problem $\sim$. Proposition~\ref{prop:isen-SR-smallness} yields furthermore that $\rho_1<\rho_M$. 
	
	Now consider problem $\wedge$. We are going to prove that the self-similar solution to problem $\wedge$ consists only of a 3-rarefaction using Proposition~\ref{prop:isen-selfsimilar}. By definition of $\vU_a$ we have $\rho_a<\rho_+$ and additionally
	\begin{equation*}
	v_+ - v_a= v_+ - v_M - \int_{\rho_M}^{\rho_a} \frac{\sqrt{p'(r)}}{r} \dr = \int_{\rho_a}^{\rho_+} \frac{\sqrt{p'(r)}}{r} \dr \ed
	\end{equation*}	
	This shows that the self-similar solution of problem $\wedge$ consists of a just a 3-rarefaction wave. 
	
	To conclude we patch together the ``wild'' solutions to problem $\sim$ and the self-similar solution to problem $\wedge$. This patching procedure is possible if $\mu_1<\mu_2$, where $\mu_1$ is the speed of the right interface of the ``wild'' solutions of problem $\sim$ (see Definition~\ref{defn:isen-fanpart}), and $\mu_2=\lambda_3\big(\rho_a,(u_-,v_a)^\trans\big)$ is the left border of the rarefaction wave of the self-similar solution to problem $\wedge$. Here $\lambda_3(\rho,\vu)=v + \sqrt{p'(\rho)}$ denotes the 3rd eigenvalue of the one-dimensional system \eqref{eq:isen-euler-1d-dens} - \eqref{eq:isen-euler-1d-mom2}, see \cite[Equation (2.3)]{ChiKre14}. Hence it remains to show $\mu_1<\mu_2$.
	
	Replacing $\gamma_1$ and $C_1$ in \eqref{eq:isen-rhr3} via \eqref{eq:defn-gamma-temp} and \eqref{eq:defn-C-temp} and solving the resulting equation for $\ep_1$, we obtain 
	\begin{equation} \label{eq:temp43}
		\ep_1=\frac{\mu_1}{\rho_1} (\rho_1 \beta_1 - \rho_a v_a) + \frac{\rho_a}{\rho_1} \big(v_a\big)^2 - \frac{p(\rho_1) - p(\rho_a)}{\rho_1} - \big(\beta_1\big)^2
	\end{equation}
 	Furthermore, solving \eqref{eq:isen-rhr1} for $\beta_1$ yields
	\begin{equation*}
		\beta_1=\frac{1}{\rho_1}\big(\mu_1 (\rho_1-\rho_a)+\rho_a v_a\big) \ed
	\end{equation*}
	We use the latter to eliminate $\beta_1$ in \eqref{eq:temp43} and obtain after some calculation
	\begin{equation*}
		\ep_1 =\frac{\rho_1 - \rho_a}{\rho_1^2} \rho_a (\mu_1 - v_a)^2 - \frac{p(\rho_1) - p(\rho_a)}{\rho_1} \ed
	\end{equation*}
	Since $\ep_1>0$, see \eqref{eq:SRcond2}, it follows that 
	\begin{equation*}
		\frac{\rho_1 - \rho_a}{\rho_1^2} \rho_a (\mu_1 - v_a)^2 - \frac{p(\rho_1) - p(\rho_a)}{\rho_1} > 0 \ed
	\end{equation*}
	Because $\rho_1<\rho_M$ and $\rho_M<\rho_a$, we have $\rho_1 - \rho_a < 0$. Therefore the inequality above is equivalent to 
	\begin{equation*}
		(\mu_1 - v_a)^2 < \frac{\rho_1}{\rho_a}\ \ \frac{p(\rho_1) - p(\rho_a)}{\rho_1 - \rho_a} \ed
	\end{equation*}
	
	Hence
	\begin{equation*}
		\mu_1 < v_a + \sqrt{\frac{\rho_1}{\rho_a}\ \ \frac{p(\rho_1) - p(\rho_a)}{\rho_1 - \rho_a}} < v_a + \sqrt{\frac{p(\rho_1) - p(\rho_a)}{\rho_1 - \rho_a}} \leq v_a + \sqrt{p'(\rho_a)} 
	\end{equation*}
	where the last inequality follows from the convexity of $p$. Since
	\begin{equation*}
		\mu_2 = \lambda_3\big(\rho_a,(u_-,v_a)^\trans\big) = v_a + \sqrt{p'(\rho_a)}
	\end{equation*}
	we found the desired inequality $\mu_1 < \mu_2$. 
\end{proof}

For an illustration of the solutions provided by Theorem~\ref{thm:isen-SR} together with a comparison with the self-similar solution, we refer to \cite[Figure 3]{KliMar18_1}.

\subsection{Sketches of the Non-Uniqueness Proofs for the Other Cases} \label{subsec:riemann-isen-other-cases}

In this subsection we briefly explain how non-uniqueness is proven in the other cases (2, 4, 5, 11, 13, 14, 15 and 17 in Table~\ref{tab:isen-results}). 

\subsubsection{Two Shocks} 

If the self-similar solution consists of two shocks (case 5 in Table~\ref{tab:isen-results}), the non-uniqueness proof has been established by \name{Chiodaroli} and \name{Kreml} \cite{ChiKre14}. The strategy is the same as in the case of one shock and one rarefaction where in the case of two shocks it is not necessary to work with an auxiliary state. In fact for each initial states $(\rho_\pm,\vu_\pm)$ which lead to a self-similar solution consisting of two shocks, one can find numbers $\mu_0,\mu_1\in\R$, $\rho_1\in\R^+$, $\alpha_1,\beta_1,\gamma_1,\delta_1\in\R$ and $C_1\in\R^+$ as required by Proposition~\ref{prop:isen-alg-eq}. In fact these numbers are constructed by slightly perturbing the corresponding values of the self-similar solution, e.g. $\beta_1\approx v_M$. Note that the two-shock-case in the context of the full Euler system, which is explained in detail in Subsection~\ref{subsec:riemann-full-SS}, works similarly.

\subsubsection{One Shock}

The fact that there exist infinitely many admissible weak solutions if the self-similar solution consists of a single shock (cases 2 and 4 in Table~\ref{tab:isen-results}), has been shown by \name{Klingenberg} and the author \cite[Section 6]{KliMar18_1}. Their proof is again built upon the smallness result of Proposition~\ref{prop:isen-SR-smallness} and the use of an auxiliary state.  In contrast to the case of one shock and one rarefaction, where the auxiliary state is connect to $(\rho_+,\vu_+)$ by a rarefaction, here the auxiliary state and $(\rho_+,\vu_+)$ are connected by an admissible shock.

\subsubsection{A Contact Discontinuity and at Least One Shock}

If the self-similar solution contains a contact discontinuity together with at least one shock (cases 11, 13, 14, 15 and 17 in Table~\ref{tab:isen-results}), non-uniqueness of admissible weak solutions has been proven by \name{B{\v r}ezina}, \name{Chiodaroli} and \name{Kreml} \cite{BreChiKre18}. More precisely they only consider two shocks (case 14) and the subcases of 15 and 17 that correspond to the subcases of 6 and 8 which were studied by \name{Chiodaroli} and \name{Kreml} \cite{ChiKre18}. However the other cases can be handled by combining the ideas of \cite{BreChiKre18} and \cite{KliMar18_1}. The approach in \cite{BreChiKre18} is to introduce another wedge such that the fan partition for isentropic Euler, see Definition~\ref{defn:isen-fanpart}, turns into the one we will consider for the full Euler equations, cf. Definition~\ref{defn:full-fanpart}. Inspired by the self-similar solution, they use the ansatz to set $\rho_1=\rho_2$, $\alpha_1=u_-$, $\alpha_2=u_+$ and $\beta_1=\beta_2$. This ansatz allows them to show the following
\begin{itemize}
	\item If the self-similar solution contains two shocks together with the contact discontinuity, then there exists an admissible fan subsolution, which is now constant in four regions $\Gamma_-,\Gamma_1,\Gamma_2,\Gamma_+$.
	\item In the case of one shock and one rarefaction together with the contact discontinuity, a smallness result which corresponds to Proposition~\ref{prop:isen-SR-smallness} or the one achieved in \cite{ChiKre18}, holds.
\end{itemize}
To conclude with the remaining cases, i.e. one shock (11 and 13 in Table~\ref{tab:isen-results}) or one shock and one ``large'' rarefaction  (cases 15 and 17), one can use the patching procedure as in the proof of Theorem~\ref{thm:isen-SR}.

\subsection{Other Results in the Context of the Riemann Problem} \label{subsec:riemann-isen-other-results}

In the context of the initial value problem for the isentropic Euler equations \eqref{eq:baro-euler-pv-dens}, \eqref{eq:baro-euler-pv-mom}, \eqref{eq:isentropic-EOS} with Riemann initial data \eqref{eq:riemann-init-isen}, several other results are worth mentioning. 

\begin{itemize}
	\item The first example of initial data of the form \eqref{eq:riemann-init-isen} that allow for infinitly many admissible weak solutions, which was achieved by \name{Chiodaroli}, \name{De~Lellis} and \name{Kreml} \cite{ChiDelKre15}, is also interesting from another point of view. It can be shown that this data is obtained by a compression wave. More precisely, there exist Lipschitz continuous initial data such that the one-dimensional solution of the corresponding initial value problem coincides at a positive time with this Riemann data. This proves that there exist Lipschitz continuous initial data which lead to infinitely many solutions. These solutions coincide on some non-empty time interval, but differ thereafter.

	\item The result we just mentioned, has been further improved by \name{Chiodaroli et al.} \cite{CKMS19} who show that there are even $C^1$ initial data which lead to infinitely many admissible weak solutions. As above these solutions coincide for small positive times and differ afterwards. 
	
	\item In the literature another condition to select physically relevant solutions has been discussed, namely the \emph{entropy rate admissibility criterion}, or -- in the case of isentropic Euler -- the \emph{energy rate admissibility criterion}, proposed by \name{Dafermos} \cite{Dafermos73}. Roughly speaking in the context of the isentropic Euler system, this criterion says that the weak solution which dissipates the largest amount of energy should be selected as the physically relevant solution, see \cite[Definition 1]{ChiKre14} for a detailed definition. Apart from the fact that no one knows if such a maximum exists, \name{Chiodaroli} and \name{Kreml} \cite{ChiKre14} show that there exist Riemann initial data of the form \eqref{eq:riemann-init-isen} such that the self-similar solution is not entropy rate admissible as there exist ``wild'' solutions that dissipate more energy than the self-similar solution. 
	
	\item Concerning the entropy rate admissibility criterion there is another interesting result by \name{Feireisl} \cite{Feireisl14}, which is not restricted to Riemann initial data. \name{Feireisl} applies convex integration to obtain infinitely many admissible weak solutions to the initial boundary value problem for barotropic Euler with periodic boundary conditions. Furthermore he shows that none of these infinitely many solutions is entropy rate admissible as for each ``wild'' solution there is another one which dissipates more energy.
	
	\item As mentioned in Subsection~\ref{subsec:riemann-isen-results}, the question on uniqueness is open if the self-similar solution consists only of a contact discontinuity (case 10 in Table~\ref{tab:isen-results}). However there exists a result on non-uniqueness for barotropic Euler system with \emph{Chaplygin pressure law} $p(\rho)=-\frac{1}{\rho}$. Initial data of the form \eqref{eq:riemann-init-isen} have been studied together with this particular pressure law by \name{B{\v r}ezina}, \name{Kreml} and \name{M{\'a}cha} \cite{BreKreMac18}. For some of those initial data the self-similar solution consists only of two or three contact discontinuities. In these cases there exist infinitely many other admissible weak solutions as shown in  \cite{BreKreMac18}.
	
	\item Finally as shown by \name{Klingenberg} and the author \cite{KliMar18_2}, for a particular example of Riemann initial data there exist infinitely many admissible weak solutions all of which fulfill the energy inequality \eqref{eq:baro-euler-weak-whole-admissibility} as an equation. In other words there exist initial data of the form \eqref{eq:riemann-init-isen} such that the corresponding initial value problem admits infinitely many \emph{energy-conservative} solutions. Therefore these solutions can be interpreted as solutions of the \emph{full} Euler system. This result builds the bridge to the next section where the initial value problem with Riemann data for the \emph{full} Euler equations is considered.
\end{itemize}

\section[Riemann Problem for the Full Euler System]{Riemann Problem for the Full Euler System%
	\sectionmark{Full Euler}} \label{sec:riemann-full}
\sectionmark{Full Euler}

\subsection{One-Dimensional Self-Similar Solution} \label{subsec:riemann-full-1D}

As in the isentropic case, one considers the one-dimensional Riemann problem that corresponds to the initial value problem for the full Euler system \eqref{eq:full-euler-pv-dens} - \eqref{eq:full-euler-pv-en} with initial data \eqref{eq:riemann-init-full}. This one-dimensional Riemann problem reads
\begin{align}
	\partial_t \rho + \partial_2 (\rho v) &= 0 \ec \label{eq:full-euler-1D-dens} \\
	\partial_t (\rho u) + \partial_2 (\rho uv) &=0 \ec \label{eq:full-euler-1D-mom1} \\
	\partial_t (\rho v) + \partial_2 (\rho v^2 + p )  &= 0 \ec \label{eq:full-euler-1D-mom2} \\
	\partial_t \bigg(\half\rho(u^2 + v^2) + \rho e(\rho,p)\bigg) + \partial_2 \bigg[\bigg(\half\rho (u^2 + v^2) + \rho e(\rho,p) + p\bigg)v\bigg] &= 0 \label{eq:full-euler-1D-en}
\end{align} 
with initial data \eqref{eq:riemann-init-full}. This system is solved with classical methods to obtain the \emph{self-similar solution}. Similarly as in Proposition~\ref{prop:isen-selfsimilar} we summarize some facts on the self-similar solution, where we do not exhibit as much details as in Proposition~\ref{prop:isen-selfsimilar}. Instead we refer to the literatur cited below. 

\begin{prop} \label{prop:full-selfsimilar}
	Let $\rho_\pm\in \R^+$, $\vu_\pm\in \R^2$ and $p_\pm\in \R^+$. The self-similar solution to the problem \eqref{eq:full-euler-pv-dens} - \eqref{eq:full-euler-pv-en}, \eqref{eq:riemann-init-full} is constant in four regions which are separated by three waves. The leftmost and rightmost states are given by the initial states $(\rho_-,\vu_-,p_-)$ and $(\rho_+,\vu_+,p_+)$ respectively. The left intermediate state is equal to $\big(\rho_{M-},(u_-,v_M)^\trans,p_M\big)$, whereas the right intermediate state equals $\big(\rho_{M+},(u_+,v_M)^\trans,p_M\big)$, see Figure~\ref{fig:full-SCR} for an example. In particular, the pressure and the velocity component, that is perpendicular to the initial discontinuity, coincide in both intermediate regions, whereas the density and the velocity component, that is parallel to the initial discontinuity, jump at the 2-wave. Each of the 1- and the 3-wave is either a shock or a rarefaction, whereas the 2-wave is a contact discontinuity. 
\end{prop}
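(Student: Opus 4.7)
The plan is to reduce the two-dimensional problem to the one-dimensional system \eqref{eq:full-euler-1D-dens}--\eqref{eq:full-euler-1D-en} via the ansatz that the solution depends only on $(t,y)$. Under this reduction, \eqref{eq:full-euler-1D-mom1} decouples as a linear transport equation for $u$ with advection velocity $v$, while the subsystem for $(\rho,v,p)$ is exactly the one-dimensional full Euler system. I would invoke the classical Riemann solver for this subsystem (see e.g.~\name{Dafermos} \cite[Chapters~7--9]{Dafermos} or \name{Smoller} \cite[Chapters~17--18]{Smoller}) and then read off the behaviour of $u$ from the transport equation.

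First I would establish the eigenstructure of the full four-component system. A direct calculation (analogous to the one in Section~\ref{sec:euler-full}) shows that the eigenvalues with respect to $y$ are $v-c, v, v, v+c$ with $c=\sqrt{\gamma p/\rho}$, where the outer fields are genuinely nonlinear and the repeated middle eigenvalue $v$ corresponds to a linearly degenerate field of multiplicity two. Collapsing this double eigenvalue into a single ``2-wave'' (as in the proposition) yields a contact discontinuity across which the Riemann invariants $p$ and $v$ are continuous, whereas $\rho$ and the tangential component $u$ may jump.

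Next I would construct the wave curves through each initial state. For the genuinely nonlinear 1- and 3-fields one obtains either a shock branch, parametrised via the Rankine--Hugoniot conditions for the density, normal-momentum and energy equations and selected by the Lax entropy condition, or a rarefaction branch, obtained by integrating the corresponding right eigenvector and parametrised by the relevant Riemann invariant. Standard monotonicity/convexity properties of these curves (under the equation of state \eqref{eq:incomp-EOS-pv} with $\gamma>1$) let one compose them: from $(\rho_-,v_-,p_-)$ follow a 1-wave to $(\rho_{M-},v_M,p_M)$, cross the 2-contact with $v_M$ and $p_M$ unchanged to reach $(\rho_{M+},v_M,p_M)$, and then follow a 3-wave to $(\rho_+,v_+,p_+)$. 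Imposing that the endpoint coincides with $(\rho_+,\vu_+,p_+)$ yields a system whose solvability determines $\rho_{M\pm}, v_M, p_M$ uniquely, outside of the vacuum regime that the statement implicitly excludes by postulating four bounded constant regions.

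Finally the assertions about $u$ follow from the transport equation \eqref{eq:full-euler-1D-mom1}: along the self-similar variable $\xi=y/t$, the characteristic speed of this transport is $v(\xi)$, and since $v$ is continuous across the 1- and 3-waves but jumps across the 2-wave, $u$ is constant on either side of the 2-wave and takes exactly the boundary values $u_-$ and $u_+$. The main obstacle is the careful case analysis that underpins the composition of wave curves -- in particular, verifying global monotonicity of the shock/rarefaction curves in $p$ and the existence of a unique intersection point; this is precisely the content of the classical Riemann solver for full Euler, which I would cite rather than reprove.
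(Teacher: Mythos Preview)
Your proposal is correct and follows the same approach as the paper: both rely on the classical Riemann solver for one-dimensional full Euler, citing \name{Smoller} and \name{Dafermos}. In fact the paper's own proof consists of nothing more than a reference to \cite[Chapter 18 \S B]{Smoller} and \cite[Chapters 7--9]{Dafermos}, so your sketch of the eigenstructure, wave-curve construction, and the transport argument for $u$ is considerably more detailed than what the paper provides.
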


\begin{proof}
	For the proof we refer to the textbooks by \name{Smoller} \cite[Chapter 18 \S B]{Smoller} or \name{Dafermos}~\cite[Chapters 7 - 9]{Dafermos}. 
\end{proof}

\begin{figure}[htb] 
	\centering
	\includegraphics[width=0.7\textwidth]{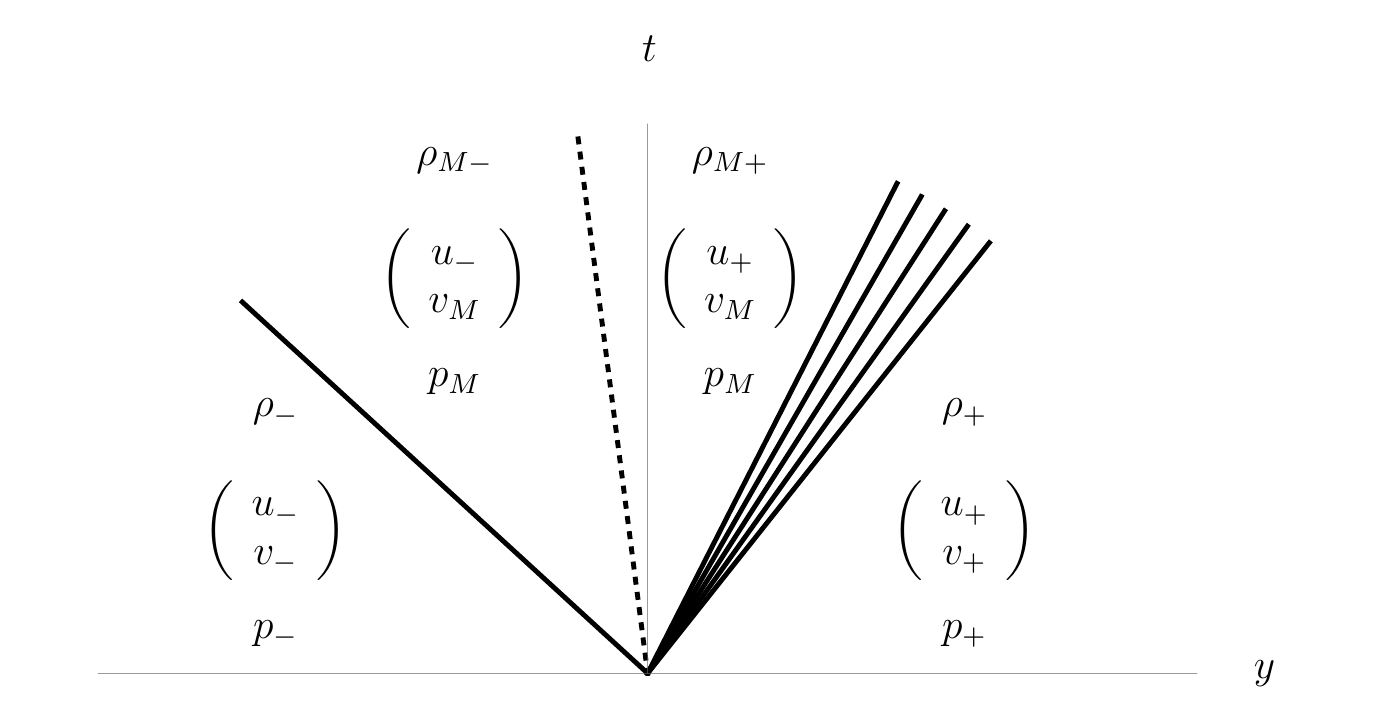} 
	\caption{An example of the self-similar solution for full Euler where the 1-wave is a shock, the 2-wave a contact discontinuity and the 3-wave a rarefaction.} 
	\label{fig:full-SCR}
\end{figure} 

\begin{rem}
	Note that \eqref{eq:full-euler-1D-dens} - \eqref{eq:full-euler-1D-en} is a system of four equations and therefore has four eigenvalues, i.e. there should be four wave families. However the eigenvalue that corresponds to the 2-contact discontinuity, has multiplicity two. Indeed there are two issues that lead to a 2-contact discontinuity: First, even the Riemann problem for the one-dimensional full Euler system \eqref{eq:full-euler-1D-dens}, \eqref{eq:full-euler-1D-mom2}, \eqref{eq:full-euler-1D-en}\footnote{To be precise, one has to erase ``$u^2$'' in \eqref{eq:full-euler-1D-en} to obtain the one-dimensional Euler system.} exhibits a contact discontinuity where the density jumps from $\rho_{M-}$ to $\rho_{M+}$. Second, the transport equation for $u$ \eqref{eq:full-euler-1D-mom1} yields a contact discontinuity where $u$ jumps from $u_-$ to $u_+$. To be more precise, one could also speak of a 2-contact discontinuity and a 3-contact discontinuity which lie on each other, and call what we called 3-wave, a 4-wave. 
\end{rem}

As in Proposition~\ref{prop:isen-selfsimilar}, there are necessary and sufficient conditions on the initial states for each possibility of the shape of the self-similar solution. Since they are quite lengthy we decided not to present them in this book with one exception, see below. Instead we refer to \name{Smoller} \cite[Corollary 18.7]{Smoller}. 

In Subsection~\ref{subsec:riemann-full-SS} we consider the case where the self-similar solution consists of a 1-shock, a possible 2-contact discontinuity and a 3-shock, where we additionally assume that $u_-=u_+$. Hence in this case the 2-contact discontinuity is only due to one of the two mechanisms that lead to a contact discontinuity, see the remark above. Let us quote the condition on the initial states that lead to a self-similar solution containing two shocks. 

\begin{prop} \label{prop:full-selfsimilar-SS}
	Let $\rho_\pm\in \R^+$, $\vu_\pm\in \R^2$ and $p_\pm\in \R^+$ such that $u_-=u_+$. Then the self-similar solution to the problem \eqref{eq:full-euler-pv-dens} - \eqref{eq:full-euler-pv-en}, \eqref{eq:riemann-init-full} consists of a 1-shock, a possible\footnote{The 2-contact discontinuity is not apparent when $\rho_{M-}=\rho_{M+}$.} 2-contact discontinuity and a 3-shock if and only if one of the conditions
	\begin{align*}
		\bullet \ \ &p_-\leq p_+ \quad\text{ and }\quad v_+ - v_- < - \sqrt{2} \frac{p_+ - p_-}{\sqrt{\rho_-\big( (\gamma-1)p_- + (\gamma+1) p_+\big)}} \ec \qquad\qquad\qquad \\
		\bullet \ \ &p_+\leq p_- \quad\text{ and }\quad v_+ - v_- < - \sqrt{2} \frac{p_- - p_+}{\sqrt{\rho_+\big((\gamma-1)p_+ + (\gamma+1) p_-\big)}}
	\end{align*}
	holds. Furthermore $\rho_{M\pm},v_M,p_M$ are uniquely determined by 
	\begin{align*}
		v_+ - v_- &= -\sqrt{2}\left(\frac{p_M - p_-}{\sqrt{\rho_- \big((\gamma-1)p_- + (\gamma+1) p_M\big)}} + \frac{p_M - p_+}{\sqrt{\rho_+ \big((\gamma-1)p_+ + (\gamma-1) p_M\big)}}\right) \ec \\
		v_M &= v_- - \sqrt{2}\frac{p_M - p_-}{\sqrt{\rho_- \big((\gamma-1)p_- + (\gamma+1) p_M\big)}} \ec \\
		\rho_{M-} &= \rho_- \frac{(\gamma-1) p_- + (\gamma+1) p_M}{(\gamma-1) p_M + (\gamma+1) p_-} \ec \\
		\rho_{M+} &= \rho_+ \frac{(\gamma-1) p_+ + (\gamma+1) p_M}{(\gamma-1) p_M + (\gamma+1) p_+} \ec
	\end{align*}
	and the following properties hold: 
	\begin{itemize}
		\item $p_M>\max\{p_-,p_+\}$, $\rho_{M-}>\rho_-$ and $\rho_{M+}>\rho_+$;
		\item The shock speeds, denoted by $\sigma_-,\sigma_+$, satisfy the Rankine-Hugoniot conditions
		 \begin{align}
			\sigma_\pm(\rho_\pm - \rho_{M\pm}) &= \rho_\pm v_{\pm} - \rho_{M\pm}v_M \es \label{eq:full-selfsimilar-SS-dens}\\ 
			\sigma_\pm(\rho_\pm v_{\pm} - \rho_{M\pm}v_M) &= \rho_\pm \big(v_{\pm}\big)^2 - \rho_{M\pm} \big(v_M\big)^2 + p_\pm - p_M \es \label{eq:full-selfsimilar-SS-mom} 
		\end{align}
		\begin{align}
			&\sigma_\pm \left(\rho_\pm \frac{\big(v_\pm\big)^2}{2} + \rho_\pm e(\rho_\pm,p_\pm) - \rho_{M\pm} \frac{\big(v_M\big)^2}{2} - \rho_{M\pm} e(\rho_{M\pm},p_M) \right) \notag\\
			&= \left(\rho_\pm \frac{\big(v_\pm\big)^2}{2} + \rho_\pm e(\rho_\pm,p_\pm) + p_\pm \right) v_\pm - \left(\rho_{M\pm} \frac{\big(v_M\big)^2}{2} + \rho_{M\pm} e(\rho_{M\pm},p_M) + p_M \right) v_M \es \label{eq:full-selfsimilar-SS-en}
		\end{align}
		\item The admissibility conditions at the shocks are satisfied as strict inequalities, i.e. 
		\begin{align}
			\sigma_- \big(\rho_{M-} s(\rho_{M-},p_M) - \rho_- s(\rho_-,p_-)\big) &< \rho_{M-} s(\rho_{M-},p_M) v_M - \rho_- s(\rho_-,p_-) v_- \es \label{eq:full-selfsimilar-SS-adm-} \\
			\sigma_+ \big(\rho_+ s(\rho_+,p_+) - \rho_{M+} s(\rho_{M+},p_M)\big) &< \rho_+ s(\rho_+,p_+) v_+ - \rho_{M+} s(\rho_{M+},p_M) v_M \es \label{eq:full-selfsimilar-SS-adm+}
		\end{align}
		\item The speed of the contact discontinuity is given by $v_{M}$.
	\end{itemize}
\end{prop}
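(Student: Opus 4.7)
The plan is to reduce everything to the classical Riemann problem for the one-dimensional full Euler system in the conserved variables, together with the independent transport equation for the tangential velocity $u$. Because we assume $u_-=u_+$, the tangential component does not produce a visible jump; its ``contact'' coincides with the 2-contact of the genuine Euler system, on which only $\rho$ would jump. Thus the shape claim and the formulas for $\rho_{M\pm},v_M,p_M$ reduce to the known theory of the Riemann problem for \eqref{eq:full-euler-1D-dens}, \eqref{eq:full-euler-1D-mom2}, \eqref{eq:full-euler-1D-en} as treated in \name{Smoller} \cite[Chapter 18]{Smoller} or \name{Dafermos} \cite[Chapters 7--9]{Dafermos}, which I will invoke rather than redo.

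The core of the proof is to parametrise the admissible 1-shock curve forward from $(\rho_-,v_-,p_-)$ and the admissible 3-shock curve backward from $(\rho_+,v_+,p_+)$ in the $(v,p)$-plane and intersect them. For a polytropic gas the Rankine--Hugoniot relations combined with the energy jump give the Hugoniot density formula $\rho_{M\pm}=\rho_\pm\,\frac{(\gamma-1)p_\pm+(\gamma+1)p_M}{(\gamma-1)p_M+(\gamma+1)p_\pm}$ whenever $p_M>p_\pm$ (the Lax condition for a compressive 1- or 3-shock). Plugging this density into the momentum Rankine--Hugoniot equation and solving for the velocity jump yields the two ``Hugoniot legs''
\[
v_M-v_-=-\sqrt{2}\,\frac{p_M-p_-}{\sqrt{\rho_-\bigl((\gamma-1)p_-+(\gamma+1)p_M\bigr)}},\qquad v_+-v_M=-\sqrt{2}\,\frac{p_M-p_+}{\sqrt{\rho_+\bigl((\gamma-1)p_++(\gamma+1)p_M\bigr)}}.
\]
Adding these two relations produces precisely the scalar equation for $p_M$ stated in the proposition. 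To obtain the characterisation of admissible initial data, I would study the function
\[
F(p):=-\sqrt{2}\!\left(\frac{p-p_-}{\sqrt{\rho_-((\gamma-1)p_-+(\gamma+1)p)}}+\frac{p-p_+}{\sqrt{\rho_+((\gamma-1)p_++(\gamma+1)p)}}\right)
\]
on $(\max\{p_-,p_+\},\infty)$. A direct differentiation shows $F$ is strictly decreasing, with $F(p)\to-\infty$ as $p\to\infty$, so the equation $F(p_M)=v_+-v_-$ has a (unique) solution with $p_M>\max\{p_-,p_+\}$ if and only if $v_+-v_-<F\bigl(\max\{p_-,p_+\}\bigr)$; evaluating $F$ at $p=\max\{p_-,p_+\}$ and splitting on the sign of $p_+-p_-$ reproduces exactly the two inequalities in the statement.

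With $p_M,v_M,\rho_{M\pm}$ thus pinned down, the relations $p_M>\max\{p_-,p_+\}$ and $\rho_{M\pm}>\rho_\pm$ are immediate from the Hugoniot density formula because the map $p\mapsto\frac{(\gamma-1)p_\pm+(\gamma+1)p}{(\gamma-1)p+(\gamma+1)p_\pm}$ is strictly greater than $1$ precisely when $p>p_\pm$. The Rankine--Hugoniot identities \eqref{eq:full-selfsimilar-SS-dens}--\eqref{eq:full-selfsimilar-SS-en} are then the defining equations of the shocks, written with shock speeds $\sigma_\pm$ determined from \eqref{eq:full-selfsimilar-SS-dens}. The contact discontinuity speed equals the common fluid velocity $v_M$ by the general theory of linearly degenerate fields for full Euler.

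I expect the main obstacle to be the strict admissibility inequalities \eqref{eq:full-selfsimilar-SS-adm-}, \eqref{eq:full-selfsimilar-SS-adm+}, since these concern the physical entropy $s(\rho,p)=\frac{1}{\gamma-1}(\log p-\gamma\log\rho)$ rather than the compressive character of the shock. My plan here is to invoke the classical fact that across a non-trivial Lax compressive shock of the polytropic Euler system the physical entropy strictly increases (this can be seen by substituting the Hugoniot density into $s(\rho_{M\pm},p_M)-s(\rho_\pm,p_\pm)$ and showing the resulting one-variable function of $p_M/p_\pm$ is strictly positive for $p_M>p_\pm$; this is the standard Weyl--Lax computation). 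Combined with the Rankine--Hugoniot density jump and the sign of $\sigma_\pm-v_M$ (which has the sign of $\rho_{M\pm}(v_M-v_\pm)/(\rho_{M\pm}-\rho_\pm)$ and is non-zero because the shocks are non-trivial), rearranging gives exactly the Clausius--Duhem inequalities \eqref{eq:full-selfsimilar-SS-adm-}, \eqref{eq:full-selfsimilar-SS-adm+} with strict inequality, completing the proof.
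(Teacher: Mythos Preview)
Your proposal is correct and matches the paper's approach: the paper simply refers to the literature (Smoller \cite[Chapter 18]{Smoller}) for this classical result, and the argument you outline---parametrising the 1- and 3-shock Hugoniot curves in the $(v,p)$-plane, intersecting them via the monotone function $F$, and invoking the strict entropy increase across compressive polytropic shocks---is precisely the standard derivation found there. You have supplied considerably more detail than the paper itself, but the route is the same.
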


\begin{proof}
	Again we refer to the literature, e.g. \name{Smoller} \cite[Chapter 18]{Smoller}. 
\end{proof}

\subsection{Summary of the Results on Non-/Uniqueness} \label{subsec:riemann-full-results}

\begin{thm} \label{thm:full-summary-riemann}
	It depends on the shape of the self-similar solution whether or not there is a unique admissible weak solution of the initial value problem for the full Euler system \eqref{eq:full-euler-pv-dens} - \eqref{eq:full-euler-pv-en} with initial data \eqref{eq:riemann-init-full}. Table~\ref{tab:full-results} summarizes the results. In the cases where the solution is not unique, there are even infinitely many admissible weak solutions.
\end{thm}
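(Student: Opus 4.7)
My plan is to mirror the strategy used for the isentropic case (Theorem~\ref{thm:isen-summary-riemann}), adapted to the richer structure of the full Euler system. Since the self-similar solution of Proposition~\ref{prop:full-selfsimilar} is already an admissible weak solution, it suffices to decide, case by case, whether there are further admissible weak solutions. The classification is driven entirely by the wave pattern of the self-similar solution: combinations of 1- and 3-waves (each being either a shock or a rarefaction, or absent) together with a possibly apparent 2-contact discontinuity. I would enumerate every such combination in Table~\ref{tab:full-results} and treat uniqueness and non-uniqueness separately.

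For the uniqueness branch, the strategy is a weak-strong-uniqueness argument in the spirit of \name{Chen}-\name{Chen} \cite{CheChe07} and \name{Feireisl}-\name{Kreml} \cite{FeiKre15}: whenever the self-similar solution contains only rarefactions (and possibly no contact), it is piecewise $C^1$ and Lipschitz continuous away from the origin, so a version of the relative-entropy method with $s(\rho,p)$ as in \eqref{eq:s} applies and forces any admissible weak solution to coincide with it. The cases containing a contact discontinuity are delicate, because the jumps in $\rho$ and $u$ across the 2-wave destroy the Lipschitz regularity required by the standard weak-strong argument, so those cells of the table should be left open, exactly as in Table~\ref{tab:isen-results}.

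For the non-uniqueness branch, I would carry out the following four steps uniformly. First, introduce a notion of \emph{admissible fan subsolution} analogous to Definition~\ref{defn:isen-fansubs}, but built over a fan partition with two wedges $\Gamma_1,\Gamma_2$ (see Definition~\ref{defn:full-fanpart}) so that the 2-contact discontinuity can be resolved between them; the unknowns on each wedge are $(\rho_i,\vm_i,\mU_i,c_i)$ together with the associated pressure $p_i$, and the subsolution must satisfy the analogues of \eqref{eq:isen-fansubs-dens}--\eqref{eq:isen-fansubs-adm} including the entropy inequality \eqref{eq:full-euler-weak-entropy}. Second, prove the analogue of Theorem~\ref{thm:isen-condition}: given such a fan subsolution, an application of Theorem~\ref{thm:convint-nodens} on each wedge produces infinitely many admissible weak solutions, where the fixed-density version is needed so that the energy equation and the entropy inequality can be verified as in the proof of Theorem~\ref{thm:isen-condition}. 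Third, translate the fan-subsolution conditions into a system of algebraic equations and inequalities analogous to Proposition~\ref{prop:isen-alg-eq}, consisting of Rankine-Hugoniot relations on each of the three interfaces (cf.\ \eqref{eq:full-selfsimilar-SS-dens}--\eqref{eq:full-selfsimilar-SS-en}), two subsolution inequalities of type \eqref{eq:isen-sc1}--\eqref{eq:isen-sc2} on each wedge, and entropy inequalities of type \eqref{eq:full-selfsimilar-SS-adm-}--\eqref{eq:full-selfsimilar-SS-adm+} on each shock-like interface. Fourth, solve this algebraic system for each of the relevant wave patterns: the two-shock case by perturbing the self-similar values as in \name{Chiodaroli}-\name{Kreml} \cite{ChiKre14} (detailed in Subsection~\ref{subsec:riemann-full-SS}), and the cases containing shocks together with rarefactions by introducing an auxiliary state and invoking a smallness lemma of the type of Proposition~\ref{prop:isen-SR-smallness} combined with a patching argument à la Theorem~\ref{thm:isen-SR}.

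The main obstacle, which is genuinely new relative to the isentropic situation, is the interaction between the 2-contact discontinuity (across which $\rho$ and $u$ jump while $p$ and $v$ stay constant) and the fan subsolution. Implementing this forces the fan partition to carry two wedges and introduces a ``jump ansatz'' such as $p_1=p_2$, $\beta_1=\beta_2$, $\alpha_1=u_-$, $\alpha_2=u_+$ inspired by the self-similar profile; showing that this ansatz is compatible with the Rankine-Hugoniot, subsolution, and entropy inequalities simultaneously, and in particular verifying the strict sign of the entropy jump across each shock (which replaces the energy admissibility \eqref{eq:isen-adml}--\eqref{eq:isen-admr}), is the technically heaviest step. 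Once this is in place, the patching argument needed to cover mixed shock/rarefaction patterns reduces, via an auxiliary state connected to $(\rho_\pm,\vu_\pm,p_\pm)$ by a rarefaction, to the two-shock algebraic problem, and the entropy rate across the right-most wave can be checked exactly as the shock-rarefaction ordering $\mu_1<\mu_2$ was verified at the end of the proof of Theorem~\ref{thm:isen-SR}.
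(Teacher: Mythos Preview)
Your proposal is essentially the same strategy the paper follows: weak--strong uniqueness for the rarefaction-only patterns (citing \cite{CheChe07} and, for full Euler, \cite{FeiKreVas15} rather than \cite{FeiKre15}), fan subsolutions with two wedges as in Definition~\ref{defn:full-fanpart}, the reduction to an algebraic system (Proposition~\ref{prop:full-alg-eq}), the perturbative solution in the two-shock case (Subsection~\ref{subsec:riemann-full-SS}), and the contact-plus-rarefaction cells left open.

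Two small corrections. First, your description of the patching step is off: the auxiliary state does \emph{not} reduce the mixed shock/rarefaction pattern to the two-shock algebraic problem. As in the proof of Theorem~\ref{thm:isen-SR} and its full-Euler analogue in \cite{KKMM20}, the auxiliary state splits the data into (i) a problem with a shock and a \emph{small} rarefaction, handled by the smallness result of Proposition~\ref{prop:isen-SR-smallness} type, and (ii) a pure-rarefaction problem, for which one simply takes the self-similar solution; these are then glued once the speed ordering $\mu_1<\mu_2$ is verified. For the single-shock cases the auxiliary state is connected by a \emph{shock}, not a rarefaction. Second, the paper's two-shock argument in Subsection~\ref{subsec:riemann-full-SS} assumes $u_-=u_+$ and $\gamma<3$; your ansatz $\alpha_1=u_-$, $\alpha_2=u_+$ is the natural generalization (and is indeed what \cite{BreChiKre18} and \cite{KKMM20} use), but you should be aware that the perturbative construction presented here does not by itself cover the case $u_-\neq u_+$ with two shocks.
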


\renewcommand{\arraystretch}{1.2}
\begin{table}[h]
	\centering 
	\begin{tabular}{|c|c|c|c|c|c|} \cline{2-6} 
		\multicolumn{1}{c|}{}& \multicolumn{3}{c|}{Structure of the self-similar solution} & \centering Solution & \centering Reference \tabularnewline 
		\multicolumn{1}{c|}{}& \centering 1-wave & \centering 2-wave & \centering 3-wave & \centering unique? & \tabularnewline \cline{1-6}
		1 & \centering - & \centering - & \centering - & \textcolor{green}{\textcolor{green}{yes}} & e.g. \cite{CheChe07} or \cite{FeiKreVas15} \tabularnewline \cline{1-6}
		2 & \centering - & \centering - & \centering shock & \textcolor{red}{\textcolor{red}{no}} & \cite{KKMM20} \tabularnewline \cline{1-6}
		3 & \centering - & \centering - & \centering rarefaction & \textcolor{green}{yes} & \cite{CheChe07}, \cite{FeiKreVas15} \tabularnewline \cline{1-6}
		4 & \centering shock & \centering - & \centering - & \textcolor{red}{no} & \cite{KKMM20} \tabularnewline \cline{1-6}
		5 & \centering shock & \centering - & \centering shock & \textcolor{red}{no} & \cite{AKKMM20} \tabularnewline \cline{1-6}
		6 & \centering shock & \centering - & \centering rarefaction & \textcolor{red}{no} & \cite{KKMM20} \tabularnewline \cline{1-6}
		7 & \centering rarefaction & \centering - & \centering - & \textcolor{green}{yes} & \cite{CheChe07}, \cite{FeiKreVas15} \tabularnewline \cline{1-6}
		8 & \centering rarefaction & \centering - & \centering shock & \textcolor{red}{no} & \cite{KKMM20} \tabularnewline \cline{1-6}
		9 &\centering rarefaction  & \centering - & \centering rarefaction & \textcolor{green}{yes} & \cite{CheChe07}, \cite{FeiKreVas15} \tabularnewline \cline{1-6}
		10 & \centering - & \centering contact & \centering - & open & \tabularnewline \cline{1-6}
		11 & \centering - & \centering contact & \centering shock & \textcolor{red}{no} & \cite{KKMM20} \tabularnewline \cline{1-6}
		12 & \centering - & \centering contact & \centering rarefaction & open &  \tabularnewline \cline{1-6}
		13 & \centering shock & \centering contact & \centering - & \textcolor{red}{no} & \cite{KKMM20} \tabularnewline \cline{1-6}
		14 & \centering shock & \centering contact & \centering shock & \textcolor{red}{no}\footnotemark & \cite{AKKMM20} \tabularnewline \cline{1-6}
		15 & \centering shock & \centering contact & \centering rarefaction & \textcolor{red}{no} & \cite{KKMM20} \tabularnewline \cline{1-6}
		16 & \centering rarefaction & \centering contact & \centering - & open &  \tabularnewline \cline{1-6}
		17 & \centering rarefaction & \centering contact & \centering shock & \textcolor{red}{no} & \cite{KKMM20} \tabularnewline \cline{1-6}
		18 &\centering rarefaction & \centering contact & \centering rarefaction & open & \tabularnewline \cline{1-6}
	\end{tabular}
	\caption{All the 18 possibilities of the structure of the self-similar solution to the initial value problem for the full Euler system \eqref{eq:full-euler-pv-dens} - \eqref{eq:full-euler-pv-en} with initial data \eqref{eq:riemann-init-full}. Furthermore, if known, the answer to the question on uniqueness.} \label{tab:full-results} 
\end{table}
\renewcommand{\arraystretch}{1}
\footnotetext{The paper by \name{Al Baba et al.} \cite{AKKMM20} only considers $u_-=u_+$. Keeping in mind that there are two mechanisms which lead to the contact discontinuity, one of which is $u_-\neq u_+$, this means that \cite{AKKMM20} does not cover the whole class of initial states $(\rho_\pm,\vu_\pm)$ for which the self-similar solution consists of two shocks and a contact discontinuity.} 

As shown by \name{Chen} and \name{Chen} \cite{CheChe07}, and also by \name{Feireisl}, \name{Kreml} and \name{Vasseur} \cite{FeiKreVas15}, the self-similar solution is unique if it contains only rarefaction waves (cases 1, 3, 7 and 9 in Table~\ref{tab:full-results}).
As in the isentropic case, these uniqueness results are achieved with the help of the relative energy inequality, which is also used to prove weak-strong-uniqueness. Again \name{Chen} and \name{Chen} \cite{CheChe07} allow for a vacuum in the intermediate state.

As mentioned in Subsection~\ref{subsec:riemann-isen-other-results}, the paper \cite{KliMar18_2} by \name{Klingenberg} and the author can be viewed as a first non-uniqueness result for the initial value problem for the full Euler system \eqref{eq:full-euler-pv-dens} - \eqref{eq:full-euler-pv-en} with initial data \eqref{eq:riemann-init-full}. In view of that, one began to work on a classification of the initial data of the form \eqref{eq:riemann-init-full} regarding non-/uniqueness of solutions as in the isentropic case. In \cite{AKKMM20} \name{Al Baba}, the author and others show that there are infinitely many admissible weak solutions if the self-similar solutions contain two shocks (cases 5 and 14 in Table~\ref{tab:full-results}). This non-uniqueness proof works similarly as in the isentropic case. Note that in \cite{AKKMM20} only the case $u_-=u_+$ is considered. Furthermore \name{Al Baba et al.} propose two approaches: One of them works for all initial data that belong to the cases 5 and 14 in Table~\ref{tab:full-results} but requires the additional assumption that $\gamma<3$. The other approach does not need this additional assumption but only works for subcases of 5 and 14. 

\begin{rem}
	Note that the restriction $\gamma<3$ covers the physically relevant cases. The theory of thermodynamics requires $\gamma = \frac{f+2}{f}$ where $f$ is the number of degrees of freedom of the gas under consideration. From $f>1$, which is justified as a gas in two space dimensions should have more than one degree of freedom, we deduce $\gamma<3$. 
\end{rem}

If the self-similar solution contains only one shock (i.e. cases 2, 4, 6, 8, 11, 13, 15 and 17 in Table~\ref{tab:full-results}), then there are infinitely many solutions as well, which was shown by \name{Klingenberg}, the author and others \cite{KKMM20}. Here again one assumes that $\gamma<3$. In contrast to the result for two shocks mentioned above, \name{Klingenberg et al.} do not assume that $u_-=u_+$ in \cite{KKMM20}.

Similarly to the isentropic case, if the self-similar solution consists of a contact discontinuity and possible rarefaction waves (cases 10, 12, 16 and 18 in Table~\ref{tab:full-results}), it remains an open question whether or not the self-similar solution is the unique admissible weak solution in the sense of Definition~\ref{defn:aws-full-whole}. For a recent uniqueness result regarding case 10 in the context of a different notion of a solution, we refer to Subsection~\ref{subsec:riemann-full-other-results}.

\subsection{Non-Uniqueness Proof if the Self-Similar Solution Contains Two Shocks} \label{subsec:riemann-full-SS}

In this subsection we show how the non-uniqueness proof for the full Euler system works. To this end we take the example where the self-similar solution contains two shocks (cases 5 and 14 in Table~\ref{tab:full-results}). We additionally assume that $u_-=u_+$ and $\gamma<3$. We present one of the two approaches performed by \name{Al Baba et al.} \cite{AKKMM20}. Parts of this subsection have been copied verbatim from \cite{AKKMM20}.

The strategy of the non-uniqueness proof is similar to the isentropic case, where the notion of a fan subsolution has to by adapted. In particular another wedge has to be considered, see Definition~\ref{defn:full-fanpart}.

Since the full Euler equations \eqref{eq:full-euler-pv-dens} - \eqref{eq:full-euler-pv-en} are invariant under Galilean transformations we may assume without loss of generality that $u_-=u_+=0$ as well as $v_M=0$. Indeed if this was not the case, then we subtract the constant velocity $(u_-,v_M)^\trans$ to achieve what we want. For more details we refer to \cite[Section 5.1]{AKKMM20} or \cite[Proposition 2.8]{KKMM20}.

In short, the aim of the current subsection is to prove the following theorem.

\begin{thm} \label{thm:full-SS} 
	Let $\gamma < 3$ and $\rho_\pm\in\R^+$, $\vu_\pm\in\R^2$, $p_\pm\in\R^+$ with $u_-=u_+=0$ be such that the self-similar solution consists either 
	\begin{itemize}
		\item of a 1-shock and a 3-shock or 
		\item of a 1-shock, a 2-contact discontinuity and a 3-shock.
	\end{itemize}
	Assume furthermore that $v_M=0$. Then there exist infinitely many admissible weak solutions to the initial value problem \eqref{eq:full-euler-pv-dens} - \eqref{eq:full-euler-pv-en}, \eqref{eq:riemann-init-full}.
\end{thm}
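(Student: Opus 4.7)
\textbf{Proof proposal for Theorem~\ref{thm:full-SS}.} The plan is to mimic the strategy exhibited in Subsection~\ref{subsec:riemann-isen-SR} for the isentropic case, but adapted to the full Euler system. First I would introduce an \emph{admissible fan subsolution} as a quintuple $(\ov{\rho},\ov{\vm},\ov{\mU},\ov{p},\ov{c})$ of piecewise constant functions, whose underlying fan partition now consists of \emph{four} open sets $\Gamma_-,\Gamma_1,\Gamma_2,\Gamma_+$ separated by three speeds $\mu_0<\mu_M<\mu_1$ (mimicking the 1-shock, the 2-contact discontinuity and the 3-shock of the self-similar solution). In the outer regions the subsolution takes the constants determined by the initial data, while in each interior wedge $\Gamma_i$ ($i=1,2$) one prescribes constants $(\rho_i,\vm_i,\mU_i,c_i)$ with $e(\rho_i,\vm_i,\mU_i)<c_i$. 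The defining integral identities combine \eqref{eq:full-euler-weak-dens}--\eqref{eq:full-euler-weak-en} (with the conservative flux in the momentum equation replaced by $\ov{\mU}+\ov{c}\,\id$) and the entropy inequality \eqref{eq:full-euler-weak-entropy} for all $Z\in C^\infty(\R)$ with $Z'\geq 0$.

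Once the correct notion is set, I would prove an analogue of Theorem~\ref{thm:isen-condition}: any admissible fan subsolution yields infinitely many admissible weak solutions of \eqref{eq:full-euler-pv-dens}--\eqref{eq:full-euler-pv-en},~\eqref{eq:riemann-init-full}. The argument is essentially the one given in the proof of Theorem~\ref{thm:isen-condition}, applied separately on each of the two wedges $\Gamma_1,\Gamma_2$: one invokes the fixed-density-version Theorem~\ref{thm:convint-nodens} to produce, in each wedge, infinitely many $(\rho,\vm)$ satisfying the linearised PDEs and the trace condition $\tfrac12|\vm|^2/\rho+\tfrac{n}{2}p(\rho)=c_i$. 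Using $\rho\equiv\rho_i$ and the trace condition, the momentum, energy and entropy fluxes on each wedge can be rewritten in such a way that the corresponding identities/inequalities reduce, modulo the Divergence Theorem, to the defining conditions of the fan subsolution; here one crucially exploits the fact that $Z\mapsto\rho Z(s(\rho,p))$ with $\rho$ and $p$ constant in each wedge is \emph{affine} in the flow variables, which is why the entropy condition survives the convex integration step.

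The existence of a fan subsolution is then reformulated as an algebraic system in the unknowns $\mu_0,\mu_M,\mu_1,\rho_1,\rho_2,p_1=p_2=:p_M^\ast,\alpha_1,\alpha_2,\beta_1,\beta_2,\gamma_1,\gamma_2,\delta_1,\delta_2,C_1,C_2$ consisting of: Rankine--Hugoniot relations for the density, momentum and energy on each of the three interfaces; the two subsolution conditions (positive definiteness of $C_i\id-$(something), as in \eqref{eq:isen-sc1}--\eqref{eq:isen-sc2}); and the entropy inequalities at the two outer interfaces (which, since the entropy is linear along the contact discontinuity separating $\Gamma_1$ and $\Gamma_2$, need only be imposed at the $\mu_0$- and $\mu_1$-interface). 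Inspired by the self-similar solution of Proposition~\ref{prop:full-selfsimilar-SS}, I would choose the ansatz $\alpha_1=\alpha_2=0$, $\beta_1=\beta_2=0$, $p_M^\ast=p_M$, $\mu_0=\sigma_-$, $\mu_1=\sigma_+$, $\mu_M=0$, $\rho_1=\rho_{M-}$, $\rho_2=\rho_{M+}$, $\delta_1=\delta_2=0$, and then use $\gamma_1,\gamma_2$ together with $C_1,C_2$ as the free parameters. Under this ansatz the Rankine--Hugoniot equations reduce, via \eqref{eq:full-selfsimilar-SS-dens}--\eqref{eq:full-selfsimilar-SS-en}, to explicit formulas relating $C_i-\tfrac{2}{n}\gamma_i$ (whose physical meaning is the normal component of the momentum flux) to the corresponding jump relations of the self-similar solution, so the remaining freedom is a single parameter $\epsilon>0$ by which $C_i$ is allowed to exceed the boundary of the subsolution cone.

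The main obstacle will be checking that this perturbation parameter can be chosen strictly positive while still fulfilling the two entropy inequalities at the outer interfaces. In the self-similar limit (corresponding formally to $\epsilon=0$) those inequalities reduce exactly to \eqref{eq:full-selfsimilar-SS-adm-}--\eqref{eq:full-selfsimilar-SS-adm+}, which hold as \emph{strict} inequalities by Proposition~\ref{prop:full-selfsimilar-SS}; hence by continuity a small $\epsilon>0$ preserves them, and simultaneously the subsolution conditions \eqref{eq:isen-sc1}--\eqref{eq:isen-sc2} (which fail at $\epsilon=0$ and become strict for $\epsilon>0$) are satisfied. The assumption $\gamma<3$ is expected to enter precisely in verifying that, in the relevant range of $(\rho_\pm,p_\pm)$, the derivatives of the entropy flux with respect to $\epsilon$ at $\epsilon=0$ have the right sign so that increasing $\epsilon$ from $0$ does not immediately destroy \eqref{eq:full-selfsimilar-SS-adm-}--\eqref{eq:full-selfsimilar-SS-adm+}; this monotonicity is a purely algebraic computation using the caloric equation of state \eqref{eq:incomp-EOS-pv}, and is the only place where the structure of $\gamma$ as an adiabatic exponent is genuinely exploited.
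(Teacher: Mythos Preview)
Your overall architecture (two-wedge fan partition, fan subsolution, application of Theorem~\ref{thm:convint-nodens} on each wedge, reduction to an algebraic system) matches the paper exactly. The gap is in the ansatz you propose for solving the algebraic system.

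You fix \emph{all} of $\mu_0=\sigma_-$, $\mu_M=0$, $\mu_1=\sigma_+$, $\rho_1=\rho_{M-}$, $\rho_2=\rho_{M+}$, $p_1=p_2=p_M$, $\beta_1=\beta_2=0$ at their self-similar values and declare $C_1,C_2,\gamma_1,\gamma_2$ to be free. But they are not free: with these choices the Rankine--Hugoniot conditions at the left interface for the $y$-momentum and for the energy (the analogues of \eqref{eq:isen-rhl3} and the additional energy equation) become two linear equations in $C_1$ and $\gamma_1$. Comparing them with the self-similar jump relations \eqref{eq:full-selfsimilar-SS-mom}, \eqref{eq:full-selfsimilar-SS-en} (using $v_M=0$) forces $C_1=0$ and $\gamma_1=0$, and likewise $C_2=\gamma_2=0$. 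The subsolution condition $C_i-\alpha_i^2-\beta_i^2>0$ then fails. There is no perturbation parameter $\epsilon$ left once you pin down $\rho_i$, $p_i$ and the interface speeds. The paper avoids this by perturbing the \emph{state} in the wedges: $\rho_1=\rho_{M-}+\ep$, $\rho_2=\rho_{M+}-\ep$, $p_1=p_2=p_M-\ov{\ep}$, with the interface speeds $\mu_0(\ep),\mu_1(\ep),\mu_2(\ep)$ and $\beta_1=\beta_2=\mu_1(\ep)$ determined accordingly. Two independent parameters $(\ep,\ov{\ep})$ are needed, and the delicate step is to show that the subsolution inequalities, which degenerate to equalities at $(\ep,\ov{\ep})=(0,0)$, open up for small positive $(\ep,\ov{\ep})$.

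You also misidentify where $\gamma<3$ enters. It is \emph{not} used for the entropy inequalities at the outer interfaces: those hold by continuity because \eqref{eq:full-selfsimilar-SS-adm-}--\eqref{eq:full-selfsimilar-SS-adm+} are strict, exactly as you say. Rather, $\gamma<3$ is needed for the subsolution condition: one finds
\[
\frac{C_1(\ep,\ov{\ep})}{2}+\gamma_1(\ep,\ov{\ep})=\frac{3-\gamma}{\gamma-1}\,\frac{\ov{\ep}}{\rho_{M-}+\ep}+R_1(\ep)
\]
with $R_1(\ep)\to 0$ as $\ep\to 0$, and the sign of the leading term requires $\gamma<3$. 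The entropy inequality at the middle interface also needs attention (it does not automatically hold as an equality under your ansatz once the states are perturbed); the paper verifies it via a short argument using \eqref{eq:full-rhm1} together with monotonicity of $Z$.
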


Recall that Proposition~\ref{prop:full-selfsimilar-SS} contains conditions for the self-similar solution to be of the form required by Theorem~\ref{thm:full-SS}.

\subsubsection{Condition for Non-Uniqueness}

We proceed as in the isentropic case. However we must adjust our strategy and work with another wedge in the part partition. This leads to the following adapted notion of a fan partition. 

\begin{defn} \label{defn:full-fanpart} 
	Let $\mu_0<\mu_1<\mu_2$ be real numbers. A \emph{fan partition} of the space-time domain $(0,\infty)\times\R^2$ consists of four open sets $\Gamma_-,\Gamma_1,\Gamma_2,\Gamma_+$ of the form
	\begin{align*}
		\Gamma_-&=\{(t,\vx):t>0\text{ and }y<\mu_0 t\} \ec \\
		\Gamma_1\:&=\{(t,\vx):t>0\text{ and }\mu_0 t<y<\mu_1 t\} \ec \\
		\Gamma_2\:&=\{(t,\vx):t>0\text{ and }\mu_1 t<y<\mu_2 t\} \ec \\
		\Gamma_+&=\{(t,\vx):t>0\text{ and }y>\mu_2 t\} \ec
	\end{align*}
	see Figure~\ref{fig:full-fanpart}.
\end{defn}

\begin{figure}[htb] 
	\centering
	\includegraphics[width=0.7\textwidth]{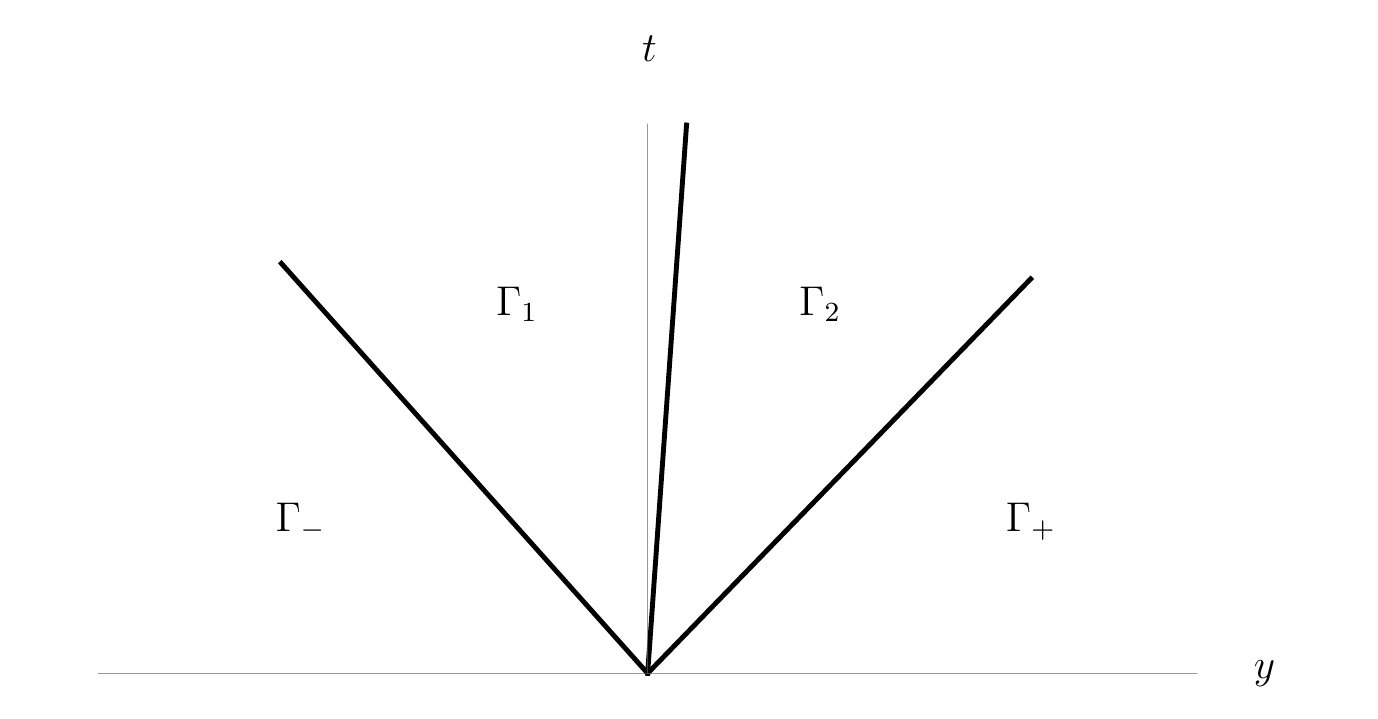} 
	\caption{A fan partition, see Definition~\ref{defn:full-fanpart}.} 
	\label{fig:full-fanpart}
\end{figure} 

A fan subsolution is defined analogously to the isentropic case, cf. Definition~\ref{defn:isen-fansubs}, where now the underlying fan partition is of the type given by Definition~\ref{defn:full-fanpart}.

\begin{defn} \label{defn:full-fansubs} 
	An \emph{admissible fan subsolution} to the initial value problem for the full Euler system \eqref{eq:full-euler-pv-dens} - \eqref{eq:full-euler-pv-en} with initial data \eqref{eq:riemann-init-full} is a quintuple
	$$
	(\ov{\rho},\ov{\vm},\ov{\mU},\ov{c},\ov{p})\ \in \ L^\infty\big((0,\infty)\times\R^2;\R^+\times\R^2\times\sztwo\times\R^+\times\R^+\big)
	$$ 
	of piecewise constant functions, which satisfies the following properties:
	\begin{enumerate}
		\item There exists a fan partition  $\Gamma_-,\Gamma_1,\Gamma_2,\Gamma_+$ of $(0,\infty)\times\R^2$ and for $i\in\{1,2\}$ there exist constants $\rho_i\in\R^+$, $\vm_i\in\R^2$, $\mU_i\in\sztwo$, $c_i\in\R^+$ and $p_i\in\R^+$, such that
		\begin{align*}
			(\ov{\rho},\ov{\vm},\ov{\mU},\ov{c},\ov{p})&= \left\{ 
			\begin{array}[c]{ll}
				\big(\rho_-,\vm_-,\mU_-, c_- , p_-\big) & \text{ if } (t,\vx)\in \Gamma_- \ec\\
				\big(\rho_1\: ,\vm_1 \:,\mU_1 \:,c_1\: ,p_1 \:\big) & \text{ if } (t,\vx)\in \Gamma_1 \:\ec\\
				\big(\rho_2\: ,\vm_2 \:,\mU_2 \:,c_2\: ,p_2 \:\big) & \text{ if } (t,\vx)\in \Gamma_2 \:\ec\\
				\big(\rho_+,\vm_+,\mU_+, c_+,p_+\big) & \text{ if } (t,\vx)\in \Gamma_+ \ec 
			\end{array}
			\right.
		\end{align*}
		where 
		\begin{align*}
			\vm_\pm &:= \rho_\pm \vu_\pm \ec \\
			\mU_\pm &:= \rho_\pm \vu_\pm \otimes \vu_\pm - \frac{\rho_\pm |\vu_\pm|^2}{2}\id \ec\\
			c_\pm &:=  \frac{\rho_\pm |\vu_\pm|^2}{2} + p_\pm 
		\end{align*}
		with the given initial states $(\rho_\pm,\vu_\pm,p_\pm) \in \R^+\times \R^2\times \R^+$;
		
		\item For $i\in\{1,2\}$ the following inequality holds:
		\begin{equation} \label{eq:full-fansubs-subs}
			\lambda_{\max}\left(\frac{\vm_i\otimes\vm_i}{\rho_i} + p_i \id - \mU_i\right) < c_i \es
		\end{equation}
		\item For all test functions $(\phi,\vphi,\psi)\in \Cc\big([0,\infty)\times\R^2;\R\times\R^2\times\R\big)$ the following identities hold:
		\begin{align} 
			\int_0^\infty \int_{\R^2} \Big[\ov{\rho} \partial_t\phi + \ov{\vm}\cdot\Grad\phi\Big] \dx\dt + \int_{\R^2} \rho_\init \phi(0,\cdot)\dx &= 0 \es \label{eq:full-fansubs-dens}\\
			\int_0^\infty \int_{\R^2}\Big[\ov{\vm}\cdot\partial_t\vphi + \ov{\mU}:\Grad\vphi + \ov{c}\Div\vphi\Big]\dx\dt  + \int_{\R^2} \rho_\init \vu_\init\cdot\vphi(0,\cdot)\dx &= 0 \es \label{eq:full-fansubs-mom}\\
			\int_0^\infty \int_{\R^2} \left[\Big(\ov{c} -\ov{p} + \ov{\rho}e(\ov{\rho},\ov{p}) \Big) \partial_t \psi + \Big(\ov{c} + \ov{\rho}e(\ov{\rho},\ov{p})\Big)\frac{\ov{\vm}}{\ov{\rho}}\cdot\Grad \psi \right]\dx\dt \qquad & \notag\\ 
			+ \int_{\R^2} \bigg(\half\rho_\init|\vu_\init|^2 + \rho_\init e(\rho_\init,p_\init)\bigg)\psi(0,\cdot) \dx &= 0 \es \label{eq:full-fansubs-en}
		\end{align}
		\item For every non-negative test function $\varphi\in \Cc\big([0,\infty)\times\R^2;\R_0^+\big)$ and all $Z\in C^\infty(\R)$ with $Z'\geq 0$, the inequality
		\begin{align}
			\int_0^\infty \int_{\R^2} \Big[\ov{\rho} Z\big(s(\ov{\rho},\ov{p})\big) \partial_t \varphi + \ov{\vm} Z\big(s(\ov{\rho},\ov{p})\big) \cdot\Grad \varphi\Big]\dx\dt \qquad & \notag\\  
			+ \int_{\R^2} \rho_\init Z\big(s(\rho_\init,p_\init)\big)\varphi(0,\cdot) \dx &\leq 0 \label{eq:full-fansubs-adm}
		\end{align}
		is fulfilled.
	\end{enumerate}
\end{defn}

Again the existence of an admissible fan subsolution implies existence of infinitely many admissible weak solutions, which is proven using Theorem~\ref{thm:convint-nodens}.

\begin{thm} \label{thm:full-condition} 
	Let $(\rho_\pm,\vu_\pm,p_\pm)$ be such that there exists an admissible fan subsolution $(\ov{\rho},\ov{\vm},\ov{\mU},\ov{c},\ov{p})$ to the initial value problem \eqref{eq:full-euler-pv-dens} - \eqref{eq:full-euler-pv-en}, \eqref{eq:riemann-init-full}. Then this initial value problem admits infinitely many admissible weak solutions $(\rho,\vu,p)$ with the following properties:
	\begin{enumerate}
		\item \label{item:full-riemann.a} $(\rho,p)=(\ov{\rho},\ov{p})$,
		\item \label{item:full-riemann.b} $\vu(t,\vx)= \vu_-$ for all $(t,\vx)\in\Gamma_-$ and $\vu(t,\vx)= \vu_+$ for all $(t,\vx)\in\Gamma_+$, 
		\item \label{item:full-riemann.c} $|\vu(t,\vx)|^2= \frac{\rho_i}{2} (c_i - p_i)$ for\footnote{As in Theorem~\ref{thm:isen-condition}, \eqref{eq:full-fansubs-subs} guarantees that $c_i-p_i>0$.} a.e. $(t,\vx) \in \Gamma_i$, $i=1,2$. 
	\end{enumerate}
\end{thm}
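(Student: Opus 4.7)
The plan is to proceed in direct analogy with the proof of Theorem~\ref{thm:isen-condition}, applying Theorem~\ref{thm:convint-nodens} separately on each of the two wedges $\Gamma_1$ and $\Gamma_2$ of the fan partition. For each $i\in\{1,2\}$, I would set $\Gamma:=\Gamma_i$, $r:=\rho_i/2$, $c:=c_i$, and take the constant triple $(\rho_0,\vm_0,\mU_0):=(\rho_i,\vm_i,\mU_i)$ on $\closure{\Gamma_i}$. The hypotheses \eqref{eq:p0-pde1}--\eqref{eq:p0-dens-bdd} hold trivially: the PDEs because the data are constant, the interior condition because \eqref{eq:full-fansubs-subs} combined with \eqref{eq:interiorU} gives $(\rho_i,\vm_i,\mU_i)\in \interior{\sU}$, and the density bound because $\rho_i>\rho_i/2=r$. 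This yields infinitely many $(\til{\rho}_i,\til{\vm}_i)\in L^\infty(\Gamma_i;\R^+\times\R^2)$; by the fixed-density version, we may assume $\til{\rho}_i\equiv\rho_i$ on $\Gamma_i$.

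Then I would paste things together: on $\Gamma_\pm$ set $(\rho,\vu,p):=(\rho_\pm,\vu_\pm,p_\pm)$ and on $\Gamma_i$ set $(\rho,\vu,p):=(\rho_i,\til{\vm}_i/\rho_i,p_i)$. Properties \ref{item:full-riemann.a} and \ref{item:full-riemann.b} are immediate by construction, and \ref{item:full-riemann.c} follows from the trace condition \eqref{eq:sol-trace}, which here reads $|\til{\vm}_i|^2/(2\rho_i)+p_i=c_i$ a.e. on $\Gamma_i$. The mass equation \eqref{eq:full-euler-weak-dens} and the momentum equation \eqref{eq:full-euler-weak-mom} are then verified exactly as in the proof of Theorem~\ref{thm:isen-condition}: start from \eqref{eq:full-fansubs-dens} resp.\ \eqref{eq:full-fansubs-mom}, on each $\Gamma_i$ subtract the integral with the constant subsolution value and add the integral with the wild solution, and use the Divergence Theorem together with \eqref{eq:sol-pde1} resp.\ \eqref{eq:sol-pde2} to see that the two boundary contributions over $\partial\Gamma_i$ cancel.

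For the energy equation \eqref{eq:full-euler-weak-en}, I would use the identity, valid on each $\Gamma_i$,
\begin{equation*}
\tfrac{1}{2}\rho|\vu|^2+\rho e(\rho,p) \ =\ c_i-p_i+\rho_i e(\rho_i,p_i) \ =\ \ov{c}-\ov{p}+\ov{\rho}e(\ov{\rho},\ov{p})\ec
\end{equation*}
which follows from \ref{item:full-riemann.c} and the constancy of $\rho,p$ on $\Gamma_i$. Because $\rho_i,p_i$ are constant, the energy flux difference between the wild and the subsolution values reduces to $\frac{c_i+\rho_i e(\rho_i,p_i)}{\rho_i}(\til{\vm}_i-\vm_i)$. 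Thus, starting from \eqref{eq:full-fansubs-en} and adding and subtracting the wedge integrals, the energy equation is reduced to showing that, for every test function $\psi$,
\begin{equation*}
\iint_{\Gamma_i}(\til{\vm}_i-\vm_i)\cdot\Grad\psi\dx\dt\ =\ 0\ed
\end{equation*}
But this is a consequence of the fixed-density condition together with \eqref{eq:sol-pde1} applied to $(\til{\rho}_i,\til{\vm}_i)$ and of the Divergence Theorem applied to the constant $\vm_i$: both produce the same boundary integral $\int_{\partial\Gamma_i}\vm_i\cdot\vn_\vx\,\psi\,\dS_{t,\vx}$, which precisely cancels. The entropy inequality \eqref{eq:full-euler-weak-entropy} is handled by the same mechanism: on each $\Gamma_i$ the physical entropy $s(\rho,p)=s(\rho_i,p_i)$ is constant, so $\rho Z(s(\rho,p))$ matches the subsolution value and the flux difference is $Z(s(\rho_i,p_i))(\til{\vm}_i-\vm_i)$, whose integral against $\Grad\varphi$ vanishes by the same argument; consequently \eqref{eq:full-fansubs-adm} is inherited as equality (hence as inequality) by the constructed $(\rho,\vu,p)$.

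The bulk of the work is bookkeeping; the only genuinely substantive point is the observation that, because we applied the \emph{fixed-density} Theorem~\ref{thm:convint-nodens} rather than Theorem~\ref{thm:convint}, the density and pressure on each wedge remain exactly equal to $\rho_i$ and $p_i$, which in turn makes both the energy density and the specific entropy functions of the wild solution \emph{constant} on $\Gamma_i$. Without this fixed-density property the nonlinear dependence of $e(\rho,p)$ and $s(\rho,p)$ on $(\rho,p)$ would prevent a clean reduction to a divergence-free correction of the momentum, so the main obstacle is recognizing at the outset that Theorem~\ref{thm:convint-nodens} rather than Theorem~\ref{thm:convint} is the correct tool here.
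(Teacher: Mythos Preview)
Your approach matches the paper's almost exactly, and all the bookkeeping you describe is correct. There is one technical step you have skipped, however, and it is precisely the one the paper is careful to spell out: Theorem~\ref{thm:convint-nodens} (like Theorem~\ref{thm:convint}) is formulated for the \emph{barotropic} Euler system, so before you can invoke it on $\Gamma_i$ you must first specify a pressure law $\rho\mapsto p(\rho)$. The set $\sU$, the map $e$, and the trace condition~\eqref{eq:sol-trace} all depend on this choice. When you write that ``\eqref{eq:full-fansubs-subs} combined with \eqref{eq:interiorU} gives $(\rho_i,\vm_i,\mU_i)\in\interior{\sU}$'' and that the trace condition reads $|\til{\vm}_i|^2/(2\rho_i)+p_i=c_i$, both statements tacitly require $p(\rho_i)=p_i$, which you have not arranged.

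The paper fixes this by choosing, for each wedge separately, the isentropic law $p_i(\rho):=(p_i/\rho_i^\gamma)\rho^\gamma$, so that $p_i(\rho_i)=p_i$ and moreover $P_i(\rho_i)=p_i/(\gamma-1)=\rho_i e(\rho_i,p_i)$. Since you are using the fixed-density version, only the value $p(\rho_i)$ ever enters the final formulas, so any admissible pressure law with $p(\rho_i)=p_i$ would serve; but some explicit choice must be made for the hypotheses of Theorem~\ref{thm:convint-nodens} to be checkable. Once you add this one sentence, your argument is complete and coincides with the paper's.
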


\begin{proof} 
	As in the isentropic case, we apply Theorem~\ref{thm:convint}, more precisely Theorem~\ref{thm:convint-nodens}, in order to prove Theorem~\ref{thm:full-condition}. In contrast to the isentropic case, the fan partition possesses now two middle wedges $\Gamma_1$ and $\Gamma_2$, see Definition~\ref{defn:full-fanpart}. We apply Theorem~\ref{thm:convint-nodens} in each of the two wedges $\Gamma_1$ and $\Gamma_2$, more precisely we set $\Gamma:= \Gamma_i$, $r:=\half \rho_i$, $c:=c_i$ and 
	$$
		(\rho_0,\vm_0,\mU_0)(t,\vx) := (\rho_i,\vm_i,\mU_i) \qquad \text{ for all } (t,\vx)\in \closure{\Gamma} \ec
	$$
	for $i=1,2$. Note that for each $i=1,2$ we must furthermore determine functions $\rho\mapsto p_i(\rho)$. We choose 
	\begin{equation} \label{eq:f2-temp-riemann}
		p_i(\rho) := \frac{p_i}{\rho_i^\gamma}\rho^\gamma \ed
	\end{equation}	
	Hence both ``pressure laws'' satisfy $p_i(\rho_i)=p_i$ for $i=1,2$, and are of the form \eqref{eq:isentropic-EOS}, i.e. the corresponding ``pressure potentials'' read
	$$
	P_i(\rho) = \frac{p_i(\rho)}{\gamma -1}
	$$
	according to \eqref{eq:isentropic-P}. As in the isentropic case, see the proof of Theorem~\ref{thm:isen-condition}, one simply checks that the assumptions of Theorem~\ref{thm:convint-nodens}, which are the same as the assumptions of Theorem~\ref{thm:convint}, hold. Note that with the choice \eqref{eq:f2-temp-riemann}, the left-hand side of \eqref{eq:full-fansubs-subs} turns into $e(\rho_i,\vm_i,\mU_i)$ and hence \eqref{eq:p0-subs} is satisfied. Theorem~\ref{thm:convint-nodens} yields infinitely many $(\til{\rho}_1,\til{\vm}_1)\in L^\infty(\Gamma_1;\R^+\times \R^2)$ and infinitely many $(\til{\rho}_2,\til{\vm}_2)\in L^\infty(\Gamma_2;\R^+\times \R^2)$ with the properties \ref{item:convint.a} - \ref{item:convint.c} stated in Theorem~\ref{thm:convint}, and additionally $\til{\rho}_1\equiv \rho_1$ and $\til{\rho}_2\equiv \rho_2$. For any two pairs $(\til{\rho}_1,\til{\vm}_1)$ and $(\til{\rho}_2,\til{\vm}_2)$ we define $(\rho,\vu,p)\in L^\infty \big((0,\infty)\times \R^2;\R^+\times \R^2\times \R^+\big)$ by 
	\begin{equation} \label{eq:f1-temp-riemann}
		(\rho,\vu,p)= \left\{ 
		\begin{array}{ll}
			\big(\rho_-,\vu_-,p_-\big) & \text{ if } (t,\vx)\in \Gamma_- \ec\\
			\big(\til{\rho}_1,\til{\vm}_1 / \til{\rho}_1 ,p_1\big) & \text{ if } (t,\vx)\in \Gamma_1 \:\ec\\
			\big(\til{\rho}_2,\til{\vm}_2 / \til{\rho}_2 ,p_2\big) & \text{ if } (t,\vx)\in \Gamma_2 \:\ec\\
			\big(\rho_+,\vu_+,p_+\big) & \text{ if } (t,\vx)\in \Gamma_+ \ed
		\end{array}
		\right.
	\end{equation} 
	We claim that each $(\rho,\vu,p)$ is indeed an admissible weak solution to the initial value problem \eqref{eq:full-euler-pv-dens} - \eqref{eq:full-euler-pv-en}, \eqref{eq:riemann-init-full} with the desired properties. This can be shown similarly as in the isentropic case. Indeed equations \eqref{eq:full-euler-weak-dens} and \eqref{eq:full-euler-weak-mom} can be handled as \eqref{eq:baro-euler-weak-whole-dens} and \eqref{eq:baro-euler-weak-whole-mom} in the proof of Theorem~\ref{thm:isen-condition} with obvious modifications. Note that for the validity of \eqref{eq:full-euler-weak-mom} it is essential that $p_i(\rho_i)=p_i$ for $i=1,2$. In order to show the energy equation \eqref{eq:full-euler-weak-en} and the entropy inequality \eqref{eq:full-euler-weak-entropy} one has to make use of the fact that $\til{\rho}_i\equiv \rho_i$ for $i=1,2$. As in the isentropic case, this implies that \eqref{eq:sol-pde1} turns into 
	\begin{equation} \label{eq:f4-temp-riemann}
		\iint_{\Gamma_i} \til{\vm}_i\cdot \Grad\varphi \dx\dt - \int_{\partial \Gamma_i} \vm_i\cdot \vn_\vx \varphi \dS_{t,\vx} = 0
	\end{equation}
	for $i=1,2$ and all $\phi\in \Cc\big([0,\infty) \times \R^2\big)$. 
	
	The properties \ref{item:full-riemann.a} - \ref{item:full-riemann.c} can again be proved as in Theorem~\ref{thm:isen-condition}. With property \ref{item:full-riemann.c}, \eqref{eq:full-fansubs-en}, \eqref{eq:f4-temp-riemann} and the fact that 
	$$
	P_i(\rho_i) = \frac{p_i(\rho_i)}{\gamma-1} = \frac{p_i}{\gamma-1} = \rho_i e(\rho_i, p_i) \ec
	$$
	one shows \eqref{eq:full-euler-weak-en}.

	Analogously \eqref{eq:full-euler-weak-entropy} follows from \eqref{eq:full-fansubs-adm} and \eqref{eq:f4-temp-riemann}.
	
	Hence each $(\rho,\vu,p)$ defined in \eqref{eq:f1-temp-riemann} is an admissible weak solution of the initial value problem \eqref{eq:full-euler-pv-dens} - \eqref{eq:full-euler-pv-en}, \eqref{eq:riemann-init-full}. This finishes the proof. 
\end{proof}

\subsubsection{The Corresponding System of Algebraic Equations and Inequalities}

In order to prove existence of an admissible fan subsolution, we again make use of the fact that \eqref{eq:full-fansubs-dens} - \eqref{eq:full-fansubs-adm} can be translated into a set of algebraic equations and inequalities as a fan subsolution is a piecewise constant object. The following Proposition is taken from \name{Al~Baba et al.} \cite[Proposition 4.4]{AKKMM20}.

\begin{prop} \label{prop:full-alg-eq}
	Let $\rho_\pm\in\R^+$, $\vu_\pm\in\R^2$, $p_\pm\in\R^+$ be given. Assume that there exist numbers $\mu_0,\mu_1,\mu_2\in\R$, $\rho_i,p_i\in\R^+$, $\alpha_i,\beta_i,\gamma_i,\delta_i\in \R$ and $C_i\in\R^+$ (for $i=1,2$) which fulfill the following algebraic equations and inequalities: 
	\begin{itemize}
		\item Order of the speeds:
		\begin{align} \label{eq:full-order}
			\mu_0&<\mu_1<\mu_2 \es
		\end{align}
		
		\item Rankine Hugoniot conditions on the left interface:
		\begin{align}
			\mu_0 (\rho_- - \rho_1) &= \rho_- v_- - \rho_1 \beta_1 \es \label{eq:full-rhl1}\\
			\mu_0 (\rho_- u_- - \rho_1 \alpha_1) &= \rho_- u_- v_- - \rho_1 \delta_1 \es\label{eq:full-rhl2}\\
			\mu_0 (\rho_- v_- - \rho_1 \beta_1) &= \rho_- \big(v_-\big)^2 - \rho_1 \bigg(\frac{C_1}{2}-\gamma_1\bigg) + p_- - p_1 \es\label{eq:full-rhl3} 
		\end{align}
		\begin{align} 
			&\mu_0 \bigg(\rho_- \frac{|\vu_-|^2}{2} + \frac{1}{\gamma-1} p_- - \rho_1 \frac{C_1}{2} - \frac{1}{\gamma-1} p_1\bigg) \notag \\
			&= \bigg(\rho_- \frac{|\vu_-|^2}{2} + \frac{\gamma}{\gamma-1} p_-\bigg) v_- - \bigg(\rho_1 \frac{C_1}{2} + \frac{\gamma}{\gamma-1} p_1\bigg) \beta_1 \es \label{eq:full-rhl4}
		\end{align}
		
		\item Rankine Hugoniot conditions on the middle interface: 
		\begin{align}
			\mu_1 (\rho_1 - \rho_2) &= \rho_1 \beta_1 - \rho_2 \beta_2 \es \label{eq:full-rhm1}\\
			\mu_1 (\rho_1 \alpha_1 - \rho_2 \alpha_2) &= \rho_1 \delta_1 - \rho_2 \delta_2 \es \label{eq:full-rhm2}\\
			\mu_1 (\rho_1 \beta_1 - \rho_2 \beta_2) &= \rho_1 \bigg(\frac{C_1}{2}-\gamma_1\bigg) - \rho_2 \bigg(\frac{C_2}{2}-\gamma_2\bigg) + p_1 - p_2 \es \label{eq:full-rhm3}
		\end{align}
		\begin{align}
			&\mu_1 \bigg(\rho_1 \frac{C_1}{2} + \frac{1}{\gamma-1} p_1 - \rho_2 \frac{C_2}{2} - \frac{1}{\gamma-1} p_2\bigg) \notag \\
			&= \bigg(\rho_1 \frac{C_1}{2} + \frac{\gamma}{\gamma-1} p_1\bigg) \beta_1 - \bigg(\rho_2 \frac{C_2}{2} + \frac{\gamma}{\gamma-1} p_2\bigg) \beta_2 \es \label{eq:full-rhm4}
		\end{align}
		
		\item Rankine Hugoniot conditions on the right interface:
		\begin{align}
			\mu_2 (\rho_2 - \rho_+) &= \rho_2 \beta_2 - \rho_+ v_+ \es \label{eq:full-rhr1}\\
			\mu_2 (\rho_2 \alpha_2 - \rho_+ u_+) &= \rho_2 \delta_2 - \rho_+ u_+ v_+ \es \label{eq:full-rhr2}\\
			\mu_2 (\rho_2 \beta_2 - \rho_+ v_+) &= \rho_2 \bigg(\frac{C_2}{2}-\gamma_2\bigg) - \rho_+ \big(v_+\big)^2 + p_2 - p_+ \es \label{eq:full-rhr3}
		\end{align} 
		\begin{align}
			&\mu_2 \bigg(\rho_2 \frac{C_2}{2} + \frac{1}{\gamma-1} p_2 - \rho_+ \frac{|\vu_+|^2}{2} - \frac{1}{\gamma-1} p_+\bigg) \notag \\ 
			&= \bigg(\rho_2 \frac{C_2}{2} + \frac{\gamma}{\gamma-1} p_2\bigg) \beta_2 - \bigg(\rho_+ \frac{|\vu_+|^2}{2} + \frac{\gamma}{\gamma-1} p_+\bigg) v_+ \es \label{eq:full-rhr4}
		\end{align}
		
		\item Subsolution conditions for $i=1,2$:
		\begin{align}
			C_i - \big(\alpha_i\big)^2 - \big(\beta_i\big)^2 &> 0 \es \label{eq:full-sc1}\\ 
			\bigg(\frac{C_i}{2} - \big(\alpha_i\big)^2 + \gamma_i\bigg) \bigg(\frac{C_i}{2} - \big(\beta_i\big)^2 - \gamma_i\bigg) - (\delta_i-\alpha_i \beta_i)^2 &> 0 \es \label{eq:full-sc2} 
		\end{align}
		
		\item Admissibility condition on the left interface: 
		\begin{equation} \label{eq:full-adml}
			\mu_0 \Big(\rho_1 s(\rho_1,p_1) - \rho_- s(\rho_-,p_-)\Big)\leq \rho_1 s(\rho_1,p_1) \beta_1 - \rho_- s(\rho_-,p_-) v_- \es 
		\end{equation}
		
		\item Admissibility condition on the middle interface: 
		\begin{equation} \label{eq:full-admm}
			\mu_1 \Big(\rho_2 s(\rho_2,p_2) - \rho_1 s(\rho_1,p_1)\Big) \leq \rho_2 s(\rho_2,p_2) \beta_2 - \rho_1 s(\rho_1,p_1) \beta_1 \es 
		\end{equation}
		
		\item Admissibility condition on the right interface: 
		\begin{equation} \label{eq:full-admr}
			\mu_2 \Big(\rho_+ s(\rho_+,p_+) - \rho_2 s(\rho_2,p_2)\Big) \leq \rho_+ s(\rho_+,p_+) v_+ - \rho_2 s(\rho_2,p_2) \beta_2 \ed
		\end{equation}
	\end{itemize}
	
	Then
	\begin{align}
		\rho_i &\ec \label{eq:full-fansubs-defn-rho} \\ 
		\vm_i&:= \rho_i \left(\begin{array}{c}
			\alpha_i \\ 
			\beta_i
		\end{array}\right)\ec \label{eq:full-fansubs-defn-m} \\
		\mU_i&:=\rho_i\left(\begin{array}{rr} 
			\gamma_i & \delta_i \\
			\delta_i & -\gamma_i
		\end{array}\right) \ec \label{eq:full-fansubs-defn-U} \\
		c_i &:= \rho_i \frac{C_i}{2} + p_i \ec \quad\text{ and } \label{eq:full-fansubs-defn-c} \\
		p_i & \label{eq:full-fansubs-defn-p} 
	\end{align}
	define an admissible fan subsolution to the initial value problem \eqref{eq:full-euler-pv-dens} - \eqref{eq:full-euler-pv-en}, \eqref{eq:riemann-init-full}, where the corresponding fan partition is determined by the speeds $\mu_0,\mu_1,\mu_2$.
\end{prop}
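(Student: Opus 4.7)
The strategy is to mirror the proof of Proposition~\ref{prop:isen-alg-eq}, with the modifications required by the additional energy equation, the extra wedge $\Gamma_2$, and the parameter-family structure of the entropy condition. First, \eqref{eq:full-order} ensures that $\mu_0,\mu_1,\mu_2$ determine a valid fan partition in the sense of Definition~\ref{defn:full-fanpart}, so that the piecewise constant quintuple $(\ov{\rho},\ov{\vm},\ov{\mU},\ov{c},\ov{p})$ built from \eqref{eq:full-fansubs-defn-rho}--\eqref{eq:full-fansubs-defn-p} is well-defined.

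Since the fan subsolution is piecewise constant, the weak identities \eqref{eq:full-fansubs-dens}, \eqref{eq:full-fansubs-mom}, \eqref{eq:full-fansubs-en} reduce via an application of the Divergence Theorem on each of the four regions $\Gamma_-,\Gamma_1,\Gamma_2,\Gamma_+$ (exactly as in the proof of Proposition~\ref{prop:isen-alg-eq}) to Rankine--Hugoniot jump conditions at each of the three interfaces $\{y=\mu_i t\}$, $i=0,1,2$. These express conservation of mass, of both momentum components, and of energy across the interface; substituting \eqref{eq:full-fansubs-defn-rho}--\eqref{eq:full-fansubs-defn-p} together with $\rho\,e(\rho,p)=p/(\gamma-1)$ from \eqref{eq:incomp-EOS-pv}, they become exactly \eqref{eq:full-rhl1}--\eqref{eq:full-rhr4}.

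The entropy inequality \eqref{eq:full-fansubs-adm} similarly translates into a jump inequality at each interface, valid for every $Z\in C^\infty(\R)$ with $Z'\geq 0$. At a single interface of speed $\mu$ separating a state ``$L$'' on the left from a state ``$R$'' on the right, this inequality reads $m\bigl(Z(s_L)-Z(s_R)\bigr)\geq 0$, where the common mass flux $m:=\rho_L(v_L-\mu)=\rho_R(v_R-\mu)$ is well-defined thanks to the mass Rankine--Hugoniot condition. Because $Z$ is non-decreasing, $Z(s_L)-Z(s_R)$ has the same sign as $s_L-s_R$ (or vanishes), so $m\bigl(Z(s_L)-Z(s_R)\bigr)\geq 0$ holds for every admissible $Z$ if and only if it holds for the particular choice $Z(s)=s$, i.e.\ $m(s_L-s_R)\geq 0$. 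Rewriting this single scalar inequality at each of the three interfaces yields precisely \eqref{eq:full-adml}, \eqref{eq:full-admm} and \eqref{eq:full-admr}.

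Finally, the subsolution condition \eqref{eq:full-fansubs-subs} on each wedge $\Gamma_i$ requires both eigenvalues $\lambda_1^{(i)},\lambda_2^{(i)}$ of the symmetric $2\times 2$ matrix $\frac{\vm_i\otimes\vm_i}{\rho_i}+p_i\id-\mU_i$ to be strictly smaller than $c_i$; this is equivalent to the simultaneous positivity of $(c_i-\lambda_1^{(i)})+(c_i-\lambda_2^{(i)})$ and $(c_i-\lambda_1^{(i)})(c_i-\lambda_2^{(i)})$. Computing the trace and determinant of the matrix above by means of \eqref{eq:full-fansubs-defn-m}--\eqref{eq:full-fansubs-defn-U} and using $c_i=\rho_iC_i/2+p_i$ from \eqref{eq:full-fansubs-defn-c}, a straightforward calculation identical in spirit to the one at the end of the proof of Proposition~\ref{prop:isen-alg-eq} reduces these two positivity conditions to precisely \eqref{eq:full-sc1} and \eqref{eq:full-sc2}. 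The only genuinely new step compared with the isentropic case is the sign analysis in the third paragraph, which collapses the $Z$-parametrized entropy inequality to a single scalar inequality per interface; the rest is bookkeeping.
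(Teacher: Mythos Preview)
Your proof is correct and follows the paper's approach closely: the Rankine--Hugoniot reduction of the weak identities, the trace/determinant argument for the subsolution condition, and the monotonicity argument collapsing the $Z$-parametrized entropy jump inequalities to the single case $Z(s)=s$ are exactly what the paper does. There is one sign slip: with your convention $m:=\rho_L(v_L-\mu)=\rho_R(v_R-\mu)$, the entropy jump condition coming from \eqref{eq:full-fansubs-adm} is $m\bigl(Z(s_R)-Z(s_L)\bigr)\geq 0$, not $m\bigl(Z(s_L)-Z(s_R)\bigr)\geq 0$, and it is the former that rewrites to \eqref{eq:full-adml}--\eqref{eq:full-admr}.
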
 

\begin{proof}
	The first part of the proof works similarly to the isentropic case, see proof of Proposition~\ref{prop:isen-alg-eq}. Indeed plugging \eqref{eq:full-fansubs-defn-rho} - \eqref{eq:full-fansubs-defn-p} into \eqref{eq:full-rhl1} -  \eqref{eq:full-rhr4} yields the corresponding Rankine-Hugoniot conditions of the PDEs \eqref{eq:full-fansubs-dens} - \eqref{eq:full-fansubs-en}, where for the energy equation \eqref{eq:full-fansubs-en} one has to take into account the equation of state \eqref{eq:incomp-EOS-pv}. 
	
	As in the proof of Proposition~\ref{prop:isen-alg-eq}, one shows that \eqref{eq:full-sc1}, \eqref{eq:full-sc2} imply \eqref{eq:full-fansubs-subs}.
	
	It remains to prove that \eqref{eq:full-adml} - \eqref{eq:full-admr} indeed imply the entropy inequality \eqref{eq:full-fansubs-adm}. The latter, i.e. \eqref{eq:full-fansubs-adm}, is equivalent to the fact that 
	\begin{equation}  \label{eq:full-adm-temp}
		\mu_i \big(\rho_R Z(s_R) - \rho_L Z(s_L)\big) \leq \rho_R Z(s_R) v_R - \rho_L Z(s_L) v_L 
	\end{equation}
	holds for all $(i,L,R)\in \big\{(0,-,1),(1,1,2),(2,2,+)\}$ and all $Z\in C^\infty(\R)$ with $Z'\geq 0$, where we have set $v_i:=\beta_i$ for $i=1,2$, and $s_i:=s(\rho_i,p_i)$ for $i=-,1,2,+$.
	
	So let $Z\in C^\infty(\R)$ with $Z'\geq 0$ and $(i,L,R)\in \big\{(0,-,1),(1,1,2),(2,2,+)\}$ be arbitrary. Due to \eqref{eq:full-rhl1}, \eqref{eq:full-rhm1}, \eqref{eq:full-rhr1} and \eqref{eq:full-adml} - \eqref{eq:full-admr}, the statements
	\begin{align}
		\mu_i (\rho_L - \rho_R) &= \rho_L v_L - \rho_R v_R \qquad \text{ and } \label{eq:f5-temp-riemann} \\
		\mu_i (\rho_R s_R - \rho_L s_L) &\leq \rho_R s_R v_R - \rho_L s_L v_L \label{eq:f6-temp-riemann}
	\end{align}
	are satisfied. From \eqref{eq:f5-temp-riemann} and \eqref{eq:f6-temp-riemann} we deduce that 
	\begin{equation} \label{eq:f7-temp-riemann}
		\mu_i (s_R - s_L) \leq v_L (s_R - s_L) \ed
	\end{equation}
	Since $Z'\geq 0$, i.e. $Z$ is non-decreasing, \eqref{eq:f7-temp-riemann} implies
	$$
		\mu_i \big(Z(s_R) - Z(s_L)\big) \leq v_L \big(Z(s_R) - Z(s_L)\big) \ed
	$$
	Together with \eqref{eq:f5-temp-riemann} this yields \eqref{eq:full-adm-temp}.
	
	Thus $(\rho_i,\vm_i,\mU_i,c_i,p_i)$ (for $i=1,2$) indeed define an admissible fan subsolution.
\end{proof}

\begin{rem}
	The converse of Proposition~\ref{prop:full-alg-eq} holds as well. 
\end{rem}

\subsubsection{Solution of the Algebraic System}

After we have discussed the preliminaries, we now turn our attention towards the proof of Theorem~\ref{thm:full-SS}. So let the assumptions of Theorem~\ref{thm:full-SS} be true. We recall that the proof of Theorem~\ref{thm:full-SS} is finished as soon as we find numbers $\mu_0,\mu_1,\mu_2\in\R$, $\rho_i,p_i\in\R^+$, $\alpha_i,\beta_i,\gamma_i,\delta_i\in \R$ and $C_i\in\R^+$ (for $i=1,2$) which satisfy \eqref{eq:full-order} - \eqref{eq:full-admr} due to Proposition~\ref{prop:full-alg-eq} and Theorem~\ref{thm:full-condition}. Inspired by the corresponding case for the isentropie Euler equations, see Subsection~\ref{subsec:riemann-isen-other-cases}, we look for those numbers as suitable perturbations of the corresponding numbers in the self-similar solution. This perturbation will be quantified by a small parameter $\ep > 0$. Note that in contrast to the case where the self-similar solution contains only one shock, it is not necessary to introduce an auxiliary state. The following can be also found in \name{Al Baba et al.} \cite[Subsection 5.2]{AKKMM20}.

For convenience we define functions $A,B,D:\R\rightarrow\R$ by 
\begin{align*} 
	A(\ep)&:= \rho_-(\rho_{M-}+\ep)(\rho_{M+}-\ep - \rho_+) - \rho_+(\rho_{M+}-\ep)(\rho_{M-}+\ep - \rho_-) \es \\
	B(\ep)&:= \rho_-\rho_+(\rho_{M-}+\ep)(\rho_{M+}-\ep)\big(v_{-} - v_{+}\big)^2 - (p_- - p_+)\ A(\ep) \es \\
	D(\ep) &:= v_{-} \rho_- (\rho_{M-} + \ep) (\rho_{M+} - \ep - \rho_+) - v_{+} \rho_+ (\rho_{M+}-\ep) (\rho_{M-} + \ep - \rho_-)\ed
\end{align*}

First of all we want to show some properties of the functions $A$ and $B$. It is easy to deduce from Proposition~\ref{prop:full-selfsimilar-SS} and the assumption $v_M=0$, that 
\begin{align}
	v_- &= \sqrt{2}\frac{p_M - p_-}{\sqrt{\rho_- \big((\gamma-1)p_- + (\gamma+1) p_M\big)}} \ec\label{eq:1} \\
	v_+ &= 	-\sqrt{2}\frac{p_M - p_+}{\sqrt{\rho_+ \big((\gamma-1)p_+ + (\gamma+1) p_M\big)}} \ed \label{eq:2}
\end{align}
Since $p_M > \max\{p_-,p_+\}$, we have $v_+<0<v_-$. Furthermore we obtain from Proposition~\ref{prop:full-selfsimilar-SS} and \eqref{eq:1}, \eqref{eq:2} that 
\begin{align*}
	\frac{\rho_{M-} - \rho_-}{\rho_-\rho_{M-}} = \frac{2 (p_M - p_-)}{\rho_- \big((\gamma-1)p_- + (\gamma+1) p_M\big)} = \frac{\big(v_-\big)^2}{p_M - p_-} \ec \\
	\frac{\rho_{M+} - \rho_+}{\rho_+\rho_{M+}} = \frac{2 (p_M - p_+)}{\rho_+ \big((\gamma-1)p_+ + (\gamma+1) p_M\big)} = \frac{\big(v_+\big)^2}{p_M - p_+} \ed 
\end{align*}
This leads to 
\begin{align*}
	\big(v_-\big)^2 + (p_- - p_+)\frac{\rho_{M-} - \rho_-}{\rho_-\rho_{M-}} &= \big(v_-\big)^2 \frac{p_M - p_+}{p_M - p_-}\ec \\ 
	\big(v_+\big)^2 - (p_- - p_+)\frac{\rho_{M+} - \rho_+}{\rho_+\rho_{M+}} &= \big(v_+\big)^2 \frac{p_M - p_-}{p_M - p_+}\ec
\end{align*}
and finally
\begin{align*}
	B(0) &= \rho_-\rho_+\rho_{M-}\rho_{M+}\big(v_- - v_+\big)^2 \\
	&\qquad - (p_- - p_+)\Big(\rho_-\rho_{M-}\big(\rho_{M+} - \rho_+\big) - \rho_+\rho_{M+}\big(\rho_{M-} - \rho_-\big)\Big) \\
	& = \rho_-\rho_+\rho_{M-}\rho_{M+}\bigg[\big(v_-\big)^2 + (p_- - p_+)\frac{\rho_{M-} - \rho_-}{\rho_-\rho_{M-}} + \big(v_+\big)^2 \\
	&\qquad - (p_- - p_+)\frac{\rho_{M+} - \rho_+}{\rho_+\rho_{M+}} -2v_-v_+ \bigg] \\
	&= \rho_-\rho_+\rho_{M-}\rho_{M+}\bigg[\big(v_-\big)^2 \frac{p_M - p_+}{p_M - p_-} + \big(v_+\big)^2 \frac{p_M - p_-}{p_M - p_+} -2v_-v_+ \bigg] \\
	&= \rho_-\rho_+\rho_{M-}\rho_{M+}\bigg[v_- \sqrt{\frac{p_M - p_+}{p_M - p_-}} - v_+ \sqrt{\frac{p_M - p_-}{p_M - p_+}} \ \bigg]^2 \\
	&\geq  0 \ed
\end{align*} 
Due to $p_M > \max\{p_-,p_+\}$ and $v_+<0<v_-$, we even have $B(0)>0$. Now, by continuity of the function $B$, there exists an $\ep_{\max,1}>0$ such that $B(\ep)>0$ for all $\ep\in(0,\ep_{\max,1}]$. Because $\rho_{M+}>\rho_+$, there exists $\ep_{\max,2}>0$ such that $\rho_{M+}-\ep-\rho_+ > 0$ for all $\ep\in(0,\ep_{\max,2}]$.

Next we want to show that there is an $\ep_{\max,3}>0$ such that $A(\ep)\neq 0$ for all $\ep\in(0,\ep_{\max,3}]$. To this end, let us first assume, that $A(0)\neq 0$. Then, by continuity of the function $A$, there exists such an $\ep_{\max,3}>0$. Now consider the case where $A(0) = 0$. Then we obtain 
\begin{align*}
	A(\ep) &=  \rho_-(\rho_{M-}+\ep)(\rho_{M+}-\ep - \rho_+) - \rho_+(\rho_{M+}-\ep)(\rho_{M-}+\ep - \rho_-) \\
	&= \ep^2 (\rho_+ - \rho_-) - \ep \big(2\rho_-\rho_+ + (\rho_- - \rho_+)(\rho_{M-} - \rho_{M+})\big) + \underbrace{A(0)}_{=0} \\
	&= \ep \big((\rho_+ - \rho_-)\ep -2\rho_-\rho_+ - (\rho_- - \rho_+)(\rho_{M-} - \rho_{M+})\big) \ec
\end{align*}
which has at most two zeros: If $\rho_- = \rho_+$ then 
\begin{align*}
	A(\ep)=0 \qquad \Longleftrightarrow \qquad \ep=0 \ec
\end{align*}
and if $\rho_- \neq \rho_+$ then 
\begin{align*}
	A(\ep)=0 \qquad \Longleftrightarrow \qquad \ep=0\ \text{ or }\ \ep=\frac{2\rho_-\rho_+ + (\rho_- - \rho_+)(\rho_{M-} - \rho_{M+})}{\rho_+ - \rho_-} \ed
\end{align*}
Hence there exists $\ep_{\max,3}>0$ such that $A(\ep)\neq 0$ for all $\ep\in(0,\ep_{\max,3}]$.

We set $\ep_{\max}:=\min\{\ep_{\max,1},\ep_{\max,2},\ep_{\max,3}\}$ and then we have
\begin{align} 
	A(\ep) &\neq 0\ec \label{eq:28a} \\
	B(\ep) &> 0\ec \label{eq:28b}\\
	\rho_{M+}-\ep-\rho_+ &> 0\ec \label{eq:28c}\\
	\rho_{M-} + \ep-\rho_- &> 0 \label{eq:28d}\ec
\end{align}
for all $\ep\in(0,\ep_{\max}]$.

Next we define the functions $\mu_0,\mu_1,\mu_2:(0,\ep_{\max}]\rightarrow\R$ by 
\begin{align*} 
	\mu_0(\ep)&:=\frac{1}{A(\ep)} \bigg[D(\ep) +\rho_-\rho_+(\rho_{M+} - \ep) (v_- - v_+)  \\
	&\qquad\qquad\qquad - \sqrt{\big(\rho_{M-} + \ep\big)^2\ \frac{\rho_{M+} - \ep - \rho_+}{\rho_{M-} + \ep-\rho_-}\ B(\ep)} \ \bigg]\es \\
	\mu_1(\ep)&:=\frac{1}{A(\ep)} \bigg[D(\ep)- \sqrt{(\rho_{M-} + \ep-\rho_-)(\rho_{M+} - \ep - \rho_+)\ B(\ep)}\ \bigg]\es \\
	\mu_2(\ep)&:= \frac{1}{A(\ep)} \bigg[D(\ep) + \rho_-\rho_+(\rho_{M-} + \ep) (v_- - v_+) \\
	&\qquad\qquad\qquad - \sqrt{(\rho_{M+} - \ep)^2\ \frac{\rho_{M-} + \ep - \rho_-}{\rho_{M+} - \ep - \rho_+}\ B(\ep)}\ \bigg]\ed
\end{align*}
Note first that the functions $\mu_0,\mu_1,\mu_2$ are well-defined because of \eqref{eq:28a} - \eqref{eq:28d}. We claim that these functions define perturbations of the shock speeds $\sigma_-$, $v_M$ and $\sigma_+$ of the self-similar solution. More precisely the following is true.

\begin{prop} \label{prop:limittostandard1}
	It holds that 
	\begin{align*}
		\lim\limits_{\ep\to 0}\mu_0(\ep) &= \sigma_- \ec &
		\lim\limits_{\ep\to 0}\mu_1(\ep) &= v_M \ec &
		\lim\limits_{\ep\to 0}\mu_2(\ep) &= \sigma_+ \ed 
	\end{align*}
\end{prop}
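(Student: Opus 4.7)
The plan is to compute the three limits as $\ep \to 0$ by treating $A(\ep)$, $B(\ep)$, $D(\ep)$ as polynomials in $\ep$. From \eqref{eq:28b} together with $\rho_{M\pm} > \rho_\pm$, we have $B(0) > 0$ and $\rho_{M-} - \rho_- > 0$, $\rho_{M+} - \rho_+ > 0$, so the square roots appearing in the definitions of $\mu_0, \mu_1, \mu_2$ extend smoothly to a neighborhood of $\ep = 0$. Two sub-cases arise: if $A(0) \neq 0$, each $\mu_i$ is continuous at $\ep = 0$ and the limit is obtained by direct substitution; if $A(0) = 0$, the factorization $A(\ep) = \ep\bigl((\rho_+ - \rho_-)\ep - 2\rho_-\rho_+ - (\rho_- - \rho_+)(\rho_{M-} - \rho_{M+})\bigr)$ derived in the proof of \eqref{eq:28a} exhibits $A(\ep)$ as having a simple zero at $\ep = 0$, and one applies L'H\^opital after first verifying that the numerator also vanishes at $\ep = 0$.

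First I would dispose of $\mu_1$. Using the Rankine-Hugoniot mass condition \eqref{eq:full-selfsimilar-SS-dens} in the form $\rho_\pm v_\pm = \sigma_\pm(\rho_\pm - \rho_{M\pm})$ (which uses $v_M = 0$), the expression $D(0)$ rewrites as
$$D(0) = (\rho_{M-} - \rho_-)(\rho_{M+} - \rho_+)(\sigma_+\rho_{M+} - \sigma_-\rho_{M-}).$$
Combined with the closed form
$$B(0) = \rho_-\rho_+\rho_{M-}\rho_{M+}\Bigl[v_-\sqrt{\tfrac{p_M-p_+}{p_M-p_-}} - v_+\sqrt{\tfrac{p_M-p_-}{p_M-p_+}}\,\Bigr]^2$$
established earlier in the text, this reduces the desired identity $\mu_1(0) = 0$ to the algebraic statement $D(0)^2 = (\rho_{M-} - \rho_-)(\rho_{M+} - \rho_+)B(0)$. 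The sign $D(0) > 0$ (immediate from $v_- > 0 > v_+$ and $\rho_{M\pm} > \rho_\pm$) selects the positive square root, so the bracket $D(\ep) - \sqrt{(\rho_{M-}+\ep-\rho_-)(\rho_{M+}-\ep-\rho_+)B(\ep)}$ vanishes at $\ep = 0$; hence $\mu_1(0) = v_M = 0$, via L'H\^opital in the degenerate case.

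For $\mu_0$ and $\mu_2$ the strategy is analogous. The identities to be verified are
$$\sigma_- A(0) = D(0) + \rho_-\rho_+\rho_{M+}(v_- - v_+) - \rho_{M-}\sqrt{\tfrac{\rho_{M+}-\rho_+}{\rho_{M-}-\rho_-}\,B(0)},$$
and its counterpart for $\sigma_+$. Substituting $\sigma_\pm = \rho_\pm v_\pm/(\rho_\pm - \rho_{M\pm})$ and the closed form for $B(0)$, each identity reduces to a polynomial identity in $\rho_\pm, \rho_{M\pm}, p_\pm, p_M$, which follows from the Rankine-Hugoniot mass and momentum relations \eqref{eq:full-selfsimilar-SS-dens}, \eqref{eq:full-selfsimilar-SS-mom} together with $v_M = 0$ and $u_\pm = 0$; the minus sign in front of the square root in the definitions of $\mu_0, \mu_2$ selects the branch consistent with the ordering $\sigma_- < 0 < \sigma_+$ dictated by $v_- > 0 > v_+$.

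The main obstacle I anticipate is the sheer algebraic volume of the polynomial identities. Introducing compact notation such as $a := \rho_-\rho_{M-}(\rho_{M+} - \rho_+)$ and $b := \rho_+\rho_{M+}(\rho_{M-} - \rho_-)$, so that $A(0) = a - b$ and $D(0) = v_-a - v_+b$, and systematically exploiting the two Hugoniot-type relations $\rho_\pm \rho_{M\pm} v_\pm^2 = (\rho_{M\pm} - \rho_\pm)(p_M - p_\pm)$ (equivalent to the explicit formulas for $v_\pm$ given in Proposition~\ref{prop:full-selfsimilar-SS} together with $v_M = 0$) will be essential to keep the verification tractable. In the degenerate case $A(0) = 0$, an extra step is required to compute the $\ep$-derivatives of the numerators at $\ep = 0$ and match them against $A'(0) = -2\rho_-\rho_+ - (\rho_- - \rho_+)(\rho_{M-} - \rho_{M+})$; smoothness of $\sqrt{B(\ep)}$ at $\ep = 0$, guaranteed by $B(0) > 0$, makes this differentiation routine but tedious.
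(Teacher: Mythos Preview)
Your treatment of $\mu_1$ matches the paper's: both reduce to the identity $D(0)^2 = (\rho_{M-}-\rho_-)(\rho_{M+}-\rho_+)B(0)$ (equivalently $D(0)^2 - A(0)E = (\rho_{M-}-\rho_-)(\rho_{M+}-\rho_+)B(0)$ together with $E=0$, which follows from the quadratic relation $A(0)v_M^2 - 2D(0)v_M + E = 0$ and $v_M = 0$), with L'H\^opital in the degenerate case $A(0)=0$.

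For $\mu_0$ and $\mu_2$, however, the paper avoids the ``sheer algebraic volume'' you anticipate. Rather than verifying the identities $\sigma_\pm A(0) = \ldots$ directly and then redoing L'H\^opital in the degenerate case, the paper observes the exact relations
\[
\mu_0(\ep) = v_- + \frac{\rho_{M-}+\ep}{\rho_{M-}+\ep-\rho_-}\bigl(\mu_1(\ep)-v_-\bigr),\qquad
\mu_2(\ep) = v_+ + \frac{\rho_{M+}-\ep}{\rho_{M+}-\ep-\rho_+}\bigl(\mu_1(\ep)-v_+\bigr),
\]
valid for \emph{all} $\ep\in(0,\ep_{\max}]$ (the square-root terms cancel when one forms $(\rho_{M-}+\ep-\rho_-)\mu_0(\ep) - (\rho_{M-}+\ep)\mu_1(\ep)$, and the remaining polynomial collapses to $-\rho_- v_- A(\ep)$). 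Since the Rankine--Hugoniot mass relation \eqref{eq:full-selfsimilar-SS-dens} gives $\sigma_\pm = v_\pm + \frac{\rho_{M\pm}}{\rho_{M\pm}-\rho_\pm}(v_M - v_\pm)$, the limits $\mu_0(\ep)\to\sigma_-$ and $\mu_2(\ep)\to\sigma_+$ follow immediately from $\mu_1(\ep)\to v_M$. This eliminates both the polynomial bookkeeping and the separate L'H\^opital computations for $\mu_0,\mu_2$ in the case $A(0)=0$; your route is correct but works considerably harder than necessary.
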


\begin{proof}
	We start with the Rankine-Hugoniot conditions for the self-similar solution \eqref{eq:full-selfsimilar-SS-dens}, \eqref{eq:full-selfsimilar-SS-mom} and we obtain by eliminating $\sigma_-$, $\sigma_+$ and $p_M$ that
	\begin{align*}
		&(\rho_{M+}-\rho_+) \big(\rho_- v_- - \rho_{M-} v_M\big)^2 + (\rho_- - \rho_{M-}) \big(\rho_{M+} v_M - \rho_+ v_+\big)^2 \\
		&= \Big(\rho_- \big(v_-\big)^2 - \rho_+ \big(v_+\big)^2 - \big(v_M\big)^2 (\rho_{M-} - \rho_{M+}) + p_- - p_+\Big) (\rho_{M+} - \rho_+) (\rho_- - \rho_{M-}) \ed
	\end{align*}
	This is equivalent to
	\begin{equation} \label{eq:quadratic_vm2}
		A(0)\ \big(v_M\big)^2 - 2 D(0)\ v_M + E = 0 \ec
	\end{equation}
	where the constant
	\begin{align*} 
		E &:= (p_- - p_+)(\rho_{M-} - \rho_-)(\rho_{M+} - \rho_+) \\
		&\qquad+ \big(v_-\big)^2 \rho_- \rho_{M-} (\rho_{M+} - \rho_+) - \big(v_+\big)^2 \rho_+ \rho_{M+} (\rho_{M-} - \rho_-) 
	\end{align*}
	only depends on the initial states.	Now we have to consider two cases, namely $A(0)=0$ and $A(0)\neq 0$.
	
	Let us start with $A(0)=0$. Then we easily deduce that 
	\begin{equation*}
	D(0) = \rho_- \rho_{M-} (\rho_{M+} - \rho_+) (v_- - v_+) \ec
	\end{equation*}
	which does not vanish because $v_+<0<v_-$ and $\rho_{M+} > \rho_+$. Hence we get from \eqref{eq:quadratic_vm2}, that 
	\begin{equation} \label{eq:4}
		v_M = \frac{E}{2D(0)} \ed
	\end{equation}
	Next we want to compute $\lim\limits_{\ep \to 0}\mu_1(\ep)$ and compare it with \eqref{eq:4}. Keeping in mind that we are considering the case $A(0)=0$, we get 
	\begin{align*}
		&D(0)- \sqrt{(\rho_{M-} - \rho_-)(\rho_{M+} - \rho_+)\ B(0)} \\
		&=\big(v_- - v_+\big) \rho_- \rho_{M-} (\rho_{M+} - \rho_+) \\
		&\qquad- \sqrt{(\rho_{M-} - \rho_-)(\rho_{M+} - \rho_+)  \rho_- \rho_+ \rho_{M-}\rho_{M+}(v_- - v_+)^2} \\
		&=\big(v_- - v_+\big) \rho_- \rho_{M-} (\rho_{M+} - \rho_+) - \sqrt{(\rho_{M+} - \rho_+)^2  (\rho_+)^2 (\rho_{M+})^2(v_- - v_+)^2} \\  &= 0\ed
	\end{align*}
	Hence we can apply L'Hospital's rule. We obtain 
	\begin{align*}
		\lim\limits_{\ep\to 0 } A'(\ep) &= \lim\limits_{\ep\to 0 }\Big(\rho_-(\rho_{M+}-\ep - \rho_+) - \rho_-(\rho_{M-}+\ep) - \rho_+(\rho_{M+}-\ep) \\
		&\qquad\qquad\qquad+ \rho_+(\rho_{M-}+\ep - \rho_-)\Big) \\
		&= -2\rho_-\rho_+ - (\rho_- - \rho_+) (\rho_{M-} - \rho_{M+}) \ed
	\end{align*} 
	A short calculation shows that this is nonzero: From $A(0)=0$ we can deduce that 
	$$
		\frac{\rho_- - \rho_+}{\rho_-\rho_+} = \frac{\rho_{M-} - \rho_{M+}}{\rho_{M-}\rho_{M+}} \ed
	$$
	This means that $\rho_- - \rho_+$ and $\rho_{M-} - \rho_{M+}$ have the same sign, which implies 
	$$
		(\rho_- - \rho_+)(\rho_{M-} - \rho_{M+})\geq 0 \ed
	$$ 
	Since $\rho_-\rho_+ >0$, we have $A'(0)<0$, in particular $A'(0)\neq 0$.
	
	Furthermore a long but straightforward computation yields
	\begin{equation*}
		\lim\limits_{\ep\to 0 }\Big[D(\ep)- \sqrt{(\rho_{M-} + \ep-\rho_-)(\rho_{M+} - \ep - \rho_+)\ B(\ep)}\ \Big]' = \frac{A'(0)\ E}{2D(0)} \ed
	\end{equation*} 
	Hence by L'Hospital's rule we obtain $\lim\limits_{\ep\to 0 }\mu_1(\ep)= \frac{E}{2D(0)}$ and recalling \eqref{eq:4}, we deduce $\lim\limits_{\ep\to 0 }\mu_1(\ep) = v_M$.
	
	Let us now consider the case $A(0)\neq 0$. Then we obtain from \eqref{eq:quadratic_vm2} that 
	\begin{align*}
		v_M &= \frac{1}{A(0)}\Big[D(0) \pm \sqrt{D(0)^2 - A(0)\ E\ }\ \Big] \ed
	\end{align*}
	The correct sign in the equation above is ``$-$'' because\footnote{Alternatively, this can be verified by considering the admissibility criterion.} $v_M=0$ and $D(0)>0$, which easily follows from $v_+<0<v_-$.
	Furthermore it is simple to check that 
	$$
		D(0)^2 - A(0)\ E=(\rho_{M-} - \rho_-)(\rho_{M+} - \rho_+)\ B(0) \ed
	$$ 
	Then it is easy to conclude $\mu_1(0) = v_M$. 
	
	To finish the proof of Proposition~\ref{prop:limittostandard1} we have to show that $\lim\limits_{\ep\to 0 } \mu_1(\ep)=v_M$ implies that $\lim\limits_{\ep\to 0 } \mu_0(\ep)=\sigma_-$ and $\lim\limits_{\ep\to 0 } \mu_2(\ep)=\sigma_+$. It is straightforward to deduce that 
	\begin{align*}
		\mu_0(\ep)&=v_- + \frac{\rho_{M-} + \ep}{\rho_{M-} + \ep - \rho_-} \big(\mu_1(\ep) - v_-\big) \ec \\
		\mu_2(\ep)&=v_+ + \frac{\rho_{M+} - \ep}{\rho_{M+} - \ep - \rho_+} \big(\mu_1(\ep) - v_+\big) \ed
	\end{align*}
	On the other hand we get from \eqref{eq:full-selfsimilar-SS-dens}, that
	\begin{align*}
		\sigma_-&=v_- + \frac{\rho_{M-}}{\rho_{M-} - \rho_-} \big(v_M - v_-\big) \ec \\
		\sigma_+&=v_+ + \frac{\rho_{M+}}{\rho_{M+} - \rho_+} \big(v_M - v_+\big) \ed
	\end{align*}
	Hence we easily deduce $\lim\limits_{\ep\to 0 } \mu_0(\ep)=\sigma_-$ and $\lim\limits_{\ep\to 0 } \mu_2(\ep)=\sigma_+$.
\end{proof} 

Because of $\sigma_- < v_M < \sigma_+$ and the continuity of the functions $\mu_0,\mu_1,\mu_2$, we may assume that $\mu_0(\ep) < \mu_1(\ep) < \mu_2(\ep)$ for all $\ep \in(0,\ep_{\max}]$. If this is not the case we redefine $\ep_{\max}$ to be a bit smaller than the smallest positive value of $\ep$ for which $\mu_0(\ep)<\mu_1(\ep)<\mu_2(\ep)$ is violated. 

In order to proceed further we need to introduce a second positive parameter $\ov{\ep} > 0$. We define the functions $C_1,C_2,\gamma_1,\gamma_2:(0,\ep_{\max}]\times(0,p_M)$ by 
\begin{align*} 
	C_1(\ep,\ov{\ep})&:= \frac{2}{(\rho_{M-} + \ep)\big(\mu_0(\ep) - \mu_1(\ep)\big)} \bigg[-\mu_0(\ep)\bigg(\frac{1}{\gamma-1}(p_M - \ov{\ep} - p_-)-\rho_- \frac{\big(v_-\big)^2}{2}\bigg) \\ 
	&\qquad\qquad\quad\qquad +\mu_1(\ep)\frac{\gamma}{\gamma-1}(p_M - \ov{\ep}) - \bigg(\rho_- \frac{\big(v_-\big)^2}{2} + \frac{\gamma}{\gamma-1}p_-\bigg) v_-\bigg] \ec \\
	C_2(\ep,\ov{\ep})&:= \frac{2}{(\rho_{M+} - \ep)\big(\mu_2(\ep) - \mu_1(\ep)\big)}\bigg[-\mu_2(\ep)\bigg(\frac{1}{\gamma-1}(p_M - \ov{\ep} - p_+)-\rho_+ \frac{\big(v_+\big)^2}{2}\bigg) \\ 
	&\qquad\qquad\quad\qquad +\mu_1(\ep)\frac{\gamma}{\gamma-1}(p_M - \ov{\ep}) - \bigg(\rho_+ \frac{\big(v_+\big)^2}{2} + \frac{\gamma}{\gamma-1}p_+\bigg)v_+\bigg] \ec \\
	\gamma_1(\ep,\ov{\ep})&:= \frac{1}{(\rho_{M-} + \ep)}\bigg[(\rho_{M-} + \ep)\frac{C_1(\ep,\ov{\ep})}{2} - \rho_- \big(v_-\big)^2 + p_M - \ov{\ep} - p_- \\
	&\qquad\qquad\quad\qquad - \mu_0(\ep) \big((\rho_{M-} + \ep)\mu_1(\ep) - \rho_- v_-\big)\bigg] \ec \\
	\gamma_2(\ep,\ov{\ep})&:= \frac{1}{(\rho_{M+} - \ep)}\bigg[(\rho_{M+} - \ep)\frac{C_2(\ep,\ov{\ep})}{2} - \rho_+ \big(v_+\big)^2 + p_M - \ov{\ep} - p_+ \\
	&\qquad\qquad\quad\qquad - \mu_2(\ep) \big((\rho_{M+} - \ep)\mu_1(\ep) - \rho_+ v_+\big)\bigg] \ed
\end{align*}
Note that these functions are well-defined because of the arguments above. More precisely, it holds that $\mu_0(\ep)-\mu_1(\ep)\neq 0$ and $\mu_2(\ep)-\mu_1(\ep)\neq 0$ for all $\ep \in(0,\ep_{\max}]$.

\begin{prop} \label{prop:limittostandard2}
	It holds that 
	\begin{align*} 
	\lim\limits_{(\ep,\ov{\ep})\to (0,0)} C_1(\ep,\ov{\ep}) &= \big(v_M\big)^2 \ec & \lim\limits_{(\ep,\ov{\ep})\to (0,0)} \gamma_1(\ep,\ov{\ep}) &= -\frac{\big(v_M\big)^2}{2} \ec \\
	\lim\limits_{(\ep,\ov{\ep})\to (0,0)} C_2(\ep,\ov{\ep}) &= \big(v_M\big)^2 \ec & \lim\limits_{(\ep,\ov{\ep})\to (0,0)} \gamma_2(\ep,\ov{\ep}) &= -\frac{\big(v_M\big)^2}{2} \ed
	\end{align*}
\end{prop}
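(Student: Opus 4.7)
The plan is to proceed by direct computation of the pointwise limits, using Proposition~\ref{prop:limittostandard1} as the key input and the Rankine-Hugoniot conditions \eqref{eq:full-selfsimilar-SS-mom}, \eqref{eq:full-selfsimilar-SS-en} as the identities that make the limits come out correctly. First I would observe that the expressions defining $C_1,C_2,\gamma_1,\gamma_2$ are rational functions of $\ep$ and $\ov{\ep}$ whose denominators stay bounded away from zero in a neighbourhood of $(0,0)$: indeed $\rho_{M\pm}\pm \ep \to \rho_{M\pm}>0$, and $\mu_0(\ep)-\mu_1(\ep)\to \sigma_--v_M$, $\mu_2(\ep)-\mu_1(\ep)\to \sigma_+-v_M$, both of which are strictly negative respectively positive since $\sigma_-<v_M<\sigma_+$ in the self-similar solution with two shocks. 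Hence the limits may be evaluated componentwise by plugging in the limit values from Proposition~\ref{prop:limittostandard1}.

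Next, for the $\gamma_i$-limits I would check that the resulting expression coincides with $-(v_M)^2/2$. Carrying out this substitution for $\gamma_1$ yields after multiplication by $\rho_{M-}$
\[
\rho_{M-}\gamma_1(0,0) \;=\; \rho_{M-}\tfrac{C_1(0,0)}{2} - \rho_-(v_-)^2 + p_M - p_- - \sigma_-\bigl(\rho_{M-}v_M - \rho_- v_-\bigr),
\]
so $\gamma_1(0,0)=-(v_M)^2/2$ is equivalent, after using $C_1(0,0)=(v_M)^2$, to the Rankine-Hugoniot momentum condition
\[
\sigma_-\bigl(\rho_- v_- - \rho_{M-}v_M\bigr) \;=\; \rho_-(v_-)^2 - \rho_{M-}(v_M)^2 + p_- - p_M,
\]
which is exactly \eqref{eq:full-selfsimilar-SS-mom} with the ``$-$''~sign. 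The analogous computation for $\gamma_2$ reduces to the ``$+$''~version of \eqref{eq:full-selfsimilar-SS-mom}. Thus, modulo establishing $C_1(0,0)=C_2(0,0)=(v_M)^2$, the $\gamma_i$-limits are immediate.

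For the $C_i$-limits, after clearing the denominator $\rho_{M-}(\sigma_--v_M)/2$ in the expression for $C_1(0,0)$, the identity $C_1(0,0)=(v_M)^2$ is equivalent to
\[
\rho_{M-}\tfrac{(v_M)^2}{2}(\sigma_--v_M) \;=\; -\sigma_-\!\Bigl(\tfrac{p_M-p_-}{\gamma-1} - \rho_-\tfrac{(v_-)^2}{2}\Bigr) + v_M\tfrac{\gamma}{\gamma-1}p_M - \Bigl(\rho_-\tfrac{(v_-)^2}{2}+\tfrac{\gamma}{\gamma-1}p_-\Bigr)v_-,
\]
and rearranging this is precisely the Rankine-Hugoniot energy condition \eqref{eq:full-selfsimilar-SS-en} after inserting the caloric equation of state $\rho e(\rho,p)=p/(\gamma-1)$ which follows from \eqref{eq:incomp-EOS-pv}. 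The computation for $C_2(0,0)$ is the mirror statement using the ``$+$''~version of \eqref{eq:full-selfsimilar-SS-en}.

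The only mild obstacle is bookkeeping: the expressions for $C_i,\gamma_i$ are tailor-made so that they encode the energy, respectively momentum Rankine-Hugoniot conditions of the self-similar shocks, but this is only visible after a somewhat tedious rearrangement. There is no analytical subtlety; the continuity argument carries the entire statement, and the apparent singularities of the denominators are harmless thanks to the strict ordering $\sigma_-<v_M<\sigma_+$ guaranteed by Proposition~\ref{prop:full-selfsimilar-SS}. Hence the proof reduces to verifying four algebraic identities, each of which is a direct rewriting of one of the Rankine-Hugoniot equations at the two outer shocks.
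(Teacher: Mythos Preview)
Your proposal is correct and follows essentially the same approach as the paper: both use Proposition~\ref{prop:limittostandard1} to pass to the limit in the continuous rational expressions, then identify the resulting algebraic identities for $C_i(0,0)=(v_M)^2$ and $\gamma_i(0,0)=-(v_M)^2/2$ with the energy Rankine--Hugoniot condition~\eqref{eq:full-selfsimilar-SS-en} (after substituting $\rho e=p/(\gamma-1)$ from~\eqref{eq:incomp-EOS-pv}) and the momentum condition~\eqref{eq:full-selfsimilar-SS-mom}, respectively. Your write-up is in fact more detailed than the paper's, which only sketches these two reductions without displaying the intermediate algebra.
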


\begin{proof}
	To prove this, we need the Rankine Hugoniot conditions of the self-similar solution in the energy equation \eqref{eq:full-selfsimilar-SS-en}, where we use \eqref{eq:incomp-EOS-pv} to replace $e$.
	
	We obtain that $\lim\limits_{(\ep,\ov{\ep})\to (0,0)} C_1(\ep,\ov{\ep}) = \big(v_M\big)^2$ and $\lim\limits_{(\ep,\ov{\ep})\to (0,0)} C_2(\ep,\ov{\ep}) = \big(v_M\big)^2$ by using Proposition~\ref{prop:limittostandard1} and \eqref{eq:full-selfsimilar-SS-en}.
	
	The fact that $\lim\limits_{(\ep,\ov{\ep})\to (0,0)} \gamma_1(\ep,\ov{\ep}) = -\frac{(v_M)^2}{2}$ and $\lim\limits_{(\ep,\ov{\ep})\to (0,0)} \gamma_2(\ep,\ov{\ep}) = -\frac{(v_M)^2}{2}$ can be shown analogously by using the Rankine Hugoniot conditions \eqref{eq:full-selfsimilar-SS-mom}.
\end{proof}

We continue the proof of the Theorem~\ref{thm:full-SS} by observing that the perturbations defined above indeed help to define numbers $\mu_0,\mu_1,\mu_2\in\R$, $\rho_i,p_i\in\R^+$, $\alpha_i,\beta_i,\gamma_i,\delta_i\in \R$ and $C_i\in\R^+$ (for $i=1,2$) as required by Proposition~\ref{prop:full-alg-eq}. 

\begin{prop} \label{prop:3algequa}
	Assume that there exist $\ep\in (0,\ep_{\max}]$, $,\ov{\ep}\in(0,p_M)$ which satisfy the following inequalities.
	\begin{itemize}
		\item Order of the speeds:
		\begin{equation} \label{eq:3order}
			\mu_0(\ep)<\mu_1(\ep)<\mu_2(\ep) \es
		\end{equation} 
		
		\item Subsolution conditions:
		\begin{align}
			C_1(\ep,\ov{\ep}) - \mu_1(\ep)^2 &> 0 \es \label{eq:3sc1}\\
			C_2(\ep,\ov{\ep}) - \mu_1(\ep)^2 &> 0 \es \label{eq:3sc2}\\
			\bigg(\frac{C_1(\ep,\ov{\ep})}{2} + \gamma_1(\ep,\ov{\ep})\bigg) \bigg(\frac{C_1(\ep,\ov{\ep})}{2} - \mu_1(\ep)^2 - \gamma_1(\ep,\ov{\ep})\bigg)  &> 0 \es \label{eq:3sc3} \\
			\bigg(\frac{C_2(\ep,\ov{\ep})}{2} + \gamma_2(\ep,\ov{\ep})\bigg) \bigg(\frac{C_2(\ep,\ov{\ep})}{2} - \mu_1(\ep)^2 - \gamma_2(\ep,\ov{\ep})\bigg)  &> 0 \es \label{eq:3sc4}
		\end{align}
		
		\item Admissibility condition on the left interface:
		\begin{align}
			&\mu_0(\ep) \Big((\rho_{M-}+\ep) s(\rho_{M-}+\ep,p_M-\ov{\ep}) - \rho_- s(\rho_-,p_-)\Big) \notag\\
			&\quad\leq (\rho_{M-}+\ep) s(\rho_{M-}+\ep,p_M-\ov{\ep}) \mu_1(\ep) - \rho_- s(\rho_-,p_-) v_- \es \label{eq:3adml}
		\end{align}
		
		\item Admissibility condition on the right interface: 
		\begin{align}
			&\mu_2(\ep) \Big(\rho_+ s(\rho_+,p_+) - (\rho_{M+}-\ep) s(\rho_{M+}-\ep,p_M-\ov{\ep})\Big) \notag\\
			&\quad\leq \rho_+ s(\rho_+,p_+) v_+ - (\rho_{M+}-\ep) s(\rho_{M+}-\ep,p_M-\ov{\ep}) \mu_1(\ep) \ed \label{eq:3admr}
		\end{align}
	\end{itemize}
 	Then 
 	\begin{align*}
 		\mu_0 &:= \mu_0(\ep)\ec &
 		\mu_1 &:= \mu_1(\ep)\ec &
 		\mu_2 &:= \mu_2(\ep)\ec \\
 		\rho_1 &:= \rho_{M-} + \ep \ec & & &
 		\rho_2 &:= \rho_{M+} - \ep \ec \\
 		& & p_1 &:= p_2 := p_M - \ov{\ep} \ec \\
 		& & \alpha_1 &:=\alpha_2 := 0 \ec \\
 		& & \beta_1 &:=\beta_2 := \mu_1(\ep) \ec \\
 		\gamma_1 &:= \gamma_1(\ep,\ov{\ep}) \ec & & & 
 		\gamma_2 &:= \gamma_2(\ep,\ov{\ep}) \ec \\
 		& & \delta_1 &:= \delta_2 := 0 \\
 		C_1 &:= C_1(\ep,\ov{\ep}) \ec & & & 
 		C_2 &:= C_2(\ep,\ov{\ep}) 
 	\end{align*}
 	satisfy \eqref{eq:full-order} - \eqref{eq:full-admr}.
\end{prop}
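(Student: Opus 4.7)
The plan is to verify each of the twenty-one relations \eqref{eq:full-order}--\eqref{eq:full-admr} in turn, exploiting the highly symmetric ansatz $\alpha_i=\delta_i=0$, $\beta_1=\beta_2=\mu_1(\ep)$, $p_1=p_2=p_M-\ov{\ep}$, which is modeled on the symmetry $u_\pm=v_M=0$ and $p_{M-}=p_{M+}=p_M$ of the underlying self-similar solution. The order relation \eqref{eq:full-order} is precisely \eqref{eq:3order}. The three tangential Rankine--Hugoniot equations \eqref{eq:full-rhl2}, \eqref{eq:full-rhm2}, \eqref{eq:full-rhr2} are trivially $0=0$ since $u_\pm=0$ and $\alpha_i=\delta_i=0$. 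For the middle interface, the density equation \eqref{eq:full-rhm1} reduces to $\mu_1(\rho_1-\rho_2)=\mu_1(\rho_1-\rho_2)$, and the energy equation \eqref{eq:full-rhm4} becomes a triviality once $p_1=p_2$ and $\beta_1=\beta_2=\mu_1$ are inserted; the normal-momentum equation \eqref{eq:full-rhm3} turns into the single scalar identity $(\rho_1-\rho_2)\mu_1(\ep)^2=\rho_1(\tfrac{C_1}{2}-\gamma_1)-\rho_2(\tfrac{C_2}{2}-\gamma_2)$, which I would verify by direct substitution of the definitions of $C_i(\ep,\ov{\ep})$ and $\gamma_i(\ep,\ov{\ep})$.

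For the outer interfaces, the strategy is to observe that the functions $\mu_0(\ep)$, $\mu_1(\ep)$, $\mu_2(\ep)$ are designed as follows. Combining the left density and normal-momentum Rankine--Hugoniot equations and eliminating $\tfrac{C_1}{2}-\gamma_1$, and doing the same on the right to eliminate $\tfrac{C_2}{2}-\gamma_2$, and then substituting into the middle momentum equation, yields a single quadratic equation for $\mu_1$ of the same shape as \eqref{eq:quadratic_vm2}, but with $\rho_{M-}$, $\rho_{M+}$ replaced by $\rho_{M-}+\ep$, $\rho_{M+}-\ep$ and with the $p_1,p_2$ terms cancelling because $p_1=p_2$. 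The explicit formula for $\mu_1(\ep)$, involving $A(\ep)$, $B(\ep)$ and $D(\ep)$, is the relevant root of this quadratic; the formulas for $\mu_0(\ep)$ and $\mu_2(\ep)$ are then obtained by solving \eqref{eq:full-rhl1} and \eqref{eq:full-rhr1} for $\mu_0$ and $\mu_2$ respectively. Thus \eqref{eq:full-rhl1} and \eqref{eq:full-rhr1} hold by construction. The identities \eqref{eq:full-rhl3} and \eqref{eq:full-rhr3} then become the defining equations of $\gamma_1(\ep,\ov{\ep})$ and $\gamma_2(\ep,\ov{\ep})$, and \eqref{eq:full-rhl4}, \eqref{eq:full-rhr4} are the defining equations of $C_1(\ep,\ov{\ep})$ and $C_2(\ep,\ov{\ep})$ after a trivial rearrangement.

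The subsolution conditions \eqref{eq:full-sc1}, \eqref{eq:full-sc2} reduce to \eqref{eq:3sc1}--\eqref{eq:3sc4}: since $\alpha_i=0$ and $\beta_i=\mu_1(\ep)$, the first inequality \eqref{eq:full-sc1} becomes exactly \eqref{eq:3sc1} or \eqref{eq:3sc2}, while in \eqref{eq:full-sc2} the cross-term $(\delta_i-\alpha_i\beta_i)^2$ vanishes, so the inequality factorises into the product form \eqref{eq:3sc3}, \eqref{eq:3sc4}. The outer admissibility inequalities \eqref{eq:full-adml}, \eqref{eq:full-admr} are literally the hypotheses \eqref{eq:3adml}, \eqref{eq:3admr}. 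Finally, the middle admissibility condition \eqref{eq:full-admm} is automatic: since $\beta_1=\beta_2=\mu_1$, both sides equal $\mu_1(\ep)(\rho_2 s(\rho_2,p_2)-\rho_1 s(\rho_1,p_1))$, so the inequality holds as equality.

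The principal computational obstacle is the verification that the explicit closed-form expressions for $\mu_0(\ep), \mu_1(\ep), \mu_2(\ep)$ do indeed solve the coupled quadratic system arising from the outer and middle Rankine--Hugoniot conditions. In principle this is a routine check, greatly simplified by the cancellation $p_1=p_2$ in the middle momentum equation and by the vanishing of all tangential contributions; but in practice it requires careful bookkeeping with the quantities $A(\ep)$, $B(\ep)$, $D(\ep)$, and with the sign choice of the square root, which is dictated by continuity of $\mu_1$ at $\ep=0$ and Proposition~\ref{prop:limittostandard1}. Once this algebraic verification is carried out, the remainder of the proof amounts to substitution.
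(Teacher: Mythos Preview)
Your proposal is correct and matches the paper's approach: the paper's own proof consists of the single sentence ``The proof of Proposition~\ref{prop:3algequa} is a matter of straightforward calculation, where one has to recall that by assumption $u_-=u_+=0$,'' and what you have written is precisely a careful roadmap for that calculation. Your identification of which equations become trivial (the tangential ones via $u_\pm=\alpha_i=\delta_i=0$, the middle density/energy/admissibility via $\beta_1=\beta_2=\mu_1$ and $p_1=p_2$), which equations are \emph{defining} ($\gamma_i$ from \eqref{eq:full-rhl3}, \eqref{eq:full-rhr3}; $C_i$ from \eqref{eq:full-rhl4}, \eqref{eq:full-rhr4}; $\mu_0,\mu_2$ from \eqref{eq:full-rhl1}, \eqref{eq:full-rhr1}), and which single nontrivial check remains (that $\mu_1(\ep)$ solves the perturbed quadratic coming from \eqref{eq:full-rhm3}) is exactly right; the relations $\mu_0(\ep)=v_-+\frac{\rho_1}{\rho_1-\rho_-}(\mu_1(\ep)-v_-)$ and the analogous one for $\mu_2(\ep)$, which the paper records in the proof of Proposition~\ref{prop:limittostandard1}, confirm your claim that \eqref{eq:full-rhl1}, \eqref{eq:full-rhr1} hold by construction.
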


\begin{proof}
	The proof of Proposition~\ref{prop:3algequa} is a matter of straightforward calculation, where one has to recall that by assumption $u_-=u_+=0$.
\end{proof}

Hence in order to finish the proof of Theorem~\ref{thm:full-SS}, we have to find a pair of small parameters $(\ep,\ov{\ep})\in (0,\ep_{\max}]\times(0,p_M)$ such that the conditions \eqref{eq:3order} - \eqref{eq:3admr} are satisfied. 

We start with noting that we already have \eqref{eq:3order} fulfilled for all $\ep\in (0,\ep_{\max}]$.

Let us now investigate the subsolution conditions \eqref{eq:3sc1} - \eqref{eq:3sc4}. We start with the terms in the first parenthesis in \eqref{eq:3sc3} - \eqref{eq:3sc4}. We obtain by using that $\ov{\ep}\in(0,p_M)$
\begin{align*}
	\frac{C_1(\ep,\ov{\ep})}{2} + \gamma_1(\ep,\ov{\ep}) &= \frac{3-\gamma}{\gamma-1} \frac{ \ov{\ep}}{\rho_{M-} + \ep} - \frac{2\mu_1 (\ep)  \ov{\ep}}{(\rho_{M-} + \ep)\big(\mu_0(\ep)-\mu_1(\ep)\big)} + \frac{C_1(\ep,0)}{2} + \gamma_1(\ep,0) \\ 
	&\geq \frac{3-\gamma}{\gamma-1} \frac{\ov{\ep}}{\rho_{M-} + \ep} \underbrace{- \frac{2|\mu_1 (\ep)|  p_M}{(\rho_{M-} + \ep)\ \big|\mu_0(\ep)-\mu_1(\ep)\big|} + \frac{C_1(\ep,0)}{2} + \gamma_1(\ep,0)}_{=:R_1(\ep)} \ec \\ 
	\frac{C_2(\ep,\ov{\ep})}{2} + \gamma_2(\ep,\ov{\ep}) &= \frac{3-\gamma}{\gamma-1} \frac{\ov{\ep}}{\rho_{M+} - \ep} - \frac{2\mu_1 (\ep)  \ov{\ep}}{(\rho_{M+} - \ep)\big(\mu_2(\ep)-\mu_1(\ep)\big)} + \frac{C_2(\ep,0)}{2} + \gamma_2(\ep,0) \\ 
	&\geq \frac{3-\gamma}{\gamma-1} \frac{\ov{\ep}}{\rho_{M+} - \ep} \underbrace{- \frac{2|\mu_1 (\ep)|  p_M}{(\rho_{M+} - \ep)\ \big|\mu_2(\ep)-\mu_1(\ep)\big|} + \frac{C_2(\ep,0)}{2} + \gamma_2(\ep,0)}_{=:R_2(\ep)} \ec
\end{align*}	
where Propositions \ref{prop:limittostandard1} and \ref{prop:limittostandard2} together with the fact that $v_M=0$ imply that 
\begin{align*}
\lim\limits_{\ep\to 0 } R_1(\ep) &= 0 \ec & \lim\limits_{\ep\to 0 } R_2(\ep) &= 0 \ed
\end{align*}
Therefore $|R_1(\ep)|$ and $|R_2(\ep)|$ become arbitrary small if we choose $\ep$ small. Because $\gamma\in(1,3)$, there exists $\til{\ep}_1(\ov{\ep})\in(0,\ep_{\max}]$ for each $\ov{\ep}\in(0,p_M)$, such that 
\begin{equation}\label{eq:3T1}
\frac{C_1(\ep,\ov{\ep})}{2} + \gamma_1(\ep,\ov{\ep}) > 0 \qquad \text{ and } \qquad \frac{C_2(\ep,\ov{\ep})}{2} + \gamma_2(\ep,\ov{\ep}) > 0
\end{equation}
hold for all $\ep\in(0,\til{\ep}_1(\ov{\ep}))$. 

Similarly we handle terms in the second parenthesis in inequalities \eqref{eq:3sc3} - \eqref{eq:3sc4}. We obtain
\begin{align*}
\frac{C_1(\ep,\ov{\ep})}{2} - \mu_1(\ep)^2 - \gamma_1(\ep,\ov{\ep}) &= \frac{\ov{\ep}}{\rho_{M-}+\ep} + \underbrace{\frac{C_1(\ep,0)}{2} - \mu_1(\ep)^2 - \gamma_1(\ep,0)}_{=:R_3(\ep)}\ec \\
\frac{C_2(\ep,\ov{\ep})}{2} - \mu_1(\ep)^2 - \gamma_2(\ep,\ov{\ep}) &= \frac{\ov{\ep}}{\rho_{M+} - \ep} + \underbrace{\frac{C_2(\ep,0)}{2} - \mu_1(\ep)^2 - \gamma_2(\ep,0)}_{=:R_4(\ep)}\ed
\end{align*}
With the same arguments as above, we obtain that for each $\ov{\ep} \in(0,p_M)$ there exists $\til{\ep}_2(\ov{\ep})\in (0,\ep_{\max}]$ such that
\begin{equation}\label{eq:3T2}
	\frac{C_1(\ep,\ov{\ep})}{2} - \mu_1(\ep)^2 - \gamma_1(\ep,\ov{\ep}) > 0 \qquad \text{ and } \qquad \frac{C_2(\ep,\ov{\ep})}{2} - \mu_1(\ep)^2 - \gamma_2(\ep,\ov{\ep})
\end{equation}
hold for all $\ep\in(0,\til{\ep}_2(\ov{\ep}))$. 

Combining \eqref{eq:3T1} and \eqref{eq:3T2} we obtain \eqref{eq:3sc3} and \eqref{eq:3sc4} whereas summing together \eqref{eq:3T1} and \eqref{eq:3T2}, we obtain \eqref{eq:3sc1} and \eqref{eq:3sc2}.

To finish the proof we have to show that we can achieve that in addition the admissibility conditions \eqref{eq:3adml} and \eqref{eq:3admr} hold. Note that according to Proposition~\ref{prop:limittostandard1} the admissibility conditions \eqref{eq:3adml} and \eqref{eq:3admr} turn into the admissibility conditions of the self-similar solution \eqref{eq:full-selfsimilar-SS-adm-}, \eqref{eq:full-selfsimilar-SS-adm+} as $(\ep,\ov{\ep})\to (0,0)$. Since the latter are fulfilled strictly, we can choose $\ov{\ep}>0$ sufficiently small and also $\ep\in(0,\min\{\til{\ep_1}(\ov{\ep}),\til{\ep_2}(\ov{\ep})\})$ sufficiently small such that \eqref{eq:3adml} and \eqref{eq:3admr} hold. This finishes the proof of Theorem~\ref{thm:full-SS}.

\subsection{Sketches of the Non-Uniqueness Proofs for the Other Cases} \label{subsec:riemann-full-other-cases}

As in the isentropic case, we sketch how the non-uniqueness proof works in the other cases (2, 4, 6, 8, 11, 13, 15 and 17 in Table~\ref{tab:full-results}). 

\subsubsection{One Shock and One Rarefaction}

The non-uniqueness proof in the case where the self-similar solution contains one shock and one rarefaction (cases 6, 8, 15 and 17 in Table~\ref{tab:full-results})) is highly inspired by the corresponding isentropic case, see Subsection~\ref{subsec:riemann-isen-SR}. \name{Klingenberg}, the author and others \cite{KKMM20} show a ``smallness'' result, i.e. there exists an admissible fan subsolutions in the sense of Definition~\ref{defn:full-fansubs} and hence infinitely many solutions, if the rarefaction is ``small''. For ``large'' rarefactions the problem is solved using and auxiliary state and a patching approach.

\subsubsection{One Shock}

The paper \cite{KKMM20} by \name{Klingenberg}, the author and others covers the one-shock-case (2, 4, 11 and 13 in Table~\ref{tab:full-results}) as well. As for the isentropic Euler system, one uses the smallness result from the case above again, where now the auxiliary state is connected to $(\rho_+,\vu_+,p_+)$ by a shock.

\subsection{Other Results in the Context of the Riemann Problem} \label{subsec:riemann-full-other-results}

We finish by mentioning two other results in the context of the initial value problem for the full Euler system \eqref{eq:full-euler-pv-dens} - \eqref{eq:full-euler-pv-en} with Riemann initial data \eqref{eq:riemann-init-full}. 

\begin{itemize}
	\item If the self-similar solution contains two shocks, \name{Al Baba}, the author and others \cite{AKKMM20} present another approch apart from what we exhibited in Subsection~\ref{subsec:riemann-full-SS}. More precisely they also considered fan subsolutions where the underlying fan partitions consist of only three sets, similarly to what we considered for the isentropic Euler equations, cf. Definition~\ref{defn:isen-fanpart}. On the one hand this strategy works for all\footnote{Note once more, that in Subsection~\ref{subsec:riemann-full-SS} we had to assume that $\gamma<3$.} $\gamma>1$, but on the other hand it covers only a subclass of initial states for which $v_+ - v_-$ is sufficiently small. 
	
	\item As already mentioned several times, the question whether or not the self-similar solution is the unique admissible weak solution if it consists only of a contact discontinuity (case~10 in Table~\ref{tab:full-results}), is open. Recently it was shown by \name{Kang}, \name{Vasseur} and \name{Wang}~\cite{KanVasWan20} that for such initial data the vanishing viscosity limit of the Navier-Stokes-Fourier system yields no other than the self-similar solution. In other words the self-similar solution is unique in the class of solutions that are obtained as a vanishing viscosity limit from the Navier-Stokes-Fourier system. This does however not mean that the self-similar solution is unique in the class of admissible weak solutions in the sense of Definition~\ref{defn:aws-full-whole}.
\end{itemize}

\appendix

\chapter{Notation and Lemmas} \label{chap:not} 

\section{Sets} \label{sec:not-sets}

The basic sets of numbers are denoted as follows.
\begin{itemize}
	\item $\N:=\{1,2,...\}$ -- the set of natural numbers. In particular $0\notin\N$.
	\item $\N_0:=\N\cup \{0\}$. 
	\item $\R$ -- the set of real numbers.
	\item Intervals are denoted e.g. by $[a,b) := \{x\in\R\,|\,a\leq x < b\}$ (where $a<b$). The intervals $(a,b)$, $[a,b]$ and $(a,b]$ are defined analogously.
	\item $\R^+:=(0,\infty)$ -- the set of positive real numbers. 
	\item $\R^+_0 := [0,\infty)$ -- the set of non-negative real numbers. 
	\item $\R^-$ and $\R^-_0$ are defined analogously. 
\end{itemize}

\noindent Furthermore we use the following notation for sets in general. 
\begin{itemize}
	\item The empty set is denoted by $\emptyset$.
	\item For a subset $A$ of a set $B$ we write $A\subset B$. If $A\subset B$ and $A\neq B$, we also write $A\subsetneq B$. We avoid the symbol $\subsetnotused$. 
	\item The set difference of two sets $A$ and $B$ is denoted by $A\setminus B = \{x\in A\,|\, x\notin B\}$.
\end{itemize}

\section{Vectors and Matrices} \label{sec:not-vecmat}

\subsection{General Euclidean Spaces} 
For the $N$-dimensional Euclidean space we write $\R^N$ ($N\in\N$), whose elements are column vectors. We denote 
\begin{itemize}
	\item vectors and vector-valued functions by bold letters, e.g. $\vu\in\R^N$, 
	\item the zero vector by $\vz\in \R^N$,
	\item the $i$-th component of the vector $\vu$ by $[\vu]_i$ or simply $u_i$,
	\item the scalar product in $\R^N$ by $\cdot$, i.e. $\va\cdot \vb = \sum_{i=1}^{N} a_i b_i$, 
	\item the Euclidean norm of a vector $\vu\in\R^N$ by $|\vu|$, i.e. $|\vu| = \sqrt{\vu\cdot \vu} = \sqrt{\sum_{i=1}^{N} u_i^2} $,
	\item the $i$-th standard basis vector by $\ve_i$, whose $j$-th component is given by the Kronecker delta $[\ve_i]_j=\delta_{ij}$,
	\item the $(N-1)$-sphere by $\sphere^{N-1}:=\left\{\va\in\R^N\,\big|\,|\va|=1\right\}$, 
	\item the open ball in $\R^N$ with center $\vu\in \R^N$ and radius $r>0$ by 
	$$
	B_N(\vu,r):=\left\{\va\in \R^N\,\big|\,|\va-\vu|<r\right\}
	$$ 
	or simply $B(\vu,r)$ (if there is no doubt what $N$ is), 
	\item the interior of a set $A\subset \R^N$ by $\interior{A}$ and the closure by $\closure{A}$,
	\item the boundary of a set $A \subset \R^N$  by $\partial A:= \closure{A} \setminus \interior{A}$,
	\item the Lebesgue measure of a set $A \subset \R^N$ by $|A|$.
\end{itemize}	

\noindent We write $A\subsetcomp B$ if $A$ is bounded and $\closure{A} \subset \interior{B}$. 

\smallskip

\noindent For row vectors we use the notation $\vu^\trans$, where $\vu\in\R^N$. The $i$-th component of the row vector $\vu^\trans$ will be denoted by $[\vu]_i$ (or simply $u_i$), too.

\smallskip

\noindent For the space of real $M\times N$-matrices we write $\R^{M\times N}$. We denote
\begin{itemize}
	\item matrices and matrix-valued functions by special roman characters\footnote{To avoid confusion with the basic number sets, there will not be any matrix in this book denoted by $\N,\Z,\Q,\R,\C$.}, e.g. $\mF\in\R^{M\times N}$, 
	\item the $ij$-th component of the matrix $\mF$ by $[\mF]_{ij}$ or simply $F_{ij}$,
	\item the $j$-th column of the matrix $\mF$ by $\vF_j$, 
	\item the transpose of the matrix $\mF$ by $\mF^\trans$,
	\item the zero matrix by $\mZ$,
	\item the $N\times N$ identity matrix by $\id_N$ or simply $\id$ (if there is no doubt what $N$ is),
	\item the Frobenius product of matrices in $\R^{M \times N}$ by $:$, i.e. $\mA:\mB = \sum_{i=1}^M \sum_{j=1}^N A_{ij} B_{ij}$.
\end{itemize} 

\noindent The tensor product\footnote{One could also define $\otimes:\R^N\times \R^M \to \R^{N\times M}$, where in general $N\neq M$. However this is not needed in this book.} $\otimes:\R^N\times \R^N \to \R^{N\times N}$, $(\va,\vb) \mapsto \va\otimes \vb$ is defined by $[\va\otimes\vb]_{ij} = a_i b_j$.

\smallskip

\noindent Apart from the scalar product in $\R^N$ we will use $\cdot$ for the product of matrices and in particular for the product of a matrix with a vector, e.g. 
$$
	\cdot: \R^{N\times P} \times \R^{P\times M} \to \R^{N\times M},\ (\mA,\mB) \mapsto \mA\cdot \mB, \text{ where }(\mA\cdot \mB)_{ij} = \sum_{k = 1}^P A_{ik} B_{kj} \ec
$$
and in particular\footnote{Since the elements in $\R^N$ are treated as column vectors, one can identify $\R^{N\times 1}$ with $\R^N$.} for $M=1$
$$
	\cdot: \R^{N\times P} \times \R^P \to \R^N,\ (\mA,\vu) \mapsto \mA\cdot \vu, \text{ where }(\mA\cdot \vu)_i = \sum_{k = 1}^P A_{ik} u_k \ed
$$
Sometimes we simply write $\mA\vu$ instead of $\mA\cdot \vu$.

\smallskip

\noindent For quadratic matrices $\mA \in \R^{N\times N}$ we denote
\begin{itemize}
	\item the trace by $\tr (\mA) := \sum_{i = 1}^N A_{ii}$, 
	\item the determinant by $\det (\mA)$.
\end{itemize}
Note that $\tr (\vu \otimes \vu) = |\vu|^2$ for all $\vu\in \R^N$. Furthermore $\mA\mapsto \tr(\mA)$ is a linear map.

\smallskip

\noindent We deal with the following subsets of $\R^{N\times N}$. 
\begin{itemize}
	\item $\sym{N}:=\left\{\mA\in \R^{N\times N}\,\big|\,\mA^\trans = \mA\right\}$ -- the space of all symmetric $N\times N$-matrices.
	\item $\symz{N}:=\left\{\mA\in \sym{N}\,\big|\,\tr (\mA)=0\right\}$ -- the space of all symmetric traceless $N\times N$-matrices.
	\item $\gl{N}:=\left\{\mA\in \R^{N\times N}\,\big|\,\mA\text{ invertible} \right\}$ -- the group of all invertible $N\times N$-matrices (the \emph{general linear group}). If $\mT\in \gl{N}$ then $\mT^{-1}$ denotes the inverse of $\mT$. Note, that 
	$$
		\mT\in \gl{N} \ \Leftrightarrow\  \det(\mT)\neq 0\ed
	$$
	\item $\orth{N}:=\left\{\mA\in \gl{N}\,\big|\,\mA^{-1} = \mA^\trans\right\}$ -- the group of all orthogonal $N\times N$-matrices (the \emph{orthogonal group}). Note, that 
	$$
		\mA\in \orth{N} \ \Leftrightarrow\ \det(\mA)= \pm 1\ed
	$$
\end{itemize} 

\noindent For symmetric matrices $\mA\in \sym{N}$ we use additionally the following notation.
\begin{itemize}
	\item All symmetric matrices are diagonalizable. We donote the largest and smallest eigenvalue by $\lambda_{\max}(\mA)$ and $\lambda_{\min}(\mA)$, respectively. Note, that the mapping $\mA\mapsto \lambda_{\max}(\mA)$ is continuous. Furthermore the trace $\tr(\mA)$ is equal to the sum of all eigenvalues, whereas the determinant $\det(\mA)$ equals the product of all eigenvalues. 
	\item A symmetric matrix $\mA$ is \emph{positive (semi-)definite} if all eigenvalues are positiv (non-negative) or equivalently if $\vu^\trans\mA\vu > 0$ ($\geq 0$) for all $\vu\in \R^N$. Analogously negative (semi-)definiteness is defined.
	\item The matrix norm for symmetric matrices reads $\|\mA\|:=\max_{i=1,...,N} |\lambda_i|$, where the $\lambda_i\in \R$ are the eigenvalues of $\mA$. 
	\item We write $B_{\symz{N}}(\mA,r) := \big\{\mB\in \symz{N}\,\big|\,\|\mB-\mA\|<r\big\}$ for the ball in $\symz{N}$ with center $\mA\in \symz{N}$ and radius $r>0$.
\end{itemize}

\begin{lemma} \label{lemma:not-lmax-tr} 
	Let $\mA\in \sym{N}$. The following statements hold.
	\begin{enumerate}
		\item \label{item:lmax-tr.a} $\tr(\mA)\leq N\lambda_{\max}(\mA)$.
		\item \label{item:lmax-tr.b} If $N\lambda_{\max}(\mA)\leq \tr(\mA)$, then $\mA= \lambda_{\max}(\mA) \id_N =\frac{\tr(\mA)}{N} \id_N$.
	\end{enumerate}
\end{lemma}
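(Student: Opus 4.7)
The plan is to diagonalize $\mA$ and work directly with its eigenvalues. Since $\mA\in\sym{N}$, it admits $N$ real eigenvalues, which I would order as $\lambda_1\leq \lambda_2 \leq \ldots \leq \lambda_N$, so that $\lambda_N = \lambda_{\max}(\mA)$ and $\tr(\mA)=\sum_{i=1}^N \lambda_i$ (using the property, recalled in the excerpt, that the trace equals the sum of eigenvalues).

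For part \ref{item:lmax-tr.a}, I would simply estimate each eigenvalue by the largest, i.e. $\lambda_i \leq \lambda_N = \lambda_{\max}(\mA)$ for every $i=1,\ldots,N$, and sum these $N$ inequalities to obtain $\tr(\mA)=\sum_{i=1}^N \lambda_i \leq N\lambda_{\max}(\mA)$.

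For part \ref{item:lmax-tr.b}, I would combine the assumption $N\lambda_{\max}(\mA)\leq \tr(\mA)$ with the reverse inequality from part \ref{item:lmax-tr.a} to conclude $\tr(\mA)=N\lambda_{\max}(\mA)$. Rewriting this as $\sum_{i=1}^N \big(\lambda_{\max}(\mA)-\lambda_i\big)=0$ and noting that every summand is non-negative (by the ordering of eigenvalues), I would deduce that $\lambda_i=\lambda_{\max}(\mA)$ for all $i$. Since $\mA$ is symmetric, there exists $\mT\in \orth{N}$ diagonalizing $\mA$, and with all eigenvalues equal to $\lambda_{\max}(\mA)$ this gives $\mA=\mT\cdot \big(\lambda_{\max}(\mA)\id_N\big)\cdot \mT^\trans = \lambda_{\max}(\mA)\id_N$. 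The second equality $\lambda_{\max}(\mA)=\tr(\mA)/N$ follows immediately from the equality $\tr(\mA)=N\lambda_{\max}(\mA)$ derived above.

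There is no real obstacle here; the only mild subtlety is to make sure to invoke both the trace-equals-sum-of-eigenvalues identity and the diagonalizability of symmetric matrices (both of which are listed explicitly in Section~\ref{sec:not-vecmat}), and to keep track of the direction of the inequality when combining part \ref{item:lmax-tr.a} with the hypothesis of part \ref{item:lmax-tr.b}.
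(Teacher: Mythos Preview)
Your proposal is correct and essentially identical to the paper's proof: both parts use that the trace is the sum of the eigenvalues, bound each eigenvalue by $\lambda_{\max}(\mA)$ for part~\ref{item:lmax-tr.a}, and for part~\ref{item:lmax-tr.b} combine the hypothesis with part~\ref{item:lmax-tr.a} to force all eigenvalues equal, then use diagonalizability to conclude $\mA=\lambda_{\max}(\mA)\id_N$. The only cosmetic difference is that the paper writes $\mT\in\gl{N}$ whereas you (more precisely) take $\mT\in\orth{N}$.
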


\begin{proof}
	\begin{enumerate}
		\item As pointed out above, the sum of all eigenvalues of $\mA$ is equal to the trace $\tr(\mA)$. Hence $\tr(\mA)\leq N\lambda_{\max}(\mA)$. 
		\item The assumption together with \ref{item:lmax-tr.a} imply $N\lambda_{\max}(\mA) = \tr(\mA)$ and therefore all eigenvalues are equal to $\lambda_{\max}(\mA) = \frac{\tr(\mA)}{N}$. This means that there exists $\mT\in\gl{N}$ with 
		$$
		\mA = \mT \lambda_{\max}(\mA) \id \mT^{-1} = \lambda_{\max}(\mA) \id = \frac{\tr(\mA)}{N} \id \ed
		$$ 
	\end{enumerate}
	
\end{proof}

\subsection{The Physical Space and the Space-Time} \label{subsec:not-spacetime}

In this book we deal with two independent variables: Time $t\in\R$ and the spatial variable $\vx$ lying in the physical space $\R^n$, whose dimension is denoted by the natural number $n$. Principally we focus on the case $n\geq 2$.

We use bold letters for vectors in the space-time $\R^{1+n}$, too, e.g. $\veta\in \R^{1+n}$. Their components however are denoted with indices in $\{t,1,...,n\}$ for obvious reasons, for example $\veta = (\eta_t,\eta_1,...,\eta_n)^\trans$. Furthermore we write $\veta_\vx$ for the ``spatial'' component of $\veta\in \R^{1+n}$, i.e. $\veta_\vx:=(\eta_1,...,\eta_n)^\trans\in \R^n$. 

The components of matrices in $\R^{(1+n)\times (1+n)}$ are denoted similarly, e.g.
\begin{equation*}
	\mA = \left(\begin{array}{cccc}
		A_{tt} & A_{t1} & \cdots & A_{tn} \\
		A_{1t} & A_{11} & \cdots & A_{1n} \\
		\vdots & \vdots &   & \vdots \\
		A_{nt} & A_{n1} & \cdots & A_{nn}
	\end{array} \right) \in \R^{(1+n)\times (1+n)} \ed
\end{equation*}
Analogously the standard basis vectors of the space-time $\R^{1+n}$ are denoted by $\ve_t,\ve_1,...,\ve_n$.

Functions with values in the space-time $\R^{1+n}$ or in $\R^{(1+n)\times (1+n)}$ are treated similarly. In particular for $\vf:\R^{1+n}\to \R^{1+n}$, $(t,\vx)\mapsto \vf(t,\vx)$ we write $f_t$ for the $t$-th component of $\vf$ whereas the partial derivative of $\vf$ with respect to $t$ is denoted by $\partial_t \vf$, see Section~\ref{sec:not-functions}.

\begin{defn}
	\begin{itemize}
		\item A \emph{(spatial) domain} $\Omega\subset\R^n$ is an open and connected subset of the physical space $\R^n$.
		\item A \emph{space-time domain} $\Gamma\subset\R^{1+n}$ is an open and connected subset of the space-time $\R^{1+n}$.
	\end{itemize}
\end{defn}

\subsection{Phase Space}
The dependent variable $\vU$ - the \emph{state vector}, see Chapter~\ref{chap:conslaws} - lies in the phase space $\R^m$, whose dimension is denoted by the natural number $m$.

\section{Sequences}

For a sequence in a set $X$, i.e. a map $\N \to X$, we write as usual $(x_n)_{n\in \N} \subset X$.

\section{Functions} \label{sec:not-functions} 

Most of the functions which appear in this book belong to either one of the following types:
\begin{itemize}
	\item Functions of time $t\in\R$ and space $\vx\in\R^n$;
	\item Functions of the state vector $\vU\in\R^m$.
\end{itemize} 

In this book there appear also mappings $f:X\to \R$ where $(X,d)$ is a metric space. Such mappings are considered in Sections \ref{sec:not-lsc} and \ref{sec:not-baire}.

\subsection{Basic Notions}

\begin{itemize} 
	\item If two functions are equal we use the symbol $\equiv$, i.e. we write for example $f\equiv g$ if $f(t,\vx)=g(t,\vx)$ for all arguments $(t,\vx)\in \R^{1+n}$. 
	\item The support of the function $f$ is denoted by $\supp(f)$.
	\item For the characteristic function with respect to the set $S\subset \R^{1+n}$ we use the notation $\charf_S$, i.e.
	$$
	\charf_S : \R^{1+n} \to \R, \quad \charf_S(t,\vx) := \left\{ \begin{array}{ll}
		1 & \text{ if }(t,\vx)\in S \ec \\
		0 & \text{ else}\ed
	\end{array} \right. 
	$$
\end{itemize}

\subsection{Differential Operators}

For functions $f:A \to \R$, where $A\subset \R$, the derivative is denoted as usual with $f'$, the second derivative with $f''$, and so on. For conveniece we write $f^{(k)}$ for the $k$-th derivative.

\subsubsection{Functions of Time and Space} 
Let $\Gamma\subset \R^{1+n}$. For scalar functions $f:\Gamma \to \R$, $(t,\vx)\mapsto f(t,\vx)$ we work with the following differential operators:
\begin{itemize}
	\item The partial derivative with respect to time, denoted by $\partial_t f(t,\vx) := \frac{\partial}{\partial t} f(t,\vx)$.
	\item The partial derivative with respect to $x_i$ (the $i$-th component of $\vx$), denoted by \\$\partial_i f(t,\vx) := \frac{\partial}{\partial x_i} f(t,\vx).$ 
	\item Higher order partial derivatives, denoted by $\partial_{i_1 \cdots i_k}^k f(t,\vx) := \partial_{i_1}\cdots \partial _{i_k} f(t,\vx)$, where $k\in\N$ and $i_j\in\{t,1,...,n\}$ for each $j=1,...,k$. E.g. $\parfour{t}{t}{1}{2} f(t,\vx) = \partial_t \partial_t \partial_1 \partial_2 f(t,\vx)$.
	\item The gradient with respect to $\vx$, denoted by $\Grad f(t,\vx) := \big(\partial_1 f(t,\vx),\ldots,\partial_n f(t,\vx)\big)$ and taking values in $\R^{1\times n}$. In other words in this book the gradient of a scalar field is a row vector.
	\item The Laplacian with respect to $\vx$, denoted by $\Lap f(t,\vx) := \sum_{i = 1}^n \partial_i \partial_i f(t,\vx)$ and taking values in $\R$.
\end{itemize}

\noindent For vector-valued functions $\vf:\Gamma \to \R^M$, $(t,\vx)\mapsto \vf(t,\vx)$, the following differential operators appear in this book:
\begin{itemize} 
	\item The partial derivative with respect to time, denoted by $\partial_t \vf(t,\vx) := \frac{\partial}{\partial t} \vf(t,\vx)$ and taking values in $\R^M$.
	\item The gradient with respect to $\vx$, denoted by 
	$$
	\Grad \vf(t,\vx) := \left(\begin{array}{ccc}
		\partial_1 f_1(t,\vx) & \ldots & \partial_n f_1(t,\vx) \\
		\vdots & & \vdots \\
		\partial_1 f_M(t,\vx) & \ldots & \partial_n f_M(t,\vx) 
	\end{array}\right)
	$$ 
	and taking values in $\R^{M\times n}$. 
	\item The Laplacian with respect to $\vx$, denoted by 
	$$
	\Lap \vf(t,\vx) := \sum_{i = 1}^n \partial_i \partial_i \vf(t,\vx) = \left(\begin{array}{c}
		\sum_{i = 1}^n \partial_i \partial_i f_1(t,\vx) \\
		\vdots \\
		\sum_{i = 1}^n \partial_i \partial_i f_M(t,\vx) 
	\end{array}\right)
	$$ 
	and taking values in $\R^M$.
	\item If $M=n$, the divergence with respect to $\vx$, denoted by $\Div \vf(t,\vx) := \sum_{i=1}^n \partial_i f_i(t,\vx)$.
	\item If $M=1+n$, the divergence with respect to $(t,\vx)$, denoted by $$\Divtx \vf(t,\vx) := \partial_t f_t(t,\vx) + \Div \vf_\vx (t,\vx) = \partial_t f_t(t,\vx) + \sum_{i=1}^n \partial_i f_i (t,\vx) \ed$$
\end{itemize}

\noindent For matrix-valued functions $\mF:\Gamma \to \R^{M\times n}$, $(t,\vx)\mapsto \mF(t,\vx)$ we deal with the following differential operator:
\begin{itemize} 
	\item The row-wise divergence with respect to $\vx$, denoted by $\Div \mF :\R^{1+n}\to \R^M$ with $j$-th component $[\Div\mF]_j (t,\vx):= \sum_{i=1}^n \partial_i F_{ji}(t,\vx)$.
\end{itemize}

\noindent For matrix-valued functions $\mF:\Gamma \to \R^{M\times (1+n)}$, $(t,\vx)\mapsto \mF(t,\vx)$ the following differential operator occurs in this book:
\begin{itemize} 
	\item The row-wise divergence with respect to $(t,\vx)$, denoted by $\Divtx \mF :\R^{1+n}\to \R^M$ with $j$-th component $[\Divtx \mF]_j (t,\vx):= \partial_t F_{jt}(t,\vx) + \sum_{i=1}^n \partial_i F_{ji}(t,\vx)$.
\end{itemize}

\noindent Matrix-valued functions of the form $\mF:\Gamma \to \R^{M\times N}$ with $N\notin \{n,1+n\}$ do not appear in this book.

\subsubsection{Functions of the State Vector}

Let $\sO\subset \R^m$. For scalar functions $f:\sO \to \R$, $\vU\mapsto f(\vU)$ we deal with the following differential operators:
\begin{itemize}
	\item The partial derivative with respect to $U_i$, denoted by $\partial_{U_i} f(\vU) := \frac{\partial}{\partial U_i} f(\vU)$.
	\item The gradient with respect to $\vU$, denoted by $\Grad_{\vU} f(\vU) := \big(\partial_{U_1} f(\vU),\ldots,\partial_{U_m} f(\vU)\big)$ and taking values in $\R^{1\times m}$.
	\item The Hessian with respect to $\vU$, denoted by 
	$$
		\Hess_\vU f(\vU) := \left(\begin{array}{ccc}
			\partial_{U_1}\partial_{U_1} f(\vU) & \ldots & \partial_{U_1}\partial_{U_m} f(\vU) \\
			\vdots & & \vdots \\
			\partial_{U_m}\partial_{U_1} f(\vU) & \ldots & \partial_{U_m}\partial_{U_m} f(\vU) 
		\end{array}\right)
	$$ 
	and taking values in $\R^{m\times m}$. Note that $\Hess_\vU f$ is symmetric as soon as $f$ is twice continuously differentiable according to Schwarz's theorem.
\end{itemize}

\noindent For vector-valued functions $\vf:\sO \to \R^m$, $\vU\mapsto \vf(\vU)$ we work with the following differential operators:
\begin{itemize} 
	\item The partial derivative with respect to $U_i$, denoted by $\partial_{U_i} \vf(\vU) := \frac{\partial}{\partial U_i} \vf(\vU)$ and taking values in $\R^m$.
	\item The gradient with respect to $\vU$, denoted by 
	$$
		\Grad_\vU \vf(\vU) := \left(\begin{array}{ccc}
			\partial_{U_1} f_1(\vU) & \ldots & \partial_{U_m} f_1(\vU) \\
			\vdots & & \vdots \\
			\partial_{U_1} f_m(\vU) & \ldots & \partial_{U_m} f_m(\vU) 
		\end{array}\right)
	$$ 
	and taking values in $\R^{m\times m}$. 
\end{itemize}

\noindent Functions of the space vector $\vU$ with values in $\R^M$ with $M\notin\{1,m\}$ do not appear in this book.

\subsection{Function Spaces} 

Let us summarize the function spaces which appear in this book. As an example we consider functions defined on $\Gamma\subset \R^{1+n}$, an open subset of the space-time. Of course for functions defined on an open subset of $\R^N$ with any $N\in \N$ we use the same notation.

\begin{itemize}
	\item $C(\Gamma)$ -- the set of continuous functions $f:\Gamma \to \R$.
	\item $C^k(\Gamma)$, where $k\in \N$ -- the set of $k$-times continuously differentiable functions $f:\Gamma \to \R$.
	\item $C(\closure{\Gamma}) = \{f\in C(\Gamma)\,|\,f \text{ uniformly continuous on bounded subsets of }\Gamma\}$.
	\item $C^k(\closure{\Gamma}) = \{f\in C^k(\Gamma)\,|\,\text{all partial derivatives of } f \text{ up to }k\text{-th order}$\\ 
	\hphantom{$C^k(\closure{\Gamma}) = \{f\in C^k(\Gamma)\,|\,$}$\text{are uniformly continuous on bounded subsets of }\Gamma\}$. 
	
	\smallskip
	
	If $f\in C(\closure{\Gamma})$, then $f$ can be extended continuously to $\closure{\Gamma}$. Similarly if $f\in C^k(\closure{\Gamma})$, then each partial derivative up to $k$-th order can be extended continuously to $\closure{\Gamma}$. 
	\item $C^\infty(\Gamma) = \bigcap_{k=0}^\infty C^k(\Gamma)$.
	\item $C^\infty(\closure{\Gamma}) = \bigcap_{k=0}^\infty C^k(\closure{\Gamma})$.
	\item $\Cc(\Gamma) = \{f\in C^\infty(\Gamma)\,|\, \supp(f) \text{ is a compact subset of }\Gamma\}$. 
	
	\smallskip
	
	A function $f\in \Cc(\Gamma)$ can be extended to a function in $\Cc(\R^{1+n})$ by setting it equal to zero outside $\Gamma$. We still write $f$ for the extension. This fact is tacitly used several times in this book.
	\item $\Cc(\closure{\Gamma}) = \{f\in C^\infty(\closure{\Gamma})\,|\, \supp(f) \text{ is a compact subset of }\closure{\Gamma}\}$. 
	
	\smallskip
	
	Note that a function $f\in \Cc(\closure{\Gamma})$ does not need to vanish on the boundary $\partial \Gamma$. We will in particular consider the set $\Cc\big([0,T)\times \closure{\Omega}\big)$ with $T>0$ and an open set $\Omega\subset \R^n$. 
	Such functions may not vanish at $t=0$ as well as on the boundary of $\Omega$, whereas they do vanish at $t=T$.
	\item $L^p(\Gamma)$, where $1\leq p < \infty$ -- the set of Lebesgue measurable functions $f:\Gamma\to\R$ with 
	$$
		\|f\|_{L^p(\Gamma)} := \left(\iint_\Gamma |f(t,\vx)|^p \dx\dt \right)^{1/p} \ <\ \infty \ed
	$$
	\item $L^\infty(\Gamma)$ -- the set of Lebesgue measurable functions $f:\Gamma\to\R$ with 
	$$
		\|f\|_{L^\infty(\Gamma)} := \mathop{\text{ess}\sup}\limits_{(t,\vx)\in\Gamma} |f(t,\vx)| \ <\ \infty \ed
	$$
	\item $L^p_\loc(\Gamma) = \{f:\Gamma\to \R\,|\, f\in L^p(\Gamma_0) \text{ for all }\Gamma_0\subsetcomp \Gamma\}$. 
\end{itemize}

We say that a property holds \emph{almost everywhere} on a set $\Gamma\subset\R^{1+n}$ if there exists a set of zero Lebesgue measure $N\subset \R^{1+n}$ such that the property holds for all $(t,\vx)\in \Gamma\setminus N$. In this case we use the abbreviation \emph{a.e.}, i.e. we write that the property holds a.e. on $\Gamma$. Equivalently we say that the property holds for \emph{almost every (a.e.)} $(t,\vx)\in\Gamma$.

To be precise the Lebesgue spaces $L^p(\Gamma)$, $L^p_\loc(\Gamma)$ contain classes of functions rather than functions, where the corresponding equivalence relation relates two functions that coincide a.e. on $\Gamma$. Moreover if $\Gamma$ is bounded, then $L^p(\Gamma) = L^p_\loc(\Gamma)$ for all $1\leq p \leq \infty$.

The spaces $C(\Gamma;\R^M)$, $L^p(\Gamma;\R^M)$, ...\@ consist of all vector-valued functions $\vf:\Gamma\to \R^M$ whose components $f_i\in C(\Gamma),\ L^p(\Gamma)$ etc.\@ for all $i=1,...,M$. If $A\subset \R^M$ and $\vf\in C(\Gamma;\R^M),\ L^p(\Gamma;\R^M)$ etc.\@ with $\vf(\Gamma)\subset A$, we write $\vf\in C(\Gamma;A),\ L^p(\Gamma;A)$ etc.

We finish this subsection with the following lemma.

\begin{lemma} \label{lemma:not-local-property} 
	Let $\Gamma\subset \R^{1+n}$ open (not necessarily bounded) and $f \in L^\infty(\Gamma)$ with 
	$$
		\iint_{\Gamma_0} f(t,\vx) \dx\dt \geq 0 \qquad \text{ for all open and bounded subsets }\Gamma_0\subset \Gamma\ed
	$$
	Then $f(t,\vx) \geq 0 $ for a.e. $(t,\vx)\in \Gamma$.
\end{lemma}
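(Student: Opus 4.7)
The plan is to argue by contradiction: assume that the set $S := \{(t,\vx) \in \Gamma : f(t,\vx) < 0\}$ has positive Lebesgue measure, and produce an open bounded $\Gamma_0 \subset \Gamma$ on which the integral of $f$ is strictly negative, contradicting the hypothesis.

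First I would localize and quantify the negativity. Writing $S = \bigcup_{k \in \N} S_k$ where $S_k := \{f < -1/k\} \cap B\big(\vz, k\big)$, countable subadditivity forces some $S_k$ to have positive measure; fix such a $k$ and set $\ep := 1/k$, so $S_k$ is a bounded measurable subset of $\Gamma$ with $|S_k| > 0$ and $f(t,\vx) < -\ep$ for all $(t,\vx) \in S_k$. Since $\closure{S_k}$ is compact and $\Gamma$ is open, I can furthermore choose an open bounded set $V$ with $\closure{S_k} \subset V \subsetcomp \Gamma$ (e.g.\ a finite union of small balls around points of $\closure{S_k}$, each contained in $\Gamma$).

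Next I would invoke outer regularity of the Lebesgue measure: for every $\delta > 0$ there exists an open set $\til{U} \subset \R^{1+n}$ with $S_k \subset \til{U}$ and $|\til{U} \setminus S_k| < \delta$. Replacing $\til{U}$ by $U := \til{U} \cap V$ preserves the inclusion $S_k \subset U$ and keeps the same smallness estimate $|U \setminus S_k| \leq |\til{U} \setminus S_k| < \delta$, while ensuring $U$ is open, bounded, and contained in $\Gamma$. Splitting the integral,
\begin{align*}
\iint_U f \dx\dt &= \iint_{S_k} f \dx\dt + \iint_{U \setminus S_k} f \dx\dt \\
&\leq -\ep \, |S_k| + \|f\|_{L^\infty(\Gamma)} \, \delta \ed
\end{align*}

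The decisive step is the choice $\delta := \frac{\ep \, |S_k|}{2(\|f\|_{L^\infty(\Gamma)} + 1)}$, which yields $\iint_U f \dx\dt \leq -\tfrac{\ep |S_k|}{2} < 0$, contradicting the hypothesis applied to the open and bounded set $\Gamma_0 := U \subset \Gamma$. The only genuine subtlety is making sure the approximating open set can be taken inside $\Gamma$; this is handled cleanly by the intersection with $V$ in the previous paragraph, exploiting that $\Gamma$ is open and $S_k$ can be chosen with bounded closure inside $\Gamma$. Everything else is bookkeeping with outer regularity and the essential bound on $f$.
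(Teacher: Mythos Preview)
Your overall strategy---contradiction, quantified negativity on a bounded set, outer regularity, then a splitting estimate---is exactly the paper's approach. There is, however, one technical slip: you assert that $\closure{S_k}$ is compact and contained in $\Gamma$, so that you can surround it by an open $V\subsetcomp\Gamma$. Compactness is fine, but $\closure{S_k}\subset\Gamma$ need not hold; $S_k$ can accumulate on $\partial\Gamma$ (take e.g.\ $\Gamma=(0,1)$ and $f=-\charf_{(0,1/2)}+\charf_{[1/2,1)}$, where $\closure{S_k}=[0,1/2]\not\subset\Gamma$). Your construction of $V$ via balls around points of $\closure{S_k}$ therefore breaks down.

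The repair is immediate and keeps everything else intact: drop $V$ entirely and set $U:=\til{U}\cap\Gamma\cap B(\vz,k)$. This is open (intersection of open sets), bounded (contained in $B(\vz,k)$), contained in $\Gamma$, contains $S_k$ (since $S_k\subset\til{U}$, $S_k\subset\Gamma$, $S_k\subset B(\vz,k)$), and still satisfies $|U\setminus S_k|\leq|\til{U}\setminus S_k|<\delta$. Your splitting estimate and choice of $\delta$ then go through unchanged. Incidentally, the paper's own proof is somewhat looser on exactly these points (it does not explicitly force $\Gamma_0\subset\Gamma$, and it infers boundedness of $\Gamma_0$ from finite measure, which is not valid as stated); your instinct to be careful here was right, just the execution needed the small adjustment above.
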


\begin{proof} 
	Assume the contrary, i.e.
	\begin{align*}
		0 &< \left|\Big\{(t,\vx)\in \Gamma\,\Big|\, f(t,\vx) < 0\Big\} \right| \\
		&= \left|  \left\{(t,\vx)\in \Gamma\,\Big|\, f(t,\vx) < -1\right\} \cup \bigcup_{k=2}^\infty \left\{(t,\vx)\in \Gamma\,\Big|\, -\frac{1}{k-1} \leq f(t,\vx) < -\frac{1}{k}\right\} \right| \\
		&= \left|  \left\{(t,\vx)\in \Gamma\,\Big|\, f(t,\vx) < -1\right\}\right| + \sum_{k=2}^\infty \left|\left\{(t,\vx)\in \Gamma\,\Big|\, -\frac{1}{k-1} \leq f(t,\vx) < -\frac{1}{k}\right\} \right|\ed 
	\end{align*}
	Hence there exists $k\in\N$ such that $\big|\big\{(t,\vx)\in \Gamma\,\big|\, f(t,\vx) < -\frac{1}{k}\big\} \big|>0$. Let us write
	$$
		L:=\left\{(t,\vx)\in \Gamma\,\Big|\, f(t,\vx) < -\frac{1}{k}\right\} 
	$$
	for convenience. We may assume without loss of generality that $L$ is bounded. Indeed if this was not the case, then there must be at least a bounded subset of $L$ which still has positive measure since we can write
	\begin{align*}
		0 &< \left| L \right| \\
		&= \left| \Big( L \cap B_{1+n}(\vz,1) \Big) \cup \bigcup_{\ell=2}^\infty \Big( L \cap \big(B_{1+n}(\vz,\ell)\setminus B_{1+n}(\vz,\ell-1)\big) \Big) \right| \\
		&= \Big| L \cap B_{1+n}(\vz,1) \Big| + \sum_{\ell=2}^\infty \Big| L \cap \big(B_{1+n}(\vz,\ell)\setminus B_{1+n}(\vz,\ell-1)\big) \Big| \ed
	\end{align*}
	
	Now for each $\ep>0$ we find an open set $\Gamma_0\supset L$ such that $|\Gamma_0\setminus L| \leq \ep$, see e.g. \name{Elstrodt} \cite[Satz 7.1]{Elstrodt}. Let us set $\ep:= \frac{|L|}{2k\|f\|_{L^\infty}}$. Note that $\Gamma_0$ is bounded since $|\Gamma_0\setminus L| \leq \ep$ and $L$ is bounded. This leads to 
	\begin{align*}
		0 &\leq \iint_{\Gamma_0} f(t,\vx) \dx\dt \\
		&= \iint_{\Gamma_0\setminus L} f(t,\vx) \dx\dt + \iint_L f(t,\vx) \dx\dt \\
		&\leq \ep \|f\|_{L^\infty} - \frac{1}{k} |L| \\
		&= - \frac{|L|}{2k} \\
		&< 0\ec
	\end{align*}
	a contradiction. 
\end{proof}

\subsection{Integrability Conditions}

\begin{prop} \label{prop:not-int-cond}
	Let $\sO\subset \R^m$ open and simply connected, and $\vF \in C^1 (\sO;\R^m)$. Then there exists $\phi\in C^2(\sO)$ such that $\vF = \Grad_\vU \phi$ if and only if the integrability conditions 
	$$
	\partial_{U_i} F_j = \partial_{U_j} F_i\qquad \text{ for all }i,j=1,...,m
	$$ 
	hold on $\sO$.
\end{prop}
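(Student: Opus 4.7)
The ``only if'' direction is the easy one and is essentially just Schwarz's theorem: if $\vF = \Grad_\vU \phi$ with $\phi \in C^2(\sO)$, then $F_j = \partial_{U_j}\phi$, and since $\phi$ is twice continuously differentiable, we have
$$
\partial_{U_i} F_j = \partial_{U_i}\partial_{U_j}\phi = \partial_{U_j}\partial_{U_i}\phi = \partial_{U_j} F_i
$$
for all $i,j = 1,\ldots,m$. I would dispose of this direction first in one short paragraph.

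For the ``if'' direction, my plan is to construct $\phi$ as a line integral. Fix a basepoint $\vU_0 \in \sO$ and, for any $\vU \in \sO$, choose a piecewise $C^1$ path $\gamma:[0,1]\to\sO$ with $\gamma(0) = \vU_0$ and $\gamma(1) = \vU$; such a path exists because $\sO$ is open and connected, hence path-connected. Define
$$
\phi(\vU) := \int_0^1 \vF\big(\gamma(s)\big) \cdot \gamma'(s) \ds \ed
$$
Once $\phi$ is shown to be well-defined, differentiability is the easy part: given $\vU \in \sO$, choose a small ball $B(\vU,r) \subset \sO$ and, for $\vU + h\ve_i \in B(\vU,r)$, concatenate a fixed path from $\vU_0$ to $\vU$ with the straight segment $s \mapsto \vU + sh\ve_i$. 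The fundamental theorem of calculus then gives
$$
\phi(\vU + h\ve_i) - \phi(\vU) = \int_0^h F_i(\vU + s\ve_i)\ds \ec
$$
so $\partial_{U_i}\phi(\vU) = F_i(\vU)$. Since $\vF \in C^1$, this yields $\phi \in C^2$ with $\Grad_\vU \phi = \vF$.

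The main obstacle — and the only place where simple connectedness enters — is proving that $\phi$ is independent of the chosen path. Equivalently, I must show that $\oint_\gamma \vF \cdot d\vs = 0$ for every piecewise $C^1$ closed loop $\gamma$ in $\sO$. The idea is as follows: by simple connectedness there is a continuous homotopy $H:[0,1]^2 \to \sO$ with $H(0,\cdot) = \gamma$ and $H(1,\cdot)$ the constant loop at $\vU_0$. After a standard smoothing/approximation argument one may assume $H$ is piecewise $C^1$. I would then tile the square $[0,1]^2$ by small sub-rectangles $R_{jk}$ whose images lie in convex neighborhoods of $\sO$. On each such convex piece the Poincaré lemma is elementary — define a local potential by integrating $\vF$ radially from a center point and verify, using the integrability conditions together with differentiation under the integral sign, that its gradient is $\vF$ — so the integral of $\vF \cdot d\vs$ around the image of $\partial R_{jk}$ vanishes. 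Summing over all sub-rectangles, the internal boundary contributions cancel pairwise and what remains is the integral of $\vF$ around the image of $\partial [0,1]^2$, which reduces to $\oint_\gamma \vF \cdot d\vs$ because the other three sides of the square are mapped to constant paths or degenerate segments contributing zero. Thus $\oint_\gamma \vF \cdot d\vs = 0$ as required.

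The hard part is really this homotopy-and-tiling argument: one has to be careful with the regularity of the homotopy (justifying the smoothing step), with choosing the tiling fine enough that each $H(R_{jk})$ lands in a convex subset of $\sO$ (which uses compactness of $[0,1]^2$ and openness of $\sO$), and with the cancellation of the interior edges. An alternative that avoids some of this bookkeeping is to invoke a suitable version of Stokes' theorem after verifying that $\vF$ locally admits a potential (so that the associated $1$-form is closed), but either route has the same conceptual core. Once path-independence is established, the construction above produces the desired $\phi \in C^2(\sO)$ with $\Grad_\vU \phi = \vF$, completing the proof.
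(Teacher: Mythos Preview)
Your proposal is correct and follows the standard route; note, however, that the paper does not actually give its own proof of this proposition but simply records it as standard and cites a textbook (Amann--Escher, VIII~4.10~(a)). Your sketch --- Schwarz for the necessity, line-integral construction plus a homotopy/tiling argument for sufficiency --- is precisely the classical argument one finds in such references, so there is nothing to compare beyond observing that you have supplied the details the paper chose to omit.
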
 

Proposition~\ref{prop:not-int-cond} is standard. Its proof can be found in fundamental textbooks, e.g. \name{Amann}-\name{Escher} \cite[VIII 4.10 (a)]{AmaEsc2}.

\subsection{Boundary Integrals and the Divergence Theorem} 


If the boundary of a domain is sufficiently regular, one can define boundary integrals. We  briefly summarize some material on this topic. The content of this section can be found in many textbooks, e.g. \name{Feireisl}-\name{Novotn{\'y}} \cite[Section 4 in the chapter ``Notation, Definitions, and Function Spaces'']{FeiNov} and references therein.

\begin{defn} 
	A domain $\Omega\subset\R^n$ is called \emph{Lipschitz} if for each point $\vx\in\partial \Omega$ there exists $r>0$ and a Lipschitz mapping $\gamma:\R^{n-1}\to \R$ such that (after rotating and relabeling the coordinate axes if necessary) we have 
	\begin{align*}
		\Omega \cap B_n(\vx,r) &= \left\{\vy\in \R^n\,\big|\,\gamma(y_1,...,y_{n-1}) < y_n\right\} \cap B_n(\vx,r) \ec \\
		\partial\Omega \cap B_n(\vx,r) &= \left\{\vy\in \R^n\,\big|\,\gamma(y_1,...,y_{n-1}) = y_n\right\} \cap B_n(\vx,r) \ed
	\end{align*}
	A \emph{Lipschitz} space-time domain $\Gamma\subset\R^{1+n}$ is defined analogously.
\end{defn}

For Lipschitz domains $\Omega$ as well as for Lipschitz space-time domains $\Gamma$ we can define a surface measure (the $(n-1)$-dimensional \emph{Hausdorff measure}) and hence an integral on the boundary. We write
$$
\int_{\partial \Omega} f \dS_\vx \qquad \text{ or } \qquad \int_{\partial \Gamma} f \dS_{t,\vx}
$$
for those boundary integrals. 

Furthermore for Lipschitz (space-time) domains $\Omega$ (or $\Gamma$) there exists the \emph{outward pointing normal vector} $\vn(\vx)\in\R^n$ (or $\vn(t,\vx)\in\R^{1+n}$) for a.e. $\vx\in\partial \Omega$ (or $(t,\vx)\in \partial \Gamma$), where here ``a.e.'' is meant with respect to the surface measure on $\partial \Omega$ (or $\partial \Gamma$).

\renewcommand{\arraystretch}{1.2}

\begin{ex} 
	We call a space-time domain of the form $(t_0,t_1)\times \Omega \subset\R^{1+n}$, where $\Omega\subset \R^n$ is a bounded domain, a \emph{cylindrical space-time domain}, see Figure~\ref{fig:cylinder} for an example. Note that $\partial \Gamma$ consists of 3 sets 
	$$
		\partial \Gamma = \big(\{t_0\}\times \Omega \big) \cup \big( \{t_1\}\times \Omega \big) \cup \big( [t_0,t_1]\times \partial\Omega \big) \ed
	$$
	Furthermore $\Gamma$ is Lipschitz in $\R^{1+n}$ if $\Omega$ is Lipschitz in $\R^n$ and in this case the outward pointing normal vector $\vn(t,\vx)\in \R^{1+n}$ is given by 
	$$
		\vn(t,\vx) = \left\{\begin{array}{ll} 
			\bigg(\begin{array}{c} -1 \\ \vz \end{array} \bigg) & \text{ for }(t,\vx)\in \{t_0\}\times \Omega \ec \\
			\bigg(\begin{array}{c} 1 \\ \vz \end{array} \bigg) & \text{ for }(t,\vx)\in \{t_1\}\times \Omega \ec \\
			\bigg(\begin{array}{c} 0 \\ \vn_\Omega(\vx) \end{array} \bigg) & \text{ for }(t,\vx)\in (t_0,t_1)\times \partial\Omega \ec
		\end{array} \right.
	$$
	where $\vn_\Omega(\vx)\in\R^n$ is the normal vector which corresponds to $\Omega$. 
	
	Moreover for $f:\R^{1+n} \to \R$ and $\vf:\R^{1+n} \to \R^{1+n}$ we have  
	\begin{align}
		\int_{\partial\Gamma} f(t,\vx) \dS_{t,\vx} &= \int_\Omega f(t_1,\vx) \dx + \int_\Omega f(t_0,\vx) \dx + \int_{t_0}^{t_1} \int_{\partial\Omega} f(t,\vx) \dS_\vx \dt \ec \\ 
		\int_{\partial\Gamma} \vf(t,\vx)\cdot \vn(t,\vx) \dS_{t,\vx} &= \int_\Omega f_t(t_1,\vx) \dx - \int_\Omega f_t(t_0,\vx) \dx + \int_{t_0}^{t_1} \int_{\partial\Omega} \vf_\vx(t,\vx) \cdot \vn_\Omega(\vx) \dS_\vx \dt \ec \label{eq:not-boundary-integral-cylinder}
	\end{align}
	provided the integrals exist.
\end{ex}

\renewcommand{\arraystretch}{1.0}

\begin{figure}[tbp]
	\centering
	\includegraphics[width=0.45\textwidth]{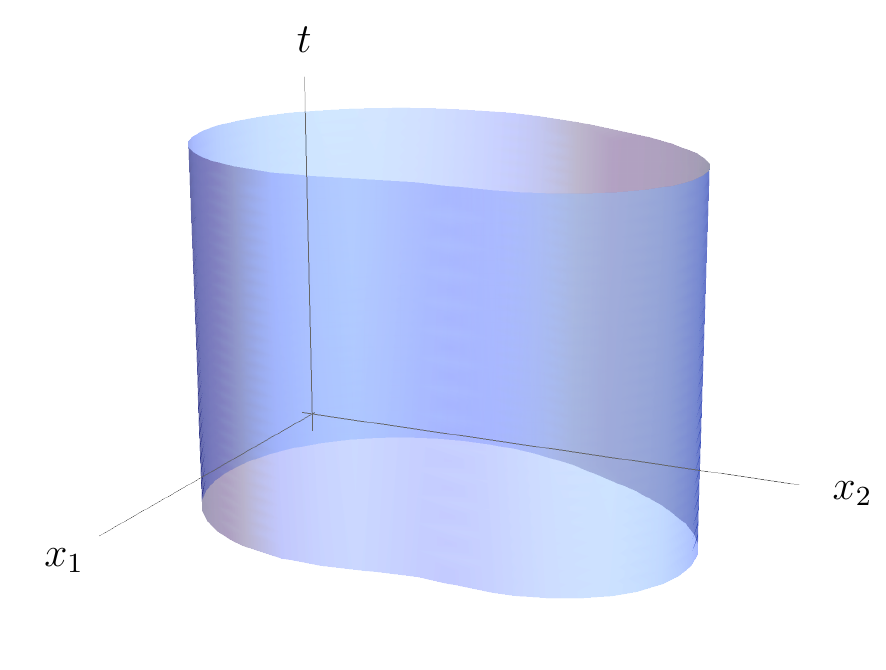}  
	\caption{An example of a cylindrical space-time domain where $n=2$.} 
	\label{fig:cylinder}
\end{figure} 

For bounded Lipschitz domains the following proposition is true.

\begin{prop} \label{prop:not-divergence} \emph{(Divergence Theorem)}
	Let the space-time domain $\Gamma\subset\R^{1+n}$ be bounded and Lipschitz and $\vf \in C^1(\closure{\Gamma};\R^{1+n})$. Then 
	\begin{equation} \label{eq:not-div-thm}
		\iint_\Gamma \Divtx \vf(t,\vx) \dx\dt = \int_{\partial \Gamma} \vf(t,\vx) \cdot \vn(t,\vx) \dS_{t,\vx} \ed
	\end{equation}
\end{prop}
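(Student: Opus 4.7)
The plan is to follow the standard textbook argument, reducing the general Lipschitz case to a computation on ``simple'' regions where the boundary is (locally) a graph, and then patching via a partition of unity. Since both sides of \eqref{eq:not-div-thm} are linear in $\vf$, I would first reduce to the case $\vf = f \, \ve_k$ for a single $k\in\{t,1,\dots,n\}$ and $f\in C^1(\closure{\Gamma})$, so the identity to prove becomes
$$
\iint_\Gamma \partial_k f \dx\dt \;=\; \int_{\partial\Gamma} f \, n_k \dS_{t,\vx}.
$$

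Next I would cover $\closure{\Gamma}$ by finitely many open sets: one open set $V_0 \subsetcomp \Gamma$ covering the interior away from the boundary, and finitely many balls $V_1,\dots,V_N$ centered at boundary points such that, after rotation and relabeling, $\Gamma \cap V_j$ is of the form $\{s_{n+1} > \gamma_j(s_1,\dots,s_n)\} \cap V_j$ for a Lipschitz function $\gamma_j$. Using a subordinate partition of unity $\{\chi_j\}_{j=0}^N \subset \Cc(\R^{1+n})$ with $\sum_j \chi_j \equiv 1$ on $\closure{\Gamma}$, I would split $f = \sum_j f\chi_j$ and verify the identity summand by summand. For the interior piece $f\chi_0$, the function $f\chi_0$ has compact support in $\Gamma$, so Fubini together with the fundamental theorem of calculus gives that $\iint_\Gamma \partial_k (f\chi_0) \dx\dt = 0$, which matches the vanishing boundary contribution.

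For each boundary piece $f\chi_j$, after the rotation bringing $\Gamma \cap V_j$ into graph form, I would apply Fubini to integrate $\partial_k (f\chi_j)$ first along the ``vertical'' variable $s_{n+1}$ over $\{s_{n+1}>\gamma_j(s')\}$. The fundamental theorem of calculus then produces a boundary term on $\{s_{n+1} = \gamma_j(s')\}$ while the integrals in the ``horizontal'' directions $s_1,\dots,s_n$ already vanish in the interior because $f\chi_j$ is compactly supported in $V_j$ and the integration is over the unbounded slab. One then rewrites the resulting integral over the graph of $\gamma_j$ as a surface integral against $\vn$ by using the explicit formula $\vn(s',\gamma_j(s')) = \bigl(-\Grad\gamma_j(s'),1\bigr)/\sqrt{1+|\Grad\gamma_j(s')|^2}$ together with the area element $\sqrt{1+|\Grad\gamma_j(s')|^2}\,\mathrm{d}s'$, so the Jacobian cancels and one is left with the desired boundary integral of $f\chi_j \, n_k$.

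The main obstacle is that $\gamma_j$ is merely Lipschitz, not $C^1$, so $\Grad\gamma_j$ exists only almost everywhere by Rademacher's theorem, and one must justify the use of the fundamental theorem of calculus and the change-of-variables formula in this regularity class. I would handle this either by invoking a standard approximation result (smoothing $\gamma_j$ by convolution, applying the classical $C^1$ divergence theorem, and passing to the limit using Lipschitz bounds), or by quoting the absolute continuity of $\gamma_j$ along lines, which legitimates the FTC step. Everything else -- linearity, partition of unity, Fubini -- is routine once this technical point is settled.
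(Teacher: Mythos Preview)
Your sketch is the standard textbook argument and is correct; the paper does not actually prove this proposition but simply refers to the literature (Ne{\v c}as, \emph{Direct Methods in the Theory of Elliptic Equations}, Theorem 1.1 in Section 3.1.2) for a more general statement. So your proposal goes further than the paper in spelling out the reduction to local graph coordinates, partition of unity, and the Rademacher/approximation argument for Lipschitz boundaries, all of which are exactly the ingredients used in the cited reference.
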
 

Note again that $\Divtx \vf = \partial_t f_t + \Div \vf_\vx$. Furthermore the right-hand side of \eqref{eq:not-div-thm} can be written as in \eqref{eq:not-boundary-integral-cylinder} if $\Gamma$ is cylindrical. 

For the proof of Proposition~\ref{prop:not-divergence} we refer to standard textbooks, e.g. \name{Ne{\v c}as} \cite[Theorem 1.1 in Section 3.1.2]{Necas} for a more general statement.


\subsection{Mollifiers} \label{subsec:not-mollification} 

\begin{defn}(See e.g. \name{Evans} \cite[Appendix C.5]{Evans}.)
	\begin{enumerate}
		\item The \emph{standard mollifier} $\phi\in C^\infty(\R;\R^+_0)$ is defined by 
		\begin{equation*}
			\phi(t) := \left\{ \begin{array}{cl}
				C \exp \left( \frac{1}{t^2 -1} \right) & \text{ for } |t|<1\ec \\
				0 & \text{ for }|t|\geq 1 \ec
			\end{array}\right.
		\end{equation*}
		where $C>0$ is such that 
		$$
			\int_{\R} \phi(t) \dt = 1\ed
		$$
		For $\delta>0$, define $\phi_\delta\in C^\infty(\R;\R^+_0)$ by 
		$$
			\phi_\delta (t) := \frac{1}{\delta} \phi\left(\frac{t}{\delta}\right) \ed
		$$
		It is a simple observation that 
		\begin{itemize}
			\item $\phi_\delta(t) = 0$ for $|t|\geq \delta$ and
			\item $\int_{\R} \phi_\delta(t) \dt = 1$.
		\end{itemize}
		\item For $f\in L^1_\loc (\R)$ the \emph{($\delta$-)mollification} $f_\delta$ of $f$ is defined as the convolution $f_\delta := \phi_\delta \ast f$, i.e. 
		$$
			f_\delta (t) = \int_\R \phi_\delta(t-\tau) f(\tau) \dtau = \int_{-\delta}^\delta \phi_\delta(\tau) f(t - \tau) \dtau \qquad \text{ for all }t \in \R \ed
		$$
	\end{enumerate}
\end{defn}	

\noindent Note that $f_\delta \in C^\infty (\R)$, see \cite[Theorem 7 in Appendix C.5]{Evans}.

\subsection{Periodic Functions} \label{subsec:not-periodic-functions}

At some point in this book we will deal with 1-periodic functions of the form $f:\R\to \R$,
\begin{equation} \label{eq:periodic-function}
f(t) = \left\{ \begin{array}{ll}
	\tau - 1 & \text{ if } t\in [0,\tau) + \Z \ec\\
	\tau & \text{ if } t\in [\tau,1) + \Z \ec
\end{array} \right.
\end{equation}
where $\tau\in (0,1)$. An example of such a function is depicted in Figure~\ref{fig:periodic}. Note that 
$$
\int_{0}^{1} f(t) \dt = \int_0^\tau (\tau-1) \dt + \int_\tau^1 \tau \dt = \tau (\tau-1) + (1-\tau) \tau = 0 \ec
$$
i.e. $f$ has zero mean. Such functions shall be mollified.

\begin{figure}[tbp]
	\vspace{0.5cm}
	\centering
	\includegraphics[width=0.5\textwidth]{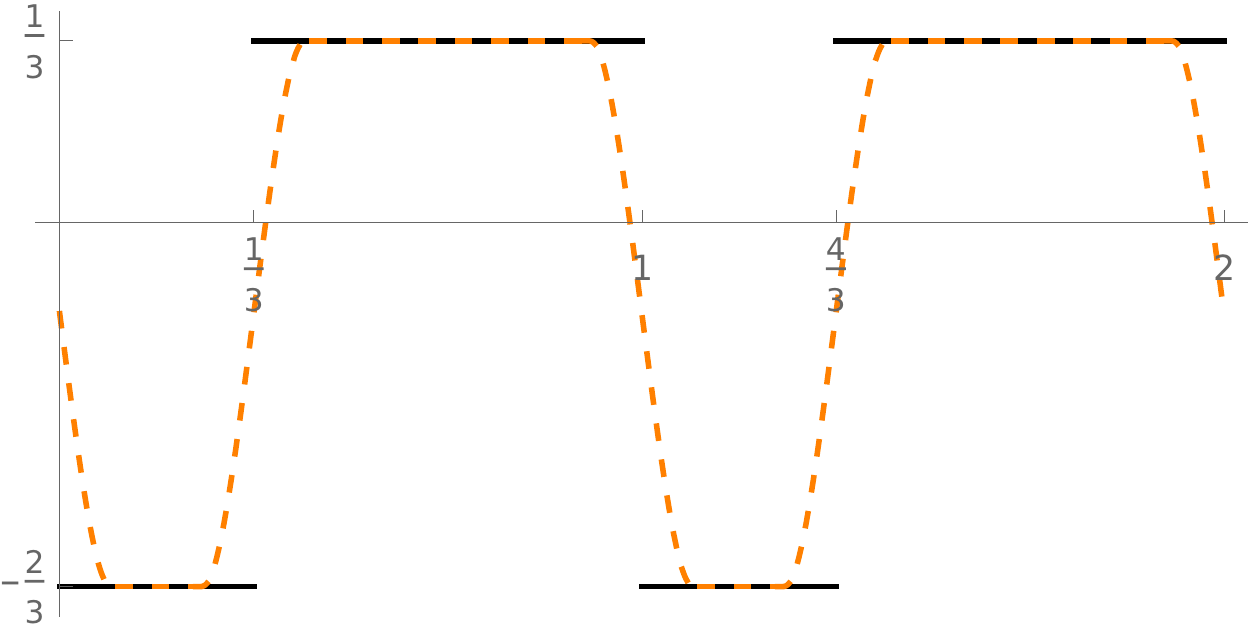}  
	\vspace{0.1cm}
	\caption{The function $f$ as in \eqref{eq:periodic-function} with $\tau=\frac{1}{3}$ (black) and its mollification $f_{\frac{1}{10}}$ (orange).} 
	\label{fig:periodic}
\end{figure} 

\begin{lemma} \label{lemma:periodic-mollification}
	Let $f$ as in \eqref{eq:periodic-function}. Its $\delta$-mollification $f_\delta$ (see Subsection~\ref{subsec:not-mollification}) has the following properties:
	\begin{itemize}
		\item $f_\delta$ is 1-periodic;
		\item $f_\delta$ has zero mean; 
		\item $f_\delta$ takes values in $[\tau-1,\tau]$;
		\item If $\delta <\min\left\{ \frac{\tau}{2},\frac{1-\tau }{2} \right\}$, then 
		\begin{align*}
			f_\delta (t) &= \tau - 1 & &\hspace{-2.5cm}\text{ for all }t\in [\delta, \tau- \delta] + \Z \ec \\
			f_\delta (t) &= \tau & &\hspace{-2.5cm}\text{ for all }t\in [\tau+\delta , 1-\delta] + \Z \ed
		\end{align*}
	\end{itemize} 
\end{lemma}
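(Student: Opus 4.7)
The plan is to verify the four properties directly from the convolution formula $f_\delta(t) = \int_{-\delta}^{\delta} \phi_\delta(\tau)\, f(t-\tau)\, d\tau$, exploiting only the periodicity of $f$, the non-negativity of $\phi_\delta$, and the normalization $\int_\R \phi_\delta = 1$. Nothing delicate occurs here; the work is purely mechanical, so the main job is to present the changes of variables cleanly.

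For periodicity, I would compute $f_\delta(t+1) = \int_\R \phi_\delta(t+1-s) f(s)\, ds$ and substitute $s' = s-1$ to reduce to $\int_\R \phi_\delta(t-s') f(s'+1)\, ds' = f_\delta(t)$ using the 1-periodicity of $f$. For zero mean, I would apply Fubini's theorem to $\int_0^1 f_\delta(t)\, dt = \int_{-\delta}^{\delta} \phi_\delta(\tau) \int_0^1 f(t-\tau)\, dt\, d\tau$; by periodicity the inner integral is $\int_0^1 f(t)\, dt = 0$, which was already verified in the text preceding the lemma. The range property is immediate: since $\phi_\delta \geq 0$, $\int \phi_\delta = 1$, and $f$ takes values in $\{\tau-1,\tau\} \subset [\tau-1,\tau]$, the value $f_\delta(t)$ is a convex combination of values in $[\tau-1,\tau]$ and so lies in that interval.

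The only part requiring a short case analysis is the constancy statement. Assume $\delta < \min\{\tau/2, (1-\tau)/2\}$ and take $t \in [\delta, \tau - \delta]$. For any $s \in (-\delta,\delta)$ we have $t - s \in (t-\delta, t+\delta) \subset (0, \tau)$, so $f(t-s) = \tau - 1$. Consequently
\begin{equation*}
f_\delta(t) = \int_{-\delta}^{\delta} \phi_\delta(s)\, (\tau - 1)\, ds = \tau - 1.
\end{equation*}
Similarly, for $t \in [\tau+\delta, 1-\delta]$ and $s \in (-\delta,\delta)$, we have $t-s \in (\tau, 1)$, hence $f(t-s) = \tau$ and $f_\delta(t) = \tau$. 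The extension of both identities to all translates $[\delta,\tau-\delta] + \Z$ and $[\tau+\delta,1-\delta] + \Z$ then follows from the periodicity established above.

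I do not anticipate any real obstacle; the only point of mild care is keeping the endpoints straight so that the open interval $(t-\delta, t+\delta)$ stays within the open interval where $f$ is constant, which is exactly what the hypothesis $\delta < \min\{\tau/2, (1-\tau)/2\}$ provides (endpoints contribute measure zero and so are harmless).
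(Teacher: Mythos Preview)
Your proof is correct and complete. The paper itself does not supply a proof at all---it states ``The simple proof is left to the reader''---so your direct verification from the convolution formula is precisely the intended argument, and there is nothing further to compare.
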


\begin{proof} 
	The simple proof is left to the reader. See also Figure~\ref{fig:periodic}. 
\end{proof} 

\begin{lemma} \label{lemma:not-periodic-primitive} 
	Let $f\in C^\infty(\R)$ be 1-periodic with zero mean. Then for each $k\in \N$ there exists $h\in C^\infty(\R)$ with the following properties:
	\begin{itemize}
		\item $h^{(i)}$ is 1-periodic for all $i=0,...,k$;
		\item $h^{(i)}$ has zero mean for all $i=0,...,k$;
		\item $h^{(k)} = f$.
	\end{itemize} 
	Note that the first two properties imply that all derivatives of $h$ are bounded.
\end{lemma}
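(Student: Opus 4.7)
My plan is to prove the lemma by induction on $k \in \N$, with the key operation being ``integrate, then subtract the mean'' at each step.

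For the base case $k=1$, I would define $H(t) := \int_0^t f(\tau)\,d\tau$. Since $f \in C^\infty(\R)$, we have $H \in C^\infty(\R)$ with $H' = f$. The periodicity of $H$ follows from the zero-mean hypothesis on $f$: for any $t \in \R$,
\[
H(t+1) - H(t) = \int_t^{t+1} f(\tau)\,d\tau = \int_0^1 f(\tau)\,d\tau = 0,
\]
where the last equality uses 1-periodicity of $f$ to shift the integration interval. However, $H$ itself need not have zero mean, so I would set $h(t) := H(t) - c$ with $c := \int_0^1 H(s)\,ds$. Then $h$ is 1-periodic (a constant shift preserves this), has zero mean by construction, and $h' = H' = f$ is also 1-periodic with zero mean by assumption. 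This establishes the $k=1$ case.

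For the inductive step, suppose the statement holds for some $k \in \N$ and let $f \in C^\infty(\R)$ be 1-periodic with zero mean. The base case applied to $f$ produces $g \in C^\infty(\R)$ with $g' = f$, where both $g$ and $g' = f$ are 1-periodic and have zero mean. Since $g$ itself is 1-periodic with zero mean, the induction hypothesis (applied at level $k$ to $g$) yields $h \in C^\infty(\R)$ such that $h^{(i)}$ is 1-periodic with zero mean for $i = 0, \ldots, k$, and $h^{(k)} = g$. Then $h^{(k+1)} = g' = f$ is 1-periodic with zero mean by assumption on $f$, so $h^{(i)}$ is 1-periodic with zero mean for all $i = 0, \ldots, k+1$, completing the induction.

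There is no serious obstacle here; the only subtle point worth flagging is that the antiderivative $H$ of a zero-mean periodic function is automatically periodic but not automatically of zero mean, which is why one must subtract the constant $c$ at every inductive step. The final remark that ``all derivatives of $h$ are bounded'' follows immediately: each $h^{(i)}$ for $i = 0, \ldots, k$ is continuous and 1-periodic, hence bounded on the compact interval $[0,1]$ and therefore on all of $\R$ by periodicity, while $h^{(i)}$ for $i > k$ is a derivative of the periodic smooth function $h^{(k)} = f$ and is likewise continuous and 1-periodic, hence bounded.
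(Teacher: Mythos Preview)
Your proof is correct and follows essentially the same approach as the paper: the paper also reduces to the case $k=1$ by iteration and defines $h(t) := \int_0^t f(s)\,ds - \int_0^1 \int_0^\sigma f(s)\,ds\,d\sigma$, which is exactly your $H(t) - c$. You have simply spelled out the periodicity check and the inductive step that the paper leaves as ``simple computations'' and ``iteration''.
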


\begin{proof} 
	It suffices to prove the claim for $k=1$ due to iteration. Define $h$ by 
	\begin{equation} \label{eq:not-periodic-primitive} 
		h(t) := \int_0^t f(s) \ds - \int_0^1 \int_0^\sigma f(s) \ds \dsigma \ed
	\end{equation}
	Note that these integrals exist because $f$ is continuous. Simple computations show that $h$ is 1-periodic, has zero mean and $h'=f$. The latter property implies that $h\in C^\infty(\R)$ since $f\in C^\infty(\R)$.
\end{proof}

\begin{rem} 
	For Lemma~\ref{lemma:not-periodic-primitive} it is crucial that $f$ has zero mean. Otherwise $h$ as defined in \eqref{eq:not-periodic-primitive} will not be bounded.
\end{rem}

\section{Convexity} \label{sec:not-convexity}

\subsection{Convex Sets and Convex Hulls}

\begin{defn} 
	\begin{itemize}
		\item The \emph{line segment} $[\vp,\vq]\subset \R^M$ between two points $\vp,\vq\in\R^M$ is defined by 
		$$
		[\vp,\vq] := \left\{ \vs\in \R^M \,\Big|\, \exists \tau\in[0,1]\text{ such that } \vs = \tau\vp + (1-\tau) \vq \right\}\ed
		$$
		\item A set $S\subset\R^M$ is called \emph{convex} if $\forall \vp,\vq \in S : [\vp,\vq]\subseteq S$.
		\item Let $S\subset \R^M$ be closed and convex. A point $\vs\in S$ is called \emph{exteme point} if there are no two points $\vp,\vq\in S\setminus\{\vs\}$ with $\vs\in[\vp,\vq]$. The set of all extreme points of $S$ is denoted by $\ext(S)$.
		\item The \emph{convex hull} $K^\co$ of a set $K\subset\R^M$ is the smallest convex set which contains $K$. 
	\end{itemize}
\end{defn} 

\begin{prop} \label{prop:convhull=convcombis}
	The convex hull $K^\co$ of a set $K\subset \R^M$
	\begin{enumerate}
		\item \label{item:convex-hull.a} as defined above is well-defined due to the fact that the intersection of convex sets is convex.
		\item \label{item:convex-hull.b} is equal to the set of all convex combinations of points in $K$, i.e.
		\begin{align}
			K^\co := \Bigg\{ \vp \in \R^M\,\Big|\,\exists &N\in\N, \exists(\tau_i,\vp_i)\in\R^+\times K\text{ for all }i=1,...,N\text{ such that } \label{eq:convex-hull} \\
			& \bullet \ \sum_{i=1}^N \tau_i = 1  \text{ and } \notag \\
			& \bullet \ \vp=\sum_{i=1}^N \tau_i \vp_i  \qquad \qquad \Bigg\} \ed \notag
		\end{align} 
	\end{enumerate}
\end{prop}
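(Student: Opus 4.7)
My plan is to prove the two parts of Proposition~\ref{prop:convhull=convcombis} in order. For part \ref{item:convex-hull.a}, the standard construction works: I would define $K^\co$ as the intersection of the family $\mathcal{F}$ of all convex subsets of $\R^M$ that contain $K$. This family is nonempty since $\R^M \in \mathcal{F}$, so the intersection exists and contains $K$. To justify that this intersection is convex, let $\vp,\vq$ be in the intersection and $\vs \in [\vp,\vq]$; then for every $S \in \mathcal{F}$ we have $\vp,\vq \in S$, hence $\vs \in S$ by convexity of $S$, hence $\vs$ lies in the intersection. Finally, minimality is immediate: any convex set containing $K$ belongs to $\mathcal{F}$ and therefore contains the intersection.

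For part \ref{item:convex-hull.b}, denote by $\widetilde{S}$ the set on the right-hand side of \eqref{eq:convex-hull}. The plan is to prove the two inclusions separately. For $\widetilde{S} \subseteq K^\co$, I would proceed by induction on $N$. The case $N=1$ gives $\vp = \vp_1 \in K \subseteq K^\co$. For the induction step, given a convex combination $\vp = \sum_{i=1}^N \tau_i \vp_i$ with $N \geq 2$, I would separate off the last term and write
\[
\vp = (1-\tau_N) \vq + \tau_N \vp_N, \qquad \vq := \sum_{i=1}^{N-1} \frac{\tau_i}{1-\tau_N} \vp_i,
\]
which is valid because $\tau_N \in (0,1)$ (indeed $\tau_N < 1$ since the other $\tau_i$ are strictly positive and all weights sum to $1$). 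Then $\vq \in K^\co$ by the induction hypothesis applied to the convex combination of $N-1$ points of $K$, and since $\vp_N \in K \subseteq K^\co$ and $K^\co$ is convex, we obtain $\vp \in [\vq,\vp_N] \subseteq K^\co$.

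For the reverse inclusion $K^\co \subseteq \widetilde{S}$, the cleanest route is to show that $\widetilde{S}$ is itself a convex set containing $K$, from which minimality of $K^\co$ finishes the argument. Containment $K \subseteq \widetilde{S}$ is immediate by taking $N=1$ and $\tau_1 = 1$. For convexity, given $\vp = \sum_{i=1}^{N_1} \tau_i \vp_i$ and $\vq = \sum_{j=1}^{N_2} \mu_j \vq_j$ in $\widetilde{S}$ and $\sigma \in [0,1]$, I would form
\[
\sigma \vp + (1-\sigma) \vq = \sum_{i=1}^{N_1} \sigma \tau_i\, \vp_i + \sum_{j=1}^{N_2} (1-\sigma) \mu_j\, \vq_j,
\]
which is a convex combination of $N_1+N_2$ points of $K$, provided $\sigma \in (0,1)$; the boundary cases $\sigma \in \{0,1\}$ give $\vq$ or $\vp$ directly. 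Strictly speaking the coefficients must lie in $\R^+$ (not $\R^+_0$), so one has to drop any zero weights, but this is cosmetic.

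There is no real obstacle here; the only delicate point is the bookkeeping with strict positivity of the weights in the definition of $\widetilde{S}$, in particular handling the edge cases where $\sigma \in \{0,1\}$ in the convexity argument and $\tau_N$ could a priori equal $1$ in the induction. Both issues are resolved by observing that $\sum_{i=1}^N \tau_i = 1$ together with $\tau_i > 0$ forces $\tau_N < 1$ whenever $N \geq 2$, so the induction goes through, and by treating the boundary cases of $\sigma$ separately.
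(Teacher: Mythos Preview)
Your proof is correct and follows essentially the same approach as the paper: for part~\ref{item:convex-hull.b}, the paper likewise shows that the set of convex combinations is convex and contains $K$ (giving $K^\co\subseteq\widetilde S$) and then proves $\widetilde S\subseteq K^\co$ by induction on $N$. Your treatment is in fact more detailed, as you explicitly handle the strict positivity of the weights $\tau_i\in\R^+$ and the edge cases $\sigma\in\{0,1\}$, which the paper glosses over.
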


\begin{proof}
	\begin{enumerate}
		\item First of all note, that $\R^M$ is convex and contains $K$. \emph{A} smallest convex set which contains $K$ can hence be constructed by removing sets from $\R^M$ until there is no convex subset which contains $K$. This yields existence of \emph{a} smallest convex set which contains $K$. Such a set is unique, which can be shown as follows. Assume there would be two different smallest convex sets which contain $K$. The intersection of those two sets is again convex and contains $K$. But this is a contradiction to the smallness assumption of the two sets. Therefore the convex hull is well-defined. 
		\item We will not present all details here because the proof is quite simple\footnote{In addition to what is presented here, the claim can be easily deduced from Proposition~\ref{prop:KLambda=U}. Indeed setting $\Lambda=\R^M$ yields that the corresponding $\Lambda$-convex hull $K^\Lambda$ coincides with the convex hull $K^\co$. Furthermore each family of pairs $\big\{(\tau_i,\vp_i)\big\}_{i=1,...,N}$, where $(\tau_i,\vp_i)\in\R^+\times K$, satisfies the $H_N$-condition (see Definition~\ref{defn:hn}) if and only if $\sum_{i=1}^N\tau_i=1$, because of the fact that we have set $\Lambda=\R^M$. Hence $\sU$ as defined in \eqref{eq:U} coincides with the right-hand side of \eqref{eq:convex-hull}. Proposition~\ref{prop:KLambda=U} finally yields the claim.}. On the one hand one can simply show that the right-hand side of \eqref{eq:convex-hull} is convex and contains $K$. Hence \eqref{eq:convex-hull} holds with a ``$\subset$'' sign instead of ``$=$''. The other inclusion can be proven by induction over $N$.
	\end{enumerate}
\end{proof}

%

The following proposition will serve as a tool to compute the convex hull. For its proof we refer to the literature.

\begin{prop}\emph{(Minkowski's Theorem, see e.g. \name{Br{\o}ndsted} \cite[Theorem 5.10]{Brondsted})} \label{prop:minkowski}
	Let $S\subseteq\R^M$ be a compact convex set and let $K\subseteq S$. Then
	\begin{equation*}
	S=K^{\co} \ \Leftrightarrow\  \ext(S)\subset K \ed
	\end{equation*}
\end{prop}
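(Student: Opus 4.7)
My plan is to prove the two implications separately, with the backward direction resting on the classical finite-dimensional Krein--Milman theorem (which asserts that every compact convex subset of $\R^M$ equals the convex hull of its extreme points).

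For the forward direction ``$\Rightarrow$'', I would assume $S=K^{\co}$ and argue by contradiction that every $\vs\in\ext(S)$ lies in $K$. Suppose $\vs\in\ext(S)$ but $\vs\notin K$. Since $\vs\in S=K^{\co}$, Proposition~\ref{prop:convhull=convcombis}~\ref{item:convex-hull.b} gives a representation $\vs=\sum_{i=1}^N \tau_i\vp_i$ with $\vp_i\in K$, $\tau_i>0$, $\sum_i\tau_i=1$, and I would choose $N$ minimal among all such representations. The assumption $\vs\notin K$ forces $N\geq 2$, so $\tau_1\in(0,1)$ and I can split $\vs=\tau_1\vp_1+(1-\tau_1)\vq$, where $\vq:=\sum_{i=2}^N\frac{\tau_i}{1-\tau_1}\vp_i\in K^{\co}=S$. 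Both $\vp_1$ and $\vq$ lie in $S$, and $\vp_1\neq\vs$ because $\vp_1\in K$ while $\vs\notin K$. Minimality of $N$ prevents $\vq=\vs$ (otherwise $\vs$ would be a convex combination of only $N-1$ points in $K$), so $\vq\neq\vs$ as well. But then $\vs\in[\vp_1,\vq]$ with $\vp_1,\vq\in S\setminus\{\vs\}$, contradicting $\vs\in\ext(S)$.

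For the backward direction ``$\Leftarrow$'', assume $\ext(S)\subset K\subset S$. The inclusion $K\subset S$ and convexity of $S$ immediately yield $K^{\co}\subset S$. For the reverse inclusion I would invoke the finite-dimensional Krein--Milman theorem, which states that the compact convex set $S\subset\R^M$ satisfies $S=(\ext(S))^{\co}$. Combining this with $\ext(S)\subset K$ gives $S=(\ext(S))^{\co}\subset K^{\co}$, finishing the proof.

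The main obstacle is really the Krein--Milman step used in the backward direction: in finite dimensions it is typically proved by induction on the dimension using supporting hyperplanes, together with the fact that faces of compact convex sets have strictly smaller dimension. I would not reprove it here but simply cite it from the reference \cite{Brondsted}; everything else in the argument is elementary and follows directly from the definitions together with Proposition~\ref{prop:convhull=convcombis}.
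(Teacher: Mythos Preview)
Your proof is correct. Note, however, that the paper does not actually prove this proposition: the sentence preceding Proposition~\ref{prop:minkowski} reads ``For its proof we refer to the literature,'' and the only content is the citation of \name{Br{\o}ndsted} \cite[Theorem~5.10]{Brondsted}. So there is no ``paper's own proof'' to compare against here; your write-up goes beyond what the paper provides.

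That said, your argument is sound and worth keeping. The forward direction is a clean, self-contained argument from the definitions and Proposition~\ref{prop:convhull=convcombis}; the minimality trick to ensure $\vq\neq\vs$ is exactly right. For the backward direction you correctly identify that the only substantial input is the finite-dimensional Krein--Milman (equivalently, Minkowski) theorem $S=(\ext(S))^{\co}$, which you appropriately cite rather than reprove. In effect, you have unpacked the equivalence so that one implication is elementary and the other is reduced to the standard theorem, which is precisely what the citation to \cite{Brondsted} is meant to cover.
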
 

If a point lies in the interior of the convex hull of a set $K$, then it lies in the interior of the convex polytope spanned by finitely many points in $K$. This is the content of the following proposition.

\begin{prop} \label{prop:caratheodory-inner-points} 
	Let $K\subset \R^M$ and $\vp \in \interior{(K^\co)}$. Then there exists $N\in\N$ and points $\vp_1,...,\vp_N\in K$ such that $\vp\in \interior{(\{\vp_1,...,\vp_N\}^\co)}$.
\end{prop}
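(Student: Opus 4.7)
The plan is to reduce the statement to the classical description of the convex hull as the set of convex combinations of finitely many points (Proposition~\ref{prop:convhull=convcombis}~\ref{item:convex-hull.b}) by first enclosing $\vp$ in a small simplex whose vertices lie in $K^\co$, and then replacing each vertex by its Carathéodory decomposition.

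First I would use that $\vp \in \interior{(K^\co)}$ to pick $r>0$ with $B_M(\vp,r)\subset K^\co$. Inside this ball I would choose $M+1$ affinely independent points $\vq_0,\vq_1,\ldots,\vq_M$ arranged so that $\vp$ is in the \emph{interior} of the simplex $\Sigma := \{\vq_0,\ldots,\vq_M\}^\co$. A concrete choice is $\vq_i = \vp + \tfrac{r}{2}\ve_i$ for $i=1,\ldots,M$ and $\vq_0 = \vp - \tfrac{r}{2M}\sum_{i=1}^M \ve_i$; these lie in $B_M(\vp,r)\subset K^\co$, are affinely independent, and one checks by writing $\vp = \tfrac{1}{M+1}\sum_{i=0}^M \vq_i$ (after rescaling the offsets appropriately) that $\vp$ is an interior point of $\Sigma$.

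Next, since each $\vq_j$ belongs to $K^\co$, Proposition~\ref{prop:convhull=convcombis}~\ref{item:convex-hull.b} gives, for every $j\in\{0,\ldots,M\}$, natural numbers $N_j\in\N$ and pairs $(\tau_{j,i},\vp_{j,i})\in\R^+\times K$ with $\sum_{i=1}^{N_j}\tau_{j,i}=1$ and $\vq_j=\sum_{i=1}^{N_j}\tau_{j,i}\,\vp_{j,i}$. Collect the finitely many points $\{\vp_{j,i}\}_{0\le j\le M,\,1\le i\le N_j}$ and relabel them as $\vp_1,\ldots,\vp_N\in K$ with $N:=\sum_{j=0}^M N_j$. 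By construction every $\vq_j$ lies in the convex set $\{\vp_1,\ldots,\vp_N\}^\co$.

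Finally, since $\{\vp_1,\ldots,\vp_N\}^\co$ is convex and contains $\vq_0,\ldots,\vq_M$, it contains the simplex $\Sigma$. Using that $\vp\in\interior{\Sigma}$ and that interior is monotone under inclusion, I conclude $\vp\in\interior{(\{\vp_1,\ldots,\vp_N\}^\co)}$, as required. The only subtle point is step one — verifying that a genuinely $M$-dimensional simplex around $\vp$ with vertices in $B_M(\vp,r)$ can be chosen so that $\vp$ lies in its topological interior; once the simplex is affinely independent this is just a coordinate computation, so I expect no serious obstacle.
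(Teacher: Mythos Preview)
Your proof is correct and follows essentially the same strategy as the paper: enclose $\vp$ in the interior of a finite polytope with vertices in $K^\co$, decompose each vertex via Proposition~\ref{prop:convhull=convcombis}~\ref{item:convex-hull.b}, and use monotonicity of the interior. The paper uses the $2^M$ corners of a small cube centered at $\vp$ (and isolates the inclusion $\{\vq_j\}^\co\subset\{\vp_i\}^\co$ as a separate Lemma~\ref{lemma:conv-polytopes}), whereas you use an $(M{+}1)$-vertex simplex and argue the inclusion directly; your choice is slightly more economical but otherwise the arguments coincide. One small remark: with your concrete offsets the barycentric coordinates of $\vp$ are $\big(\tfrac{1}{2},\tfrac{1}{2M},\ldots,\tfrac{1}{2M}\big)$ rather than all equal to $\tfrac{1}{M+1}$, but since these are strictly positive the conclusion $\vp\in\interior{\Sigma}$ still holds.
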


In order to prove Proposition~\ref{prop:caratheodory-inner-points}, we consider the following lemma.

\begin{lemma} \label{lemma:conv-polytopes}
	Let $N_1,N_2\in \N$ and $\vq_1,...,\vq_{N_1},\vp_1,...,\vp_{N_2} \in K \subset \R^M$ such that $\vq_1\in \{\vp_1,...,\vp_{N_2}\}^\co$. Then 
	$$
		\{\vq_1,...,\vq_{N_1} \}^\co \subset \Big( \{\vp_1,...,\vp_{N_2}\} \cup \{\vq_2,...,\vq_{N_1} \} \Big)^\co\ed
	$$
\end{lemma}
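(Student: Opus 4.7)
The plan is to take an arbitrary element of the left-hand side, use the convex combination characterization of the convex hull from Proposition~\ref{prop:convhull=convcombis}~\ref{item:convex-hull.b}, and substitute the given representation of $\vq_1$ to exhibit the element as a convex combination of points in $\{\vp_1,\ldots,\vp_{N_2}\} \cup \{\vq_2,\ldots,\vq_{N_1}\}$.

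More precisely, I would let $\vs \in \{\vq_1,\ldots,\vq_{N_1}\}^\co$ be arbitrary. By Proposition~\ref{prop:convhull=convcombis}~\ref{item:convex-hull.b} there exist coefficients $\tau_1,\ldots,\tau_{N_1} \geq 0$ (allowing zero coefficients by simply omitting the corresponding term from the sum in \eqref{eq:convex-hull}) with $\sum_{i=1}^{N_1} \tau_i = 1$ and $\vs = \sum_{i=1}^{N_1} \tau_i \vq_i$. Similarly, using the assumption that $\vq_1 \in \{\vp_1,\ldots,\vp_{N_2}\}^\co$, there exist $\mu_1,\ldots,\mu_{N_2} \geq 0$ with $\sum_{j=1}^{N_2} \mu_j = 1$ and $\vq_1 = \sum_{j=1}^{N_2} \mu_j \vp_j$.

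Substituting the expression for $\vq_1$ into that of $\vs$ yields
$$
\vs \;=\; \tau_1 \sum_{j=1}^{N_2} \mu_j \vp_j + \sum_{i=2}^{N_1} \tau_i \vq_i \;=\; \sum_{j=1}^{N_2} (\tau_1 \mu_j)\, \vp_j + \sum_{i=2}^{N_1} \tau_i\, \vq_i \ed
$$
All the new coefficients are non-negative, and they sum to $\tau_1 \sum_{j=1}^{N_2} \mu_j + \sum_{i=2}^{N_1} \tau_i = \tau_1 + \sum_{i=2}^{N_1} \tau_i = 1$. Hence $\vs$ is a convex combination of points in $\{\vp_1,\ldots,\vp_{N_2}\} \cup \{\vq_2,\ldots,\vq_{N_1}\}$, and another application of Proposition~\ref{prop:convhull=convcombis}~\ref{item:convex-hull.b} shows $\vs \in (\{\vp_1,\ldots,\vp_{N_2}\} \cup \{\vq_2,\ldots,\vq_{N_1}\})^\co$.

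There is no real obstacle here; the only minor bookkeeping is the allowance of zero coefficients in the convex combination (which is immediate since any representation with some $\tau_i = 0$ can be rewritten by dropping the corresponding summand, matching the strict-positivity convention used in \eqref{eq:convex-hull}). In particular, the degenerate case $\tau_1 = 0$ needs no special treatment, since then $\vs$ already lies in $\{\vq_2,\ldots,\vq_{N_1}\}^\co \subset (\{\vp_1,\ldots,\vp_{N_2}\} \cup \{\vq_2,\ldots,\vq_{N_1}\})^\co$.
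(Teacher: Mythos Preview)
Your proof is correct and essentially identical to the paper's: both take an arbitrary point in the left-hand side, write it as a convex combination of the $\vq_i$, substitute the convex-combination representation of $\vq_1$ in terms of the $\vp_j$, and verify that the resulting coefficients are non-negative and sum to $1$. The only differences are notational (the paper swaps the roles of your $\tau$'s and $\mu$'s and writes the coefficients directly in $\R^+_0$ rather than discussing the zero-coefficient bookkeeping).
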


\begin{proof}
	Since $\vq_1\in \{\vp_1,...,\vp_{N_2}\}^\co$, there exist $\tau_i\in\R^+_0$ for $i=1,...,N_2$ such that $\sum_{i = 1}^{N_2} \tau_i =1$ and $\vq_1 = \sum_{i = 1}^{N_2} \tau_i \vp_i$ due to Proposition~\ref{prop:convhull=convcombis}. Let now $\vp\in\{\vq_1,...,\vq_{N_1} \}^\co$ arbitrary. Similarly there exist $\mu_i\in\R^+_0$ for each $i=1,...,N_1$ such that $\sum_{i = 1}^{N_1} \mu_i =1$ and $\vp = \sum_{i = 1}^{N_1} \mu_i \vq_i$. We simply deduce that
	\begin{align*}
		1&= \mu_1 + \sum_{i = 2}^{N_1} \mu_i = \sum_{i = 1}^{N_2} \mu_1\tau_i + \sum_{i = 2}^{N_1} \mu_i \qquad \text{ and } \\
		\vp &= \mu_1\vq_1 + \sum_{i = 2}^{N_1} \mu_i \vq_i =  \sum_{i = 1}^{N_2} \mu_1\tau_i \vp_i + \sum_{i = 2}^{N_1} \mu_i \vq_i\ec
	\end{align*}
	which shows -- again using Proposition~\ref{prop:convhull=convcombis} -- that $\vp\in\big( \{\vp_1,...,\vp_{N_2}\} \cup \{\vq_2,...,\vq_{N_1} \} \big)^\co$.
\end{proof}

\begin{proof}[Proof of Proposition~\ref{prop:caratheodory-inner-points}]
	Since $\interior{(K^\co)}$ is open, there exists a small $M$-dimensional open cube with center $\vp$ and which lies in $\interior{(K^\co)}$. Let $\vq_1,...,\vq_{2^M}\in K^\co$ be the corners of such a cube. Then $\vp\in \interior{(\{\vq_1,...,\vq_{2^M}\}^\co)}$.
	
	Because each of the points $\vq_i$ lies in $K^\co$, there exist $N_i\in \N$ and $\vp_{i,1},...,\vp_{i,N_i}\in K$ such that $\vq_i \in \{\vp_{i,1},...,\vp_{i,N_i}\}^\co$ accoring to Proposition~\ref{prop:convhull=convcombis}. Lemma~\ref{lemma:conv-polytopes} says that 
	$$
		\{\vq_1,...,\vq_{2^M} \}^\co \subset \bigg(\bigcup_{i=1,...,2^M} \{\vp_{i,1},...,\vp_{i,N_i}\} \bigg)^\co
	$$
	and hence 
	$$
		\vp\in \interior{(\{\vq_1,...,\vq_{2^M}\}^\co)} \subset \interior{\Bigg(\bigg(\bigcup_{i=1,...,2^M} \{\vp_{i,1},...,\vp_{i,N_i}\} \bigg)^\co\Bigg)} \ed
	$$
\end{proof}

\subsection{Convex Functions}

\begin{defn}
	Let $S\subset\R^M$ be a convex subset. A function $f:S\to \R$ is called \emph{convex} if
	$$
		f\big( \tau\vp + (1-\tau) \vq\big) \leq \tau f(\vp) + (1-\tau) f(\vq)
	$$
	for all $\vp,\vq\in\R^M$ and all $\tau\in[0,1]$.
\end{defn}

\begin{lemma} \label{lemma:convexity-kin-en}
	Let $N\in\N$. The function $\R^+ \times \R^N \to \R$, $(a,\vb)\mapsto \frac{|\vb|^2}{a}$ is convex.
\end{lemma}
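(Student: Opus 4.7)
My plan is to verify convexity by computing the Hessian of $f(a,\vb) = |\vb|^2/a$ on the open convex set $\R^+ \times \R^N$ and showing it is positive semi-definite, which (since $f \in C^\infty(\R^+ \times \R^N)$) is equivalent to convexity. A direct calculation gives $\partial_a f = -|\vb|^2/a^2$, $\partial_{b_i} f = 2 b_i/a$, and consequently $\partial_a^2 f = 2|\vb|^2/a^3$, $\partial_a \partial_{b_i} f = -2 b_i /a^2$, and $\partial_{b_i}\partial_{b_j} f = (2/a)\delta_{ij}$. Thus
\[
\Hess f(a,\vb) \;=\; \frac{2}{a^3}\left(\begin{array}{cc} |\vb|^2 & -a\,\vb^\trans \\ -a\,\vb & a^2\,\id_N \end{array}\right).
\]

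The key step is then to show that this matrix is positive semi-definite. For any $(s,\vw)\in \R\times \R^N$, I would compute
\[
(s,\vw^\trans)\cdot \Hess f(a,\vb)\cdot \binom{s}{\vw}
\;=\;\frac{2}{a^3}\Bigl(s^2|\vb|^2 - 2 s\, a\,\vb\cdot \vw + a^2 |\vw|^2\Bigr)
\;=\;\frac{2}{a^3}\,\bigl| s\,\vb - a\,\vw\bigr|^2 \;\ge\; 0,
\]
using $a>0$ in the last step. This completes the proof.

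There is no real obstacle here; the only mild subtlety is that positive semi-definiteness of the Hessian characterizes convexity for $C^2$ functions on a convex open set, and one must verify that $\R^+ \times \R^N$ is indeed convex (which is immediate). As an alternative route that avoids referring to the Hessian, I note that one can also prove the inequality $f(\tau(a_1,\vb_1)+(1-\tau)(a_2,\vb_2))\le \tau f(a_1,\vb_1)+(1-\tau)f(a_2,\vb_2)$ directly: after clearing denominators it reduces to $2\,\vb_1\cdot \vb_2 \le (a_1/a_2)|\vb_2|^2 + (a_2/a_1)|\vb_1|^2$, which follows from AM--GM combined with the Cauchy--Schwarz inequality $\vb_1\cdot\vb_2 \le |\vb_1||\vb_2|$. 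Either argument yields the claim.
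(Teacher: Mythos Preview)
Your Hessian argument is correct: the computation of the second derivatives is accurate, and the identity
\[
(s,\vw^\trans)\cdot \Hess f(a,\vb)\cdot \binom{s}{\vw}=\frac{2}{a^3}\,\bigl| s\,\vb - a\,\vw\bigr|^2
\]
cleanly establishes positive semi-definiteness. This is a genuinely different route from the paper's proof, which verifies the convexity inequality directly: the paper starts from the trivial inequality $0\le \tau(1-\tau)|a\vd-c\vb|^2$ and rearranges algebraically until the two sides of the convexity inequality appear. Your alternative sketch at the end is in fact precisely this: after clearing denominators, the inequality $f(\tau(a_1,\vb_1)+(1-\tau)(a_2,\vb_2))\le \tau f(a_1,\vb_1)+(1-\tau)f(a_2,\vb_2)$ becomes $\tau(1-\tau)|a_1\vb_2-a_2\vb_1|^2\ge 0$, which is the paper's starting point (and is equivalent to your AM--GM/Cauchy--Schwarz formulation after dividing by $a_1a_2$). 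The Hessian route has the advantage of being more systematic and avoiding the need to ``see'' the right algebraic manipulation; the direct route has the virtue of being entirely elementary and not invoking the $C^2$ characterization of convexity.
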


\begin{proof}	
We have to show that for all $(a,\vb),(c,\vd)\in\R^+\times \R^N$ and all $\tau\in[0,1]$ it holds that 
\begin{equation} \label{eq:temp-not-lemmas-1}
	\frac{\big|\tau \vb + (1-\tau) \vd\big|^2}{\tau a + (1-\tau) c}  \leq \tau \frac{|\vb|^2}{a} + (1-\tau) \frac{|\vd|^2}{c} \ed
\end{equation}

Obviously we have
\begin{align*}
	0&\leq \tau(1-\tau)\big|a\vd-c\vb\big|^2 \\
	&=\tau(1-\tau)(a^2|\vd|^2 + c^2|\vb|^2) - 2 \tau (1-\tau) ac \vb\cdot \vd\ec
\end{align*}	
which is equivalent to 
\begin{equation*}
 	2 \tau (1-\tau) ac \vb\cdot \vd \leq \tau(1-\tau)(a^2|\vd|^2 + c^2|\vb|^2)\ed
\end{equation*}	
Adding $\tau^2 ac |\vb|^2 + (1-\tau)^2 ac |\vd|^2 $ on both sides we obtain
\begin{align*}
	&\tau^2 ac |\vb|^2 + (1-\tau)^2 ac |\vd|^2 + 2 \tau (1-\tau) ac \vb\cdot \vd \\
	&\leq \tau^2 ac |\vb|^2 + (1-\tau)^2 ac |\vd|^2 + \tau(1-\tau)(a^2|\vd|^2 + c^2|\vb|^2)\ed
\end{align*}	
This yields
\begin{equation*}
	ac \big|\tau  \vb + (1-\tau)  \vd\big|^2\leq \big(\tau a + (1-\tau)c\big)\big(\tau c |\vb|^2 + (1-\tau) a |\vd|^2\big) \ed
\end{equation*}	
Dividing by the positive expression $ac(\tau a + (1-\tau)c)$ leads to \eqref{eq:temp-not-lemmas-1}.
\end{proof}

\section{Semi-Continuity} \label{sec:not-lsc} 

\begin{defn}
	Let $(X,d)$ be a metric space. A map $f:X\to \R$ is called \emph{lower semi-continuous with respect to $d$} if for all $x\in X$ and all sequences $(x_k)_{k\in\N}\subset X$ with $x_k \mathop{\to}\limits^d x$ the inequality
	$$
	f(x) \leq \liminf\limits_{k\to \infty} f(x_k)
	$$
	holds.
\end{defn}

\section{Weak-$\ast$ Convergence in $L^\infty$} \label{sec:not-weakconv}

In this book we deal with weak-$\ast$ convergence in $L^\infty$. 

\begin{defn} \label{defn:not-weak-ast}
	Let $\Gamma\subset\R^{1+n}$. A sequence $(f_k)_{k\in \N}\subset L^\infty(\Gamma)$ converges \emph{weakly-$\ast$} to $f\in L^\infty(\Gamma)$ if 
	$$
		\iint_{\Gamma} f_k(t,\vx) \varphi(t,\vx) \dx\dt \ \to\ \iint_{\Gamma} f(t,\vx) \varphi(t,\vx) \dx\dt
	$$ 
	for all $\varphi\in L^1(\Gamma)$ as $k\to \infty$. In this case we write $f_k \mathop{\rightharpoonup}\limits^\ast f$ as $k\to \infty$. 
	
	A sequence of vector-valued functions $(\vf_k)_{k\in \N}\subset L^\infty(\Gamma;\R^M)$ converges \emph{weakly-$\ast$} to $\vf\in L^\infty(\Gamma;\R^M)$ if each component $([\vf_k]_i)_{k\in \N}$ converges weakly-$\ast$ to $[\vf]_i$ ($i=1,...,N$). 
\end{defn}

The topology which corresponds to the weak-$\ast$ convergence is called \emph{weak-$\ast$ topology}. For more details we refer to the textbooks by \name{Megginson} \cite[Section 2.6]{Megginson}, \name{Conway} \cite[Chapter V]{Conway} or \name{Alt} \cite[Chapter 6]{Alt}. We say that a topological property (like boundedness, closedness, compactness, etc.) holds \emph{weakly-$\ast$} if it holds with respect to the weak-$\ast$ topology. The topology which is induced by the norm $\|\cdot\|_{L^\infty}$ is also called \emph{strong topology}.

We need the following facts about the weak-$\ast$ topology:
\begin{itemize}
	\item A subset of $L^\infty$ is bounded (with respect to the strong topology) if and only if it is weakly-$\ast$ bounded, see \cite[Theorem 2.6.7]{Megginson}.
	\item The weak-$\ast$ closure of a weakly-$\ast$ bounded subset of $L^\infty$ is weakly-$\ast$ bounded, see \cite[Theorem 2.2.9 (i)]{Megginson}. Due to the fact above, we even have: The weak-$\ast$ closure of a bounded subset of $L^\infty$ is bounded.
	\item A bounded and weakly-$\ast$ closed subset of $L^\infty$ is weakly-$\ast$ compact, see \cite[Corollary 2.6.19]{Megginson}.
	\item Let $A\subset L^\infty$ be bounded. Then the relative weak-$\ast$ topology oa $A$ is metrizable, i.e. there exists a metric on $A$ which induces the relative weak-$\ast$ topology, see \cite[Corollary 2.6.20]{Megginson}.
\end{itemize}

It is well-known that periodic functions converge weakly-$\ast$ to their mean as the frequency grows, see e.g. \cite[U6.7]{Alt}. A version of this fact, which is tailored to our purposes, is the content of the following lemma.

\begin{lemma} \label{lemma:not-periodic-weak-convergence}  
	Let $f\in C^\infty(\R)$ be 1-periodic with zero mean. Let furthermore $\Phi\in \Cc(\R^{1+n})$, $\veta\in\R^{1+n}\setminus\{\vz\}$ and $C\in\R$. We define $f_k\in \Cc(\R^{1+n})$ by $f_k(t,\vx):= f\big(k(t,\vx)\cdot \veta\big)$. Then 
	$$
		C\Phi f_k \ \mathop{\rightharpoonup}\limits^{\ast} \ 0 \qquad \text{ in }L^\infty(\R^{1+n}) \text{ as }k\to \infty\ed
	$$
\end{lemma}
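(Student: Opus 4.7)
The plan is to verify the definition of weak-$\ast$ convergence directly by an integration-by-parts argument, using the bounded primitive $h$ of $f$ provided by Lemma~\ref{lemma:not-periodic-primitive}. I would first observe that the sequence $C\Phi f_k$ is uniformly bounded in $L^\infty(\R^{1+n})$, with $\|C\Phi f_k\|_{L^\infty}\leq |C|\,\|\Phi\|_{L^\infty}\|f\|_{L^\infty}$. This boundedness means that, by a standard $3\varepsilon$-argument, it suffices to test the weak-$\ast$ convergence against a dense subset of $L^1(\R^{1+n})$, and I would take that dense subset to be $\Cc(\R^{1+n})$.

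So the goal reduces to proving that for every $\psi\in\Cc(\R^{1+n})$,
\begin{equation*}
I_k:=\iint_{\R^{1+n}} C\,\Phi(t,\vx)\, f\bigl(k(t,\vx)\cdot\veta\bigr)\,\psi(t,\vx)\dx\dt\ \longrightarrow\ 0\qquad\text{as }k\to\infty.
\end{equation*}
Here I would set $\Psi := C\Phi\psi\in\Cc(\R^{1+n})$ and rewrite $I_k = \iint \Psi\, f_k\dx\dt$. Since $\veta\neq\vz$, some component $\eta_j$ (with $j\in\{t,1,\dots,n\}$) is nonzero. Invoking Lemma~\ref{lemma:not-periodic-primitive} with $k=1$, I get a bounded function $h\in C^\infty(\R)$ with $h'=f$. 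Then the chain rule gives $\partial_j\bigl[h(k(t,\vx)\cdot\veta)\bigr]=k\eta_j\,f_k(t,\vx)$, so
\begin{equation*}
f_k(t,\vx)=\frac{1}{k\eta_j}\,\partial_j\bigl[h\bigl(k(t,\vx)\cdot\veta\bigr)\bigr].
\end{equation*}

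Plugging this into $I_k$ and integrating by parts (the boundary terms vanish because $\Psi$ has compact support) yields
\begin{equation*}
I_k=-\frac{1}{k\eta_j}\iint_{\R^{1+n}} \partial_j\Psi(t,\vx)\,h\bigl(k(t,\vx)\cdot\veta\bigr)\dx\dt,
\end{equation*}
and this is bounded in absolute value by $\frac{1}{k|\eta_j|}\,\|\partial_j\Psi\|_{L^1}\,\|h\|_{L^\infty}$, which tends to zero as $k\to\infty$. Combining this with the density/boundedness reduction finishes the proof. The only step that requires real care is the density reduction from arbitrary $\varphi\in L^1$ to $\psi\in\Cc$; this is where the uniform $L^\infty$ bound on $f_k$ is indispensable, since without it the approximation argument would not control the error uniformly in $k$. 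The integration-by-parts step itself is routine once the bounded primitive $h$ is available.
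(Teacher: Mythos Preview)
Your proof is correct. Both you and the paper use the same three ingredients: the bounded primitive $h$ from Lemma~\ref{lemma:not-periodic-primitive}, a uniform $L^\infty$ bound on the sequence, and a density argument in $L^1$. The difference lies in the choice of dense class and how the $1/k$ gain is extracted. The paper tests against characteristic functions of rectangles having one edge parallel to $\veta$, rotates so that this edge becomes parallel to $\ve_t$, applies Fubini, and then uses the fundamental theorem of calculus in the one remaining variable to produce $h(k|\veta|b_t)-h(k|\veta|a_t)$ divided by $k|\veta|$. You instead test against $\Cc$ functions and integrate by parts in a coordinate direction $j$ with $\eta_j\neq 0$, which yields the same $1/k$ decay without any rotation or explicit Fubini computation. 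Your route is arguably more streamlined, since it sidesteps the geometric step of aligning rectangles with $\veta$ and the (slightly informal) claim that piecewise-constant functions on such rectangles are dense in $L^1$; on the other hand, the paper's computation makes the mechanism of convergence (oscillation averaging to zero over one period) more visibly explicit.
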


\begin{proof}
	We have to show that 
	$$
		\iint_{\R^{1+n}} C\Phi(t,\vx) f\big(k(t,\vx)\cdot \veta\big) \varphi(t,\vx) \dx\dt \ \to\ 0
	$$
	for all $\varphi\in L^1(\R^{1+n})$ as $k\to \infty$. Note that for each $\varphi\in L^1(\R^{1+n})$, the product $C\Phi\varphi$ lies in $L^1(\R^{1+n})$. Hence it remains to show 
	\begin{equation} \label{eq:8-temp-not}
		\lim_{k\to \infty} \iint_{\R^{1+n}} f\big(k(t,\vx)\cdot \veta\big) \varphi(t,\vx) \dx\dt = 0
	\end{equation}
	for all $\varphi\in L^1(\R^{1+n})$. Let us consider a $(1+n)$-dimensional rectangle $Q\subset \R^{1+n}$ with the property that one edge is parallel to $\veta$. Then $Q$ can be rotated such that this edge is afterwards parallel to $\ve_t$. Moreover there exist $a_t,a_1,...,a_n,b_t,b_1,...,b_n\in \R$ with $a_i < b_i$ for all $i=t,1,...,n$ such that 
	\begin{align}
		\iint_{Q} f\big(k(t,\vx)\cdot \veta\big) \dx\dt &= \int_{a_t}^{b_t}  \int_{a_1}^{b_1} \cdots  \int_{a_n}^{b_n} f\big(kt |\veta|\big) \dxcomp_n\cdots \dxcomp_1\dt \notag \\
		&= \prod_{i=1}^n (b_i-a_i) \int_{a_t}^{b_t}  f\big(kt |\veta|\big) \dt \notag \\
		&= \frac{\prod_{i=1}^n (b_i-a_i)}{k |\veta|} \int_{k|\veta| a_t}^{k|\veta| b_t}  f(t) \dt \ed \label{eq:7-temp-not}
	\end{align}
	Due to Lemma~\ref{lemma:not-periodic-primitive} there exists a bounded primitive $h$ of $f$. Using this in \eqref{eq:7-temp-not} we obtain 
	\begin{align*}
		\left|\iint_{Q} f\big(k(t,\vx)\cdot \veta\big) \dx\dt\right| &= \frac{\prod_{i=1}^n (b_i-a_i)}{k |\veta|} \Big|h\big(k|\veta| b_t\big) - h\big(k|\veta| a_t\big)\Big|  \ \to \ 0
	\end{align*}
	as $k\to \infty$. Hence \eqref{eq:8-temp-not} holds for all piece-wise constant $\varphi$ which are constant in rectangles as $Q$ above. Note that the set of such functions is dense in $L^1(\R^{1+n})$. 
	
	Obviously $\|f_k\|_{L^\infty} = \|f\|_{L^\infty}$ for all $k\in\N$. In other words $\|f_k\|_{L^\infty}$ is bounded. This together with the fact that \eqref{eq:8-temp-not} holds for all $\varphi$ in a dense subset of $L^1$, implies the claim, see \cite[Exercise 2.71]{Megginson} or \cite[U3.4]{Alt}. 
\end{proof}

\begin{lemma} \label{lemma:not-wls-convex} 
	Let $S\subset \R^M$ be a convex subset and $f:S\to \R$ a convex and continuous function. Let furthermore $\Gamma\subset\R^{1+n}$ open and bounded, and $(\vu_k)_{k\in\N}\subset L^\infty(\Gamma;S)$ be a sequence which converges weakly-$\ast$ in $L^\infty$ to $\vu\in L^\infty(\Gamma;S)$. Then 
	$$
		\iint_\Gamma f(\vu(t,\vx)) \dx\dt \leq \liminf\limits_{k\to\infty} \iint_\Gamma f(\vu_k(t,\vx)) \dx\dt \ed
	$$
\end{lemma}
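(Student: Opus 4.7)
The plan is to combine Mazur's lemma, Fatou's lemma, and the convexity of $f$ in a standard way, with the only twist being that we first pass to a subsequence that achieves the $\liminf$ on the right-hand side. First I would observe that, since $\Gamma$ is bounded, every $\varphi\in L^2(\Gamma)$ lies in $L^1(\Gamma)$, so weak-$\ast$ convergence in $L^\infty$ implies weak convergence $\vu_k\rightharpoonup\vu$ in $L^2(\Gamma;\R^M)$. Also, the sequence is uniformly bounded in $L^\infty$, so there exists $C>0$ with $\vu_k(t,\vx),\,\vu(t,\vx)\in S\cap \closure{B_M(\vz,C)}$ a.e. on $\Gamma$. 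Writing $L:=\liminf_{k\to\infty}\iint_\Gamma f(\vu_k)\dx\dt$ and assuming $L<+\infty$ (otherwise there is nothing to prove), I would pass to a subsequence $(\vu_{k_\ell})_{\ell\in\N}$ along which the integrals actually converge to $L$.

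Next I would apply Mazur's lemma to this subsequence, which converges weakly in $L^2$ to $\vu$, to obtain finite convex combinations
$$
\vw_\ell := \sum_{m=\ell}^{N(\ell)} \lambda_m^{(\ell)}\,\vu_{k_m}, \qquad \lambda_m^{(\ell)}\geq 0,\ \sum_{m=\ell}^{N(\ell)}\lambda_m^{(\ell)}=1,
$$
with $\vw_\ell\to\vu$ strongly in $L^2(\Gamma;\R^M)$. Extracting a further subsequence (not relabelled), $\vw_\ell\to\vu$ a.e.\ on $\Gamma$. Since $S$ is convex and each $\vu_{k_m}$ takes values in $S\cap \closure{B_M(\vz,C)}$, so does each $\vw_\ell$. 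By continuity of $f$, $f(\vw_\ell(t,\vx))\to f(\vu(t,\vx))$ a.e., and by convexity
$$
f\big(\vw_\ell(t,\vx)\big)\leq \sum_{m=\ell}^{N(\ell)}\lambda_m^{(\ell)}\,f\big(\vu_{k_m}(t,\vx)\big) \qquad \text{a.e.\ on }\Gamma.
$$

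To apply Fatou's lemma I would use that the $\vw_\ell$ take values in a bounded subset of $S$, on which the convex continuous function $f$ is bounded below (convex functions are locally bounded on convex sets, and in each of the applications in this book $f$ is either the function $E$ or $e$, both of which are bounded below on the relevant bounded sets). Granting this lower bound, Fatou's lemma and the convexity estimate above yield
$$
\iint_\Gamma f(\vu)\dx\dt \leq \liminf_{\ell\to\infty}\iint_\Gamma f(\vw_\ell)\dx\dt \leq \liminf_{\ell\to\infty}\sum_{m=\ell}^{N(\ell)}\lambda_m^{(\ell)}\,a_m,
$$
where $a_m:=\iint_\Gamma f(\vu_{k_m})\dx\dt$. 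Since $a_m\to L$ by the choice of the subsequence, for any $\varepsilon>0$ there is $\ell_0$ with $a_m\leq L+\varepsilon$ for all $m\geq \ell_0$; the convex combination for $\ell\geq\ell_0$ is then also $\leq L+\varepsilon$, and letting $\varepsilon\to 0$ gives $\iint_\Gamma f(\vu)\dx\dt\leq L$, as required. The main obstacle is the technical point about the lower bound needed for Fatou: in full generality one has to invoke local boundedness of convex functions on convex sets, but for the applications in this book the boundedness below of $E$ and $e$ on the bounded sets where the sequences live makes this step immediate.
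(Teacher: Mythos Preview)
Your argument is correct and is a standard route to weak(-$\ast$) lower semicontinuity of convex integral functionals, but it differs substantially from the paper's proof. The paper does not go through $L^2$, Mazur, or Fatou at all: instead it exploits directly that a convex continuous function is the supremum of its affine minorants. Concretely, for a given $\varepsilon>0$ the paper partitions the common bounded range of $\vu,\vu_k$ into finitely many pieces $S_1,\dots,S_N$, on each of which $f$ is $\varepsilon$-close from below to an affine function $\va_i\cdot\vy+b_i$ that globally minorizes $f$ on $S$; it then partitions $\Gamma$ according to where the limit $\vu$ lands, passes the weak-$\ast$ limit through the \emph{linear} expressions $\iint_{\Gamma_i}(\va_i\cdot\vu_k+b_i)$, sums over $i$, and lets $\varepsilon\to 0$.

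The trade-off: the paper's argument is more self-contained (no Mazur, no subsequence bookkeeping) and sidesteps your Fatou lower-bound issue automatically, since the affine minorants are bounded on bounded sets by construction. Your approach is more conceptual and closer to the general functional-analytic folklore; the only point you flag --- a uniform lower bound for $f$ on the bounded range --- is in fact available in general (a convex function on a convex set with nonempty interior has an affine minorant, hence is bounded below on bounded sets), so your hedge toward the specific $E$ and $e$ is unnecessary.
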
 

In other words the functional $\vu\mapsto \iint_\Gamma f(\vu) \dx\dt$ is lower semi-continuous with respect to the weak-$\ast$ topology. The statement of Lemma~\ref{lemma:not-wls-convex} can be found in \name{Tartar} \cite[Theorem 4]{Tartar79}. Since \name{Tartar} does not provide a proof, we give a detailed proof here.


\begin{proof}
	First of all note that the sequence $(\vu_k)_{k\in\N}$ is weakly-$\ast$ bounded since it converges. Hence it is strongly bounded, see the facts about the weak-$\ast$ topology above. This shows that $\vu$ and all $\vu_k$, $k\in\N$, take almost everywhere values in a bounded subset of $S$. Now for a given $\ep>0$ we partition this bounded set into finitely many subsets $S_1,...,S_N\subset S$ with the following property: For each $i=1,...,N$ there exist $\va_i\in \R^M$ and $b_i\in \R$ such that\footnote{Mind the small but crucial difference in the following two equations: \eqref{eq:1-temp-convex} holds only for $\vy\in S_i$, whereas \eqref{eq:2-temp-convex} is valid for all $\vy\in S = \bigcup_{i=1}^N S_i$.} 
	\begin{align}
		f(\vy) - \ep \leq \va_i \cdot \vy + b_i & & &\hspace{-2cm}\text{ for all }\vy\in S_i \text{ and } \label{eq:1-temp-convex} \\
		\va_i \cdot \vy + b_i &\leq f(\vy) & &\hspace{-2cm}\text{ for all }\vy\in S \ed \label{eq:2-temp-convex}
	\end{align}
	This is possible since $f$ is convex and continuous\footnote{Intuitively this seems to be clear. See \name{Ekeland}-\name{T{\'e}mam} \cite[Proposition 3.1 of Chapter 1]{EkeTem} for a rigorous proof.}.
	
	Now we partition $\Gamma$ into $N$ measurable subsets $\Gamma_1,...,\Gamma_N\subset \Gamma$ such that for all $i=1,...,N$ we have $\vu(t,\vx)\in S_i$ for a.e. $(t,\vx)\in\Gamma_i$. Thus \eqref{eq:1-temp-convex} implies 
	$$ 
		f(\vu(t,\vx)) - \ep \leq \va_i \cdot \vu(t,\vx) + b_i \qquad\text{ for a.e. } (t,\vx)\in \Gamma_i \ed
	$$
	Integrating over $\Gamma_i$ yields
	\begin{equation} \label{eq:3-temp-convex}
		\iint_{\Gamma_i} \Big( f(\vu(t,\vx)) - \ep \Big) \dx\dt \leq \iint_{\Gamma_i} \Big(\va_i \cdot \vu(t,\vx) + b_i \Big) \dx\dt\ed
	\end{equation}
	
	Moreover we obtain from \eqref{eq:2-temp-convex} that 
	$$ 
		\va_i \cdot \vu_k(t,\vx) + b_i \leq f(\vu_k(t,\vx)) \qquad\text{ for a.e. } (t,\vx)\in \Gamma \ed
	$$
	In particular, this holds a.e. on $\Gamma_i$ and hence integrating yields
	\begin{equation} \label{eq:4-temp-convex}
		\iint_{\Gamma_i} \Big( \va_i \cdot \vu_k(t,\vx) + b_i \Big) \dx\dt \leq \iint_{\Gamma_i} f(\vu_k(t,\vx)) \dx\dt  \ed
	\end{equation}
	
	The fact that $\vu_k\mathop{\rightharpoonup}\limits^\ast \vu$ implies
	\begin{equation} \label{eq:5-temp-convex}
		\lim\limits_{k\to\infty} \iint_{\Gamma_i} \Big( \va_i \cdot \vu_k(t,\vx) + b_i \Big) \dx\dt  = \iint_{\Gamma_i} \Big( \va_i \cdot \vu(t,\vx) + b_i \Big) \dx\dt \ed 
	\end{equation}
	
	Combining \eqref{eq:3-temp-convex}, \eqref{eq:4-temp-convex} and \eqref{eq:5-temp-convex}, we obtain 
	$$
		\iint_{\Gamma_i} \Big( f(\vu(t,\vx)) - \ep \Big) \dx\dt \leq \liminf\limits_{k\to\infty}\iint_{\Gamma_i} f(\vu_k(t,\vx)) \dx\dt \ed
	$$
	Finally summing over $i=1,...,N$ we find 
	$$
		\iint_{\Gamma} \Big( f(\vu(t,\vx)) - \ep \Big) \dx\dt \leq \liminf\limits_{k\to\infty}\iint_{\Gamma} f(\vu_k(t,\vx)) \dx\dt \ec
	$$
	which implies the claim, since $\ep>0$ was arbitrary.
\end{proof}

\section{Baire Category Theorem} \label{sec:not-baire} 

In this section we state a version of the famous \emph{Baire Category Theorem} taylored to our purposes. What is presented here can be found in several textbooks on topology, e.g. the one by \name{Waldmann} \cite[Chapter 7]{Waldmann}. Let us start with some definitions.

\begin{defn}
	Let $(X,\mathcal{T})$ be a topological space. A subset $M\subset X$ is called
	\begin{itemize}
		\item \emph{nowhere dense} if the interior of the closure is empty, i.e. $\interior{(\closure{M})}= \emptyset$,
		\item \emph{meager} if $M$ is the countable union of nowhere dense sets,
		\item \emph{residual} if the complement of $M$ is meager. 
	\end{itemize}
\end{defn}

The following is a simple observation. For a detailed proof we refer to \cite[Proposition~7.1.3~(iv)]{Waldmann}.

\begin{prop} \label{prop:intersection-residual}
	The intersection of countably many residual subsets of a topological space is residual.
\end{prop}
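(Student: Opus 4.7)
The proof is essentially a bookkeeping exercise with De~Morgan's law, so my plan is to simply unwind the definitions and observe that a countable union of countable unions is again countable.

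First I would fix a countable family $(R_n)_{n\in\N}$ of residual subsets of a topological space $(X,\mathcal{T})$ and set $R := \bigcap_{n\in\N} R_n$. To show $R$ is residual I need to verify that its complement $X\setminus R$ is meager. By De~Morgan's law,
$$
X\setminus R \;=\; X\setminus \bigcap_{n\in\N} R_n \;=\; \bigcup_{n\in\N} (X\setminus R_n)\ed
$$

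Next I would use the hypothesis that each $R_n$ is residual. By definition this means that $X\setminus R_n$ is meager, so for each $n\in\N$ there exist nowhere dense sets $N_{n,k}\subset X$ (indexed by $k\in\N$) such that $X\setminus R_n = \bigcup_{k\in\N} N_{n,k}$. Substituting into the previous display yields
$$
X\setminus R \;=\; \bigcup_{n\in\N}\bigcup_{k\in\N} N_{n,k}\ec
$$
which exhibits $X\setminus R$ as a union of the doubly indexed family $\{N_{n,k}\}_{(n,k)\in \N\times \N}$ of nowhere dense sets.

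Finally, since $\N\times \N$ is countable, this is a countable union of nowhere dense sets. Hence $X\setminus R$ is meager by definition, and therefore $R$ is residual, which finishes the proof. There is no real obstacle here; the only thing one must be careful about is that meagerness is preserved under \emph{countable} unions, which is exactly what the countability of $\N\times\N$ provides.
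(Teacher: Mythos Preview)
Your proof is correct and is exactly the standard argument; the paper itself does not spell out a proof but simply refers to \name{Waldmann} \cite[Proposition~7.1.3~(iv)]{Waldmann}, where the same De~Morgan-plus-countability reasoning is carried out.
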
 

We need Baire's Theorem in the following version. For the proof we refer to \cite[Theorem~7.2.1 and Proposition~7.1.5~(iv)]{Waldmann}.

\begin{prop}[Baire Category Theorem] \label{prop:Baire}
	Let $(X,d)$ be a complete metric space. Then every residual subset of $X$ is dense. 
\end{prop}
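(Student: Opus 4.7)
The plan is to reduce the statement to the more classical form of the Baire Category Theorem, namely that in a complete metric space the intersection of countably many open dense sets is dense, and then prove that form by the standard nested-ball construction.

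First I would unpack the hypothesis. If $M\subset X$ is residual, then $X\setminus M$ is meager, so there exist nowhere dense sets $N_k\subset X$ ($k\in\N$) with $X\setminus M=\bigcup_{k\in\N}N_k$. Setting $U_k:=X\setminus\closure{N_k}$, each $U_k$ is open (complement of a closed set), and dense (since $N_k$ is nowhere dense, $\interior{\closure{N_k}}=\emptyset$, so $X\setminus\closure{N_k}$ meets every nonempty open set). Moreover $\bigcap_{k\in\N}U_k\subset M$, because any $x\in\bigcap_k U_k$ satisfies $x\notin\closure{N_k}\supset N_k$ for every $k$, hence $x\notin X\setminus M$. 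So it suffices to prove that $\bigcap_{k\in\N}U_k$ is dense in $X$.

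To show density, I would fix an arbitrary point $x_0\in X$ and an arbitrary radius $\varepsilon>0$, and construct a point of $\bigcap_k U_k$ inside $B(x_0,\varepsilon)$. Since $U_1$ is open and dense, $U_1\cap B(x_0,\varepsilon)$ is nonempty and open, so I may pick $x_1\in U_1\cap B(x_0,\varepsilon)$ together with $0<r_1<1$ satisfying $\closure{B(x_1,r_1)}\subset U_1\cap B(x_0,\varepsilon)$. Proceeding inductively, once $x_k,r_k$ are chosen, density and openness of $U_{k+1}$ yield $x_{k+1}\in U_{k+1}\cap B(x_k,r_k)$ and $0<r_{k+1}<\tfrac{1}{k+1}$ with $\closure{B(x_{k+1},r_{k+1})}\subset U_{k+1}\cap B(x_k,r_k)$. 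The sequence $(x_k)_{k\in\N}$ is then Cauchy, since $x_\ell\in B(x_k,r_k)$ for all $\ell\geq k$ and $r_k\to 0$.

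Here is where completeness of $(X,d)$ enters, which is the only substantive ingredient and thus the main point of the argument: the Cauchy sequence $(x_k)$ converges to some $x^\ast\in X$. For every $k\in\N$ and every $\ell\geq k$ we have $x_\ell\in\closure{B(x_k,r_k)}$, hence in the limit $x^\ast\in\closure{B(x_k,r_k)}\subset U_k$, while $x^\ast\in\closure{B(x_1,r_1)}\subset B(x_0,\varepsilon)$. Therefore $x^\ast\in\bigcap_{k\in\N}U_k\cap B(x_0,\varepsilon)\subset M\cap B(x_0,\varepsilon)$, which shows $M$ is dense in $X$. The only potential obstacle is purely bookkeeping, namely ensuring that each new ball $\closure{B(x_{k+1},r_{k+1})}$ is contained strictly inside both $U_{k+1}$ and the previous ball $B(x_k,r_k)$ so that the nesting and the Cauchy property hold simultaneously; this is handled by shrinking $r_{k+1}$ below both $\tfrac{1}{k+1}$ and the distance from $x_{k+1}$ to the boundary of $U_{k+1}\cap B(x_k,r_k)$.
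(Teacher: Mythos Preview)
Your proof is correct and is exactly the standard nested-ball argument one would expect. Note, however, that the paper does not actually prove this proposition at all: it simply cites \name{Waldmann} \cite[Theorem~7.2.1 and Proposition~7.1.5~(iv)]{Waldmann} and moves on. So there is nothing to compare against in the paper itself; what you have written is essentially the proof one would find behind that citation, and it is more than sufficient here.
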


We finish this section with a corollary of some standard facts taylored to our purposes. 

\begin{prop} \label{prop:lsc-residual} 
	Let $(X,d)$ be a complete metric space and $f:X\to \R$ lower semi-continuous and taking values in a bounded interval of $\R$. Then the points of continuity of $f$ form a residual set in $X$. 
\end{prop}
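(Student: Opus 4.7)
The plan is to exhibit the continuity set $C$ of $f$ as a countable intersection of open dense subsets of $X$, from which residuality is immediate and completeness is actually not needed. First I would introduce the oscillation
$$\omega(x):=\lim_{r\to 0^+}\sup\big\{|f(y)-f(z)|:y,z\in B(x,r)\big\},$$
which is well-defined in $\R_0^+$ because $f$ takes values in a bounded interval, and observe that
$$C=\big\{x\in X:\omega(x)=0\big\}=\bigcap_{n\in\N}G_n,\qquad G_n:=\big\{x\in X:\omega(x)<1/n\big\}.$$
A routine argument shows each $G_n$ is open: if $\omega(x_0)<1/n$ is witnessed by a ball $B(x_0,r)$ on which $\sup|f(y)-f(z)|<1/n$, then every $x\in B(x_0,r/2)$ satisfies $B(x,r/2)\subseteq B(x_0,r)$ and hence $\omega(x)<1/n$ as well. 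Thus $C$ is automatically a $G_\delta$-set, independently of any hypothesis on $f$.

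The heart of the argument, and the only place where lower semi-continuity enters, is the density of each $G_n$. Given $n\in\N$ and a nonempty open set $U\subseteq X$, I would set $s:=\sup_{x\in U}f(x)$, which is finite because $f$ takes values in a bounded interval, and pick $x_0\in U$ with $f(x_0)>s-\frac{1}{2n}$. Lower semi-continuity of $f$ makes the superlevel set $\{y\in X:f(y)>s-\frac{1}{2n}\}$ open, so
$$V:=U\cap\big\{y\in X:f(y)>s-\tfrac{1}{2n}\big\}$$
is a nonempty open subset of $U$ (it contains $x_0$). For any two points $y,z\in V$ we have $f(y),f(z)\in\big(s-\frac{1}{2n},\,s\big]$, hence $|f(y)-f(z)|\leq\frac{1}{2n}$. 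Consequently the oscillation of $f$ on $V$ is at most $\frac{1}{2n}$, and so $\omega(y)\leq\frac{1}{2n}<1/n$ for every $y\in V$. This yields $\emptyset\neq V\subseteq G_n\cap U$, proving that $G_n$ is dense.

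Putting these two ingredients together: each $X\setminus G_n$ is closed with empty interior, i.e.\ nowhere dense, so $X\setminus C=\bigcup_{n\in\N}(X\setminus G_n)$ is meager by definition, which is exactly the statement that $C$ is residual in $X$. I do not expect any serious obstacle here; the argument rests on two one-sided ingredients -- lower semi-continuity to make the superlevel sets open, and boundedness of $f$ to make $s$ finite -- and nowhere invokes the completeness of $(X,d)$. Completeness is only relevant if one wishes to additionally conclude, via Proposition~\ref{prop:Baire}, that $C$ is dense in $X$, but that is stronger than what is being asserted.
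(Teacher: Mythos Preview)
Your argument is correct. The oscillation $\omega(x)$ is well-defined thanks to boundedness, the sets $G_n$ are open for the reason you give, and the density step is the key point: for $y,z\in V\subseteq U$ you have $f(y),f(z)\in(s-\tfrac{1}{2n},s]$ because $s$ is the supremum over $U$, so the oscillation on $V$ is at most $\tfrac{1}{2n}$ and every point of the open set $V$ lies in $G_n$. Your observation that completeness plays no role in establishing residuality is also accurate.

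The paper takes a rather different route: it invokes the fact that a bounded lower semi-continuous function can be written as the pointwise supremum of an increasing sequence of continuous functions (citing Bourbaki), so that $f$ is in particular a Baire-1 map, and then cites a textbook result (Waldmann) that the continuity set of a Baire-1 map is residual. In effect the paper outsources both halves of the argument to the literature. Your approach is more direct and self-contained: it uses only the openness of superlevel sets of lower semi-continuous functions and the finiteness of the supremum, and it makes transparent exactly where each hypothesis enters. The paper's route has the advantage of situating the result within the standard Baire-class hierarchy, but yours gives a cleaner elementary proof tailored to the lower semi-continuous case.
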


\begin{proof}
	Under the assumption of the proposition, $f$ can be written as a pointwise supremum of an increasing sequence of continuous functions\footnote{In this case one also says that $f$ is a \emph{Baire-1 map}.} in $X$, see \name{Bourbaki} \cite[Proposition 11 in Section 2.7 of Chapter IX]{Bourbaki:Top2}\footnote{To be precise \name{Bourbaki} shows this under the assumption that $f\geq 0$. Instead of this assumption, we require that $f$ takes values in a bounded interval of $\R$. By adding a suitable constant to $f$, we can reduce our case to the one considered by \name{Bourbaki}.}. This implies the claim according to \cite[Proposition 7.3.2]{Waldmann}. 
\end{proof}

\backmatter

\printbibliography[heading=bibintoc]

\end{document}